\newtheorem{thm}{Theorem}
\newtheorem{prop}[thm]{Proposition}
\newtheorem{lem}[thm]{Lemma}
\newtheorem{cor}[thm]{Corollary}
\newtheorem{rem}[thm]{Remark}
\newtheorem{df}[thm]{Definition}
\newtheorem{ex}[thm]{Example}
\renewcommand{\epsilon}{\varepsilon}
\renewcommand{\phi}{\varphi}
\renewcommand{\deg}{\operatorname{deg}}
\renewcommand{\P}{\operatorname{P}}
\newcommand{\BB}{\mathbb}
\newcommand{\g}{\mathfrak}
\newcommand{\im}{\operatorname{Im}}
\newcommand{\re}{\operatorname{Re}}
\newcommand{\tr}{\operatorname{Tr}}
\newcommand{\HC}{\BB H_{\BB C}}
\newcommand{\degt}{\widetilde{\operatorname{deg}}}
\newcommand{\degtt}{(\operatorname{deg}+2)}
\newcommand{\M}{\operatorname{Mx}}
\newcommand{\sgn}{\operatorname{sign}}
\newcommand{\DR}{\operatorname{Diag}}
\newcommand\textcyr[1]{{\fontencoding{OT2}\fontfamily{wncyr}\selectfont #1}}
\newcommand{\Zh}{\textit{\textcyr{Zh}}}
\newcommand{\Sh}{\textit{\textcyr{Sh}}}
\begin{document}

\title{\bf Quaternionic Analysis, Representation Theory and Physics II}
\author{Igor Frenkel and Matvei Libine}
\maketitle

\begin{abstract}
We develop further quaternionic analysis introducing left and right doubly
regular functions. We derive Cauchy-Fueter type formulas for these doubly
regular functions that can be regarded as another counterpart of Cauchy's
integral formula for the second order pole, in addition to the one studied
in the first paper with the same title.
We also realize the doubly regular functions as a subspace of the
quaternionic-valued functions satisfying a Euclidean version of Maxwell's
equations for the electromagnetic field.

Then we return to the study of the original quaternionic analogue of Cauchy's
second order pole formula and its relation to the polarization of vacuum.
We find the decomposition of the space of quaternionic-valued functions
into irreducible components that include the spaces of doubly left and right
regular functions.
Using this decomposition, we show that a regularization of the vacuum
polarization diagram is achieved by subtracting the component corresponding
to the one-dimensional subrepresentation of the conformal group.
After the regularization, the vacuum polarization diagram is identified with
a certain second order differential operator which yields a quaternionic
version of Maxwell equations.

Next, we introduce two types of quaternionic algebras consisting of spaces
of scalar-valued and quaternionic-valued functions.
We emphasize that these algebra structures are invariant under the action
of the conformal Lie algebra.
This is done using techniques that appear in the study of the vacuum
polarization diagram.
These algebras are not associative, but we can define an infinite family of
$n$-multiplications, and we conjecture that they have the structures of
weak cyclic $A_{\infty}$-algebras.
We also conjecture the relation between the multiplication operations
of the scalar and non-scalar quaternionic algebras with the $n$-photon
Feynman diagrams in the scalar and ordinary conformal QED.

We conclude the article with a discussion of relations between quaternionic
analysis, representation theory of the conformal group, massless quantum
electrodynamics and perspectives of further development of these subjects.
\end{abstract}

\section{Introduction}

The starting point in the development of quaternionic analysis by Fueter and
others was an exact analogue of Cauchy's integral formula for complex
holomorphic functions
\begin{equation}  \label{Cauchy}
f(w) = \frac 1{2\pi i} \oint \frac {f(z)}{z-w}\,dz
\end{equation}
involving the first order pole.
This counterpart of (\ref{Cauchy}) is usually referred to as the Cauchy-Fueter
formulas for the quaternionic analogues of holomorphic functions known as
left and right regular functions. Thus there are two versions:
\begin{align}
f(W) &= \frac1{2\pi^2} \int_S k(Z-W) \cdot Dz \cdot f(Z),  \label{lr-intro}  \\
g(W) &= \frac1{2\pi^2} \int_S g(Z) \cdot Dz \cdot k(Z-W),  \label{rr-intro}
\end{align}
where
\begin{equation}  \label{CF-kernel}
k(Z-W) = \frac{(Z-W)^{-1}}{\det(Z-W)},
\end{equation}
$Dz$ is a certain quaternionic valued $3$-from, the contour of integration
$S$ is homotopic to a $3$-sphere $S^3$ around $W$ in $\BB H$,
$f: \BB H \to \BB S$ is left regular, and $g: \BB H \to \BB S'$ is right
regular. Here, $\BB S$ and $\BB S'$ are two dimensional left and right modules
over $\BB H$. (Usually people consider $\BB H$-valued functions.)
The quaternionic conformal group $SL(2,\BB H)$ and its Lie algebra
$\mathfrak{sl}(2,\BB H)$ have natural actions on the spaces of left and right
regular functions, which are analogous to the actions of the (global) conformal
group $SL(2,\BB C)$ and its Lie algebra $\mathfrak{sl}(2,\BB C)$ on the space
of holomorphic functions.
In spite of this indisputable parallel between complex and quaternionic
analysis, further attempts to extend the analogy between the two theories
have been met with substantial difficulties. In particular, neither left nor
right regular functions form a ring and, therefore, cannot be regarded as a
full counterpart of the ring of holomorphic functions.
Additionally, generalizations of the Cauchy-Fueter formulas
(\ref{lr-intro})-(\ref{rr-intro}) to higher order poles are not at all
straightforward.

In our first paper with the same title \cite{FL1} we proposed to approach
quaternionic analysis from the point of view of representation theory of
the conformal group $SL(2,\BB H)$ and its Lie algebra $\mathfrak{sl}(2,\BB H)$.
In particular, we explored the parallel between quaternionic and complex
analysis from this representation theoretic point of view.
This approach allowed us to discover a quaternionic counterpart
of Cauchy's integral formula for the second order pole
\begin{equation}  \label{2pole-intro}
\frac{df}{dw}(w) = \frac 1{2\pi i} \oint \frac {f(z)\,dz}{(z-w)^2}
\end{equation}
by interpreting the square of the Cauchy-Fueter kernel (\ref{CF-kernel}) as
a kernel of an intertwining operator for $\mathfrak{sl}(2,\BB H)$.
As explained in Introduction of \cite{FL1}, the derivative operator
$\frac{d}{dz}$ can be interpreted as an intertwining operator between
certain representations of $SL(2,\BB C)$.
We show in \cite{FL1} that the quaternionic counterpart of (\ref{2pole-intro})
dictated by representation theory of the quaternionic conformal group has the
form
\begin{equation}  \label{Mx-intro}
(\M F)(W) = \frac{12i}{\pi^3} \int_{Z \in U(2)}
k(Z-W) \cdot F(Z) \cdot k(Z-W) \,dV,
\end{equation}
where the operator $\frac{d}{dz}$ in Cauchy's formula (\ref{2pole-intro})
is replaced by a certain second order differential operator that we call
``Maxwell operator''
$$
\M F = \overrightarrow{\nabla} F \overleftarrow{\nabla} - \square F^+.
$$
The operator $\M$ is an intertwining operator between certain actions of
$\mathfrak{sl}(2,\BB H)$ on the space of quaternionic valued polynomials
(or its analytic completion).

In complex analysis, Cauchy's formulas for the first and second order poles
(\ref{Cauchy}), (\ref{2pole-intro}) admit immediate generalizations to
holomorphic functions on the punctured complex plane
$\BB C^{\times} = \BB C \setminus \{0\}$ by choosing the contour of integration
to be the difference of loops around zero and infinity.
In quaternionic analysis, there is a similar generalization of the
Cauchy-Fueter formulas (\ref{lr-intro})-(\ref{rr-intro}) to regular functions
on $\BB H^{\times} = \BB H \setminus \{0\}$
by choosing the contour of integration to be the difference of two $3$-cycles
around zero and infinity (as well as more general domains).
However, a generalization of the quaternionic analogue of the second order pole
formula (\ref{2pole-intro}) to functions on $\BB H^{\times}$ presents substantial
difficulties and is directly related to the divergence of the Feynman diagram
for vacuum polarization, as was indicated in \cite{FL1}.
In the present paper we resolve this problem similarly to our derivation of
the quaternionic second order pole formula for the scalar valued functions in
\cite{FL3}.
Recall that $\BB D^+$ and $\BB D^-$ are certain open domains in
$\BB H \otimes \BB C$, both having $U(2)$ as Shilov boundary.
The idea is to separate the singularities in the second order pole
by considering maps $J^{\varepsilon_1 \varepsilon_2}$, where
$\varepsilon_1, \varepsilon_2 = \pm$, from a space of quaternionic valued
functions ${\cal W}'$ to (a completion of) the tensor product of left and
right regular functions
$$
(J^{\varepsilon_1 \varepsilon_2} F)(Z_1,Z_2) = \frac {12i}{\pi^3} \int_{W \in U(2)}
k(W-Z_1) \cdot F(W) \cdot k(W-Z_2) \,dV,
$$
$$
\text{with} \quad Z_1 \in \BB D^{\varepsilon_1}, \quad Z_2 \in \BB D^{\varepsilon_2},
$$
and then taking the limits as $Z_1$, $Z_2$ approach the common boundary $U(2)$
of $\BB D^{\pm}$.
While the maps $J^{++}$ and $J^{--}$ are well defined on the diagonal $Z_1=Z_2$
(as in \cite{FL1}), the maps $J^{+-}$ and $J^{-+}$ have singularities
which cancel each other in the sum (as in \cite{FL3}).
Setting $Z_1=Z_2$ in the total map
$$
J=J^{++}+J^{--}-J^{+-}-J^{-+}
$$
yields an extension of our second order pole formula to functions on
$\BB H^{\times}$
\begin{equation}  \label{J=Mx}
(JF)(Z,Z) = (\M F)(Z).
\end{equation}

The appearance of singularity in $J^{+-}$, $J^{-+}$ is related to the presence
of the one-dimensional representation in the subquotient of ${\cal W}'$.
For this reason, the generalization of our second order pole formula from
\cite{FL1} to the ``quaternionic Laurent polynomials'' -- i.e. polynomial
functions defined on $\BB H^{\times}$ -- requires a detailed study of the
$\mathfrak{sl}(2,\BB H)$-module structures of the spaces ${\cal W}'$ and
${\cal W}$ of quaternionic valued functions (now defined on $\BB H^{\times}$)
and the homomorphism $\M: {\cal W}' \to {\cal W}$.
It turns out that each of these modules contains $13$ composition factors,
and they can be studied using a larger complex originally considered
as equation (57) in \cite{FL1}:
\begin{equation}  \label{q-complex-intro}
\xymatrix@R=1pc{
& & & \hat{\cal F} \ar[dr] & & & \\
\BB C \ar[r] & \Sh' \ar[r] & {\cal W}' \ar[ur] \ar[rr]^{\M} \ar[dr] & &
{\cal W} \ar[r] & \Sh  \ar[r] & \BB C, \\
& & & \hat{\cal G} \ar[ur] & & &}
\end{equation}
where $\Sh'$ is the space of scalar valued functions on $\BB H^{\times}$
and $\Sh$ is its dual space.
We show that both $\Sh$ and $\Sh'$ have six irreducible components,
five of which reappear in ${\cal W}$ and ${\cal W}'$, and the trivial
one-dimensional subrepresentation $\BB C$ of $\Sh'$ is annihilated by
$\nabla^+$.

The spaces $\hat{\cal F}$ and $\hat{\cal G}$ are
$$
\hat{\cal F} = \{ f: \BB H^{\times} \to \BB S \otimes \BB S \}
\quad \text{and} \quad
\hat{\cal G} = \{ g: \BB H^{\times} \to \BB S' \otimes \BB S' \}.
$$
They contain subspaces ${\cal F}$ and ${\cal G}$ of ``doubly
left and right regular functions'' respectively that share
many similarities with the usual left and right regular functions.
They are preserved by the actions of the conformal group and can be defined as
kernels of certain linear differential operators. Besides, they also satisfy
a quaternionic analogue of Cauchy's integral formula for the second order
pole (\ref{2pole-intro}) as follows:
\begin{align}
\nabla (W \cdot f)(W) &= \frac1{\pi^2} \int_S k_1(Z-W) \cdot Dz \cdot Z
\cdot f(Z),  \label{dlr-intro}  \\
(g(W) \cdot W) \overleftarrow{\nabla} &= \frac1{\pi^2} \int_S g(Z) \cdot Z
\cdot Dz \cdot k_1(Z-W),  \label{drr-intro}
\end{align}
where
$$
k_1(Z-W) = \frac12 \nabla k(Z-W),
$$
$f: \BB H \to \BB S \otimes \BB S$ is a doubly left regular function
and $g: \BB H \to \BB S' \otimes \BB S'$ is a doubly right regular function.
Like the Cauchy-Fueter formulas (\ref{lr-intro})-(\ref{rr-intro}),
formulas (\ref{dlr-intro})-(\ref{drr-intro}) extend to functions
on $\BB H^{\times}$ and more general domains.

We emphasize that in quaternionic analysis there are two analogues
of Cauchy's integral formula for the second order pole (\ref{2pole-intro}).
On the one hand, the kernel  $(z-w)^{-2}$ in (\ref{2pole-intro}) can be viewed
as a square of $(z-w)^{-1}$ with the quaternionic counterpart having form
(\ref{Mx-intro}). On the other hand, the kernel  $(z-w)^{-2}$ can also be
regarded as a derivative of $(z-w)^{-1}$, in which case the quaternionic
counterpart is given by (\ref{dlr-intro})-(\ref{drr-intro}).
Although the two quaternionic analogues (\ref{Mx-intro}) and
(\ref{dlr-intro})-(\ref{drr-intro}) of Cauchy's integral formula for the
second order pole appear to be quite different, they turn out to be
complementary to each other.
In fact, one can identify the doubly left and right regular functions with
self-dual and anti-self-dual solutions of the Maxwell equations
(in Euclidean signature).
Moreover, (\ref{dlr-intro})-(\ref{drr-intro}) can be combined into a single
formula valid for any quaternionic functions $A(Z)$ annihilated by the
Maxwell operator:
\begin{equation}  \label{Max-intro}
\M A(Z)=0, \qquad A: \BB H^{\times} \to \BB H.
\end{equation}
Comparing (\ref{Mx-intro}) and (\ref{Max-intro}) clearly demonstrates the
complementary nature of the two quaternionic analogues (\ref{Mx-intro}) and
(\ref{dlr-intro})-(\ref{drr-intro}) of the second order pole formula
and that the Maxwell equations play a key role in both versions!

Our study of the quaternionic complex and the decomposition of the
representations involved in (\ref{q-complex-intro}) uses extensively the
basis of $K$-types of $(\mathfrak g, K)$-modules with
$\mathfrak g = \mathfrak{gl}(2,\BB H) \otimes \BB C
\simeq \mathfrak{gl}(4,\BB C)$ and $K=U(2) \times U(2)$.
Therefore, the algebra generated by the matrix coefficients of
$GL(2,\BB C)$ in finite dimensional representations
($t^l_{n\,\underline{m}}(Z)$'s and $N(Z)^k$'s) plays a key role in our approach.
This algebra can be viewed as a counterpart of the algebra of
Laurent polynomials in complex analysis, which is the algebra of
the matrix coefficients of $GL(1,\BB C)$.
The algebra of matrix coefficients of $GL(2,\BB C)$ technically is more
complicated than the familiar algebra of Laurent polynomials, but conceptually
various results and formulas in many aspects are similar.
The bases of $K$-types that we are using have certain advantages over the
equivalent picture in the Minkowski space, where simpler continuous bases are
natural (see e.g. \cite{FL3}, Section 8, for the relation between the two types
of bases).
In particular, the $K$-bases allow us to conveniently isolate the
one-dimensional
irreducible component in representations ${\cal W}$ and ${\cal W}'$,
which plays crucial role in regularization of the vacuum polarization.
This component is less transparent in the Minkowski picture and is hidden
in the traditional physics approach to this regularization.
The $K$-type approach is extensively used in analysis of various
representations of real semisimple groups, see e.g. \cite{Le} and
especially Section 8 dedicated to the $K$-types of representations of $SU(2,2)$.

We saw in our first paper \cite{FL1} that in order to introduce unitary
structures on the spaces of harmonic as well as (left and right) regular
functions, one must replace the quaternionic conformal group $SL(2,\BB H)$
with $SU(2,2)$, which is another real form of $SL(4,\BB C)$.
The group $SU(2,2)$ in turn can be identified with the conformal group of
the Minkowski space $\BB M$.
Similarly, the unitarity of the spaces of doubly (left and right) regular
functions and, equivalently, the space of solutions of the Maxwell equations
(\ref{Max-intro}) modulo the image of $\Sh'$ require the same Minkowski
space $\BB M$.

Moreover, the quaternionic complex (\ref{q-complex-intro}) in Minkowski
space realization can be identified with the complex of differential forms
on the Minkowski space with the zero light cone removed.
The program of study of vector bundles on the covering space of the
compactified Minkowski space and representations of the conformal group
in the spaces of sections was suggested by I.~Segal as a mathematical approach
to studying of four-dimensional field theory.
In particular, the irreducible components of the complex of differential
forms were identified by his student S.~Paneitz \cite{P}.
Thus quaternionic analysis and representation theory of the conformal group
associated to Minkowski space are deeply intertwined and mutually beneficial.
Another example of this link is provided by the realization of irreducible
representations of the most degenerate series (depending on $n \in \BB Z$)
as solutions of certain differential operators on $\BB M$ \cite{JV1}.
For $n = \pm 1$, these spaces are exactly the spaces of left/right regular
functions.
And for $n = \pm 2$, they can be identified with the doubly left/right
regular functions.
One can also define $n$-regular functions for any $n \in \BB N$, then the
Cauchy-Fueter type integral formulas for these functions can be interpreted
as a quaternionic analogue of Cauchy's integral formula for the $n$-th order
pole, where the Cauchy kernel is treated as the $(n-1)$-st derivative of
$(z-w)^{-1}$.

The quaternionic second order pole formulas described above show that the
analogy with the complex case is not straightforward.
So, it is not surprising that the quaternionic counterpart of the algebra of
complex holomorphic functions is far from obvious.
In this paper we suggest a certain candidate for a quaternionic algebra,
again, based on representation theory of the conformal group.
We already noted that tensor products of representations of the
most degenerate series of $SU(2,2)$ (and its Lie algebra)
depending on $n \in \BB Z$ do not contain representations of the same class.
For $n = \pm 1$, these representations are exactly the spaces of left/right
regular functions. And for $n = \pm 2$, these representations can be identified
with the doubly left/right regular functions.
Therefore, one cannot expect a group-invariant algebra structure on these
function spaces.
Thus we have to consider the class of representations that comes next after
the most degenerate series of $SU(2,2)$ -- the middle series.
Such representations appear in the complex of quaternionic spaces
(\ref{q-complex-intro}), and one can consider other similar representations,
for example, the space of $\BB C$-valued functions $\Zh$ studied in \cite{FL3}.
Clearly, the space $\Sh'$ provides a trivial example of a quaternionic algebra
of $\BB C$-valued functions with pointwise multiplication.
On the other hand, the best candidate for a quaternionic algebra of
quaternionic-valued functions appears to be a closely related space ${\cal W}'$.
In order to understand the quaternionic algebra structure, we start with the
algebra of scalar-valued quaternionic functions $\Zh$.
It is similar to, but in certain ways simpler than ${\cal W}'$.
In both cases we first embed our spaces into larger algebras
$$
I: \Zh \to \operatorname{completion \: of} {\cal H} \otimes {\cal H}, \qquad
J: {\cal W}' \to \operatorname{completion \: of} {\cal V} \otimes {\cal V}',
$$
where $I$ is described in \cite{FL3} and $J$ appears in our study of
vacuum polarization in this paper.
Note that ${\cal H}$ is the space of harmonic functions on $\BB H^{\times}$
and ${\cal V}$, ${\cal V}'$ are the spaces of left and right regular
functions on $\BB H^{\times}$. Both ${\cal H}$ and ${\cal V}$, ${\cal V}'$
are representations of the most degenerate series corresponding to $n=0$
and $n=\pm 1$ respectively.
The multiplication in the larger algebra is defined using the invariant
pairings on ${\cal H}$ and between ${\cal V}$ and ${\cal V}'$.
Thus, to finish our construction of quaternionic multiplication operation,
we need to find appropriate inverses of the maps $I$ and $J$.
This can be done in two ways by considering certain subtle limits and,
therefore, one can define a one-parameter family of invariant multiplications
by taking linear combinations of those limits.
The case of ${\cal W}'$ differs from $\Zh$ in that $\ker J$ is non-trivial
and, therefore, we actually define multiplication on ${\cal W}'/\ker J$.
An additional subtlety of the ${\cal W}'$ case is that in the construction
of the inverse of $J$ we lose the one-dimensional irreducible component of
the representation ${\cal W}'/\ker J$; this is the component containing
the identity element. Thus we can only define a one-parameter family of
multiplication-like operations
$$
({\cal W}'/\ker J) \otimes ({\cal W}'/\ker J) \to
({\cal W}'/\ker J)/\BB C \simeq {\cal W}'/\ker\M
$$
that are invariant under the action of the conformal group.
These maps can be lifted to genuine multiplication operations
$$
({\cal W}'/\ker J) \otimes ({\cal W}'/\ker J) \to {\cal W}'/\ker J
$$
similarly to the procedure of adjoining of unit to algebras without units.
It is an interesting problem to find a way to define the multiplication on
${\cal W}'/\ker J$ directly without the procedure of adjoining the unit.
Both quaternionic algebras $\Zh$ and ${\cal W}'/\ker J$ defined in this paper
are not associative. However, our construction allows immediate generalizations
to invariant $n$-multiplications (multiplications of $n$ factors) with $n=1$
corresponding to the identity operator and $n=2$ to the multiplication
described above. We conjecture that the resulting $n$-multiplications satisfy
the quadratic relations of weak cyclic $A_{\infty}$-algebras.
Thus the quaternionic analogue of the algebra of complex holomorphic functions
might have a much richer structure than its classical counterpart!
Another interesting question is how to characterize the $n$-multiplications as
intertwining operators for the conformal group.

We see repeatedly in \cite{FL1} and in this paper that the Minkowski space
reformulation of various structures of quaternionic analysis leads to
profound relations with different structures of four-dimensional conformal
field theory, particularly with the conformal QED.
One can ask about the physical meaning of the two quaternionic algebras
$\Zh$ and ${\cal W}'/\ker J$, including their $n$-multiplications and
possible relations between them.
Our second conjecture is that they are related to the $n$-photon diagrams
in the scalar and non-scalar conformal QED (Figure \ref{n-photon}).
We discuss this conjecture in more detail in Subsection \ref{last_subsection}
at the end of the paper.
For now, we only mention the relation between our two conjectures.
Namely, it was discovered relatively recently (more than fifty years
after creation of modern QED) that $n$-photon diagrams (as well as
$n$-gluon diagrams) satisfy certain quadratic relations, known in
physics literature as the BCFW relations
(see \cite{BCFW}, \cite{BBBV} and references therein).

These relations provide strong evidence in support of and a link between our
two conjectures.
In particular, if the second conjecture is correct, then the BCFW relations
yield the associativity-type relations for the quaternionic algebras
giving them the structures of weak cyclic $A_{\infty}$-algebras,
thus making the first conjecture valid as well.
This way studying quaternionic algebras and relating structures will produce
further connections between quaternionic analysis and four-dimensional quantum
field theory that we predicted in the first paper with the same
title \cite{FL1}. We expect that these connections will be beneficial for
both disciplines: quaternionic analysis will be enriched by many beautiful
structures and quantum field theory will find its purely mathematical
formulation.


\begin{figure}
\begin{center}
\begin{subfigure}[b]{0.25\textwidth}
\centering
\includegraphics[scale=0.4]{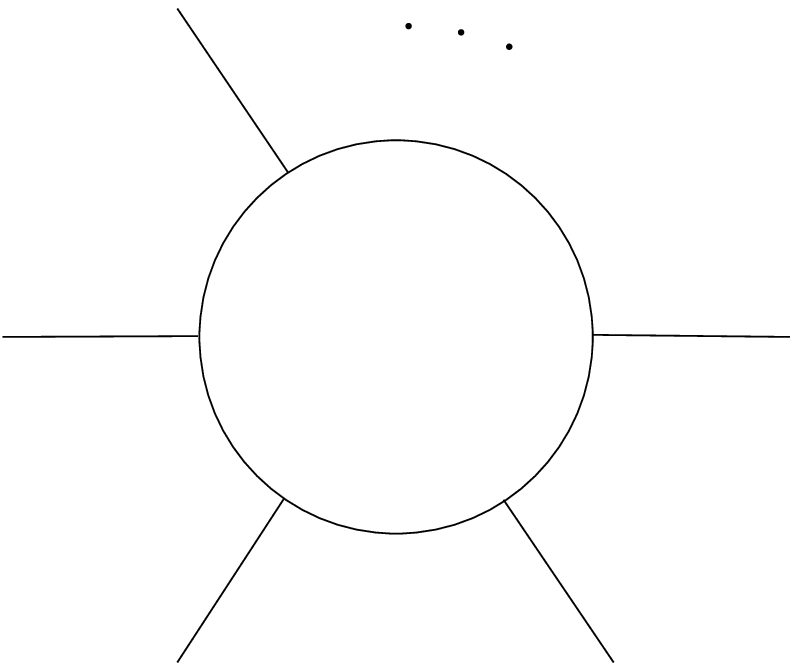}
\caption{Scalar case}
\end{subfigure}
\qquad
\begin{subfigure}[b]{0.4\textwidth}
\centering
\includegraphics[scale=0.4]{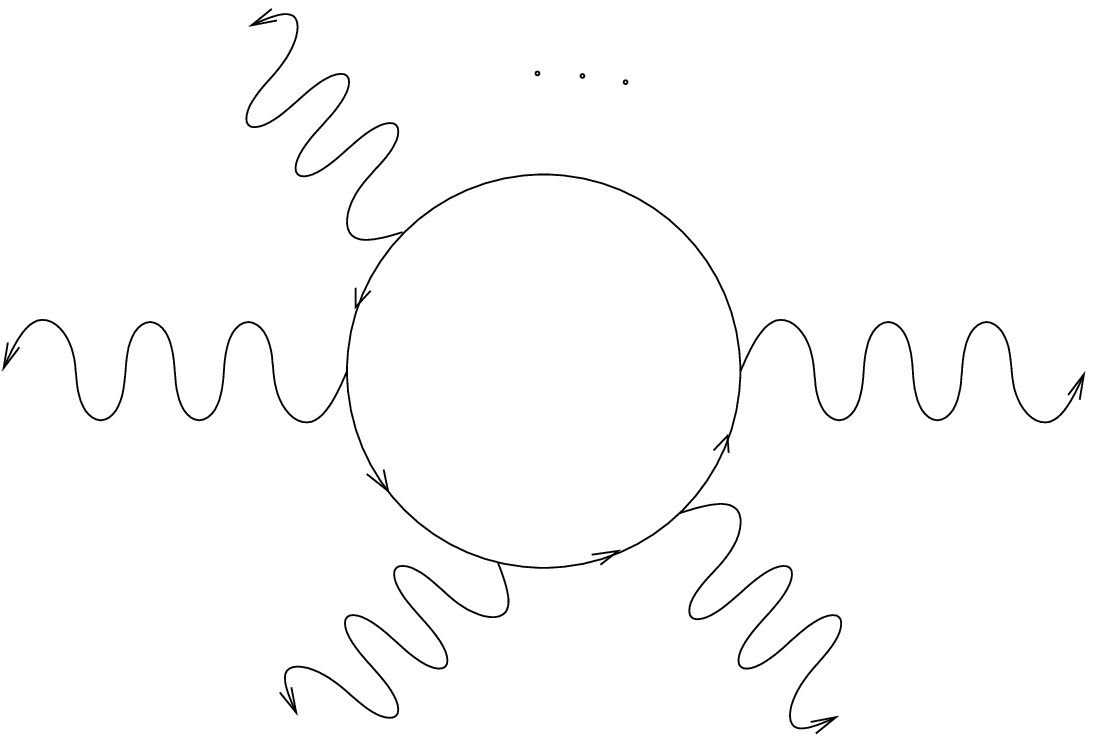}
\caption{Spinor case}
\end{subfigure}
\end{center}
\caption{$n$-photon Feynman diagrams.}
\label{n-photon}
\end{figure}

For technical reasons the paper is organized differently from the order of
this discussion. In Section \ref{DR-section} we define and study left and
right doubly regular functions. This is done in complete parallel with the
theory of regular functions reviewed in \cite{FL1}:
we prove analogues of the Cauchy-Fueter formulas,
construct action of the conformal group,
decompose the Cauchy-Fueter kernel for these functions
and study the invariant bilinear form.
In the last subsection we generalize the notion of doubly regular functions
to $n$-regular functions.
The Cauchy-Fueter type formulas for these functions are proved in a
separate paper \cite{FL4}.
In Section \ref{QCC-section} we describe the quaternionic chain complex
(\ref{q-complex-intro}), which plays a central role in this paper.
Then we decompose the representations $\Sh$ and $\Sh'$ into irreducible
components.
In Section \ref{DRinQCC-section} we proceed to the study of representations
${\cal W}$ and ${\cal W}'$. First, we analyze the kernel of $\M$ in
${\cal W}'$ which contains the image of $\Sh'$ and the irreducible components
isomorphic to doubly left and right regular functions.
We carefully identify the $K$-types and explicit forms of various intertwining
functors related to the quaternionic chain complex (\ref{q-complex-intro}).
Some of the intertwining functors are expressed by quaternionic analogues of
Cauchy's integral formula (\ref{2pole-intro}).
In Section \ref{decomp-section} we complete the decomposition of
representations ${\cal W}$ and ${\cal W}'$.
Besides the five irreducible components coming from $\Sh$ and $\Sh'$ and four
irreducible components of doubly regular functions, we identify four additional
irreducible components, including the trivial one-dimensional representation.
These four components are crucial to understanding of polarization of vacuum
and definition of quaternionic algebra that are subject of
Section \ref{VP-section}.
In Section \ref{VP-section} we generalize our result from \cite{FL1} and
extend the quaternionic analogue of Cauchy's integral formula for the
second order pole from ${\cal W}'^+$ to ${\cal W}'$.
Our main technical tool is a certain operator
$$
J: {\cal W}' \to \operatorname{completion \: of} {\cal V} \otimes {\cal V}'.
$$
The quotient space ${\cal W}'/\ker J$ has four irreducible components,
including the trivial one-dimensional subrepresentation.
In the subsequent section, ${\cal W}'/\ker J$ will be equipped with
an algebra structure.
In Section \ref{Alg-section} we first construct a scalar quaternionic algebra
using previous results from \cite{FL3}.
Then we proceed in a similar fashion to our main goal of constructing
the quaternionic algebra structure on the space ${\cal W}'/\ker J$.
The latter version is in many aspects similar to its scalar counterpart, but
has a richer structure.
In Subsection \ref{last_subsection} we discuss relations between quaternionic
algebras and Feynman diagrams of massless QED as well as future problems and
perspectives of our direction of quaternionic analysis.
In Section \ref{errata-section} we provide some comments about our earlier
papers \cite{FL1, FL3} that are relevant to the present article.

Since this paper is a continuation of \cite{FL1,FL3},
we follow the same notations and instead of introducing those
notations again we direct the reader to Section 2 of \cite{FL3}.

\section{Doubly Regular Functions}  \label{DR-section}

\subsection{Definitions}

We continue to use notations established in \cite{FL1}.
In particular, $e_0$, $e_1$, $e_2$, $e_3$ denote the units of the classical
quaternions $\BB H$ corresponding to the more familiar $1$, $i$, $j$, $k$
(we reserve the symbol $i$ for $\sqrt{-1} \in \BB C$).
Thus $\BB H$ is an algebra over $\BB R$ generated by $e_0$, $e_1$, $e_2$, $e_3$,
and the multiplicative structure is determined by the rules
$$
e_0 e_i = e_i e_0 = e_i, \qquad
(e_i)^2 = e_1e_2e_3 = - e_0, \qquad
e_ie_j=-e_ie_j, \qquad 1 \le i< j \le 3,
$$
and the fact that $\BB H$ is a division ring.
Next we consider the algebra of complexified quaternions
(also known as biquaternions) $\HC = \BB C \otimes_{\BB R} \BB H$ and
write elements of $\HC$ as
$$
Z = z^0e_0 + z^1e_1 + z^2e_2 + z^3e_3, \qquad z^0,z^1,z^2,z^3 \in \BB C,
$$
so that $Z \in \BB H$ if and only if $z^0,z^1,z^2,z^3 \in \BB R$:
$$
\BB H = \{ X = x^0e_0 + x^1e_1 + x^2e_2 + x^3e_3; \: x^0,x^1,x^2,x^3 \in \BB R \}.
$$
Recall that we denote by $\BB S$ (respectively $\BB S'$)
the irreducible 2-dimensional left (respectively right) $\HC$-module,
as described in Subsection 2.3 of \cite{FL1}.
The spaces $\BB S$ and $\BB S'$ can be realized as respectively
columns and rows of complex numbers.
Then
\begin{equation}  \label{SotimesS}
\BB S \otimes \BB S' \simeq \HC.
\end{equation}
Note that $\BB S \otimes \BB S$ and $\BB S' \otimes \BB S'$
are respectively left and right modules over $\HC \otimes \HC$.

We introduce four first order differential operators
\begin{align*}
\nabla^+ \otimes 1 &= (e_0 \otimes 1) \frac{\partial}{\partial x^0}
+ (e_1 \otimes 1) \frac{\partial}{\partial x^1}
+ (e_2 \otimes 1) \frac{\partial}{\partial x^2}
+ (e_3 \otimes 1) \frac{\partial}{\partial x^3}, \\
1 \otimes \nabla^+ &= (1 \otimes e_0) \frac{\partial}{\partial x^0}
+ (1 \otimes e_1) \frac{\partial}{\partial x^1}
+ (1 \otimes e_2) \frac{\partial}{\partial x^2}
+ (1 \otimes e_3) \frac{\partial}{\partial x^3}, \\
\nabla \otimes 1 &= (e_0 \otimes 1) \frac{\partial}{\partial x^0}
- (e_1 \otimes 1) \frac{\partial}{\partial x^1}
- (e_2 \otimes 1) \frac{\partial}{\partial x^2}
- (e_3 \otimes 1) \frac{\partial}{\partial x^3}, \\
1 \otimes \nabla &= (1 \otimes e_0) \frac{\partial}{\partial x^0}
- (1 \otimes e_1) \frac{\partial}{\partial x^1}
- (1 \otimes e_2) \frac{\partial}{\partial x^2}
- (1 \otimes e_3) \frac{\partial}{\partial x^3},
\end{align*}
which can be applied to functions with values in
$\BB S \otimes \BB S$ or $\BB S' \otimes \BB S'$ as follows.
If $U$ is an open subset of $\BB H$ or $\HC$ and
$f: U \to \BB S \otimes \BB S$ is a differentiable function,
then these operators can be applied to $f$ on the left. For example,
$$
(\nabla^+ \otimes 1) f = (e_0 \otimes 1) \frac{\partial f}{\partial x^0}
+ (e_1 \otimes 1) \frac{\partial f}{\partial x^1}
+ (e_2 \otimes 1) \frac{\partial f}{\partial x^2}
+ (e_3 \otimes 1) \frac{\partial f}{\partial x^3}.
$$
Similarly, these operators can be applied on the right to
differentiable functions $g: U \to \BB S' \otimes \BB S'$;
we often indicate this with an arrow above the operator. For example,
$$
g (\overleftarrow{\nabla^+ \otimes 1})
= \frac{\partial g}{\partial x^0} (e_0 \otimes 1)
+ \frac{\partial g}{\partial x^1} (e_1 \otimes 1)
+ \frac{\partial g}{\partial x^2} (e_2 \otimes 1)
+ \frac{\partial g}{\partial x^3} (e_3 \otimes 1).
$$

The tensor product $\BB S \otimes \BB S$ decomposes into a direct sum
of its symmetric part $\BB S \odot \BB S$ and antisymmetric part
$\BB S \wedge \BB S$:
$$
\BB S \otimes \BB S =
(\BB S \odot \BB S) \oplus (\BB S \wedge \BB S).
$$
Similarly, $\BB S' \otimes \BB S'$ decomposes into a direct sum
of its symmetric and antisymmetric parts:
$$
\BB S' \otimes \BB S' =
(\BB S' \odot \BB S') \oplus (\BB S' \wedge \BB S').
$$

\begin{df}  \label{dr-definition}
Let $U$ be an open subset of $\BB H$.
A ${\cal C}^1$-function $f: U \to \BB S \odot \BB S$ is
{\em doubly left regular} if it satisfies
$$
(\nabla^+ \otimes 1) f =0 \quad \text{and} \quad (1 \otimes \nabla^+) f =0
$$
for all points in $U$.
Similarly, a ${\cal C}^1$-function $g: U \to \BB S' \odot \BB S'$ is
{\em doubly right regular} if
$$
g (\overleftarrow{\nabla^+ \otimes 1}) =0 \quad \text{and} \quad
g (\overleftarrow{1 \otimes \nabla^+}) =0
$$
for all points in $U$.
\end{df}

Since
$$
(\nabla \otimes 1)(\nabla^+ \otimes 1)
= (\nabla^+ \otimes 1)(\nabla \otimes 1)
= (1 \otimes \nabla)(1 \otimes \nabla^+)
= (1 \otimes \nabla^+)(1 \otimes \nabla)
= \square,
$$
$$
\square = \frac{\partial^2}{(\partial x^0)^2}+
\frac{\partial^2}{(\partial x^1)^2} + \frac{\partial^2}{(\partial x^2)^2}+
\frac{\partial^2}{(\partial x^3)^2},
$$
doubly left and right regular functions are harmonic.

One way to construct doubly left regular functions is to start with a harmonic
function $\phi: \BB H \to \BB S \odot \BB S$, then
$(\nabla \otimes \nabla) \phi$ is doubly left regular.
Similarly, if $\phi: \BB H \to \BB S' \odot \BB S'$ is harmonic, then
$\phi (\overleftarrow{\nabla \otimes \nabla})$ is doubly right regular.

We also can talk about doubly regular functions defined on open subsets of
$\HC$. In this case we require such functions to be holomorphic.

\begin{df}
Let $U$ be an open subset of $\HC$.
A holomorphic function $f: U \to \BB S \odot \BB S$ is
{\em doubly left regular} if it satisfies $(\nabla^+ \otimes 1) f =0$
and $(1 \otimes \nabla^+) f =0$ for all points in $U$.

Similarly, a holomorphic function $g: U \to \BB S' \odot \BB S'$ is
{\em doubly right regular} if
$g (\overleftarrow{\nabla^+ \otimes 1}) =0$ and
$g (\overleftarrow{1 \otimes \nabla^+}) =0$ for all points in $U$.
\end{df}

Let ${\cal DR}$ and ${\cal DR}'$ denote respectively the spaces of (holomorphic)
doubly left and right regular functions on $\HC$, possibly with singularities.

\begin{thm}  \label{dr-action-thm}
\begin{enumerate}
\item
The space ${\cal DR}$ of doubly left regular functions
$\HC \to \BB S \odot \BB S$ (possibly with singularities)
is invariant under the following action of $GL(2,\HC)$:
\begin{multline}  \label{pi_dl}
\pi_{dl}(h): \: f(Z) \: \mapsto \: \bigl( \pi_{dl}(h)f \bigr)(Z) =
\frac{(cZ+d)^{-1} \otimes (cZ+d)^{-1}}{N(cZ+d)}
\cdot f\bigl( (aZ+b)(cZ+d)^{-1} \bigr),  \\
h^{-1} = \bigl( \begin{smallmatrix} a & b \\ c & d \end{smallmatrix} \bigr)
\in GL(2,\HC).
\end{multline}
\item
The space ${\cal DR}'$ of doubly right regular functions
$\HC \to \BB S' \odot \BB S'$ (possibly with singularities)
is invariant under the following action of $GL(2,\HC)$:
\begin{multline}  \label{pi_dr}
\pi_{dr}(h): \: g(Z) \: \mapsto \: \bigl( \pi_{dr}(h)g \bigr)(Z) =
g \bigl( (a'-Zc')^{-1}(-b'+Zd') \bigr)
\cdot \frac{(a'-Zc')^{-1} \otimes (a'-Zc')^{-1}}{N(a'-Zc')}, \\
h = \bigl( \begin{smallmatrix} a' & b' \\ c' & d' \end{smallmatrix} \bigr)
\in GL(2,\HC).
\end{multline}
\end{enumerate}
\end{thm}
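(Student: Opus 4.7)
The proof follows the pattern of the analogous result for singly left and right regular functions in \cite{FL1}: reduce to a generating set of $GL(2,\HC)$ and verify invariance on each generator. Convenient generators are the translations $\bigl(\begin{smallmatrix} 1 & b \\ 0 & 1 \end{smallmatrix}\bigr)$, the constant block-diagonal matrices $\bigl(\begin{smallmatrix} a & 0 \\ 0 & d \end{smallmatrix}\bigr)$ with $a,d\in\HC^{\times}$, and the inversion $\bigl(\begin{smallmatrix} 0 & 1 \\ 1 & 0 \end{smallmatrix}\bigr)$. That $\pi_{dl}$ (resp.\ $\pi_{dr}$) actually defines a group action -- i.e.\ satisfies the cocycle relation -- is verified by the same bookkeeping as for $\pi_{l}$ in \cite{FL1}.

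For translations the statement is trivial, since $\nabla^+\otimes 1$ and $1\otimes\nabla^+$ have constant coefficients. For a constant $\bigl(\begin{smallmatrix} a & 0 \\ 0 & d \end{smallmatrix}\bigr)$ the action is $f(Z)\mapsto\frac{d^{-1}\otimes d^{-1}}{N(d)}\,f(aZd^{-1})$; a direct chain-rule computation, formally parallel to the one used for the singly regular case, shows that $(\nabla^+\otimes 1)$ applied to the right-hand side is an invertible-constant (in the first tensor slot) multiple of $((\nabla^+\otimes 1)f)(aZd^{-1})$, and analogously for $(1\otimes\nabla^+)$; this vanishes whenever $f$ is doubly left regular. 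The prefactor $d^{-1}\otimes d^{-1}$ is symmetric in the two tensor slots, so $\BB S\odot\BB S$ is preserved.

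The main step is the inversion, for which $(\pi_{dl}(h)f)(Z) = \frac{Z^{-1}\otimes Z^{-1}}{N(Z)}\,f(Z^{-1})$. Here the key input is a Kelvin-type transformation formula of the schematic form
\begin{equation*}
(\nabla^+\otimes 1)\!\left(\frac{Z^{-1}\otimes Z^{-1}}{N(Z)}\,f(Z^{-1})\right) \;=\; \frac{Z^{-1}\otimes Z^{-1}}{N(Z)^{2}}\,(Z\otimes 1)\cdot\bigl((\nabla^+\otimes 1)f\bigr)(Z^{-1}),
\end{equation*}
together with the mirror identity for $1\otimes\nabla^+$. I would prove these by direct quaternionic computation using the product rule, the chain rule for $f(Z^{-1})$, and the standard facts that the Cauchy-Fueter kernel $k(Z)=Z^{-1}/N(Z)$ is itself left regular and that $\nabla^{+}(Z^{-1})=2/N(Z)$. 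Both identities show immediately that the two conditions defining doubly left regularity are preserved by inversion. Part (2) of the theorem then follows by the mirror argument (operators acting on the right, $\BB S$ replaced by $\BB S'$, all quaternionic multiplications reversed), or equivalently via the pairing $\BB S\otimes\BB S'\simeq\HC$ together with matrix transposition in $GL(2,\HC)$.

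The \textbf{main obstacle} is this Kelvin identity. The subtlety -- and the reason the denominator in the definition of $\pi_{dl}$ carries only $N(cZ+d)$ rather than the $N(cZ+d)^{2}$ one would na\"{\i}vely expect from tensoring two copies of the singly regular action -- is that $e_{i}\otimes 1$ does not commute with the first $Z^{-1}$ factor in the kernel, so the computation does not reduce tensor-slot-wise to the singly regular Kelvin formula. One must track the non-commutative cross terms produced when $(\nabla^+\otimes 1)$ differentiates the first $Z^{-1}$, and verify that, combined with the derivative of $N(Z)^{-1}$ and with the chain-rule term from $f(Z^{-1})$, they collapse into the single clean expression above, with exactly one (rather than two) extra powers of $N(Z)$ in the denominator.
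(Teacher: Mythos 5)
Your route (reduce to the generators of $GL(2,\HC)$ and prove a Kelvin-type transformation formula for the inversion) is genuinely different from the paper's, which instead uses connectedness of $GL(2,\HC)$ to reduce to the Lie algebra, computes the differentiated actions $\pi_{dl}$, $\pi_{dr}$ of $\mathfrak{gl}(2,\HC)$ explicitly (Lemma \ref{Lie-alg-action}), and checks that each Lie algebra generator preserves the equations $(\nabla^+\otimes 1)f=0$, $(1\otimes\nabla^+)f=0$. Either route can be made to work, but your proposal stops exactly at the point where the essential idea is needed and does not supply it.

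Concretely: write the inverted function as $(K(Z)\otimes Z^{-1})f(Z^{-1})$ with $K(Z)=Z^{-1}/N(Z)$ and apply $(\nabla^+\otimes 1)=\sum_i(e_i\otimes 1)\partial_i$. You get three groups of terms: (i) $\sum_i(e_i\partial_iK\otimes Z^{-1})f(Z^{-1})$, which vanishes because the Cauchy--Fueter kernel is left regular; (ii) the chain-rule term, which produces the multiple of $\bigl((\nabla^+\otimes 1)f\bigr)(Z^{-1})$ you want; and (iii) the cross term where $\partial_i$ hits the $Z^{-1}$ in the \emph{second} slot. Using $\partial_iZ^{-1}=-Z^{-1}e_iZ^{-1}$, this cross term equals
\begin{equation*}
-(1\otimes Z^{-1})\Bigl(\textstyle\sum_{i=0}^3 e_i\otimes e_i\Bigr)
\frac{(Z^{-1}\otimes Z^{-1})}{N(Z)}\,f(Z^{-1}),
\end{equation*}
and it does \emph{not} collapse by bookkeeping alone: it is nonzero on general $\BB S\otimes\BB S$-valued functions. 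It vanishes only because $f$ takes values in the symmetric part, $(Z^{-1}\otimes Z^{-1})$ preserves $\BB S\odot\BB S$, and the operator $e_0\otimes e_0+e_1\otimes e_1+e_2\otimes e_2+e_3\otimes e_3$ annihilates $\BB S\odot\BB S$ (the paper's Lemma \ref{antisymmetric}, proved by exhibiting this operator as the $4\times 4$ matrix $\left(\begin{smallmatrix}0&0&0&0\\0&2&-2&0\\0&-2&2&0\\0&0&0&0\end{smallmatrix}\right)$, which kills $4$-tuples with equal second and third entries). Your proposed identity is therefore false as a statement about $\BB S\otimes\BB S$-valued functions and true only on $\BB S\odot\BB S$; without isolating this algebraic fact, the ``direct quaternionic computation'' you invoke gets stuck with a surviving $\sum_i e_i\otimes e_i$ term. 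The same lemma is also the crux of the paper's infinitesimal argument (it kills the third summand in the computation of $(\nabla^+\otimes 1)\pi_{dl}\bigl(\begin{smallmatrix}0&0\\C&0\end{smallmatrix}\bigr)f$), so whichever route you take, you must state and prove it.
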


\begin{proof}
It is easy to see that the formulas describing the actions
$\pi_{dl}$ and $\pi_{dr}$ also produce well-defined actions on the spaces
of all functions on $\HC$ (possibly with singularities) with values in
$\BB S \otimes \BB S$ and $\BB S' \otimes \BB S'$ respectively.
These actions preserve the subspaces of functions with values in
$\BB S \odot \BB S$ and $\BB S' \odot \BB S'$.
Differentiating $\pi_{dl}$ and $\pi_{dr}$, we obtain actions of the Lie algebra
$\mathfrak{gl}(2,\HC)$, which we still denote by $\pi_{dl}$ and $\pi_{dr}$
respectively.
Using notations
$$
\partial = \begin{pmatrix} \partial_{11} & \partial_{21} \\
\partial_{12} & \partial_{22} \end{pmatrix} = \frac 12 \nabla, \qquad
\partial^+ = \begin{pmatrix} \partial_{22} & -\partial_{21} \\
-\partial_{12} & \partial_{11} \end{pmatrix} = \frac 12 \nabla^+,
\qquad \partial_{ij} = \frac{\partial}{\partial z_{ij}},
$$
we can describe these actions of the Lie algebra.

\begin{lem}  \label{Lie-alg-action}
The Lie algebra action $\pi_{dl}$ of $\mathfrak{gl}(2,\HC)$ on ${\cal DR}$
is given by
\begin{align*}
\pi_{dl} \bigl( \begin{smallmatrix} A & 0 \\ 0 & 0 \end{smallmatrix} \bigr) &:
f(Z) \mapsto - \tr (AZ \partial) f,  \\
\pi_{dl} \bigl( \begin{smallmatrix} 0 & B \\ 0 & 0 \end{smallmatrix} \bigr) &:
f(Z) \mapsto - \tr (B \partial) f,  \\
\pi_{dl} \bigl( \begin{smallmatrix} 0 & 0 \\ C & 0 \end{smallmatrix} \bigr) &:
f(Z) \mapsto \tr (ZCZ \partial +CZ) f + (CZ \otimes 1 + 1 \otimes CZ) f,  \\
\pi_{dl} \bigl( \begin{smallmatrix} 0 & 0 \\ 0 & D \end{smallmatrix} \bigr) &:
f(Z) \mapsto \tr (ZD \partial +D) f + (D \otimes 1 + 1 \otimes D) f.
\end{align*}

Similarly, the Lie algebra action $\pi_{dr}$ of $\mathfrak{gl}(2,\HC)$ on
${\cal DR}'$ is given by
\begin{align*}
\pi_{dr} \bigl( \begin{smallmatrix} A & 0 \\ 0 & 0 \end{smallmatrix} \bigr) &:
g(Z) \mapsto - \tr (AZ \partial +A) g - g (A \otimes 1 + 1 \otimes A),  \\
\pi_{dr} \bigl( \begin{smallmatrix} 0 & B \\ 0 & 0 \end{smallmatrix} \bigr) &:
g(Z) \mapsto - \tr (B \partial) g,  \\
\pi_{dr} \bigl( \begin{smallmatrix} 0 & 0 \\ C & 0 \end{smallmatrix} \bigr) &:
g(Z) \mapsto \tr (ZCZ \partial +ZC) g + g(ZC \otimes 1 + 1 \otimes ZC),  \\
\pi_{dr} \bigl( \begin{smallmatrix} 0 & 0 \\ 0 & D \end{smallmatrix} \bigr) &:
g(Z) \mapsto \tr (ZD \partial) g.
\end{align*}
\end{lem}

\begin{proof}
These formulas are obtained by differentiating (\ref{pi_dl}) and (\ref{pi_dr}).
\end{proof}

We return to the proof of Theorem \ref{dr-action-thm}.
Since the Lie group $GL(2,\HC) \simeq GL(4,\BB C)$ is connected, it is
sufficient to show that, if $f \in {\cal DR}$, $g \in {\cal DR}'$ and
$\bigl( \begin{smallmatrix} A & B \\ C & D \end{smallmatrix} \bigr) \in
\mathfrak{gl}(2,\HC)$, then
$\pi_{dl} \bigl( \begin{smallmatrix} A & B \\ C & D \end{smallmatrix} \bigr)f
\in {\cal DR}$ and
$\pi_{dr} \bigl( \begin{smallmatrix} A & B \\ C & D \end{smallmatrix} \bigr)g
\in {\cal DR}'$.
Consider, for example, the case of 
$\pi_{dl} \bigl( \begin{smallmatrix} 0 & 0 \\ C & 0 \end{smallmatrix} \bigr)f$,
the other cases are similar. We have:
\begin{multline*}
(\nabla^+ \otimes 1) \pi_{dl}
\bigl( \begin{smallmatrix} 0 & 0 \\ C & 0 \end{smallmatrix} \bigr)f
= (\nabla^+ \otimes 1) \bigl( \tr(ZCZ \partial +CZ)f + (CZ \otimes 1)f \bigr)
+ (1 \otimes CZ) (\nabla^+ \otimes 1)f \\ + (1 \otimes C)
(e_0 \otimes e_0 + e_1 \otimes e_1 + e_2 \otimes e_2 + e_3 \otimes e_3)f,
\end{multline*}
the first summand is zero essentially because the space of left regular
functions is invariant under the action $\pi_l$ (equation (22) in \cite{FL1}),
the second summand is zero because $f$ satisfies $(\nabla^+ \otimes 1) f=0$,
and the third summand is zero by Lemma \ref{antisymmetric}.
\end{proof}

\begin{lem}  \label{antisymmetric}
Let $t \in \BB S \odot \BB S$ and $t' \in \BB S' \odot \BB S'$,
then
$$
(e_0 \otimes e_0 + e_1 \otimes e_1 + e_2 \otimes e_2 + e_3 \otimes e_3)t=0 \quad
\text{in $\BB S \otimes \BB S$}
$$
and
$$
t'(e_0 \otimes e_0 + e_1 \otimes e_1 + e_2 \otimes e_2 + e_3 \otimes e_3)=0 \quad
\text{in $\BB S' \otimes \BB S'$}.
$$
\end{lem}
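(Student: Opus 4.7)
The plan is to identify $\HC$ with $M_2(\BB C)$ in the standard way, so that $e_0$ is the $2\times 2$ identity $I_2$ and $e_1,e_2,e_3$ equal $i$ times the three Pauli matrices $\sigma_1,\sigma_2,\sigma_3$ (in some order; the particular bijection does not matter for the sum over $i$). With $\BB S = \BB C^2$ acted on by left matrix multiplication, the element
$$
\Omega \;=\; e_0\otimes e_0 + e_1\otimes e_1 + e_2\otimes e_2 + e_3\otimes e_3
\;\in\; \HC\otimes\HC
$$
becomes an explicit endomorphism of $\BB S\otimes\BB S \simeq \BB C^2\otimes\BB C^2$ that I want to identify with a multiple of the projection onto the antisymmetric subspace.

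The key input is the classical completeness (Fierz) identity for the Pauli matrices,
$$
\sum_{j=1}^{3} \sigma_j\otimes \sigma_j \;=\; 2P - I_2\otimes I_2,
$$
where $P$ denotes the swap operator on $\BB C^2\otimes\BB C^2$. Since $e_j = i\sigma_j$ implies $e_j\otimes e_j = -\sigma_j\otimes\sigma_j$ for $j=1,2,3$, substituting into $\Omega$ and including the $e_0\otimes e_0 = I_2\otimes I_2$ term gives
$$
\Omega \;=\; I_2\otimes I_2 \;-\; \sum_{j=1}^{3}\sigma_j\otimes\sigma_j
\;=\; 2(I_4 - P).
$$

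Since $P$ acts as $+1$ on $\BB S\odot\BB S$ and as $-1$ on $\BB S\wedge\BB S$, the operator $2(I_4 - P)$ annihilates the symmetric part and acts as $4$ times the identity on the antisymmetric part. In particular $\Omega\cdot t = 0$ for every $t\in \BB S\odot\BB S$, which is the first claim. The second claim, concerning right multiplication of $t'\in \BB S'\odot\BB S'$ by $\Omega$, is obtained by repeating the same calculation with $\BB S'=\BB C^2$ acted on by right matrix multiplication, since the swap decomposition of $\BB S'\otimes\BB S'$ is identical.

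I do not anticipate a real obstacle here: the only non-formal ingredient is the Pauli completeness identity, which is standard. A reader who prefers a completely pedestrian proof can bypass the Pauli formalism and instead evaluate $\Omega$ directly on the three basis elements $v_1\otimes v_1$, $v_2\otimes v_2$, and $v_1\otimes v_2 + v_2\otimes v_1$ of $\BB S\odot\BB S$, using the explicit matrices for $e_0,e_1,e_2,e_3$; the four summands cancel in pairs in each case.
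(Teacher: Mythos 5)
Your proof is correct and is essentially the paper's argument: the paper also realizes $e_0,\dots,e_3$ as $2\times 2$ matrices and computes the Kronecker-product sum explicitly, obtaining the matrix $\left(\begin{smallmatrix} 0&0&0&0\\0&2&-2&0\\0&-2&2&0\\0&0&0&0\end{smallmatrix}\right)$, which is exactly your $2(I_4-P)$, and then observes that symmetric tensors (equal second and third entries as $4$-tuples) are annihilated. Your packaging via the Pauli completeness identity is a slightly more conceptual way of arriving at the same matrix, and the sign bookkeeping ($e_j\otimes e_j=-\sigma_j\otimes\sigma_j$ regardless of whether $e_j=\pm i\sigma_j$) is handled correctly.
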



\begin{proof}
Under the standard realization of $\BB H$ as a subalgebra of $\BB C^{2 \times 2}$,
we have:
$$
e_0 = \bigl(\begin{smallmatrix} 1 & 0 \\ 0 & 1 \end{smallmatrix}\bigr), \qquad
e_1 = \bigl(\begin{smallmatrix} 0 & -i \\ -i & 0 \end{smallmatrix}\bigr), \qquad
e_2 = \bigl(\begin{smallmatrix} 0 & -1 \\ 1 & 0 \end{smallmatrix}\bigr), \qquad
e_3 = \bigl(\begin{smallmatrix} -i & 0 \\ 0 & i \end{smallmatrix}\bigr).
$$
Then by direct computation using Kronecker product
(see also Subsection \ref{matrix-subsection}) we obtain
$$
e_0 \otimes e_0 + e_1 \otimes e_1 + e_2 \otimes e_2 + e_3 \otimes e_3
= \left( \begin{smallmatrix} 0 & 0 & 0 & 0 \\ 0 & 2 & -2 & 0 \\
0 & -2 & 2 & 0 \\ 0 & 0 & 0 & 0 \end{smallmatrix} \right).
$$
Since the elements of $\BB S \odot \BB S$ and $\BB S' \odot \BB S'$,
when realized as $4$-tuples, have equal second and third entries,
they are annihilated by the above matrix, and the result follows.
\end{proof}

\subsection{Cauchy-Fueter Formulas for Doubly Regular Functions}

In this section we derive Cauchy-Fueter type formulas for doubly regular
functions from the classical Cauchy-Fueter formulas for left and right
regular functions.

\begin{lem}  \label{zf-regular}
Let $f(Z)$ be a doubly left regular function, then the
$\BB S \otimes \BB S$-valued functions $(1 \otimes Z) f(Z)$ and
$(Z \otimes 1) f(Z)$ are ``left regular'' in the sense that they satisfy
$$
(\nabla^+ \otimes 1) \bigl[ (1 \otimes Z) f(Z) \bigr] = 0, \qquad
(1 \otimes \nabla^+) \bigl[ (Z \otimes 1) f(Z) \bigr] = 0.
$$

Similarly, if $g(Z)$ is a doubly right regular function, then the
$\BB S' \otimes \BB S'$-valued functions $g(Z)(1 \otimes Z)$
and $g(Z)(Z \otimes 1)$ are ``right regular'' in the sense that they satisfy
$$
\bigl[ g(Z) (1 \otimes Z) \bigr] (\overleftarrow{\nabla^+ \otimes 1}) = 0,
\qquad
\bigl[ g(Z) (Z \otimes 1) \bigr] (\overleftarrow{1 \otimes \nabla^+}) = 0.
$$
\end{lem}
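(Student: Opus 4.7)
The plan is a direct product-rule computation that isolates exactly the expression handled by Lemma \ref{antisymmetric}. I will write $Z=\sum_i x^i e_i$ so that $\partial Z/\partial x^i = e_i$, and use the fact that the two tensor factors act on independent slots, so $(e_i\otimes 1)(1\otimes Z)=(1\otimes Z)(e_i\otimes 1)=(e_i\otimes Z)$.

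First, I would expand
\begin{equation*}
(\nabla^+ \otimes 1)\bigl[(1\otimes Z)f(Z)\bigr]
= \sum_{i=0}^{3}(e_i\otimes 1)\frac{\partial}{\partial x^i}\bigl[(1\otimes Z)f(Z)\bigr]
\end{equation*}
by the Leibniz rule. Differentiating the $Z$ factor produces $\sum_i (e_i\otimes e_i)f(Z)$, while differentiating $f$ produces $(1\otimes Z)(\nabla^+ \otimes 1)f(Z)$, since $(1\otimes Z)$ commutes past each $(e_i\otimes 1)$.

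The second term vanishes because $f$ is doubly left regular. The first term is precisely
$(e_0\otimes e_0 + e_1\otimes e_1 + e_2\otimes e_2 + e_3\otimes e_3)f(Z)$,
and since $f$ takes values in $\BB S\odot\BB S$, Lemma \ref{antisymmetric} forces it to vanish as well. The identity $(1\otimes\nabla^+)\bigl[(Z\otimes 1)f(Z)\bigr]=0$ is obtained by swapping the roles of the two tensor slots — the same product rule produces the same two kinds of terms, one killed by $(1\otimes\nabla^+)f=0$ and the other by Lemma \ref{antisymmetric}.

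The right-regular statements are handled by the mirror-image computation: apply $\overleftarrow{\nabla^+\otimes 1}$ (resp.\ $\overleftarrow{1\otimes\nabla^+}$) to $g(Z)(1\otimes Z)$ (resp.\ $g(Z)(Z\otimes 1)$), use the product rule, note that $(1\otimes Z)$ and $(e_i\otimes 1)$ still commute, and invoke the second assertion of Lemma \ref{antisymmetric} together with the double right regularity of $g$. There is no real obstacle here; the only thing to watch is the bookkeeping of which tensor factor each derivative and each multiplication acts on, so that the Leibniz-rule terms line up as claimed and the hypothesis $f,g\in\BB S\odot\BB S$ (resp.\ $\BB S'\odot\BB S'$) is available to activate Lemma \ref{antisymmetric}.
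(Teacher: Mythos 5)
Your proposal is correct and follows the same route as the paper's proof: expand by the Leibniz rule, observe that differentiating the $Z$-factor yields $(e_0\otimes e_0+e_1\otimes e_1+e_2\otimes e_2+e_3\otimes e_3)f(Z)$, which vanishes by Lemma \ref{antisymmetric}, while the remaining term vanishes by the doubly regular hypothesis. The paper likewise treats the other three identities as symmetric variants of this one computation.
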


\begin{proof}
We have:
$$
(\nabla^+ \otimes 1) \bigl[ (1 \otimes Z) f(Z) \bigr]
= (e_0 \otimes e_0 + e_1 \otimes e_1 + e_2 \otimes e_2 + e_3 \otimes e_3)f(Z)
+ (1 \otimes Z) \bigr[ (\nabla^+ \otimes 1) f(Z) \bigr],
$$
the first summand is zero by Lemma \ref{antisymmetric} and
the second summand is zero because $f$ satisfies $(\nabla^+ \otimes 1) f=0$.
Proofs of the other assertions are similar.
\end{proof}

Let $\degtt$ denote the degree operator plus two times the identity.
For example, if $f$ is a function on $\BB H$,
$$
\degtt f = x^0\frac{\partial f}{\partial x^0} +
x^1\frac{\partial f}{\partial x^1} + x^2\frac{\partial f}{\partial x^2}
+ x^3\frac{\partial f}{\partial x^3} +2f.
$$
Similarly, we can define operators $\deg$ and $\degtt$ acting on functions
on $\HC$. For convenience we recall Lemma 8 from \cite{FL2}
(it applies to both cases).

\begin{lem}
\begin{equation}  \label{deg-nabla}
2\degtt = Z^+\nabla^+ + \nabla Z = \nabla^+Z^+ + Z\nabla.
\end{equation}
\end{lem}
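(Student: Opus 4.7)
The plan is to verify both identities by direct expansion in the quaternionic coordinates, using the Clifford-algebra relations among $e_0,e_1,e_2,e_3$. Introduce signs $\epsilon_0=1$, $\epsilon_1=\epsilon_2=\epsilon_3=-1$, so that
$$
Z = \sum_i z^i e_i,\qquad Z^+ = \sum_i \epsilon_i z^i e_i,\qquad
\nabla^+ = \sum_i e_i\partial_i,\qquad \nabla = \sum_i \epsilon_i e_i\partial_i,
$$
and let all operators act on a test function $f$.

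For the first identity, I would compute the two composites separately. Applying $Z^+\nabla^+$ gives $\sum_{i,j}\epsilon_i z^i e_ie_j\,\partial_j f$, while for $\nabla Z$ the Leibniz rule produces
$$
\nabla(Zf) = \sum_i \epsilon_i e_i^2\,f + \sum_{i,j}\epsilon_i z^j e_ie_j\,\partial_i f.
$$
The purely multiplicative term is $\sum_i \epsilon_i e_i^2 = 4$ since $e_0^2=1$ and $e_i^2=-1$ for $i\geq 1$. After swapping dummy indices in the derivative piece from $\nabla Z$ and adding to $Z^+\nabla^+$, the first-order terms collect into $\sum_{i,j} z^i \partial_j\,(\epsilon_i e_ie_j+\epsilon_j e_je_i)$. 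The key algebraic fact is
$$
\epsilon_i e_ie_j + \epsilon_j e_je_i = 2\delta_{ij},
$$
which one checks in three cases: it equals $2\epsilon_i e_i^2 = 2$ for $i=j$; it vanishes when exactly one of the indices is $0$, because $e_0$ is central and the signs disagree; and for $i\neq j$ both nonzero it vanishes by the standard anticommutation $e_ie_j + e_je_i = -2\delta_{ij}$ together with $\epsilon_i = \epsilon_j = -1$. Substituting yields $2\sum_i z^iu\partial_i + 4 = 2\deg + 4 = 2\degtt$, proving $Z^+\nabla^+ + \nabla Z = 2\degtt$.

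For the second equality $\nabla^+ Z^+ + Z\nabla = 2\degtt$ the calculation is entirely parallel: the additive $4$ now arises from $\partial_i$ hitting $Z^+$ inside $\nabla^+ Z^+$ (again giving $\sum_i \epsilon_i e_i^2$), and the remaining first-order terms collapse by the same Clifford identity after a dummy renaming. The derivation extends verbatim to the complexified setting $\HC$ by treating $z^0,\dots,z^3$ as complex coordinates and taking all contractions $\BB C$-bilinearly, so a single proof covers both cases asserted in the lemma. There is no real obstacle beyond bookkeeping operator orderings and keeping track of which signs come from the Hodge-like convention on $\nabla$ versus the quaternionic conjugation on $Z^+$; the whole computation is essentially the observation that $X\mapsto X^+$ together with the Clifford relations turns the quadratic expressions $Z^+\nabla^+ + \nabla Z$ into twice the Euclidean pairing $\sum z^i\partial_i$, plus a normal-ordering constant of $4$.
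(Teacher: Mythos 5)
Your computation is correct: the sign bookkeeping with $\epsilon_0=1$, $\epsilon_1=\epsilon_2=\epsilon_3=-1$, the normal-ordering constant $\sum_i\epsilon_i e_i^2=4$, and the contraction identity $\epsilon_i e_ie_j+\epsilon_j e_je_i=2\delta_{ij}$ (checked in the three cases you list) together give exactly $Z^+\nabla^+ + \nabla Z = 2\deg + 4 = 2\degtt$, and the mirrored argument gives the second equality. The paper itself supplies no proof here --- it simply cites Lemma 8 of [FL2] --- so your direct expansion is a legitimate, self-contained substitute, and it is phrased as an operator identity (with $Z$, $Z^+$ acting by left multiplication on an arbitrary test function), which is the form actually needed later when the lemma is applied to $\BB S\otimes\BB S$-valued functions via $1\otimes\nabla$, $1\otimes W$. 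The only blemishes are typographical (the stray ``$u$'' in $2\sum_i z^i\partial_i$); the extension to $\HC$ by $\BB C$-bilinearity is indeed immediate as you say.
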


Define
\begin{equation}  \label{k_1-kernel}
k_1(Z-W) = \frac14 (\nabla \otimes \nabla) \biggl( \frac1{N(Z-W)} \biggr)
\end{equation}
(the derivatives can be taken with respect to either $Z$ or $W$ variable
-- the result is the same); this is a function of $Z$ and $W$ taking values in
$\HC \otimes \HC$, it is spelled out in equation (\ref{k_1-explicit}).
We also consider holomorphic $3$-forms $Dz \otimes Z$ and $Z \otimes Dz$
on $\HC$ with values in $\HC \otimes \HC$.
Then we obtain the following analogue of the Cauchy-Fueter formulas for
doubly regular functions.

\begin{thm}  \label{Fueter-doubly-reg}
Let $U \subset \BB H$ be an open bounded subset with piecewise ${\cal C}^1$
boundary $\partial U$. Suppose that $f(Z)$ is doubly left regular on a
neighborhood of the closure $\overline{U}$, then
\begin{multline*}
\frac1{2\pi^2} \int_{\partial U} k_1(Z-W) \cdot (Dz \otimes Z) \cdot f(Z) \\
= \frac1{2\pi^2} \int_{\partial U} k_1(Z-W) \cdot (Z \otimes Dz) \cdot f(Z) =
\begin{cases}
\degtt f(W) & \text{if $W \in U$;} \\
0 & \text{if $W \notin \overline{U}$.}
\end{cases}
\end{multline*}
If $g(Z)$ is doubly right regular on a neighborhood of the closure
$\overline{U}$, then
\begin{multline*}
\frac1{2\pi^2} \int_{\partial U} g(Z) \cdot (Dz \otimes Z)\cdot k_1(Z-W) \\
= \frac1{2\pi^2} \int_{\partial U} g(Z) \cdot (Z \otimes Dz)\cdot k_1(Z-W) =
\begin{cases}
\degtt g(W) & \text{if $W \in U$;} \\
0 & \text{if $W \notin \overline{U}$.}
\end{cases}
\end{multline*}
\end{thm}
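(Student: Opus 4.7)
The plan is to reduce the doubly regular Cauchy--Fueter identities to the classical Cauchy--Fueter formulas of \cite{FL1} via Lemma \ref{zf-regular}, and then to differentiate once more in $W$ using the identity (\ref{deg-nabla}). The kernel $k_1 \in \HC \otimes \HC$ is designed so that it appears naturally as a single derivative of $k \otimes 1$ or $1 \otimes k$, since $k$ is itself (up to a factor) a single derivative of $1/N$ and $k_1$ is (up to a factor) a second derivative of $1/N$ landing in two distinct tensor slots.

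For the left case, Lemma \ref{zf-regular} asserts that the $\BB S \otimes \BB S$-valued function $(1 \otimes Z) f(Z)$ is annihilated by $\nabla^+ \otimes 1$. Treating the second copy of $\BB S$ as a passive parameter, the classical Cauchy--Fueter formula applied in the first factor yields
\[
(1 \otimes W) f(W) = \frac{1}{2\pi^2}\int_{\partial U} (k(Z-W) \otimes 1) \cdot (Dz \otimes Z) \cdot f(Z)
\]
for $W \in U$, with the right-hand side vanishing for $W \notin \overline{U}$. I would then apply $(1 \otimes \nabla)_W$ to both sides. On the left, the identity (\ref{deg-nabla}) read in the second tensor slot, combined with $(1 \otimes \nabla^+) f = 0$, collapses $(1 \otimes \nabla)_W[(1 \otimes W) f(W)]$ to $2\,\degtt f(W)$. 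On the right, since $(Dz \otimes Z) f(Z)$ is $W$-independent, the derivative hits only the kernel, and a direct computation from (\ref{k_1-kernel}), using $\nabla_Z(1/N(Z-W)) = -2 k(Z-W)$ and the sign flip $\partial_{W^i} = -\partial_{Z^i}$, gives the crucial kernel identity
\[
(1 \otimes \nabla)_W (k(Z-W) \otimes 1) = 2\, k_1(Z-W).
\]
Cancelling the factor of $2$ produces the first integral formula. Starting instead from $(Z \otimes 1) f(Z)$ (annihilated by $1 \otimes \nabla^+$) and applying $(\nabla \otimes 1)_W$ yields the second integral formula via the parallel identity $(\nabla \otimes 1)_W(1 \otimes k(Z-W)) = 2\,k_1(Z-W)$; equality of the two integrals is then automatic since both equal $\degtt f(W)$. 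The case $W \notin \overline{U}$ is obtained by differentiating the correspondingly vanishing intermediate identity.

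The right doubly regular formulas follow by the mirror argument. By Lemma \ref{zf-regular}, $g(Z)(1 \otimes Z)$ is right regular with respect to $\nabla^+ \otimes 1$ and $g(Z)(Z \otimes 1)$ is right regular with respect to $1 \otimes \nabla^+$. Apply the classical right Cauchy--Fueter formula in each case, then differentiate from the right by $\overleftarrow{1 \otimes \nabla_W}$ and $\overleftarrow{\nabla \otimes 1_W}$ respectively, invoking the right-action version of (\ref{deg-nabla}) together with the analogous kernel identities such as $(k(Z-W) \otimes 1)\overleftarrow{(1 \otimes \nabla_W)} = 2\,k_1(Z-W)$.

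\emph{The main obstacle} is bookkeeping rather than ideas: the four kernel identities of the form $(1 \otimes \nabla)_W (k(Z-W) \otimes 1) = 2\, k_1(Z-W)$ are elementary in principle, but one must track carefully (i) the $\nabla$ versus $\nabla^+$ sign conventions, (ii) the sign flip between $Z$- and $W$-derivatives, and (iii) which tensor slot each $\nabla$ occupies, since $k_1$ is defined symmetrically in both slots whereas $k\otimes 1$ and $1 \otimes k$ are not.
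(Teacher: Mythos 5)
Your proposal is correct and follows essentially the same route as the paper: apply Lemma \ref{zf-regular} to make $(1 \otimes Z)f$ (resp. $(Z \otimes 1)f$) classically regular in one tensor slot, invoke the classical Cauchy--Fueter formula there, and then differentiate in $W$ so that the kernel identity $(1 \otimes \nabla_W)(k(Z-W)\otimes 1) = 2\,k_1(Z-W)$ and the relation (\ref{deg-nabla}) together with $(1\otimes\nabla^+)f=0$ produce $2\degtt f(W)$ on both sides. The factor bookkeeping you flag as the main obstacle works out exactly as you describe, matching the paper's computation.
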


\begin{proof}
By Lemma \ref{zf-regular}, the $\BB S \otimes \BB S$-valued function
$(1 \otimes Z) f(Z)$ satisfies
$(\nabla^+ \otimes 1) \bigl[ (1 \otimes Z) f(Z) \bigr] = 0$.
From the classical Cauchy-Fueter formula for left regular functions,
we obtain:
\begin{equation}  \label{old-Fueter}
\frac1{2\pi^2} \int_{\partial U} k_{1/2}(Z-W) \cdot (Dz \otimes 1) \cdot
(1 \otimes Z) f(Z) =
\begin{cases}
(1 \otimes W) f(W) & \text{if $W \in U$;} \\
0 & \text{if $W \notin \overline{U}$,}
\end{cases}
\end{equation}
where
$$
k_{1/2}(Z-W) = \frac{(Z-W)^{-1}}{N(Z-W)} \otimes 1
= - \frac12 (\nabla_Z \otimes 1) \biggl( \frac1{N(Z-W)} \biggr)
= \frac12 (\nabla_W \otimes 1) \biggl( \frac1{N(Z-W)} \biggr).
$$
Applying $1 \otimes \nabla$ to both sides of (\ref{old-Fueter})
(the derivative is taken with respect to $W$),
\begin{multline*}
\frac1{\pi^2} \int_{\partial U} k_1(Z-W) \cdot (Dz \otimes Z) \cdot f(Z) =
\begin{cases}
(1 \otimes \nabla)(1 \otimes W) f(W) & \text{if $W \in U$;} \\
0 & \text{if $W \notin \overline{U}$}
\end{cases} \\
= \begin{cases}
2\degtt f(W) & \text{if $W \in U$;} \\
0 & \text{if $W \notin \overline{U}$,}
\end{cases}
\end{multline*}
where the last equality follows from (\ref{deg-nabla}),
since $(1 \otimes \nabla^+) f =0$.
The other cases are similar.
\end{proof}

We have an analogue of Liouville's theorem for doubly regular functions:

\begin{cor}  \label{Liouville}
Let $f: \BB H \to \BB S \odot \BB S$ be a function that is doubly
left regular and bounded on $\BB H$, then $f$ is constant.
Similarly, if $g: \BB H \to \BB S' \odot \BB S'$ is a function that is
doubly right regular and bounded on $\BB H$, then $g$ is constant.
\end{cor}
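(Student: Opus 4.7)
The plan is to reduce the statement to the classical Liouville theorem for harmonic functions on $\BB R^4$. Immediately before Theorem~\ref{Fueter-doubly-reg} we observed, as a consequence of the identities $(\nabla \otimes 1)(\nabla^+ \otimes 1) = \square$ and $(1 \otimes \nabla)(1 \otimes \nabla^+) = \square$, that every doubly left regular function satisfies $\square f = 0$ componentwise, and similarly for doubly right regular $g$. Since $\BB S \odot \BB S$ and $\BB S' \odot \BB S'$ are finite-dimensional complex vector spaces, choosing a basis expresses $f$ (resp.\ $g$) as a finite tuple of scalar-valued harmonic functions on $\BB R^4$.

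Boundedness of $f$ as a vector-valued function, via equivalence of norms on finite-dimensional spaces, implies that each scalar coordinate is a bounded harmonic function on $\BB R^4$. The classical Liouville theorem for harmonic functions on $\BB R^n$ then forces every coordinate to be constant, and hence $f$ itself is constant. The argument for $g$ is identical.

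There is essentially no obstacle on this route, since the non-trivial content — that doubly regular functions are harmonic — was established just before the statement of Theorem~\ref{Fueter-doubly-reg}. One could instead attempt a direct proof mimicking the classical complex-analytic argument, by applying the Cauchy-Fueter type formula of Theorem~\ref{Fueter-doubly-reg} on a ball of radius $R$ and estimating the difference $k_1(Z-W_1)-k_1(Z-W_2)$, which decays like $R^{-5}$ on $|Z|=R$ by the mean value theorem. Against a $3$-form of total mass $\sim R^3$ and the factor $Z$ of size $\sim R$, this yields an $O(R^{-1})$ bound and shows $\degtt f$ is constant as $R \to \infty$. However, concluding $f$ itself is constant from $\degtt f \equiv c$ still requires an additional argument (e.g., integrating along radial lines and using boundedness), so the harmonic-function reduction above is substantially cleaner and is the route I would take.
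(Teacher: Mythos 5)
Your proof is correct, but it takes a different route from the paper. You reduce the statement to the classical Liouville theorem for bounded harmonic functions on $\BB R^4$, using the observation (recorded in the paper right after Definition \ref{dr-definition}) that doubly regular functions are harmonic componentwise; since $\BB S \odot \BB S$ is finite-dimensional, boundedness passes to each scalar coordinate and the classical theorem finishes the job. The paper instead stays inside the quaternionic framework: it differentiates the Cauchy--Fueter formula of Theorem \ref{Fueter-doubly-reg} under the integral sign with respect to the parameter $W$, notes that $\partial k_1/\partial x_{ij}$ is homogeneous of degree $-5$ while the form $(Dz\otimes Z)\cdot f(Z)$ contributes only $O(R^4)$ on $S^3_R$, and concludes that all four partial derivatives of $\degtt f$ vanish, so $\degtt f$ and hence $f$ are constant --- exactly the "direct proof" you sketch in your last paragraph, including the final step of passing from $\degtt f \equiv c$ to $f$ constant (which is immediate from the expansion of $f$ into homogeneous components, since $(d+2)f_d = 0$ forces $f_d=0$ for $d\ge 1$). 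Your reduction is shorter and imports a standard external result; the paper's argument is self-contained and deliberately parallels the Liouville proof for ordinary left/right regular functions in \cite{FL1}, which is why it is phrased through the newly established integral formula. Both are valid.
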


\begin{proof}
The proof is essentially the same as for the (classical) left and right
regular functions on $\BB H$, so we only give a sketch of the first part.
From Theorem \ref{Fueter-doubly-reg} we have:
$$
\frac{\partial}{\partial x_{11}} \degtt f(X) =
\frac1{2\pi^2} \int_{S^3_R} \frac{\partial k_1(Z-X)}{\partial x_{11}} \cdot
(Dz \otimes Z) \cdot f(Z)
$$
where $S^3_R \subset \BB H$ is the three-dimensional sphere of radius $R$
centered at the origin
$$
S^3_R = \{ X \in \BB H ;\: N(X)=R^2 \}
$$
with $R^2>N(X)$.
If $f$ is bounded, one easily shows that the integral on the right hand side
tends to zero as $R \to \infty$. Thus
$\frac{\partial}{\partial x_{11}} \degtt f =0$. Similarly,
$$
\frac{\partial}{\partial x_{12}} \degtt f =
\frac{\partial}{\partial x_{21}} \degtt f =
\frac{\partial}{\partial x_{22}} \degtt f = 0.
$$
It follows that $\degtt f$ and hence $f$ are constant.
\end{proof}

\subsection{Expansion of the Cauchy-Fueter Kernel for Doubly Regular Functions}  \label{matrix-subsection}

We often identify $\HC$ with $2 \times 2$ matrices with complex entries.
Similarly, it will be convenient to identify $\HC \otimes \HC$
with $4 \times 4$ matrices with complex entries using the Kronecker product.
Let $\BB C^{n \times n}$ denote the algebra of $n \times n$ complex matrices.
If $A = \bigl( \begin{smallmatrix} a_{11} & a_{12} \\
a_{21} & a_{22} \end{smallmatrix} \bigr),
B = \bigl( \begin{smallmatrix} b_{11} & b_{12} \\
b_{21} & b_{22} \end{smallmatrix} \bigr) \in \BB C^{2 \times 2}$,
then their Kronecker product is
$$
A \otimes B = \begin{pmatrix} a_{11}B & a_{12}B \\ a_{21}B & a_{22}B \end{pmatrix}
= \begin{pmatrix} a_{11}b_{11} & a_{11}b_{12} & a_{12}b_{11} & a_{12}b_{12} \\
a_{11}b_{21} & a_{11}b_{22} & a_{12}b_{21} & a_{12}b_{22} \\
a_{21}b_{11} & a_{21}b_{12} & a_{22}b_{11} & a_{22}b_{12} \\
a_{21}b_{21} & a_{21}b_{22} & a_{22}b_{21} & a_{22}b_{22} \end{pmatrix}
\in \BB C^{4 \times 4}.
$$
Similarly, if we identify $\BB S$ and $\BB S'$ with columns and rows of two
complex numbers respectively, then
\begin{equation}  \label{StensorS}
\begin{pmatrix} z_1 \\ z_2 \end{pmatrix} \otimes
\begin{pmatrix} w_1 \\ w_2 \end{pmatrix}
= \left( \begin{smallmatrix} z_1w_1 \\ z_1w_2 \\ z_2w_1 \\ z_2w_2
\end{smallmatrix} \right), \qquad
(z_1, z_2) \otimes (w_1, w_2) = (z_1w_1, z_1w_2, z_2w_1, z_2w_2).
\end{equation}
A $4$-tuple belongs to $\BB S \odot \BB S$ or $\BB S' \odot \BB S'$
if and only if its second entry equals the third entry.
It is easy to see that Kronecker product satisfies
$$
(A \otimes B) (C \otimes D) = (AC) \otimes (BD).
$$

Recall that the Cauchy-Fueter kernel for doubly regular functions is defined by
(\ref{k_1-kernel}).
From its realization as a $4 \times 4$ matrix, we find:
\begin{multline}  \label{k_1-explicit}
k_1(Z-W) = (\partial_Z \otimes \partial_Z) \biggl( \frac1{N(Z-W)} \biggr)
= \begin{pmatrix} \partial_{11} \left(\begin{smallmatrix} \partial_{11} &
\partial_{21} \\ \partial_{12} & \partial_{22} \end{smallmatrix}\right) &
\partial_{21} \left(\begin{smallmatrix} \partial_{11} & \partial_{21} \\
\partial_{12} & \partial_{22} \end{smallmatrix}\right) \\
\partial_{12} \left(\begin{smallmatrix} \partial_{11} & \partial_{21} \\
\partial_{12} & \partial_{22} \end{smallmatrix}\right) &
\partial_{22} \left(\begin{smallmatrix} \partial_{11} & \partial_{21} \\
\partial_{12} & \partial_{22} \end{smallmatrix}\right) \end{pmatrix}
\biggl( \frac1{N(Z-W)} \biggr)  \\
= \frac2{N(Z-W)} (Z-W)^{-1} \otimes (Z-W)^{-1}  \\
- \frac{1/2}{N(Z-W)^2}
(e_0 \otimes e_0 + e_1 \otimes e_1 + e_2 \otimes e_2 + e_3 \otimes e_3).
\end{multline}

Next we recall the matrix coefficients $t^l_{n\,\underline{m}}(Z)$'s of $SU(2)$
described by equation (27) of \cite{FL1} (cf. \cite{V}):
\begin{equation}  \label{t}
t^l_{n\,\underline{m}}(Z) = \frac 1{2\pi i}
\oint (sz_{11}+z_{21})^{l-m} (sz_{12}+z_{22})^{l+m} s^{-l+n} \,\frac{ds}s,
\qquad
\begin{smallmatrix} l = 0, \frac12, 1, \frac32, \dots, \\ m,n \in \BB Z +l, \\
 -l \le m,n \le l, \end{smallmatrix}
\end{equation}
$Z=\bigl(\begin{smallmatrix} z_{11} & z_{12} \\
z_{21} & z_{22} \end{smallmatrix}\bigr) \in \HC$,
the integral is taken over a loop in $\BB C$ going once around the origin
in the counterclockwise direction.
We regard these functions as polynomials on $\HC$.
Using Lemma 22 from \cite{FL1} repeatedly, we compute:
\begin{multline*}
(\partial \otimes \partial) t^l_{n\,\underline{m}}(Z) = \begin{pmatrix}
\partial_{11} \left(\begin{smallmatrix} \partial_{11} & \partial_{21} \\
\partial_{12} & \partial_{22} \end{smallmatrix}\right) &
\partial_{21} \left(\begin{smallmatrix} \partial_{11} & \partial_{21} \\
\partial_{12} & \partial_{22} \end{smallmatrix}\right) \\
\partial_{12} \left(\begin{smallmatrix} \partial_{11} & \partial_{21} \\
\partial_{12} & \partial_{22} \end{smallmatrix}\right) &
\partial_{22} \left(\begin{smallmatrix} \partial_{11} & \partial_{21} \\
\partial_{12} & \partial_{22} \end{smallmatrix}\right) \end{pmatrix}
t^l_{n\,\underline{m}}(Z) \\
= \left(\begin{smallmatrix}
(l-m)(l-m-1) t^{l-1}_{n+1\,\underline{m+1}} & (l-m)(l-m-1) t^{l-1}_{n\,\underline{m+1}} &
(l-m)(l-m-1) t^{l-1}_{n\,\underline{m+1}} & (l-m)(l-m-1) t^{l-1}_{n-1\,\underline{m+1}} \\
(l+m)(l-m) t^{l-1}_{n+1\,\underline{m}} & (l+m)(l-m) t^{l-1}_{n\,\underline{m}} &
(l+m)(l-m) t^{l-1}_{n\,\underline{m}} & (l+m)(l-m) t^{l-1}_{n-1\,\underline{m}} \\
(l-m)(l+m) t^{l-1}_{n+1\,\underline{m}} & (l-m)(l+m) t^{l-1}_{n\,\underline{m}} &
(l-m)(l+m) t^{l-1}_{n\,\underline{m}} & (l-m)(l+m) t^{l-1}_{n-1\,\underline{m}} \\
(l+m)(l+m-1) t^{l-1}_{n+1\,\underline{m-1}} & (l+m)(l+m-1) t^{l-1}_{n\,\underline{m-1}} &
(l+m)(l+m-1) t^{l-1}_{n\,\underline{m-1}} & (l+m)(l+m-1) t^{l-1}_{n-1\,\underline{m-1}}
\end{smallmatrix}\right) (Z).
\end{multline*}
Since $\square t^l_{n\,\underline{m}}(Z) =0$, by observation made after
Definition \ref {dr-definition}, the columns and rows of this
$4 \times 4$ matrix are respectively doubly left and right regular.

\begin{lem}  \label{doubly-reg-fns-basis}
Let
$$
l = 0, \frac12, 1, \frac32, \dots, \qquad
m,n \in \BB Z +l, \qquad
-l-1 \le m \le l+1, \qquad
-l \le n \le l.
$$
The functions $\HC \to \BB S \odot \BB S$
$$
F_{l,m,n}(Z) = \begin{pmatrix} (l-m)(l-m+1) t^l_{n\,\underline{m+1}}(Z) \\
(l-m+1)(l+m+1) t^l_{n\,\underline{m}}(Z) \\
(l-m+1)(l+m+1) t^l_{n\,\underline{m}}(Z) \\
(l+m)(l+m+1) t^l_{n\,\underline{m-1}}(Z) \end{pmatrix}
$$
and
$$
F'_{l,m,n}(Z) = \frac1{N(Z)}
\begin{pmatrix} (l-n+1)(l-n+2) t^{l+1}_{n-1\,\underline{m}}(Z^{-1}) \\
(l-n+1)(l+n+1) t^{l+1}_{n\,\underline{m}}(Z^{-1}) \\
(l-n+1)(l+n+1) t^{l+1}_{n\,\underline{m}}(Z^{-1}) \\
(l+n+1)(l+n+2) t^{l+1}_{n+1\,\underline{m}}(Z^{-1}) \end{pmatrix}
$$
are doubly left regular.
Similarly, the functions $\HC \to \BB S' \odot \BB S'$
$$
G_{l,m,n}(Z) = \bigl( t^l_{m+1\,\underline{n}}(Z), t^l_{m\,\underline{n}}(Z),
t^l_{m\,\underline{n}}(Z), t^l_{m-1\,\underline{n}}(Z) \bigr)
$$
and
$$
G'_{l,m,n}(Z) = N(Z)^{-1} \cdot \bigl( t^{l+1}_{m\,\underline{n-1}}(Z^{-1}),
t^{l+1}_{m\,\underline{n}}(Z^{-1}), t^{l+1}_{m\,\underline{n}}(Z^{-1}),
t^{l+1}_{m\,\underline{n+1}}(Z^{-1}) \bigr)
$$
are doubly right regular.
\end{lem}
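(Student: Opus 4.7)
\emph{Plan.} The argument splits into two halves, one for the functions without primes and one for the primed versions; in each half the left-regular and right-regular cases run in parallel.

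The plan for $F_{l,m,n}$ and $G_{l,m,n}$ is to read them straight off the $4\times 4$ matrix computed just before the statement. Substituting $l \mapsto l+1$ in the displayed expression for $(\partial\otimes\partial)\, t^l_{n\,\underline{m}}(Z)$, the two coinciding middle columns become the vector $F_{l,m,n}(Z)$, and the two coinciding middle rows become $G_{l,m,n}(Z)$. Because $t^{l+1}_{n\,\underline{m}}$ is a harmonic polynomial (it is a matrix coefficient of a finite-dimensional $SU(2)$-representation, so $\square t^{l+1}_{n\,\underline{m}} = 0$), the remark following Definition \ref{dr-definition} applies verbatim: every column of $(\partial\otimes\partial)\, t^{l+1}_{n\,\underline{m}}$ is doubly left regular and every row is doubly right regular. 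That these columns and rows actually land in $\BB S \odot \BB S$ and $\BB S' \odot \BB S'$ rather than in the full tensor product is immediate from the coincidence of the two middle entries in the displayed matrix.

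For $F'_{l,m,n}$ and $G'_{l,m,n}$ I would apply the same mechanism to an inverted matrix coefficient. The key point is that $N(Z)^{-1} t^{l+2}_{m\,\underline{n}}(Z^{-1})$ is harmonic on $\HC \setminus \{N(Z)=0\}$; this can be seen either directly from a computation of the Laplacian, or as a consequence of Theorem \ref{dr-action-thm} applied to the inversion element $h^{-1} = \bigl(\begin{smallmatrix} 0 & 1 \\ 1 & 0 \end{smallmatrix}\bigr) \in GL(2,\HC)$, which sends $f(Z) \mapsto \tfrac{Z^{-1}\otimes Z^{-1}}{N(Z)} f(Z^{-1})$ while preserving double regularity. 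Applying $(\partial\otimes\partial)$ to this inverted harmonic function produces a $4\times 4$ matrix whose middle columns and middle rows should match $F'_{l,m,n}(Z)$ and $G'_{l,m,n}(Z)$ respectively, and the doubly regular conclusion then follows exactly as in the unprimed case.

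The main obstacle is the coefficient matching in the second half. Whereas for $F_{l,m,n}$ one reads the coefficients $(l-m)(l-m+1)$, $(l-m+1)(l+m+1)$, $(l+m)(l+m+1)$ directly off Lemma 22 of \cite{FL1} (with $l\mapsto l+1$), the analogous step for the inverse matrix coefficients requires writing $Z^{-1} = Z^+/N(Z)$ and using an ``inverted'' counterpart of Lemma 22 that expresses $\partial_{ij} \bigl[N(Z)^{-1} t^{l+2}_{m\,\underline{n}}(Z^{-1})\bigr]$ as a linear combination of expressions of the form $N(Z)^{-1} t^{l+1}_{m\,\underline{n'}}(Z^{-1})$. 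The appearance of a single factor $N(Z)^{-1}$ in $F'_{l,m,n}$, rather than a higher inverse power, reflects a nontrivial cancellation between the terms arising from differentiating the product $N(Z)^{-1}\cdot t^{l+2}_{m\,\underline{n}}(Z^{-1})$; once this cancellation is confirmed, identifying the resulting coefficients as $(l-n+1)(l-n+2)$, $(l-n+1)(l+n+1)$, $(l+n+1)(l+n+2)$ is a routine combinatorial check parallel to the unprimed case.
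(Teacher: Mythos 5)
Your approach is the paper's: the proof given there is literally ``direct computations using Lemmas 22 and 23 in \cite{FL1} or Proposition 24 in \cite{FL1}'', and your treatment of $F_{l,m,n}$ and $G_{l,m,n}$ — read them off the displayed matrix $(\partial\otimes\partial)\,t^{l+1}$ and invoke the observation after Definition \ref{dr-definition} together with harmonicity (which is also what forces the two middle entries to agree, since $\partial_{11}\partial_{22}-\partial_{12}\partial_{21}=\tfrac14\square$) — is exactly the intended argument. (Minor bookkeeping: $G_{l,m,n}$ is a nonzero multiple of a middle row of $(\partial\otimes\partial)\,t^{l+1}_{m\,\underline{n}}$, with the roles of the two indices swapped relative to the column case.)

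The one substantive correction is in the primed half. For the inverted matrix coefficients the relevant identity is (\ref{del-t(Z^{-1})}): each $\partial_{ij}$ applied to $N(Z)^{-1}\,t^{l}_{n\,\underline{m}}(Z^{-1})$ \emph{raises} the superscript by $\tfrac12$ (and keeps a single factor $N(Z)^{-1}$), in contrast to Lemma 22 of \cite{FL1}, where differentiation lowers it. So to land on the $N(Z)^{-1}\,t^{l+1}_{\cdot\,\underline{\cdot}}(Z^{-1})$ entries of $F'_{l,m,n}$ and $G'_{l,m,n}$ you must start from the harmonic function $N(Z)^{-1}\,t^{l}_{n\,\underline{m}}(Z^{-1})$, not from $N(Z)^{-1}\,t^{l+2}_{m\,\underline{n}}(Z^{-1})$ as you propose — the latter would produce $t^{l+3}(Z^{-1})$ entries. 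With that substitution, the ``nontrivial cancellation'' you flag is not an additional obstacle: it is already packaged in (\ref{del-t(Z^{-1})}) (stated and used later in the paper, and itself a consequence of Lemmas 22 and 23 of \cite{FL1}), and the coefficient matching with $(l-n+1)(l-n+2)$, $(l-n+1)(l+n+1)$, $(l+n+1)(l+n+2)$ is then the routine check you describe. Alternatively, as you note, one can deduce the primed case from the unprimed one by applying the inversion element via Theorem \ref{dr-action-thm}, which is essentially the ``Proposition 24'' route mentioned in the paper.
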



\begin{proof}
The result can be derived either by direct computations using
Lemmas 22 and 23 in \cite{FL1} or from Proposition 24 in \cite{FL1}.
\end{proof}

\begin{prop}  \label{DR-irred-prop}
Let
\begin{equation} \label{FF'}
{\cal F}^+ = \BB C\text{-span of } \bigl\{ F_{l,m,n}(Z) \bigr\}, \qquad
{\cal F}^- = \BB C\text{-span of } \bigl\{ F'_{l,m,n}(Z) \bigr\},
\end{equation}
\begin{equation}  \label{GG'}
{\cal G}^+ = \BB C\text{-span of } \bigl\{ G_{l,m,n}(Z) \bigr\}, \qquad
{\cal G}^- = \BB C\text{-span of } \bigl\{ G'_{l,m,n}(Z) \bigr\},
\end{equation}
$$
l = 0, \frac12, 1, \frac32, \dots, \qquad
m,n \in \BB Z +l, \qquad
-l-1 \le m \le l+1, \qquad
-l \le n \le l.
$$
Then $(\pi_{dl},{\cal F}^+)$, $(\pi_{dl},{\cal F}^-)$, $(\pi_{dr},{\cal G}^+)$
and $(\pi_{dr},{\cal G}^-)$ are irreducible representations of
$\mathfrak{sl}(2,\HC)$ (as well as $\mathfrak{gl}(2,\HC)$).
\end{prop}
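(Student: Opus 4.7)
The plan is to perform a $K$-type analysis with respect to $K = U(2) \times U(2)$, paralleling the treatment of the singly regular spaces in \cite{FL1}. The goal is to exhibit each of ${\cal F}^\pm$, ${\cal G}^\pm$ as a multiplicity-free sum of finite-dimensional $K$-types indexed by $l$, and to show that the non-compact generators of $\mathfrak{sl}(2,\HC)$ provide nonvanishing transitions between adjacent $K$-types.

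First I would verify the $K$-type structure: for fixed $l$, varying $(m,n)$ over the admissible range of Lemma \ref{doubly-reg-fns-basis} gives a $(2l+3)(2l+1)$-dimensional subspace of ${\cal F}^+$ that is stable under the restriction of $\pi_{dl}$ to $K$ and is isomorphic to an irreducible $K$-representation of highest weight $(l+1, l)$. This uses only the diagonal blocks $A, D$ in Lemma \ref{Lie-alg-action} together with the standard action of $SU(2) \times SU(2)$ on the matrix coefficients $t^l_{n\,\underline m}$; the analogous computation works for ${\cal F}^-$, ${\cal G}^\pm$.

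Next I would compute the action of the off-diagonal generators $\pi_{dl}\bigl(\begin{smallmatrix} 0 & B \\ 0 & 0 \end{smallmatrix}\bigr)$ and $\pi_{dl}\bigl(\begin{smallmatrix} 0 & 0 \\ C & 0 \end{smallmatrix}\bigr)$ on a basis vector $F_{l,m,n}$. Using Lemmas 22--23 of \cite{FL1}, $\tr(B\partial)$ shifts $l$ down by $\tfrac12$ and $\tr(ZCZ\partial)$ shifts it up by $\tfrac12$, so that after assembling these entrywise together with the tensor-correction pieces $CZ \otimes 1 + 1 \otimes CZ$, the result decomposes explicitly as a $\BB C$-linear combination of $F_{l \mp 1/2,\, m',\, n'}$'s. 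Since any nonzero $\mathfrak{sl}(2,\HC)$-invariant subspace must be a union of full $K$-types, once the leading transition coefficients to the adjacent $K$-types are shown to be nonzero for generic $(m,n)$, the invariant subspace propagates through all $l$ and fills the whole space. The cases ${\cal F}^-$, ${\cal G}^+$, ${\cal G}^-$ follow by the same argument with left/right regularity and the inversion $Z \mapsto Z^{-1}$ interchanged.

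The main obstacle is checking that these transitions do not vanish: a priori the symmetric-tensor correction terms in Lemma \ref{Lie-alg-action} could conspire to cancel the leading contribution on specific basis vectors. A cleaner alternative I would pursue in parallel is to use the observation preceding Lemma \ref{doubly-reg-fns-basis} that each $F_{l,m,n}$ arises as $(\partial \otimes \partial)$ applied to the harmonic polynomial $t^l_{n\,\underline m}$. Since the ambient space of harmonic polynomials on $\HC$ is already known to be $\mathfrak{gl}(2,\HC)$-irreducible from \cite{FL1} and $(\partial \otimes \partial)$ intertwines the relevant actions up to a central shift, irreducibility of ${\cal F}^+$ reduces to verifying injectivity of $(\partial \otimes \partial)$ on the appropriate subspace, which is transparent from the $4 \times 4$ matrix formulas displayed just before Lemma \ref{doubly-reg-fns-basis}.
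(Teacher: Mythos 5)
Your primary route --- exhibiting each of ${\cal F}^{\pm}$, ${\cal G}^{\pm}$ as a multiplicity-free sum of $K$-types $V_{l+1} \boxtimes V_l$ (resp.\ their mirrors) and then showing that the off-diagonal generators give nonvanishing transitions between adjacent $K$-types --- is exactly the direct argument the paper sketches for this proposition. The honest difficulty is the one you flag yourself: verifying that the transition coefficients do not vanish, so that any nonzero vector propagates through all $l$. You leave that step open. The paper resolves it by a detour: it establishes the isomorphisms $(\rho'_2,{\cal W}'/\ker\tau_a^{\pm}) \simeq (\pi_{dl},{\cal F}^{\pm})$ and $(\rho'_2,{\cal W}'/\ker\tau_s^{\pm}) \simeq (\pi_{dr},{\cal G}^{\pm})$ in Theorem \ref{tau-thm}, and then in Corollary \ref{FG-irred} reads off the transition coefficients explicitly from formulas (\ref{BF}), (\ref{BF'}) and (\ref{inversion}) on the quotient side, where the coefficients $(l \mp m)$, $(l \pm m +1)$, $(l \mp n)$, $(l \pm n+1)$ are visibly nonzero in the required places. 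So your outline is viable but incomplete precisely at the step the paper works hardest to supply.

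Your proposed ``cleaner alternative'' has a genuine gap. The assignment $t^l_{n\,\underline{m}} \mapsto F_{l',m',n'}$ obtained by applying $\partial \otimes \partial$ and extracting a column is \emph{not} an intertwining operator from the scalar harmonics to $(\pi_{dl},{\cal F}^+)$: extraction of a fixed column of an $\HC \otimes \HC$-valued function is not equivariant, and the $K$-types rule out any such isomorphism --- the degree-$2l$ harmonic polynomials form $V_l \boxtimes V_l$ under $K=SU(2)\times SU(2)$, whereas the span of the $F_{l,m,n}$ with $l$ fixed is $V_{l+1} \boxtimes V_l$ (which is exactly why the index range is $-l-1 \le m \le l+1$, $-l \le n \le l$). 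Consequently ${\cal F}^+$ is not the image of ${\cal H}^+$ under an intertwiner, and irreducibility cannot be transferred from the harmonics by checking injectivity of $\partial \otimes \partial$. The observation that each $F_{l,m,n}$ \emph{occurs} as a column of $(\partial\otimes\partial)t^{l+1}_{\cdot\,\underline{\cdot}}$ is only a spanning statement; it does not carry any representation-theoretic content. To close the argument you must either carry out the transition-coefficient computation in your first paragraph, or pass through the equivariant maps $\tau_a^{\pm}$, $\tau_s^{\pm}$ as the paper does.
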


The result can be proved directly by finding the $K$-types of
$(\pi_{dl},{\cal F}^{\pm})$ and $(\pi_{dr},{\cal G}^{\pm})$, computing
the actions of
$\bigl(\begin{smallmatrix} 0 & B \\ 0 & 0 \end{smallmatrix}\bigr)$ and
$\bigl(\begin{smallmatrix} 0 & 0 \\ C & 0 \end{smallmatrix}\bigr)$,
then showing that any non-zero vector generates the whole space.
Alternatively, it follows from Corollary \ref{FG-irred}.

Next we derive two matrix coefficient expansions of the Cauchy-Fueter
kernel for doubly regular functions (\ref{k_1-kernel}) in terms of these
functions $F_{l,m,n}$, $F'_{l,m,n}$, $G_{l,m,n}$, $G'_{l,m,n}$.
This is a doubly regular function analogue of Proposition 26 from \cite{FL1}
for the usual regular functions (see also Proposition \ref{Prop26}).

\begin{prop}
We have the following expansions
\begin{multline}  \label{k_1-expansion-1}
k_1(Z-W) = \sum_{l,m,n} F_{l,m,n}(Z) \cdot G'_{l,m,n}(W)
= \frac1{N(W)} \sum_{l,m,n}
\begin{pmatrix} (l-m)(l-m+1) t^l_{n\,\underline{m+1}}(Z) \\
(l-m+1)(l+m+1) t^l_{n\,\underline{m}}(Z) \\ (l-m+1)(l+m+1) t^l_{n\,\underline{m}}(Z) \\
(l+m)(l+m+1) t^l_{n\,\underline{m-1}}(Z) \end{pmatrix}  \\
\times
\bigl( t^{l+1}_{m\,\underline{n-1}}(W^{-1}), t^{l+1}_{m\,\underline{n}}(W^{-1}),
t^{l+1}_{m\,\underline{n}}(W^{-1}), t^{l+1}_{m\,\underline{n+1}}(W^{-1}) \bigr),
\end{multline}
which converges uniformly on compact subsets in the region
$\{ (Z,W) \in \HC \times \HC^{\times}; \: ZW^{-1} \in \BB D^+ \}$, and
\begin{multline}  \label{k_1-expansion-2}
k_1(Z-W) = \sum_{l,m,n} F'_{l.m,n}(Z) \cdot G_{l,m,n}(W)
= \frac1{N(Z)} \sum_{l,m,n}
\begin{pmatrix} (l-n+1)(l-n+2) t^l_{n-1\,\underline{m}}(Z^{-1}) \\
(l-n+1)(l+n+1) t^l_{n\,\underline{m}}(Z^{-1}) \\
(l-n+1)(l+n+1) t^l_{n\,\underline{m}}(Z^{-1}) \\
(l+n+1)(l+n+2) t^l_{n+1\,\underline{m}}(Z^{-1}) \end{pmatrix}  \\
\times
\bigl( t^l_{m+1\,\underline{n}}(W), t^l_{m\,\underline{n}}(W),
t^l_{m\,\underline{n}}(W), t^l_{m-1\,\underline{n}}(W) \bigr),
\end{multline}
which converges uniformly on compact subsets in the region
$\{ (Z,W) \in \HC^{\times} \times \HC; \: WZ^{-1} \in \BB D^+ \}$.
The sums are taken first over all $m=-l-1,-l,\dots,l+1$, 
$n=-l,-l+1,\dots,l$, then over $l=0,\frac12,1,\frac32,\dots$.
\end{prop}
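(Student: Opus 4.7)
The plan is to derive both expansions from a scalar matrix-coefficient expansion of $1/N(Z-W)$ by applying the second-order operator $(\partial_Z \otimes \partial_Z) = \tfrac14(\nabla \otimes \nabla)$, which by (\ref{k_1-explicit}) sends $1/N(Z-W)$ to $k_1(Z-W)$. Concretely, I would start from the scalar expansion
$$
\tfrac{1}{N(Z-W)} = \tfrac{1}{N(W)} \sum_{l,m,n} a_l\, t^l_{n\,\underline{m}}(Z)\, t^l_{m\,\underline{n}}(W^{-1}),
$$
valid uniformly on compact subsets of $\{ZW^{-1} \in \BB D^+\}$ with suitable numerical coefficients $a_l$ (this is the standard matrix-coefficient expansion of the reciprocal norm, the same identity that underlies the proof of Proposition 26 in \cite{FL1}), and then apply $(\partial_Z \otimes \partial_Z)$ termwise to the right-hand side.

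The next step uses the explicit formula for $(\partial \otimes \partial) t^l_{n\,\underline{m}}(Z)$ displayed in the paragraph preceding Lemma \ref{doubly-reg-fns-basis}. A direct comparison with the definition of $F_{L,M,N}$ in that lemma shows that, after the relabeling $L=l-1$, $M=m$, the four columns of this $4\times4$ matrix are precisely $F_{L,M,N+1}(Z)$, $F_{L,M,N}(Z)$, $F_{L,M,N}(Z)$, $F_{L,M,N-1}(Z)$, with the coincidence of the two middle columns reflecting the equality $(l-m)(l+m)=(l+m)(l-m)$. Thus each term $a_l\, t^l_{n\,\underline{m}}(Z)\, t^l_{m\,\underline{n}}(W^{-1})/N(W)$ in the expansion contributes, after differentiation in $Z$, a sum of four rank-one $4\times4$ matrices of the form $F_{l-1,m,n'}(Z)\cdot e_j^T$ weighted by $a_l\, t^l_{m\,\underline{n}}(W^{-1})/N(W)$.

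I would then regroup by a fixed $F_{L,M,N}(Z)$: the four contributions come from $n=N+1,N,N,N-1$ (with $l=L+1$ after relabeling), so the accompanying row in $W$ becomes
$$
\tfrac{a_{L+1}}{N(W)}\bigl(t^{L+1}_{M\,\underline{N-1}}(W^{-1}),\, t^{L+1}_{M\,\underline{N}}(W^{-1}),\, t^{L+1}_{M\,\underline{N}}(W^{-1}),\, t^{L+1}_{M\,\underline{N+1}}(W^{-1})\bigr),
$$
which (after absorbing $a_{L+1}$ into the normalization of the scalar expansion) coincides with $G'_{L,M,N}(W)$ by Lemma \ref{doubly-reg-fns-basis}. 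This produces (\ref{k_1-expansion-1}). The second expansion (\ref{k_1-expansion-2}) follows by the symmetric argument with the roles of $Z$ and $W$ swapped: start from the scalar expansion valid in $\{WZ^{-1}\in\BB D^+\}$ and apply $(\partial_W \otimes \partial_W)$, identifying the columns of the resulting matrix with the $1/N(Z)$-factors defining $F'_{L,M,N}(Z)$ and the rows with $G_{L,M,N}(W)$.

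Uniform convergence on compacts is preserved under termwise differentiation of the uniformly convergent scalar expansion, since both sides are holomorphic on the asserted domain. The main obstacle is purely combinatorial bookkeeping: matching the numerical prefactors $(l\pm m)(l\pm m\pm 1)$ produced by $(\partial\otimes\partial)$ with those built into $F_{L,M,N}$, $F'_{L,M,N}$, $G_{L,M,N}$, and $G'_{L,M,N}$, and verifying that boundary terms at the edges of the summation ranges ($|m|=l$, $|n|=l$) cause no trouble. These edge contributions are automatically annihilated by the vanishing of the coefficients $(l\pm m)(l\pm m\pm 1)$ at the relevant endpoints, so no genuinely new correction terms appear after reindexing.
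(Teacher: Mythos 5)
Your proposal is correct, and it reaches the expansions by a mildly different route than the paper. The paper writes $k_1(Z-W) = -(\partial_Z \otimes 1)\bigl(1 \otimes \frac{(Z-W)^{-1}}{N(Z-W)}\bigr)$, inserts the already-established vector-valued expansion of the first-order kernel $\frac{(Z-W)^{-1}}{N(Z-W)}$ (Proposition 26 of \cite{FL1}), and then applies the single remaining derivative $\partial_Z \otimes 1$ termwise, computing the four $2\times 2$ blocks $A_{ij}(l,m,n')$ and reassembling; you instead start one level lower, from the scalar expansion of $1/N(Z-W)$ (Proposition 25 of \cite{FL1}), and apply the full second-order operator $\partial_Z \otimes \partial_Z$ termwise, using the displayed formula for $(\partial\otimes\partial)\,t^l_{n\,\underline{m}}(Z)$ to recognize the four columns as $F_{l-1,m,n+1}$, $F_{l-1,m,n}$, $F_{l-1,m,n}$, $F_{l-1,m,n-1}$ and then regrouping in $n$ to assemble $G'_{l-1,m,N}(W)$. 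Your identification of the columns checks out (e.g.\ $(l-m)(l-m-1)=(L-M)(L-M+1)$ with $L=l-1$, $M=m$), the regrouping only rearranges finitely many terms within each fixed $l$-block so convergence is unaffected, and the scalar expansion has coefficient $a_l=1$, so no normalization needs absorbing. Your approach is somewhat more symmetric between the two expansions; the paper's buys shorter differentiation bookkeeping at the cost of carrying an extra half-integer index $n'$ that must be shifted to $n=n'\pm\frac12$ at the end. One small imprecision: the edge terms in $n$ (where $|n\pm1|>l-1$) are killed not by the coefficients $(l\pm m)(l\pm m\pm 1)$ — those handle the $m$-edges — but by the standing convention that $t^{l-1}_{n'\,\underline{\,\cdot\,}}$ with out-of-range indices is declared zero; this is harmless but worth stating.
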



\begin{proof}
See the discussion after equation (21) in \cite{FL3} for the definition of
the open domain $\BB D \subset \HC$.
Using our previous calculations (\ref{k_1-explicit}) and
Proposition 26 in \cite{FL1} (see also Proposition \ref{Prop26}), we obtain:
\begin{multline*}
k_1(Z-W) = (\partial_Z \otimes \partial_Z) \biggl( \frac1{N(Z-W)} \biggr)
= -(\partial_Z \otimes 1) \biggl( 1 \otimes \frac{(Z-W)^{-1}}{N(Z-W)} \biggr)  \\
= \frac1{N(W)} (\partial_Z \otimes 1) \left( 1 \otimes
\sum_{l,m,n'} \begin{pmatrix} (l-m+1) t^{l+\frac12}_{n' \, \underline{m+ \frac 12}}(Z)  \\
(l+m+1) t^{l+\frac12}_{n' \, \underline{m- \frac 12}}(Z) \end{pmatrix}
\cdot \bigl( t^{l+1}_{m \, \underline{n'- \frac 12}}(W^{-1}),
t^{l+1}_{m \, \underline{n'+ \frac 12}}(W^{-1}) \bigr) \right)  \\
= \frac1{N(W)} \sum_{l,m,n'} \begin{pmatrix} A_{11}(l,m,n') & A_{12}(l,m,n') \\
A_{21}(l,m,n') & A_{22}(l,m,n') \end{pmatrix},
\end{multline*}
where $l=-\frac12,0,\frac12,1,\frac32,\dots$, $m=-l-1,-l, \dots, l+1$,
$n'=-l-\frac12,-l+\frac12, \dots, l+\frac12$, and
\begin{multline*}
A_{11}(l,m,n') =
\partial_{11} \begin{pmatrix} (l-m+1) t^{l+\frac12}_{n' \, \underline{m+ \frac 12}}(Z)  \\
(l+m+1) t^{l+\frac12}_{n' \, \underline{m- \frac 12}}(Z) \end{pmatrix}
\cdot \bigl( t^{l+1}_{m \, \underline{n'- \frac 12}}(W^{-1}),
t^{l+1}_{m \, \underline{n'+ \frac 12}}(W^{-1}) \bigr)  \\
= \begin{pmatrix} (l-m)(l-m+1) t^{l}_{n'+\frac12 \, \underline{m+1}}(Z)  \\
(l-m+1)(l+m+1) t^{l}_{n'+\frac12 \, \underline{m}}(Z) \end{pmatrix}
\cdot \bigl( t^{l+1}_{m \, \underline{n'- \frac 12}}(W^{-1}),
t^{l+1}_{m \, \underline{n'+ \frac 12}}(W^{-1}) \bigr),
\end{multline*}
\begin{multline*}
A_{12}(l,m,n') =
\partial_{21} \begin{pmatrix} (l-m+1) t^{l+\frac12}_{n' \, \underline{m+ \frac 12}}(Z)  \\
(l+m+1) t^{l+\frac12}_{n' \, \underline{m- \frac 12}}(Z) \end{pmatrix}
\cdot \bigl( t^{l+1}_{m \, \underline{n'- \frac 12}}(W^{-1}),
t^{l+1}_{m \, \underline{n'+ \frac 12}}(W^{-1}) \bigr)  \\
= \begin{pmatrix} (l-m)(l-m+1) t^{l}_{n'-\frac12 \, \underline{m+1}}(Z)  \\
(l-m+1)(l+m+1) t^{l}_{n'-\frac12 \, \underline{m}}(Z) \end{pmatrix}
\cdot \bigl( t^{l+1}_{m \, \underline{n'- \frac 12}}(W^{-1}),
t^{l+1}_{m \, \underline{n'+ \frac 12}}(W^{-1}) \bigr),
\end{multline*}
\begin{multline*}
A_{21}(l,m,n') =
\partial_{12} \begin{pmatrix} (l-m+1) t^{l+\frac12}_{n' \, \underline{m+ \frac 12}}(Z)  \\
(l+m+1) t^{l+\frac12}_{n' \, \underline{m- \frac 12}}(Z) \end{pmatrix}
\cdot \bigl( t^{l+1}_{m \, \underline{n'- \frac 12}}(W^{-1}),
t^{l+1}_{m \, \underline{n'+ \frac 12}}(W^{-1}) \bigr)  \\
= \begin{pmatrix} (l-m+1)(l+m+1) t^{l}_{n'+\frac12 \, \underline{m}}(Z)  \\
(l+m)(l+m+1) t^{l}_{n'+\frac12 \, \underline{m-1}}(Z) \end{pmatrix}
\cdot \bigl( t^{l+1}_{m \, \underline{n'- \frac 12}}(W^{-1}),
t^{l+1}_{m \, \underline{n'+ \frac 12}}(W^{-1}) \bigr),
\end{multline*}
\begin{multline*}
A_{22}(l,m,n') =
\partial_{22} \begin{pmatrix} (l-m+1) t^{l+\frac12}_{n' \, \underline{m+ \frac 12}}(Z)  \\
(l+m+1) t^{l+\frac12}_{n' \, \underline{m- \frac 12}}(Z) \end{pmatrix}
\cdot \bigl( t^{l+1}_{m \, \underline{n'- \frac 12}}(W^{-1}),
t^{l+1}_{m \, \underline{n'+ \frac 12}}(W^{-1}) \bigr)  \\
= \begin{pmatrix} (l-m+1)(l+m+1) t^{l}_{n'-\frac12 \, \underline{m}}(Z)  \\
(l+m)(l+m+1) t^{l}_{n'-\frac12 \, \underline{m-1}}(Z) \end{pmatrix}
\cdot \bigl( t^{l+1}_{m \, \underline{n'- \frac 12}}(W^{-1}),
t^{l+1}_{m \, \underline{n'+ \frac 12}}(W^{-1}) \bigr).
\end{multline*}
Since the terms with $l=-\frac12$ are zero, we can restrict $l$ to
$0,\frac12,1,\frac32,\dots$. We have:
$$
\sum_{l,m,n'} A_{11}(l,m,n') = \sum_{l,m,n}
\begin{pmatrix} (l-m)(l-m+1) t^{l}_{n \, \underline{m+1}}(Z)  \\
(l-m+1)(l+m+1) t^{l}_{n \, \underline{m}}(Z) \end{pmatrix}
\cdot \bigl( t^{l+1}_{m \, \underline{n-1}}(W^{-1}),
t^{l+1}_{m \, \underline{n}}(W^{-1}) \bigr),
$$
$$
\sum_{l,m,n'} A_{21}(l,m,n') = \sum_{l,m,n}
\begin{pmatrix} (l-m+1)(l+m+1) t^{l}_{n \, \underline{m}}(Z)  \\
(l+m)(l+m+1) t^{l}_{n \, \underline{m-1}}(Z) \end{pmatrix}
\cdot \bigl( t^{l+1}_{m \, \underline{n-1}}(W^{-1}),
t^{l+1}_{m \, \underline{n}}(W^{-1}) \bigr),
$$
where $n=n'+\frac12$, and
$$
\sum_{l,m,n'} A_{12}(l,m,n') = \sum_{l,m,n}
\begin{pmatrix} (l-m)(l-m+1) t^{l}_{n \, \underline{m+1}}(Z)  \\
(l-m+1)(l+m+1) t^{l}_{n \, \underline{m}}(Z) \end{pmatrix}
\cdot \bigl( t^{l+1}_{m \, \underline{n}}(W^{-1}),
t^{l+1}_{m \, \underline{n+1}}(W^{-1}) \bigr),
$$
$$
\sum_{l,m,n'} A_{22}(l,m,n') = \sum_{l,m,n}
\begin{pmatrix} (l-m+1)(l+m+1) t^{l}_{n \, \underline{m}}(Z)  \\
(l+m)(l+m+1) t^{l}_{n \, \underline{m-1}}(Z) \end{pmatrix}
\cdot \bigl( t^{l+1}_{m \, \underline{n}}(W^{-1}),
t^{l+1}_{m \, \underline{n+1}}(W^{-1}) \bigr),
$$
where $n=n'-\frac12$; in all cases $n=-l,-l+1,\dots,l$.
This proves the first expansion. The other expansion is proved similarly.
\end{proof}

\subsection{Doubly Regular Functions on $\BB H^{\times}$}  \label{deg+2-inverse-subsection}

In this subsection we show that if a (left or right) doubly regular function is
defined on all of $\BB H^{\times}$, then the operator $\degtt$ can be inverted.
This will be needed, for example, when we define the invariant bilinear pairing
for such functions.

We start with a doubly left regular function
$f: \BB H^{\times} \to \BB S \odot \BB S$
and derive some properties of such functions.
Of course, doubly right regular functions
$g: \BB H^{\times} \to \BB S' \odot \BB S'$ have similar properties.
Let $0<r<R$, then, by the Cauchy-Fueter formulas for doubly regular functions
(Theorem \ref{Fueter-doubly-reg}),
$$
\degtt f(W) =
\frac1{2\pi^2} \int_{S^3_R} k_1(Z-W) \cdot (Dz \otimes Z) \cdot f(Z)
- \frac1{2\pi^2} \int_{S^3_r} k_1(Z-W) \cdot (Dz \otimes Z) \cdot f(Z),
$$
for all $W \in \BB H$ such that $r^2<N(W)<R^2$, where
$S^3_R \subset \BB H$ is the sphere of radius $R$ centered at the origin
$$
S^3_R = \{ X \in \BB H ;\: N(X)=R^2 \}.
$$
Define functions $\tilde f_+: \BB H \to \BB S \odot \BB S$
and $\tilde f_-: \BB H^{\times} \to \BB S \odot \BB S$ by
$$
\tilde f_+(W) =
\frac1{2\pi^2} \int_{S^3_R} k_1(Z-W) \cdot (Dz \otimes Z) \cdot f(Z),
\qquad R^2>N(W),
$$
$$
\tilde f_-(W) =
- \frac1{2\pi^2} \int_{S^3_r} k_1(Z-W) \cdot (Dz \otimes Z) \cdot f(Z),
\qquad r^2<N(W).
$$
Note that $\tilde f_+$ and $\tilde f_-$ are doubly left regular and
that $\tilde f_-(W)$ decays at infinity at a rate $\sim N(W)^{-2}$.

For a function $\phi$ defined on $\BB H$ or, slightly more generally,
on a star-shaped open subset of $\BB H$ centered at the origin, let
$$
\bigl( \degtt^{-1} \phi \bigr)(Z) = \int_0^1 t \cdot \phi(tZ) \,dt.
$$
Similarly, for a function $\phi$ defined on $\BB H^{\times}$ and decaying
sufficiently fast at infinity, we can define $\degtt^{-1} \phi$ as
$$
\bigl( \degtt^{-1} \phi \bigr)(Z) = - \int_1^{\infty} t \cdot \phi(tZ) \,dt.
$$
Then
$$
\degtt \bigl( \degtt^{-1} \phi \bigr)
= \degtt^{-1} \bigl( \degtt \phi \bigr) = \phi
$$
for functions $\phi$ that are either defined on star-shaped open subsets of
$\BB H$ centered at the origin or on $\BB H^{\times}$ and decaying sufficiently
fast at infinity. (In the same fashion one can also define $\degtt^{-1}\phi$
for functions defined on star-shaped open subsets of $\HC$ centered at
the origin or on $\HC^{\times}$ and decaying sufficiently fast at infinity.)

We introduce functions
$$
f_+ = \degtt^{-1} \tilde f_+ \qquad \text{and} \qquad
f_- = \degtt^{-1} \tilde f_-.
$$

\begin{prop}
Let $f: \BB H^{\times} \to \BB S \odot \BB S$ be a doubly left regular
function. Then $f(X)=f_+(X)+f_-(X)$, for all $X \in \BB H^{\times}$.
\end{prop}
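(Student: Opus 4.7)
The plan is to establish the identity $(\deg+2) f = \tilde f_+ + \tilde f_-$ on all of $\mathbb{H}^\times$ via Cauchy--Fueter on an annulus, deduce that $h := f_+ + f_- - f$ is a doubly left regular function homogeneous of degree $-2$, and finally argue that no such nonzero function exists. First I would verify that $\tilde f_+$ is independent of $R$ (and $\tilde f_-$ of $r$) on its natural domain: for $R_1 < R_2$ and $N(W) < R_1^2$, Theorem~\ref{Fueter-doubly-reg} applied to the annular region $\{R_1^2 < N(Z) < R_2^2\}$ (on which $f$ is doubly regular) with $W$ outside the closure gives $\tilde f_+^{R_2}(W) = \tilde f_+^{R_1}(W)$; symmetrically for $\tilde f_-$. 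Letting $R \to \infty$ and $r \to 0$ extends $\tilde f_+$ to a doubly left regular function on all of $\mathbb{H}$, and $\tilde f_-$ to one on $\mathbb{H}^\times$ decaying like $N(W)^{-2}$ at infinity. Theorem~\ref{Fueter-doubly-reg} applied to any annulus enclosing a point $W \in \mathbb{H}^\times$ then yields the pointwise identity
$$
(\deg+2) f(W) = \tilde f_+(W) + \tilde f_-(W).
$$

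Next, by differentiation under the integral sign the two forms of $(\deg+2)^{-1}$ commute with $\nabla^+ \otimes 1$ and $1 \otimes \nabla^+$ and so preserve double left regularity; together with the right-inverse property $(\deg+2)((\deg+2)^{-1}\phi) = \phi$ recorded just before the statement, this gives $(\deg+2) f_\pm = \tilde f_\pm$ with $f_\pm$ doubly left regular (on $\mathbb{H}$ and $\mathbb{H}^\times$ respectively). Subtracting from the previous display, $h := f_+ + f_- - f$ is doubly left regular on $\mathbb{H}^\times$ and satisfies $(\deg+2) h = 0$; equivalently, $h$ is homogeneous of degree $-2$ on $\mathbb{H}^\times$.

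The main obstacle is to conclude $h \equiv 0$, as no direct Liouville-type argument applies (a degree $-2$ function decays at infinity but is singular at the origin). The plan is to multiply by $Z$ and exploit the Laurent-type decomposition of left regular functions on $\mathbb{H}^\times$. By Lemma~\ref{zf-regular}, $H(Z) := (1 \otimes Z) h(Z)$ is (left) regular on $\mathbb{H}^\times$, and since $h$ is homogeneous of degree $-2$, $H$ is homogeneous of degree $-1$. The classical Cauchy--Fueter formula for left regular functions on an annulus enclosing a given point decomposes $H = H_+ + H_-$ with $H_+$ left regular on all of $\mathbb{H}$ and $H_-$ left regular on $\mathbb{H}^\times$ decaying at infinity. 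The matrix-coefficient expansion of the Cauchy--Fueter kernel (Proposition 26 of \cite{FL1}; cf.\ Proposition~\ref{Prop26}) expands $H_+$ as a series in left regular basis functions of degrees $\{0,1,2,\ldots\}$ and $H_-$ in the decaying basis of degrees $\{-3,-4,\ldots\}$. Since neither basis contains a degree $-1$ element, the degree $-1$ parts of $H_+$ and $H_-$ both vanish, forcing $H \equiv 0$. As $1 \otimes Z$ is invertible for $Z \in \mathbb{H}^\times$, we conclude $h \equiv 0$, which is the desired identity $f = f_+ + f_-$ on $\mathbb{H}^\times$.
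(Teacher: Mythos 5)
Your proposal is correct, and the reduction to showing that $h = f_+ + f_- - f$ is a doubly left regular function homogeneous of degree $-2$ coincides with the paper's setup (there $h$ is called $f_{-2}$). Where you diverge is in the final vanishing step. The paper differentiates: it sets $f_{-3} = \partial f_{-2}/\partial x_{11}$, which is doubly left regular and homogeneous of degree $-3$, applies the annulus Cauchy--Fueter formula to write $f_{-3}$ as a sum of an entire doubly regular function and one decaying like $N(W)^{-2}$, and then combines Liouville (Corollary \ref{Liouville}) with the mismatch between homogeneity of degree $-3$ and decay of order $N(W)^{-2}$ to force $f_{-3}\equiv 0$; repeating for the other partials makes $f_{-2}$ constant, hence zero by homogeneity. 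You instead multiply: $(1\otimes Z)h$ is left regular in the first tensor factor by Lemma \ref{zf-regular} and homogeneous of degree $-1$, and the classical Laurent decomposition for left regular functions on $\BB H^{\times}$ has homogeneous components only in degrees $\{0,1,2,\dots\}\cup\{-3,-4,\dots\}$, so the degree $-1$ gap kills it, and invertibility of $1\otimes Z$ on $\BB H^{\times}$ kills $h$. Both arguments are sound; yours is arguably cleaner in that it needs only the spectral gap in the classical left regular Laurent expansion (which is legitimately available from Propositions 24 and 26 of \cite{FL1} without circularity, since Corollary \ref{Laurent-expansion} for doubly regular functions depends on the proposition being proved), whereas the paper's needs the Liouville theorem for doubly regular functions together with a quantitative decay estimate for the kernel $k_1$. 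One small imprecision: $\degtt^{-1}$ does not literally commute with $\nabla^{+}\otimes 1$ (conjugation by the dilation $Z\mapsto tZ$ introduces a factor of $t$), but it does preserve the kernel of that operator, which is all you use.
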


\begin{proof}
Let $f_{-2} = f-f_+-f_-$, we want to show that $f_{-2} \equiv 0$.
Note that $f_{-2}: \BB H^{\times} \to \BB S \odot \BB S$ is a doubly left
regular function such that $\degtt f_{-2} \equiv 0$, hence $f_{-2}$ is
homogeneous of degree $-2$. Let
$$
f_{-3} = \frac{\partial}{\partial x_{11}} f_{-2}, \qquad
f_{-3}: \BB H^{\times} \to \BB S \odot \BB S,
$$
then $f_{-3}$ is a doubly left regular function that is homogeneous
of degree $-3$.

By the Cauchy-Fueter formulas for doubly regular functions
(Theorem \ref{Fueter-doubly-reg}),
\begin{multline*}
f_{-3}(X) =
- \frac1{2\pi^2} \int_{S^3_R} k_1(Z-X) \cdot (Dz \otimes Z) \cdot f_{-3}(Z)  \\
+ \frac1{2\pi^2} \int_{S^3_r} k_1(Z-X) \cdot (Dz \otimes Z) \cdot f_{-3}(Z),
\end{multline*}
where $R,r>0$ are such that $r^2<N(X)<R^2$.
By Liouville's theorem (Corollary \ref{Liouville}), the first integral defines
a doubly left regular function on $\BB H$ that is either constant or unbounded.
On the other hand, the second integral defines a doubly left regular function
on $\BB H^{\times}$ that decays at infinity at a rate $\sim N(W)^{-2}$.
We conclude that $f_{-3} \equiv 0$, hence $f_{-2} \equiv 0$ as well.
\end{proof}

\begin{df}  \label{deg-inv-def}
Let $f: \BB H^{\times} \to \BB S \odot \BB S$ be a doubly left regular
function. We define
$$
\degtt^{-1}f = \degtt^{-1}f_+ + \degtt^{-1}f_-.
$$
Similarly, we can define $\degtt^{-1}g$ for doubly right regular functions
$g: \BB H^{\times} \to \BB S' \odot \BB S'$.
\end{df}

From the previous discussion we immediately obtain:

\begin{prop}
Let $f: \BB H^{\times} \to \BB S \odot \BB S$ be a doubly left regular function
and $g: \BB H^{\times} \to \BB S' \odot \BB S'$ a doubly right regular function.
Then
\begin{align*}
\degtt \bigl( \degtt^{-1} f \bigr) = \degtt^{-1} \bigl(\degtt f \bigr) &= f, \\
\degtt \bigl( \degtt^{-1} g \bigr) = \degtt^{-1} \bigl( \degtt g \bigr) &= g.
\end{align*}
\end{prop}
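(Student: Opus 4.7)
The plan is to reduce the proposition to the two elementary identities $\degtt \circ \degtt^{-1} = \operatorname{id}$ and $\degtt^{-1} \circ \degtt = \operatorname{id}$ which, as explained in the paper just before Definition \ref{deg-inv-def}, hold on the spaces of functions defined on star-shaped open subsets of $\BB H$ centered at the origin, or on $\BB H^\times$ with sufficient decay at infinity. The two cases in the proposition (for $f$ and for $g$) are formally symmetric, so I would write out only the case of $f$ and indicate that the case of $g$ is identical.

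The identity $\degtt(\degtt^{-1} f) = f$ is the easier one and I would dispatch it first. By Definition \ref{deg-inv-def} and linearity, $\degtt(\degtt^{-1} f) = \degtt(\degtt^{-1} f_+) + \degtt(\degtt^{-1} f_-)$, and the elementary inverse identities apply to each summand because $f_+$ is doubly left regular on all of $\BB H$ (a star-shaped domain) and $f_-$ decays at infinity at rate $\sim N(W)^{-2}$, as one reads off from its integral representation via $\tilde f_-$. Summing gives $f_+ + f_- = f$ by the preceding Proposition.

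The harder direction is $\degtt^{-1}(\degtt f) = f$, and my plan has three steps: (i) verify that $\degtt f$ is again doubly left regular on $\BB H^\times$, using the short commutator computation $[\nabla^+ \otimes 1,\degtt] = \nabla^+ \otimes 1$ (and the analogous identity for $1 \otimes \nabla^+$), which gives $(\nabla^+ \otimes 1)(\degtt f) = (\degtt+1)(\nabla^+ \otimes 1)f = 0$ and likewise for the other operator; (ii) identify the canonical decomposition $(\degtt f)_+ + (\degtt f)_- = \degtt f$ of Definition \ref{deg-inv-def} applied to $\degtt f$ as $\degtt f_+ + \degtt f_-$, noting that $\degtt f_+$ is doubly left regular on all of $\BB H$ while $\degtt f_-$ is doubly left regular on $\BB H^\times$ and still decays at rate $\sim N(W)^{-2}$ (since $f_-$ does and $\degtt$ preserves the order of decay of homogeneous components); (iii) apply Definition \ref{deg-inv-def} together with the elementary inverse identities to conclude $\degtt^{-1}(\degtt f_+) + \degtt^{-1}(\degtt f_-) = f_+ + f_- = f$.

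The main obstacle is step (ii), namely showing $(\degtt f)_\pm = \degtt f_\pm$. My plan is to establish uniqueness of the decomposition $\phi = \phi_+ + \phi_-$ for doubly left regular $\phi$ on $\BB H^\times$, with $\phi_+$ extending doubly regularly to $\BB H$ and $\phi_-$ decaying at infinity. If two such decompositions existed, their difference $\phi_+ - \phi_+' = \phi_-' - \phi_-$ would be a doubly left regular function on all of $\BB H$ that is bounded (even decaying) at infinity, hence constant by Liouville's theorem (Corollary \ref{Liouville}), and the decay would force the constant to be zero. This uniqueness, together with the two regularity/decay properties of $\degtt f_+$ and $\degtt f_-$ identified in (ii), supplies the required identification and completes the argument.
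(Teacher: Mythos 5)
Your argument is correct and takes essentially the same route as the paper, which derives this proposition ``immediately from the previous discussion,'' i.e.\ from Definition \ref{deg-inv-def}, the decomposition $f=f_++f_-$, and the elementary inverse identities on star-shaped domains and on decaying functions. The one point the paper leaves implicit --- that $(\degtt f)_{\pm}=\degtt f_{\pm}$, which you settle by uniqueness of the decomposition via Liouville's theorem (Corollary \ref{Liouville}) --- is handled soundly in your step (ii).
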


From the expansions of the Cauchy-Fueter kernel (\ref{k_1-expansion-1}) and
(\ref{k_1-expansion-2}) we immediately obtain an analogue of Laurent series
expansion for doubly regular functions.

\begin{cor}  \label{Laurent-expansion}
Let $f: \BB H^{\times} \to \BB S \odot \BB S$ be a doubly left regular
function, write $f=f_++f_-$ as in the above proposition.
Then the functions $f_+$ and $f_-$ can be expanded as series
$$
f_+(X) = \sum_l \Bigl( \sum_{m,n} a_{l,m,n} F_{l,m,n}(X) \Bigr), \qquad
f_-(X) = \sum_l \Bigl( \sum_{m,n} b_{l,m,n} F'_{l,m,n}(X) \Bigr).
$$

If $g: \BB H^{\times} \to \BB S' \odot \BB S'$ is a doubly right regular
function, then it can be expressed as $g=g_+ + g_-$ in a similar way,
and the functions $g_+$ and $g_-$ can be expanded as series
$$
g_+(X) = \sum_l \Bigl( \sum_{m,n} c_{l,m,n} G_{l,m,n}(X) \Bigr), \qquad
g_-(X) = \sum_l \Bigl( \sum_{m,n} d_{l,m,n} G'_{l,m,n}(X) \Bigr).
$$
\end{cor}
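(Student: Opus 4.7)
The plan is to substitute the kernel expansions (\ref{k_1-expansion-1}) and (\ref{k_1-expansion-2}) into the integral representations of $\tilde f_{\pm}$ from the preceding proposition, and then invert $\degtt$ term-by-term. Recall $f=f_{+}+f_{-}$ with $f_{\pm}=\degtt^{-1}\tilde f_{\pm}$, where $\tilde f_{+}$ and $\tilde f_{-}$ are given by Cauchy-Fueter type integrals over $S^{3}_{R}$ and $S^{3}_{r}$ respectively. The two regimes $|Z|>|W|$ (on $S^{3}_{R}$, with $W$ inside) and $|Z|<|W|$ (on $S^{3}_{r}$, with $W$ outside) correspond exactly to the convergence regions $WZ^{-1}\in\BB D^{+}$ and $ZW^{-1}\in\BB D^{+}$ of the two kernel expansions.

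I would first treat $\tilde f_{+}$. On $S^{3}_{R}$ one has $|Z|=R>|W|$, so $WZ^{-1}$ lies in a compact subset of the convergence region of (\ref{k_1-expansion-2}); the uniform convergence permits term-by-term integration. Because $G_{l,m,n}(W)$ is $Z$-independent, each term reduces to a linear combination of the scalar matrix coefficients $t^{l}_{*\underline{*}}(W)$ with constant $\BB S\odot\BB S$-valued coefficients obtained from the $Z$-integral of $F'_{l,m,n}(Z)$ against $(Dz\otimes Z)f(Z)$. Since $\tilde f_{+}$ is doubly left regular on all of $\BB H$ (the sphere encloses $W$, so there is no singularity at the origin), each homogeneous piece of its analytic expansion is a homogeneous doubly left regular polynomial, and by Proposition \ref{DR-irred-prop} these exhaust the irreducible module ${\cal F}^{+}=\BB C\text{-span}\{F_{l,m,n}\}$. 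Regrouping the series in this basis yields $\tilde f_{+}(X)=\sum_{l,m,n}\tilde a_{l,m,n}F_{l,m,n}(X)$. The treatment of $\tilde f_{-}$ is parallel with the two expansions swapped: on $S^{3}_{r}$ we have $|Z|=r<|W|$, so (\ref{k_1-expansion-1}) applies; the resulting $\tilde f_{-}$ is doubly left regular on $\BB H^{\times}$ and decays like $N(W)^{-2}$ at infinity, hence lies in ${\cal F}^{-}=\BB C\text{-span}\{F'_{l,m,n}\}$, giving $\tilde f_{-}(X)=\sum_{l,m,n}\tilde b_{l,m,n}F'_{l,m,n}(X)$. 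Finally, $F_{l,m,n}$ and $F'_{l,m,n}$ are homogeneous of degrees $2l$ and $-2l-4$ respectively, so they are $\degtt$-eigenvectors with nonzero eigenvalues $\pm(2l+2)$; applying $\degtt^{-1}$ term-by-term produces the expansions of $f_{\pm}$ with $a_{l,m,n}=\tilde a_{l,m,n}/(2l+2)$ and $b_{l,m,n}=-\tilde b_{l,m,n}/(2l+2)$. The doubly right regular case is entirely analogous, using the Cauchy-Fueter formula for $g$ from Theorem \ref{Fueter-doubly-reg} together with the dual bases ${\cal G}^{+}$ and ${\cal G}^{-}$.

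The main technical obstacle is that the kernel expansions present $k_{1}(Z-W)$ with doubly right regular row functions $G_{l,m,n}(W)$ or $G'_{l,m,n}(W)$ on the $W$-side, whereas the conclusion requires expansions in the doubly left regular column basis. The resolution is not a direct algebraic rewriting but an identification of spaces: $\tilde f_{\pm}$ is a priori doubly left regular, so Proposition \ref{DR-irred-prop} pins down the unique irreducible module where it can live, making the reorganization of indices automatic. A minor subtlety is verifying the decay $\tilde f_{-}(W)\sim N(W)^{-2}$ that places it in ${\cal F}^{-}$ rather than a larger completion; this is visible directly from the integral formula via crude bounds on $k_{1}(Z-W)$ for $|W|\gg r$.
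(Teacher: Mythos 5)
Your proposal is correct and is essentially the paper's own argument: the paper derives this corollary as an immediate consequence of substituting the kernel expansions (\ref{k_1-expansion-1})--(\ref{k_1-expansion-2}) into the integral formulas for $\tilde f_{\pm}$ and inverting $\degtt$ on each homogeneous term, exactly as you do. The only simplification you miss is that the symmetry $k_1(Z-W)=k_1(W-Z)$ lets you apply each expansion with the roles of $Z$ and $W$ exchanged, so the $W$-dependence lands directly in the column factors $F_{l,m,n}$, $F'_{l,m,n}$ and the $Z$-dependence pairs with $(Dz\otimes Z)f(Z)$ to give a scalar coefficient; this removes the need for your ``identification of spaces'' detour and produces the coefficient formulas of Corollary \ref{Laurent-coeff} at the same time.
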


Formulas expressing the coefficients $a_{l,m,n}$, $b_{l,m,n}$, $c_{l,m,n}$ and
$d_{l,m,n}$ will be given in Corollary \ref{Laurent-coeff}.

\subsection{Invariant Bilinear Pairing for Doubly Regular Functions}

We define a pairing between doubly left and right regular functions as follows.
If $f(Z)$ and $g(Z)$ are doubly left and right regular functions on
$\HC^{\times}$ respectively, then by the results of the previous subsection
$\degtt^{-1}f$ and $\degtt^{-1}g$ are well defined, and we set
\begin{equation}   \label{pairing1}
\langle f, g \rangle_{\cal DR} = \frac1{2\pi^2}
\int_{Z \in S^3_R} g(Z) \cdot (Z \otimes Dz)\cdot \bigl( \degtt^{-1} f \bigr)(Z),
\end{equation}
where $S^3_R \subset \BB H \subset \HC$ is the sphere of radius $R$
centered at the origin
$$
S^3_R = \{ X \in \BB H ;\: N(X)=R^2 \}.
$$
Recall that by Lemma 6 in \cite{FL1} the $3$-form $Dz$ restricted to $S^3_R$
becomes $Z\,dS/R$, where $dS$ is the usual Euclidean volume element on $S^3_R$.
Thus we can rewrite (\ref{pairing1}) as
\begin{multline*}
\langle f, g \rangle_{\cal DR} = \frac1{2\pi^2} \int_{Z \in S^3_R} g(Z) \cdot
(Z \otimes Z) \cdot \bigl( \degtt^{-1} f \bigr)(Z) \,\frac{dS}R  \\
= \frac1{2\pi^2} \int_{Z \in S^3_R} g(Z) \cdot (Dz \otimes Z)
\cdot \bigl( \degtt^{-1} f \bigr)(Z).
\end{multline*}
Since $(1 \otimes \nabla^+) \degtt^{-1} f =0$ and, by Lemma \ref{zf-regular},
$\bigl[ g(Z) (Z \otimes 1) \bigr] (\overleftarrow{1 \otimes \nabla^+}) = 0$,
the integrand of (\ref{pairing1}) is a closed $3$-form and the contour of
integration can be continuously deformed. In particular, this pairing does
not depend on the choice of $R>0$.

\begin{prop}
If $f(Z)$ and $g(Z)$ are doubly left and right regular functions
on $\HC^{\times}$ respectively, then
\begin{multline}  \label{deg-switch}
\langle f, g \rangle_{\cal DR} = \frac1{2\pi^2} \int_{Z \in S^3_R} g(Z) \cdot
(Z \otimes Dz) \cdot \bigl( \degtt^{-1} f \bigr)(Z) \\
= - \frac1{2\pi^2} \int_{Z \in S^3_R} \bigl( \degtt^{-1} g \bigr)(Z) \cdot
(Z \otimes Dz) \cdot f(Z).
\end{multline}
\end{prop}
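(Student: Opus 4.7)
The plan is to realize the identity (\ref{deg-switch}) as the vanishing of the integral of a Lie derivative. Set $F = \widetilde{\deg}^{-1}f$ and $G = \widetilde{\deg}^{-1}g$, so that the preceding proposition gives $\widetilde{\deg} F = f$ and $\widetilde{\deg} G = g$. Note that $\widetilde{\deg}^{-1}$, as defined by the radial integrals in Definition \ref{deg-inv-def}, commutes with the constant-coefficient operators $(\nabla^+\otimes 1)$ and $(1\otimes \nabla^+)$ (differentiate under the integral after absorbing the factor of $t$ from the chain rule), so $F$ is again doubly left regular and $G$ doubly right regular on $\HC^{\times}$.

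Now consider the scalar $3$-form
$$
\omega \;=\; G(Z)\cdot (Z\otimes Dz)\cdot F(Z)
$$
on $\BB H^{\times}$. The closedness of $\omega$ follows by exactly the argument given in the paragraph preceding the proposition: factor $(Z\otimes Dz) = (Z\otimes 1)(1\otimes Dz)$, observe that $[G(Z)(Z\otimes 1)](\overleftarrow{1\otimes\nabla^+})=0$ by Lemma \ref{zf-regular}, and $(1\otimes\nabla^+)F=0$ by double left regularity of $F$. Let $E=\sum_\mu x^\mu\,\partial/\partial x^\mu$ be the Euler vector field. Since $d\omega = 0$, Cartan's magic formula gives $L_E\omega = d\,i_E\omega$, and hence by Stokes' theorem on the closed manifold $S^3_R$,
$$
\int_{S^3_R} L_E\omega \;=\; \int_{S^3_R} d\,i_E\omega \;=\; 0.
$$

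The final step is a Leibniz-rule computation of $L_E\omega$. For any function $\phi$ one has $L_E\phi = \deg\phi$, and for the $\HC\otimes\HC$-valued $3$-form $Z\otimes Dz$, which has total scaling degree $1+3=4$ under $Z\mapsto \lambda Z$, one has $L_E(Z\otimes Dz)=4(Z\otimes Dz)$. Hence
$$
L_E\omega \;=\; (\deg G)(Z\otimes Dz)F \;+\; 4\,G(Z\otimes Dz)F \;+\; G(Z\otimes Dz)(\deg F),
$$
which regroups using $\widetilde{\deg} = \deg + 2$ as
$$
L_E\omega \;=\; (\widetilde{\deg}G)(Z\otimes Dz)F \;+\; G(Z\otimes Dz)(\widetilde{\deg}F).
$$
Substituting $\widetilde{\deg}G=g$ and $\widetilde{\deg}F=f$ and dividing by $2\pi^2$, the vanishing of $\int_{S^3_R}L_E\omega$ is exactly the assertion (\ref{deg-switch}). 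The only point requiring care is to keep the tensor-product structure consistent in the Leibniz rule---verifying that the factor of $4$ from $L_E(Z\otimes Dz)$ really does split as $+2$ on each side to convert $\deg$ into $\widetilde{\deg}$---but this is a direct algebraic check and is the only nontrivial combinatorial input in the argument.
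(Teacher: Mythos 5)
Your proof is correct and is essentially the paper's argument: the paper likewise uses closedness of $g(Z)\cdot(Z\otimes Dz)\cdot f(Z)$ to get $R$-independence of the contour integral, differentiates the rescaled integral $\int_{S^3_{tR}}$ at $t=1$ (which is exactly your $\int_{S^3_R}L_E\omega=0$), and reads off the identity from the homogeneity degree $4=2+2$ of $Z\otimes Dz$. Your packaging via Cartan's formula and the explicit check that $\degtt^{-1}$ preserves double regularity are fine but do not change the substance.
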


\begin{proof}
Since the expression
$$
\int_{Z \in S^3_R} g(Z) \cdot (Z \otimes Dz) \cdot f(Z)
$$
is independent of the choice of $R>0$, we have:
\begin{multline*}
0 = \frac{d}{dt} \biggr|_{t=1} \biggl( \int_{Z \in  S^3_{tR}} g(Z)
\cdot (Z \otimes Dz) \cdot f(Z) \biggr)  \\
= \int_{Z \in  S^3_R} \Bigl( \bigl( \degtt g \bigr)(Z) \cdot (Z \otimes Dz)
\cdot f(Z)
+ g(Z) \cdot (Z \otimes Dz) \cdot \bigl( \degtt f \bigr)(Z) \Bigr).
\end{multline*}
From this (\ref{deg-switch}) follows.
\end{proof}

\begin{cor}
If $f(Z)$ and $g(Z)$ are doubly left and right regular functions on $\HC$
respectively and $W \in \BB D_R^+$ (open domains $\BB D_R^{\pm}$ were defined by
equation (22) in \cite{FL3}), the Cauchy-Fueter formulas for doubly
regular functions (Theorem \ref{Fueter-doubly-reg}) can be rewritten as
$$
f(W) = \bigl\langle k_1(Z-W), f(Z) \bigr\rangle_{\cal DR}
\qquad \text{and} \qquad
g(W) = - \bigl\langle g(Z), k_1(Z-W) \bigr\rangle_{\cal DR}.
$$
\end{cor}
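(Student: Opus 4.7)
The plan is to invert the degree operator $\degtt$ on the Cauchy-Fueter formulas from Theorem \ref{Fueter-doubly-reg}, and then to recognize the resulting integrals as the pairings defined in (\ref{pairing1}).

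I would begin by specializing Theorem \ref{Fueter-doubly-reg} to $U = \BB D_R^+$ (with boundary $S^3_R$); for $W \in \BB D_R^+$ this gives
$$
\degtt f(W) = \frac{1}{2\pi^2} \int_{S^3_R} k_1(Z-W) \cdot (Z \otimes Dz) \cdot f(Z),
$$
together with the analogous identity for $g$ (with the order of $g(Z)$ and $k_1(Z-W)$ in the integrand reversed). I would then apply $\degtt^{-1}$ in the variable $W$ (in the sense of Definition \ref{deg-inv-def}) to both sides. Since $f$ and $g$ are defined on all of $\HC$, the left-hand sides simplify to $f(W)$ and $g(W)$. On the right, the operator $\degtt^{-1}_W$ must be commuted past the integral over $Z$ --- legitimate because $\degtt^{-1}$ is essentially a radial integration in $W$ --- so that it lands only on the kernel $k_1(Z-W)$.

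It then remains to identify the two resulting integrals with $\langle k_1(Z-W), f(Z) \rangle_{\cal DR}$ and $-\langle g(Z), k_1(Z-W) \rangle_{\cal DR}$ respectively. The main computational input is the identity
$$
\degtt_Z k_1(Z-W) = -\degtt_W k_1(Z-W),
$$
which follows because $k_1$ is homogeneous of degree $-4$ in $Z-W$, so that $(\degtt_Z + \degtt_W) k_1(Z-W) = 0$. Combined with the two equivalent expressions of the pairing in (\ref{deg-switch}), this identity converts the $\degtt^{-1}_W$ landed on $k_1$ in the previous step into the $\degtt^{-1}_Z$ which appears in the definition (\ref{pairing1}) of $\langle \cdot, \cdot \rangle_{\cal DR}$. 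The extra minus sign in the $g$ formula arises from the combination of the sign in the identity above and the sign flip in (\ref{deg-switch}), depending on whether $k_1$ occupies the first or second slot of the pairing.

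The main obstacle will be the bookkeeping with the two degree operators $\degtt_Z$ and $\degtt_W$ acting on the two-variable kernel $k_1(Z-W)$, together with extending $\degtt^{-1}$ to $k_1(Z-W)$ viewed as a function of $W$ (which, for fixed $Z$, is defined on $\BB H \setminus \{Z\}$ rather than $\BB H^{\times}$). I expect this extension can be handled rigorously by expanding $k_1(Z-W)$ in the convergent series (\ref{k_1-expansion-1}), whose terms are homogeneous in $W$ so that $\degtt^{-1}$ acts by division by a scalar on each term, and applying all operations termwise before resumming.
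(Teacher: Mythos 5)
Your argument is correct, and it lives in the same circle of ideas as the paper's (the paper in fact offers no written proof, treating the corollary as immediate from the definition of the pairing together with Theorem \ref{Fueter-doubly-reg} and (\ref{deg-switch})). The one substantive difference is where you choose to invert $\degtt$. You apply $\degtt^{-1}$ in the $W$ variable to the output of the Cauchy--Fueter formula and then push it through the integral onto the kernel, after which the homogeneity identity $\degtt_Z k_1(Z-W) = -\degtt_W k_1(Z-W)$ and (\ref{deg-switch}) are needed to trade it for the $\degtt^{-1}_Z$ appearing in (\ref{pairing1}). The shorter route is to observe that $\degtt^{-1}f$ is itself doubly left regular (the radial integration commutes with $\nabla^+ \otimes 1$ and $1 \otimes \nabla^+$), so Theorem \ref{Fueter-doubly-reg} applies to it verbatim and gives $f(W) = \degtt\bigl(\degtt^{-1}f\bigr)(W) = \frac1{2\pi^2}\int_{S^3_R} k_1(Z-W)\cdot(Z\otimes Dz)\cdot\bigl(\degtt^{-1}f\bigr)(Z)$, which is the pairing by definition; (\ref{deg-switch}) then yields the form with $\degtt^{-1}$ on the kernel if one wants it. This avoids entirely the technical discussion you raise at the end about extending $\degtt^{-1}$ to the singular kernel. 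Two small corrections to that discussion anyway: for $W \in \BB D_R^+$ and $Z \in S^3_R$ the function $W \mapsto k_1(Z-W)$ is smooth on the star-shaped domain $\BB D_R^+$, so $\degtt^{-1}_W$ is already well defined there without any series argument; and if you do expand, the convergent expansion in this configuration ($WZ^{-1} \in \BB D^+$) is (\ref{k_1-expansion-2}), not (\ref{k_1-expansion-1}).
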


We can rewrite the bilinear pairing (\ref{pairing1})
in a more symmetrical way.
Let $0<r<R$ and $0<r_1<R<r_2$. Using the Cauchy-Fueter formulas for doubly
regular functions (Theorem \ref{Fueter-doubly-reg}), substituting
\begin{multline*}
  g(Z) = \frac1{2\pi^2} \int_{W \in S^3_{r_2}}
  \bigl( \degtt^{-1} g \bigr)(W) \cdot (W \otimes Dw) \cdot k_1(W-Z) \\
  - \frac1{2\pi^2} \int_{W \in S^3_{r_1}}
  \bigl( \degtt^{-1} g \bigr)(W) \cdot (W \otimes Dw) \cdot k_1(W-Z),
  \qquad Z \in \BB D^+_{r_2} \cap \BB D^-_{r_1},
\end{multline*}
into (\ref{pairing1}) and shifting contours of integration, we obtain:
\begin{multline}  \label{pairing1-symm}
\langle f, g \rangle_{\cal DR}  \\
= \frac1{4\pi^4}
\iint_{\genfrac{}{}{0pt}{}{Z \in S^3_r}{W \in S^3_R}} \bigl( \degtt^{-1} g \bigr)(W)
\cdot (W \otimes Dw) \cdot k_1(W-Z) \cdot (Z \otimes Dz) \cdot
\bigl( \degtt^{-1} f \bigr)(Z)  \\
- \frac1{4\pi^4} \iint_{\genfrac{}{}{0pt}{}{Z \in S^3_R}{W \in S^3_r}}
\bigl( \degtt^{-1} g \bigr)(W) \cdot (W \otimes Dw) \cdot k_1(W-Z) \cdot
(Z \otimes Dz) \cdot \bigl( \degtt^{-1} f \bigr)(Z).
\end{multline}

\begin{prop}  \label{invariant}
The bilinear pairing (\ref{pairing1}) is $\mathfrak{gl}(2,\HC)$-invariant.
\end{prop}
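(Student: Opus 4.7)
The plan is to reduce to infinitesimal invariance, since $GL(2,\HC)$ is connected, and to verify
$$
\langle \pi_{dl}(X)f,g\rangle_{\cal DR} + \langle f,\pi_{dr}(X)g\rangle_{\cal DR} = 0
$$
for $X$ ranging over a generating set of $\mathfrak{gl}(2,\HC)$. By Lemma \ref{Lie-alg-action} it suffices to treat the four block types $\bigl(\begin{smallmatrix} A & 0 \\ 0 & 0 \end{smallmatrix}\bigr)$, $\bigl(\begin{smallmatrix} 0 & B \\ 0 & 0 \end{smallmatrix}\bigr)$, $\bigl(\begin{smallmatrix} 0 & 0 \\ 0 & D \end{smallmatrix}\bigr)$, $\bigl(\begin{smallmatrix} 0 & 0 \\ C & 0 \end{smallmatrix}\bigr)$ separately.

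First I would handle the $A$-, $B$- and $D$-blocks. These integrate to affine transformations $Z\mapsto (1+tA)Z+tB$ and so preserve the contour $S^3_R$ up to a rescaling of $R$; since the pairing is independent of the choice of $R>0$, the desired invariance reduces in each of these three cases to Stokes' theorem for a closed $3$-form on $\BB H^{\times}$. The key technical input is the identity $2\degtt = Z^+\nabla^+ + \nabla Z = \nabla^+Z^+ + Z\nabla$ of Lemma \ref{deg-nabla}: it lets me commute $\degtt^{-1}$ past the first-order operators $\tr(AZ\partial)$, $\tr(B\partial)$, $\tr(ZD\partial)$ supplied by Lemma \ref{Lie-alg-action}, modulo terms killed by $(\nabla^+\otimes 1)f$, $(1\otimes\nabla^+)f$, or their right-hand analogues on $g$.

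The main obstacle is the $C$-block, whose infinitesimal action includes the nonlinear piece $\tr(ZCZ\partial)f+\tr(CZ)f+(CZ\otimes 1+1\otimes CZ)f$, which cannot be absorbed by reparametrizing the contour. Rather than attack it head-on, I would exploit the Weyl-type element $\sigma=\bigl(\begin{smallmatrix} 0 & 1 \\ 1 & 0 \end{smallmatrix}\bigr)\in GL(2,\HC)$, which realizes the global inversion $W\mapsto W^{-1}$ and whose adjoint action on $\mathfrak{gl}(2,\HC)$ interchanges the $B$- and $C$-blocks (and the $A$- and $D$-blocks). It therefore suffices to verify that the pairing is preserved by $\pi_{dl}(\sigma)\otimes\pi_{dr}(\sigma)$; this is a finite change-of-variables computation in (\ref{pairing1}) under $Z\mapsto Z^{-1}$, where the conformal weights $1/N(cZ+d)$ appearing in (\ref{pi_dl})--(\ref{pi_dr}) are compared against the pullback of the $3$-form $Z\otimes Dz$ under inversion. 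Once this is verified, invariance under the $C$-block follows from the already-established invariance under the $B$-block by conjugation.

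As a cross-check I would also revisit the manifestly symmetric form (\ref{pairing1-symm}), in which the pairing is expressed through two insertions of $k_1(W-Z)$. By Theorem \ref{Fueter-doubly-reg} this kernel is the reproducing kernel for doubly regular functions, so an intertwining property of $k_1$ under the diagonal action $\pi_{dl}\otimes\pi_{dr}$ would give an alternative proof of invariance and serves as a sanity check on the inversion step above.
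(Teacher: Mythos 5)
Your overall architecture coincides with the paper's: the paper likewise reduces to the generating set consisting of $SU(2)\times SU(2)$ and dilations (handled by a change of variables on $S^3_R$), the inversion $\bigl(\begin{smallmatrix} 0 & 1 \\ 1 & 0 \end{smallmatrix}\bigr)$ (handled by the orientation-reversing substitution $Z\mapsto Z^{-1}$, with the resulting overall sign harmless for Lie-algebra invariance), and the nilpotent block $\bigl(\begin{smallmatrix} 0 & B \\ 0 & 0 \end{smallmatrix}\bigr)$, the $C$-block being recovered by conjugating the $B$-block with the inversion. So the reduction step and the inversion computation are exactly as in the paper.

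The gap is in your treatment of the $B$-block. Translations $Z\mapsto Z+tB$ do not preserve $S^3_R$ even up to rescaling (they move the sphere off center), and, more seriously, they do not interact well with $\degtt^{-1}$, which is defined by radial integration from the origin (Definition \ref{deg-inv-def}); so the ``affine change of variables plus Stokes'' argument does not apply to this block. Nor does Lemma \ref{deg-nabla} let you commute $\degtt^{-1}$ past $\tr(B\partial)$: differentiation shifts homogeneity by one, so on homogeneous components $\degtt^{-1}\circ\partial_{ij}=\partial_{ij}\circ(\deg+1)^{-1}\ne\partial_{ij}\circ\degtt^{-1}$, and this mismatch is precisely what makes the $B$-block the delicate case. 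The paper's resolution is to first pass to the symmetric expression (\ref{pairing1-symm}), in which $\degtt^{-1}$ is applied to \emph{both} $f$ and $g$ with the translation-invariant kernel $k_1(W-Z)$ inserted between them, and then to transfer $\partial/\partial z_{11}$ from the $f$-side to the $g$-side through the kernel via $\partial k_1(W-Z)/\partial w_{11}=-\,\partial k_1(W-Z)/\partial z_{11}$ and the reproducing property of Theorem \ref{Fueter-doubly-reg}. In other words, the symmetric form that you relegate to a ``sanity check'' at the end is the essential device for the one block your sketch does not actually close; without it (or an equivalent substitute) the verification for $\bigl(\begin{smallmatrix} 0 & B \\ 0 & 0 \end{smallmatrix}\bigr)$ — and hence, by your own conjugation argument, for $\bigl(\begin{smallmatrix} 0 & 0 \\ C & 0 \end{smallmatrix}\bigr)$ as well — remains incomplete.
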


\begin{proof}
It is sufficient to show that the pairing is invariant under
$SU(2) \times SU(2) \subset GL(2,\HC)$, dilation matrices
$\bigl(\begin{smallmatrix} \lambda & 0 \\ 0 & 1 \end{smallmatrix}\bigr)
\in GL(2,\HC)$, $\lambda \in \BB R$, $\lambda>0$, inversions
$\bigl(\begin{smallmatrix} 0 & 1 \\ 1 & 0 \end{smallmatrix}\bigr)
\in GL(2,\HC)$ and
$\bigl(\begin{smallmatrix} 0 & B \\ 0 & 0 \end{smallmatrix}\bigr)
\in \mathfrak{gl}(2,\HC)$, $B \in \HC$.

First, let
$h= \bigl(\begin{smallmatrix} a & 0 \\ 0 & d \end{smallmatrix}\bigr)
\in GL(2,\HC)$, $a,d \in \BB H$, $N(a)=N(d)=1$, $\tilde Z = a^{-1}Zd$.
Using Proposition 11 from \cite{FL1} we obtain:
\begin{multline*}
  2\pi^2 \cdot \langle \pi_{dl}(h)f, \pi_{dr}(h)g \rangle_{\cal DR}
  = \int_{Z \in S^3_R}
(\pi_{dr}(h)g)(Z) \cdot (Z \otimes Dz) \cdot
\bigl( \degtt^{-1} (\pi_{dl}(h)f) \bigr)(Z)  \\
= \int_{Z \in S^3_R} g(a^{-1}Zd) \cdot \frac{a^{-1} \otimes a^{-1}}{N(a)}
\cdot (Z \otimes Dz) \cdot \degtt^{-1} \Bigl( N(d) \cdot (d \otimes d)
\cdot f(a^{-1}Zd) \Bigr)  \\
= \int_{\tilde Z \in S^3_R} g(\tilde Z) \cdot (\tilde Z \otimes D\tilde z)
\cdot \bigl( \degtt^{-1} f \bigr)(\tilde Z)
= 2\pi^2 \cdot \langle f,g \rangle_{\cal DR}.
\end{multline*}

The calculations for
$h = \bigl(\begin{smallmatrix} \lambda & 0 \\ 0 & 1 \end{smallmatrix}\bigr)
\in GL(2,\HC)$, $\lambda \in \BB R$, $\lambda>0$, are similar.


Next, we recall that matrices
$\bigl(\begin{smallmatrix} 0 & B \\ 0 & 0 \end{smallmatrix}\bigr)
\in \mathfrak{gl}(2,\HC)$, $B \in \HC$,
act by differentiation (Lemma \ref{Lie-alg-action}).
For example, if
$B= \bigl(\begin{smallmatrix} 1 & 0 \\ 0 & 0 \end{smallmatrix}\bigr) \in \HC$,
using the symmetric expression (\ref{pairing1-symm}) we obtain:
\begin{multline*}
4\pi^4 \cdot \Bigl\langle \pi_{dl}\bigl(\begin{smallmatrix} 0 & B \\
0 & 0 \end{smallmatrix}\bigr)f, g \Bigr\rangle_{\cal DR}  \\
= \iint_{\genfrac{}{}{0pt}{}{Z \in S^3_r}{W \in S^3_R}} \bigl( \degtt^{-1} g \bigr)(W)
\cdot (W \otimes Dw) \cdot k_1(W-Z) \cdot (Z \otimes Dz) \cdot
\biggl(\degtt^{-1} \frac{\partial f}{\partial z_{11}} \biggr)(Z)  \\
- \iint_{\genfrac{}{}{0pt}{}{Z \in S^3_R}{W \in S^3_r}} \bigl( \degtt^{-1} g \bigr)(W)
\cdot (W \otimes Dw) \cdot k_1(W-Z) \cdot (Z \otimes Dz) \cdot
\biggl(\degtt^{-1} \frac{\partial f}{\partial z_{11}} \biggr)(Z)  \\
= \iint_{\genfrac{}{}{0pt}{}{Z \in S^3_r}{W \in S^3_R}} \bigl( \degtt^{-1} g\bigr)(W)
\cdot (W \otimes Dw) \cdot \biggl( \frac{\partial}{\partial w_{11}} k_1(W-Z)
\biggr) \cdot (Z \otimes Dz) \cdot \bigl( \degtt^{-1} f \bigr)(Z)  \\
- \iint_{\genfrac{}{}{0pt}{}{Z \in S^3_R}{W \in S^3_r}} \bigl( \degtt^{-1} g \bigr)(W)
\cdot (W \otimes Dw) \cdot \biggl( \frac{\partial}{\partial w_{11}} k_1(W-Z)
\biggr) \cdot (Z \otimes Dz) \cdot \bigl( \degtt^{-1} f \bigr)(Z)  \\
= - \iint_{\genfrac{}{}{0pt}{}{Z \in S^3_r}{W \in S^3_R}} \bigl( \degtt^{-1} g \bigr)(W)
\cdot (W \otimes Dw) \cdot \biggl( \frac{\partial}{\partial z_{11}} k_1(W-Z)
\biggr) \cdot (Z \otimes Dz) \cdot \bigl( \degtt^{-1} f \bigr)(Z)  \\
+ \iint_{\genfrac{}{}{0pt}{}{Z \in S^3_R}{W \in S^3_r}} \bigl( \degtt^{-1} g \bigr)(W)
\cdot (W \otimes Dw) \cdot \biggl( \frac{\partial}{\partial z_{11}} k_1(W-Z)
\biggr) \cdot (Z \otimes Dz) \cdot \bigl( \degtt^{-1} f \bigr)(Z)  \\
= - \iint_{\genfrac{}{}{0pt}{}{Z \in S^3_r}{W \in S^3_R}}
\biggl(\degtt^{-1} \frac{\partial g}{\partial w_{11}} \biggr)(W) \cdot
(W \otimes Dw) \cdot k_1(W-Z) \cdot (Z \otimes Dz) \cdot
\bigl( \degtt^{-1} f \bigr)(Z)  \\
+ \iint_{\genfrac{}{}{0pt}{}{Z \in S^3_R}{W \in S^3_r}}
\biggl(\degtt^{-1} \frac{\partial g}{\partial w_{11}} \biggr)(W) \cdot
(W \otimes Dw) \cdot k_1(W-Z) \cdot (Z \otimes Dz) \cdot
\bigl( \degtt^{-1} f \bigr)(Z)  \\
= - 4\pi^4 \cdot \Bigl\langle f, \pi_{dr}\bigl(\begin{smallmatrix} 0 & B \\
0 & 0 \end{smallmatrix}\bigr)g \Bigr\rangle_{\cal DR}.
\end{multline*}

Finally, if
$h = \bigl(\begin{smallmatrix} 0 & 1 \\ 1 & 0 \end{smallmatrix}\bigr)
\in GL(2,\HC)$, changing the variable to $\tilde Z = Z^{-1}$ -- which is
an orientation reversing map $S^3_R \to S^3_{1/R}$ -- and using
Proposition 11 from \cite{FL1}, we have:
\begin{multline*}
2\pi^2 \cdot \langle \pi_{dl}(h)f, \pi_{dr}(h)g \rangle_{\cal DR} \\
= \int_{Z \in S^3_R}
g(Z^{-1}) \cdot \frac{Z^{-1} \otimes Z^{-1}}{N(Z)} \cdot (Z \otimes Dz) \cdot
\degtt^{-1} \biggl( \frac{Z^{-1} \otimes Z^{-1}}{N(Z)} \cdot f(Z^{-1}) \biggr)  \\
= - \int_{Z \in S^3_R} g(Z^{-1}) \cdot
\frac{Z^{-1} \otimes (Z^{-1} \cdot Dz \cdot Z^{-1})}{N(Z)^2} \cdot
\bigl( \degtt^{-1} f \bigr)(Z^{-1}) \\
= - \int_{\tilde Z \in S^3_{1/R}} g(\tilde Z) \cdot
(\tilde Z \otimes D\tilde z) \cdot \bigl( \degtt^{-1} f \bigr)(\tilde Z)
= - 2\pi^2 \cdot \langle f, g \rangle_{\cal DR}.
\end{multline*}
(Note that the negative sign in
$\langle \pi_{dl}(h)f, \pi_{dr}(h)g \rangle_{\cal DR}
= - \langle f, g \rangle_{\cal DR}$
does not affect the invariance of the bilinear pairing under the Lie algebra
$\mathfrak{gl}(2,\HC)$.)
\end{proof}

Next we describe orthogonality relations for doubly regular functions.
Recall functions $F_{l,m,n}$, $F'_{l,m,n}$, $G_{l,m,n}$ and $G'_{l,m,n}$ introduced
in Lemma \ref{doubly-reg-fns-basis}, these are the functions that appear in
matrix coefficient expansions of the Cauchy-Fueter kernel
(\ref{k_1-expansion-1}) and (\ref{k_1-expansion-2}).

\begin{prop}  \label{orthogonality-dreg-prop}
We have the following orthogonality relations:
$$
\langle F_{l,m,n}, G'_{l',m',n'} \rangle_{\cal DR} =
- \langle F'_{l,m,n}, G_{l',m',n'} \rangle_{\cal DR} =
\delta_{ll'} \cdot \delta_{mm'} \cdot \delta_{nn'},
$$
$$
\langle F_{l,m,n}, G_{l',m',n'} \rangle_{\cal DR}
= \langle F'_{l,m,n}, G'_{l',m',n'} \rangle_{\cal DR} = 0.
$$
\end{prop}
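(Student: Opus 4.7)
The plan is to deduce the orthogonality relations from the Cauchy-Fueter formulas of Theorem \ref{Fueter-doubly-reg} together with the matrix coefficient expansions (\ref{k_1-expansion-1}) and (\ref{k_1-expansion-2}) of the kernel $k_1(Z-W)$. A preliminary observation is that each basis function is homogeneous in $Z$: the components of $F_{l,m,n}$ and $G_{l,m,n}$ are polynomial matrix coefficients $t^l_{n\,\underline{m}}$ of degree $2l$, while $F'_{l,m,n}$ and $G'_{l,m,n}$ carry an extra factor $N(Z)^{-1}$ together with $t^{l+1}(Z^{-1})$, giving degree $-2l-4$. Consequently $\degtt^{-1}$ acts on these two families by the scalars $\frac{1}{2(l+1)}$ and $-\frac{1}{2(l+1)}$ respectively, and the resulting sign contrast accounts for the sign in $\langle F',G\rangle_{\cal DR}=-\delta$.

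The two vanishing relations follow from a simple contour argument. For $\langle F_{l,m,n},G_{l',m',n'}\rangle_{\cal DR}$ both factors are entire on $\HC$, so the integrand of (\ref{pairing1}) is a closed $3$-form with no singularity at the origin; by the $R$-independence of the pairing we may shrink $S^3_R$ to the origin, and the homogeneity estimate bounds the integral by $O(R^{2l+2l'+4})\to 0$. The dual identity $\langle F'_{l,m,n},G'_{l',m',n'}\rangle_{\cal DR}=0$ is handled symmetrically by sending $R\to\infty$ and exploiting the decay $F'(Z),G'(Z)=O(|Z|^{-2l-4})$.

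The main step is the identity $\langle F_{l,m,n},G'_{l',m',n'}\rangle_{\cal DR}=\delta_{ll'}\delta_{mm'}\delta_{nn'}$. Apply the Cauchy-Fueter formula for doubly right regular functions to $g=G'_{l',m',n'}$ on an annulus $\{r<|Z|<R\}$ containing $W$. The outer-boundary contribution vanishes as $R\to\infty$ by the decay of $G'$, leaving
\[
  \degtt G'_{l',m',n'}(W) \;=\; -\frac{1}{2\pi^2}\int_{S^3_r} G'_{l',m',n'}(Z)\cdot(Z\otimes Dz)\cdot k_1(Z-W).
\]
Now substitute the expansion (\ref{k_1-expansion-1}), which is valid because $|Z|=r<|W|$, and pull the $Z$-independent row $G'_{l,m,n}(W)$ outside the integral; the quantity $G'_{l',m',n'}(Z)\cdot(Z\otimes Dz)\cdot F_{l,m,n}(Z)$ that remains in the integrand is a scalar $3$-form. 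Since the left-hand side equals $-2(l'+1)\,G'_{l',m',n'}(W)$ and the functions $\{G'_{l,m,n}(W)\}$ are linearly independent (Proposition \ref{DR-irred-prop}), we obtain
\[
  \frac{1}{2\pi^2}\int_{S^3_r} G'_{l',m',n'}(Z)\cdot(Z\otimes Dz)\cdot F_{l,m,n}(Z) \;=\; 2(l'+1)\,\delta_{ll'}\delta_{mm'}\delta_{nn'};
\]
multiplying by the scalar $\frac{1}{2(l+1)}$ by which $\degtt^{-1}$ acts on $F_{l,m,n}$ yields $\langle F_{l,m,n},G'_{l',m',n'}\rangle_{\cal DR}=\delta_{ll'}\delta_{mm'}\delta_{nn'}$.

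The relation $\langle F'_{l,m,n},G_{l',m',n'}\rangle_{\cal DR}=-\delta$ is proved in strictly parallel fashion, only simpler: since $g=G_{l',m',n'}$ is polynomial, we apply Cauchy-Fueter on an ordinary ball $\{|Z|<R\}$ containing $W$, substitute the expansion (\ref{k_1-expansion-2}) (valid for $|W|<|Z|$), and compare coefficients in the linearly independent basis $\{G_{l,m,n}(W)\}$; the overall minus sign comes from $\degtt^{-1}F'_{l,m,n}=-\frac{1}{2(l+1)}F'_{l,m,n}$. The main technical care needed throughout is the bookkeeping of the column/row/matrix structure when inserting the outer products from the kernel expansions; the factorization $G'(Z)\cdot(Z\otimes Dz)\cdot\bigl[F(Z)\,G'(W)\bigr]=\bigl(G'(Z)\cdot(Z\otimes Dz)\cdot F(Z)\bigr)\,G'(W)$ (and its analogue for the other case) is what allows us to pull the $W$-dependent row out of the integral and recover exactly the scalar $3$-form under the integral sign that defines the pairing (\ref{pairing1}).
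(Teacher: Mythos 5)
Your argument is correct, but for the two nonzero pairings it takes a genuinely different route from the paper. The paper proves $\langle F_{l,m,n},G'_{l',m',n'}\rangle_{\cal DR}=\delta_{ll'}\delta_{mm'}\delta_{nn'}$ by brute force: it computes $(Z\otimes Z)\cdot F_{l,m,n}(Z)$ explicitly via Lemma 23 of \cite{FL1} and then integrates against $G'_{l',m',n'}$ using the $SU(2)$ matrix-coefficient orthogonality relations (17) of \cite{FL3}, with the normalization falling out of that computation. You instead derive biorthogonality from the reproducing property: apply Theorem \ref{Fueter-doubly-reg} on an annulus (resp.\ a ball), insert the kernel expansions (\ref{k_1-expansion-1})--(\ref{k_1-expansion-2}), and match coefficients against the linearly independent family $\{G'_{l,m,n}\}$ (resp.\ $\{G_{l,m,n}\}$); the scalars $\pm\tfrac1{2(l+1)}$ by which $\degtt^{-1}$ acts then produce the normalization and the relative sign. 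This is legitimate and arguably more conceptual --- it explains \emph{why} the kernel expansion forces biorthogonality --- at the cost of needing the annulus form of the Cauchy--Fueter formula, a term-by-term integration of the uniformly convergent series, and uniqueness of the coefficients in such an expansion. On that last point your citation of Proposition \ref{DR-irred-prop} is not quite what you need: irreducibility is not linear independence. What actually justifies extracting coefficients from the infinite sum is that the $G'_{l,m,n}$ with distinct $l$ are homogeneous of distinct degrees (so the homogeneous components of a convergent series are determined) and that for fixed $l$ the finitely many $G'_{l,m,n}$ are linearly independent because their entries are distinct matrix coefficients. Your treatment of the vanishing relations (homogeneity of the integrand plus $R$-independence of the pairing) coincides with the paper's.
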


\begin{proof}
Recall that, by Lemma 6 in \cite{FL1}, the $3$-form $Dz$ restricted to $S^3_R$
equals $Z\,dS/R$.
We continue to identify tensor products of matrices with their Kronecker
products.
Applying Lemma 23 from \cite{FL1} repeatedly, we compute
$$
(Z \otimes Z) \cdot  F_{l,m,n}(Z)
= \left( \begin{smallmatrix} (l-n+1)(l-n+2) t^{l+1}_{n-1\,\underline{m}}(Z) \\
(l+n+1)(l-n+1) t^{l+1}_{n\,\underline{m}}(Z) \\
(l-n+1)(l+n+1) t^{l+1}_{n\,\underline{m}}(Z) \\
(l+n+1)(l+n+2) t^{l+1}_{n+1\,\underline{m}}(Z) \end{smallmatrix} \right).
$$
When we multiply this by $G'_{l',m',n'}(Z)$ and integrate over $S^3_R$,
by the orthogonality relations (17) in \cite{FL3}
(see also equation (1) in $\S$6.2 of Chapter III in \cite{V}),
$$
\langle F_{l,m,n}, G'_{l',m',n'} \rangle_{\cal DR} =
\delta_{ll'} \cdot \delta_{mm'} \cdot \delta_{nn'}.
$$

Since 
$$
G_{l',m',n'}(Z) \cdot (Z \otimes Z) \cdot F_{l,m,n}(Z)
$$
is homogeneous of degree $2(l+l')>-2$ and the pairing (\ref{pairing1})
is independent of the choice of $R>0$, $\langle F_{l,m,n}, G_{l',m',n'} \rangle_{\cal DR}$
must be zero.
The cases involving $F'_{l,m,n}$ can be proved similarly.
\end{proof}

\begin{cor}  \label{Laurent-coeff}
The coefficients $a_{l,m,n}$, $b_{l,m,n}$, $c_{l,m,n}$ and $d_{l,m,n}$
of Laurent expansions of doubly regular functions given in
Corollary \ref{Laurent-expansion} are given by the following expressions:
\begin{center}
\begin{tabular}{lcl}
$a_{l.m,n} = \langle f, G'_{l,m,n} \rangle_{\cal DR}$, & \qquad &
$b_{l.m,n} = -\langle f, G_{l,m,n} \rangle_{\cal DR}$, \\
$c_{l.m,n} = -\langle F'_{l,m,n}, g \rangle_{\cal DR}$, & \qquad &
$d_{l.m,n} = \langle F_{l,m,n}, g \rangle_{\cal DR}$.
\end{tabular}
\end{center}
\end{cor}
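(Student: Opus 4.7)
The plan is to pair the Laurent expansion from Corollary \ref{Laurent-expansion} against the appropriate dual basis element and then read off the coefficients via the orthogonality relations of Proposition \ref{orthogonality-dreg-prop}. Since the pairing $\langle \cdot, \cdot \rangle_{\cal DR}$ is bilinear and the statements of the corollary are linear in the $a$'s, $b$'s, $c$'s and $d$'s, this strategy should deliver all four identities in one stroke.

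Concretely, I would start from
$$
f(X) = \sum_{l',m',n'} a_{l',m',n'} F_{l',m',n'}(X) + \sum_{l',m',n'} b_{l',m',n'} F'_{l',m',n'}(X)
$$
and apply $\langle \cdot, G'_{l,m,n} \rangle_{\cal DR}$ to both sides. The pairing is defined by the integral (\ref{pairing1}) over $S^3_R$; once I can interchange that integral with the two infinite sums, Proposition \ref{orthogonality-dreg-prop} gives $\langle F_{l',m',n'}, G'_{l,m,n} \rangle_{\cal DR} = \delta_{ll'}\delta_{mm'}\delta_{nn'}$ and $\langle F'_{l',m',n'}, G'_{l,m,n} \rangle_{\cal DR} = 0$, isolating the single coefficient $a_{l,m,n}$. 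The formula for $b_{l,m,n}$ comes from the same calculation after pairing with $G_{l,m,n}$ instead: the other two orthogonality identities $\langle F_{l',m',n'}, G_{l,m,n}\rangle_{\cal DR} = 0$ and $\langle F'_{l',m',n'}, G_{l,m,n}\rangle_{\cal DR} = -\delta_{ll'}\delta_{mm'}\delta_{nn'}$ yield $\langle f, G_{l,m,n}\rangle_{\cal DR} = -b_{l,m,n}$. The doubly right regular formulas for $c_{l,m,n}$ and $d_{l,m,n}$ follow by the mirror argument, pairing the Laurent expansion $g = g_+ + g_-$ with $-F'_{l,m,n}$ and $F_{l,m,n}$ respectively and invoking the same four orthogonality relations.

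The only real obstacle is justifying the termwise pairing. By the construction in Subsection \ref{deg+2-inverse-subsection}, $f_+$ is doubly left regular on all of $\BB H$, so its expansion in the polynomial basis $\{F_{l',m',n'}\}$ converges uniformly on compact subsets of $\BB H$; and $f_-$ is doubly left regular on $\BB H^\times$ and decays at infinity like $N(X)^{-2}$, so its expansion in $\{F'_{l',m',n'}\}$ converges uniformly on compact subsets of $\BB H^\times$. On any single sphere $S^3_R \subset \BB H^\times$ both series therefore converge uniformly, which permits the interchange of sum and integral. Since the pairing (\ref{pairing1}) does not depend on the choice of $R>0$, fixing any such $R$ completes the argument. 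This uniform-convergence verification is the only nontrivial step; everything else is a direct application of orthogonality.
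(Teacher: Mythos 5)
Your proof is correct and is exactly the argument the paper intends: the corollary is stated without proof as an immediate consequence of pairing the Laurent expansion of Corollary \ref{Laurent-expansion} term by term against $G_{l,m,n}$, $G'_{l,m,n}$ (resp. $F_{l,m,n}$, $F'_{l,m,n}$) and applying the orthogonality relations of Proposition \ref{orthogonality-dreg-prop}, with the signs coming out as you computed. Your extra remark justifying the interchange of sum and integral via uniform convergence on $S^3_R$ is a sensible addition and causes no difficulty, since $\degtt^{-1}$ only rescales each homogeneous term by a bounded factor.
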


\subsection{$n$-regular Functions}

One can generalize the notion of doubly regular functions to
triply regular functions, quadruply regular functions and so on.
Thus, left $n$-regular functions take values in
$$
\underbrace{\BB S \odot \dots \odot \BB S}_{\text{$n$ times}}
$$
and satisfy $n$ regularity conditions
$$
(1 \otimes \dots \otimes \underset{\text{$i$-th place}}{\nabla^+} \otimes
\dots \otimes 1)f=0, \qquad i=1,\dots,n.
$$
Similarly, one can define right $n$-regular functions with values in
$$
\underbrace{\BB S' \odot \dots \odot \BB S'}_{\text{$n$ times}}.
$$
The group $GL(2,\HC)$ acts on $n$-regular functions similarly to
(\ref{pi_dl})-(\ref{pi_dr}).
Then polynomial $n$-regular functions should yield realizations
of all the highest weight representations of the most degenerate series of
representations of the conformal Lie algebra $\mathfrak{sl}(2,\HC)$.
Those are often called the spin $\frac{n}2$ representations of positive and
negative helicities.
The spin $0$ case corresponds to the harmonic functions, while the
spin $\frac12$ case correspond to the usual left and right regular functions.
Such representations were considered by H.~P.~Jakobsen and M.~Vergne
in \cite{JV1}.

One can also derive an analogue of the Cauchy-Fueter formulas as well as
a bilinear pairing for $n$-regular functions, just as we did for the $n=2$
case in this section. However, the $n=2$ case appears to be special,
as the doubly regular functions can be realized as a certain subspace of
$\HC$-valued functions, which will be the subject of Section
\ref{DRinQCC-section}.

Functions with values in $\BB S \odot \dots \odot \BB S$ are a subspace of
all functions
$$
\HC^{\times} \to \underbrace{\BB S \otimes \dots \otimes \BB S}_{\text{$n$ times}}.
$$
As a special case of Schur-Weyl duality,
\begin{equation}  \label{Schur-Weyl}
\underbrace{\BB S \otimes \dots \otimes \BB S}_{\text{$n$ times}}
= \bigoplus_{k=0}^n M_k \otimes V_k,
\end{equation}
where $V_k$ is the irreducible representation of $SL(2,\BB C)$ of
dimension $k+1$ and $M_k$ is its multiplicity, which is also an irreducible
representation of the symmetric group on $n$ objects.
In particular,
$$
V_n = \underbrace{\BB S \odot \dots \odot \BB S}_{\text{$n$ times}}
\qquad \text{and} \qquad
V_0 = \underbrace{\BB S \wedge \dots \wedge \BB S}_{\text{$n$ times}}.
$$
When $n=2$, $M_1=0$ and
$$
\BB S \otimes \BB S = (\BB S \wedge \BB S) \oplus (\BB S \odot \BB S).
$$

\begin{prop}
As a representation of $\mathfrak{gl}(2,\HC)$, the space of maps
$$
\HC^{\times} \to \BB S \wedge \BB S
$$
is isomorphic to $\BB C[z_{11},z_{12},z_{21},z_{22}, N(Z)^{-1}]$ with
$\mathfrak{gl}(2,\HC)$ action obtained by differentiating the following
action of $GL(2,\HC)$:
$$
f(Z) \: \mapsto \: \frac1{N(cZ+d)^2}
\cdot f\bigl( (aZ+b)(cZ+d)^{-1} \bigr),  \qquad
h^{-1} = \bigl( \begin{smallmatrix} a & b \\ c & d \end{smallmatrix} \bigr)
\in GL(2,\HC).
$$

Similarly, as a representation of $\mathfrak{gl}(2,\HC)$, the space of maps
$$
\HC^{\times} \to \BB S' \wedge \BB S'
$$
is isomorphic to $\BB C[z_{11},z_{12},z_{21},z_{22}, N(Z)^{-1}]$ with
$\mathfrak{gl}(2,\HC)$ action obtained by differentiating another action of
$GL(2,\HC)$:
$$
f(Z) \: \mapsto \: \frac1{N(a'-Zc')^2}
\cdot f \bigl( (a'-Zc')^{-1}(-b'+Zd') \bigr),  \qquad
h = \bigl( \begin{smallmatrix} a' & b' \\ c' & d' \end{smallmatrix} \bigr)
\in GL(2,\HC).
$$
\end{prop}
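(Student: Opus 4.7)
The crux of the proof is that $\BB S \wedge \BB S$ is one-dimensional (since $\dim_{\BB C} \BB S = 2$). Fixing a nonzero generator $\omega \in \BB S \wedge \BB S$, every map $\HC^{\times} \to \BB S \wedge \BB S$ is uniquely of the form $f(Z)\,\omega$ for a scalar function $f$, so we identify the space of such maps with scalar functions on $\HC^{\times}$ as vector spaces. The polynomial algebra $\BB C[z_{11},z_{12},z_{21},z_{22}, N(Z)^{-1}]$ is the natural algebraic model for the $K$-finite part (with $K = U(2) \times U(2)$), built from matrix coefficients of finite-dimensional representations together with inverse powers of $N(Z)$.

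The action formula then follows from restricting $\pi_{dl}$ of Theorem \ref{dr-action-thm} to the antisymmetric subspace. Since $GL(2,\HC)$ acts on $\BB S$ by its tautological representation, for any $A \in GL(2,\HC)$ we have $A s_1 \wedge A s_2 = \det(A) \, (s_1 \wedge s_2)$. Applying this with $A = (cZ+d)^{-1}$ shows that the operator $(cZ+d)^{-1} \otimes (cZ+d)^{-1}$, when restricted to $\BB S \wedge \BB S$, acts as multiplication by $\det((cZ+d)^{-1}) = N(cZ+d)^{-1}$. Combined with the overall scalar factor $1/N(cZ+d)$ already present in (\ref{pi_dl}), the induced action on the scalar-valued function $f$ becomes
$$
f(Z) \:\mapsto\: \frac{1}{N(cZ+d)^2}\, f\bigl((aZ+b)(cZ+d)^{-1}\bigr),
$$
which matches the statement. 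The cocycle property and the fact that this action preserves the polynomial algebra (polynomials in $z_{ij}$ and $N(Z)^{-1}$ are closed under Möbius-type transformations from $GL(2,\HC)$, since $N$ is multiplicative and $(aZ+b)(cZ+d)^{-1}$ has entries that are rational in $Z$ with $N(cZ+d)$ in the denominator) together yield a genuine representation. Differentiating at the identity produces the $\mathfrak{gl}(2,\HC)$-action.

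The $\BB S' \wedge \BB S'$ case is entirely parallel, using $\pi_{dr}$ in place of $\pi_{dl}$: again the antisymmetric part is one-dimensional, and $(a'-Zc')^{-1} \otimes (a'-Zc')^{-1}$ restricted to $\BB S' \wedge \BB S'$ acts as $N(a'-Zc')^{-1}$, so combining with the explicit factor $1/N(a'-Zc')$ in (\ref{pi_dr}) yields the claimed formula with $N(a'-Zc')^{-2}$.

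The only non-formal step, and the likely main obstacle, is justifying the identification of the space of maps with precisely $\BB C[z_{11},z_{12},z_{21},z_{22}, N(Z)^{-1}]$ rather than a larger function space; this reduces to checking that this polynomial algebra is preserved under the prescribed action (verified above) and that it exhausts the $K$-finite vectors for $K = U(2) \times U(2)$ under the induced infinitesimal action -- a $K$-type computation analogous to those used throughout the paper for $\Sh$ and $\Sh'$, and implicit in the identification of these spaces with middle-series representations of the conformal Lie algebra.
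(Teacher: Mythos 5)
Your proposal is correct and follows essentially the same route as the paper: one-dimensionality of $\BB S \wedge \BB S$ and $\BB S' \wedge \BB S'$ gives the identification with $\BB C[z_{11},z_{12},z_{21},z_{22}, N(Z)^{-1}]$, and the action is obtained by taking the determinant of the factors $(cZ+d)^{-1} \otimes (cZ+d)^{-1}$ and $(a'-Zc')^{-1} \otimes (a'-Zc')^{-1}$ in $\pi_{dl}$ and $\pi_{dr}$, yielding the extra power of $N(cZ+d)^{-1}$ (resp. $N(a'-Zc')^{-1}$). Your closing worry about exhausting the function space is not an issue here, since "maps" in this context means polynomial maps on $\HC^{\times}$, as throughout the paper.
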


\begin{proof}
Since $\BB S \wedge \BB S$ and $\BB S' \wedge \BB S'$ are one-dimensional,
the spaces of functions $\HC^{\times} \to \BB S \wedge \BB S$ and
$\HC^{\times} \to \BB S' \wedge \BB S'$ can be both identified with
$\BB C[z_{11},z_{12},z_{21},z_{22}, N(Z)^{-1}]$.
The actions of $\mathfrak{gl}(2,\HC)$ are obtained by taking the determinants of
$\pi_{dl}$ and $\pi_{dr}$ respectively, and the result follows from
(\ref{pi_dl})-(\ref{pi_dr}).
\end{proof}

Note that similar representations and their irreducible components were
considered in Subsections 3.1-3.2 in \cite{L}.

For general $n$, we have a decomposition of the space of functions
$\HC^{\times} \to \BB S \otimes \dots \otimes \BB S$ according to
(\ref{Schur-Weyl}).
In a separate paper \cite{FL4} we study $n$-regular functions in more detail.
For example, we prove that generalizations of the Cauchy-Fueter formulas to
such functions provide natural quaternionic analogues of Cauchy's
differentiation formula
$$
f^{(n-1)}(w) = \frac{(n-1)!}{2\pi i} \oint \frac {f(z)\,dz}{(z-w)^n}.
$$

\section{Quaternionic Chain Complex and Decomposition of $(\rho,\Sh)$,
$(\rho',\Sh')$ into Irreducible Components}  \label{QCC-section}

\subsection{Quaternionic Chain Complex}

We start with a sequence of maps (57) from \cite{FL1}:
\begin{equation}  \label{W-sequence}
\begin{CD}
(\rho', \Sh') @>{\partial^+}>> (\rho'_2, {\cal W}') @>{\M}>>
(\rho_2, {\cal W}) @>{\tr \circ \partial^+}>> (\rho, \Sh),
\end{CD}
\end{equation}
where
$$
\Sh = \Sh' = \bigl\{\text{$\BB C$-valued polynomial functions on
$\HC^{\times}$}\bigr\}
= \BB C[z_{11},z_{12},z_{21},z_{22}, N(Z)^{-1}],
$$
$$
{\cal W} = {\cal W}' = \bigl\{\text{$\HC$-valued polynomial functions on
$\HC^{\times}$}\bigr\}
= \HC \otimes \Sh,
$$
$$
\partial = \begin{pmatrix} \partial_{11} & \partial_{21} \\
\partial_{12} & \partial_{22} \end{pmatrix} = \frac 12 \nabla, \qquad
\partial^+ = \begin{pmatrix} \partial_{22} & -\partial_{21} \\
-\partial_{12} & \partial_{11} \end{pmatrix} = \frac 12 \nabla^+,
\qquad \M F = \nabla F \nabla - \square F^+.
$$
Since the compositions of any two consecutive maps are zero:
$$
\M \circ \partial^+ =0 \qquad \text{and} \qquad
(\tr \circ \partial^+) \circ \M =0,
$$
we call (\ref{W-sequence}) a {\em quaternionic chain complex}.
The Lie algebra $\g{gl}(2,\HC)$ acts on these spaces by differentiating
the following group actions:
\begin{align*}
\rho(h): \: f(Z) \quad &\mapsto \quad \bigl( \rho(h)f \bigr)(Z) =
\frac {f \bigl( (aZ+b)(cZ+d)^{-1} \bigr)}{N(cZ+d)^2 \cdot N(a'-Zc')^2}, \\
\rho'(h): \: f(Z) \quad &\mapsto \quad \bigl( \rho(h)f \bigr)(Z) =
f \bigl( (aZ+b)(cZ+d)^{-1} \bigr), \\
\rho_2(h): \: F(Z) \quad &\mapsto \quad \bigl( \rho_2(h)F \bigr)(Z) =
\frac {(cZ+d)^{-1}}{N(cZ+d)} \cdot F \bigl( (aZ+b)(cZ+d)^{-1} \bigr) \cdot
\frac {(a'-Zc')^{-1}}{N(a'-Zc')},  \\
\rho'_2(h): \: F(Z) \quad &\mapsto \quad \bigl( \rho'_2(h)F \bigr)(Z) =
\frac {(a'-Zc')}{N(a'-Zc')} \cdot F \bigl( (aZ+b)(cZ+d)^{-1} \bigr)
\cdot \frac {(cZ+d)}{N(cZ+d)},
\end{align*}
where $f \in \Sh$ or $\Sh'$, $F \in {\cal W}$ or ${\cal W}'$,
$h = \bigl(\begin{smallmatrix} a' & b' \\ c' & d' \end{smallmatrix}\bigr)
\in GL(2,\HC)$ and 
$h^{-1} = \bigl(\begin{smallmatrix} a & b \\ c & d \end{smallmatrix}\bigr)$.
Although $\Sh = \Sh'$ and ${\cal W} = {\cal W}'$ as vector spaces,
these notations indicate the action of $\g{gl}(2,\HC)$.
Also, in \cite{FL3} we treat $(\rho_1,\Zh)$, where $\Zh=\Sh$ as vector spaces,
but the action $\rho_1$ of $\g{gl}(2,\HC)$ is different from $\rho$, $\rho'$
considered here.
We have the following four analogues of Lemma 68 in \cite{FL1}:

\begin{lem}  \label{rho-action-lem}
The Lie algebra action $\rho$ of $\mathfrak{gl}(2,\HC)$ on $\Sh$ is given by
\begin{align*}
\rho \bigl( \begin{smallmatrix} A & 0 \\ 0 & 0 \end{smallmatrix} \bigr) &:
f(Z) \mapsto - \tr (AZ \partial + 2A) f,  \\
\rho \bigl( \begin{smallmatrix} 0 & B \\ 0 & 0 \end{smallmatrix} \bigr) &:
f(Z) \mapsto - \tr (B \partial) f,  \\
\rho \bigl( \begin{smallmatrix} 0 & 0 \\ C & 0 \end{smallmatrix} \bigr) &:
f(Z) \mapsto \tr (ZCZ \partial + 4CZ) f,  \\
\rho \bigl( \begin{smallmatrix} 0 & 0 \\ 0 & D \end{smallmatrix} \bigr) &:
f(Z) \mapsto \tr (ZD \partial + 2D) f.
\end{align*}
\end{lem}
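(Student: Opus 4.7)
The proof is a direct infinitesimal calculation: for each of the four block-types $X \in \mathfrak{gl}(2,\HC)$ appearing in the statement, I write down the one-parameter subgroup $h(t)=\exp(tX)$, plug it into the group action formula
$$
\bigl(\rho(h(t))f\bigr)(Z) = \frac{f\bigl((a(t)Z+b(t))(c(t)Z+d(t))^{-1}\bigr)}
{N(c(t)Z+d(t))^2 \cdot N(a'(t)-Zc'(t))^2},
$$
and take $\tfrac{d}{dt}\bigl|_{t=0}$. Note that $h(t)^{-1}=\exp(-tX)$ controls the unprimed entries, while the primed entries come from $h(t)$ itself; the point is that the two factors in the denominator will in general contribute separately.

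The four cases unfold as follows. For $X=\bigl(\begin{smallmatrix} A & 0 \\ 0 & 0\end{smallmatrix}\bigr)$: one gets $(aZ+b)(cZ+d)^{-1}=e^{-tA}Z$, $N(cZ+d)=1$, and $N(a'-Zc')=\det(e^{tA})=e^{t\tr A}$, so the denominator contributes $e^{-2t\tr A}$, yielding $-\tr(AZ\partial)f-2\tr(A)f$. For $X=\bigl(\begin{smallmatrix} 0 & B \\ 0 & 0\end{smallmatrix}\bigr)$: the Möbius argument becomes $Z-tB$ and both denominators are identically $1$, giving $-\tr(B\partial)f$. For $X=\bigl(\begin{smallmatrix} 0 & 0 \\ C & 0\end{smallmatrix}\bigr)$: the argument is $Z(1-tCZ)^{-1}$, with infinitesimal variation $ZCZ$, and both $N(cZ+d)=\det(1-tCZ)$ and $N(a'-Zc')=\det(1-tZC)$ have derivative $-\tr(CZ)$ at $t=0$, so each denominator contributes $+2\tr(CZ)$ and they combine with $\tr(ZCZ\partial)f$ to give $\tr(ZCZ\partial+4CZ)f$. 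For $X=\bigl(\begin{smallmatrix} 0 & 0 \\ 0 & D\end{smallmatrix}\bigr)$: the argument is $Ze^{tD}$ with variation $ZD$, $N(a'-Zc')=1$, and $N(cZ+d)=e^{-t\tr D}$ contributes $+2\tr(D)$, giving $\tr(ZD\partial+2D)f$.

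The only technical input I need beyond elementary calculus is the identity $\tfrac{d}{dt}\det(I+tM)\big|_{t=0}=\tr(M)$ (which handles both denominators in each case) and the standard $\tfrac{d}{dt}(I+tM)^{-1}\big|_{t=0}=-M$. The computation of $\tfrac{d}{dt}f(Z(t))\big|_{t=0}$ in the form $\tr(\dot Z(0)^{\mathrm{t}}\cdot \text{gradient})$ is to be done with the convention $\partial=(\partial_{ji})_{ij}$ already fixed in the excerpt, so the trace pairings $\tr(AZ\partial)$, $\tr(B\partial)$, $\tr(ZCZ\partial)$, $\tr(ZD\partial)$ are literally what emerges.

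The only place where one has to be careful is bookkeeping: the ``extra'' scalar terms $-2A$, $+4CZ$, $+2D$ come entirely from the two determinant factors in the denominator, and the factor $4$ (versus $2$) in the $C$-case arises precisely because for lower-triangular $C$ both $N(cZ+d)^2$ and $N(a'-Zc')^2$ contribute nontrivially at first order, whereas for upper-triangular $A,D$ only one of the two factors does. This asymmetry is the main bookkeeping subtlety, and once noted the lemma follows by comparison with the stated formulas.
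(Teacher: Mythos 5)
Your proof is correct and follows exactly the route the paper intends: the lemma is stated as an analogue of Lemma 68 in [FL1] and is obtained by differentiating the group action $\rho(h)f = f\bigl((aZ+b)(cZ+d)^{-1}\bigr)\,N(cZ+d)^{-2}N(a'-Zc')^{-2}$ along one-parameter subgroups, with the constants $-2A$, $+4CZ$, $+2D$ coming from the determinant factors precisely as you describe. Your bookkeeping of which of the two determinant factors contributes in each case (both for the $C$-block, only one for the $A$- and $D$-blocks) and your sign conventions for $\partial$ all check out.
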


\begin{lem}
The Lie algebra action $\rho'$ of $\mathfrak{gl}(2,\HC)$ on $\Sh'$ is given by
\begin{align*}
\rho' \bigl( \begin{smallmatrix} A & 0 \\ 0 & 0 \end{smallmatrix} \bigr) &:
f(Z) \mapsto - \tr (AZ \partial) f,  \\
\rho' \bigl( \begin{smallmatrix} 0 & B \\ 0 & 0 \end{smallmatrix} \bigr) &:
f(Z) \mapsto - \tr (B \partial) f,  \\
\rho' \bigl( \begin{smallmatrix} 0 & 0 \\ C & 0 \end{smallmatrix} \bigr) &:
f(Z) \mapsto \tr (ZCZ \partial) f,  \\
\rho' \bigl( \begin{smallmatrix} 0 & 0 \\ 0 & D \end{smallmatrix} \bigr) &:
f(Z) \mapsto \tr (ZD \partial) f.
\end{align*}
\end{lem}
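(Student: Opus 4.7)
The plan is to differentiate the defining group action $\rho'(h): f(Z) \mapsto f\bigl((aZ+b)(cZ+d)^{-1}\bigr)$ at the identity along a one-parameter subgroup generated by each of the four basis blocks of $\mathfrak{gl}(2,\HC)$. Compared with $\rho$ in Lemma \ref{rho-action-lem}, the computation is cleaner because the multiplicative factor $N(cZ+d)^{-2} \cdot N(a'-Zc')^{-2}$ is absent from $\rho'$, so only the pullback along the Möbius transformation contributes and no trace terms like $2A$, $4CZ$, $2D$ appear.

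Concretely, for $X \in \mathfrak{gl}(2,\HC)$ set $h(t)^{-1} = \exp(-tX)$, so $\frac{d}{dt}\big|_{t=0} h(t)^{-1} = -X$, and denote the blocks of $h(t)^{-1}$ by $a(t),b(t),c(t),d(t)$ with $a(0)=d(0)=I$ and $b(0)=c(0)=0$. Put $Z(t) = \bigl(a(t)Z+b(t)\bigr)\bigl(c(t)Z+d(t)\bigr)^{-1}$. The product rule together with the standard formula for the derivative of a matrix inverse gives
$$
\dot Z(0) = \dot a(0)\, Z + \dot b(0) - Z\bigl(\dot c(0)\, Z + \dot d(0)\bigr).
$$
Substituting the four basis cases yields $\dot Z(0) = -AZ$, $\;-B$, $\;ZCZ$, $\;ZD$ for $X = \bigl(\begin{smallmatrix} A & 0 \\ 0 & 0\end{smallmatrix}\bigr)$, $\bigl(\begin{smallmatrix} 0 & B \\ 0 & 0\end{smallmatrix}\bigr)$, $\bigl(\begin{smallmatrix} 0 & 0 \\ C & 0\end{smallmatrix}\bigr)$, $\bigl(\begin{smallmatrix} 0 & 0 \\ 0 & D\end{smallmatrix}\bigr)$ respectively.

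The chain rule then gives $\frac{d}{dt}\big|_{t=0} f(Z(t)) = \sum_{i,j}\bigl(\dot Z(0)\bigr)_{ij}\,\partial_{ij} f$. Because of the transposed indexing convention $\partial = (\partial_{ji})_{ij}$ adopted in the paper, this sum equals $\tr\bigl(\dot Z(0)\cdot \partial\bigr)f$. Plugging in the four expressions for $\dot Z(0)$ yields exactly the four displayed formulas in the lemma.

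The only obstacle is bookkeeping: tracking the signs that come from the inversion $h \mapsto h^{-1}$ in the definition of $\rho'$, and respecting the transposed convention for the entries of $\partial$. There is no analytic subtlety, since elements of $\Sh' = \BB C[z_{11},z_{12},z_{21},z_{22},N(Z)^{-1}]$ are smooth on $\HC^{\times}$ and the $GL(2,\HC)$ action preserves this space, so differentiation of the group action term-by-term is justified. As a sanity check, one can compare with the $\rho$ case in Lemma \ref{rho-action-lem}: the formulas differ exactly by the additional scalar trace terms $-2A$, $+4CZ$, $+2D$ coming from differentiating the $N(cZ+d)^{-2}N(a'-Zc')^{-2}$ multiplier, confirming the $\rho'$ answer above.
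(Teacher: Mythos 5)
Your proposal is correct and is essentially the argument the paper intends: the four lemmas are stated as direct analogues of Lemma 68 in [FL1] and are obtained, just as you do, by differentiating the group action $\rho'(h)\colon f(Z)\mapsto f\bigl((aZ+b)(cZ+d)^{-1}\bigr)$ along one-parameter subgroups, with the trace convention $\tr(M\partial)=\sum_{i,j}M_{ij}\,\partial/\partial z_{ij}$ handling the transposed indexing of $\partial$. Your cross-check against Lemma \ref{rho-action-lem} (the extra $-2A$, $+4CZ$, $+2D$ terms coming from the multiplier $N(cZ+d)^{-2}N(a'-Zc')^{-2}$) is also consistent with the paper.
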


\begin{lem}
The Lie algebra action $\rho_2$ of $\mathfrak{gl}(2,\HC)$ on ${\cal W}$
is given by
\begin{align*}
\rho_2 \bigl( \begin{smallmatrix} A & 0 \\ 0 & 0 \end{smallmatrix} \bigr) &:
F(Z) \mapsto - \tr (AZ \partial + A) F -FA,  \\
\rho_2 \bigl( \begin{smallmatrix} 0 & B \\ 0 & 0 \end{smallmatrix} \bigr) &:
F(Z) \mapsto - \tr (B \partial) F,  \\
\rho_2 \bigl( \begin{smallmatrix} 0 & 0 \\ C & 0 \end{smallmatrix} \bigr) &:
F(Z) \mapsto \tr (ZCZ \partial +2ZC) F + CZF + FZC,  \\
\rho_2 \bigl( \begin{smallmatrix} 0 & 0 \\ 0 & D \end{smallmatrix} \bigr) &:
F(Z) \mapsto \tr (ZD \partial +D) F +DF.
\end{align*}
\end{lem}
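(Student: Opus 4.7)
The plan is to follow the same recipe as for Lemma \ref{Lie-alg-action}: each formula is obtained by differentiating the group action $\rho_2$ along a one-parameter subgroup. Specifically, for an element $X \in \mathfrak{gl}(2,\HC)$, I set $h_t = \exp(tX)$, so $h_t^{-1} = \exp(-tX)$, read off the quadrant entries $a,b,c,d$ of $h_t^{-1}$ and $a',b',c',d'$ of $h_t$ to first order in $t$, and compute
$$ \rho_2(X)F(Z) \;=\; \frac{d}{dt}\Big|_{t=0} \rho_2(h_t)F(Z) $$
by expanding the three factors in the definition of $\rho_2$ and collecting the $O(t)$ contributions. I would treat the four block cases independently.

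The three expansions I need in each case are standard. First, $N(I+tM) = \det(I+tM) = 1+t\tr(M)+O(t^2)$, hence $1/N(I+tM) = 1 - t\tr(M) + O(t^2)$; matrix inverses expand as $(I+tM)^{-1} = I - tM + O(t^2)$. Second, the middle factor is handled via the chain rule: if $(aZ+b)(cZ+d)^{-1} = Z + tM(Z) + O(t^2)$, then
$$ F\bigl((aZ+b)(cZ+d)^{-1}\bigr) = F(Z) + t\sum_{i,j} M(Z)_{ij} \frac{\partial F}{\partial z_{ij}}(Z) + O(t^2) = F(Z) + t\,\tr(M(Z)\,\partial)F + O(t^2), $$
where the identification with $\tr(M\partial)F$ comes from the convention $(\partial)_{ij} = \partial_{ji}$ used throughout the paper.

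Running through the four blocks: for $X = \bigl(\begin{smallmatrix} A & 0 \\ 0 & 0 \end{smallmatrix}\bigr)$ the left factor is trivial, the middle gives $-\tr(AZ\partial)F$, and the right factor $(a'-Zc')^{-1}/N(a'-Zc') = (I-tA)(1-t\tr A) + O(t^2)$ contributes $-FA - \tr(A)\,F$, combining to $-\tr(AZ\partial + A)F - FA$. For $X = \bigl(\begin{smallmatrix} 0 & B \\ 0 & 0 \end{smallmatrix}\bigr)$ only the middle factor moves, giving $-\tr(B\partial)F$. For $X = \bigl(\begin{smallmatrix} 0 & 0 \\ C & 0 \end{smallmatrix}\bigr)$ both flanking factors contribute: the left gives $CZ\cdot F + \tr(CZ)F$, the right gives $F\cdot ZC + \tr(CZ)F$, and the middle gives $\tr(ZCZ\partial)F$; using $2\tr(CZ) = \tr(2ZC)$ these combine into $\tr(ZCZ\partial + 2ZC)F + CZF + FZC$. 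Finally for $X = \bigl(\begin{smallmatrix} 0 & 0 \\ 0 & D \end{smallmatrix}\bigr)$ the right factor is trivial and the left gives $DF + \tr(D)F$, which together with $\tr(ZD\partial)F$ from the middle yields $\tr(ZD\partial + D)F + DF$.

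The only real obstacle is bookkeeping, in two forms: (i) keeping track of which expansion contributes a non-derivative multiplicative term (the flanking rational factors) versus a derivative term (the chain rule on $F$); and (ii) correctly absorbing the two $\tr$-scalar contributions coming from the $1/N$ factors into the single trace term appearing in the stated formulas. Since the group $GL(2,\HC)$ is connected and these four block types generate $\mathfrak{gl}(2,\HC)$, verifying the formulas on each block is sufficient.
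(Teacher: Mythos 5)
Your proposal is correct and is exactly the argument the paper intends: the four formulas are obtained by differentiating the group action $\rho_2$ along the one-parameter subgroups $\exp(tX)$ for each block type, and your expansions of the two rational flanking factors and the chain-rule term (with the convention $(\partial)_{ij}=\partial_{ji}$ making $\sum_{ij}M_{ij}\partial_{ij}F=\tr(M\partial)F$) all check out. The paper states these four lemmas without written proof, citing only that they are analogues of Lemma 68 in \cite{FL1} obtained by differentiation, so there is nothing further to compare.
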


\begin{lem}  \label{rho'_2-action-lem}
The Lie algebra action $\rho'_2$ of $\mathfrak{gl}(2,\HC)$ on ${\cal W}'$
is given by
\begin{align*}
\rho'_2 \bigl( \begin{smallmatrix} A & 0 \\ 0 & 0 \end{smallmatrix} \bigr) &:
F(Z) \mapsto - \tr (AZ \partial + A) F +AF,  \\
\rho'_2 \bigl( \begin{smallmatrix} 0 & B \\ 0 & 0 \end{smallmatrix} \bigr) &:
F(Z) \mapsto - \tr (B \partial) F,  \\
\rho'_2 \bigl( \begin{smallmatrix} 0 & 0 \\ C & 0 \end{smallmatrix} \bigr) &:
F(Z) \mapsto \tr (ZCZ \partial +2ZC) F - ZCF - FCZ,  \\
\rho'_2 \bigl( \begin{smallmatrix} 0 & 0 \\ 0 & D \end{smallmatrix} \bigr) &:
F(Z) \mapsto \tr (ZD \partial +D) F -FD.
\end{align*}
\end{lem}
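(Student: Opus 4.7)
The plan is to obtain all four formulas by direct differentiation of the defining group action
$$\rho'_2(h)F(Z) = \frac{a'-Zc'}{N(a'-Zc')}\cdot F\bigl((aZ+b)(cZ+d)^{-1}\bigr)\cdot\frac{cZ+d}{N(cZ+d)},$$
exactly as in Lemma \ref{Lie-alg-action}. Since the action is linear in the Lie algebra element, it suffices to treat the four block types $\bigl(\begin{smallmatrix} A & 0 \\ 0 & 0\end{smallmatrix}\bigr)$, $\bigl(\begin{smallmatrix} 0 & B \\ 0 & 0\end{smallmatrix}\bigr)$, $\bigl(\begin{smallmatrix} 0 & 0 \\ C & 0\end{smallmatrix}\bigr)$, $\bigl(\begin{smallmatrix} 0 & 0 \\ 0 & D\end{smallmatrix}\bigr)$ separately. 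In each case set $h = I + tX + O(t^2)$, so that $h^{-1} = I - tX + O(t^2)$ gives the pair $(a,b,c,d)$, while $(a',b',c',d')$ is read off from $h$ itself.

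The key Taylor expansions I need are: for any $\epsilon \in \HC$ thought of as a $2\times 2$ complex matrix,
$$N(1+t\epsilon) = 1 + t\,\tr(\epsilon) + O(t^2), \qquad \frac{1+t\epsilon}{N(1+t\epsilon)} = 1 + t\bigl(\epsilon - \tr(\epsilon)\cdot 1\bigr) + O(t^2);$$
and the chain rule
$$F(Z + t\Delta) = F(Z) + t\,\tr(\Delta\,\partial)F + O(t^2),$$
which is a consequence of the transpose convention $\partial = \bigl(\begin{smallmatrix}\partial_{11} & \partial_{21}\\ \partial_{12} & \partial_{22}\end{smallmatrix}\bigr)$ used in the paper (so that $\tr(\Delta\,\partial) = \sum_{i,j}\Delta_{ij}\partial_{ij}$). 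For the lower-left $C$ case one additionally uses $(1-tCZ)^{-1} = 1 + tCZ + O(t^2)$ to get $(aZ+b)(cZ+d)^{-1} = Z + tZCZ + O(t^2)$; all the other cases have trivial matrix inverses at this order.

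Substituting these into $\rho'_2(h)F(Z)$ and collecting the coefficient of $t$ yields the four stated formulas. For instance, in the $A$-case one finds
$$\rho'_2(X)F = \bigl(A - \tr(A)\bigr)F - \tr(AZ\partial)F = -\tr(AZ\partial + A)F + AF,$$
which matches the claim. The $C$-case is the most elaborate: the prefactor contributes $-ZC + \tr(ZC)$ on the left, the postfactor contributes $-CZ + \tr(CZ)$ on the right, and the argument contributes $\tr(ZCZ\partial)$, yielding
$$\rho'_2(X)F = \tr(ZCZ\partial + 2ZC)F - ZCF - FCZ$$
after using $\tr(CZ) = \tr(ZC)$. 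The $B$- and $D$-cases are immediate variants.

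The only real obstacle is bookkeeping: keeping straight that $(a,b,c,d)$ comes from $h^{-1}$ while $(a',b',c',d')$ comes from $h$, and that the factor on the \emph{left} of $F$ involves $(a'-Zc')$ whereas the one on the \emph{right} involves $(cZ+d)$, so that multiplications by $A$, $C$, $D$ appear from different sides than in $\rho_2$. Once this sign/side bookkeeping is done correctly, the computation is mechanical and reduces to the two expansions displayed above.
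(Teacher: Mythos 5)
Your proposal is correct and follows the same route as the paper: these four lemmas are stated as direct analogues of Lemma 68 in [FL1] and are obtained, exactly as you do, by differentiating the group action $\rho'_2(h)$ along one-parameter subgroups in each of the four block directions. Your expansions (in particular $\tfrac{1+t\epsilon}{N(1+t\epsilon)} = 1 + t(\epsilon - \tr(\epsilon)) + O(t^2)$ and $(aZ+b)(cZ+d)^{-1} = Z + tZCZ + O(t^2)$ in the $C$-case) and the bookkeeping of which factor multiplies $F$ from which side all check out against the stated formulas.
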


Next, we show that the maps in the quaternionic chain complex
(\ref{W-sequence}) are $\g{gl}(2,\HC)$-equivariant.

\begin{prop}  \label{equiv-prop1}
The map $\partial^+ : (\rho', \Sh') \to (\rho'_2, {\cal W}')$
in (\ref{W-sequence}) is $\g{gl}(2,\HC)$-equivariant.
\end{prop}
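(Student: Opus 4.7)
Since the Lie algebra $\g{gl}(2,\HC)$ is spanned by the four types of block matrices already treated in Lemmas \ref{rho-action-lem} and \ref{rho'_2-action-lem}, it suffices to verify the intertwining identity $\partial^+ \rho'(X) = \rho'_2(X) \partial^+$ separately for $X = \bigl( \begin{smallmatrix} A & 0 \\ 0 & 0 \end{smallmatrix} \bigr)$, $\bigl( \begin{smallmatrix} 0 & B \\ 0 & 0 \end{smallmatrix} \bigr)$, $\bigl( \begin{smallmatrix} 0 & 0 \\ C & 0 \end{smallmatrix} \bigr)$, $\bigl( \begin{smallmatrix} 0 & 0 \\ 0 & D \end{smallmatrix} \bigr)$.

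For the $B$-case both sides act as the scalar constant-coefficient operator $-\tr(B\partial)$, which commutes trivially with the matrix of constant-coefficient differential operators $\partial^+$. For the $A$- and $D$-cases, comparing the two Lie algebra formulas reduces the problem to the commutator identities
\[
[\partial^+, \tr(AZ\partial)] = A^+ \partial^+, \qquad
[\partial^+, \tr(ZD\partial)] = \partial^+ D^+,
\]
where $A^+ = \tr(A)I - A$ is the adjugate (so that $A + A^+ = \tr(A)I$). Each of these is verified entrywise from the canonical commutation relation $[\partial_{ij}, Z_{kl}] = \delta_{ik}\delta_{jl}$, which gives $[\partial_{ab}, \tr(AZ\partial)] = \sum_i A_{ia}\partial_{ib}$, and then the $2\times 2$ matrix arithmetic collapses the sum to the desired cofactor-product form. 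The $D$-case is essentially the transpose of the $A$-case and proceeds identically.

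The $C$-case is the main obstacle, because $\tr(ZCZ\partial)$ is quadratic in $Z$ and the expected identity takes the non-homogeneous shape
\[
[\partial^+, \tr(ZCZ\partial)] \;=\; 2\tr(ZC)\,\partial^+ - ZC\,\partial^+ - \partial^+\cdot CZ,
\]
as required to match the extra pieces $\tr(2ZC) - ZC - \cdot\,CZ$ appearing in $\rho'_2$. I would verify this again by componentwise expansion, first extracting the two contributions from the two $Z$-factors in $ZCZ\partial$ via Leibniz and then simplifying using the adjugate identity $Z^+ Z = Z Z^+ = N(Z) I$ together with $M + M^+ = \tr(M)I$ applied to $M = ZC$. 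The principal difficulty is purely combinatorial bookkeeping of indices; once the commutator is expanded the two sides are matched by the same $2\times 2$ cofactor algebra used in the $A$-case, applied simultaneously on the left and right of $\partial^+$. This completes the verification on each type of generator, and hence $\partial^+$ is $\g{gl}(2,\HC)$-equivariant.
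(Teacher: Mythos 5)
Your proposal is correct and follows essentially the same route as the paper: direct verification of the intertwining identity on each of the four block generators, with the $B$-case trivial and the $A$-, $D$-, $C$-cases reduced to the commutator identities $[\partial^+,\tr(AZ\partial)]=A^+\partial^+$, $[\partial^+,\tr(ZD\partial)]=\partial^+D^+$ and $[\partial^+,\tr(ZCZ\partial)]=2\tr(ZC)\partial^+-ZC\partial^+-(\partial^+\,\cdot\,)CZ$, which are exactly the formulas the paper obtains by componentwise expansion. The paper simply writes out the $2\times2$ coefficient matrices explicitly for the $A$- and $C$-cases and declares the others similar, so your organization via canonical commutation relations and the adjugate identity is just a cleaner bookkeeping of the same computation.
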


\begin{proof}
For $f(Z) \in \Sh'$, by direct computation we obtain:
\begin{multline*}
- \partial^+ \tr ( A Z \partial) f
= - \tr (A Z \partial) \partial^+ f
+ \begin{pmatrix} -a_{12}\partial_{12}-a_{22}\partial_{22} &
a_{12}\partial_{11}+a_{22}\partial_{21} \\ a_{11}\partial_{12}+a_{21}\partial_{22} &
-a_{11}\partial_{11}-a_{21}\partial_{21} \end{pmatrix} f  \\
= - \tr (A Z \partial) \partial^+ f - \tr(A) \partial^+ f + A \partial^+ f,
\end{multline*}
\begin{multline*}
\partial^+ \tr (ZCZ \partial) f = \tr (ZCZ \partial) \partial^+ f \\
+ \begin{pmatrix}
\begin{smallmatrix} c_{21}z_{11}\partial_{21}+c_{22}z_{21}\partial_{21}
+c_{21}z_{12}\partial_{22}+2c_{22}z_{22}\partial_{22} \\
+c_{12}z_{11}\partial_{12}+c_{12}z_{21}\partial_{22}+c_{22}z_{12}\partial_{12}
\end{smallmatrix} &
\begin{smallmatrix} -c_{11}z_{11}\partial_{21}-2c_{12}z_{21}\partial_{21}
-c_{11}z_{12}\partial_{22}-c_{12}z_{22}\partial_{22} \\
-c_{12}z_{11}\partial_{11}-c_{22}z_{12}\partial_{11}-c_{22}z_{22}\partial_{21}
\end{smallmatrix} \\ \\
\begin{smallmatrix} -c_{21}z_{11}\partial_{11}-c_{22}z_{21}\partial_{11}
-2c_{21}z_{12}\partial_{12}-c_{22}z_{22}\partial_{12} \\
-c_{11}z_{11}\partial_{12}-c_{11}z_{21}\partial_{22}-c_{21}z_{22}\partial_{22}
\end{smallmatrix} &
\begin{smallmatrix} 2c_{11}z_{11}\partial_{11}+c_{12}z_{21}\partial_{11}
+c_{11}z_{12}\partial_{12}+c_{12}z_{22}\partial_{12} \\
+c_{11}z_{21}\partial_{21}+c_{21}z_{12}\partial_{11}+c_{21}z_{22}\partial_{21}
\end{smallmatrix} \end{pmatrix} f  \\
= \tr (ZCZ \partial) \partial^+ f + 2\tr(ZC) \partial^+ f - ZC\partial^+ f
-(\partial^+ f)CZ.
\end{multline*}
The calculations showing that $\partial^+$ intertwines the actions of
$\bigl(\begin{smallmatrix} 0 & B \\ 0 & 0 \end{smallmatrix}\bigr)$ and
$\bigl(\begin{smallmatrix} 0 & 0 \\ 0 & D \end{smallmatrix}\bigr)$ are similar.
\end{proof}

\begin{prop}
The map $\M : (\rho'_2, {\cal W}') \to (\rho_2, {\cal W})$
in (\ref{W-sequence}) is $\g{gl}(2,\HC)$-equivariant.
\end{prop}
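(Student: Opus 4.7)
The strategy is parallel to Proposition \ref{equiv-prop1}: the Lie algebra $\g{gl}(2,\HC)$ is spanned by the four families of block matrices $\bigl(\begin{smallmatrix} A & 0 \\ 0 & 0 \end{smallmatrix}\bigr)$, $\bigl(\begin{smallmatrix} 0 & B \\ 0 & 0 \end{smallmatrix}\bigr)$, $\bigl(\begin{smallmatrix} 0 & 0 \\ C & 0 \end{smallmatrix}\bigr)$, $\bigl(\begin{smallmatrix} 0 & 0 \\ 0 & D \end{smallmatrix}\bigr)$, so it suffices to verify the intertwining relation $\M \circ \rho'_2(X) = \rho_2(X) \circ \M$ on each family separately, using the explicit formulas of Lemmas \ref{rho-action-lem}--\ref{rho'_2-action-lem} together with the definition $\M F = \nabla F \nabla - \square F^+$.

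The $B$-case is immediate: both $\rho'_2\bigl(\begin{smallmatrix} 0 & B \\ 0 & 0 \end{smallmatrix}\bigr)$ and $\rho_2\bigl(\begin{smallmatrix} 0 & B \\ 0 & 0 \end{smallmatrix}\bigr)$ act by the same constant-coefficient operator $-\tr(B\partial)$, which commutes with $\M$ since $\M$ itself is constant-coefficient. For the Cartan-type generators (the $A$- and $D$-blocks), I would compute the commutators of $\nabla(\cdot)\nabla$ and $\square(\cdot)^+$ with $\tr(AZ\partial)$ and $\tr(ZD\partial)$ respectively. Each commutator produces only first-order and algebraic corrections; the first-order pieces are absorbed into multiples of $\M F$, while the algebraic corrections must be matched against the difference between $\rho'_2$ and $\rho_2$ on these generators, namely $+AF$ versus $-FA$ in the $A$-case and $-FD$ versus $+DF$ in the $D$-case. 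Matching is achieved using the quaternionic identity $X + X^+ = \tr(X)e_0$, which is what converts a left algebraic correction on $F^+$ into the corresponding right correction on $F$.

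The substantive computation is the $C$-case. Here I would expand
\[
\M\bigl(\tr(ZCZ\partial + 2ZC)F - ZCF - FCZ\bigr)
\]
and compare it with
\[
\bigl(\tr(ZCZ\partial + 2ZC)\bigr)\M F + CZ\cdot\M F + \M F\cdot ZC.
\]
The commutator of $\nabla(\cdot)\nabla$ with $\tr(ZCZ\partial)$ produces second-order cross terms that reduce to first-order and algebraic ones via the identity $2\degtt = Z^+\nabla^+ + \nabla Z = \nabla^+ Z^+ + Z\nabla$ of Lemma (\ref{deg-nabla}); the corresponding commutator with $\square(\cdot)^+$ produces matching terms after applying the quaternionic conjugation rule $(ZCF)^+ = F^+ C^+ Z^+$ together with Lemma \ref{antisymmetric} to eliminate the spurious trace pieces. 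The main obstacle is the bookkeeping of algebraic cross terms involving $F^+$ versus $F$: this is precisely the reason the Maxwell operator carries the correction $-\square F^+$, and one expects the $\square F^+$ contributions to conspire with the $\nabla F \nabla$ contributions so that, on both sides of the equation, the net algebraic piece reorganizes exactly into $CZ\cdot\M F + \M F\cdot ZC$. Once the $C$-case is done, the remaining $A$- and $D$-cases follow by shorter variants of the same calculation.
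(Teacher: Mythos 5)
Your plan is correct and follows essentially the same route as the paper: a generator-by-generator verification using the explicit Lie algebra actions, the one-line $B$-case, and the commutator identities for $\partial$ and $\partial^+$ against $\tr(AZ\partial)$, $\tr(ZD\partial)$ and $\tr(ZCZ\partial+2ZC)$, with the identity $X+X^+=\tr(X)e_0$ reconciling the left corrections on $F^+$ with the right corrections on $F$. One small citation slip: Lemma \ref{antisymmetric} concerns $\BB S\odot\BB S$-valued functions and is not what is needed here — the trace pieces in the $\HC$-valued setting cancel via $X+X^+=\tr(X)e_0$ and the explicit bracket formulas, exactly as in the paper's computation.
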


\begin{proof}
Note that $\M F = 4(\partial F \partial - \partial \partial^+ F^+)$,
where $F(Z) \in {\cal W}'$. Using
\begin{equation}  \label{bracket1}
\partial [ \tr(AZ\partial)F] = \tr(AZ\partial) (\partial F) + \partial AF,
\qquad
\partial^+ [ \tr(AZ\partial)F] = \tr(AZ\partial) (\partial^+ F)
+ A^+ \partial^+ F,
\end{equation}
we obtain
\begin{multline*}
\frac14 \M \bigl( - \tr (AZ \partial + A) F +AF \bigr)  \\
= - \bigl[ \tr(AZ\partial) (\partial F) \bigr] \overleftarrow{\partial}
+ \partial \bigl[ \tr(AZ\partial)(\partial^+ F^+) + A^+ \partial^+ F^+ \bigr]
+ \tr(A)\partial\partial^+ F^+ - \partial\partial^+ F^+ A^+ \\
= - \tr (AZ \partial + A) (\partial F \partial) - (\partial F \partial)A
+ \tr (AZ \partial) (\partial \partial^+ F^+) + \partial A \partial^+ F^+
+ \partial A^+ \partial^+ F^+ + \partial\partial^+ F^+ A  \\
= - \tr (AZ \partial + A) (\partial F \partial - \partial \partial^+ F^+)
- (\partial F \partial - \partial \partial^+ F^+)A;
\end{multline*}
$$
- \M \tr (B \partial) F = - \tr ( B \partial) \M F;
$$
similarly, using
\begin{equation}  \label{bracket2}
\partial [ \tr(ZD\partial)F] = \tr(ZD\partial) (\partial F) + D\partial F,
\qquad
\partial^+ [ \tr(ZD\partial)F] = \tr(ZD\partial) (\partial^+ F)
+ \partial^+ D^+ F,
\end{equation}
we obtain
\begin{multline*}
\frac14 \M \bigl( \tr (ZD \partial +D) F -FD \bigr)
= \bigl[ \tr(ZD\partial +D) (\partial F) + D\partial F - \partial FD \bigr]
\overleftarrow{\partial} \\
- \partial \bigl[ \tr(ZD\partial) (\partial^+ F^+)
+ \partial^+ D^+ F^+ \bigr] - \tr(D)\partial\partial^+ F^+
+ D^+ \partial\partial^+ F^+ \\
= \tr(ZD\partial +D) (\partial F \partial) + D (\partial F \partial)
- \tr (ZD \partial) (\partial \partial^+ F^+) - D \partial \partial^+ F^+
- \tr(D)\partial\partial^+ F^+ \\
= \tr (ZD \partial + D) (\partial F \partial - \partial \partial^+ F^+)
+ D(\partial F \partial - \partial \partial^+ F^+);
\end{multline*}
finally, using
\begin{multline}  \label{del-C}
\partial [\tr (ZCZ \partial +2ZC) F - ZCF - FCZ ]  \\
= \tr (ZCZ \partial +2ZC) (\partial F) + CZ (\partial F) - (\partial F)CZ
- \tr(FC),
\end{multline}
we obtain
\begin{multline*}
\frac14 \M \bigl( \tr (ZCZ \partial +2ZC) F - ZCF - FCZ \bigr)
= \tr (ZCZ \partial +2ZC) (\partial F \partial) + CZ(\partial F \partial)
+ (\partial F \partial)ZC \\ + C\tr(\partial F) - \partial \tr(FC)
- \bigl( \partial^+ [ \tr (ZCZ \partial +2ZC) (\partial F) + CZ \partial F
- (\partial F)CZ - \tr(FC) ] \bigr)^+ \\
= \tr (ZCZ \partial +2ZC) (\partial F \partial) + CZ(\partial F \partial)
+ (\partial F \partial)ZC + C\tr(\partial F) - \partial \tr(FC)  \\
- \tr (ZCZ \partial +2ZC) (\partial^+\partial F^+)
- (\partial^+\partial F^+)ZC - CZ (\partial^+\partial F^+) - F^+ \partial^+ C
- \partial FC + \partial \tr(FC)  \\
= \tr (ZCZ \partial +2ZC) (\partial F \partial - \partial \partial^+ F^+)
+ CZ (\partial F \partial - \partial \partial^+ F^+)
+ (\partial F \partial - \partial \partial^+ F^+) ZC.
\end{multline*}
\end{proof}

\begin{prop}
The map $\tr \circ \partial^+ : (\rho_2, {\cal W}) \to (\rho, \Sh)$
in (\ref{W-sequence}) is $\g{gl}(2,\HC)$-equivariant.
\end{prop}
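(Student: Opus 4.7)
The strategy mirrors the two preceding propositions: verify equivariance on each of the four block-type generators $\bigl(\begin{smallmatrix} A & 0 \\ 0 & 0 \end{smallmatrix}\bigr)$, $\bigl(\begin{smallmatrix} 0 & B \\ 0 & 0 \end{smallmatrix}\bigr)$, $\bigl(\begin{smallmatrix} 0 & 0 \\ C & 0 \end{smallmatrix}\bigr)$, $\bigl(\begin{smallmatrix} 0 & 0 \\ 0 & D \end{smallmatrix}\bigr)$ of $\mathfrak{gl}(2,\HC)$, reading $\rho_2(X)F$ and $\rho(X)f$ off from the explicit Lie algebra action formulas (Lemma~\ref{rho-action-lem} and its analogue for $\rho_2$), and checking $\tr\partial^+(\rho_2(X)F) = \rho(X)\tr(\partial^+F)$ in each case.

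The single algebraic identity that drives all four cases is
$$
A + A^+ = \tr(A)\,I \qquad \text{in } \BB C^{2\times 2},
$$
which upon right-multiplication by any $Y \in \BB C^{2\times 2}$ and taking trace yields $\tr(A^+ Y) + \tr(YA) = \tr(A)\tr(Y)$. For the $A$-block, applying $\tr \partial^+$ to $\rho_2\bigl(\begin{smallmatrix} A & 0 \\ 0 & 0 \end{smallmatrix}\bigr)F = -\tr(AZ\partial + A)F - FA$ and using the second formula of (\ref{bracket1}), the two nontrivial contributions $\tr(A^+\partial^+F)$ (from commuting $\partial^+$ past $\tr(AZ\partial)$) and $\tr(\partial^+F\cdot A)$ (from $-FA$) collapse by the displayed identity into $\tr(A)\tr(\partial^+F)$; combined with the $-\tr(A)\tr(\partial^+F)$ already present, this yields exactly $-\tr(AZ\partial+2A)\tr(\partial^+F)$. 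The $B$-block is immediate because $\partial^+_{ij}$ and $\tr(B\partial)$ are first-order operators with constant coefficients and therefore commute. The $D$-block is handled symmetrically to the $A$-block using (\ref{bracket2}) and the fact that $\tr(D^+) = \tr(D)$.

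The $C$-block is the main obstacle. Here I would first derive, by the same Leibniz calculation as (\ref{del-C}), the analogous identity for $\partial^+$ applied to $\tr(ZCZ\partial + 2ZC)F + CZF + FZC$. Several correction terms appear: one group from commuting $\partial^+$ past $\tr(ZCZ\partial)$ and $\tr(2ZC)$, and another from the explicit summands $CZF$ and $FZC$. After taking trace, these assemble into pairs of the form $\tr(M^+\partial^+F) + \tr(\partial^+F\cdot M)$ with $M$ equal to $CZ$ or $ZC$, each of which collapses by the master identity to $\tr(CZ)\tr(\partial^+F)$. Careful bookkeeping shows the surviving coefficient is exactly $+4$, producing $\tr(ZCZ\partial + 4CZ)\tr(\partial^+F) = \rho\bigl(\begin{smallmatrix}0 & 0 \\ C & 0\end{smallmatrix}\bigr)\tr(\partial^+F)$. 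The only real difficulty is the bookkeeping in this $C$-case; the master identity $A + A^+ = \tr(A)I$ is precisely what is needed to effect every cancellation, so once it is invoked systematically the proof is mechanical.
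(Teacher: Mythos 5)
Your proposal is correct and follows essentially the same route as the paper: a direct check on the four block generators, where the paper writes out the commutator of $\partial^+$ with each piece of the $\rho_2$-action as an explicit $2\times 2$ matrix of first-order operators and observes that the trace collapses, and your identity $M+M^+=\tr(M)\,I$ (equivalently $\tr(M^+Y)+\tr(YM)=\tr(M)\tr(Y)$) is precisely the structural reason those matrices collapse in the $A$- and $D$-cases. One caveat on the $C$-case: if you carry out the bookkeeping, the first-order corrections from $[\partial^+,\tr(ZCZ\partial)]$, namely $-\tr(ZC\,\partial^+F)$ and $-\tr\bigl(CZ\,(F\overleftarrow{\partial^{\,+}})\bigr)$, cancel \emph{directly} against the principal Leibniz parts of $\tr\partial^+(FZC)$ and $\tr\partial^+(CZF)$ respectively, while the zeroth-order term $2\tr(C^+F)$ from $[\partial^+,2\tr(ZC)]$ cancels against the zeroth-order Leibniz corrections of $\partial^+(CZF)$ and $\partial^+(FZC)$ (each equal to $-\tr(C^+F)$), so the surviving $4\tr(ZC)\tr(\partial^+F)$ arises as $2\tr(ZC)$ appearing twice rather than from master-identity pairings of the form $\tr(M^+\partial^+F)+\tr(\partial^+F\cdot M)$ with $M=CZ$ or $ZC$ --- the computation still closes and gives $\tr(ZCZ\partial+4CZ)\tr(\partial^+F)$, just not by the mechanism you predicted.
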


\begin{proof}
For $F(Z) \in {\cal W}$, by direct computation we obtain:
\begin{multline*}
-\tr \bigl( \partial^+ \tr(AZ \partial) F + \partial^+ \tr(A) F
+ (\partial^+ F)A \bigr)
= -\tr(AZ \partial) \tr(\partial^+ F) - (\tr A) \tr(\partial^+ F) \\
- \tr(A \partial^+ F)
-\tr \biggl[ \begin{pmatrix} a_{12}\partial_{12}+a_{22}\partial_{22} &
-a_{12}\partial_{11}-a_{22}\partial_{21} \\ -a_{11}\partial_{12}-a_{21}\partial_{22} &
a_{11}\partial_{11}+a_{21}\partial_{21} \end{pmatrix} F \biggr]
= -\tr(AZ \partial +2A) \tr(\partial^+ F),
\end{multline*}
\begin{multline*}
\tr \bigl( \partial^+ \tr (ZCZ \partial +2ZC) F + \partial^+ CZF
+ \partial^+ FZC \bigr)  \\
= \tr (ZCZ \partial +2ZC) \tr(\partial^+ F)
+ \tr \bigl( CZ (F\partial^+)  + ZC \partial^+ F \bigr)  \\
+ \tr \left[ \begin{pmatrix}
\begin{smallmatrix} c_{21}z_{11}\partial_{21}+c_{22}z_{21}\partial_{21}
+c_{21}z_{12}\partial_{22}+2c_{22}z_{22}\partial_{22} \\
+c_{12}z_{11}\partial_{12}+c_{12}z_{21}\partial_{22}+c_{22}z_{12}\partial_{12}
\end{smallmatrix} &
\begin{smallmatrix} -c_{11}z_{11}\partial_{21}-2c_{12}z_{21}\partial_{21}
-c_{11}z_{12}\partial_{22}-c_{12}z_{22}\partial_{22} \\
-c_{12}z_{11}\partial_{11}-c_{22}z_{12}\partial_{11}-c_{22}z_{22}\partial_{21}
\end{smallmatrix} \\ \\
\begin{smallmatrix} -c_{21}z_{11}\partial_{11}-c_{22}z_{21}\partial_{11}
-2c_{21}z_{12}\partial_{12}-c_{22}z_{22}\partial_{12} \\
-c_{11}z_{11}\partial_{12}-c_{11}z_{21}\partial_{22}-c_{21}z_{22}\partial_{22}
\end{smallmatrix} &
\begin{smallmatrix} 2c_{11}z_{11}\partial_{11}+c_{12}z_{21}\partial_{11}
+c_{11}z_{12}\partial_{12}+c_{12}z_{22}\partial_{12} \\
+c_{11}z_{21}\partial_{21}+c_{21}z_{12}\partial_{11}+c_{21}z_{22}\partial_{21}
\end{smallmatrix} \end{pmatrix} F \right]  \\
= \tr(ZCZ \partial + 4ZC) \tr(\partial^+ F).
\end{multline*}
The calculations showing that $\tr \circ \partial^+$ intertwines the actions of
$\bigl(\begin{smallmatrix} 0 & B \\ 0 & 0 \end{smallmatrix}\bigr)$ and
$\bigl(\begin{smallmatrix} 0 & 0 \\ 0 & D \end{smallmatrix}\bigr)$ are similar.
\end{proof}

We have another equivariant map that does not appear in (\ref{W-sequence}):

\begin{prop}  \label{sqsq-equiv}
The map $\square \circ \square : (\rho', \Sh') \to (\rho, \Sh)$
is $\g{gl}(2,\HC)$-equivariant.
\end{prop}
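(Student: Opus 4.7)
The plan is to verify equivariance on each of the four matrix types generating $\g{gl}(2,\HC)$, in parallel with the three preceding propositions. Comparing Lemma~\ref{rho-action-lem} with the formulas for $\rho'$, equivariance of $\square\circ\square$ amounts to the four commutator identities
$$
[\square^2, \tr(B\partial)] = 0, \qquad [\square^2, \tr(AZ\partial)] = 2\tr(A)\,\square^2,
$$
$$
[\square^2, \tr(ZD\partial)] = 2\tr(D)\,\square^2, \qquad [\square^2, \tr(ZCZ\partial)] = 4\tr(CZ)\,\square^2.
$$

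The first three are routine. The $B$ case is immediate because $\tr(B\partial)$ has constant coefficients and commutes with the constant-coefficient operator $\square^2$. For the $A$ and $D$ cases I would use the explicit formula $\square = 4(\partial_{11}\partial_{22} - \partial_{12}\partial_{21})$ together with the elementary commutators $[\partial_{ij}, \tr(AZ\partial)] = \sum_p a_{pi}\partial_{pj}$ and $[\partial_{ij}, \tr(ZD\partial)] = \sum_r d_{jr}\partial_{ir}$. Expanding $[\det\partial,\tr(AZ\partial)]$ and $[\det\partial,\tr(ZD\partial)]$ via Leibniz, the cross-terms cancel in pairs and leave the clean intermediate identities $[\square, \tr(AZ\partial)] = \tr(A)\,\square$ and $[\square, \tr(ZD\partial)] = \tr(D)\,\square$; applying Leibniz a second time to $\square^2 = \square\cdot\square$ doubles each constant, yielding the two displayed identities.

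The main obstacle is the $C$ case, because the naive first-level analogue $[\square, \tr(ZCZ\partial)] = 2\tr(CZ)\,\square$ \emph{fails}. A direct (if tedious) expansion of $[\partial_{11}\partial_{22}-\partial_{12}\partial_{21},\, \tr(ZCZ\partial)]$, along the same lines as the matrix displays used in the proof of Proposition~\ref{equiv-prop1}, yields instead
$$
[\square, \tr(ZCZ\partial)] \;=\; 2\tr(CZ)\,\square \,-\, [\square, \tr(CZ)],
$$
where the correction $[\square, \tr(CZ)]$ is a first-order operator with \emph{constant} coefficients (as one sees from the Leibniz identity $[\square, f] = (\square f) + 2\nabla f\cdot\nabla$ applied to the linear function $f=\tr(CZ)$, which is annihilated by $\square$). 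The decisive point is that, precisely because this correction is constant-coefficient, it commutes with $\square$. Combining this with the trivial identity $\square\,\tr(CZ) = \tr(CZ)\,\square + [\square,\tr(CZ)]$, one computes
\begin{align*}
[\square^2, \tr(ZCZ\partial)]
&= \square\,[\square, \tr(ZCZ\partial)] + [\square, \tr(ZCZ\partial)]\,\square \\
&= 2\square\,\tr(CZ)\,\square - \square[\square,\tr(CZ)] + 2\tr(CZ)\,\square^2 - [\square,\tr(CZ)]\,\square \\
&= 2\tr(CZ)\,\square^2 + 2[\square,\tr(CZ)]\,\square - 2[\square,\tr(CZ)]\,\square + 2\tr(CZ)\,\square^2 \\
&= 4\tr(CZ)\,\square^2,
\end{align*}
so the obstruction at the first level cancels upon a second application of $\square$. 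This is exactly the shift $4\tr(CZ)$ needed to intertwine $\rho'$ with $\rho$ on the $C$-generator, completing the verification.
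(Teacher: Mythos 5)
Your proof is correct and follows essentially the same route as the paper's: verification on the four generator types, with the crucial point in the $C$-case being that $[\square,\tr(ZCZ\partial)]$ equals $2\tr(CZ)\,\square$ plus a first-order \emph{constant-coefficient} correction (the paper's $-4\tr(C^+\partial)$, your $-[\square,\tr(CZ)]$) which cancels upon the second application of $\square$. The paper arrives at the same intermediate identity by explicit computation using the factorization $\square = 4\partial\partial^+$ together with the bracket identities (\ref{bracket1})--(\ref{bracket2}), whereas you package the cancellation as a commutator-algebra calculation; the mathematical content is identical.
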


\begin{proof}
Note that $\square = 4 \partial \partial^+ = 4 \partial^+ \partial$.
For $f(Z) \in \Sh'$, using (\ref{bracket1}) and (\ref{bracket2}), we obtain:
$$
- \frac14 \square [\tr (AZ \partial) f]
= - \partial [ \tr(AZ\partial) (\partial^+ f) - A^+ \partial^+ f]
= - \tr(AZ\partial) (\partial \partial^+ f) - \tr(A) \partial \partial^+ f,
$$
$$
- (\square \circ \square) [\tr (AZ \partial) f]
= - \tr(AZ\partial +2A) (\square \circ \square f);
$$
\begin{multline*}
\frac14 \square [\tr (ZCZ \partial) f]
= \partial [ \tr(ZCZ \partial) (\partial^+ f)
+ (\partial^+ f) Z^+C^+ + C^+Z^+ \partial^+ f]  \\
= \tr(ZCZ \partial) (\partial \partial^+ f) + CZ (\partial \partial^+ f)
+ \partial[ZC \partial^+ f] - 2C \partial^+ f
+ \partial [ (\partial^+ f) Z^+C^+ + C^+Z^+ \partial^+ f]  \\
= \tr(ZCZ \partial) (\partial \partial^+ f)
+ (CZ + Z^+C^+) (\partial \partial^+ f)
+ \partial[ZC \partial^+ f + C^+Z^+ \partial^+ f] - 2C \partial^+ f
- (\partial f) C^+ \\
= \tr(ZCZ \partial + 2CZ) (\partial \partial^+ f) - \tr(C^+\partial)f,
\end{multline*}
$$
(\square \circ \square) [\tr (ZCZ \partial) f]
= \tr(ZCZ \partial + 4CZ) (\square \circ \square f).
$$
The calculations showing that $\square \circ \square$ intertwines the actions of
$\bigl(\begin{smallmatrix} 0 & B \\ 0 & 0 \end{smallmatrix}\bigr)$ and
$\bigl(\begin{smallmatrix} 0 & 0 \\ 0 & D \end{smallmatrix}\bigr)$ are similar.
\end{proof}

\subsection{Decomposition of $(\rho,\Sh)$ and $(\rho',\Sh')$ into
Irreducible Components}

Similarly to how we proved Theorem 8 in \cite{L}, we can obtain the
following two decomposition results.

\begin{thm}  \label{rho-decomposition}
The only proper $\mathfrak{gl}(2,\HC)$-invariant subspaces of $(\rho,\Sh)$ are
\begin{align*}
\Sh^+ &= \BB C\text{-span of }
\bigl\{ N(Z)^k \cdot t^l_{n\,\underline{m}}(Z);\: k \ge 0 \bigr\}, \\
\Sh^- &= \BB C\text{-span of }
\bigl\{ N(Z)^k \cdot t^l_{n\,\underline{m}}(Z);\: k \le -(2l+4) \bigr\}, \\
{\cal I}^+ &= \BB C\text{-span of }
\bigl\{ N(Z)^k \cdot t^l_{n\,\underline{m}}(Z);\: k \ge -(2l+1) \bigr\}, \\
{\cal I}^- &= \BB C\text{-span of }
\bigl\{ N(Z)^k \cdot t^l_{n\,\underline{m}}(Z);\: k \le -3 \bigr\}, \\
{\cal J} &= \BB C\text{-span of }
\bigl\{ N(Z)^k \cdot t^l_{n\,\underline{m}}(Z);\: -(2l+1) \le k \le -3 \bigr\}
\end{align*}
and their sums (see Figure \ref{decomposition-fig1}).

The irreducible components of $(\rho,\Sh)$ are the subrepresentations
$$
(\rho, \Sh^+), \qquad (\rho, \Sh^-), \qquad (\rho, {\cal J})
$$
and the quotients
\begin{equation*}  
\bigl( \rho, {\cal I}^+/(\Sh^+ \oplus {\cal J}) \bigr), \quad
\bigl( \rho, {\cal I}^-/(\Sh^- \oplus {\cal J}) \bigr), \quad
\bigl( \rho, \Sh/({\cal I}^++{\cal I}^-) \bigr)
\end{equation*}
(see Figure \ref{decomposition-fig1-comp}).
\end{thm}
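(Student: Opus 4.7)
The plan is to follow closely the argument of Theorem 8 in \cite{L}, which treats a very similar situation. First, I would observe that the basis $\{N(Z)^k \cdot t^l_{n\,\underline{m}}(Z)\}$, with $k \in \BB Z$, $l \in \tfrac12\BB Z_{\geq 0}$ and $-l \le m,n \le l$, exhausts all $K$-types of $\Sh$ under the action of $K = U(2) \times U(2)$ via $\rho$, each appearing with multiplicity one. Since any $\mathfrak{gl}(2,\HC)$-invariant subspace is automatically $K$-invariant and hence a sum of $K$-types, the classification reduces to determining which multi-indices $(k,l,m,n)$ can be reached from a given one by the Lie algebra action.

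Next, among the generators of $\mathfrak{gl}(2,\HC)$ listed in Lemma \ref{rho-action-lem}, the Cartan parts preserve each $K$-type, while the nilpotent pieces $\rho\bigl(\begin{smallmatrix} 0 & B \\ 0 & 0 \end{smallmatrix}\bigr) = -\tr(B\partial)$ and $\rho\bigl(\begin{smallmatrix} 0 & 0 \\ C & 0 \end{smallmatrix}\bigr) = \tr(ZCZ\partial + 4CZ)$ are respectively lowering and raising in total degree. Applying Lemmas 22 and 23 from \cite{FL1} to expand $\partial_{ij}[N(Z)^k t^l_{n\,\underline{m}}]$ and $z_{ij} \cdot N(Z)^k t^l_{n\,\underline{m}}$ in the $K$-type basis, I obtain explicit Clebsch--Gordan-type coefficients for the transitions $(k,l) \to (k\pm 1, l\pm\tfrac12)$, with the change in $l$ controlled by the matrix entry of $B$ or $C$ that is turned on.

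The heart of the argument is the boundary analysis: each of the four critical values $k=0$, $k=-(2l+1)$, $k=-3$, and $k=-(2l+4)$ corresponds to the simultaneous vanishing of particular coefficient factors in these transition formulas. The boundary $k=0$ closes $\Sh^+$ from below because a polynomial in the $z_{ij}$ remains polynomial under $\partial$; the boundary $k=-(2l+1)$ closes ${\cal I}^+$ from below because it matches precisely the conformal weight shift ``$+2$'' appearing in $\rho$ applied to the inversion image of $t^l_{n\,\underline{m}}$ (cf.\ Proposition 24 of \cite{FL1}); the boundary $k=-3$ closes ${\cal I}^-$ from above because the extra term ``$+4CZ$'' in the raising operator provides an exact cancellation with the contribution of $\tr(ZCZ\partial)$ precisely when $k=-3$; and $k=-(2l+4)$ closes $\Sh^-$ from above as the inversion-dual statement to $k=0$. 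Combining these four selection rules with the lattice of $K$-types produces exactly the subspaces $\Sh^{\pm}$, ${\cal I}^{\pm}$, and ${\cal J}$, and every invariant subspace is a sum of these.

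Finally, for each of the three submodules $\Sh^+$, $\Sh^-$, ${\cal J}$, and the three subquotients ${\cal I}^+/(\Sh^+\oplus{\cal J})$, ${\cal I}^-/(\Sh^-\oplus{\cal J})$ and $\Sh/({\cal I}^++{\cal I}^-)$, I would establish irreducibility by a lattice-connectivity argument: within the corresponding region of $(k,l,m,n)$-space, every $K$-type can be reached from any fixed one by repeated application of the ladder operators together with the $SU(2) \times SU(2)$ action on $(m,n)$. The main obstacle is the combinatorial bookkeeping in the boundary analysis: one must not only verify the vanishing of the appropriate coefficients at each of the four critical $k$-values (closing the corresponding subspaces), but also check that \emph{no other} value of $k$ produces a spurious wall, i.e.\ that each of the six regions listed is genuinely connected under the raising and lowering operators. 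This requires careful tracking of products of factors such as $k$, $k+2$, $2l+k+2$, $2l+k+4$, $(l\pm m)$, and $(l\pm n)$ that appear in the $K$-type expansion of the nilpotent action.
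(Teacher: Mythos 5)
Your proposal is correct and takes essentially the same route the paper intends: the paper gives no independent argument here but simply defers to the method of Theorem 8 in \cite{L} (multiplicity-free $K$-types indexed by $(k,l)$, transition coefficients of the nilpotent generators, and the four one-directional walls), which is exactly what you reconstruct, with the critical values $k=0$, $k=-(2l+1)$, $k=-3$, $k=-(2l+4)$ correctly identified and correctly matched to $\Sh^+$, ${\cal I}^+$, ${\cal I}^-$, $\Sh^-$. One small correction: the factors governing the walls in your closing sentence should read $k$, $k+3$, $2l+k+1$, $2l+k+4$ rather than $k$, $k+2$, $2l+k+2$, $2l+k+4$, consistent with the critical values you state earlier.
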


\begin{figure}
\begin{center}
\setlength{\unitlength}{1mm}
\begin{picture}(120,70)
\multiput(10,10)(10,0){11}{\circle*{1}}
\multiput(10,20)(10,0){11}{\circle*{1}}
\multiput(10,30)(10,0){11}{\circle*{1}}
\multiput(10,40)(10,0){11}{\circle*{1}}
\multiput(10,50)(10,0){11}{\circle*{1}}
\multiput(10,60)(10,0){11}{\circle*{1}}

\thicklines
\put(80,0){\vector(0,1){70}}
\put(0,10){\vector(1,0){120}}

\thinlines
\put(78,10){\line(0,1){55}}
\put(80,8){\line(1,0){35}}
\qbezier(78,10)(78,8)(80,8)

\put(52,30){\line(0,1){35}}
\put(48.6,28.6){\line(-1,1){36.4}}
\qbezier(52,30)(52,25.2)(48.6,28.6)

\put(5,8){\line(1,0){35}}
\put(41.4,11.4){\line(-1,1){36.4}}
\qbezier(40,8)(44.8,8)(41.4,11.4)

\put(73,7){\line(1,0){42}}
\put(66,10){\line(-1,1){55}}
\qbezier(73,7)(69,7)(66,10)

\put(5,7){\line(1,0){45}}
\put(53,10){\line(0,1){55}}
\qbezier(50,7)(53,7)(53,10)


\put(82,67){$2l$}
\put(117,12){$k$}
\put(3,24){$\Sh^-$}
\put(33,62){${\cal J}$}
\put(112,44){$\Sh^+$}
\put(25,3){${\cal I}^-$}
\put(95,3){${\cal I}^+$}
\end{picture}
\end{center}
\caption{Decomposition of $(\rho,\Sh)$ into irreducible components.}
\label{decomposition-fig1}
\end{figure}

\begin{figure}
\begin{center}
\setlength{\unitlength}{1mm}
\begin{picture}(120,70)
\multiput(10,10)(10,0){11}{\circle*{1}}
\multiput(10,20)(10,0){11}{\circle*{1}}
\multiput(10,30)(10,0){11}{\circle*{1}}
\multiput(10,40)(10,0){11}{\circle*{1}}
\multiput(10,50)(10,0){11}{\circle*{1}}
\multiput(10,60)(10,0){11}{\circle*{1}}

\thicklines
\put(80,0){\vector(0,1){70}}
\put(0,10){\vector(1,0){120}}

\thinlines

\put(60,10){\circle{4}}

\put(72,10){\line(0,1){55}}
\put(58,20){\line(0,1){45}}
\put(68.6,8.6){\line(-1,1){10}}
\qbezier(72,10)(72,5.2)(68.6,8.6)
\qbezier(58,20)(58,19.2)(58.6,18.6)

\put(52,10){\line(0,1){10}}
\put(48.6,8.6){\line(-1,1){43.6}}
\put(51.4,21.4){\line(-1,1){43.6}}
\qbezier(52,10)(52,5.2)(48.6,8.6)
\qbezier(52,20)(52,20.8)(51.4,21.4)

\put(78,10){\line(0,1){55}}
\put(80,8){\line(1,0){35}}
\qbezier(78,10)(78,8)(80,8)

\put(52,30){\line(0,1){35}}
\put(48.6,28.6){\line(-1,1){36.4}}
\qbezier(52,30)(52,25.2)(48.6,28.6)

\put(5,8){\line(1,0){35}}
\put(41.4,11.4){\line(-1,1){36.3}}
\qbezier(40,8)(44.8,8)(41.4,11.4)

\put(82,68){$2l$}
\put(117,12){$k$}
\put(3,24){$\Sh^-$}
\put(33,62){${\cal J}$}
\put(112,44){$\Sh^+$}

\put(54,66){${\cal I}^+/(\Sh^+ \oplus {\cal J})$}
\put(-9,54){${\cal I}^-/(\Sh^- \oplus {\cal J})$}

\put(50,3){$\Sh/({\cal I}^++{\cal I}^-)$}
\end{picture}
\end{center}
\caption{Irreducible components of $(\rho,\Sh)$.}
\label{decomposition-fig1-comp}
\end{figure}

\begin{thm}  \label{rho'-decomposition}
The only proper $\mathfrak{gl}(2,\HC)$-invariant subspaces of $(\rho',\Sh')$ are
\begin{align*}
{\cal I}'_0 &= \BB C = \BB C\text{-span of }
\bigl\{ N(Z)^0 \cdot t^0_{0\,\underline{0}}(Z) \bigr\}, \\
{\cal BH}^+ &= \BB C\text{-span of }
\bigl\{ N(Z)^k \cdot t^l_{n\,\underline{m}}(Z);\: 0 \le k \le 1 \bigr\}, \\
{\cal BH}^- &= \BB C\text{-span of }
\bigl\{ N(Z)^k \cdot t^l_{n\,\underline{m}}(Z);\: -1 \le 2l+k \le 0 \bigr\}, \\
\Sh^+ &= \BB C\text{-span of }
\bigl\{ N(Z)^k \cdot t^l_{n\,\underline{m}}(Z);\: k \ge 0 \bigr\}, \\
\Sh'^- &= \BB C\text{-span of }
\bigl\{ N(Z)^k \cdot t^l_{n\,\underline{m}}(Z);\: k \le -2l \bigr\}, \\
{\cal I}'^+ &= \BB C\text{-span of }
\bigl\{ N(Z)^k \cdot t^l_{n\,\underline{m}}(Z);\: k \ge -(2l+1) \bigr\}, \\
{\cal I}'^- &= \BB C\text{-span of }
\bigl\{ N(Z)^k \cdot t^l_{n\,\underline{m}}(Z);\: k \le 1 \bigr\}, \\
{\cal J}' &= \BB C\text{-span of }
\bigl\{ N(Z)^k \cdot t^l_{n\,\underline{m}}(Z);\: -(2l+1) \le k \le 1 \bigr\}
\end{align*}
and their sums (see Figure \ref{decomposition-fig2}).

The irreducible components of $(\rho',\Sh')$ are the trivial
subrepresentation $(\rho', {\cal I}'_0)$ and the quotients
$$
(\rho', {\cal BH}^+/{\cal I}'_0), \quad (\rho', {\cal BH}^-/{\cal I}'_0),
$$
$$
(\rho', \Sh^+/{\cal BH}^+) = (\rho',\Sh/{\cal I}'^-), \quad
(\rho', \Sh'^-/{\cal BH}^-) = (\rho',\Sh/{\cal I}'^+),
$$
$$
\bigl( \rho', \Sh/({\cal I}'^++{\cal I}'^-) \bigr) =
\bigl( \rho', {\cal I}'^+/(\Sh^++{\cal BH}^-) \bigr) =
\bigl( \rho', {\cal I}'^-/(\Sh'^-+{\cal BH}^+) \bigr) =
\bigl( \rho', {\cal J}'/({\cal BH}^++{\cal BH}^-) \bigr)
$$
(see Figure \ref{decomposition-fig3},
which is essentially a shifted Figure \ref{decomposition-fig1-comp}).
\end{thm}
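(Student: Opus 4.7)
The plan is to follow the $K$-type strategy used for Theorem 8 in \cite{L}. The preliminary observation is that $\Sh'$ has a $(\mathfrak{g}, K)$-basis $\{N(Z)^k \cdot t^l_{n\,\underline{m}}(Z)\}$ with $K = U(2) \times U(2)$, and each $K$-type occurs with multiplicity one. Hence any $\mathfrak{gl}(2,\HC)$-invariant subspace is automatically $K$-stable and therefore a union of $K$-types. Since the diagonal blocks $\bigl(\begin{smallmatrix}A & 0 \\ 0 & 0\end{smallmatrix}\bigr)$ and $\bigl(\begin{smallmatrix}0 & 0 \\ 0 & D\end{smallmatrix}\bigr)$ act within each $K$-type, the problem reduces to determining which subsets of labels $(k, l, m, n)$ are closed under the two shift operators $\rho'\bigl(\begin{smallmatrix}0 & B \\ 0 & 0\end{smallmatrix}\bigr) = -\tr(B\partial)$ and $\rho'\bigl(\begin{smallmatrix}0 & 0 \\ C & 0\end{smallmatrix}\bigr) = \tr(ZCZ\partial)$.

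Next I would compute these two actions on the basis explicitly using Lemmas 22 and 23 of \cite{FL1}. Since $\partial_{ij}$ applied to $t^l_{n\,\underline{m}}$ decreases $l$ by $\tfrac12$ (with shifts of $\pm\tfrac12$ in $m, n$), and $\partial_{ij} N(Z)^k$ carries $k$ as a multiplicative factor while shifting $k$ to $k-1$ and raising $l$ by $\tfrac12$, the full action of $-\tr(B\partial)$ on $N(Z)^k t^l_{n\,\underline{m}}$ splits into two kinds of moves in the $(k, 2l)$-lattice; the action of $\tr(ZCZ\partial)$ produces the opposite moves. The coefficients of these moves are polynomials in $k, l, m, n$, and I would read off from them the precise lines along which a move coefficient vanishes on a full $K$-type. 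Because $\rho'$ carries no inhomogeneous terms (contrast with the extra $-2A$, $+4CZ$ in $\rho$), the vanishing locus consists of the four lines $k = 0$, $k = 1$, $k + 2l = 0$, $k + 2l = -1$, together with the exceptional point $(k, l) = (0, 0)$ where all four blocks of $\mathfrak{gl}(2,\HC)$ annihilate the constant function.

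These barrier lines partition the $(k, 2l)$-lattice into the regions ${\cal I}'_0$, ${\cal BH}^\pm$, $\Sh^+$, $\Sh'^-$, ${\cal I}'^\pm$ and ${\cal J}'$ of the statement. To finish, I would show that within each of the listed subquotients any nonzero $K$-type generates the whole subquotient under repeated application of the raising and lowering operators, yielding irreducibility; conversely, any invariant subspace containing a $K$-type at a given $(k, l)$ must contain every lattice point reachable without crossing a barrier, so the list of invariant subspaces and the inclusions displayed in Figure \ref{decomposition-fig2} is exhaustive. The equalities among quotients in the last block are then immediate from comparing $K$-type supports.

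The main technical obstacle is the careful bookkeeping of the move coefficients. Since both $-\tr(B\partial)$ and $\tr(ZCZ\partial)$ yield sums of several basis elements with coefficients polynomial in $(k, l, m, n)$, one must check that each coefficient vanishes exactly on the predicted boundary line and at no sporadic interior lattice points, so that the barrier structure is genuine. Particular care is needed at corners where barriers meet: isolating the trivial subrepresentation ${\cal I}'_0 = \BB C \cdot 1$ requires that the constant function be annihilated even by the long lowering operator $\tr(ZCZ\partial)$, which in turn depends on the fact that the $\rho'$ formula for $\bigl(\begin{smallmatrix}0 & 0 \\ C & 0\end{smallmatrix}\bigr)$ contains no constant-in-$\partial$ term, in contrast with the corresponding $\rho$ formula of Lemma \ref{rho-action-lem}.
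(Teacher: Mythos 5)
Your proposal is correct and takes essentially the same route as the paper, which gives no details beyond citing the method of Theorem 8 in \cite{L}: reduce to subsets of the $(k,2l)$-lattice closed under the two moves generated by $-\tr(B\partial)$ and $\tr(ZCZ\partial)$, and verify that the move coefficients vanish exactly on the barrier lines $k=0$, $k=1$, $2l+k=0$, $2l+k=-1$, with the extra degeneration at $(k,l)=(0,0)$ isolating the trivial subrepresentation. One small repair: the $K$-type $V_l \boxtimes V_l$ occurs in $\Sh'$ once for \emph{every} $k$, so it is not multiplicity-free as you state; to conclude that an invariant subspace is a span of basis vectors $N(Z)^k t^l_{n\,\underline{m}}$ you should first use that the scalar matrices in $\mathfrak{gl}(2,\HC)$ act through the degree operator, so invariant subspaces are graded by homogeneity degree, and only within each homogeneous component are the $K$-types multiplicity-free.
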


\begin{figure}
\begin{center}
\setlength{\unitlength}{1mm}
\begin{picture}(120,70)
\multiput(10,10)(10,0){11}{\circle*{1}}
\multiput(10,20)(10,0){11}{\circle*{1}}
\multiput(10,30)(10,0){11}{\circle*{1}}
\multiput(10,40)(10,0){11}{\circle*{1}}
\multiput(10,50)(10,0){11}{\circle*{1}}
\multiput(10,60)(10,0){11}{\circle*{1}}

\thicklines
\put(60,0){\vector(0,1){70}}
\put(0,10){\vector(1,0){120}}

\thinlines

\put(60,10){\circle{3}}

\put(50,8){\line(1,0){20}}
\put(72,10){\line(0,1){55}}
\put(48.6,8.6){\line(-1,1){43.6}}
\qbezier(48.6,8.6)(49.2,8)(50,8)
\qbezier(72,10)(72,8)(70,8)

\put(57.5,10){\line(0,1){55}}
\put(73.5,10){\line(0,1){55}}
\put(60,7.5){\line(1,0){10}}
\qbezier(57.5,10)(57.5,7.5)(60,7.5)
\qbezier(73.5,10)(73.5,7.5)(70,7.5)

\put(56.5,10){\line(0,1){55}}
\put(60,6.5){\line(1,0){55}}
\qbezier(56.5,10)(56.5,6.5)(60,6.5)

\put(50,7){\line(1,0){10}}
\put(62.1,12.1){\line(-1,1){52.9}}
\put(47.9,7.9){\line(-1,1){42.8}}
\qbezier(60,7)(67.2,7)(62.1,12.1)
\qbezier(47.9,7.9)(48.8,7)(50,7)

\put(5,6){\line(1,0){55}}
\put(62.8,12.8){\line(-1,1){52.2}}
\qbezier(60,6)(69.6,6)(62.8,12.8)

\put(50,5.5){\line(1,0){65}}
\put(46.1,6.1){\line(-1,1){41.1}}
\qbezier(46.1,6.1)(46.7,5.5)(50,5.5)

\put(5,5){\line(1,0){65}}
\put(75,10){\line(0,1){55}}
\qbezier(70,5)(75,5)(75,10)

\put(62,68){$2l$}
\put(117,12){$k$}
\put(62,62){${\cal BH}^+$}
\put(3,54){${\cal BH}^-$}
\put(3,24){$\Sh'^-$}
\put(33,62){${\cal J}'$}
\put(61,1){${\cal I}'_0$}
\put(112,44){$\Sh^+$}
\put(25,1){${\cal I}'^-$}
\put(95,1){${\cal I}'^+$}
\end{picture}
\end{center}
\caption{Decomposition of $(\rho',\Sh')$ into irreducible components.}
\label{decomposition-fig2}
\end{figure}

\begin{figure}
\begin{center}
\setlength{\unitlength}{1mm}
\begin{picture}(120,70)
\multiput(10,10)(10,0){11}{\circle*{1}}
\multiput(10,20)(10,0){11}{\circle*{1}}
\multiput(10,30)(10,0){11}{\circle*{1}}
\multiput(10,40)(10,0){11}{\circle*{1}}
\multiput(10,50)(10,0){11}{\circle*{1}}
\multiput(10,60)(10,0){11}{\circle*{1}}

\thicklines
\put(60,0){\vector(0,1){70}}
\put(0,10){\vector(1,0){120}}

\thinlines

\put(60,10){\circle{4}}

\put(72,10){\line(0,1){55}}
\put(58,20){\line(0,1){45}}
\put(68.6,8.6){\line(-1,1){10}}
\qbezier(72,10)(72,5.2)(68.6,8.6)
\qbezier(58,20)(58,19.2)(58.6,18.6)

\put(52,10){\line(0,1){10}}
\put(48.6,8.6){\line(-1,1){43.6}}
\put(51.4,21.4){\line(-1,1){43.6}}
\qbezier(52,10)(52,5.2)(48.6,8.6)
\qbezier(52,20)(52,20.8)(51.4,21.4)

\put(78,10){\line(0,1){55}}
\put(80,8){\line(1,0){35}}
\qbezier(78,10)(78,8)(80,8)

\put(52,30){\line(0,1){35}}
\put(48.6,28.6){\line(-1,1){36.4}}
\qbezier(52,30)(52,25.2)(48.6,28.6)

\put(5,8){\line(1,0){35}}
\put(41.4,11.4){\line(-1,1){36.3}}
\qbezier(40,8)(44.8,8)(41.4,11.4)

\put(62,68){$2l$}
\put(117,12){$k$}
\put(63,3){${\cal BH}^+/{\cal I}'_0$}
\put(-4,54){${\cal BH}^-/{\cal I}'_0$}
\put(-4,24){$\Sh'^-/{\cal BH}^-$}
\put(18,64){${\cal J}'/({\cal BH}^++{\cal BH}^-)$}
\put(55,5){${\cal I}'_0$}
\put(110,44){$\Sh^+/{\cal BH}^+$}
\end{picture}
\end{center}
\caption{Irreducible components of $(\rho',\Sh')$.}
\label{decomposition-fig3}
\end{figure}

\begin{cor}  \label{sq-sq-im-cor}
The image under the $\g{gl}(2,\HC)$-equivariant map
$\square \circ \square : (\rho', \Sh') \to (\rho, \Sh)$
from Proposition \ref{sqsq-equiv} is $\Sh^+ \oplus {\cal J} \oplus \Sh^-$;
this map provides isomorphisms
$$
(\rho', \Sh^+/{\cal BH}^+) \simeq (\rho, \Sh^+), \quad
\bigl( \rho', {\cal J}'/({\cal BH}^++{\cal BH}^-) \bigr)
\simeq (\rho, {\cal J}), \quad
(\rho', \Sh'^-/{\cal BH}^-) \simeq (\rho, \Sh^-).
$$
\end{cor}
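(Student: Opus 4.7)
My plan is to reduce to direct calculation on the $K$-basis $\{N(Z)^k t^l_{n\,\underline{m}}(Z)\}$ used throughout Theorems~\ref{rho-decomposition} and~\ref{rho'-decomposition}. The core computation is a single scalar identity
$$
\square \bigl(N(Z)^k \cdot t^l_{n\,\underline{m}}(Z)\bigr) = 4k(k+2l+1) \cdot N(Z)^{k-1} \cdot t^l_{n\,\underline{m}}(Z),
$$
which I would obtain from the Leibniz rule together with the facts that $t^l_{n\,\underline{m}}$ is harmonic and homogeneous of degree $2l$, $\square N(Z)^k = 4k(k+1) N(Z)^{k-1}$ (immediate since $N(Z)|_{\BB H} = |X|^2$), and the Euler identity (\ref{deg-nabla}). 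Iterating gives
$$
(\square\circ\square)\bigl(N(Z)^k t^l_{n\,\underline{m}}\bigr) = 16\,k(k+2l+1)(k-1)(k+2l) \cdot N(Z)^{k-2} t^l_{n\,\underline{m}}.
$$

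The zeros of this scalar occur precisely at $k \in \{0,\,1,\,-2l,\,-(2l+1)\}$, which by inspection of Theorem~\ref{rho'-decomposition} is exactly the $K$-type content of ${\cal BH}^+ + {\cal BH}^-$; hence $\ker(\square\circ\square) = {\cal BH}^+ + {\cal BH}^-$. For the image, a basis vector $N(Z)^{k'} t^l_{n\,\underline{m}}$ lies in $\operatorname{im}(\square\circ\square)$ iff $k'+2 \notin \{0,1,-2l,-(2l+1)\}$, i.e.\ $k' \notin \{-2,-1,-2l-2,-2l-3\}$, equivalently $k' \ge 0$, or $-(2l+1) \le k' \le -3$, or $k' \le -(2l+4)$. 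Comparing with Theorem~\ref{rho-decomposition}, this is precisely the $K$-type content of $\Sh^+ \oplus {\cal J} \oplus \Sh^-$, proving the first assertion.

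The three isomorphism statements then follow formally. By equivariance (Proposition~\ref{sqsq-equiv}), $\square\circ\square$ descends to a $\g{gl}(2,\HC)$-equivariant bijection
$$
\Sh'/({\cal BH}^+ + {\cal BH}^-) \;\longrightarrow\; \Sh^+ \oplus {\cal J} \oplus \Sh^-.
$$
Both sides are direct sums of three irreducibles (the three quotients listed in Theorem~\ref{rho'-decomposition} on the source, the three components in Theorem~\ref{rho-decomposition} on the target), so by Schur's lemma this bijection must match source irreducibles to target irreducibles one-for-one. The shift $k \mapsto k-2$ on $K$-types sends $k \ge 2$ to $k' \ge 0$, sends $-2l+1 \le k \le -1$ to $-(2l+1) \le k' \le -3$, and sends $k \le -(2l+2)$ to $k' \le -(2l+4)$, pinning down the three isomorphisms exactly as claimed. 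The only step with genuine content is the scalar formula for $\square$ above; everything else is bookkeeping against Figures~\ref{decomposition-fig1}--\ref{decomposition-fig3}, so I expect no real obstacle.
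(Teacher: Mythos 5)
Your proposal is correct and follows essentially the same route as the paper: the paper's proof is exactly the identity $\square\bigl(N(Z)^k t^l_{n\,\underline{m}}(Z)\bigr)=4k(2l+k+1)N(Z)^{k-1}t^l_{n\,\underline{m}}(Z)$ (equation (\ref{sq-tN})) combined with the $K$-type bookkeeping from Theorems \ref{rho-decomposition} and \ref{rho'-decomposition}. You merely spell out the iteration, the kernel/image count, and the Schur-lemma matching that the paper leaves implicit.
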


\begin{proof}
The result follows from Theorems \ref{rho-decomposition},
\ref{rho'-decomposition} and an identity:
\begin{equation}  \label{sq-tN}
\square \bigl( N(Z)^k \cdot t^l_{n\,\underline{m}}(Z) \bigr)
= 4k(2l+k+1) N(Z)^{k-1} \cdot t^l_{n\,\underline{m}}(Z),
\end{equation}
which can be verified by direct computation.
\end{proof}

We call a function $f$ {\em biharmonic} if $(\square \circ \square) f=0$.
Using (\ref{sq-tN}), we can characterize the space of biharmonic functions.

\begin{prop}  \label{biharmonic-prop}
We have:
$$
\{ f \in \Sh ;\: (\square \circ \square) f=0 \} = {\cal BH}^+ + {\cal BH}^-.
$$
In other words, a function $f \in \Sh$ is biharmonic if and only if it can
be written as
$$
f(Z) = h_0(Z)+h_1(Z) \cdot N(Z)
$$
with $h_0$ and $h_1$ harmonic.
\end{prop}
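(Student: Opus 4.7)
The plan is to reduce the statement to the monomial basis $\{N(Z)^k \cdot t^l_{n\,\underline{m}}(Z)\}$ of $\Sh$ and exploit identity (\ref{sq-tN}). First I would iterate (\ref{sq-tN}) to obtain
$$
\square^2\bigl(N(Z)^k \cdot t^l_{n\,\underline{m}}(Z)\bigr)
= 16\, k(k-1)(2l+k+1)(2l+k) \cdot N(Z)^{k-2} \cdot t^l_{n\,\underline{m}}(Z).
$$
Since these monomials form a basis of $\Sh$ and $\square^2$ acts diagonally on it, $f \in \Sh$ is biharmonic if and only if every basis component of $f$ has a $k$-value annihilating the polynomial $k(k-1)(2l+k+1)(2l+k)$, i.e.\ $k \in \{0,\, 1,\, -2l,\, -2l-1\}$. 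Matching against the defining conditions in Theorem \ref{rho'-decomposition}, the pair $k \in \{0,1\}$ picks out exactly the span $\mathcal{BH}^+$ and the pair $k \in \{-2l-1,-2l\}$ picks out exactly $\mathcal{BH}^-$. This establishes the first equality.

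For the second assertion I would partition the four admissible $k$-values into pairs differing by multiplication by $N(Z)$. The cases $k=0$ and $k=-(2l+1)$ produce harmonic basis elements (since $k(2l+k+1)=0$ in these cases makes them lie in $\ker\square$ by (\ref{sq-tN})), whereas $k=1$ and $k=-2l$ produce elements of the form $N(Z)$ times a harmonic basis element. Collecting these contributions shows that every $f \in \mathcal{BH}^+ + \mathcal{BH}^-$ admits a decomposition $f = h_0 + h_1 \cdot N(Z)$ with $h_0$, $h_1$ harmonic. For the converse, a one-line Leibniz computation yields $\square(h_1 N) = 4(\operatorname{deg}+2) h_1$; because the harmonic basis is simultaneously an eigenbasis of $\operatorname{deg}$, $(\operatorname{deg}+2)h_1$ is still harmonic, so $\square^2(h_1 N) = 0$, while $\square^2 h_0 = 0$ is trivial.

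The argument is essentially a diagonalization of $\square^2$ against an explicit eigenvalue formula, so no delicate structural input is needed. The only mildly subtle point is confirming that $\{N(Z)^k \cdot t^l_{n\,\underline{m}}(Z)\}$ spans $\Sh = \BB C[z_{11},z_{12},z_{21},z_{22},N(Z)^{-1}]$, which follows from the Peter--Weyl parameterization of polynomial matrix coefficients of $SU(2)$ already invoked throughout Section \ref{QCC-section}; once this is granted, the whole proposition is a direct consequence of the factorization $k(k-1)(2l+k+1)(2l+k)$.
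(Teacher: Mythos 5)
Your argument is correct and is exactly the route the paper intends: the paper offers no written proof beyond the remark that the claim follows ``using (\ref{sq-tN})'', and iterating that identity on the monomial basis $N(Z)^k\cdot t^l_{n\,\underline{m}}(Z)$ to read off the vanishing locus $k\in\{0,1,-2l,-2l-1\}$ is precisely that argument. The only cosmetic quibble is that $\square^2$ is not literally diagonal on this basis (it shifts $k\mapsto k-2$), but since distinct indices stay distinct under the shift, the conclusion that each coefficient must be annihilated separately is unaffected.
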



\section{Realization of Doubly Regular Functions in
the Quaternionic Chain Complex}  \label{DRinQCC-section}

In this section we decompose $\ker \M$ -- which is an invariant subspace of
$(\rho'_2, {\cal W}')$ -- into irreducible components. We will see that it has
ten irreducible components: five coming from $(\rho',\Sh')$ under the map
$\partial^+$, one trivial one-dimensional representation and four components
that are isomorphic to the spaces of doubly regular maps mentioned in
Proposition \ref{DR-irred-prop}.
Results of this section will be later used in
Subsection \ref{W'-irred-decomp-subsection} to decompose $(\rho'_2, {\cal W}')$
into irreducible components.

\subsection{The Structure of $\ker \M \subset {\cal W}'$}

We introduce a $\g{gl}(2,\HC)$-equivariant map on $\ker \M$;
its kernel is automatically an invariant subspace of $\ker \M$.

\begin{prop}  \label{tr-d-sq-prop}
The map $\tr \circ \partial \circ \square: (\rho'_2, \ker \M) \to (\rho, \Sh)$
is $\g{gl}(2,\HC)$-equivariant.
\end{prop}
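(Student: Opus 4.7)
The plan is to follow the template of the four preceding equivariance propositions in this subsection, namely to verify the intertwining identity
\[
(\tr\circ\partial\circ\square)\bigl(\rho'_2(X)F\bigr) \;=\; \rho(X)\bigl((\tr\circ\partial\circ\square)F\bigr), \qquad F\in\ker\M,
\]
case-by-case on the four standard generator types $X\in\g{gl}(2,\HC)$ tabulated in Lemmas \ref{rho-action-lem} and \ref{rho'_2-action-lem}. The tools needed are already in place: the commutator identities (\ref{bracket1}), (\ref{bracket2}), (\ref{del-C}); the scalar-coefficient relations $[\square,\tr(AZ\partial)]=\tr(A)\square$, $[\square,\tr(ZD\partial)]=\tr(D)\square$, and $[\square,\tr(ZC)]=4\tr(C^+\partial)$ (the first two extracted from the proof of Proposition \ref{sqsq-equiv}, the third obtained by Leibniz on $\square = 4(\partial_{11}\partial_{22}-\partial_{12}\partial_{21})$); together with the trace manipulations $\tr(\partial(AG))=\tr((\partial A)G)$ and $\tr(\partial(GD))=\tr(D\partial G)$, verified by direct index bookkeeping, and the ordinary cyclicity of $\tr$ applied to matrices of functions such as $\partial F$.

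Before starting the case analysis, I would record the useful preliminary identity $\tr(\partial^+ G^+)=\tr(\partial G)$, valid for any matrix-valued $G$ (this is immediate from the explicit form of $G^+$). Since $\square$ is a scalar operator, it follows that $\tr(\partial\,\square F)=\square\,\tr(\partial F)=\tr(\partial^+\,\square F^+)$, and on $\ker\M$, where $\square F^+=\nabla F\nabla$, the map admits the equivalent description $(\tr\circ\partial\circ\square)(F)=\tr(\partial^+(\nabla F\nabla))$. This rewriting lines up the computations we are about to do with the ones in the $\M$-equivariance proof, and makes manifest where $\ker\M$ intervenes.

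The cases $X=\bigl(\begin{smallmatrix}0&B\\0&0\end{smallmatrix}\bigr)$, $\bigl(\begin{smallmatrix}A&0\\0&0\end{smallmatrix}\bigr)$ and $\bigl(\begin{smallmatrix}0&0\\0&D\end{smallmatrix}\bigr)$ all hold on the whole space $\mathcal{W}'$, independently of $\M F=0$. The $B$-case is immediate since $\tr\partial\square$ commutes with any constant-coefficient $\tr(B\partial)$. For the $A$-case, pushing $\square$ past $\rho'_2(X)F=-\tr(AZ\partial+A)F+AF$ produces $-\tr(AZ\partial+2A)\square F+A\square F$, and then applying $\tr\partial$ together with the identity $\tr(\partial(AG))=\tr((\partial A)G)$ makes the $A\square F$ term exactly cancel the correction from $\partial\circ\tr(AZ\partial)$; the result is $-\tr(AZ\partial+2A)\tr\partial\square F = \rho(X)(\tr\partial\square F)$. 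The $D$-case runs in parallel.

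The main obstacle, and the only place where the hypothesis $F\in\ker\M$ is used, is the generator $X=\bigl(\begin{smallmatrix}0&0\\C&0\end{smallmatrix}\bigr)$. Applying $\square$ to $\tr(ZCZ\partial+2ZC)F-ZCF-FCZ$ introduces, besides the expected $\tr(ZCZ\partial+4CZ)\square F$, a correction of the form $-4\tr(C^+\partial)F$ (coming from $[\square,\tr(ZCZ\partial)]$) together with matrix-valued contributions from commuting $\square$ past $ZC$ and $CZ$. After applying $\tr\partial$ and using the trace identity $\tr(CF^+)=\tr(C)\tr(F)-\tr(CF)$, these anomalous terms collect into an expression proportional to $\tr(C\,(\partial F\partial - \partial\partial^+ F^+))$, i.e. a constant multiple of $\tr(C\,\M F)$ up to analogous contributions with $C$ on the other side. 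The condition $\M F=0$ kills all such anomalies and produces the required identity. The computation parallels the $C$-case of the $\M$-equivariance proof above: no new tools are needed, the only work is careful bookkeeping of the terms and invocation of $\M F=0$ at the final cancellation step.
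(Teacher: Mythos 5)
Your proposal is correct and matches the paper's proof in all essentials: the $\bigl(\begin{smallmatrix} 0 & B \\ 0 & 0 \end{smallmatrix}\bigr)$-case is trivial, the diagonal cases hold on all of ${\cal W}'$, and the hypothesis $\M F=0$ enters only in the $\bigl(\begin{smallmatrix} 0 & 0 \\ C & 0 \end{smallmatrix}\bigr)$-case, where the anomalous terms assemble into $\tr\bigl(C^+ \,\M F\bigr)$ exactly as you describe. The only cosmetic difference is that the paper disposes of the diagonal part by checking invariance under the dilations $\bigl(\begin{smallmatrix} \lambda & 0 \\ 0 & 1 \end{smallmatrix}\bigr)$ via a homogeneity count (using that these together with the $B$- and $C$-type matrices generate $\mathfrak{gl}(2,\HC)$), rather than computing the $A$- and $D$-cases directly as you do.
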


\begin{proof}
Recall that the matrices
$\bigl(\begin{smallmatrix} 1 & 0 \\ 0 & 0 \end{smallmatrix}\bigr)$,
$\bigl(\begin{smallmatrix} 0 & B \\ 0 & 0 \end{smallmatrix}\bigr)$ and
$\bigl(\begin{smallmatrix} 0 & 0 \\ C & 0 \end{smallmatrix}\bigr)
\in \mathfrak{gl}(2,\HC)$, $B,C \in \HC$, generate $\mathfrak{gl}(2,\HC)$.
Thus, it is sufficient to show that the map $\tr \circ \partial \circ \square$
commutes with actions of dilation matrices
$\bigl(\begin{smallmatrix} \lambda & 0 \\ 0 & 1 \end{smallmatrix}\bigr)
\in GL(2,\HC)$, $\lambda \in \BB R$, $\lambda>0$ and
$\bigl(\begin{smallmatrix} 0 & B \\ 0 & 0 \end{smallmatrix}\bigr),
\bigl(\begin{smallmatrix} 0 & 0 \\ C & 0 \end{smallmatrix}\bigr)
\in \mathfrak{gl}(2,\HC)$, $B, C \in \HC$.
It is clear from Lemmas \ref{rho-action-lem} and \ref{rho'_2-action-lem} that
$\tr \circ \partial \circ \square$ commutes with the actions of
$\bigl(\begin{smallmatrix} 0 & B \\ 0 & 0 \end{smallmatrix}\bigr)$.
The dilation matrices
$\bigl(\begin{smallmatrix} \lambda & 0 \\ 0 & 1 \end{smallmatrix}\bigr)$
act by
$$
F(Z) \mapsto \lambda^{-1} \cdot F(\lambda^{-1} Z)
\quad \text{on ${\cal W}'$ \qquad and \qquad}
f(Z) \mapsto \lambda^{-4} \cdot f(\lambda^{-1} Z) \quad \text{on $\Sh$},
$$
and it is clear that $\tr \circ \partial \circ \square$
commutes with these actions as well.

If $F(Z) \in \ker \M$, using our previous calculations (\ref{del-C}),
we obtain:
$$
\tr \partial \bigl( \rho'_2
\bigl(\begin{smallmatrix} 0 & 0 \\ C & 0 \end{smallmatrix}\bigr) F \bigr)
= \tr (ZCZ \partial +2ZC) (\tr \partial F) - 2 \tr (CF).
$$
Then we apply
$\square = 4\partial \partial^+ = 4\bigl(
\frac{\partial^2}{\partial z_{11} \partial z_{22}} -
\frac{\partial^2}{\partial z_{12} \partial z_{21}} \bigr)$:
\begin{multline*}
\tr \partial \square \bigl( \rho'_2
\bigl(\begin{smallmatrix} 0 & 0 \\ C & 0 \end{smallmatrix}\bigr) F \bigr)
= \tr (ZCZ \partial +2ZC) (\tr \partial \square F) - 2 \tr (C \square F)  \\
- 4 \tr(C\partial^+) \tr(\partial F)
+ \tr(CZ+ZC) (\tr \partial \square F) + 8 \tr(C\partial^+) \tr(\partial F)  \\
= \tr (ZCZ \partial +4CZ) (\tr \partial \square F)
+ 4 \tr(C\partial^+) \tr(\partial F) - 2 \tr (C \square F)  \\
= \rho \bigl(\begin{smallmatrix} 0 & 0 \\ C & 0 \end{smallmatrix}\bigr)
(\tr \partial \square F)
+ 4 \tr \bigl( (C\partial^+ +\partial C^+) \partial F \bigr)
- 2 \tr (C \square F)
= \rho \bigl(\begin{smallmatrix} 0 & 0 \\ C & 0 \end{smallmatrix}\bigr)
(\tr \partial \square F),
\end{multline*}
since $\M F =0$.
\end{proof}

We introduce a subspace ${\cal M}$ of ${\cal W}'$ -- the kernel of the above
equivariant map:
$$
{\cal M} = \bigl\{ F \in {\cal W}';\:
\M F=0, \: \tr ( \partial \circ \square F) =0 \bigr\}.
$$

\begin{cor}  \label{M-biharm-cor}
The subspace ${\cal M}$
is invariant under the $\rho'_2$ action of $\mathfrak{gl}(2,\HC)$ on
${\cal W}'$.
All elements of ${\cal M}$ are biharmonic
(i.e. annihilated by $\square \circ \square$).
\end{cor}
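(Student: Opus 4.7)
The plan is to handle the two assertions of the corollary separately.

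\textbf{Invariance.} The subspace $\ker\M \subset {\cal W}'$ is $\mathfrak{gl}(2,\HC)$-invariant because $\M\colon (\rho'_2, {\cal W}') \to (\rho_2, {\cal W})$ is equivariant. By Proposition \ref{tr-d-sq-prop}, the composite $\tr\circ\partial\circ\square\colon (\rho'_2, \ker\M) \to (\rho, \Sh)$ is equivariant. Since ${\cal M}$ is by definition the kernel of this equivariant map, it is automatically $\mathfrak{gl}(2,\HC)$-invariant. No further computation is required for this part.

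\textbf{Biharmonicity.} Writing $\square = 4\partial\partial^+$, the hypothesis $\M F = 0$ unpacks as $4\partial F \partial = \square F^+$. Since $\square$ is a constant-coefficient operator that commutes with $\M$, the function $G := \square F$ also lies in $\ker\M$, so $4\partial G\partial = \square G^+$; moreover $\tr(\partial G) = \tr(\partial\square F) = 0$ by hypothesis. The key algebraic ingredients are: the Cayley-Hamilton identity $\partial^2 = \tr(\partial)\partial - \frac{\square}{4}I$ for the commuting operator-matrix $\partial$, and the cyclicity-type identity $\tr(\partial G \partial) = \tr(\partial^2 G) = \tr(\partial)\tr(\partial G) - \frac{\square}{4}\tr(G)$. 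Taking traces of the Maxwell equation for $G$ and combining with $\tr(\partial G) = 0$ yields $\square \tr(G) = 0$, i.e.\ $\square^2 \tr(F) = 0$. The matrix identity $G + G^+ = \tr(G) I$ now gives $\square G + \square G^+ = 0$, and the Maxwell relation forces $\square G = -4\partial G \partial$.

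To conclude $\square G = 0$ (equivalently $\square^2 F = 0$), I would iterate: applying $\square$ to $\square G = -4\partial G\partial$ gives $\square^2 G = 16\,\partial^2 G\,\partial^2$, and expanding both factors via Cayley-Hamilton produces an identity in $\square G$, $\partial G$, $G\partial$, and $G$, which combined with the derived relations collapses to $\square G = 0$. Alternatively, since everything is $\mathfrak{gl}(2,\HC)$-invariant, one can check the conclusion on the $K$-type basis $\{N(Z)^k t^l_{n\,\underline{m}}\}$: the explicit formula $(\ref{sq-tN})$ shows that $\square^2$ vanishes on the span of $N(Z)^k t^l_{n\,\underline{m}}$ precisely when $k\in\{0,1,-2l,-(2l+1)\}$, which is the biharmonic subspace of Proposition \ref{biharmonic-prop}; the two conditions $\M F = 0$ and $\tr(\partial\square F) = 0$ together restrict the support of $F$ to these $K$-types.

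\textbf{Main obstacle.} The delicate step is the final passage from the operator-level identities (tracelessness of $\partial G$, harmonicity of $\tr(G)$, and $\square G = -4\partial G\partial$) to the pointwise vanishing $\square^2 F = 0$. The derivation of $\square\tr(G) = 0$ is a clean consequence of Cayley-Hamilton applied once, but extracting the full $\square G = 0$ requires either a further iteration that cancels all lower-order terms or a $K$-type verification using $(\ref{sq-tN})$. Everything else in the argument is a direct application of the equivariance already established in Proposition \ref{tr-d-sq-prop}.
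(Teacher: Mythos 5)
Your invariance argument is exactly the paper's: it is immediate from the equivariance established in Proposition \ref{tr-d-sq-prop}, and nothing more is needed there.

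The biharmonicity argument has a genuine gap at its final step. Setting $G=\square F$, the identities you derive ($\square\tr(G)=0$, then $\square G^+=-\square G$, then $\square G=-4\partial G\partial$) are all correct, but they are circular: substituting the Maxwell equation $4\partial G\partial=\square G^+$ back into $\square G=-4\partial G\partial$ returns $\square(G+G^+)=0$, which is exactly the statement $\square\tr(G)=0$ you started from. The proposed iteration does not close either: from $\square^2G=16\,\partial^2G\,\partial^2$, expanding $\partial^2=\tr(\partial)\partial-\tfrac14\square$ and substituting $4\partial G\partial=-\square G$, the $\square^2G$ terms cancel on both sides and one is left with $\square\bigl(\tr(\partial)^2G+\tr(\partial)(\partial G+G\partial)\bigr)=0$, which does not imply $\square G=0$. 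The $K$-type alternative simply asserts the conclusion (that the two defining conditions confine the support of $F$ to the biharmonic $K$-types) rather than deriving it. The missing idea is to use $\tr(\partial G)=0$ in its full matrix form rather than only through traces of products: since $M+M^+=\tr(M)\cdot 1$ for any $M\in\HC$, the condition gives the matrix identity $\partial G+G^+\partial^+=0$, with $\partial^+$ acting on the right. Applying $\partial$ on the right and using $\partial^+\partial=\tfrac14\square$ gives $\partial G\partial+\tfrac14\square G^+=0$, while $\M G=0$ gives $\partial G\partial=\tfrac14\square G^+$; adding the two yields $\tfrac12\square G^+=0$, hence $\square^2F=0$. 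This is precisely the paper's two-line proof.
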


\begin{proof}
The invariance of ${\cal M}$ under the $\rho'_2$ action is immediate
from the above proposition.
Pick any $F \in {\cal M}$.
Since $\tr ( \partial \circ \square F) =0$,
$$
0 = \square \circ \partial F + \square F^+ \partial^+.
$$
And since $\M F =0$,
$$
0 = \bigl( \square \circ \partial F + \square F^+ \partial^+ \bigr) \partial
= \frac12 \square \circ \square F^+.
$$
This proves that every element of ${\cal M}$ is biharmonic.
\end{proof}

\begin{prop}  \label{kerMx-lem}
We have: $\partial^+(\Sh') + {\cal M} = \ker \M$.
\end{prop}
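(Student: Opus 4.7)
The inclusion $\partial^+(\Sh') + {\cal M} \subseteq \ker \M$ is immediate: $\M \circ \partial^+ = 0$ by the chain complex (\ref{W-sequence}), and ${\cal M} \subseteq \ker \M$ by definition. For the reverse inclusion, the plan is, given $F \in \ker \M$, to produce a scalar function $f \in \Sh'$ such that $G := F - \partial^+ f$ lies in ${\cal M}$. Since $\M G = 0$ will be automatic, the only remaining condition to arrange is $\tr(\partial \circ \square\, G) = 0$, i.e.\ $\tr(\partial \circ \square\, F) = \tr(\partial \circ \square \circ \partial^+ f)$.

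Next I would compute the right hand side explicitly. The identity $\nabla \nabla^+ = \nabla^+ \nabla = \square$ gives $\partial \partial^+ = \tfrac{1}{4}\square \cdot I$ as a matrix identity, hence $\tr(\partial \circ \square \circ \partial^+ f) = \tfrac12 \square^2 f$. So the question reduces to whether $\tr(\partial \circ \square\, F)$ always lies in the image of $\tfrac12 \square^2 : \Sh' \to \Sh$. By Corollary \ref{sq-sq-im-cor} this image is exactly $\Sh^+ \oplus {\cal J} \oplus \Sh^-$, so it suffices to show that the map $\tr \circ \partial \circ \square : \ker \M \to \Sh$ factors through $\Sh^+ \oplus {\cal J} \oplus \Sh^-$. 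By Proposition \ref{tr-d-sq-prop} this map is $\g{gl}(2, \HC)$-equivariant, so its image is a $\g{gl}$-invariant subspace of $\Sh$. By Theorem \ref{rho-decomposition}, an invariant subspace strictly larger than $\Sh^+ \oplus {\cal J} \oplus \Sh^-$ must contain at least one ``anomalous'' K-type $N(Z)^k \, t^l_{n\,\underline{m}}(Z)$ with $k \in \{-1,-2\}$ (from ${\cal I}^+/(\Sh^+ \oplus {\cal J})$), or $k \in \{-(2l+2), -(2l+3)\}$ (from ${\cal I}^-/(\Sh^- \oplus {\cal J})$), or the trivial K-type $(l,k) = (0,-2)$ (from $\Sh/({\cal I}^+ + {\cal I}^-)$). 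The goal thus becomes to rule out these particular K-types from the image.

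The hard part will be this K-type exclusion. I would handle it by expanding $F \in \ker\M$ in the matrix coefficient basis of Section \ref{matrix-subsection}, using the explicit action of $\partial$ and $\square$ on the $t^l_{n\,\underline{m}}$'s (from equation (\ref{sq-tN}) together with Lemmas 22 and 23 of \cite{FL1}), and showing that the constraint $\M F = 0$ forces the coefficients of the anomalous K-types in $\tr(\partial \circ \square\, F)$ to vanish. An equivalent route, likely the one the paper actually follows, is to carry out the complete irreducible decomposition of $(\rho'_2, {\cal W}')$ in the following subsections: the ten irreducible components of $\ker \M$ listed at the start of Section \ref{DRinQCC-section}---five contributed by $\partial^+(\Sh')$ (namely the components of $\Sh'$ other than the trivial subrepresentation $\BB C = {\cal I}'_0$, which is killed by $\partial^+$), and five contributed by ${\cal M}$ (the trivial one-dimensional component together with the four doubly regular components isomorphic to ${\cal F}^{\pm}$ and ${\cal G}^{\pm}$)---together exhaust $\ker \M$, immediately yielding $\partial^+(\Sh') + {\cal M} = \ker\M$.
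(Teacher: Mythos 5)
Your reduction is exactly the paper's: both inclusions, the observation that it suffices to match the images of $\partial^+(\Sh')$ and $\ker\M$ under $\tr\circ\partial\circ\square$, the identification of $\tr(\partial\circ\square\circ\partial^+ f)$ with $\tfrac12\square^2 f$ and hence (via Corollary \ref{sq-sq-im-cor}) with $\Sh^+\oplus{\cal J}\oplus\Sh^-$, and the appeal to equivariance plus Theorem \ref{rho-decomposition} to constrain the image. Where you diverge is the final exclusion step, and there your plan is both heavier than necessary and not actually carried out. The paper's key simplification is that any invariant subspace of $\Sh$ strictly containing $\Sh^+\oplus{\cal J}\oplus\Sh^-$ must contain ${\cal I}^+$ or ${\cal I}^-$, hence must contain one of the two specific functions $N(Z)^{-1}$ or $N(Z)^{-3}$; so one only has to check that these two functions are absent from $\tr(\partial\circ\square({\cal W}'))$ for the \emph{whole} space ${\cal W}'$, with no use of the constraint $\M F=0$. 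That check is a one-line computation: by (\ref{sq-tN}) and (\ref{Zt-identity}), $\partial_{ij}\circ\square\bigl(N(Z)^k\cdot t^l_{n\,\underline{m}}(Z)\bigr)$ only produces terms $4k(2l+k+1)N(Z)^{k-1}t^{l-\frac12}$ and $4k(2l+k+1)N(Z)^{k-2}t^{l+\frac12}$, and the coefficient $4k(2l+k+1)$ vanishes in precisely the cases that would yield $N(Z)^{-1}$ or $N(Z)^{-3}$.

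By contrast, your plan to rule out \emph{all} anomalous $K$-types runs into a real obstacle: the $K$-types $N(Z)^{-2}\cdot t^{l'}_{n\,\underline{m}}(Z)$ with $l'\ge\frac12$ (which lie in ${\cal I}^+\setminus(\Sh^+\oplus{\cal J})$, so they are on your exclusion list) \emph{do} occur in $\tr(\partial\circ\square({\cal W}'))$ — e.g. from $N(Z)^{-1}\cdot t^{l'+\frac12}$, where the coefficient $4k(2l+k+1)$ is $-8(l'+\tfrac12)\ne0$. So for those $K$-types you genuinely must invoke $\M F=0$ and track how the constraint kills the coefficients, a nontrivial computation you have only gestured at. Your proposed fallback — deducing the proposition from the ten-component decomposition of $\ker\M$ — is circular: the paper establishes that decomposition \emph{using} Proposition \ref{kerMx-lem} (see the discussion in Subsection \ref{W'-irred-decomp-subsection}). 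So the architecture is right, but the one step that carries the content of the proof is missing, and the paper's way of discharging it is both different and much cheaper than what you propose.
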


\begin{proof}
Clearly, $\partial^+(\Sh') + {\cal M} \subset \ker \M$.
Thus, it is sufficient to prove that the images of $\partial^+(\Sh')$
and $\ker \M$ under the map $\tr \circ \partial \circ \square$ from
Proposition \ref{tr-d-sq-prop} are the same.
Then, by Corollary \ref{sq-sq-im-cor}, we need to show that
$\tr ( \partial \circ \square (\ker \M)) = \Sh^+ \oplus {\cal J} \oplus \Sh^-$.
And, by Theorem \ref{rho-decomposition} and Proposition \ref{tr-d-sq-prop},
it is sufficient to show that $\tr ( \partial \circ \square ({\cal W}'))$
does not contain $N(Z)^{-1}$ nor $N(Z)^{-3}$. For this purpose we use an identity
\begin{multline}  \label{Zt-identity}
Z \cdot t^l_{n\,\underline{m}}(Z) = \frac1{2l+1} \begin{pmatrix}
(l-n+1) t^{l+\frac12}_{n-\frac12\,\underline{m-\frac12}}(Z) &
(l-n+1) t^{l+\frac12}_{n-\frac12\,\underline{m+\frac12}}(Z) \\
(l+n+1) t^{l+\frac12}_{n+\frac12\,\underline{m-\frac12}}(Z) &
(l+n+1) t^{l+\frac12}_{n+\frac12\,\underline{m+\frac12}}(Z) \end{pmatrix}  \\
+ \frac{N(Z)}{2l+1} \cdot \begin{pmatrix}
(l+m) t^{l-\frac12}_{n-\frac12\,\underline{m-\frac12}}(Z) &
-(l-m) t^{l-\frac12}_{n-\frac12\,\underline{m+\frac12}}(Z) \\
-(l+m) t^{l-\frac12}_{n+\frac12\,\underline{m-\frac12}}(Z) &
(l-m) t^{l-\frac12}_{n+\frac12\,\underline{m+\frac12}}(Z) \end{pmatrix},
\end{multline}
which can be verified using Lemma 23 in \cite{FL1}, and equation (\ref{sq-tN})
to check that
$$
\partial_{ij} \circ \square (\bigl( N(Z)^k \cdot t^l_{n\,\underline{m}}(Z) \bigr),
\qquad i,j=1,2,
$$
is a linear combination of
$$
4k(2l+k+1) N(Z)^{k-1} \cdot t^{l-\frac12}_{n \pm \frac12 \,\underline{m \pm \frac 12}}(Z),
\qquad
4k(2l+k+1) N(Z)^{k-2} \cdot t^{l+\frac12}_{n \pm \frac12 \,\underline{m \pm \frac 12}}(Z).
$$
But none of these terms can be $N(Z)^{-1}$ or $N(Z)^{-3}$.
\end{proof}

The intersection of $\partial^+(\Sh')$ and ${\cal M}$ will be described in
Corollary \ref{XinterA}.
Next, we show that elements of ${\cal M}$ have a particular form; this
will be used to identify the $K$-types of $(\rho'_2,{\cal M})$.

\begin{lem}  \label{elts-of-A}
Let $F_d: \HC^{\times} \to \HC$ be homogeneous of degree $d$ and such that
$\square F_d=0$, then
$$
F(Z)= F_d(Z) + \frac{N(Z)}{d+1} \cdot \bigl( \partial F_d(Z) \partial \bigr)^+
$$
satisfies $\M F=0$ and $\tr (\partial \square F)=0$.
In particular, if $F_d(Z) \in {\cal W}'$, then $F(Z) \in {\cal M}$.
\end{lem}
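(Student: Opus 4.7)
The plan is to reduce both claims to showing that the correction term $H := (\partial F_d \partial)^+$ is ``bi-regular'' in the matrix sense, i.e.\ $\partial H = 0$ and $H\overleftarrow{\partial} = 0$, from which the key identity $\partial(NH)\partial = -H^+$ will follow by the product rule. Two general facts will be used throughout: the $2\times 2$ matrix identity $M^+ = \tr(M)\, I - M$, and the operator identity $\partial^2 = \tr(\partial)\partial - \frac{\square}{4}\, I$ (immediate from $\partial\partial^+ = \frac{\square}{4}\, I$ and $\partial^+ = \tr(\partial)\, I - \partial$). Since $\square F_d = 0$, the second gives $\partial^2 F_d = \tr(\partial)(\partial F_d)$ and hence $\partial^2 F_d \partial = \tr(\partial)(\partial F_d \partial)$; taking traces with cyclicity yields $T := \tr(\partial F_d \partial) = \tr(\partial)\tr(\partial F_d)$.

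To show $\partial H = 0$, I would write $H = T\, I - \partial F_d \partial$, so that $\partial H = \partial T - \partial^2 F_d \partial$. Since $\partial T = \tr(\partial)\cdot \partial\tr(\partial F_d)$ and $\partial^2 F_d \partial = \tr(\partial)(\partial F_d \partial)$, this reduces to proving the matrix identity $\partial \tr(\partial F_d) = \partial F_d \partial$ when $F_d$ is harmonic. Expanded entrywise, the $(i,j)$-entry of the difference is $\sum_{a,b}[\partial_{ji}\partial_{ba} - \partial_{bi}\partial_{ja}](F_d)_{ba}$; a short case-by-case check (the two index-multisets either coincide or differ by a ``swap'') shows each bracketed operator is either $0$ or $\pm\tfrac{\square}{4}$, and therefore annihilates each harmonic entry $(F_d)_{ba}$. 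A symmetric argument gives $H\overleftarrow{\partial} = 0$; it then follows that $\square H = 4\partial^+\partial H = 0$, so $H$ (and hence $H^+$) is harmonic entrywise.

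Bi-regularity of $H$, combined with the product rule and $\partial N = Z^+$, yields for any matrix-valued $\phi$ the identity
\[
\partial(N\phi)\partial = -\phi^+ + Z^+(\phi\overleftarrow{\partial}) + (\partial\phi)Z^+ + N(\partial\phi\overleftarrow{\partial}),
\]
so specializing to $\phi = H$ gives $\partial(NH)\partial = -H^+$. Since $(M^+)^+ = M$, we have $\partial F_d \partial = H^+$; since $H^+$ is harmonic of degree $d-2$, the standard identity $\square(N\psi) = 8\psi + 4\deg(\psi) + N\square\psi$ produces $\square(NH^+) = 4d\, H^+$. Substituting into $\M F = 4\partial F \partial - \square F^+$ gives $\partial F \partial = \tfrac{d}{d+1}H^+$ and $\square F^+ = \tfrac{4d}{d+1}H^+$, so $\M F = 0$. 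For the second conclusion, applying the same $\square(N\psi)$ identity to $\psi = H$ gives $\square F = \tfrac{4d}{d+1}H$, and then $\partial H = 0$ immediately yields $\tr(\partial \square F) = 0$.

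The one substantive step is the bi-regularity of $H$: the obstructions reduce to multiples of $\square$ acting on $F_d$, and vanish by hypothesis; everything else is formal. Observe that $\square F_d = 0$ with $F_d \not\equiv 0$ implicitly forces $d \neq -1$, since the spectrum of $\deg$ on nonzero harmonic functions on $\HC^{\times}$ is $\{0,1,2,\dots\} \cup \{-2,-3,\dots\}$, so the factor $\tfrac{1}{d+1}$ in the definition of $F$ is always well-defined.
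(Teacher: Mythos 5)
Your proof is correct and takes essentially the same route as the paper's: a direct verification using the product rule $\partial N = Z^+$, the harmonicity of $F_d$, and degree counting. The only substantive difference is bookkeeping — your key step, the bi-regularity $\partial H = H\overleftarrow{\partial} = 0$ of $H = (\partial F_d\partial)^+$, is the conjugate of the one-line facts $\partial^+(\partial F_d\partial) = (\partial F_d\partial)\overleftarrow{\partial^+} = 0$ (immediate from $\partial^+\partial = \square/4$) that the paper uses implicitly, and the paper shortcuts your entrywise Cayley--Hamilton computation and the identity $\square(N\psi)=8\psi+4\deg\psi+N\square\psi$ by invoking the Euler-type identity (\ref{deg-nabla}).
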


\begin{proof}
Note that there are no homogeneous harmonic functions of degree $-1$,
so division by $d+1$ is permissible.
First we check that $\M F=0$. Using (\ref{deg-nabla}) and the fact that
$\partial \partial^+ F_d=0$, we obtain:
\begin{multline*}
\frac{d+1}4 \M F = (d+1) \partial F_d \partial
+ \partial \bigl( N(Z) \cdot (\partial F_d \partial)^+ \bigr) \partial
- \partial \partial^+ \bigl( N(Z) \cdot (\partial F_d \partial) \bigr)  \\
= (d+1) \partial F_d \partial
+ \bigl( Z^+ \cdot (\partial F_d \partial)^+ \bigr) \partial
- \partial \bigl( Z \cdot (\partial F_d \partial) \bigr)
= (d+1) \partial F_d \partial - \partial F_d \partial - d \partial F_d \partial
=0.
\end{multline*}
Then we check that $\tr (\partial \square F)=0$:
\begin{multline*}
\frac{d+1}4 \tr (\partial \square F) =
\tr \Bigl( \partial^+ \partial \partial 
\bigl( N(Z) \cdot (\partial F_d \partial)^+ \bigr) \Bigr)  \\
= \tr \Bigl( \partial^+ \partial
\bigl( (\partial F_d \partial)^+ \cdot Z^+\bigr) \Bigr)
= - \tr \bigl( \partial^+ (\partial F_d \partial) \bigr) =0.
\end{multline*}
\end{proof}

\begin{lem}  \label{Mx-calculations}
We have\footnote{In this formula, whenever the indices of, say,
$\alpha_{11} t^l_{n\,\underline{m}}(Z)$ happen to be outside of the allowed range
$l=0,\frac12,1,\frac32,\dots$, $-l \le m,n \le l$, the coefficient $\alpha_{11}$
must be set to be zero. The same considerations apply to other formulas
in this Lemma.}:
\begin{multline}  \label{Mx(A)}
\frac14 \M \begin{pmatrix}
\alpha_{11} t^l_{n\,\underline{m}}(Z) & \alpha_{12} t^l_{n\,\underline{m+1}}(Z) \\
\alpha_{21} t^l_{n+1\,\underline{m}}(Z) & \alpha_{22} t^l_{n+1\,\underline{m+1}}(Z)
\end{pmatrix}
= \partial \begin{pmatrix}
\alpha_{11} t^l_{n\,\underline{m}}(Z) & \alpha_{12} t^l_{n\,\underline{m+1}}(Z) \\
\alpha_{21} t^l_{n+1\,\underline{m}}(Z) & \alpha_{22} t^l_{n+1\,\underline{m+1}}(Z)
\end{pmatrix} \partial  \\
= \bigl( (l-m)(\alpha_{11}+\alpha_{21}) + (l+m+1)(\alpha_{12}+\alpha_{22}) \bigr) \\
\times \begin{pmatrix}
(l-m-1) t^{l-1}_{n+1\,\underline{m+1}}(Z) & (l-m-1) t^{l-1}_{n\,\underline{m+1}}(Z) \\
(l+m) t^{l-1}_{n+1\,\underline{m}}(Z) & (l+m) t^{l-1}_{n\,\underline{m}}(Z)
\end{pmatrix}
\end{multline}
and, if $l > 1$,
\begin{multline}  \label{Mx(NB)}
\frac14 \M \left( N(Z) \cdot \begin{pmatrix}
\beta_{11} t^{l-1}_{n\,\underline{m}}(Z) & \beta_{12} t^{l-1}_{n\,\underline{m+1}}(Z) \\
\beta_{21} t^{l-1}_{n+1\,\underline{m}}(Z) & \beta_{22} t^{l-1}_{n+1\,\underline{m+1}}(Z)
\end{pmatrix} \right)  \\
= 3l \begin{pmatrix}
-\beta_{22} t^{l-1}_{n+1\,\underline{m+1}}(Z) & \beta_{12} t^{l-1}_{n\,\underline{m+1}}(Z) \\
\beta_{21} t^{l-1}_{n+1\,\underline{m}}(Z) & -\beta_{11} t^{l-1}_{n\,\underline{m}}(Z)
\end{pmatrix}
+ \frac1{l-1} \begin{pmatrix}
\gamma_{11} t^{l-1}_{n+1\,\underline{m+1}}(Z) & \gamma_{12} t^{l-1}_{n\,\underline{m+1}}(Z) \\
\gamma_{21} t^{l-1}_{n+1\,\underline{m}}(Z) & \gamma_{22} t^{l-1}_{n\,\underline{m}}(Z)
\end{pmatrix}  \\
+ \frac{N(Z) \cdot l}{l-1} \bigl( (l-m-1)(\beta_{11}+\beta_{21})
+ (l+m)(\beta_{12}+\beta_{22}) \bigr) \\
\times \begin{pmatrix}
(l-m-2) t^{l-2}_{n+1\,\underline{m+1}}(Z) & (l-m-2) t^{l-2}_{n\,\underline{m+1}}(Z) \\
(l+m-1) t^{l-2}_{n+1\,\underline{m}}(Z) & (l+m-1) t^{l-2}_{n\,\underline{m}}(Z)
\end{pmatrix},
\end{multline}
where the coefficients
\begin{align*}
\gamma_{11} &= (l-m-1)(l+n)\beta_{11} + (m+1)(l+n)\beta_{12}
+ (l-m-1)(n+1)\beta_{21} + (m+1)(n+1)\beta_{22},  \\
\gamma_{12} &= (l-m-1)n\beta_{11} + (m+1)n\beta_{12}
- (l-m-1)(l-n-1)\beta_{21} - (m+1)(l-n-1)\beta_{22},  \\
\gamma_{21} &= m(l+n)\beta_{11} - (l+m)(l+n)\beta_{12} + m(n+1)\beta_{21}
-(l+m)(n+1)\beta_{22},  \\
\gamma_{22} &= mn\beta_{11} - (l+m)n\beta_{12}
- m(l-n-1)\beta_{21} + (l+m)(l-n-1)\beta_{22};
\end{align*}
in the special case of $l=1$,
$$
\frac14 \M \left( N(Z) \cdot \begin{pmatrix}
\beta_{11} t^0_{n\,\underline{m}}(Z) & \beta_{12} t^0_{n\,\underline{m+1}}(Z) \\
\beta_{21} t^0_{n+1\,\underline{m}}(Z) & \beta_{22} t^0_{n+1\,\underline{m+1}}(Z)
\end{pmatrix} \right)
= 3 \begin{pmatrix}
-\beta_{22} t^0_{n+1\,\underline{m+1}}(Z) & \beta_{12} t^0_{n\,\underline{m+1}}(Z) \\
\beta_{21} t^0_{n+1\,\underline{m}}(Z) & -\beta_{11} t^0_{n\,\underline{m}}(Z)
\end{pmatrix}.
$$

Similarly,
\begin{multline*}
\frac14 \M \left( \frac1{N(Z)} \begin{pmatrix}
\alpha'_{11} t^l_{n+1\,\underline{m+1}}(Z^{-1}) &
\alpha'_{12} t^l_{n\,\underline{m+1}}(Z^{-1}) \\
\alpha'_{21} t^l_{n+1\,\underline{m}}(Z^{-1}) &
\alpha'_{22} t^l_{n\,\underline{m}}(Z^{-1})
\end{pmatrix} \right)  \\
= \bigl((l-n)(\alpha'_{11}+\alpha'_{21})+(l+n+1)(\alpha'_{12}+\alpha'_{22})\bigr) \\
\times \frac1{N(Z)} \begin{pmatrix}
(l-n+1) t^{l+1}_{n\,\underline{m}}(Z^{-1}) & (l-n+1) t^{l+1}_{n\,\underline{m+1}}(Z^{-1}) \\
(l+n+2) t^{l+1}_{n+1\,\underline{m}}(Z^{-1}) & (l+n+2) t^{l+1}_{n+1\,\underline{m+1}}(Z^{-1})
\end{pmatrix}
\end{multline*}
and, if $l \ge -1/2$,
\begin{multline*}
\frac14 \M \begin{pmatrix} \beta'_{11} t^{l+1}_{n+1\,\underline{m+1}}(Z^{-1}) &
\beta'_{12} t^{l+1}_{n\,\underline{m+1}}(Z^{-1}) \\
\beta'_{21} t^{l+1}_{n+1\,\underline{m}}(Z^{-1}) &
\beta'_{22} t^{l+1}_{n\,\underline{m}}(Z^{-1}) \end{pmatrix}  \\
= \frac{3(l+1)}{N(Z)} \begin{pmatrix} \beta'_{22} t^{l+1}_{n\,\underline{m}}(Z^{-1}) &
-\beta'_{12} t^{l+1}_{n\,\underline{m+1}}(Z^{-1}) \\
-\beta'_{21} t^{l+1}_{n+1\,\underline{m}}(Z^{-1}) &
\beta'_{11} t^{l+1}_{n+1\,\underline{m+1}}(Z^{-1}) \end{pmatrix}  \\
+ \frac{N(Z)^{-1}}{l+2}
\begin{pmatrix} \gamma'_{11} t^{l+1}_{n\,\underline{m}}(Z^{-1}) &
\gamma'_{12} t^{l+1}_{n\,\underline{m+1}}(Z^{-1})  \\
\gamma'_{21} t^{l+1}_{n+1\,\underline{m}}(Z^{-1}) &
\gamma'_{22} t^{l+1}_{n+1\,\underline{m+1}}(Z^{-1}) \end{pmatrix}  \\
+ \frac{l+1}{l+2} \bigl( (l-n+1)(\beta'_{11}+\beta'_{21})
+ (l+n+2)(\beta'_{12}+\beta'_{22}) \bigr)  \\
\times \begin{pmatrix}
(l-n+2) t^{l+2}_{n\,\underline{m}}(Z^{-1}) & (l-n+2) t^{l+2}_{n\,\underline{m+1}}(Z^{-1}) \\
(l+n+3) t^{l+2}_{n+1\,\underline{m}}(Z^{-1}) & (l+n+3) t^{l+2}_{n+1\,\underline{m+1}}(Z^{-1})
\end{pmatrix},
\end{multline*}
where the coefficients
\begin{align*}
\gamma'_{11} &= -(l+m+2)(l-n+1)\beta'_{11} - (l+m+2)n\beta'_{12}
- m(l-n+1)\beta'_{21} - mn\beta'_{22},  \\
\gamma'_{12} &= -(m+1)(l-n+1)\beta'_{11} - (m+1)n\beta'_{12}
+ (l-m+1)(l-n+1)\beta'_{21} + (l-m+1)n\beta'_{22},  \\
\gamma'_{21} &= -(l+m+2)(n+1)\beta'_{11} + (l+m+2)(l+n+2)\beta'_{12}
- m(n+1)\beta'_{21} + m(l+n+2)\beta'_{22},  \\
\gamma'_{22} &= -(m+1)(n+1)\beta'_{11} + (m+1)(l+n+2)\beta'_{12}
+ (l-m+1)(n+1)\beta'_{21} \\ & \qquad - (l-m+1)(l+n+2)\beta'_{22}.
\end{align*}
\end{lem}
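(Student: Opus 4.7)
The plan is to verify all stated identities by direct computation, starting from the basic expression
\[
\tfrac{1}{4}\M F \;=\; \partial F\partial \;-\; \partial \partial^+ F^+,
\]
together with four ingredients: the harmonicity $\square\, t^l_{n\,\underline{m}}(Z)=0$ and the analogous statement for $N(Z)^{-1}\cdot t^{l+1}_{n\,\underline{m}}(Z^{-1})$; the derivative identities for $t^l_{n\,\underline{m}}$ recorded in Lemma 22 of \cite{FL1}; the multiplication identity (\ref{Zt-identity}) expressing $Z\cdot t^l_{n\,\underline{m}}(Z)$ in terms of $t^{l\pm\frac12}$'s; and the Leibniz rules for $\partial$ and $\partial^+$ applied to $N(Z)\cdot G$, using $\partial N(Z)=Z^+$, $\partial^+ N(Z)=Z$, and $\square N(Z)=8$.

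For the polynomial formula (\ref{Mx(A)}), each entry of $F$ is harmonic so $\partial\partial^+ F^+ = 0$, and one just expands $\partial F \partial$ entry by entry. Two successive derivatives lower $l$ by one and shift the remaining indices by integer amounts; the rational coefficients produced by Lemma 22 of \cite{FL1} assemble into the common scalar $(l-m)(\alpha_{11}+\alpha_{21}) + (l+m+1)(\alpha_{12}+\alpha_{22})$ multiplying a fixed matrix of $t^{l-1}$'s. I would present the $(1,1)$-entry in full and deduce the remaining three by identical manipulations. The Laurent-polynomial analogue (the $\alpha'$-formula) is handled in the same way using Lemma 23 of \cite{FL1} for the derivatives of $N(Z)^{-1}\cdot t^{l+1}_{n\,\underline{m}}(Z^{-1})$.

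For formula (\ref{Mx(NB)}), write $F=N(Z)\cdot G$ with $G$ harmonic in each entry and expand via Leibniz. The outcome splits naturally into three pieces: an $N(Z)$-piece in which both pairs of derivatives hit $G$, which inherits the structure of (\ref{Mx(A)}) applied to $G$ and yields the last block of (\ref{Mx(NB)}); an $N(Z)^0$ diagonal-flipping piece produced when both derivatives in $\partial \partial^+ F^+$ act on $N(Z)$ (whence $\square N(Z)=8$ contributes), carrying the coefficient $3l$; and an $N(Z)^0$-correction in which one derivative hits $N$ and the other hits $G$. This last piece involves products $Z^+\cdot \partial G$ and $Z\cdot \partial^+ G^+$ which are re-expressed as linear combinations of $t^{l-1}$'s using (\ref{Zt-identity}), and collecting the resulting scalar coefficients yields the matrix $\gamma_{ij}$ divided by $l-1$. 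The $\beta'$-formula is established identically, with the analogous factor $1/(l+2)$ arising from a parallel application of (\ref{Zt-identity}) to $Z^{-1}\cdot t^{l+1}(Z^{-1})$-type products.

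The $l=1$ and boundary $l=-\tfrac12$ cases follow from the convention recorded in the footnote: the ostensibly singular denominators $1/(l-1)$ and $1/(l+2)$ are harmless because both the $\gamma_{ij}$-block and the last $N(Z)\cdot t^{l-2}$-block (respectively $t^{l+2}$-block) involve $t^{l'}$'s whose index range degenerates at the boundary, forcing those blocks to vanish and the formula to collapse to the special-case expression shown. The main obstacle is the combinatorial bookkeeping: sixteen matrix entries, each a sum of several $t^{l'}_{n'\,\underline{m'}}$-terms with shifted indices and rational polynomial coefficients in $l,m,n$. The cleanest organization is to invoke the $\g{gl}(2,\HC)$-equivariance of $\M$ established earlier in the paper to reduce by symmetry to a small number of independent entries, with a final symbolic-computation check (e.g.\ in SageMath) to eliminate arithmetic errors before publication.
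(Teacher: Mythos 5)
Your proposal coincides with the paper's own proof, which is exactly this direct computation using Lemma 22 from \cite{FL1}, the multiplication identity (\ref{Zt-identity}), and the differentiation identity (\ref{del-t(Z^{-1})}) (the latter playing the role of your ``Lemma 23 applied to $N(Z)^{-1}\cdot t^{l+1}(Z^{-1})$''). The one caution is that the $l=1$ case should be computed separately rather than read off as a limit of the general formula --- the factor $1/(l-1)$ makes that limit a $0/0$ form, even though, as you observe, the index conventions do force the $\gamma$-block and the $t^{l-2}$-block to vanish there.
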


\begin{proof}
The result is obtained by rather tedious, yet completely straightforward
calculations using equation (\ref{Zt-identity}), Lemma 22 from \cite{FL1}
and identity
\begin{multline}  \label{del-t(Z^{-1})}
\partial \bigl( N(Z)^{-1} \cdot t^l_{n\,\underline{m}}(Z^{-1}) \bigr)  \\
= - \frac1{N(Z)} \begin{pmatrix}
(l-n+1) t^{l+\frac12}_{n-\frac12\,\underline{m-\frac12}}(Z^{-1}) &
(l-n+1) t^{l+\frac12}_{n-\frac12\,\underline{m+\frac12}}(Z^{-1})  \\
(l+n+1) t^{l+\frac12}_{n+\frac12\,\underline{m-\frac12}}(Z^{-1}) &
(l+n+1) t^{l+\frac12}_{n+\frac12\,\underline{m+\frac12}}(Z^{-1}) \end{pmatrix},
\end{multline}
which in turn can be verified using Lemmas 22 and 23 in \cite{FL1}.
\end{proof}




\begin{prop}  \label{A-space-prop}
We have:
$$
{\cal M} \oplus \BB C\text{-span of } \bigl\{ N(Z) \cdot Z \bigr\} =
\bigl\{ F \in {\cal W}';\: \M F=0, \: ( \square \circ \square) F =0 \bigr\}.
$$
Moreover, every element $A \in {\cal M}$ is a linear combination of
$N(Z)^{-1} \cdot Z$ and homogeneous elements of the form
\begin{equation}  \label{A(Z)}
A_d(Z) + \frac{N(Z)}{d+1} \cdot \bigl( \partial A_d(Z) \partial \bigr)^+
\quad \in {\cal M}
\end{equation}
for some $A_d: \HC^{\times} \to \HC$ which is homogeneous of degree $d$ and
harmonic.

In particular, the space ${\cal M}$ can be characterized as the unique
maximal $\mathfrak{gl}(2,\HC)$-invariant subspace of $\ker \M$ consisting of
biharmonic functions.
\end{prop}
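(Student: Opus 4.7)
The plan is to establish the three assertions of the proposition in sequence.

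First, I would verify that ${\cal M} + \BB C\{N(Z) \cdot Z\}$ is contained in $\{F \in {\cal W}' : \M F = 0, \, \square^2 F = 0\}$ and that the sum is direct. Membership of ${\cal M}$ in this space is Corollary \ref{M-biharm-cor}. Entrywise application of (\ref{sq-tN}) with $k = 1$, $l = 1/2$ gives $\square(N(Z) \cdot Z) = 12\, Z$, hence $\square^2(N(Z) \cdot Z) = 0$; and formula (\ref{Mx(NB)}) of Lemma \ref{Mx-calculations} applied with $l = 3/2$, $n = m = -1/2$, and all $\beta_{ij} = 1$ shows $\M(N(Z) \cdot Z) = 0$: the $3l$-matrix and the $\gamma$-matrix precisely cancel, and the $N(Z) \cdot t^{l-2}$ contribution vanishes because $l - 2 = -1/2$ lies outside the allowed index range. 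Directness of the sum follows from $\tr(\partial \square (N(Z) \cdot Z)) = 12\,\tr(\partial Z) = 48 \neq 0$, showing $N(Z) \cdot Z \notin {\cal M}$.

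Second, for the reverse inclusion (and simultaneously part 2 of the proposition), I would use equivariance of $\M$ and $\square$ under the degree operator (which lies in the $\g{gl}(2,\HC)$-action via its center) to reduce to a homogeneous $F$ of degree $d$, and then apply Proposition \ref{biharmonic-prop} entrywise to write $F = F_d + N(Z) \cdot G_{d-2}$ with $F_d, G_{d-2}$ harmonic of the indicated degrees. For $d \neq -1$, Lemma \ref{elts-of-A} furnishes $A := F_d + \frac{N(Z)}{d+1}(\partial F_d \partial)^+ \in {\cal M}$, and the difference $F - A = N(Z) \cdot H$ where $H := G_{d-2} - \frac{1}{d+1}(\partial F_d \partial)^+$ is harmonic of degree $d-2$ and satisfies $\M(N(Z) \cdot H) = 0$. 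The crux is the claim that such an $H$ must vanish unless $d = 3$, in which case $H$ is forced into $\BB C \cdot Z$. I would expand $H$ in $2 \times 2$ blocks of $t^{l-1}_{n\,\underline{m}}$'s with $l = d/2$ and apply formula (\ref{Mx(NB)}): the output of $\M(N(Z) \cdot H)$ separates into a $t^{l-1}$-type contribution (sum of the $3l$- and $\gamma$-matrices) and a distinct $N(Z) \cdot t^{l-2}$-type contribution, each of which must vanish independently, yielding a scalar constraint plus a $4 \times 4$ system on the block coefficients $\beta_{ij}$; this system has only the trivial solution generically and a one-parameter family spanned by $H = Z$ exactly at the exceptional value $l = 3/2$, giving $F - A = c \cdot N(Z) \cdot Z$ as required. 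In the remaining case $d = -1$, no harmonic $\HC$-valued function of degree $-1$ exists in $\Sh$, so $F_d = 0$ and $F = N(Z) \cdot G_{-3}$; a parallel analysis on the space of harmonic functions of degree $-3$, spanned by $N(Z)^{-2} t^{1/2}_{n\,\underline{m}}(Z)$ (which are harmonic by (\ref{sq-tN})), forces $G_{-3} \in \BB C \cdot N(Z)^{-2} Z$, so $F \in \BB C \cdot N(Z)^{-1} Z$. The direct computation $\partial(N(Z)^{-2} \cdot Z) = 0$ then gives $\tr(\partial \square (N(Z)^{-1} Z)) = 0$, confirming $N(Z)^{-1} Z \in {\cal M}$. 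This completes the reverse inclusion and simultaneously establishes part 2.

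Third, for the maximality assertion, let $V \subseteq \ker \M$ be any $\g{gl}(2,\HC)$-invariant subspace consisting of biharmonic functions; by the first two parts $V \subseteq {\cal M} \oplus \BB C\{N(Z) \cdot Z\}$. If $V \not\subseteq {\cal M}$, pick $v = A + c \cdot N(Z) \cdot Z \in V$ with $A \in {\cal M}$ and $c \neq 0$. For the root vector $\xi := \bigl(\begin{smallmatrix} 0 & 0 \\ B & 0 \end{smallmatrix}\bigr)$ with $B = \bigl(\begin{smallmatrix} 1 & 0 \\ 0 & 0 \end{smallmatrix}\bigr) \in \HC$, the element $\rho'_2(\xi) A \in {\cal M}$ is biharmonic, but a direct calculation via Lemma \ref{rho'_2-action-lem} shows that the $(2,2)$-entry of $\rho'_2(\xi)(N(Z) \cdot Z)$ contains a $3 z_{11}^2 z_{22}^2$ term, and carrying out $\square^2$ on this entry produces the nonzero constant $320$, so $\rho'_2(\xi) v \in V$ is not biharmonic for $c \neq 0$, a contradiction. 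Hence $V \subseteq {\cal M}$.

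The main obstacle will be the $K$-type bookkeeping in step two: establishing uniformly across all $l$ that the $4 \times 4$ system coming from (\ref{Mx(NB)}) has only the trivial solution (apart from the exceptional $l = 3/2$ case), handling the special subcases $l = 0, 1$ noted in Lemma \ref{Mx-calculations}, and carrying out the parallel analysis at negative degree using the $N(Z)^{-2} t^{1/2}(Z)$ basis to isolate $N(Z)^{-1} \cdot Z$ as the one extra element of ${\cal M}$ not captured by Lemma \ref{elts-of-A}.
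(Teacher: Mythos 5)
Your proposal is correct and follows essentially the same route as the paper's proof: both rest on Proposition \ref{biharmonic-prop} to reduce to the homogeneous block decomposition of Lemma \ref{Mx-calculations}, on the resulting linear system in the $\beta_{ij}$'s with its exceptional case $l=3/2$ producing $N(Z)\cdot Z$, and, for the maximality claim, on the fact that $N(Z)\cdot Z$ generates non-biharmonic elements under $\rho'_2$. Your two deviations are cosmetic rather than structural --- subtracting the Lemma \ref{elts-of-A} element first so that the linear system's target is zero rather than the rank-one matrix $\M(\alpha\text{-part})$, and replacing the paper's observation that $N(Z)\cdot Z=\frac12\partial^+(N(Z)^2)$ generates $\partial^+(\Sh^+)\ni N(Z)^2\cdot Z$ by an explicit root-vector computation (where the constant comes out to $384$ rather than $320$, though only its nonvanishing matters).
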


\begin{proof}
It is easy to see that $N(Z)^{-1} \cdot Z \in {\cal M}$ and
$N(Z) \cdot Z \notin {\cal M}$. Let
$$
{\cal M}' = \bigl\{ F \in {\cal W}';\:
\M F=0, \: ( \square \circ \square) F =0 \bigr\}.
$$
By Corollary \ref{M-biharm-cor},
$$
{\cal M} \oplus \BB C\text{-span of } \bigl\{ N(Z) \cdot Z \bigr\}
\subset {\cal M}',
$$
and we need to prove the opposite inclusion.
By Lemma \ref{elts-of-A} it is sufficient to show that every element
$A \in {\cal M}'$ is a linear combination of $N(Z) \cdot Z$,
$N(Z)^{-1} \cdot Z$ and homogeneous elements of the form (\ref{A(Z)}).
By Proposition \ref{biharmonic-prop}, $A(Z)$ has to be a linear combination
of homogeneous elements that appear in Lemma \ref{Mx-calculations}:
$$
\begin{pmatrix}
\alpha_{11} t^l_{n\,\underline{m}}(Z) & \alpha_{12} t^l_{n\,\underline{m+1}}(Z) \\
\alpha_{21} t^l_{n+1\,\underline{m}}(Z) & \alpha_{22} t^l_{n+1\,\underline{m+1}}(Z)
\end{pmatrix},
\qquad
N(Z) \cdot \begin{pmatrix}
\beta_{11} t^{l-1}_{n\,\underline{m}}(Z) & \beta_{12} t^{l-1}_{n\,\underline{m+1}}(Z) \\
\beta_{21} t^{l-1}_{n+1\,\underline{m}}(Z) & \beta_{22} t^{l-1}_{n+1\,\underline{m+1}}(Z)
\end{pmatrix},
$$
$$
\frac1{N(Z)} \begin{pmatrix}
\alpha'_{11} t^l_{n+1\,\underline{m+1}}(Z^{-1}) &
\alpha'_{12} t^l_{n\,\underline{m+1}}(Z^{-1}) \\
\alpha'_{21} t^l_{n+1\,\underline{m}}(Z^{-1}) &
\alpha'_{22} t^l_{n\,\underline{m}}(Z^{-1})
\end{pmatrix},
\qquad
\begin{pmatrix} \beta'_{11} t^{l+1}_{n+1\,\underline{m+1}}(Z^{-1}) &
\beta'_{12} t^{l+1}_{n\,\underline{m+1}}(Z^{-1}) \\
\beta'_{21} t^{l+1}_{n+1\,\underline{m}}(Z^{-1}) &
\beta'_{22} t^{l+1}_{n\,\underline{m}}(Z^{-1}) \end{pmatrix}.
$$
Our element $A(Z) \in {\cal M}'$ must also be annihilated by $\M$,
and, in view of Lemma \ref{Mx-calculations}, without loss of generality
we may assume that it is either a linear combination of the first two
or the last two types.
We provide a sketch for the first case with $l \ne 1$ only, the subcase $l=1$
and other case are similar.
Thus we assume
$$
A(Z) = \begin{pmatrix}
\alpha_{11} t^l_{n\,\underline{m}}(Z) & \alpha_{12} t^l_{n\,\underline{m+1}}(Z) \\
\alpha_{21} t^l_{n+1\,\underline{m}}(Z) & \alpha_{22} t^l_{n+1\,\underline{m+1}}(Z)
\end{pmatrix}
+ N(Z) \cdot \begin{pmatrix}
\beta_{11} t^{l-1}_{n\,\underline{m}}(Z) & \beta_{12} t^{l-1}_{n\,\underline{m+1}}(Z) \\
\beta_{21} t^{l-1}_{n+1\,\underline{m}}(Z) & \beta_{22} t^{l-1}_{n+1\,\underline{m+1}}(Z)
\end{pmatrix}.
$$
If the second summand is zero, then $A(Z)$ is harmonic with
$\partial A(Z) \partial =0$ (since $\M A = 0$),
and $A(Z)$ is of the form (\ref{A(Z)}).
Thus we further assume
\begin{equation}  \label{not_zero}
 N(Z) \cdot \begin{pmatrix}
\beta_{11} t^{l-1}_{n\,\underline{m}}(Z) & \beta_{12} t^{l-1}_{n\,\underline{m+1}}(Z) \\
\beta_{21} t^{l-1}_{n+1\,\underline{m}}(Z) & \beta_{22} t^{l-1}_{n+1\,\underline{m+1}}(Z)
\end{pmatrix} \ne 0
\end{equation}
and that $-l+1 \le m,n \le l-2$ so that $t^{l-1}_{n\,\underline{m}}(Z)$,
$t^{l-1}_{n\,\underline{m+1}}(Z)$, $t^{l-1}_{n+1\,\underline{m}}(Z)$,
$t^{l-1}_{n+1\,\underline{m+1}}(Z) \ne 0$. (The case when some of these functions
are zero needs to be considered separately.)
Since $\M A = 0$, $\M A$ has no harmonic component.
This means that in the harmonic component of (\ref{Mx(NB)}) the coefficients
in the same rows are equal and proportional to the coefficients in
(\ref{Mx(A)}):
$$
\begin{cases}
-3l(l-1)\beta_{22}+\gamma_{11} = 3l(l-1)\beta_{12}+\gamma_{12} \\
3l(l-1)\beta_{21}+\gamma_{21} = -3l(l-1)\beta_{11}+\gamma_{22} \\
(l+m) \bigl( -3l(l-1)\beta_{22}+\gamma_{11} \bigr)
= (l-m-1) \bigl( 3l(l-1)\beta_{21}+\gamma_{21} \bigr).
\end{cases}
$$
The first two equations simplify to
\begin{equation}  \label{lin-sys}
\begin{cases}
l\bigl((l-m-1)(\beta_{11}+\beta_{21})-(3l-m-4)(\beta_{12}+\beta_{22})\bigr)=0  \\
l\bigl((3l+m-3)(\beta_{11}+\beta_{21})-(l+m)(\beta_{12}+\beta_{22})\bigr)=0,
\end{cases}
\end{equation}
and
$$
\det \begin{pmatrix} l-m-1 & -3l+m+4 \\ 3l+m-3 & -(l+m) \end{pmatrix}
= 4(l-1)(2l-3).
$$
Assume for now that $l \ne 3/2$, then
$\beta_{11}+\beta_{21}=0$ and $\beta_{12}+\beta_{22}=0$.
Substituting $\beta_{21}=-\beta_{11}$ and $\beta_{12}=-\beta_{22}$ into the third
equation yields
$$
4l(l-1) \bigl( (l-m-1)\beta_{11} - (l+m)\beta_{22} \bigr) =0,
$$
hence $(\beta_{11},\beta_{22})$ is proportional to $(l+m,l-m-1)$.
It follows from $\M A(Z) =0$ that $A(Z)$ is of the form (\ref{A(Z)}).


In the exceptional case $l=3/2$, the system (\ref{lin-sys})
has rank one and simplifies to a single equation 
$\beta_{11}+\beta_{21}=\beta_{12}+\beta_{22}$.
If $m=n=-1/2$ we get exactly one additional linearly independent
solution in ${\cal M}'$ that is not in ${\cal M}$:
$$
\beta_{11}=\beta_{12}=\beta_{21}=\beta_{22}=1, \qquad A(Z)=N(Z) \cdot Z.
$$


Finally, to prove the maximality property of ${\cal M}$,
it is sufficient to show that a $\mathfrak{gl}(2,\HC)$-invariant subspace
of ${\cal M}'$ cannot contain an element of the form $A+N(Z)\cdot Z$ with
$A \in {\cal M}$.
Indeed, $N(Z)\cdot Z$ is the image of an element $\frac12 N(Z)^2$ under
a map $\partial^+ : \Sh' \to {\cal W}'$, which is
$\mathfrak{gl}(2,\HC)$-equivariant by Proposition \ref{equiv-prop1}.
Since $\frac12 N(Z)^2$ generates $\Sh^+$ (Theorem \ref{rho'-decomposition}),
$N(Z)\cdot Z$ generates $\partial^+(\Sh^+)$, which contains
$N(Z)^2\cdot Z \notin {\cal M}'$.
Therefore, a $\mathfrak{gl}(2,\HC)$-invariant subspace of ${\cal W}'$
containing $A+N(Z)\cdot Z$ with $A \in {\cal M}$ also contains elements not in
${\cal M}'$.
\end{proof}

\begin{cor}  \label{XinterA}
The intersection of the image of the map $\partial^+ : \Sh' \to {\cal W}'$
and ${\cal M}$ in ${\cal W}'$ is precisely
$\partial^+({\cal BH}^+) \oplus \partial^+({\cal BH}^-)$.
\end{cor}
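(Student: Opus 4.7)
The plan is to characterize exactly which $\partial^+ f$, $f \in \Sh'$, lie in ${\cal M}$ by a direct computation, identify the resulting subspace of $\Sh'$ via Proposition \ref{biharmonic-prop}, and then upgrade the sum to a direct sum by a short index check using Theorem \ref{rho'-decomposition}.

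First I would use that $\M \circ \partial^+ = 0$ is already built into the chain complex (\ref{W-sequence}), so $\partial^+(\Sh') \subset \ker \M$ and the condition $\partial^+ f \in {\cal M}$ reduces to the single extra requirement $\tr(\partial \circ \square (\partial^+ f)) = 0$. To evaluate this I would apply the elementary identity $\partial \partial^+ = \partial^+ \partial = \tfrac14 \square \cdot I$ on scalar functions (immediate from the explicit matrix forms of $\partial$ and $\partial^+$), which gives $\tr(\partial \partial^+) = \tfrac12 \square$ and therefore
$$
\tr(\partial \square \partial^+ f) \;=\; \tr(\partial \partial^+)\,\square f \;=\; \tfrac12 \square^2 f.
$$
Hence $\partial^+ f \in {\cal M}$ if and only if $f$ is biharmonic. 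By Proposition \ref{biharmonic-prop}, the biharmonic elements of $\Sh = \Sh'$ are exactly ${\cal BH}^+ + {\cal BH}^-$, so
$$
\partial^+(\Sh') \cap {\cal M} \;=\; \partial^+({\cal BH}^+) + \partial^+({\cal BH}^-).
$$

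It remains to show this sum is direct. Suppose $\partial^+ f_+ = \partial^+ f_-$ with $f_\pm \in {\cal BH}^\pm$; then $f_+ - f_- \in \ker(\partial^+|_{\Sh'}) = \BB C$, and since $\BB C = {\cal I}'_0 \subset {\cal BH}^-$ this forces $f_+ \in {\cal BH}^+ \cap {\cal BH}^-$. The index constraints $0 \le k \le 1$ (for ${\cal BH}^+$) and $-1 \le 2l + k \le 0$ (for ${\cal BH}^-$) from Theorem \ref{rho'-decomposition} force $k = l = 0$ and hence $m = n = 0$, so $f_+ \in {\cal I}'_0$ and $\partial^+ f_+ = 0$. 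The only genuine computation is the identity $\tr(\partial \partial^+) = \tfrac12 \square$, a one-line check; everything else is assembled from the cited propositions and index bookkeeping, and I do not anticipate a serious obstacle.
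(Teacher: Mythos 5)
Your proof is correct, and it takes a genuinely different route from the paper's. The paper deduces the corollary from the equivariance of $\partial^+$ (Proposition \ref{equiv-prop1}), the lattice of invariant subspaces of $(\rho',\Sh')$ (Theorem \ref{rho'-decomposition}), and the structural characterization of ${\cal M}$ in Proposition \ref{A-space-prop} -- in particular the fact that the biharmonic elements of $\ker\M$ form ${\cal M}\oplus\BB C\text{-span of }\{N(Z)\cdot Z\}$ and that ${\cal M}$ is the unique maximal invariant biharmonic subspace. That maximality clause is needed there precisely because biharmonicity of $\partial^+ f$ only forces $\square^2 f$ to be \emph{constant} (e.g. $N(Z)\cdot Z=\tfrac12\partial^+(N(Z)^2)$ is biharmonic although $N(Z)^2$ is not), so the representation-theoretic argument must separately exclude the $N(Z)\cdot Z$ direction. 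Your computation bypasses all of this: since $\partial\,\partial^+=\tfrac14\square\cdot I$ on scalars, the second defining condition of ${\cal M}$ evaluates on $\partial^+ f$ exactly to $\tr(\partial\,\square\,\partial^+f)=\tfrac12\square^2 f$, so membership in ${\cal M}$ is \emph{equivalent} to biharmonicity of $f$ itself, and Proposition \ref{biharmonic-prop} finishes the identification; the directness then follows from $\ker(\partial^+|_{\Sh'})=\BB C={\cal I}'_0$ and the index check ${\cal BH}^+\cap{\cal BH}^-={\cal I}'_0$. What your approach buys is independence from Proposition \ref{A-space-prop} (the hardest ingredient of the paper's proof) and an automatic resolution of the $N(Z)\cdot Z$ subtlety; what the paper's approach buys is that it reuses machinery already established for the larger decomposition program, with no new computation.
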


\begin{proof}
The result follows from Proposition \ref{equiv-prop1},
Theorem \ref{rho'-decomposition} and Proposition \ref{A-space-prop}.
\end{proof}

\subsection{The $K$-types of ${\cal M}$}

In this subsection we describe the $K$-types of ${\cal M}$.
For $d \in \BB Z$, define
$$
{\cal M}(d) =
\{ A(Z) \in {\cal M};\: \text{$A(Z)$ is homogeneous of degree $d$} \}.
$$
We realize $\mathfrak{sl}(2,\BB C) \times \mathfrak{sl}(2,\BB C)$
as diagonal elements of $\mathfrak{gl}(2,\HC)$:
$$
\mathfrak{sl}(2,\BB C) \times \mathfrak{sl}(2,\BB C) = \left\{
\left(\begin{smallmatrix} A & 0 \\ 0 & D \end{smallmatrix}\right)
\in \mathfrak{gl}(2,\HC);\: A,D \in \HC, \re(A)=\re(D)=0 \right\}.
$$

\begin{prop}  \label{A-K-types}
Each ${\cal M}(d)$ is invariant under the $\rho'_2$ action restricted to
$\mathfrak{sl}(2,\BB C) \times \mathfrak{sl}(2,\BB C)$, and we have the
following decomposition into irreducible components:
$$
{\cal M}(-1) = V_0 \boxtimes V_0
= \BB C\text{-span of } \bigl\{ N(Z)^{-1} \cdot Z \bigr\},
$$
\begin{equation}  \label{A(2l)}
{\cal M}(2l) = \bigl( V_{l-\frac12} \boxtimes V_{l-\frac12} \bigr) \oplus
\bigl( V_{l+\frac12} \boxtimes V_{l-\frac12} \bigr) \oplus
\bigl( V_{l-\frac12} \boxtimes V_{l+\frac12} \bigr) \oplus
\bigl( V_{l+\frac12} \boxtimes V_{l+\frac12} \bigr),
\end{equation}
\begin{equation}  \label{A(-2l-1)}
{\cal M}(-2l-2) = \bigl( V_{l-\frac12} \boxtimes V_{l-\frac12} \bigr) \oplus
\bigl( V_{l+\frac12} \boxtimes V_{l-\frac12} \bigr) \oplus
\bigl( V_{l-\frac12} \boxtimes V_{l+\frac12} \bigr) \oplus
\bigl( V_{l+\frac12} \boxtimes V_{l+\frac12} \bigr),
\end{equation}
$l=0,\frac12,1,\frac32,\dots$.
Explicitly, these irreducible components are generated by homogeneous
elements of the form (\ref{A(Z)}) with harmonic parts:
\begin{multline*}
V_{l-\frac12} \boxtimes V_{l-\frac12} = 
\BB C\text{-span of }
\left\{ \left(\begin{smallmatrix}
(l-n) t^l_{n\,\underline{m}}(Z) & (l-n) t^l_{n\,\underline{m+1}}(Z) \\
(l+n+1) t^l_{n+1\,\underline{m}}(Z) & (l+n+1) t^l_{n+1\,\underline{m+1}}(Z)
\end{smallmatrix}\right) ;\: \begin{smallmatrix} l=\frac12,1,\frac32,2,\dots \\
-l \le m,n \le l-1 \end{smallmatrix} \right\}  \\
\text{or} \quad \left\{ \tilde H'_{l,m,n}(Z) = \frac1{N(Z)}
\left(\begin{smallmatrix}
(l+n+1) t^l_{n+1\,\underline{m+1}}(Z^{-1}) & -(l-n) t^l_{n\,\underline{m+1}}(Z^{-1}) \\
-(l+n+1) t^l_{n+1\,\underline{m}}(Z^{-1}) & (l-n) t^l_{n\,\underline{m}}(Z^{-1})
\end{smallmatrix}\right) ;\: \begin{smallmatrix} l=\frac12,1,\frac32,2,\dots \\
-l \le m,n \le l-1 \end{smallmatrix} \right\},
\end{multline*}
\begin{multline*}
V_{l+\frac12} \boxtimes V_{l-\frac12} = 
\BB C\text{-span of } \left\{ \tilde G_{l,m,n}(Z) = \left(\begin{smallmatrix}
t^l_{n\,\underline{m}}(Z) & t^l_{n\,\underline{m+1}}(Z) \\
-t^l_{n+1\,\underline{m}}(Z) & -t^l_{n+1\,\underline{m+1}}(Z) \end{smallmatrix}\right) ;\:
\begin{smallmatrix} l=\frac12,1,\frac32,2,\dots \\
-l \le m \le l-1 \\  -l-1 \le n \le l \end{smallmatrix} \right\}  \\
\text{or} \quad \left\{ \tilde F'_{l,m,n}(Z) = \frac1{N(Z)}
\left(\begin{smallmatrix} (l-m)(l+n+1) t^l_{n+1\,\underline{m+1}}(Z^{-1}) &
-(l-m)(l-n) t^l_{n\,\underline{m+1}}(Z^{-1}) \\
(l+m+1)(l+n+1) t^l_{n+1\,\underline{m}}(Z^{-1}) &
-(l+m+1)(l-n) t^l_{n\,\underline{m}}(Z^{-1}) \end{smallmatrix}\right) ;\:
\begin{smallmatrix} l=\frac12,1,\frac32,2,\dots \\
-l-1 \le m \le l \\  -l \le n \le l-1 \end{smallmatrix} \right\},
\end{multline*}
\begin{multline*}
V_{l-\frac12} \boxtimes V_{l+\frac12} = 
\BB C\text{-span of }  \\
\left\{ \tilde F_{l,m,n}(Z) = \left(\begin{smallmatrix}
(l+m+1)(l-n) t^l_{n\,\underline{m}}(Z) & -(l-m)(l-n) t^l_{n\,\underline{m+1}}(Z) \\
(l+m+1)(l+n+1) t^l_{n+1\,\underline{m}}(Z) & -(l-m)(l+n+1) t^l_{n+1\,\underline{m+1}}(Z)
\end{smallmatrix}\right) ;\:
\begin{smallmatrix} l=\frac12,1,\frac32,2,\dots \\
-l-1 \le m \le l \\  -l \le n \le l-1 \end{smallmatrix} \right\}  \\
\text{or} \quad \left\{ \tilde G'_{l,m,n}(Z) = \frac1{N(Z)}
\left(\begin{smallmatrix}
t^l_{n+1\,\underline{m+1}}(Z^{-1}) & t^l_{n\,\underline{m+1}}(Z^{-1}) \\
-t^l_{n+1\,\underline{m}}(Z^{-1}) & -t^l_{n\,\underline{m}}(Z^{-1})
\end{smallmatrix}\right) ;\:
\begin{smallmatrix} l=\frac12,1,\frac32,2,\dots \\
-l \le m \le l-1 \\  -l-1 \le n \le l \end{smallmatrix} \right\},
\end{multline*}
\begin{multline*}
V_{l+\frac12} \boxtimes V_{l+\frac12} = \BB C\text{-span of }
\left\{ \tilde H_{l,m,n}(Z) = \left(\begin{smallmatrix}
(l+m+1) t^l_{n\,\underline{m}}(Z) & -(l-m) t^l_{n\,\underline{m+1}}(Z) \\
-(l+m+1) t^l_{n+1\,\underline{m}}(Z) & (l-m) t^l_{n+1\,\underline{m+1}}(Z)
\end{smallmatrix}\right) ;\: \begin{smallmatrix} l=0,\frac12,1,\frac32,\dots \\
-l-1 \le m,n \le l \end{smallmatrix} \right\}  \\
\text{or} \quad \left\{ \frac1{N(Z)} \left(\begin{smallmatrix}
(l-m) t^l_{n+1\,\underline{m+1}}(Z^{-1}) & (l-m) t^l_{n\,\underline{m+1}}(Z^{-1}) \\
(l+m+1) t^l_{n+1\,\underline{m}}(Z^{-1}) & (l+m+1) t^l_{n\,\underline{m}}(Z^{-1})
\end{smallmatrix}\right) ;\: \begin{smallmatrix} l=0,\frac12,1,\frac32,\dots \\
-l-1 \le m,n \le l \end{smallmatrix} \right\}.
\end{multline*}

The functions in ${\cal M}(2l)$ that lie in
$$
\bigl( V_{l+\frac12} \boxtimes V_{l-\frac12} \bigr) \oplus
\bigl( V_{l-\frac12} \boxtimes V_{l+\frac12} \bigr) \oplus
\bigl( V_{l+\frac12} \boxtimes V_{l+\frac12} \bigr)
$$
and the functions in ${\cal M}(-2l-2)$ that lie in
$$
\bigl( V_{l-\frac12} \boxtimes V_{l-\frac12} \bigr) \oplus
\bigl( V_{l+\frac12} \boxtimes V_{l-\frac12} \bigr) \oplus
\bigl( V_{l-\frac12} \boxtimes V_{l+\frac12} \bigr)
$$
have harmonic parts only, their non-harmonic parts are zero.
\end{prop}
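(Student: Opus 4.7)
The plan is to use Proposition~\ref{A-space-prop} to reduce the computation to a $K$-type decomposition of the space of homogeneous harmonic $\HC$-valued functions. That proposition expresses every element of ${\cal M}$ as a sum of $N(Z)^{-1}\cdot Z$ and homogeneous pieces
$$
\Phi_d(A_d)(Z) := A_d(Z) + \frac{N(Z)}{d+1}\bigl(\partial A_d(Z) \partial\bigr)^+,
$$
with $A_d$ harmonic $\HC$-valued of degree $d \ne -1$. The subgroup $K = SU(2) \times SU(2) \subset GL(2,\HC)$ acts on ${\cal W}'$ by $(\alpha,\delta)\cdot F(Z) = \alpha F(\alpha^{-1}Z\delta)\delta^{-1}$ (the restriction of $\rho'_2$ to diagonal unitary elements); this action preserves $N(Z)$ and commutes with $\square$, and the operators $\partial$, $\partial^+$ intertwine with it consistently with the identification $\HC \simeq V_{1/2}\boxtimes V_{1/2}$. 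Hence $\Phi_d$ is $K$-equivariant and injective, so ${\cal M}(d)$ is $K$-equivariantly isomorphic to the space of harmonic $\HC$-valued functions on $\HC^\times$ of degree $d$ for every $d \ne -1$. For $d = -1$ there are no harmonic $\HC$-valued polynomials of that degree (harmonic functions have degrees in $\{0,1,2,\dots\}\cup\{-2,-3,\dots\}$), so ${\cal M}(-1) = \BB C \cdot N(Z)^{-1} Z$; this line is manifestly $K$-fixed, giving the type $V_0 \boxtimes V_0$.

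Next I would decompose the space of harmonic $\HC$-valued functions of degree $d$ as a $K$-module. For $d = 2l \ge 0$, scalar harmonic polynomials of degree $2l$ are spanned by $\{t^l_{n\,\underline{m}}\}$ and form the $K$-module $V_l\boxtimes V_l$; tensoring with $\HC \simeq V_{1/2}\boxtimes V_{1/2}$ and applying Clebsch--Gordan to each $SU(2)$ factor gives
$$
(V_l\boxtimes V_l)\otimes(V_{1/2}\boxtimes V_{1/2})
\;=\;\bigoplus_{\epsilon_1,\epsilon_2\in\{\pm\}} V_{l+\epsilon_1/2}\boxtimes V_{l+\epsilon_2/2},
$$
where $V_{-1/2}$ terms are interpreted as zero. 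This produces the four summands in (\ref{A(2l)}). The case $d = -2l-2$ is identical upon replacing $t^l_{n\,\underline{m}}(Z)$ by $N(Z)^{-1} t^l_{n\,\underline{m}}(Z^{-1})$, giving (\ref{A(-2l-1)}).

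The explicit generators are then obtained by writing down the standard Clebsch--Gordan projectors in matrix form: the four summands correspond to (anti)symmetrizing the two $V_{1/2}$ factors of $\HC$ against the two $V_l$ factors of the scalar harmonics. Using Lemmas~22 and~23 of \cite{FL1} to manipulate the indices then yields the tabular forms $\tilde H_{l,m,n}$, $\tilde F_{l,m,n}$, $\tilde G_{l,m,n}$, $\tilde H'_{l,m,n}$ and their $Z \leftrightarrow Z^{-1}$ duals listed in the statement. For the final claim that only the $V_{l-1/2}\boxtimes V_{l-1/2}$ component in positive degree (and only $V_{l+1/2}\boxtimes V_{l+1/2}$ in negative degree) carries a nonzero non-harmonic correction, I would invoke equation (\ref{Mx(A)}) of Lemma~\ref{Mx-calculations}: the expression $(\partial A_d\partial)^+$ is proportional to the single linear form $(l-m)(\alpha_{11}+\alpha_{21}) + (l+m+1)(\alpha_{12}+\alpha_{22})$ in the entries of $A_d$, and a short calculation shows this form vanishes on the generators $\tilde G$, $\tilde F$, $\tilde H$ but equals $(2l+1)^2$ on the generator of $V_{l-1/2}\boxtimes V_{l-1/2}$; the negative-degree case follows by the $Z \leftrightarrow Z^{-1}$ symmetry.

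The main obstacle is the matrix-coefficient bookkeeping in verifying that the explicit bases really carry the claimed irreducible $K$-types. Concretely, confirming that, say, $\tilde H_{l,m,n}$ is a highest-weight vector for $V_{l+1/2}\boxtimes V_{l+1/2}$ amounts to a direct but lengthy computation of the action of $\rho'_2\bigl(\begin{smallmatrix}0 & b\\ 0 & 0\end{smallmatrix}\bigr)$ from Lemma~\ref{rho'_2-action-lem}, combined with the recursions in Lemma~22 of \cite{FL1}; once one family is handled, the others follow by the transpose and inversion symmetries of the matrix coefficients.
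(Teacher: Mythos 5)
Your proposal is correct and follows essentially the same route as the paper's own (much terser) proof: reduce via Proposition \ref{A-space-prop} to the $K$-type decomposition of homogeneous harmonic $\HC$-valued functions, apply Clebsch--Gordan $V_l \otimes V_{1/2} \simeq V_{l-\frac12} \oplus V_{l+\frac12}$ in each factor, and read off the non-harmonic parts from the linear form $(l-m)(\alpha_{11}+\alpha_{21})+(l+m+1)(\alpha_{12}+\alpha_{22})$ in Lemma \ref{Mx-calculations}. Your explicit check that this form vanishes on $\tilde F$, $\tilde G$, $\tilde H$ and equals $(2l+1)^2$ on the $V_{l-\frac12}\boxtimes V_{l-\frac12}$ generator is accurate and fills in a detail the paper leaves implicit.
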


\begin{proof}
Note that, as usual, if the indices of $t^l_{n\,\underline{m}}(Z)$
happen to be outside of the allowed range $l=0,\frac12,1,\frac32,\dots$,
$m,n \in \BB Z +l$, $-l \le m,n \le l$, then such matrix coefficients
are declared to be zero.
The result follows from Proposition \ref{A-space-prop},
Lemma \ref{rho'_2-action-lem} and explicit realization of the isomorphism
of representations of $\mathfrak{sl}(2,\BB C)$
$$
V_l \otimes V_{\frac12} \simeq V_{l-\frac12} \oplus V_{l+\frac12}.
$$

The assertion about non-harmonic parts follows from Lemma \ref{Mx-calculations}.
\end{proof}

Combining this result with Corollary \ref{XinterA} and comparing
the decompositions into the
$\mathfrak{sl}(2,\BB C) \times \mathfrak{sl}(2,\BB C)$ components,
we obtain:

\begin{cor}
$$
\partial^+({\cal BH}^+) = \bigoplus_{l \ge 0} \bigl(
\text{$V_{l-\frac12} \boxtimes V_{l-\frac12}$ component of ${\cal M}(2l)$} \bigr)
\oplus \bigl(
\text{$V_{l+\frac12} \boxtimes V_{l+\frac12}$ component of ${\cal M}(2l)$} \bigr),
$$
\begin{multline*}
\partial^+({\cal BH}^-) \\ = \bigoplus_{l \ge 0} \bigl(
\text{$V_{l-\frac12} \boxtimes V_{l-\frac12}$ comp. of ${\cal M}(-2l-2)$} \bigr)
\oplus \bigl(
\text{$V_{l+\frac12} \boxtimes V_{l+\frac12}$ comp. of ${\cal M}(-2l-2)$} \bigr).
\end{multline*}
\end{cor}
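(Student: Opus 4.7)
The plan is to combine the equivariance of $\partial^+$ with an explicit matching of generators. By Proposition \ref{equiv-prop1} and Theorem \ref{rho'-decomposition}, each image $\partial^+({\cal BH}^\pm)$ is a $\mathfrak{gl}(2,\HC)$-invariant subspace of ${\cal W}'$; by Corollary \ref{XinterA} both lie in ${\cal M}$. Since $\partial^+$ decreases homogeneous degree by one, $\partial^+({\cal BH}^+)$ occupies degrees $\ge 0$ while $\partial^+({\cal BH}^-)$ occupies degrees $\le -2$, so the two images sit in disjoint homogeneous components of ${\cal M}$ and can be analyzed independently as sums of the $K$-isotypes listed in Proposition \ref{A-K-types}.

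For $\partial^+({\cal BH}^+)$ I would split ${\cal BH}^+$ according to the two levels of Theorem \ref{rho'-decomposition}: the $k=0$ layer spanned by $\{t^{l+1/2}_{n\,\underline{m}}\}$ and the $k=1$ layer spanned by $\{N(Z)\cdot t^{l-1/2}_{n\,\underline{m}}\}$. A direct application of Lemma~22 of \cite{FL1} to a $k=0$ generator yields
$$
\partial^+ t^{l+1/2}_{n+\frac12\,\underline{m+\frac12}}(Z) = \tilde H_{l,m,n}(Z),
$$
which is precisely the displayed generator of the $V_{l+1/2}\boxtimes V_{l+1/2}$ summand of ${\cal M}(2l)$ in Proposition \ref{A-K-types}. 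For the $k=1$ generator, the Leibniz rule combined with $\partial^+ N(Z)=Z$ and identity (\ref{Zt-identity}) expresses $\partial^+\bigl(N(Z)\cdot t^{l-1/2}_{n+\frac12\,\underline{m+\frac12}}\bigr)$ as the sum of a $t^l$-matrix and an $N(Z)\cdot t^{l-1}$-matrix; after re-indexing, the former is a nonzero scalar multiple of the harmonic representative of the $V_{l-1/2}\boxtimes V_{l-1/2}$ generator of ${\cal M}(2l)$, and the latter is exactly the $N(Z)$-correction prescribed by the normal form (\ref{A(Z)}). Hence the $k=0$ and $k=1$ layers of ${\cal BH}^+$ map bijectively onto the $V_{l+1/2}\boxtimes V_{l+1/2}$ and $V_{l-1/2}\boxtimes V_{l-1/2}$ components of ${\cal M}(2l)$ respectively.

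The argument for $\partial^+({\cal BH}^-)$ runs in parallel, substituting identity (\ref{del-t(Z^{-1})}) for Lemma~22 and working with the two layers of ${\cal BH}^-$ generated by $N(Z)^{-1} t^l_{n\,\underline{m}}(Z^{-1})$ and $N(Z)^{-2} t^l_{n\,\underline{m}}(Z^{-1})$; the same matching yields the $V_{l+1/2}\boxtimes V_{l+1/2}$ and $V_{l-1/2}\boxtimes V_{l-1/2}$ components of ${\cal M}(-2l-2)$. The only real obstacle is the index bookkeeping: verifying that the explicit matrices of $t^{l\pm 1/2}$-entries produced by $\partial^+$ coincide, up to a nonzero scalar and the appropriate shift of $(n,m)$, with the displayed generators $\tilde H_{l,m,n}$ and $\tilde H'_{l,m,n}$ and with the harmonic parts of the relevant $V_{l-1/2}\boxtimes V_{l-1/2}$ bases in Proposition \ref{A-K-types}. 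Once these generator-level identifications are made, a $K$-type count promotes the containment into the claimed equality.
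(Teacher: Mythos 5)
Your proposal is correct and follows essentially the same route as the paper: the paper's proof is exactly the combination of Corollary \ref{XinterA} (placing the images inside ${\cal M}$) with the $K$-type decomposition of Proposition \ref{A-K-types}, compared degree by degree. The only difference is that you carry out the generator-level computations explicitly, whereas the paper observes that they are not needed: since $\partial^+$ preserves each $K$-isotype $V_l\boxtimes V_l$ of ${\cal BH}^{\pm}$, shifts homogeneity degree by $-1$, and has kernel $\BB C$ in $\Sh'$, and since each ${\cal M}(d)$ is multiplicity-free as a $K$-module, the matching of components is already forced by Schur's lemma and a degree count.
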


\begin{lem}  \label{FGH-reg}
The functions $(\tilde F_{l,m,n})^+$, $(\tilde F'_{l,m,n})^+ : \HC \to \HC$
are left regular.
The functions $(\tilde G_{l,m,n})^+$, $(\tilde G'_{l,m,n})^+ : \HC \to \HC$
are right regular.
The functions $(\tilde H_{l,m,n})^+$, $(\tilde H'_{l,m,n})^+ : \HC \to \HC$
are both left and right regular.
\end{lem}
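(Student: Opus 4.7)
The plan is direct computation using the derivative formulas for the matrix coefficients $t^l_{n\,\underline{m}}$ given in Lemma 22 of \cite{FL1} (which yield $\partial_{11} t^l_{n\,\underline{m}} = (l-m) t^{l-1/2}_{n+1/2\,\underline{m+1/2}}$ and the three analogous expressions for $\partial_{12}, \partial_{21}, \partial_{22}$) and identity (\ref{del-t(Z^{-1})}) for the primed versions. The key observation is that all six assertions reduce to verifying regularity of a single two-entry ``pattern.''

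Concretely, I would first write $(\tilde H_{l,m,n})^+$ explicitly as a $2 \times 2$ matrix. Each of its two columns has the shape $\bigl( \begin{smallmatrix} (l-m)\,t^l_{n'\,\underline{m+1}}(Z) \\ (l+m+1)\,t^l_{n'\,\underline{m}}(Z) \end{smallmatrix} \bigr)$ (with $n' = n+1$ or $n'=n$). Applying $\nabla^+ = 2\partial^+$ on the left to such a column, the two contributions cancel: $\partial_{22}$ of the top entry and $\partial_{21}$ of the bottom entry both produce $(l-m)(l+m+1)\, t^{l-1/2}_{n'-1/2\,\underline{m+1/2}}$ with opposite signs, and similarly for the second row of $\partial^+$. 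Dually, each row of $(\tilde H_{l,m,n})^+$ has the shape $\bigl( c\,t^l_{n+1\,\underline{m'}}(Z),\; c\,t^l_{n\,\underline{m'}}(Z) \bigr)$ and is annihilated by $\nabla^+$ on the right by the same type of cancellation. Hence $(\tilde H_{l,m,n})^+$ is simultaneously left and right regular.

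With this in hand, regularity of the other functions follows without further computation. Inspecting the formulas in Proposition \ref{A-K-types}, one checks that each column of $(\tilde F_{l,m,n})^+$ equals a scalar multiple (namely $-(l+n+1)$ and $(l-n)$, respectively) of the corresponding column of $(\tilde H_{l,m,n})^+$, so $(\tilde F_{l,m,n})^+$ inherits left regularity; similarly, each row of $(\tilde G_{l,m,n})^+$ is a scalar multiple of the corresponding row of $(\tilde H_{l,m,n})^+$, yielding right regularity. The primed cases $(\tilde H'_{l,m,n})^+$, $(\tilde F'_{l,m,n})^+$, $(\tilde G'_{l,m,n})^+$ are treated identically, with identity (\ref{del-t(Z^{-1})}) replacing the derivative formulas for $t^l_{n\,\underline{m}}$: one first verifies directly that $(\tilde H'_{l,m,n})^+$ is both left and right regular, and then $(\tilde F'_{l,m,n})^+$ and $(\tilde G'_{l,m,n})^+$ inherit the appropriate regularity from it by column and row proportionality.

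The computation is entirely mechanical, so the main obstacle is bookkeeping rather than substance. A small subtlety arises at boundary indices where some coefficient $l \pm m$ or $l \pm n$ vanishes, or where $t^l_{n\,\underline{m}}$ falls outside the permissible range $-l \le m,n \le l$ and must be set to zero; these cases cause no trouble because the regularity equations are linear in the matrix entries, so the vanishing of a coefficient or a matrix coefficient only eliminates terms from an identity that is already satisfied.
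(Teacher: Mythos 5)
Your overall strategy --- reduce all six statements to the left/right regularity of a single two-entry column or row pattern, verify that pattern via the derivative formulas of Lemma 22 in \cite{FL1} and identity (\ref{del-t(Z^{-1})}), and let the remaining cases inherit regularity by proportionality --- is sound, and your explicit computations check out: the columns of $(\tilde F_{l,m,n})^+$ are indeed $-(l+n+1)$ and $(l-n)$ times those of $(\tilde H_{l,m,n})^+$, and the cancellation under $\partial^+$ works exactly as you describe. This is in substance the paper's argument; the proof in the text simply matches the columns and rows against the basis $v^{\pm}_{l,m,n}$, $v'^{\pm}_{l,m,n}$ of left and right regular functions from Proposition 24 of \cite{FL1}, whose regularity is precisely the content of your direct computation.

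One step fails as literally stated, however: the claim that each row of $(\tilde G'_{l,m,n})^+$ is a scalar multiple of the corresponding row of $(\tilde H'_{l,m,n})^+$. The rows of $(\tilde H'_{l,m,n})^+$ carry the overall factors $(l-n)$ and $(l+n+1)$, whereas the rows of $(\tilde G'_{l,m,n})^+$ carry $\mp 1$; moreover $\tilde G'_{l,m,n}$ is indexed by $-l-1\le n\le l$ while $\tilde H'_{l,m,n}$ exists only for $-l\le n\le l-1$. At $n=l$ the first row of (the formula for) $(\tilde H'_{l,m,l})^+$ vanishes identically while the first row of $(\tilde G'_{l,m,l})^+$ does not, so the latter is not a multiple of the former; the second rows fail the same way at $n=-l-1$. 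Your closing remark about boundary indices does not cover this: the problem is not a term dropping out of an identity that still holds, but the inheritance argument having nothing to inherit from. The repair is immediate --- the rows of $(\tilde G'_{l,m,n})^+$ and of $(\tilde H'_{l,m,n})^+$ are both multiples of the common pattern $N(Z)^{-1}\bigl(t^l_{n'\,\underline{m}}(Z^{-1}),\,t^l_{n'\,\underline{m+1}}(Z^{-1})\bigr)$, and it is the right regularity of that pattern one should verify (your computation for $(\tilde H')^+$ already does this, since the constants factor out); equivalently, recognize the pattern as one of the right regular basis functions $v'^-$ of Proposition 24 in \cite{FL1}, which is exactly what the paper does.
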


\begin{proof}
The result follows by comparing the columns and rows of the functions
in question with the basis of left and right regular functions given
in Proposition 24 in \cite{FL1}.
\end{proof}

The harmonic functions
$$
\tilde F_{l,m,n}(Z),\: \tilde F'_{l,m,n}(Z),\: \tilde G_{l,m,n}(Z),\:
\tilde G'_{l,m,n}(Z),\: \tilde H_{l,m,n}(Z),\: \tilde H'_{l,m,n}(Z)
$$
belong to ${\cal M}$.
We complete these and $N(Z)^{-1} \cdot Z$ to a basis of ${\cal M}$ by letting
$\tilde I_{l,m,n}(Z)$ be the elements in ${\cal M}$ that have harmonic parts
$$
\begin{pmatrix}
(l-n) t^l_{n\,\underline{m}}(Z) & (l-n) t^l_{n\,\underline{m+1}}(Z) \\
(l+n+1) t^l_{n+1\,\underline{m}}(Z) & (l+n+1) t^l_{n+1\,\underline{m+1}}(Z)
\end{pmatrix}, \qquad \begin{smallmatrix} l=\frac12,1,\frac32,2,\dots \\
-l \le m,n \le l-1 \end{smallmatrix},
$$
these generate the $V_{l-\frac12} \boxtimes V_{l-\frac12}$ component of
${\cal M}(2l)$,
and 
$\tilde I'_{l,m,n}(Z)$ be the elements in ${\cal M}$ that have harmonic parts
$$
\frac1{N(Z)} \begin{pmatrix}
(l-m) t^l_{n+1\,\underline{m+1}}(Z^{-1}) & (l-m) t^l_{n\,\underline{m+1}}(Z^{-1}) \\
(l+m+1) t^l_{n+1\,\underline{m}}(Z^{-1}) & (l+m+1) t^l_{n\,\underline{m}}(Z^{-1})
\end{pmatrix}, \qquad \begin{smallmatrix} l=0,\frac12,1,\frac32,\dots \\
-l-1 \le m,n \le l \end{smallmatrix},
$$
these generate the $V_{l+\frac12} \boxtimes V_{l+\frac12}$ component of
${\cal M}(-2l-2)$.
The following technical lemma will be used to construct equivariant maps
$\tau^{\pm}_a$ and $\tau^{\pm}_s$ from ${\cal W}'$ to doubly regular functions.

\begin{lem}  \label{del-A}
We have the following identities:
$$
(\tilde F_{l,m,n} \overleftarrow{\partial}) (Z)
= (\tilde F'_{l,m,n} \overleftarrow{\partial}) (Z)
= \overrightarrow{\partial} \tilde G_{l,m,n}(Z) =
\overrightarrow{\partial} \tilde G'_{l,m,n}(Z) =0,
$$
$$
\overrightarrow{\partial} \tilde H_{l,m,n}(Z)
= (\tilde H_{l,m,n} \overleftarrow{\partial}) (Z)
= \overrightarrow{\partial} \tilde H'_{l,m,n}(Z)
= (\tilde H'_{l,m,n} \overleftarrow{\partial}) (Z) =0,
$$
$$
\overrightarrow{\partial} \bigl( N(Z)^{-1} \cdot Z \bigr)
= \bigl( N(Z)^{-1} \cdot Z \bigr) \overleftarrow{\partial} = N(Z)^{-1},
$$
\begin{multline*}
\overrightarrow{\partial} \tilde F_{l,m,n}(Z) = (2l+1) \left( \begin{smallmatrix}
(l-m)(l+m+1) t^{l-\frac12}_{n+\frac12\,\underline{m+\frac12}}(Z) &
-(l-m-1)(l-m) t^{l-\frac12}_{n+\frac12\,\underline{m+\frac32}}(Z) \\
(l+m)(l+m+1) t^{l-\frac12}_{n+\frac12\,\underline{m-\frac12}}(Z) &
-(l+m+1)(l-m) t^{l-\frac12}_{n+\frac12\,\underline{m+\frac12}}(Z)
\end{smallmatrix} \right)  \\
= (2l+1) Z^{-1} \cdot \tilde F_{l,m,n}(Z),
\end{multline*}
\begin{multline*}
\overrightarrow{\partial} \tilde F'_{l,m,n}(Z) =
\frac{2l+1}{N(Z)} \left( \begin{smallmatrix}
-(l-n)(l+n+1) t^{l+\frac12}_{n+\frac12\,\underline{m+\frac12}}(Z^{-1}) &
(l-n+1)(l-n) t^{l+\frac12}_{n-\frac12\,\underline{m+\frac12}}(Z^{-1}) \\
-(l+n+2)(l+n+1) t^{l+\frac12}_{n+\frac32\,\underline{m+\frac12}}(Z^{-1}) &
(l+n+1)(l-n) t^{l+\frac12}_{n+\frac12\,\underline{m+\frac12}}(Z^{-1})
\end{smallmatrix} \right) \\
= -(2l+1) Z^{-1} \cdot \tilde F'_{l,m,n}(Z),
\end{multline*}
$$
(\tilde G_{l,m,n} \overleftarrow{\partial}) (Z)
= (2l+1) \left( \begin{smallmatrix} t^{l-\frac12}_{n+\frac12\,\underline{m+\frac12}}(Z) &
t^{l-\frac12}_{n-\frac12\,\underline{m+\frac12}}(Z) \\
-t^{l-\frac12}_{n+\frac32\,\underline{m+\frac12}}(Z) &
-t^{l-\frac12}_{n+\frac12\,\underline{m+\frac12}}(Z) \end{smallmatrix} \right)
= (2l+1) G_{l,m,n}(Z) \cdot Z^{-1},
$$
$$
(\tilde G'_{l,m,n} \overleftarrow{\partial}) (Z)
= \frac{2l+1}{N(Z)} \left( \begin{smallmatrix}
-t^{l+\frac12}_{n+\frac12\,\underline{m+\frac12}}(Z^{-1}) &
-t^{l+\frac12}_{n+\frac12\,\underline{m+\frac32}}(Z^{-1}) \\
t^{l+\frac12}_{n+\frac12\,\underline{m-\frac12}}(Z^{-1}) &
t^{l+\frac12}_{n+\frac12\,\underline{m+\frac12}}(Z^{-1}) \end{smallmatrix} \right)
= -(2l+1) \tilde G'_{l,m,n}(Z) \cdot Z^{-1},
$$
$$
\overrightarrow{\partial} \tilde I_{l,m,n}(Z)
= (2l+1) Z^{-1} \cdot \tilde I_{l,m,n}(Z), \qquad
(\tilde I_{l,m,n} \overleftarrow{\partial}) (Z)
= (2l+1) \tilde I_{l,m,n}(Z) \cdot Z^{-1},
$$
$$
\overrightarrow{\partial} \tilde I'_{l,m,n}(Z)
= -(2l+1) Z^{-1} \cdot \tilde I'_{l,m,n}(Z), \qquad
(\tilde I'_{l,m,n} \overleftarrow{\partial}) (Z)
= -(2l+1) \tilde I'_{l,m,n}(Z) \cdot Z^{-1}.
$$
\end{lem}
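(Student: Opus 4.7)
The identities split into three natural groups: the eight vanishing identities, the two identities for $N(Z)^{-1}Z$, and the factorization identities for the remaining tilde-functions. I treat each in turn.

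\emph{Vanishing identities.} The key observation is that on $2\times 2$ matrices the adjugate reverses products: $(AB)^+ = B^+ A^+$. Consequently $(F\overleftarrow{\partial})^+ = \partial^+ F^+$ and $(\overrightarrow{\partial} F)^+ = F^+\overleftarrow{\partial^+}$, so left regularity of $F^+$ is equivalent to $F\overleftarrow{\partial}=0$, and right regularity of $F^+$ to $\overrightarrow{\partial} F=0$. All eight vanishing statements in the lemma then follow at once from Lemma \ref{FGH-reg}.

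\emph{The identity for $N(Z)^{-1}Z$.} By Leibniz, $\overrightarrow{\partial}(N(Z)^{-1}Z) = -N(Z)^{-2}(\overrightarrow{\partial} N(Z))Z + N(Z)^{-1}(\overrightarrow{\partial} Z)$. Short componentwise checks give $\overrightarrow{\partial} N(Z)=Z^+$ and $\overrightarrow{\partial} Z=2I$; combined with the cofactor identity $Z^+ Z = N(Z) I$, the two contributions collapse to $N(Z)^{-1}I$. The right-acting version is verified identically.

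\emph{Factorization identities.} For the pure harmonic $\tilde F_{l,m,n},\tilde F'_{l,m,n},\tilde G_{l,m,n},\tilde G'_{l,m,n}$, apply Lemma 22 of \cite{FL1} (and identity (\ref{del-t(Z^{-1})}) for the $Z^{-1}$-versions) entrywise to obtain the first matrix expression stated in each identity directly. To recognize the result as $\pm(2l+1)Z^{-1}\cdot(\,\cdot\,)$ (or with $Z^{-1}$ on the right, as appropriate), multiply on the appropriate side by $Z$ and reduce the right-hand side via (\ref{Zt-identity}); the matching of $t^l$-coefficient matrices is then a routine bookkeeping check. For $\tilde I_{l,m,n}$ and $\tilde I'_{l,m,n}$, Proposition \ref{A-space-prop} together with Lemma \ref{elts-of-A} writes each as $A_d + \tfrac{N(Z)}{d+1}(\partial A_d \partial)^+$ with $A_d$ the prescribed harmonic matrix and $d$ the homogeneity degree. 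Applying Leibniz, the derivative of the $N(Z)$-factor contributes $\tfrac{N(Z)}{d+1}Z^{-1}(\partial A_d\partial)^+$, while $\overrightarrow{\partial}(\partial A_d\partial)^+$ vanishes by harmonicity of $A_d$ (since $\partial\partial^+=\tfrac14\square$ annihilates $A_d^+$). Combining with $\overrightarrow{\partial} A_d$ computed as in the pure harmonic case, the output collapses to $(2l+1)Z^{-1}\tilde I_{l,m,n}$; the right-acting identity and the two $\tilde I'$ identities proceed identically, with the minus sign in the $\tilde I'$ case arising from differentiating the extra $N(Z)^{-1}$ factor in its harmonic part.

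The main technical obstacle is this last step for $\tilde I$ and $\tilde I'$: propagating the non-harmonic correction through the differentiation and verifying that the assembled output factors as $(2l+1)$ times $Z^{-1}$ applied to the original element requires careful bookkeeping, though no new ideas beyond those already present in Lemma \ref{Mx-calculations}. Everything else reduces to applications of previously established identities.
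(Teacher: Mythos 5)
Your proposal is correct, and it is worth comparing with the paper's own proof, which consists of the single sentence that everything ``follows by direct computations from Lemmas 22, 23 from \cite{FL1} and identity (\ref{del-t(Z^{-1})}).'' For the explicit factorization identities you end up doing essentially what the paper does: entrywise differentiation via Lemma 22 of \cite{FL1} and (\ref{del-t(Z^{-1})}), followed by a bookkeeping match against (\ref{Zt-identity}). Where you genuinely diverge is in the eight vanishing identities and in the treatment of $\tilde I_{l,m,n}$, $\tilde I'_{l,m,n}$. Your observation that the adjugate is an anti-homomorphism on $2\times2$ matrices, so that $(F\overleftarrow{\partial})^+=\partial^+F^+$ and $(\overrightarrow{\partial}F)^+=F^+\overleftarrow{\partial^+}$, converts all eight vanishing statements into restatements of Lemma \ref{FGH-reg} and spares you any further computation there; this is cleaner than the paper's blanket appeal to direct calculation and makes the logical dependence on Lemma \ref{FGH-reg} explicit. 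Likewise, writing $\tilde I_{l,m,n}$ in the form (\ref{A(Z)}) via Lemma \ref{elts-of-A} and Proposition \ref{A-space-prop}, killing $\overrightarrow{\partial}(\partial A_d\partial)^+$ by harmonicity of $A_d^+$, and isolating the $Z^+$ contribution from the Leibniz rule is a sound way to organize the only identities for which no explicit matrix formula is recorded in the statement. The one step you correctly flag as nontrivial --- that $\overrightarrow{\partial}A_{2l}$ plus the $\tfrac1{2l+1}Z^+(\partial A_{2l}\partial)^+$ correction reassembles into $(2l+1)Z^{-1}\tilde I_{l,m,n}$ --- is exactly the residual direct computation the paper relies on throughout, so nothing is missing; your write-up simply makes visible which parts are structural and which are genuinely computational.
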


\begin{proof}
The result follows by direct computations from Lemmas 22, 23 from \cite{FL1}
and identity (\ref{del-t(Z^{-1})}).
\end{proof}

\subsection{Equivariant Maps from ${\cal W}'$ to Doubly Regular Functions}

The goal of this subsection is to construct equivariant maps $\tau^{\pm}_a$
and $\tau^{\pm}_s$ from ${\cal W}'$ into the spaces of
doubly left and right regular functions.
We conclude that $(\rho'_2, {\cal M})$ contains irreducible components
isomorphic to $(\pi_{dl},{\cal F}^+)$, $(\pi_{dl},{\cal F}^-)$,
$(\pi_{dr},{\cal G}^+)$ and $(\pi_{dr},{\cal G}^-)$ introduced in
Proposition \ref{DR-irred-prop}.

Recall that we refer to Section 2 of \cite{FL3} for a summary of notations.
In particular, we treat $U(2)$ as a subgroup of $\HC^{\times}$,
let $U(2)_R = \{ RZ ;\: Z \in U(2) \}$, where $R>0$. 
We will also need the open domains $\BB D_R^+$, $\BB D_R^-$ defined by
equation (22) in \cite{FL3}.
We introduce four maps $p_a^+$, $p_a^-$, $p_s^+$ and
$p_s^-: {\cal W}' \to {\cal W}$:
\begin{align*}
p_a^+: \:F(Z) \: &\mapsto \:
\frac{i}{2\pi^3} \left[ \int_{Z \in U(2)_R} \frac{(Z-W)^{-1}}{N(Z-W)}
\cdot Z \cdot F(Z)^+ \,\frac{dV}{N(Z)} \right]^+,
\qquad W \in \BB D_R^+,  \\
p_a^-: \:F(Z) \: &\mapsto \:
- \frac{i}{2\pi^3} \left[ \int_{Z \in U(2)_R} \frac{(Z-W)^{-1}}{N(Z-W)}
\cdot Z \cdot F(Z)^+ \,\frac{dV}{N(Z)} \right]^+,
\qquad W \in \BB D_R^-,  \\
p_s^+: \:F(Z) \: &\mapsto \:
\frac{i}{2\pi^3} \left[ \int_{Z \in U(2)_R} F(Z)^+ \cdot Z \cdot
\frac{(Z-W)^{-1}}{N(Z-W)} \,\frac{dV}{N(Z)} \right]^+,
\qquad W \in \BB D_R^+,  \\
p_s^-: \:F(Z) \: &\mapsto \:
- \frac{i}{2\pi^3} \left[ \int_{Z \in U(2)_R} F(Z)^+ \cdot Z \cdot
\frac{(Z-W)^{-1}}{N(Z-W)} \,\frac{dV}{N(Z)} \right]^+,
\qquad W \in \BB D_R^-.
\end{align*}
(These maps do not depend on the choice of $R>0$.)
First, we show that these maps annihilate the non-harmonic parts.

\begin{lem}  \label{p(NF)}
Let $F: \HC^{\times} \to \HC$ be such that $\square F=0$ and $k \ne 0$, then
$$
p_a^+ \bigl( N(Z)^k \cdot F(Z) \bigr) =
p_a^- \bigl( N(Z)^k \cdot F(Z) \bigr) =
p_s^+ \bigl( N(Z)^k \cdot F(Z) \bigr) =
p_s^- \bigl( N(Z)^k \cdot F(Z) \bigr) = 0.
$$
In particular, these maps annihilate the non-harmonic parts of
functions of the form (\ref{A(Z)}).
\end{lem}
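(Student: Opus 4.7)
My plan is to prove the vanishing by an explicit expansion of the Cauchy-Fueter kernel into matrix coefficients of $U(2)$, combined with the orthogonality relations for those matrix coefficients. I focus on $p_a^+$; the other three cases are entirely analogous, swapping the order of $F(Z)^+$ and $Z$ for $p_s^{\pm}$ and exchanging the roles of $Z$ and $W$ via the alternative kernel expansion from Proposition 26 of \cite{FL1} for $p_a^-$ and $p_s^-$.

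By linearity, I first reduce to the case in which $F(Z)$ is a single basis element of harmonic $\HC$-valued functions on $\HC^{\times}$, namely $F(Z) = t^{l_0}_{n_0\,\underline{m_0}}(Z)\cdot e_\alpha$ or $F(Z) = N(Z)^{-(2l_0+1)} t^{l_0}_{n_0\,\underline{m_0}}(Z)\cdot e_\alpha$ for some basis quaternion $e_\alpha \in \HC$; by (\ref{sq-tN}) these span all harmonic polynomial functions on $\HC^{\times}$. I then expand $k(Z-W)$ using Proposition 26 of \cite{FL1} (in the region $WZ^{-1} \in \BB D^+$, which is the one appropriate for $W\in\BB D_R^+$ and $Z \in U(2)_R$), substitute into the integrand defining $p_a^+(N(Z)^k F)(W)$, and parametrize $Z = RU$ with $U \in U(2)$ so as to pull all $R$-dependence outside of the $U(2)$-integration. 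Since $p_a^+$ is independent of $R>0$, each distinct power $R^{\alpha_l}$ appearing in the resulting series must have vanishing coefficient separately, collapsing the expansion to at most a single $U(2)$-integral.

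That remaining integral is a product of matrix coefficients of $U(2)$ weighted by a power of $\det(U)$ whose exponent is controlled by the harmonic basis type, the kernel-expansion index $l$, and a shift of $k$ coming from the factor $N(Z)^k$. Tracking the central $U(1)$-character (``$\det$-weight'') of every factor -- kernel, $Z$, $F(Z)^+$, and $dV/N(Z)$ -- shows that the would-be-surviving integrand carries $\det$-weight a nonzero multiple of $k$; so by the orthogonality of matrix coefficients on $U(2)$, which forces integrals of matrix coefficients of non-trivial $U(1)$-isotypes to vanish, the integral is zero whenever $k \ne 0$. The main obstacle will be carrying out this $\det$-weight bookkeeping uniformly across both types of harmonic basis element and verifying that the coupling with the constant matrix $e_\alpha^+$ (which contributes an extra spin $\tfrac{1}{2}$ from the factor $U$) cannot conspire with the Clebsch-Gordan decomposition of $t^{l+1/2}\otimes t^{1/2}\otimes t^{l_0}$ to produce a trivial $SU(2)\times SU(2)$-component simultaneously with vanishing $\det$-weight.
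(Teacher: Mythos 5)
Your overall strategy --- expand the Cauchy--Fueter kernel in matrix coefficients and invoke the orthogonality relations over $U(2)_R$ --- is the right family of argument and close in spirit to the paper's, but the selection-rule bookkeeping you propose does not close. First, your two steps ``collapse by $R$-independence'' and ``kill by non-trivial $\det$-weight'' are the \emph{same} condition: for a homogeneous integrand on $U(2)_R$, independence of $R$ and invariance under the central $U(1)$ both amount to total homogeneity degree $-4$. Consequently the $\det$-weight of the surviving term is \emph{not} a nonzero multiple of $k$; the central character merely selects the kernel index $l$ shifted by $k$ from the resonant value, and the vanishing must come from combining this with the $SU(2)\times SU(2)$ selection rule. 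Here your explicit factor $Z$ causes a genuine problem: reducing $t^{l+\frac12}(Z^{-1})\cdot Z\cdot t^{l_0}(Z)$ by Clebsch--Gordan, the spin-lowering component of $Z\cdot t^{l_0}(Z)$ (identity (\ref{Zt-identity})) carries an extra factor of $N(Z)$, which shifts the effective power of $N$ by one, and the resulting channel passes \emph{both} the central-character and the $SU(2)\times SU(2)$ selection rules precisely when $k=-1$ (and symmetrically $k=+1$ for the other harmonic type and for $p_a^-$, $p_s^-$). So the ``conspiracy'' you hope to rule out in your last paragraph actually occurs; it is not excluded by representation theory. It is killed only by a special identity of the kernel: the spin-raising Clebsch--Gordan component of $(\text{kernel row})\cdot Z$ vanishes identically (already for $l=0$ the relevant rows are rows of $Z^{-1}$, whose product with $Z$ is constant), which is exactly the statement that $k(Z-W)\cdot Z$ is harmonic in $Z$.

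The paper's proof sidesteps all of this by absorbing $Z$ into the kernel first: $k(Z-W)\cdot Z = k(Z-W)\cdot(Z-W)+k(Z-W)\cdot W = N(Z-W)^{-1}+k(Z-W)\cdot W$, which is manifestly harmonic in $Z$ and, for $W\in\BB D_R^{\pm}$, expands purely in genuine harmonics of $Z$. The integrand is then $(\text{harmonic in }Z)\cdot N(Z)^{k-1}\cdot(\text{harmonic in }Z)\,dV$, and the orthogonality relations (19) of \cite{FL3} give zero outright because the power of $N(Z)$ is displaced by $k\ne 0$ from the unique value that can produce a nonzero integral --- no three-factor Clebsch--Gordan analysis is needed. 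If you want to keep your expansion-based plan, you must either prove and use this harmonicity of $k(Z-W)\cdot Z$ (equivalently, the vanishing of the spin-raising component of the kernel rows times $Z$), or dispose of the $|k|=1$ channel by direct computation; the $\det$-weight and type selection rules alone will not do it.
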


\begin{proof}
Observe that
$$
\frac{(Z-W)^{-1}}{N(Z-W)} \cdot Z
\qquad \text{and} \qquad Z \cdot \frac{(Z-W)^{-1}}{N(Z-W)}
$$
are harmonic with respect to the $Z$ variable:
$$
\square_Z \left( \frac{(Z-W)^{-1}}{N(Z-W)} \cdot Z \right)
= \square_Z \left( \frac{(Z-W)^{-1}}{N(Z-W)} \cdot (Z-W) \right)
= \square_Z \left( \frac1{N(Z-W)} \right) =0,
$$
since $\frac{(Z-W)^{-1}}{N(Z-W)}$, $\frac{(Z-W)^{-1}}{N(Z-W)} \cdot W$ and
$\frac1{N(Z-W)}$ are harmonic with respect to $Z$. Then the integrals
\begin{align*}
\int_{Z \in U(2)_R} \biggl( \frac{(Z-W)^{-1}}{N(Z-W)} \cdot Z \biggr)
\cdot N(Z)^{k-1} \cdot F(Z)^+ \,dV &=0,  \\
\int_{Z \in U(2)_R} N(Z)^{k-1} \cdot F(Z)^+ \cdot
\biggl( Z \cdot \frac{(Z-W)^{-1}}{N(Z-W)} \biggr) \,dV &=0
\end{align*}
by the orthogonality relations (19) in \cite{FL3}, since we never get the
power of $N(Z)$ that can potentially result in non-zero integral.
\end{proof}

Then we determine the effect of the maps $p_a^{\pm}$ and $p_s^{\pm}$
on harmonic functions.
Note that any harmonic function in ${\cal W}'$ is a linear combination
of harmonic parts of functions from Proposition \ref{A-K-types}.

\begin{prop}  \label{p(A)}
For each $l$, $l=0,\frac12,1,\frac32,\dots$,
\begin{enumerate}
\item
The map $p_a^+$ annihilates ${\cal M}(-2l-2)$ and the components of
${\cal M}(2l)$ isomorphic to $V_{l-\frac12} \boxtimes V_{l-\frac12}$ and
$V_{l+\frac12} \boxtimes V_{l-\frac12}$ from Proposition \ref{A-K-types};
it leaves the components of ${\cal M}(2l)$ isomorphic to
$V_{l-\frac12} \boxtimes V_{l+\frac12}$ and $V_{l+\frac12} \boxtimes V_{l+\frac12}$
unchanged.
\item
The map $p_a^-$ annihilates ${\cal M}(2l)$ and the components of
${\cal M}(-2l-2)$ isomorphic to $V_{l-\frac12} \boxtimes V_{l+\frac12}$ and
$V_{l+\frac12} \boxtimes V_{l+\frac12}$;
it leaves the components of ${\cal M}(-2l-2)$ isomorphic to
$V_{l-\frac12} \boxtimes V_{l-\frac12}$ and $V_{l+\frac12} \boxtimes V_{l-\frac12}$
unchanged.
\item
The map $p_s^+$ annihilates ${\cal M}(-2l-2)$ and the components of
${\cal M}(2l)$ isomorphic to $V_{l-\frac12} \boxtimes V_{l-\frac12}$ and
$V_{l-\frac12} \boxtimes V_{l+\frac12}$;
it leaves the components of ${\cal M}(2l)$ isomorphic to
$V_{l+\frac12} \boxtimes V_{l-\frac12}$ and $V_{l+\frac12} \boxtimes V_{l+\frac12}$
unchanged.
\item
The map $p_s^-$ annihilates ${\cal M}(2l)$ and the components of
${\cal M}(-2l-2)$ isomorphic to $V_{l+\frac12} \boxtimes V_{l-\frac12}$ and
$V_{l+\frac12} \boxtimes V_{l+\frac12}$;
it leaves the components of ${\cal M}(-2l-2)$ isomorphic to
$V_{l-\frac12} \boxtimes V_{l-\frac12}$ and $V_{l-\frac12} \boxtimes V_{l+\frac12}$
unchanged.
\item
The component ${\cal M}(-1)$ is annihilated by all four maps
$p_a^{\pm}$, $p_s^{\pm}$.
\end{enumerate}
\end{prop}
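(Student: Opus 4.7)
The plan is to reduce to explicit harmonic basis elements and then use matrix coefficient expansions of the Cauchy--Fueter kernel together with orthogonality relations for $t^l_{n\,\underline{m}}$ on $U(2)_R$. By Proposition \ref{A-space-prop}, every element of ${\cal M}$ is a linear combination of $N(Z)^{-1}\cdot Z$ and homogeneous elements of the form $A_d(Z) + \frac{N(Z)}{d+1}\bigl(\partial A_d\,\partial\bigr)^+$. Lemma \ref{p(NF)} annihilates the $N(Z)$-correction term, since $(\partial A_d\,\partial)^+$ is harmonic; hence it suffices to evaluate each of $p_a^{\pm}$, $p_s^{\pm}$ on the explicit harmonic basis $\tilde F_{l,m,n}$, $\tilde F'_{l,m,n}$, $\tilde G_{l,m,n}$, $\tilde G'_{l,m,n}$, $\tilde H_{l,m,n}$, $\tilde H'_{l,m,n}$, $\tilde I_{l,m,n}$, $\tilde I'_{l,m,n}$ from Proposition \ref{A-K-types}, together with the generator $N(Z)^{-1}\cdot Z$ of ${\cal M}(-1)$.

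For a given harmonic basis element $F$, I would first expand $Z\cdot F(Z)^+$ (for $p_a^{\pm}$) or $F(Z)^+\cdot Z$ (for $p_s^{\pm}$) into a $2\times 2$ matrix of $t^l_{\cdot\,\underline{\cdot}}(Z)$ or $t^l_{\cdot\,\underline{\cdot}}(Z^{-1})$ entries times a single explicit power of $N(Z)$, using (\ref{Zt-identity}) and Lemma 23 of \cite{FL1}. Then I would substitute the matrix coefficient expansion of $\frac{(Z-W)^{-1}}{N(Z-W)}$ from Proposition 26 of \cite{FL1} in the appropriate region: the expansion converging for $W\in\BB D_R^+$ when evaluating $p_a^+$ or $p_s^+$, and its $\BB D_R^-$-counterpart for $p_a^-$ or $p_s^-$. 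After expanding, the integrals over $U(2)_R$ reduce via the orthogonality relations (17) in \cite{FL3} to Kronecker deltas in $(l,m,n)$ times a fixed power of $R$.

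The selection rule that produces the four-way pattern comes from two observations: the factor $\frac{dV}{N(Z)}$ shifts the effective $N(Z)$-degree by $-1$, and for each fixed $l$ only one of the two expansions of $\frac{(Z-W)^{-1}}{N(Z-W)}$ carries $t^l(Z)$-terms at the correct $N(Z)$-power to pair nontrivially with a given harmonic basis function under orthogonality. Thus $p_a^+$ survives only on basis elements built from $t^l(Z)$ with matching $N(Z)^0$-factor and with row structure compatible with left multiplication by $Z$, which picks out exactly $V_{l-\frac12}\boxtimes V_{l+\frac12} \oplus V_{l+\frac12}\boxtimes V_{l+\frac12}$ in ${\cal M}(2l)$; the other three cases follow by the same template, with left-versus-right placement of $Z$ swapping the two $\mathfrak{sl}(2,\BB C)$ factors and the choice $W\in\BB D_R^{\pm}$ flipping positive versus negative degree. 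For the special element $N(Z)^{-1}\cdot Z$: since $Z\cdot(N(Z)^{-1}Z)^+ = (N(Z)^{-1}Z)^+\cdot Z = N(Z)^{-1}\cdot I$, all four maps reduce to a scalar multiple of $\int_{U(2)_R}\frac{(Z-W)^{-1}}{N(Z-W)}\,\frac{dV}{N(Z)^2}$, which vanishes by orthogonality (the integrand has no $t^0_{0\,\underline{0}}(Z)$-component at the compensating $N(Z)$-power).

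The main obstacle will not be the vanishing assertions, which are immediate from orthogonality once the $N(Z)$-powers are counted, but the claim that $p_a^{\pm}$ and $p_s^{\pm}$ act as the \emph{identity} on the four components they do not annihilate. Verifying this requires checking that the products of combinatorial coefficients $(l\pm m)$ and $(l\pm n)$ introduced by (\ref{Zt-identity}), by the kernel expansion, and by the orthogonality integrals telescope precisely to $1$. This is a finite case-by-case combinatorial check, four cases per map, and the normalizations of $\tilde F, \tilde F', \tilde G, \tilde G', \tilde H, \tilde H'$ in Proposition \ref{A-K-types} have been chosen so that this cancellation takes place; the detailed bookkeeping is where the computation becomes lengthy but not conceptually deep.
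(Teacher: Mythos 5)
Your proposal follows essentially the same route as the paper's proof: reduce to the harmonic parts of the basis elements of Proposition \ref{A-K-types} via Lemma \ref{p(NF)}, expand $Z\cdot F(Z)^+$ (resp.\ $F(Z)^+\cdot Z$) with Lemma 23 of \cite{FL1}, substitute the matrix coefficient expansion of $\frac{(Z-W)^{-1}}{N(Z-W)}$ from Proposition 26 of \cite{FL1} in the region dictated by the sign, and let the orthogonality relations over $U(2)_R$ select the surviving terms, with the normalizations of the $\tilde F,\tilde G,\dots$ arranged so that the surviving components are reproduced exactly. Two small corrections: the relevant orthogonality relations for integrals over $U(2)_R$ with $dV$ are (19) of \cite{FL3}, not (17) (which govern the $S^3_R$ integrals), and $Z\cdot\bigl(N(Z)^{-1}Z\bigr)^+=e_0$ rather than $N(Z)^{-1}e_0$ since $ZZ^+=N(Z)$ --- though the vanishing on ${\cal M}(-1)$ still follows by the same degree count.
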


\begin{proof}
We show calculations for $p_a^+$ acting on the harmonic part of the component
${\cal M}(2l)$ isomorphic to $V_{l-\frac12} \boxtimes V_{l+\frac12}$.
Suppose $W \in \BB D_R^+$,
using Lemma 23 and Proposition 26 from \cite{FL1}
(see also Proposition \ref{Prop26}) we can rewrite the integrand as
\begin{multline*}
\frac{(Z-W)^{-1}}{N(Z-W)} \cdot Z \cdot F(Z)^+  \\
= \frac{(Z-W)^{-1}}{N(Z-W)} \cdot Z \cdot \begin{pmatrix}
-(l-m)(l+n+1) t^l_{n+1\,\underline{m+1}}(Z) & (l-m)(l-n) t^l_{n\,\underline{m+1}}(Z) \\
-(l+m+1)(l+n+1) t^l_{n+1\,\underline{m}}(Z) & (l+m+1)(l-n) t^l_{n\,\underline{m}}(Z)
\end{pmatrix}  \\
= \frac1{N(Z)} \sum_{l',m',n'}
\begin{pmatrix} (l'-m'+\frac12) t^{l'}_{n'\,\underline{m'+\frac12}}(W)  \\
(l'+m'+\frac12) t^{l'}_{n'\,\underline{m'-\frac12}}(W) \end{pmatrix}
\cdot
\Bigl( t^{l'+\frac 12}_{m'\,\underline{n'-\frac 12}}(Z^{-1}),
t^{l'+\frac 12}_{m'\,\underline{n'+\frac 12}}(Z^{-1}) \Bigr)  \\
\times \begin{pmatrix} -(l-n)(l+n+1) t^{l+\frac12}_{n+\frac12\,\underline{m+\frac12}}(Z) &
(l-n+1)(l-n) t^{l+\frac12}_{n-\frac12\,\underline{m+\frac12}}(Z) \\
-(l+n+2)(l+n+1) t^{l+\frac12}_{n+\frac32\,\underline{m+\frac12}}(Z) &
(l+n+1)(l-n) t^{l+\frac12}_{n+\frac12\,\underline{m+\frac12}}(Z) \end{pmatrix}.
\end{multline*}
When integrated over $Z \in U(2)_R$, by the orthogonality relations
(19) in \cite{FL3}, only two terms survive -- with $l'=l$, $m'=m+\frac12$ and
$n'=n$ or $n+1$:
$$
\begin{pmatrix} -(l-m)(l+n+1) t^l_{n+1\,\underline{m+1}}(W) &
(l-m)(l-n) t^l_{n\,\underline{m+1}}(W)  \\
-(l+m+1)(l+n+1) t^l_{n+1\,\underline{m}}(W) &
(l+m+1)(l-n) t^l_{n\,\underline{m}}(W) \end{pmatrix}^+.
$$
The other cases are similar.
\end{proof}

We introduce $\BB C$-linear maps $\BB S \to \BB S'$ and $\BB S' \to \BB S$:
$$
\begin{pmatrix} s_1 \\ s_2 \end{pmatrix}^{\dagger} = (s_2, -s_1),
\qquad
(s'_1, s'_2)^{\dagger} = \begin{pmatrix} -s'_2 \\ s'_1 \end{pmatrix},
\qquad
\begin{pmatrix} s_1 \\ s_2 \end{pmatrix} \in \BB S ,\:
(s'_1, s'_2) \in \BB S'.
$$
These maps are similar to quaternionic conjugation:
$$
(s^{\dagger})^{\dagger}=s, \qquad (s'^{\dagger})^{\dagger}=s', \qquad
(Zs)^{\dagger} = s^{\dagger} Z^+, \qquad
(s'Z)^{\dagger} = Z^+ s'^{\dagger},
$$
for all $Z \in \HC,$ $s \in \BB S$, $s' \in \BB S'$.
Then we have two more isomorphisms induced by these maps:
\begin{align*}
\sigma &: \HC \simeq \BB S \otimes \BB S' \to \BB S \otimes \BB S, \qquad
s \otimes s' \mapsto s \otimes s'^{\dagger},  \\
\sigma' &: \HC \simeq \BB S \otimes \BB S' \to \BB S' \otimes \BB S', \qquad
s \otimes s' \mapsto s^{\dagger} \otimes s'.
\end{align*}
If we identify $\BB S \otimes \BB S$ with $4$-columns
and $\BB S' \otimes \BB S'$ with $4$-rows as in (\ref{StensorS}), the maps
$\sigma$ and $\sigma'$ can be expressed as
$$
\sigma \begin{pmatrix} z_{11} & z_{12} \\ z_{21} & z_{22} \end{pmatrix}
= \begin{pmatrix} z_{12} \\ -z_{11} \\ z_{22} \\ -z_{21} \end{pmatrix}, \qquad
\sigma' \begin{pmatrix} z_{11} & z_{12} \\ z_{21} & z_{22} \end{pmatrix}
= (-z_{21}, -z_{22}, z_{11}, z_{12}).
$$
Note that $\Sh \otimes \BB S \otimes \BB S$ and
$\Sh \otimes \BB S' \otimes \BB S'$ denote respectively
$\BB S \otimes \BB S$ and $\BB S' \otimes \BB S'$-valued
polynomial functions on $\HC^{\times}$.
The maps $\sigma$ and $\sigma'$ naturally extend to maps
$$
\sigma: {\cal W} \to \Sh \otimes \BB S \otimes \BB S
\qquad \text{and} \qquad
\sigma': {\cal W} \to \Sh \otimes \BB S' \otimes \BB S'.
$$
We are particularly interested in the following four maps:
$$
\tau_a^+, \: \tau_a^-:
{\cal W}' \to \Sh \otimes \BB S \otimes \BB S, \qquad
\tau_s^+, \: \tau_s^-:
{\cal W}' \to \Sh \otimes \BB S' \otimes \BB S':
$$
\begin{center}
\begin{tabular}{lcl}
$\tau_a^+(F) = - \sigma \bigl(\partial \bigl( p_a^+(F) \bigr) \bigr)$,
& \qquad &
$\tau_s^+(F) = \sigma' \bigl(\bigl( p_s^+(F) \bigr)
\overleftarrow{\partial} \bigr)$,  \\
$\tau_a^-(F) = - \sigma \bigl(\partial \bigl( p_a^-(F) \bigr) \bigr)$,
& \qquad &
$\tau_s^-(F) = \sigma' \bigl(\bigl( p_s^-(F) \bigr)
\overleftarrow{\partial} \bigr)$.
\end{tabular}
\end{center}

Combining Lemmas \ref{del-A}, \ref{p(NF)} and Proposition \ref{p(A)},
we obtain the following description of these maps:

\begin{lem}  \label{tau-proj}
The maps $\tau_a^{\pm}$ and $\tau_s^{\pm}$ annihilate all functions of the form
$N(Z)^k \cdot F(Z)$ with $F: \HC^{\times} \to \HC$ harmonic and $k \ne 0$.
For each $l$, $l=\frac12,1,\frac32,2,\dots$,
\begin{enumerate}
\item
$\tau_a^+$ annihilates ${\cal M}(-2l-2)$, projects ${\cal M}(2l)$ onto
the $V_{l-\frac12} \boxtimes V_{l+\frac12}$ component in the decomposition
(\ref{A(2l)}), then applies $\partial$ and identifies the result with
a polynomial function $\HC^{\times} \to \BB S \otimes \BB S$;
\item
$\tau_a^-$ annihilates ${\cal M}(2l)$, projects ${\cal M}(-2l-2)$ onto
the $V_{l+\frac12} \boxtimes V_{l-\frac12}$ component in the decomposition
(\ref{A(-2l-1)}), then applies $\partial$ and identifies the result with
a polynomial function $\HC^{\times} \to \BB S \otimes \BB S$;
\item
$\tau_s^+$ annihilates ${\cal M}(-2l-2)$, projects ${\cal M}(2l)$ onto
the $V_{l+\frac12} \boxtimes V_{l-\frac12}$ component in the decomposition
(\ref{A(2l)}), then applies $\partial$ on the right and identifies the result
with a polynomial function $\HC^{\times} \to \BB S' \otimes \BB S'$;
\item
$\tau_s^-$ annihilates ${\cal M}(2l)$, projects ${\cal M}(-2l-2)$ onto
the $V_{l-\frac12} \boxtimes V_{l+\frac12}$ component in the decomposition
(\ref{A(-2l-1)}), then applies $\partial$ on the right and identifies the
result with a polynomial function $\HC^{\times} \to \BB S' \otimes \BB S'$;
\item
The component ${\cal M}(-1)$ is annihilated by all four maps
$\tau_a^{\pm}$, $\tau_s^{\pm}$.
\end{enumerate}
\end{lem}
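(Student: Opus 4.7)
The plan is to chain together the three preceding results: Lemma \ref{p(NF)}, Proposition \ref{p(A)}, and Lemma \ref{del-A}. By construction each of $\tau_a^{\pm}$ and $\tau_s^{\pm}$ is a composition of the corresponding $p$-map with $\partial$ or $\overleftarrow{\partial}$ and finally the $\BB C$-linear isomorphism $\sigma$ or $\sigma'$. Thus the assertion that these maps annihilate every summand $N(Z)^k F(Z)$ with $k \ne 0$ and $\square F = 0$ is immediate from Lemma \ref{p(NF)}, since the $p$-maps already kill such inputs. Combining this with Proposition \ref{A-space-prop}, which states that every element of ${\cal M}$ is a combination of $N(Z)^{-1} Z$ and homogeneous pieces of the form (\ref{A(Z)}), reduces the remaining work to computing what each $\tau$ does on the harmonic $K$-isotypic summands listed in Proposition \ref{A-K-types}.

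Next I would use Proposition \ref{p(A)} to read off which isotypic components survive each $p$-map. For instance $p_a^+$ kills the whole of ${\cal M}(-2l-2)$ together with the $V_{l-\frac12} \boxtimes V_{l-\frac12}$ and $V_{l+\frac12} \boxtimes V_{l-\frac12}$ summands of ${\cal M}(2l)$, leaving only the summands generated by $\tilde F_{l,m,n}$ (type $V_{l-\frac12} \boxtimes V_{l+\frac12}$) and $\tilde H_{l,m,n}$ (type $V_{l+\frac12} \boxtimes V_{l+\frac12}$); the cases of $p_a^-$, $p_s^+$, $p_s^-$ are entirely analogous, each leaving two isotypic components, one of $\tilde F'/\tilde G/\tilde G'$ type and one of $\tilde H$ or $\tilde H'$ type.

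The decisive step is then Lemma \ref{del-A}. There $\tilde H_{l,m,n}$ and $\tilde H'_{l,m,n}$ are shown to be killed by both $\overrightarrow{\partial}$ and $\overleftarrow{\partial}$, consistent with Lemma \ref{FGH-reg} in which they are noted to be simultaneously left and right regular; by contrast $\overrightarrow{\partial}$ acts as a nonzero scalar multiple of $Z^{-1} \cdot \tilde F_{l,m,n}$ (and similarly on $\tilde F'_{l,m,n}$), and $\overleftarrow{\partial}$ acts analogously on $\tilde G_{l,m,n}$ and $\tilde G'_{l,m,n}$. Composing $\partial$ with $p_a^{\pm}$ therefore kills the $\tilde H$-component and keeps only the $\tilde F$- or $\tilde F'$-component; composing $\overleftarrow{\partial}$ with $p_s^{\pm}$ kills the $\tilde H$-component and keeps only the $\tilde G$- or $\tilde G'$-component. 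The final application of $\sigma$ or $\sigma'$ merely reinterprets an $\HC$-valued function as a $\BB S \otimes \BB S$- or $\BB S' \otimes \BB S'$-valued polynomial function on $\HC^{\times}$ and so does not change which summands survive. Claim (5) about ${\cal M}(-1)$ is immediate from the last item of Proposition \ref{p(A)}.

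The only mild obstacle lies in bookkeeping: one must verify that the explicit expressions produced by $\partial \circ p_a^+$ and the three analogues match, up to the factor $(2l+1)$ and the identifications $\sigma$, $\sigma'$, the polynomial realizations of the doubly regular $K$-types appearing in Proposition \ref{DR-irred-prop}. No new idea is required; the identification is forced once Proposition \ref{p(A)} and the first-order formulas in Lemma \ref{del-A} are in hand.
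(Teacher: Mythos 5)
Your proposal is correct and takes exactly the paper's route: the paper offers no separate proof of this lemma beyond the sentence ``Combining Lemmas \ref{del-A}, \ref{p(NF)} and Proposition \ref{p(A)}, we obtain the following description of these maps,'' which is precisely the chain you execute. Your bookkeeping of which $K$-isotypic summands survive each $p$-map and which are subsequently killed by $\partial$ or $\overleftarrow{\partial}$ (the $\tilde H$, $\tilde H'$ components) is accurate, so nothing is missing.
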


\begin{thm}  \label{tau-thm}
For each $F \in {\cal W}'$, $\tau_a^+(F)$ and $\tau_a^-(F)$ are polynomial
functions $\HC^{\times} \to \BB S \odot \BB S$ that are doubly left
regular, and $\tau_s^+(F)$ and $\tau_s^-(F)$ are polynomial functions
$\HC^{\times} \to \BB S' \odot \BB S'$ that are doubly right regular.

Moreover, the maps $\tau_a^{\pm}$, $\tau_s^{\pm}$ produce isomorphisms of
representations of $\mathfrak{sl}(2,\HC)$:
$$
\tau_a^+: (\rho'_2,{\cal W}'/ \ker \tau_a^+) \simeq (\pi_{dl},{\cal F}^+), \qquad
\tau_a^-: (\rho'_2,{\cal W}'/ \ker \tau_a^-) \simeq (\pi_{dl},{\cal F}^-),
$$
$$
\tau_s^+: (\rho'_2,{\cal W}'/ \ker \tau_s^+) \simeq (\pi_{dr},{\cal G}^+), \qquad
\tau_s^-: (\rho'_2,{\cal W}'/ \ker \tau_s^-) \simeq (\pi_{dr},{\cal G}^-).
$$
(Recall that the spaces ${\cal F}^{\pm}$ and ${\cal G}^{\pm}$ were introduced
in equations (\ref{FF'})-(\ref{GG'}).)
\end{thm}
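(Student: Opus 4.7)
The plan is to combine Lemma \ref{tau-proj} with the explicit derivative computations of Lemma \ref{del-A} to evaluate $\tau_a^{\pm}$ and $\tau_s^{\pm}$ on a basis of ${\cal W}'$, match the outputs against the basis of doubly regular functions in Lemma \ref{doubly-reg-fns-basis}, and then use equivariance together with irreducibility (Proposition \ref{DR-irred-prop}) to obtain the isomorphism statements. Every basis element of ${\cal W}'$ has the form $N(Z)^k \cdot t^l_{n\,\underline{m}}(Z) \cdot E_{ij}$, i.e.\ $N(Z)^k$ times a harmonic function. For $k \ne 0$, Lemma \ref{p(NF)} kills it. For $k = 0$, a dimension count against Proposition \ref{A-K-types} shows that every harmonic element of ${\cal W}'$ of degree $2l$ (or $-2l-2$) is the harmonic part of a unique element of ${\cal M}(2l)$ (resp.\ ${\cal M}(-2l-2)$); the non-harmonic remainder of that element is of the form $N(Z) \cdot (\text{harmonic})$, which is again annihilated by Lemma \ref{p(NF)}. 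Thus $\tau_a^{\pm}$ and $\tau_s^{\pm}$ factor through projection to the harmonic part of ${\cal M}$, and the whole computation reduces to the $K$-type generators listed in Proposition \ref{A-K-types}.

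For $\tau_a^+$, I apply $\partial$ to the harmonic generator $\tilde F_{l,m,n}$ of the $V_{l-\frac12} \boxtimes V_{l+\frac12}$ summand of ${\cal M}(2l)$ via the formula in Lemma \ref{del-A}. The resulting $2\times 2$ matrix has trace
\[
(2l+1)\bigl[(l-m)(l+m+1) - (l+m+1)(l-m)\bigr] t^{l-\frac12}_{n+\frac12\,\underline{m+\frac12}} = 0,
\]
so its image under $\sigma$ is a $4$-column whose second and third entries coincide and hence lies in $\BB S \odot \BB S$. A direct comparison with Lemma \ref{doubly-reg-fns-basis} identifies this column, up to the scalar factor $(2l+1)$, with $F_{l-\frac12,\,m+\frac12,\,n+\frac12}$, which is doubly left regular. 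As $(l,m,n)$ ranges over the allowed indices these images sweep out a full basis of ${\cal F}^+$, so $\tau_a^+({\cal W}') = {\cal F}^+$ and in particular $\tau_a^+(F) \in \BB S \odot \BB S$ is doubly left regular for every $F \in {\cal W}'$. Entirely parallel computations for the other three maps, applied to the generators $\tilde F'_{l,m,n}, \tilde G_{l,m,n}, \tilde G'_{l,m,n}$ and using the corresponding formulas in Lemma \ref{del-A}, give the statements for $\tau_a^-, \tau_s^+, \tau_s^-$ with images ${\cal F}^-, {\cal G}^+, {\cal G}^-$ respectively.

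It remains to establish that the four maps are $\mathfrak{gl}(2,\HC)$-equivariant, after which the induced surjective maps $(\rho'_2, {\cal W}'/\ker \tau) \to (\pi_{dl}, {\cal F}^{\pm})$ and $(\rho'_2, {\cal W}'/\ker \tau) \to (\pi_{dr}, {\cal G}^{\pm})$ are automatic isomorphisms by irreducibility of the targets (Proposition \ref{DR-irred-prop}). I would prove equivariance directly for the four integral operators $p_a^{\pm}, p_s^{\pm}$ in the spirit of the Cauchy-Fueter intertwining arguments of \cite{FL1}: for each generator $\bigl(\begin{smallmatrix} A & 0 \\ 0 & 0 \end{smallmatrix}\bigr), \bigl(\begin{smallmatrix} 0 & B \\ 0 & 0 \end{smallmatrix}\bigr), \bigl(\begin{smallmatrix} 0 & 0 \\ C & 0 \end{smallmatrix}\bigr), \bigl(\begin{smallmatrix} 0 & 0 \\ 0 & D \end{smallmatrix}\bigr)$ of $\g{gl}(2,\HC)$ one changes the integration variable via the fractional-linear action $Z \mapsto (aZ+b)(cZ+d)^{-1}$ and invokes the transformation laws of the kernel $(Z-W)^{-1}/N(Z-W)$, of the factor $Z/N(Z)$, and of the $U(2)$-volume form; equivariance of $\partial$ and of the conjugations $\sigma, \sigma'$ is then immediate from Lemmas \ref{rho-action-lem}--\ref{rho'_2-action-lem} and Theorem \ref{dr-action-thm}.

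The main obstacle is the equivariance check for the nilpotent generator $\bigl(\begin{smallmatrix} 0 & 0 \\ C & 0 \end{smallmatrix}\bigr)$: as in the proofs of Propositions \ref{equiv-prop1}--\ref{sqsq-equiv}, the change of variable produces several extra terms that must be shown to cancel, using Lemma \ref{antisymmetric} together with the freedom to shift the integration contour in $R$. A secondary point, which is really the conceptual heart of the construction, is the trace-free identity $\tr(\partial p_a^+(F)) = 0$ that forces $\sigma(\partial p_a^+(F))$ to land in $\BB S \odot \BB S$; in the argument above this is verified basis element by basis element via Lemma \ref{del-A}, but a coordinate-free derivation (say via the right regularity of $(Z-W)^{-1}/N(Z-W)$ in $W$) would be preferable and is the part I expect to require the most care.
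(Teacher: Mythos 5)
Your computational core matches the paper's: reduce to the harmonic/$K$-type basis via Lemma \ref{p(NF)} and Lemma \ref{tau-proj}, evaluate $\tau_a^{\pm}$, $\tau_s^{\pm}$ on the generators $\tilde F_{l,m,n}$, $\tilde F'_{l,m,n}$, $\tilde G_{l,m,n}$, $\tilde G'_{l,m,n}$ using Lemma \ref{del-A}, check the trace vanishes so the image lands in $\BB S \odot \BB S$ (resp.\ $\BB S' \odot \BB S'$), and identify the outputs with $(2l+1)F_{l-\frac12,m+\frac12,n+\frac12}$ etc.\ so that the images exhaust ${\cal F}^{\pm}$, ${\cal G}^{\pm}$. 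That part is fine.

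The gap is in your equivariance step. You propose to prove that $p_a^{\pm}$, $p_s^{\pm}$ (hence $\tau_a^{\pm}$, $\tau_s^{\pm}$) intertwine all four generator types of $\mathfrak{gl}(2,\HC)$, including $\bigl(\begin{smallmatrix} A & 0 \\ 0 & 0 \end{smallmatrix}\bigr)$ and $\bigl(\begin{smallmatrix} 0 & 0 \\ 0 & D \end{smallmatrix}\bigr)$. This cannot succeed: the maps are \emph{not} $\mathfrak{gl}(2,\HC)$-equivariant, only $\mathfrak{sl}(2,\HC)$-equivariant, which is why the theorem is stated for $\mathfrak{sl}(2,\HC)$. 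Scalar matrices act trivially via $\rho'_2$ but by a nonzero character via $\pi_{dl}$ and $\pi_{dr}$ (the extra $\partial$ in the definition of $\tau$ shifts the conformal weight), so the diagonal-generator check you plan would produce an irreconcilable discrepancy on the scalar direction. The paper avoids this entirely: it verifies the intertwining relation only for $\bigl(\begin{smallmatrix} 0 & B \\ 0 & 0 \end{smallmatrix}\bigr)$, by comparing the explicit $\partial_{ij}$-recursions for $\tilde F_{l,m,n}$ (formula (\ref{BF}) and its companions) with those for $F_{l,m,n}$, and then transfers to $\bigl(\begin{smallmatrix} 0 & 0 \\ C & 0 \end{smallmatrix}\bigr)$ by conjugating with the inversion $\bigl(\begin{smallmatrix} 0 & 1 \\ 1 & 0 \end{smallmatrix}\bigr)$ using (\ref{inversion}); since the $B$- and $C$-type matrices already generate $\mathfrak{sl}(2,\HC)$, no diagonal check is needed. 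Your alternative route -- a change of variables in the integrals defining $p_a^{\pm}$, $p_s^{\pm}$ for the nilpotent $C$-generator -- is exactly the step you flag as the main obstacle, and it is doubtful it closes cleanly given the non-covariant factors $Z$, $N(Z)^{-1}$ and the quaternionic conjugation in the kernels; even if it did, you would still have to restrict the claim to $\mathfrak{sl}(2,\HC)$.
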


\begin{rem}
The maps $\tau_a^{\pm}$, $\tau_s^{\pm}$ do not produce isomorphisms of
representations of $\mathfrak{gl}(2,\HC)$ because the scalar matrices
act trivially via the $\rho'_2$ action and non-trivially via the
$\pi_{dl}$ and $\pi_{dr}$ actions.
\end{rem}

\begin{proof}
By direct computation, using Lemmas \ref{del-A}, \ref{tau-proj},
identity (\ref{del-t(Z^{-1})}) and Lemma 22 from \cite{FL1},
\begin{align}
\tau_a^+ \bigl(\tilde F_{l,m,n}(Z)\bigr) &=
(2l+1) F_{l-\frac12, m+\frac12, n+\frac12}(Z), \label{tau(F)}  \\
\tau_a^- \bigl(\tilde F'_{l,m,n}(Z)\bigr) &=
-(2l+1) F'_{l-\frac12, m+\frac12, n+\frac12}(Z), \label{tau(F')}  \\
\tau_s^+ \bigl(\tilde G_{l,m,n}(Z)\bigr) &=
(2l+1) G_{l-\frac12, n+\frac12, m+\frac12}(Z), \label{tau(G)}  \\
\tau_s^- \bigl(\tilde G'_{l,m,n}(Z)\bigr) &=
-(2l+1) G'_{l-\frac12, n+\frac12, m+\frac12}(Z);  \label{tau(G')}
\end{align}
\begin{multline}  \label{BF}
\left(\begin{smallmatrix} \partial_{11} \tilde F_{l, m, n}(Z) &
\partial_{21} \tilde F_{l, m, n}(Z) \\
\partial_{12} \tilde F_{l, m, n}(Z) &
\partial_{22} \tilde F_{l, m, n}(Z) \end{smallmatrix}\right)
= \frac{2l+1}{2l}
\left(\begin{smallmatrix} (l-m) \tilde F_{l-\frac12, m+\frac12, n+\frac12}(Z) &
(l-m) \tilde F_{l-\frac12, m+\frac12, n-\frac12}(Z) \\
(l+m+1) \tilde F_{l-\frac12, m-\frac12, n+\frac12}(Z) &
(l+m+1) \tilde F_{l-\frac12, m-\frac12, n-\frac12}(Z) \end{smallmatrix}\right)  \\
+ \frac1{2l}
\left(\begin{smallmatrix} (l-m)(l+n+1) \tilde H_{l-\frac12, m+\frac12, n+\frac12}(Z) &
-(l-m)(l-n) \tilde H_{l-\frac12, m+\frac12, n-\frac12}(Z) \\
(l+m+1)(l+n+1) \tilde H_{l-\frac12, m-\frac12, n+\frac12}(Z) &
-(l+m+1)(l-n) \tilde H_{l-\frac12, m-\frac12, n-\frac12}(Z) \end{smallmatrix}\right),
\end{multline}
\begin{multline}  \label{BF'}
\left(\begin{smallmatrix} \partial_{11} \tilde F'_{l, m, n}(Z) &
\partial_{21} \tilde F'_{l, m, n}(Z) \\
\partial_{12} \tilde F'_{l, m, n}(Z) &
\partial_{22} \tilde F'_{l, m, n}(Z) \end{smallmatrix}\right)
= - \frac{2l+1}{2l+2}
\left(\begin{smallmatrix} (l-n) \tilde F'_{l+\frac12, m-\frac12, n-\frac12}(Z) &
(l-n) \tilde F'_{l+\frac12, m+\frac12, n-\frac12}(Z) \\
(l+n+1) \tilde F'_{l+\frac12, m-\frac12, n+\frac12}(Z) &
(l+n+1) \tilde F'_{l+\frac12, m+\frac12, n+\frac12}(Z) \end{smallmatrix}\right)  \\
+ \frac1{2l+2}
\left(\begin{smallmatrix} (l+m+1)(l-n) \tilde H'_{l+\frac12, m-\frac12, n-\frac12}(Z) &
-(l-m)(l-n) \tilde H'_{l+\frac12, m+\frac12, n-\frac12}(Z) \\
(l+m+1)(l+n+1) \tilde H'_{l+\frac12, m-\frac12, n+\frac12}(Z) &
-(l-m)(l+n+1) \tilde H'_{l+\frac12, m+\frac12, n+\frac12}(Z) \end{smallmatrix}\right),
\end{multline}
\begin{multline*}
\left(\begin{smallmatrix} \partial_{11} \tilde G_{l, m, n}(Z) &
\partial_{21} \tilde G_{l, m, n}(Z) \\
\partial_{12} \tilde G_{l, m, n}(Z) &
\partial_{22} \tilde G_{l, m, n}(Z) \end{smallmatrix}\right)
= \frac{2l+1}{2l}
\left(\begin{smallmatrix} (l-m-1) \tilde G_{l-\frac12, m+\frac12, n+\frac12}(Z) &
(l-m-1) \tilde G_{l-\frac12, m+\frac12, n-\frac12}(Z) \\
(l+m) \tilde G_{l-\frac12, m-\frac12, n+\frac12}(Z) &
(l+m) \tilde G_{l-\frac12, m-\frac12, n-\frac12}(Z) \end{smallmatrix}\right)  \\
+ \frac1{2l}
\left(\begin{smallmatrix} \tilde H_{l-\frac12, m+\frac12, n+\frac12}(Z) &
\tilde H_{l-\frac12, m+\frac12, n-\frac12}(Z) \\
- \tilde H_{l-\frac12, m-\frac12, n+\frac12}(Z) &
- \tilde H_{l-\frac12, m-\frac12, n-\frac12}(Z) \end{smallmatrix}\right),
\end{multline*}
\begin{multline*}
\left(\begin{smallmatrix} \partial_{11} \tilde G'_{l, m, n}(Z) &
\partial_{21} \tilde G'_{l, m, n}(Z) \\
\partial_{12} \tilde G'_{l, m, n}(Z) &
\partial_{22} \tilde G'_{l, m, n}(Z) \end{smallmatrix}\right)
= -\frac{2l+1}{2l+2}
\left(\begin{smallmatrix} (l-n+1) \tilde G'_{l+\frac12, m-\frac12, n-\frac12}(Z) &
(l-n+1) \tilde G'_{l+\frac12, m+\frac12, n-\frac12}(Z) \\
(l+n+2) \tilde G'_{l+\frac12, m-\frac12, n+\frac12}(Z) &
(l+n+2) \tilde G'_{l+\frac12, m+\frac12, n+\frac12}(Z) \end{smallmatrix}\right)  \\
+ \frac1{2l+2}
\left(\begin{smallmatrix} \tilde H'_{l+\frac12, m-\frac12, n-\frac12}(Z) &
\tilde H'_{l+\frac12, m+\frac12, n-\frac12}(Z) \\
- \tilde H'_{l+\frac12, m-\frac12, n+\frac12}(Z) &
- \tilde H'_{l+\frac12, m+\frac12, n+\frac12}(Z) \end{smallmatrix}\right);
\end{multline*}
{\small
$$
\begin{pmatrix} \partial_{11} F_{l, m, n}(Z) & \partial_{21} F_{l, m, n}(Z) \\
\partial_{12} F_{l, m, n}(Z) & \partial_{22} F_{l, m, n}(Z) \end{pmatrix}
= \begin{pmatrix} (l-m+1) F_{l-\frac12, m+\frac12, n+\frac12}(Z) &
(l-m+1) F_{l-\frac12, m+\frac12, n-\frac12}(Z) \\
(l+m+1) F_{l-\frac12, m-\frac12, n+\frac12}(Z) &
(l+m+1) F_{l-\frac12, m-\frac12, n-\frac12}(Z) \end{pmatrix},
$$
$$
\begin{pmatrix} \partial_{11} F'_{l, m, n}(Z) & \partial_{21} F'_{l, m, n}(Z) \\
\partial_{12} F'_{l, m, n}(Z) & \partial_{22} F'_{l, m, n}(Z) \end{pmatrix}
= -\begin{pmatrix} (l-n+1) F'_{l+\frac12, m-\frac12, n-\frac12}(Z) &
(l-n+1) F'_{l+\frac12, m+\frac12, n-\frac12}(Z) \\
(l+n+1) F'_{l+\frac12, m-\frac12, n+\frac12}(Z) &
(l+n+1) F'_{l+\frac12, m+\frac12, n+\frac12}(Z) \end{pmatrix},
$$
$$
\begin{pmatrix} \partial_{11} G_{l, m, n}(Z) & \partial_{21} G_{l, m, n}(Z) \\
\partial_{12} G_{l, m, n}(Z) & \partial_{22} G_{l, m, n}(Z) \end{pmatrix}
= \begin{pmatrix} (l-n) G_{l-\frac12, m+\frac12, n+\frac12}(Z) &
(l-n) G_{l-\frac12, m-\frac12, n+\frac12}(Z) \\
(l+n) G_{l-\frac12, m+\frac12, n-\frac12}(Z) &
(l+n) G_{l-\frac12, m-\frac12, n-\frac12}(Z) \end{pmatrix},
$$
$$
\begin{pmatrix} \partial_{11} G'_{l, m, n}(Z) & \partial_{21} G'_{l, m, n}(Z) \\
\partial_{12} G'_{l, m, n}(Z) & \partial_{22} G'_{l, m, n}(Z) \end{pmatrix}
= -\begin{pmatrix} (l-m+2) G'_{l+\frac12, m-\frac12, n-\frac12}(Z) &
(l-m+2) G'_{l+\frac12, m-\frac12, n+\frac12}(Z) \\
(l+m+2) G'_{l+\frac12, m+\frac12, n-\frac12}(Z) &
(l+m+2) G'_{l+\frac12, m+\frac12, n+\frac12}(Z) \end{pmatrix}.
$$
}
It follows from Lemmas \ref{Lie-alg-action} and \ref{rho'_2-action-lem}
that, if $B \in \HC$,
\begin{align*}
\tau_a^+ \circ \rho'_2\bigl(\begin{smallmatrix} 0 & B \\
0 & 0 \end{smallmatrix}\bigr) \tilde F_{l,m,n}(Z) &=
\pi_{dl}\bigl(\begin{smallmatrix} 0 & B \\ 0 & 0 \end{smallmatrix}\bigr)
\circ \tau_a^+ \tilde F_{l,m,n}(Z),  \\
\tau_a^- \circ \rho'_2\bigl(\begin{smallmatrix} 0 & B \\
0 & 0 \end{smallmatrix}\bigr) \tilde F'_{l,m,n}(Z) &=
\pi_{dl}\bigl(\begin{smallmatrix} 0 & B \\ 0 & 0 \end{smallmatrix}\bigr)
\circ \tau_a^- \tilde F'_{l,m,n}(Z),  \\
\tau_s^+ \circ \rho'_2\bigl(\begin{smallmatrix} 0 & B \\
0 & 0 \end{smallmatrix}\bigr) \tilde G_{l,m,n}(Z) &=
\pi_{dr}\bigl(\begin{smallmatrix} 0 & B \\ 0 & 0 \end{smallmatrix}\bigr)
\circ \tau_s^+ \tilde G_{l,m,n}(Z), \\
\tau_s^- \circ \rho'_2\bigl(\begin{smallmatrix} 0 & B \\
0 & 0 \end{smallmatrix}\bigr) \tilde G'_{l,m,n}(Z) &=
\pi_{dr}\bigl(\begin{smallmatrix} 0 & B \\ 0 & 0 \end{smallmatrix}\bigr)
\circ \tau_s^- \tilde G'_{l,m,n}(Z).
\end{align*}

Let $\bigl(\begin{smallmatrix} 0 & 1 \\ 1 & 0 \end{smallmatrix}\bigr) \in
GL(2,\HC)$, then
\begin{equation}  \label{inversion}
\rho'_2 \bigl(\begin{smallmatrix} 0 & 1 \\ 1 & 0 \end{smallmatrix}\bigr)
\tilde F_{l,m,n}(Z) = \tilde F'_{l,m,n}(Z), \qquad
\rho'_2 \bigl(\begin{smallmatrix} 0 & 1 \\ 1 & 0 \end{smallmatrix}\bigr)
\tilde G_{l,m,n}(Z) = \tilde G'_{l,m,n}(Z);
\end{equation}
$$
\pi_{dl} \bigl(\begin{smallmatrix} 0 & 1 \\ 1 & 0 \end{smallmatrix}\bigr)
F_{l,m,n}(Z) = F'_{l,m,n}(Z), \qquad
\pi_{dr} \bigl(\begin{smallmatrix} 0 & 1 \\ 1 & 0 \end{smallmatrix}\bigr)
G_{l,m,n}(Z) = G'_{l,m,n}(Z).
$$
This implies that the maps $\tau_a^{\pm}$, $\tau_s^{\pm}$ commute with actions of
$\bigl(\begin{smallmatrix} 0 & 0 \\ C & 0 \end{smallmatrix}\bigr)$,
$C \in \HC$.
Since matrices of the form
$\bigl(\begin{smallmatrix} 0 & B \\ 0 & 0 \end{smallmatrix}\bigr)$ and
$\bigl(\begin{smallmatrix} 0 & 0 \\ C & 0 \end{smallmatrix}\bigr)$,
$B, C \in \HC$, generate $\mathfrak{sl}(2,\HC)$, the maps
$\tau_a^{\pm}$, $\tau_s^{\pm}$ are $\mathfrak{sl}(2,\HC)$-equivariant.
\end{proof}

\begin{cor}  \label{FG-irred}
The following representations
\begin{align*}
(\rho'_2,{\cal W}'/ \ker \tau_a^+)
= (\rho'_2,{\cal M}/ (\ker \tau_a^+ \cap {\cal M}))
&\simeq (\pi_{dl},{\cal F}^+), \\
(\rho'_2,{\cal W}'/ \ker \tau_a^-)
= (\rho'_2,{\cal M}/ (\ker \tau_a^- \cap {\cal M}))
&\simeq (\pi_{dl},{\cal F}^-),  \\
(\rho'_2,{\cal W}'/ \ker \tau_s^+)
= (\rho'_2,{\cal M}/ (\ker \tau_s^+ \cap {\cal M}))
&\simeq (\pi_{dr},{\cal G}^+), \\
(\rho'_2,{\cal W}'/ \ker \tau_s^-)
= (\rho'_2,{\cal M}/ (\ker \tau_s^- \cap {\cal M}))
&\simeq (\pi_{dr},{\cal G}^-)
\end{align*}
of $\mathfrak{sl}(2,\HC)$ are irreducible.
\end{cor}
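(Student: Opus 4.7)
The plan splits into two parts: first verify the equalities in the middle of each isomorphism (i.e.\ that ${\cal W}'/\ker\tau = {\cal M}/(\ker\tau\cap{\cal M})$), and then prove the four representations ${\cal F}^\pm$, ${\cal G}^\pm$ are irreducible under $\mathfrak{sl}(2,\HC)$.

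For the first part I would use the explicit formulas (\ref{tau(F)})--(\ref{tau(G')}) derived inside the proof of Theorem \ref{tau-thm}. These show, for instance, that $\tau_a^+(\tilde F_{l,m,n}) = (2l+1) F_{l-\frac12,m+\frac12,n+\frac12}$; since $\{F_{l,m,n}\}$ spans ${\cal F}^+$ (by definition (\ref{FF'})) and each $\tilde F_{l,m,n} \in {\cal M}$ by Proposition \ref{A-K-types}, the restriction $\tau_a^+|_{\cal M}$ is already surjective onto ${\cal F}^+$. Combined with Theorem \ref{tau-thm}, this yields ${\cal W}' = {\cal M} + \ker\tau_a^+$, hence the canonical map ${\cal M}/(\ker\tau_a^+\cap {\cal M}) \to {\cal W}'/\ker\tau_a^+$ is an isomorphism. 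The same argument applies verbatim to $\tau_a^-$, $\tau_s^+$, $\tau_s^-$ using $\tilde F'_{l,m,n}$, $\tilde G_{l,m,n}$, $\tilde G'_{l,m,n}$ respectively.

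For the second part (irreducibility of $(\pi_{dl},{\cal F}^+)$, and analogously the other three), I would work with the $K=U(2)\times U(2)$-type decomposition. From Lemma \ref{doubly-reg-fns-basis} the basis $\{F_{l,m,n}\}$ indexed by $l\in\frac12\BB Z_{\ge 0}$, $-l-1\le m\le l+1$, $-l\le n\le l$ gives the $K$-isotypic decomposition, with each isotypic component occurring with multiplicity one. By Lemma \ref{Lie-alg-action}, the actions of $\pi_{dl}\bigl(\begin{smallmatrix}0 & B\\0 & 0\end{smallmatrix}\bigr)$ and $\pi_{dl}\bigl(\begin{smallmatrix}0 & 0\\C & 0\end{smallmatrix}\bigr)$ are first-order differential operators and thus shift $l$ by $\pm\frac12$. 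I would use the explicit formulas at the end of the proof of Theorem \ref{tau-thm} (the four displayed matrices giving $\partial_{ij}F_{l,m,n}$, $\partial_{ij}F'_{l,m,n}$, etc.) to see that $\bigl(\begin{smallmatrix}0 & B\\0 & 0\end{smallmatrix}\bigr)$ sends $F_{l,m,n}$ to a nonzero linear combination of $F_{l-\frac12,\cdot,\cdot}$'s (with explicit nonvanishing coefficients $(l\pm m+1)$); using the inversion $\bigl(\begin{smallmatrix}0 & 1\\1 & 0\end{smallmatrix}\bigr)$, which by (\ref{inversion}) swaps $F_{l,m,n}\leftrightarrow F'_{l,m,n}$, conjugating $B$-blocks into $C$-blocks gives the dual statement that $\bigl(\begin{smallmatrix}0 & 0\\C & 0\end{smallmatrix}\bigr)$ shifts $l$ by $+\frac12$ nondegenerately. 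An arbitrary nonzero invariant subspace is a sum of isotypic components, contains some $F_{l_0,m_0,n_0}$, and by successively applying raising/lowering operators and varying $B$, $C$ within the appropriate $SU(2)\times SU(2)$-orbits one reaches every $F_{l,m,n}$; hence the subspace is all of ${\cal F}^+$. The other three cases are identical after the evident substitutions.

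The main obstacle is bookkeeping: one must verify that the shift coefficients appearing in the formulas of Theorem \ref{tau-thm} never vanish within the allowed range $-l-1\le m\le l+1$, $-l\le n\le l$, so that every $K$-type can indeed be reached from any other. Once that nonvanishing is checked the rest is formal, and the remaining three cases follow by symmetry (via the inversion (\ref{inversion}) and the duality between doubly left and doubly right regular functions).
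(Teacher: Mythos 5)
Your proposal is correct and follows essentially the same route as the paper: a multiplicity-one $K$-type ladder argument in which the $\bigl(\begin{smallmatrix} 0 & B \\ 0 & 0 \end{smallmatrix}\bigr)$-action lowers $l$ by $\tfrac12$ and the $\bigl(\begin{smallmatrix} 0 & 0 \\ C & 0 \end{smallmatrix}\bigr)$-action (computed by conjugating with the inversion) raises it, with the shift coefficients never vanishing simultaneously on the allowed index range, so any nonzero vector generates every $K$-type. The only cosmetic difference is that you run the argument in the model $(\pi_{dl},{\cal F}^+)$ using the formulas for $\partial_{ij}F_{l,m,n}$, whereas the paper runs it in the isomorphic quotient $(\rho'_2,{\cal W}'/\ker\tau_a^+)$ using equations (\ref{BF}), (\ref{BF'}) and (\ref{inversion}).
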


\begin{proof}
By Lemma \ref{tau-proj},
$$
{\cal W}'/ \ker \tau_a^+ =
\bigoplus_{l=\frac12,1,\frac32,2,\dots} V_{l-\frac12} \boxtimes V_{l+\frac12}
$$
as a representation of $SU(2) \times SU(2)$, where each direct summand
$V_{l-\frac12} \boxtimes V_{l+\frac12}$ is spanned by $\tilde F_{l,m,n}$'s
with $-l-1 \le m \le l$, $-l \le n \le l-1$.
And by Lemma \ref{rho'_2-action-lem} and equations (\ref{BF}), (\ref{BF'}),
(\ref{inversion}) we have:
$$
\rho'_2\bigl(\begin{smallmatrix} 0 & B \\
0 & 0 \end{smallmatrix}\bigr) \tilde F_{l,m,n}(Z)
= - \frac{2l+1}{2l} \tr \left[ B
\left( \begin{smallmatrix} (l-m) \tilde F_{l-\frac12, m+\frac12, n+\frac12}(Z) &
(l-m) \tilde F_{l-\frac12, m+\frac12, n-\frac12}(Z) \\
(l+m+1) \tilde F_{l-\frac12, m-\frac12, n+\frac12}(Z) &
(l+m+1) \tilde F_{l-\frac12, m-\frac12, n-\frac12}(Z) \end{smallmatrix} \right) \right],
$$
$$
\rho'_2\bigl(\begin{smallmatrix} 0 & 0 \\
C & 0 \end{smallmatrix}\bigr) \tilde F_{l,m,n}(Z)
= \frac{2l+1}{2l+2} \tr \left[ C
\left( \begin{smallmatrix} (l-n) \tilde F_{l+\frac12, m-\frac12, n-\frac12}(Z) &
(l-n) \tilde F_{l+\frac12, m+\frac12, n-\frac12}(Z) \\
(l+n+1) \tilde F_{l+\frac12, m-\frac12, n+\frac12}(Z) &
(l+n+1) \tilde F_{l+\frac12, m+\frac12, n+\frac12}(Z) \end{smallmatrix} \right) \right]
$$
in the quotient space ${\cal W}'/ \ker \tau_a^+$.
It follows that any non-zero vector in ${\cal W}'/ \ker \tau_a^+$
generates the $V_0 \boxtimes V_1$ component and hence the whole space,
thus proving irreducibility of
$(\rho'_2,{\cal W}'/ \ker \tau_a^+) \simeq (\pi_{dl},{\cal F}^+)$.
The other cases are similar.
\end{proof}

\subsection{Quaternionic Analogue of Cauchy's Integral Formula for the
Second Order Pole}

Note that the maps $\tau_a^{\pm}$ and $\tau_s^{\pm}$ on ${\cal W}'$
are given by integrating over a four-dimensional cycle $U(2)_R$. For example,
$$
(\tau_a^+ F)(W) = -\sigma \circ \partial_W \left( \frac{i}{2\pi^3}
\int_{Z \in U(2)_R} F(Z) \cdot Z^+ \cdot \left[ \frac{(Z-W)^{-1}}{N(Z-W)} \right]^+
\frac{dV}{N(Z)} \right), \qquad W \in \BB D_R^+.
$$
When restricted to ${\cal M} \subset {\cal W}'$, these maps can be rewritten
so that the integral is over the sphere $S^3_R$ of radius $R$.

\begin{thm}
The restrictions of $\tau_a^{\pm}$ and $\tau_s^{\pm}$ to ${\cal M}$ can be
rewritten as
\begin{align*}
(\tau_a^+ F)(W) &= -\sigma \circ \partial_W \left[\frac1{2\pi^2}
\int_{Z \in S^3_R} \frac{(Z-W)^{-1}}{N(Z-W)} \cdot Dz \cdot F^+(Z) \right]^+,
\qquad W \in \BB D_R^+,  \\
(\tau_a^- F)(W) &= -\sigma \circ \partial_W
\left[\frac{-1}{2\pi^2} \int_{Z \in S^3_R} \frac{(Z-W)^{-1}}{N(Z-W)} \cdot Dz
\cdot F^+(Z) \right]^+, \qquad W \in \BB D_R^-,  \\
(\tau_s^+ F)(W) &= \sigma' \left( \left[\frac1{2\pi^2}
\int_{Z \in S^3_R} F^+(Z) \cdot Dz \cdot \frac{(Z-W)^{-1}}{N(Z-W)} \right]^+
\overleftarrow{\partial}_W \right), \qquad W \in \BB D_R^+,  \\
(\tau_s^- F)(W) &= \sigma' \left( \left[\frac1{2\pi^2}
\int_{Z \in S^3_R} F^+(Z) \cdot Dz \cdot \frac{(Z-W)^{-1}}{N(Z-W)} \right]^+
\overleftarrow{\partial}_W \right), \qquad W \in \BB D_R^-,
\end{align*}
where $F \in {\cal M}$.
\end{thm}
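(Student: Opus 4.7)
The plan is to verify the identity on the basis of $\mathcal{M}$ provided by Proposition \ref{A-K-types}, namely $N(Z)^{-1}\cdot Z$ together with the elements $\tilde F_{l,m,n}$, $\tilde F'_{l,m,n}$, $\tilde G_{l,m,n}$, $\tilde G'_{l,m,n}$, $\tilde H_{l,m,n}$, $\tilde H'_{l,m,n}$, $\tilde I_{l,m,n}$, $\tilde I'_{l,m,n}$ (each possibly carrying the non-harmonic tail $\tfrac{N(Z)}{d+1}(\partial F_d\partial)^+$). I describe the argument for $\tau_a^+$; the three other cases are completely parallel after exchanging rows/columns and $+/-$. First I would use Lemma 6 of \cite{FL1} to rewrite the proposed RHS in terms of the Euclidean volume element: since $Dz|_{S^3_R}=Z\,dS/R$, the proposed integrand becomes $\frac{(Z-W)^{-1}}{N(Z-W)}\cdot Z\cdot F^+(Z)\,dS/R$, putting it into exactly the same algebraic form as the original integrand (the original has $\frac{dV}{N(Z)}$ instead of $\frac{dS}{R}$), so only the cycle of integration differs.

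Next I would reduce to harmonic basis elements. The original formula annihilates every non-harmonic tail by Lemma \ref{p(NF)}, and the same holds for the proposed formula: on $S^3_R$ the factor $N(Z)^k$ is constant, so expanding the Cauchy--Fueter kernel via Proposition 26 of \cite{FL1} and using the orthogonality (1) of \S6.2, Ch.~III of \cite{V} (equivalently equation (17) of \cite{FL3}) for the remaining matrix coefficients $t^l_{n\underline{m}}$ kills every term in which the total power of $N(Z)$ on $S^3_R$ is not that required by orthogonality. Having reduced to harmonic generators, I would split into two sub-cases. For the basis elements $\tilde F_{l,m,n}$ and $\tilde F'_{l,m,n}$, Lemma \ref{FGH-reg} says $(\tilde F_{l,m,n})^+$ is left regular on all of $\HC$ while $(\tilde F'_{l,m,n})^+$ is left regular on $\HC^\times$ and vanishes at infinity; the classical Cauchy--Fueter formula (\ref{lr-intro}) then evaluates the proposed integral to $(\tilde F_{l,m,n})^+(W)$ for $W\in\BB D_R^+$ (and $0$ for $W\in\BB D_R^-$), and to $(\tilde F'_{l,m,n})^+(W)$ with the opposite sign pattern, so that after $-\sigma\circ\partial_W$ followed by the outer $+$ one recovers exactly the right-hand sides of equations (\ref{tau(F)})--(\ref{tau(F')}) via the explicit derivative formulas (\ref{BF})--(\ref{BF'}).

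Finally I would show that every other generator is annihilated by the proposed RHS, matching Proposition \ref{p(A)}. For $\tilde G_{l,m,n}$, $\tilde G'_{l,m,n}$, Lemma \ref{FGH-reg} says $(\tilde G_{l,m,n})^+$ is only right regular, so the proposed integral is not governed by Cauchy--Fueter; however, expanding $k(Z-W)$ via Proposition 26 of \cite{FL1} and multiplying out against $Z\cdot(\tilde G_{l,m,n})^+(Z)$ produces only matrix coefficients $t^{l\pm\frac12}_{\cdot\,\underline{\cdot}}(Z)$ whose $U(2)$- or equivalently $SU(2)$-orthogonality against the factors in the expansion forces the integral to vanish. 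For $\tilde H_{l,m,n}$, $\tilde H'_{l,m,n}$ (both left and right regular), Cauchy--Fueter does give the value $(\tilde H_{l,m,n})^+(W)$, but the subsequent application of $\partial_W$ produces $\overrightarrow{\partial}\tilde H_{l,m,n}=0$ by Lemma \ref{del-A}. The $\tilde I, \tilde I'$ components and $N(Z)^{-1}\cdot Z$ are handled by the same mechanism using Lemma \ref{del-A}, which gives $\overrightarrow{\partial}\tilde I_{l,m,n}=(2l+1)Z^{-1}\tilde I_{l,m,n}$ (a pure $N(Z)^{-1}\cdot$(matrix coefficient) which is annihilated by the $\sigma$ projection onto the symmetric part, since the matrices $\tilde I_{l,m,n}$ lie in the $V_{l-\frac12}\boxtimes V_{l-\frac12}$ summand orthogonal to the image of $\sigma\circ\partial$). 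The main obstacle is the careful bookkeeping in this last case: one must check that the $\sigma\circ\partial_W$ postprocessing annihilates precisely those components of $\mathcal{M}$ on which the new integral does not automatically vanish by Cauchy--Fueter, so that the combined operator has exactly the same kernel as the original $\tau_a^+$; the sign and normalization match ($i/(2\pi^3)$ over $U(2)_R$ versus $1/(2\pi^2)$ over $S^3_R$) falls out of the standard relation between the Haar measure on $U(2)$ and the normalized volume on $SU(2)\cong S^3$ via the $U(1)$-fiber integration picking out the $e^{2i\theta}$ Fourier mode contributed by the factor $1/N(Z)$.
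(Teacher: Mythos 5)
Your overall strategy --- checking the four formulas on the $K$-type basis of ${\cal M}$ from Proposition \ref{A-K-types}, converting $Dz|_{S^3_R}$ to $Z\,dS/R$, and then combining the classical Cauchy--Fueter formula with sphere orthogonality and Lemma \ref{del-A} --- is essentially the paper's route, and your handling of the $\tilde F,\tilde F'$ cases (Cauchy--Fueter), the $\tilde G,\tilde G'$ cases (orthogonality), and the $\tilde H,\tilde H'$ cases ($\overrightarrow{\partial}\tilde H_{l,m,n}=0$) is sound. However, your treatment of the non-harmonic tails is wrong in a way that matters, since that is precisely the step where the sphere formula differs from the $U(2)_R$ formula and the step the paper's proof is actually devoted to. You claim the proposed sphere integral annihilates the tail $\frac{N(Z)}{d+1}(\partial F_d\partial)^+$ ``by orthogonality'' because $N(Z)^k$ is constant on $S^3_R$. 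That is exactly backwards: because $N(Z)$ restricts to the nonzero constant $R^2$ on the sphere, the orthogonality relations (17) of \cite{FL3} are blind to it and do \emph{not} kill the tail, whereas the $U(2)_R$ relations (19) invoked in Lemma \ref{p(NF)} are sensitive to the power of $N(Z)$ and do. The correct argument is that $\partial F_d\partial$ is both left and right regular (since $F_d$ is harmonic), so the sphere integral of the tail evaluates via Cauchy--Fueter to a multiple of $(\partial F_d(W)\partial)^+$ for $W$ in the appropriate domain (and $0$ otherwise); the contribution only dies at the next step, because $\partial_W(\partial F_d(W)\partial)^+=\partial\partial^+F_d^+\partial^+=\tfrac14\square F_d^+\cdot\partial^+=0$.

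A second error: $\sigma$ is the isomorphism $\HC\simeq\BB S\otimes\BB S'\to\BB S\otimes\BB S$, $s\otimes s'\mapsto s\otimes s'^{\dagger}$; it is a bijection, not a projection onto $\BB S\odot\BB S$, so it cannot ``annihilate'' the contribution of the $\tilde I_{l,m,n}$, $\tilde I'_{l,m,n}$ or ${\cal M}(-1)$ components as you assert. The vanishing on those components must come either from the integral itself (the orthogonality computation of Proposition \ref{p(A)} redone with relations (17) in place of (19), as the paper indicates) or from the subsequent application of $\partial_W$, and this bookkeeping --- which you yourself flag as the main obstacle --- is left unverified in your argument.
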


\begin{rem}
Note that (apart from quaternionic conjugation),
the maps $\tau_a^{\pm}$ and $\tau_s^{\pm}$ restricted to ${\cal M}$
are essentially derivatives of the classical Cauchy-Fueter formulas for
left and right regular functions.
Thus $\tau_a^{\pm}$ and $\tau_s^{\pm}$ can be treated as another analogue of
Cauchy's integral formula for the second order pole (\ref{2pole-intro}).
\end{rem}

\begin{proof}
First, we determine the effect of these maps on the non-harmonic parts
of functions of the form (\ref{A(Z)}).
Suppose that $F_d: \HC^{\times} \to \HC$ be homogeneous of degree $d$ and
harmonic, then
$$
\nabla^+ (\partial F_d(Z) \partial) =
(\partial F_d(Z) \partial) \overleftarrow{\nabla}^+ =0,
$$
i.e. the function $\partial F_d(Z) \partial$ is both left and right regular.
Thus we can apply the Cauchy-Fueter formulas.
In the case of $\tau_a^+$, the integral becomes
$$
\left[ \frac1{2\pi^2} \int_{Z \in S^3_R} \frac{(Z-W)^{-1}}{N(Z-W)} \cdot Dz
\cdot \bigl( \partial F_d(Z) \partial \bigr) \right]^+
= \begin{cases}
(\partial F_d(W) \partial)^+ & \text{if $d \ge 0$};  \\
0 & \text{if $d<0$}.
\end{cases}
$$
Applying $\partial_W$ results in zero.
The cases of $\tau_a^-$ and $\tau_s^{\pm}$ are similar.
Using Lemma 6 from \cite{FL1}, we replace $Dz |_{S^3_R}$ with $R^{-1} Z \,dS$,
where $dS$ is the usual Euclidean volume element on the $3$-sphere $S^3_R$.
Then the proof proceeds exactly as those of Proposition \ref{p(A)}
and Theorem \ref{tau-thm}, except using orthogonality relations (17)
from \cite{FL3} instead of (19).
\end{proof}

\subsection{Invariant Bilinear Pairing on $(\rho'_2, {\cal M})$}

We define a bilinear pairing on ${\cal M}$ by
\begin{equation}  \label{A-pairing}
\langle F_1, F_2 \rangle_{\cal M} = \frac1{2\pi^2} \int_{Z \in S^3_R}
\tr \Bigl( \degt^{-1} \bigl( (F_1 \overleftarrow{\partial})(Z) \cdot Z \bigr)
\cdot
\bigl( Z \cdot (\overrightarrow{\partial} F_2)(Z) \bigr)^+ \Bigr) \frac{dS}R.
\end{equation}
Note that if $F_1$ happens to be $N(Z)^{-1} \cdot Z$, then by Lemma \ref{del-A}
$$
(F_1 \overleftarrow{\partial})(Z) \cdot Z = N(Z)^{-1} \cdot Z,
$$
and $\degt^{-1} \bigl( (F_1 \overleftarrow{\partial})(Z) \cdot Z \bigr)$
is undefined. Thus we declare the pairing (\ref{A-pairing}) to be
zero on ${\cal M}(-1)$.
We will see shortly that the pairing does not depend on the choice of $R>0$.

\begin{thm}
The bilinear pairing $(\ref{A-pairing})$ on ${\cal M}$ is
$\mathfrak{sl}(2,\HC)$-invariant and does not depend on the choice of $R>0$.
We have the following orthogonality relations:
$$
\langle \tilde G_{l,m,n}(Z), \tilde F'_{l',n',m'}(Z) \rangle_{\cal M}
= - \langle \tilde G'_{l,m,n}(Z), \tilde F_{l',n',m'}(Z) \rangle_{\cal M}
= (2l+1)^2 \delta_{ll'}\delta_{mm'} \delta_{nn'};
$$
all other pairing between
$\tilde F_{l,m,n}(Z)$, $\tilde F'_{l,m,n}(Z)$,
$\tilde G_{l,m,n}(Z)$, $\tilde G'_{l,m,n}(Z)$,
$\tilde H_{l,m,n}(Z)$, $\tilde H'_{l,m,n}(Z)$,
$\tilde I_{l,m,n}(Z)$, $\tilde I'_{l,m,n}(Z)$
are zero; in particular,
$$
\langle \tilde F_{l,m,n}(Z), \tilde G'_{l',n',m'}(Z) \rangle_{\cal M}
= \langle \tilde F'_{l,m,n}(Z), \tilde G_{l',n',m'}(Z) \rangle_{\cal M} =0.
$$
\end{thm}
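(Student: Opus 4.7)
My plan is to reduce the pairing (\ref{A-pairing}) to the classical $\g{sl}(2,\HC)$-invariant pairing between left and right regular functions constructed in \cite{FL1}. The key observation is that for every basis element of ${\cal M}$ listed in Proposition \ref{A-K-types}, the expressions $F \overleftarrow{\partial} \cdot Z$ and $Z \cdot \overrightarrow{\partial} F$ can be read off from Lemma \ref{del-A}: e.g.\ $\tilde G_{l,m,n} \overleftarrow{\partial} \cdot Z = (2l+1) G_{l,m,n}(Z)$ is right regular, and $Z \cdot \overrightarrow{\partial} \tilde F'_{l,m,n} = -(2l+1) \tilde F'_{l,m,n}(Z)$ gives, after taking $^+$, a left regular function. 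Once this identification is in place, the integrand in (\ref{A-pairing}) matches (up to universal constants) the Cauchy-Fueter integrand whose invariance and contour-independence were established in \cite{FL1}. The role of $\degt^{-1}$ is simply to absorb the degree factor coming from the two $\partial$'s, which is why the pairing is declared zero on ${\cal M}(-1)$, the one component for which $F_1 \overleftarrow{\partial} \cdot Z$ is homogeneous of forbidden degree.

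The orthogonality relations then split into three strata. First, by Lemma \ref{del-A} we have $\tilde F \overleftarrow{\partial} = \tilde F' \overleftarrow{\partial} = \tilde H \overleftarrow{\partial} = \tilde H' \overleftarrow{\partial} = 0$ and symmetrically $\overrightarrow{\partial} \tilde G = \overrightarrow{\partial} \tilde G' = \overrightarrow{\partial} \tilde H = \overrightarrow{\partial} \tilde H' = 0$, so any pairing with first argument in $\{\tilde F, \tilde F', \tilde H, \tilde H'\}$ or second argument in $\{\tilde G, \tilde G', \tilde H, \tilde H'\}$ vanishes identically. Second, for the surviving pairs the integrand is homogeneous of degree $d_1 + d_2$ on $S^3_R$, so once $R$-independence is proven the integral must vanish unless $d_1 + d_2 = -2$; this eliminates all cross-degree pairings and, in particular, $\langle \tilde G_l, \tilde F_{l'}\rangle$ and $\langle \tilde G'_l, \tilde F'_{l'} \rangle$.

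Third, the remaining genuinely nonzero cases $\langle \tilde G_{l,m,n}, \tilde F'_{l,n',m'} \rangle_{\cal M}$ and $\langle \tilde G'_{l,m,n}, \tilde F_{l,n',m'} \rangle_{\cal M}$ are computed by substituting the explicit formulas from Lemma \ref{del-A} and applying the orthogonality relations for the matrix coefficients $t^l_{n\,\underline{m}}(Z)$ on $S^3_R$, namely equation (17) of \cite{FL3} (equivalently, Vilenkin \cite{V}, Ch.~III, \S6.2). The opposite signs in the two cases come directly from the opposite signs in $\overrightarrow{\partial} \tilde F_{l,m,n} = (2l+1) Z^{-1} \cdot \tilde F_{l,m,n}$ versus $\overrightarrow{\partial} \tilde F'_{l,m,n} = -(2l+1) Z^{-1} \cdot \tilde F'_{l,m,n}$. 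The pairings involving $\tilde I, \tilde I'$ (allowed a priori by degree count) must also vanish; these I would kill by the $\g{sl}(2,\HC)$-invariance together with the fact that $\tilde I$ and $\tilde I'$ generate $\g{sl}(2,\HC)$-submodules with different $K$-types ($V_{l\mp 1/2}\boxtimes V_{l\mp 1/2}$) from those generated by $\tilde F', \tilde F$, so the pairing of inequivalent irreducibles is forced to be zero.

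Invariance and $R$-independence are then established in parallel with Proposition \ref{invariant}: once the integrand is identified with a Cauchy-Fueter integrand of a right-regular function against a left-regular function, the classical invariance of that pairing under $SU(2)\times SU(2)$, the dilations, the inversion, and the raising operators $\bigl(\begin{smallmatrix} 0 & B \\ 0 & 0 \end{smallmatrix}\bigr)$ transfers to $\langle \cdot, \cdot\rangle_{\cal M}$ via equivariance of the operations $F \mapsto F\overleftarrow{\partial}\cdot Z$ and $F \mapsto Z\cdot \overrightarrow{\partial}F$ (modulo the scalar matrices, which act trivially on both sides). The main obstacle I anticipate is the equivariance of these derivative-and-multiply maps: although they are natural, they intertwine $\rho'_2$ with actions on regular functions that differ from $\pi_l$, $\pi_r$ by a shift coming from the extra factor of $Z$, so the intertwining has to be checked on the generators $\bigl(\begin{smallmatrix} 0 & 0 \\ C & 0 \end{smallmatrix}\bigr)$ by a direct computation paralleling the proof of Theorem \ref{tau-thm}.
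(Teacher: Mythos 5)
Your plan inverts the logical order of the paper's argument, and the inversion creates a genuine circularity at the $\tilde I_{l,m,n}$, $\tilde I'_{l,m,n}$ components. You propose to first identify the integrand of (\ref{A-pairing}) with a classical Cauchy--Fueter integrand (a closed $3$-form pairing a right regular against a left regular function), deduce contour-independence and $\mathfrak{sl}(2,\HC)$-invariance from that, and then use those two properties to kill the cross-degree pairings and the pairings involving $\tilde I$, $\tilde I'$. But the identification fails precisely on those components: by Lemma \ref{del-A}, $Z \cdot \overrightarrow{\partial}\, \tilde I_{l,m,n} = (2l+1)\,\tilde I_{l,m,n}$, and $\tilde I_{l,m,n}$ is \emph{not} harmonic (it carries the non-harmonic correction $\tfrac{N(Z)}{2l+1}(\partial A \partial)^+$ from (\ref{A(Z)})), so $(Z\cdot\overrightarrow{\partial}\,\tilde I_{l,m,n})^+$ is not left regular -- note that Lemma \ref{FGH-reg} deliberately omits $\tilde I$ and $\tilde I'$. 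Hence neither closedness of the integrand nor invariance can be imported from the classical theory on exactly the components where you need them, and $K$-invariance alone does not force $\langle \tilde I_{l}, \tilde I_{l}\rangle_{\cal M}$ or $\langle \tilde I_{l}, \tilde I'_{l-1}\rangle_{\cal M}$ to vanish, since these sit in matching $K$-types $V_{l-\frac12}\boxtimes V_{l-\frac12}$. Your degree-count argument for the cross-degree pairings has the same problem: it presupposes $R$-independence, which you have not established independently of the orthogonality relations.

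The paper avoids this by running the argument in the opposite direction: \emph{all} pairings of basis elements are computed directly from Lemma \ref{del-A} together with the orthogonality relations (17) of \cite{FL3} (the $N(Z)$ factors in the non-harmonic parts are constants on $S^3_R$, so every entry reduces to integrals of products of $t^l$'s); the explicit answers are $R$-independent, which gives contour-independence; and invariance is then obtained not by building new equivariant maps but by matching the computed orthogonality relations against Proposition \ref{orthogonality-dreg-prop} via equations (\ref{tau(F)})--(\ref{tau(G')}), yielding the identity
$\langle F_1, F_2\rangle_{\cal M} = \langle \tau_a^+(F_2), \tau_s^-(F_1)\rangle_{\cal DR} + \langle \tau_a^-(F_2), \tau_s^+(F_1)\rangle_{\cal DR}$,
whose right-hand side is invariant because the $\tau$'s are already known to be $\mathfrak{sl}(2,\HC)$-equivariant (Theorem \ref{tau-thm}) and, conveniently, annihilate the $\tilde H, \tilde H', \tilde I, \tilde I'$ and ${\cal M}(-1)$ components outright. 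If you want to keep your reduction to the singly regular pairing, you must first do the direct computation of the $\tilde I,\tilde I'$ and cross-degree pairings anyway, at which point the reduction buys you only the invariance step -- and there you would still owe the equivariance check for $F\mapsto \degt^{-1}(F\overleftarrow{\partial}\cdot Z)$ that you flag as the main obstacle, whereas the $\tau$-route gets it for free.
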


\begin{proof}
First, we check the orthogonality relations; they follow from
Lemma \ref{del-A} and orthogonality relations (17) from \cite{FL3}.
The orthogonality relations also imply that the pairing is independent of
the choice of $R>0$.

Using Proposition \ref{orthogonality-dreg-prop} and equations
(\ref{tau(F)})-(\ref{tau(G')}) we can relate the bilinear pairing
(\ref{A-pairing}) to the pairing for doubly regular functions
(\ref{pairing1}) as
$$
\langle F_1, F_2 \rangle_{\cal M} =
\bigl\langle \tau_a^+(F_2), \tau_s^-(F_1) \bigr\rangle_{\cal DR}
+ \bigl\langle \tau_a^-(F_2), \tau_s^+(F_1) \bigr\rangle_{\cal DR}.
$$
Since the right hand side is $\mathfrak{sl}(2,\HC)$-invariant, 
the pairing (\ref{A-pairing}) is $\mathfrak{sl}(2,\HC)$-invariant as well.
\end{proof}



\section{The Quaternionic Chain Complex and Decomposition of
$(\rho_2,{\cal W})$, $(\rho'_2,{\cal W}')$ into Irreducible Components}  \label{decomp-section}

\subsection{Decomposition of $(\rho_2,{\cal W})$}  \label{W-decomp-subsect}

In this subsection we find explicit decomposition of $(\rho_2,{\cal W})$
into irreducible components.
The idea is to use the quaternionic chain complex (\ref{W-sequence}) and
deal separately with
$$
\ker (\tr \circ \partial^+) \subset {\cal W} \qquad \text{and} \qquad
{\cal W}/\ker (\tr \circ \partial^+) \subset \Sh.
$$
We will see that there is a total of thirteen irreducible components
with $\ker (\tr \circ \partial^+)$ having eight components and
${\cal W}/\ker (\tr \circ \partial^+)$ having five.

\begin{lem}
The image of the map $\tr \circ \partial^+ : {\cal W} \to \Sh$ from
(\ref{W-sequence}) is ${\cal I}^+ + {\cal I}^-$.
\end{lem}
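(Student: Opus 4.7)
The plan is: since the map is $\g{gl}(2,\HC)$-equivariant (previous proposition), its image is an invariant subspace of $(\rho,\Sh)$, hence by Theorem \ref{rho-decomposition} a sum of the five standard subspaces $\Sh^\pm,{\cal I}^\pm,{\cal J}$. Inspecting the defining inequalities (or Figure \ref{decomposition-fig1}), one sees that $\Sh/({\cal I}^++{\cal I}^-)$ is one-dimensional, spanned by the class of $N(Z)^{-2}$: this is the unique basis element $N(Z)^k t^l_{n\,\underline m}$ with $k<-(2l+1)$ and $k>-3$, forcing $l=0,k=-2$. The assertion therefore reduces to two sub-claims: (i) $\im\supset{\cal I}^++{\cal I}^-$, and (ii) $N(Z)^{-2}\notin\im$.

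Both sub-claims follow from a single computational identity. Expanding $\tr(\partial^+F)=\partial_{22}F_{11}-\partial_{21}F_{21}-\partial_{12}F_{12}+\partial_{11}F_{22}$ via the product rule and Euler's identity $\sum_{i,j} z_{ij}\partial_{ij}f=\deg f$ yields
$$
\tr\bigl(\partial^+(Z^+\cdot f)\bigr) = (\deg+4)\,f \quad\text{for every } f\in\Sh,
$$
so in particular $\tr(\partial^+(Z^+\cdot N(Z)^k))=2(k+2)\,N(Z)^k$. Taking $k=-1$ puts $N(Z)^{-1}$ in $\im$; since $N(Z)^{-1}\in{\cal I}^+$ but belongs to none of the proper invariant subspaces $\Sh^+$, ${\cal J}$, $\Sh^+\oplus{\cal J}$ of ${\cal I}^+$, the invariant subspace generated by $N(Z)^{-1}$ is forced to be all of ${\cal I}^+$. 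Likewise $k=-3$ gives $N(Z)^{-3}\in\im$, which by the analogous argument generates ${\cal I}^-$. This establishes (i).

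For (ii), I appeal to $SU(2)\times SU(2)$-equivariance (a consequence of the full $\g{gl}(2,\HC)$-equivariance). Any preimage of $N(Z)^{-2}\in\Sh$ --- a vector in the trivial $K$-type $V_0\boxtimes V_0$ at homogeneity $-4$ --- must lie in the $V_0\boxtimes V_0$-isotypic of ${\cal W}$ at homogeneity $-3$. Decomposing ${\cal W}=\HC\otimes\Sh$ with $\HC\simeq V_{1/2}\boxtimes V_{1/2}$, and noting that the $V_{1/2}\boxtimes V_{1/2}$-part of $\Sh_{-3}$ is spanned by $\{N(Z)^{-2}z_{ij}\}$, a short dimension count identifies this isotypic as one-dimensional with generator $Z^+\cdot N(Z)^{-2}$. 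The identity above at $k=-2$ gives $\tr(\partial^+(Z^+\cdot N(Z)^{-2}))=0$, so $N(Z)^{-2}$ has no preimage. The subtlest step --- and the conceptual heart of the lemma --- is this ``accidental'' vanishing $k+2=0$ at $k=-2$, which precisely mirrors the existence of the one-dimensional trivial quotient $\Sh/({\cal I}^++{\cal I}^-)$ in the decomposition of $(\rho,\Sh)$.
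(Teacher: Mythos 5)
Your proof is correct and follows essentially the same route as the paper's: exhibit $N(Z)^{-1}$ and $N(Z)^{-3}$ in the image as generators of ${\cal I}^{+}$ and ${\cal I}^{-}$, then invoke the classification of invariant subspaces of $(\rho,\Sh)$ and a $K$-type/homogeneity argument to reduce the remaining question to the single candidate preimage $N(Z)^{-2}\cdot Z^+$ of $N(Z)^{-2}$, which is annihilated. The only (minor, welcome) difference is that you package the three separate computations the paper performs into the single identity $\tr\bigl(\partial^+(Z^+\cdot f)\bigr)=(\deg+4)f$, which checks out.
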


\begin{proof}
Indeed, $\tr \circ \partial^+(N(Z)^{-1} \cdot Z^+) = 2 N(Z)^{-1}$,
which generates
${\cal I}^+$, and $\tr \circ \partial^+(N(Z)^{-3} \cdot Z^+) = -2 N(Z)^{-3}$,
which generates ${\cal I}^-$.
Thus the image of $\tr \circ \partial^+$ contains ${\cal I}^+ + {\cal I}^-$.
It remains to show that the image is ${\cal I}^+ + {\cal I}^-$.
Otherwise, by Theorem \ref{rho-decomposition}, the image of
$\tr \circ \partial^+$ is all of $\Sh$.
In particular, there exists a homogeneous function $F \in {\cal W}$
of degree $-3$ such that $\tr \circ \partial^+ F = N(Z)^{-2}$.
Such a function $F$ must be a linear combination of
$$
N(Z)^{-2} \cdot H_1(Z), \quad N(Z)^{-3} \cdot H_3(Z), \quad
N(Z)^{-4} \cdot H_5(Z), \dots
$$
with $H_1(Z)$, $H_3(Z)$, $H_5(Z), \dots$ harmonic and homogeneous of degrees
$1$, $3$, $5, \dots$ respectively.
On the other hand, the $\rho$ action of
$\mathfrak{sl}(2,\BB C) \times \mathfrak{sl}(2,\BB C)$
on $N(Z)^{-2}$ is trivial, hence the $\rho_2$ action of
$\mathfrak{sl}(2,\BB C) \times \mathfrak{sl}(2,\BB C)$ on
$H_1(Z)$, $H_3(Z)$, $H_5(Z), \dots$ must be trivial as well.
Since each $H_k(Z)$ lies in
$(V_{\frac{k}2} \otimes V_{\frac12}) \boxtimes (V_{\frac{k}2} \otimes V_{\frac12})$,
it follows that $H_3(Z)=H_5(Z)=\dots=0$ and $H_1(Z)$ is proportional to
$N(Z)^{-2} \cdot Z^+$. But $\tr \circ \partial^+(N(Z)^{-2} \cdot Z^+) = 0$,
which gives us a contradiction.
\end{proof}

Combining this with Theorem \ref{rho-decomposition} we obtain:

\begin{cor}  \label{W/ker-decomp-cor}
The quotient $\bigl( \rho_2, {\cal W}/\ker (\tr \circ \partial^+) \bigr)$
has five irreducible components that are isomorphic to
$$
(\rho, \Sh^+), \quad (\rho, \Sh^-), \quad (\rho, {\cal J}), \quad
\bigl( \rho, {\cal I}^+/(\Sh^+ \oplus {\cal J}) \bigr), \quad
\bigl( \rho, {\cal I}^-/(\Sh^- \oplus {\cal J}) \bigr).
$$
\end{cor}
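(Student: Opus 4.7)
The plan is to deduce this corollary almost immediately from the preceding lemma together with Theorem~\ref{rho-decomposition}. Since Proposition ``$\tr \circ \partial^+$ is $\mathfrak{gl}(2,\HC)$-equivariant'' has already been established earlier in this section, the first isomorphism theorem for $\mathfrak{gl}(2,\HC)$-modules gives
$$
\bigl(\rho_2, {\cal W}/\ker(\tr \circ \partial^+)\bigr) \;\simeq\; \bigl(\rho,\: \mathrm{image}(\tr \circ \partial^+)\bigr),
$$
so the task reduces to analyzing the composition factors of $\mathrm{image}(\tr \circ \partial^+)$ as a subrepresentation of $(\rho,\Sh)$.

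By the preceding lemma, $\mathrm{image}(\tr \circ \partial^+) = {\cal I}^+ + {\cal I}^-$, so I would then read off the irreducible constituents of ${\cal I}^+ + {\cal I}^-$ directly from Theorem~\ref{rho-decomposition} and Figure~\ref{decomposition-fig1}. The theorem lists $\Sh^+, \Sh^-, {\cal I}^+, {\cal I}^-, {\cal J}$ and their sums as the only proper invariant subspaces of $(\rho,\Sh)$, with irreducible components $(\rho, \Sh^+)$, $(\rho, \Sh^-)$, $(\rho,{\cal J})$, $(\rho, {\cal I}^+/(\Sh^+ \oplus {\cal J}))$, $(\rho, {\cal I}^-/(\Sh^- \oplus {\cal J}))$, and $(\rho, \Sh/({\cal I}^++{\cal I}^-))$.

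From the inclusions $\Sh^+ \subset {\cal I}^+$, $\Sh^- \subset {\cal I}^-$ and ${\cal J} \subset {\cal I}^+ \cap {\cal I}^-$ (visible from the index conditions defining these subspaces), a Jordan--H\"older argument for ${\cal I}^+ + {\cal I}^-$ yields exactly five composition factors, namely $(\rho,\Sh^+)$, $(\rho,\Sh^-)$, $(\rho,{\cal J})$, $(\rho, {\cal I}^+/(\Sh^+ \oplus {\cal J}))$, and $(\rho, {\cal I}^-/(\Sh^- \oplus {\cal J}))$, with only $(\rho, \Sh/({\cal I}^++{\cal I}^-))$ missing from the full list of irreducibles of $(\rho,\Sh)$. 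No computation is required beyond collating the data already recorded, so there is no real obstacle; the only point requiring a line of care is the observation ${\cal I}^+ \cap {\cal I}^- = {\cal J}$, which follows from the defining index conditions $k \ge -(2l+1)$ and $k \le -3$ in Theorem~\ref{rho-decomposition}.
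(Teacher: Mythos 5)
Your proposal is correct and follows exactly the route the paper takes: the paper states the corollary as an immediate consequence of the preceding lemma (identifying the image of $\tr \circ \partial^+$ with ${\cal I}^+ + {\cal I}^-$) combined with Theorem~\ref{rho-decomposition}, which is precisely your argument. Your added remarks (the first isomorphism theorem for equivariant maps and the identity ${\cal I}^+ \cap {\cal I}^- = {\cal J}$ from the index conditions) just make explicit what the paper leaves to the reader.
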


Our next task is to decompose $\ker (\tr \circ \partial^+)$.

\begin{lem}
We have:
\begin{multline*}
\tr \circ \partial^+ \left[ N(Z)^k \cdot \begin{pmatrix}
\alpha_{11} t^l_{n+1\,\underline{m+1}}(Z) & \alpha_{12} t^l_{n\,\underline{m+1}}(Z) \\
\alpha_{21} t^l_{n+1\,\underline{m}}(Z) & \alpha_{22} t^l_{n\,\underline{m}}(Z)
\end{pmatrix} \right]  \\
= \frac{2l+k+1}{2l+1} \bigl( (l+m+1)(\alpha_{11}-\alpha_{12})
+(l-m)(\alpha_{22}-\alpha_{21}) \bigr)
\cdot  N(Z)^k \cdot t^{l-\frac12}_{n+\frac12\,\underline{m+\frac12}}(Z)  \\
+ \frac{k}{2l+1} \bigl( (l-n)(\alpha_{11}+\alpha_{21})
+(l+n+1)(\alpha_{12}+\alpha_{22}) \bigr)
\cdot N(Z)^{k-1} \cdot t^{l+\frac12}_{n+\frac12\,\underline{m+\frac12}}(Z);
\end{multline*}
\begin{multline*}
\tr \circ \partial^+ \left[ N(Z)^{k-1} \cdot \begin{pmatrix}
\beta_{11} t^l_{n\,\underline{m}}(Z^{-1}) & \beta_{12} t^l_{n\,\underline{m+1}}(Z^{-1}) \\
\beta_{21} t^l_{n+1\,\underline{m}}(Z^{-1}) & \beta_{22} t^l_{n+1\,\underline{m+1}}(Z^{-1})
\end{pmatrix} \right]  \\
= -\frac{2l-k+1}{2l+1} \bigl( (l+n+1)(\beta_{11}-\beta_{12})
+(l-n)(\beta_{22}-\beta_{21}) \bigr)
\cdot  N(Z)^{k-1} \cdot t^{l+\frac12}_{n+\frac12\,\underline{m+\frac12}}(Z^{-1})  \\
+ \frac{k}{2l+1} \bigl( (l-m)(\beta_{11}+\beta_{21})
+(l+m+1)(\beta_{12}+\beta_{22}) \bigr)
\cdot N(Z)^{k-2} \cdot t^{l-\frac12}_{n+\frac12\,\underline{m+\frac12}}(Z^{-1}).
\end{multline*}
\end{lem}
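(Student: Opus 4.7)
The plan is to apply the Leibniz rule to split off the factor $N(Z)^k$, then evaluate the two resulting pieces using the derivative formulas of Lemma~22 in \cite{FL1} and the multiplication identity (\ref{Zt-identity}). Concretely, since $\partial^+$ is a matrix of first-order differential operators, an entrywise Leibniz computation combined with the identities $\partial_{11}N=z_{22}$, $\partial_{22}N=z_{11}$, $\partial_{12}N=-z_{21}$, $\partial_{21}N=-z_{12}$ yields
$$
\tr \bigl(\partial^+[N(Z)^k M(Z)]\bigr) = N(Z)^k \cdot \tr(\partial^+ M) + k \, N(Z)^{k-1} \cdot \tr(ZM),
$$
where $M(Z)$ denotes the matrix of $t^l$-coefficients in the statement. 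The two terms of the claim will arise from these two summands after further simplification.

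First I would compute $\tr(\partial^+ M)$ directly. Applying the single-derivative formulas extracted from Lemma~22 of \cite{FL1} (which are the ``first-order halves'' of the $(\partial\otimes\partial)t^l_{n\,\underline{m}}$ formula displayed in Subsection~\ref{matrix-subsection}), each of the four terms $\partial_{22}(\alpha_{11}t^l_{n+1\,\underline{m+1}})$, $\partial_{21}(\alpha_{21}t^l_{n+1\,\underline{m}})$, $\partial_{12}(\alpha_{12}t^l_{n\,\underline{m+1}})$, $\partial_{11}(\alpha_{22}t^l_{n\,\underline{m}})$ produces a scalar multiple of the single function $t^{l-\frac12}_{n+\frac12\,\underline{m+\frac12}}$. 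Collecting coefficients then yields
$$
\tr(\partial^+ M) = \bigl((l+m+1)(\alpha_{11}-\alpha_{12}) + (l-m)(\alpha_{22}-\alpha_{21})\bigr) \cdot t^{l-\frac12}_{n+\frac12\,\underline{m+\frac12}}(Z),
$$
which accounts for the ``$1$'' part of the coefficient $\frac{2l+k+1}{2l+1} = 1 + \frac{k}{2l+1}$ in the first term of the claim.

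Next I would compute $\tr(ZM)$ using (\ref{Zt-identity}). Each product $z_{ij}\,t^l_{\cdot\,\underline{\cdot}}$ splits into a $t^{l+\frac12}$ piece and an $N(Z)\cdot t^{l-\frac12}$ piece (each divided by $2l+1$). The $t^{l+\frac12}$ pieces combine to produce the second term of the claim, with coefficient $\frac{k}{2l+1}\bigl((l-n)(\alpha_{11}+\alpha_{21})+(l+n+1)(\alpha_{12}+\alpha_{22})\bigr)$ multiplying $N(Z)^{k-1} t^{l+\frac12}_{n+\frac12\,\underline{m+\frac12}}$. The $N(Z) t^{l-\frac12}$ pieces, multiplied by the outer $kN(Z)^{k-1}$, contribute an extra $\frac{k}{2l+1}\bigl((l+m+1)(\alpha_{11}-\alpha_{12})+(l-m)(\alpha_{22}-\alpha_{21})\bigr)\cdot N(Z)^k t^{l-\frac12}_{n+\frac12\,\underline{m+\frac12}}$ term, which combines with the $\tr(\partial^+ M)$ contribution above to yield exactly the $\frac{2l+k+1}{2l+1}$ coefficient in the claim.

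The second identity in the lemma, involving $t^l_{n\,\underline{m}}(Z^{-1})$ and $N(Z)^{k-1}$, follows by an entirely parallel calculation using identity (\ref{del-t(Z^{-1})}) in place of Lemma~22 of \cite{FL1}, together with the analogue of (\ref{Zt-identity}) obtained by substituting $Z \mapsto Z^{-1}$ and multiplying through by $N(Z)$. The main obstacle is not conceptual but bookkeeping: one must carefully track the index shifts $n \mapsto n \pm \tfrac12$, $m \mapsto m \pm \tfrac12$ introduced by each single derivative and each factor $z_{ij}$, and verify that the four separate $t^{l-\frac12}$ contributions from the different matrix entries collapse into the stated antisymmetric combinations $\alpha_{11}-\alpha_{12}$ and $\alpha_{22}-\alpha_{21}$ (respectively the symmetric combinations $\alpha_{11}+\alpha_{21}$ and $\alpha_{12}+\alpha_{22}$ for the $t^{l+\frac12}$ part). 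Once the indices are aligned, the coefficients fall out of linear algebra with no further input.
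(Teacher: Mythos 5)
Your proposal is correct and follows essentially the same route as the paper, whose proof is exactly the direct computation you describe: Leibniz rule on $N(Z)^k$, the single-derivative formulas from Lemma~22 of \cite{FL1} for the $\tr(\partial^+M)$ piece, and identities (\ref{Zt-identity}) and (\ref{del-t(Z^{-1})}) for the $\tr(ZM)$ piece and the $Z^{-1}$ case. The coefficient bookkeeping you outline (the $t^{l-\frac12}$ contributions recombining with $\tr(\partial^+M)$ to give $\frac{2l+k+1}{2l+1}$) checks out.
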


\begin{proof}
The result follows by direct computation using Lemma 22 from \cite{FL1}
and identities (\ref{Zt-identity}), (\ref{del-t(Z^{-1})}).
\end{proof}

Let
$$
{\bf F_{l,m,n}}(Z) = \left(\begin{smallmatrix}
-(l-m)(l+n+1) t^l_{n+1\,\underline{m+1}}(Z) & (l-m)(l-n) t^l_{n\,\underline{m+1}}(Z) \\
-(l+m+1)(l+n+1) t^l_{n+1\,\underline{m}}(Z) & (l+m+1)(l-n) t^l_{n\,\underline{m}}(Z)
\end{smallmatrix}\right) ;\quad
\begin{smallmatrix} l=\frac12,1,\frac32,2,\dots \\
-l-1 \le m \le l \\  -l \le n \le l-1 \end{smallmatrix};
$$
$$
{\bf G_{l,m,n}}(Z) = \left(\begin{smallmatrix}
-t^l_{n+1\,\underline{m+1}}(Z) & -t^l_{n\,\underline{m+1}}(Z) \\
t^l_{n+1\,\underline{m}}(Z) & t^l_{n\,\underline{m}}(Z) \end{smallmatrix}\right) ;\quad
\begin{smallmatrix} l=\frac12,1,\frac32,2,\dots \\
-l \le m \le l-1 \\  -l-1 \le n \le l \end{smallmatrix};
$$
\begin{multline*}
{\bf H_{k,l,m,n}}(Z) = \frac{2l+k+2}{2l+1} N(Z)^k \cdot \left(\begin{smallmatrix}
(l-m) t^l_{n+1\,\underline{m+1}}(Z) & (l-m) t^l_{n\,\underline{m+1}}(Z) \\
(l+m+1) t^l_{n+1\,\underline{m}}(Z) & (l+m+1) t^l_{n\,\underline{m}}(Z)
\end{smallmatrix}\right)  \\
- \frac{k}{2l+3} N(Z)^{k-1} \cdot \left(\begin{smallmatrix}
(l+n+2) t^{l+1}_{n+1\,\underline{m+1}}(Z) & -(l-n+1) t^{l+1}_{n\,\underline{m+1}}(Z) \\
-(l+n+2) t^{l+1}_{n+1\,\underline{m}}(Z) & (l-n+1) t^{l+1}_{n\,\underline{m}}(Z)
\end{smallmatrix}\right) ;\quad
\begin{smallmatrix} k=0, \pm1, \pm2, \dots \\
l=0,\frac12,1,\frac32,\dots \\
-l-1 \le m,n \le l \end{smallmatrix}.
\end{multline*}
We also introduce
$$
{\bf F'_{l,m,n}}(Z) = \frac1{N(Z)}
\left(\begin{smallmatrix} -(l+m+1)(l-n) t^l_{n\,\underline{m}}(Z^{-1}) &
(l-m)(l-n) t^l_{n\,\underline{m+1}}(Z^{-1}) \\
-(l+m+1)(l+n+1) t^l_{n+1\,\underline{m}}(Z^{-1}) &
(l-m)(l+n+1) t^l_{n+1\,\underline{m+1}}(Z^{-1}) \end{smallmatrix}\right) ;\quad
\begin{smallmatrix} l=\frac12,1,\frac32,2,\dots \\
-l-1 \le m \le l \\  -l \le n \le l-1 \end{smallmatrix};
$$
$$
{\bf G'_{l,m,n}}(Z) = \frac1{N(Z)} \left(\begin{smallmatrix}
-t^l_{n\,\underline{m}}(Z^{-1}) & -t^l_{n\,\underline{m+1}}(Z^{-1}) \\
t^l_{n+1\,\underline{m}}(Z^{-1}) & t^l_{n+1\,\underline{m+1}}(Z^{-1})
\end{smallmatrix}\right) ;\quad
\begin{smallmatrix} l=\frac12,1,\frac32,2,\dots \\
-l \le m \le l-1 \\  -l-1 \le n \le l \end{smallmatrix};
$$
\begin{multline*}
{\bf H'_{k,l,m,n}}(Z) = \frac{k+1}{2l+1} N(Z)^{k-1} \cdot \left(\begin{smallmatrix}
-(l+m+1) t^l_{n\,\underline{m}}(Z^{-1}) & (l-m) t^l_{n\,\underline{m+1}}(Z^{-1}) \\
(l+m+1) t^l_{n+1\,\underline{m}}(Z^{-1}) & -(l-m) t^l_{n+1\,\underline{m+1}}(Z^{-1})
\end{smallmatrix}\right)  \\
- \frac{2l-k+1}{2l+3} N(Z)^k \cdot \left(\begin{smallmatrix}
(l-n+1) t^{l+1}_{n\,\underline{m}}(Z^{-1}) & (l-n+1) t^{l+1}_{n\,\underline{m+1}}(Z^{-1}) \\
(l+n+2) t^{l+1}_{n+1\,\underline{m}}(Z^{-1}) & (l+n+2) t^{l+1}_{n+1\,\underline{m+1}}(Z^{-1})
\end{smallmatrix}\right) ;\quad
\begin{smallmatrix} k=0, \pm1, \pm2, \dots \\
l=0,\frac12,1,\frac32,\dots \\
-l-1 \le m,n \le l \end{smallmatrix}.
\end{multline*}

Let $K = U(2) \times U(2)$ sitting as the diagonal subgroup of $GL(2,\HC)$.

\begin{prop}
The functions
\begin{equation}  \label{W-K-types}
N(Z)^k \cdot {\bf F_{l,m,n}}(Z), \quad N(Z)^k \cdot {\bf G_{l,m,n}}(Z), \quad
{\bf H_{k,l,m,n}}(Z) \quad \text{and} \quad N(Z)^{-2} \cdot Z^+
\end{equation}
span the kernel of $\tr \circ \partial^+ : {\cal W} \to \Sh$ and generate the
$K$-types of the kernel.
More precisely, as representations of $SU(2) \times SU(2)$,
$$
V_0 \boxtimes V_0 = \BB C\text{-span of } \bigl\{ N(Z)^{-2} \cdot Z^+ \bigr\},
$$
and, for $k$, $l$ fixed,
\begin{align*}
V_{l-\frac12} \boxtimes V_{l+\frac12} &= \BB C\text{-span of }
\left\{ N(Z)^k \cdot {\bf F_{l,m,n}}(Z) ;\: \begin{smallmatrix}
-l-1 \le m \le l \\  -l \le n \le l-1 \end{smallmatrix} \right\},  \\
V_{l+\frac12} \boxtimes V_{l-\frac12} &= \BB C\text{-span of }
\left\{ N(Z)^k \cdot {\bf G_{l,m,n}}(Z) ;\: \begin{smallmatrix}
-l \le m \le l-1 \\  -l-1 \le n \le l \end{smallmatrix} \right\},  \\
V_{l+\frac12} \boxtimes V_{l+\frac12} &= \BB C\text{-span of }
\left\{ {\bf H_{k,l,m,n}}(Z) ;\: -l-1 \le m,n \le l \right\}.
\end{align*}
\end{prop}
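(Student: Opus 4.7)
The plan is to decompose $(\rho_2,{\cal W})$ along the $K$-type basis furnished by the matrix coefficients $t^l_{n\,\underline{m}}(Z)$ and apply the preceding lemma directly. Since ${\cal W} = \HC \otimes \Sh$ with $\Sh = \BB C[z_{11},z_{12},z_{21},z_{22},N(Z)^{-1}]$, any $F \in {\cal W}$ is a linear combination of $2 \times 2$ matrix-valued monomials of the form $N(Z)^k$ times a matrix whose entries are matrix coefficients $t^l_{n\,\underline{m}}(Z)$ or $t^l_{n\,\underline{m}}(Z^{-1})$. Using the identity $N(Z)^{-1} \cdot t^l_{n\,\underline{m}}(Z^{-1}) = N(Z)^{-(2l+1)} \cdot $ (something polynomial in $Z$) one can normalize so that every such monomial has one of the two forms treated in the preceding lemma. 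First, I would fix $(k,l)$ and the shift pattern (so that the four entries are indexed by $t^l_{n+\epsilon_1\,\underline{m+\epsilon_2}}$ with $\epsilon_i \in \{0,1\}$) and note that at this fixed $(k,l,m,n)$ the space of such monomials is four-dimensional and is permuted by $SU(2) \times SU(2)$ into two irreducible components, corresponding to the decomposition $V_l \otimes V_{1/2} \simeq V_{l-1/2} \oplus V_{l+1/2}$ applied separately to rows and columns.

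Next, the preceding lemma provides two linear constraints on the four coefficients $\alpha_{ij}$ (respectively $\beta_{ij}$) that characterize membership in $\ker(\tr \circ \partial^+)$: one vanishing condition with prefactor $\frac{2l+k+1}{2l+1}$ (respectively $\frac{2l-k+1}{2l+1}$) producing a $t^{l-1/2}$ (respectively $t^{l+1/2}$) term, and one with prefactor $\frac{k}{2l+1}$ producing the other shift. Generically both constraints are non-degenerate, cutting the four-dimensional space down to a two-dimensional kernel. A direct check—expanding the explicit coefficients in the definitions of ${\bf F_{l,m,n}}(Z)$ and ${\bf G_{l,m,n}}(Z)$—shows that each satisfies both constraints identically: the $(l+m+1)(\alpha_{11}-\alpha_{12})+(l-m)(\alpha_{22}-\alpha_{21})$ combination and the $(l-n)(\alpha_{11}+\alpha_{21})+(l+n+1)(\alpha_{12}+\alpha_{22})$ combination both vanish by inspection. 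These give two linearly independent solutions, hence span the two-dimensional kernel at each generic $(k,l,m,n)$. The $SU(2) \times SU(2)$-action on $t^l_{n+\epsilon_1\,\underline{m+\epsilon_2}}$ (computed via the standard raising/lowering operators, as in Lemma 22 of \cite{FL1}) immediately shows that ${\bf F_{l,m,n}}$ spans a $V_{l-1/2} \boxtimes V_{l+1/2}$ and ${\bf G_{l,m,n}}$ spans a $V_{l+1/2} \boxtimes V_{l-1/2}$, with the parameter ranges $-l-1 \le m \le l$, $-l \le n \le l-1$ (and symmetrically for ${\bf G}$) giving the correct dimensions $(2l+2)(2l)$ of each.

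The main obstacle—and the most delicate step—is the family ${\bf H_{k,l,m,n}}(Z)$, which lives in the $V_{l+1/2} \boxtimes V_{l+1/2}$ isotypic component and straddles two adjacent levels $(k,l)$ and $(k-1,l+1)$. For these $K$-types the generic symmetric pieces at level $(k,l)$ produce a non-zero $N(Z)^{k-1} \cdot t^{l+1/2}$ contribution through the lemma, and likewise the level $(k-1,l+1)$ pieces produce a non-zero $t^{l+1/2}$ contribution (via its own $\frac{2l'+k'+1}{2l'+1}$ summand with $l'=l+1$, $k'=k-1$). The coefficients of ${\bf H_{k,l,m,n}}$ are chosen precisely so that these two contributions cancel: a routine computation yields $k(2l+k+2)$ from the first block and $-k(2l+k+2)$ from the second, giving zero. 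This cancellation, together with the fact that the $V_{l-1/2} \boxtimes V_{l-1/2}$ components never enter the kernel at all (since they produce non-vanishing terms at both $t^{l-1/2}$ and $t^{l+1/2}$ under generic $(k,l)$), shows that the listed ${\bf H}$'s exhaust the $V_{l+1/2} \boxtimes V_{l+1/2}$ multiplicity in the kernel at each $(k,l)$.

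Finally, I would handle the one exceptional case separately: when $k=0$ and $l=0$ simultaneously (or more precisely, when the coefficient $\frac{2l+k+1}{2l+1}$ in the lemma vanishes while the $\frac{k}{2l+1}$ coefficient is also zero, which happens at degree $-3$ corresponding to $k=-2$, $l=0$), one more direction appears. Solving the resulting single linear equation at $(k,l,m,n)=(-2,0,-1/2,-1/2)$ exhibits $N(Z)^{-2} \cdot Z^+$ as the extra generator, which transforms trivially under $SU(2) \times SU(2)$; this matches the $V_0 \boxtimes V_0$ component in the statement. Assembling the pieces across all $(k,l,m,n)$ and noting that the four families (\ref{W-K-types}) are linearly independent by their distinct $K$-type or homogeneity, we conclude they form a basis of $\ker(\tr \circ \partial^+)$ with the claimed $SU(2) \times SU(2)$-decomposition.
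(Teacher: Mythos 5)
Your overall strategy is viable and is, at bottom, the same $K$-type bookkeeping the paper performs: the paper's proof also observes that the listed functions are annihilated by $\tr\circ\partial^+$ and then matches $K$-types degree by degree, except that it counts the $K$-types of ${\cal W}(d)$ against those of the already-computed image $({\cal I}^++{\cal I}^-)(d-1)$, whereas you solve the two linear constraints of the preceding lemma directly to exhibit the kernel. Your verifications that ${\bf F_{l,m,n}}$ and ${\bf G_{l,m,n}}$ annihilate both linear functionals, and that the two pieces of ${\bf H_{k,l,m,n}}$ contribute $k(2l+k+2)$ and $-k(2l+k+2)$ to the $N(Z)^{k-1}\,t^{l+\frac12}$ output, are correct and are exactly the computations needed.

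However, your treatment of the exceptional generator $N(Z)^{-2}\cdot Z^+$ is wrong as stated, and that step would fail if carried out literally. The prefactors $\frac{2l+k+1}{2l+1}$ and $\frac{k}{2l+1}$ can never vanish simultaneously (that would force $2l+1=0$), and at $(k,l)=(-2,0)$ neither vanishes. The element $N(Z)^{-2}\cdot Z^+$ sits in the block $(k,l)=(-2,\tfrac12)$, where $2l+k+1=0$ kills the $t^{l-\frac12}=t^0_{0\,\underline{0}}$ output. What makes it exceptional is that the isotypic type $V_0\boxtimes V_0$ occurs with multiplicity \emph{one} (not two) in each homogeneous degree, since there is no companion block with $l=-\tfrac12$; hence no ${\bf H}$-type cancellation between adjacent blocks is available, and the single scalar $2l+k+1$ must vanish by itself, which happens only at $k=-2$. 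For the diagonal types $V_j\boxtimes V_j$ with $j\ge\tfrac12$ the source has multiplicity two and the two governing scalars (one proportional to $2l+k+1$ in the block with $l=j+\tfrac12$, the other to the exponent of $N(Z)$ in the adjacent block with $l=j-\tfrac12$) cannot vanish simultaneously, so the kernel there is exactly the one copy spanned by the ${\bf H}$'s. Relatedly, your parenthetical that the $V_{l-\frac12}\boxtimes V_{l-\frac12}$ components ``produce non-vanishing terms at both $t^{l-1/2}$ and $t^{l+1/2}$'' contradicts Schur's lemma: each diagonal isotypic component can only map to the matching type in $\Sh$. These points are fixable, but the degenerate bookkeeping is precisely where the content of the proposition lies, so it must be done correctly.
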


\begin{proof}
Clearly, these functions are $K$-finite, linearly independent and annihilated
by $\tr \circ \partial^+$. It remains to show that these functions
span all of the kernel of $\tr \circ \partial^+$.
This is done by checking, for each $d \in \BB Z$, that if one takes the
$K$-types of
$$
{\cal W}(d) =
\{ F(Z) \in {\cal W};\: \text{$F(Z)$ is homogeneous of degree $d$} \}
$$
and ``subtracts'' the $K$-types of
$$
({\cal I}^+ + {\cal I}^-)(d-1) = \{ f(Z) \in {\cal I}^+ + {\cal I}^-;\:
\text{$f(Z)$ is homogeneous of degree $d-1$} \}
$$
then all the remaining $K$-types are accounted for in (\ref{W-K-types}).
\end{proof}

We compute the action of
$\bigl(\begin{smallmatrix} 0 & B \\ 0 & 0 \end{smallmatrix}\bigr)
\in \mathfrak{gl}(2,\HC)$, $B \in \HC$, on these generators:
{\small
\begin{multline}  \label{BFW}
\begin{pmatrix}
\partial_{11} ( N(Z)^k \cdot {\bf F_{l,m,n}}(Z)) &
\partial_{21} ( N(Z)^k \cdot {\bf F_{l,m,n}}(Z)) \\
\partial_{12} ( N(Z)^k \cdot {\bf F_{l,m,n}}(Z)) &
\partial_{22} ( N(Z)^k \cdot {\bf F_{l,m,n}}(Z))
\end{pmatrix}  \\
=
\frac{2l+k+1}{2l} N(Z)^k \begin{pmatrix}
(l-m) {\bf F_{l-\frac12,m+\frac12,n+\frac12}}(Z) &
(l-m) {\bf F_{l-\frac12,m+\frac12,n-\frac12}}(Z) \\
(l+m+1) {\bf F_{l-\frac12,m-\frac12,n+\frac12}}(Z) &
(l+m+1) {\bf F_{l-\frac12,m-\frac12,n-\frac12}}(Z)
\end{pmatrix}  \\
+
\frac{k}{2l+2} N(Z)^{k-1} \begin{pmatrix}
(l+n+1) {\bf F_{l+\frac12,m+\frac12,n+\frac12}}(Z) &
-(l-n) {\bf F_{l+\frac12,m+\frac12,n-\frac12}}(Z) \\
-(l+n+1) {\bf F_{l+\frac12,m-\frac12,n+\frac12}}(Z) &
(l-n) {\bf F_{l+\frac12,m-\frac12,n-\frac12}}(Z)
\end{pmatrix}  \\
+\frac1{2l+1} \begin{pmatrix}
(l-m)(l+n+1) {\bf H_{k,l-\frac12,m+\frac12,n+\frac12}}(Z) &
-(l-m)(l-n) {\bf H_{k,l-\frac12,m+\frac12,n-\frac12}}(Z) \\
(l+m+1)(l+n+1) {\bf H_{k,l-\frac12,m-\frac12,n+\frac12}}(Z) &
-(l+m+1)(l-n) {\bf H_{k,l-\frac12,m-\frac12,n-\frac12}}(Z)
\end{pmatrix};
\end{multline}
\begin{multline}  \label{BGW}
\begin{pmatrix}
\partial_{11} ( N(Z)^k \cdot {\bf G_{l,m,n}}(Z)) &
\partial_{21} ( N(Z)^k \cdot {\bf G_{l,m,n}}(Z)) \\
\partial_{12} ( N(Z)^k \cdot {\bf G_{l,m,n}}(Z)) &
\partial_{22} ( N(Z)^k \cdot {\bf G_{l,m,n}}(Z))
\end{pmatrix}  \\
=
\frac{2l+k+1}{2l} N(Z)^k \begin{pmatrix}
(l-m-1) {\bf G_{l-\frac12,m+\frac12,n+\frac12}}(Z) &
(l-m-1) {\bf G_{l-\frac12,m+\frac12,n-\frac12}}(Z) \\
(l+m) {\bf G_{l-\frac12,m-\frac12,n+\frac12}}(Z) &
(l+m) {\bf G_{l-\frac12,m-\frac12,n-\frac12}}(Z)
\end{pmatrix}  \\
+
\frac{k}{2l+2} N(Z)^{k-1} \begin{pmatrix}
(l+n+2) {\bf G_{l+\frac12,m+\frac12,n+\frac12}}(Z) &
-(l-n+1) {\bf G_{l+\frac12,m+\frac12,n-\frac12}}(Z) \\
-(l+n+2) {\bf G_{l+\frac12,m-\frac12,n+\frac12}}(Z) &
(l-n+1) {\bf G_{l+\frac12,m-\frac12,n-\frac12}}(Z)
\end{pmatrix}  \\
+\frac1{2l+1} \begin{pmatrix}
{\bf H_{k,l-\frac12,m+\frac12,n+\frac12}}(Z) &
{\bf H_{k,l-\frac12,m+\frac12,n-\frac12}}(Z) \\
- {\bf H_{k,l-\frac12,m-\frac12,n+\frac12}}(Z) &
- {\bf H_{k,l-\frac12,m-\frac12,n-\frac12}}(Z)
\end{pmatrix};
\end{multline}
\begin{multline*}
\begin{pmatrix} \partial_{11} {\bf H_{k,l,m,n}}(Z) &
\partial_{21} {\bf H_{k,l,m,n}}(Z) \\
\partial_{12} {\bf H_{k,l,m,n}}(Z) &
\partial_{22} {\bf H_{k,l,m,n}}(Z) \end{pmatrix}  \\
=
\frac{k(2l+k+2) N(Z)^{k-1}}{(l+1)(2l+1)(2l+3)}\begin{pmatrix}
- {\bf F_{l+\frac12,m+\frac12,n+\frac12}}(Z) &
- {\bf F_{l+\frac12,m+\frac12,n-\frac12}}(Z) \\
{\bf F_{l+\frac12,m-\frac12,n+\frac12}}(Z) &
{\bf F_{l+\frac12,m-\frac12,n-\frac12}}(Z)
\end{pmatrix}  \\
+
\frac{k(2l+k+2)N(Z)^{k-1}}{(l+1)(2l+1)(2l+3)} \\
\times \begin{pmatrix} -(l-m)(l+n+2) {\bf G_{l+\frac12,m+\frac12,n+\frac12}}(Z) &
(l-m)(l-n+1) {\bf G_{l+\frac12,m+\frac12,n-\frac12}}(Z) \\
-(l+m+1)(l+n+2) {\bf G_{l+\frac12,m-\frac12,n+\frac12}}(Z) &
(l+m+1)(l-n+1) {\bf G_{l+\frac12,m-\frac12,n-\frac12}}(Z) \end{pmatrix}  \\
+\frac{2l(2l+k+2)}{(2l+1)^2} \begin{pmatrix}
(l-m) {\bf H_{k,l-\frac12,m+\frac12,n+\frac12}}(Z) &
(l-m){\bf H_{k,l-\frac12,m+\frac12,n-\frac12}}(Z) \\
(l+m+1) {\bf H_{k,l-\frac12,m-\frac12,n+\frac12}}(Z) &
(l+m+1) {\bf H_{k,l-\frac12,m-\frac12,n-\frac12}}(Z)
\end{pmatrix}  \\
+ \frac{k(2l+4)}{(2l+3)^2} \begin{pmatrix}
(l+n+2) {\bf H_{k-1,l+\frac12,m+\frac12,n+\frac12}}(Z) &
-(l-n+1) {\bf H_{k-1,l+\frac12,m+\frac12,n-\frac12}}(Z) \\
-(l+n+2) {\bf H_{k-1,l+\frac12,m-\frac12,n+\frac12}}(Z) &
(l-n+1) {\bf H_{k-1,l+\frac12,m-\frac12,n-\frac12}}(Z)
\end{pmatrix};
\end{multline*}
\begin{equation}  \label{BIW}
\begin{pmatrix} \partial_{11} (N(Z)^{-2} \cdot Z^+) &
\partial_{21} (N(Z)^{-2} \cdot Z^+) \\
\partial_{12} (N(Z)^{-2} \cdot Z^+) &
\partial_{22} (N(Z)^{-2} \cdot Z^+) \end{pmatrix}
= \frac32 \begin{pmatrix} -{\bf H_{-2,0,0,0}}(Z) & {\bf H_{-2,0,0,-1}}(Z) \\
{\bf H_{-2,0,-1,0}}(Z) & -{\bf H_{-2,0,-1,-1}}(Z) \end{pmatrix};
\end{equation}
\begin{multline*}
\begin{pmatrix}
\partial_{11} ( N(Z)^k \cdot {\bf F'_{l,m,n}}(Z)) &
\partial_{21} ( N(Z)^k \cdot {\bf F'_{l,m,n}}(Z)) \\
\partial_{12} ( N(Z)^k \cdot {\bf F'_{l,m,n}}(Z)) &
\partial_{22} ( N(Z)^k \cdot {\bf F'_{l,m,n}}(Z))
\end{pmatrix}  \\
=
\frac{k}{2l} N(Z)^{k-1} \begin{pmatrix}
(l+m+1) {\bf F'_{l-\frac12,m-\frac12,n-\frac12}}(Z) &
-(l-m) {\bf F'_{l-\frac12,m+\frac12,n-\frac12}}(Z) \\
-(l+m+1) {\bf F'_{l-\frac12,m-\frac12,n+\frac12}}(Z) &
(l-m) {\bf F'_{l-\frac12,m+\frac12,n+\frac12}}(Z)
\end{pmatrix}  \\
-
\frac{2l-k+1}{2l+2} N(Z)^k \begin{pmatrix}
(l-n) {\bf F'_{l+\frac12,m-\frac12,n-\frac12}}(Z) &
(l-n) {\bf F'_{l+\frac12,m+\frac12,n-\frac12}}(Z) \\
(l+n+1) {\bf F'_{l+\frac12,m-\frac12,n+\frac12}}(Z) &
(l+n+1) {\bf F'_{l+\frac12,m+\frac12,n+\frac12}}(Z)
\end{pmatrix}  \\
+\frac1{2l+1} \begin{pmatrix}
-(l+m+1)(l-n) {\bf H'_{k-1,l-\frac12,m-\frac12,n-\frac12}}(Z) &
(l-m)(l-n) {\bf H'_{k-1,l-\frac12,m+\frac12,n-\frac12}}(Z) \\
-(l+m+1)(l+n+1) {\bf H'_{k-1,l-\frac12,m-\frac12,n+\frac12}}(Z) &
(l-m)(l+n+1) {\bf H'_{k-1,l-\frac12,m+\frac12,n+\frac12}}(Z)
\end{pmatrix};
\end{multline*}
\begin{multline*}
\begin{pmatrix}
\partial_{11} ( N(Z)^k \cdot {\bf G'_{l,m,n}}(Z)) &
\partial_{21} ( N(Z)^k \cdot {\bf G'_{l,m,n}}(Z)) \\
\partial_{12} ( N(Z)^k \cdot {\bf G'_{l,m,n}}(Z)) &
\partial_{22} ( N(Z)^k \cdot {\bf G'_{l,m,n}}(Z))
\end{pmatrix}  \\
=
\frac{k}{2l} N(Z)^{k-1} \begin{pmatrix}
(l+m) {\bf G'_{l-\frac12,m-\frac12,n-\frac12}}(Z) &
-(l-m-1) {\bf G'_{l-\frac12,m+\frac12,n-\frac12}}(Z) \\
-(l+m) {\bf G'_{l-\frac12,m-\frac12,n+\frac12}}(Z) &
(l-m-1) {\bf G'_{l-\frac12,m+\frac12,n+\frac12}}(Z)
\end{pmatrix}  \\
-
\frac{2l-k+1}{2l+2} N(Z)^k \begin{pmatrix}
(l-n+1) {\bf G'_{l+\frac12,m-\frac12,n-\frac12}}(Z) &
(l-n+1) {\bf G'_{l+\frac12,m+\frac12,n-\frac12}}(Z) \\
(l+n+2) {\bf G'_{l+\frac12,m-\frac12,n+\frac12}}(Z) &
(l+n+2) {\bf G'_{l+\frac12,m+\frac12,n+\frac12}}(Z)
\end{pmatrix}  \\
+\frac1{2l+1} \begin{pmatrix}
- {\bf H'_{k-1,l-\frac12,m-\frac12,n-\frac12}}(Z) &
- {\bf H'_{k-1,l-\frac12,m+\frac12,n-\frac12}}(Z) \\
{\bf H'_{k-1,l-\frac12,m-\frac12,n+\frac12}}(Z) &
{\bf H'_{k-1,l-\frac12,m+\frac12,n+\frac12}}(Z)
\end{pmatrix};
\end{multline*}
\begin{multline*}
\begin{pmatrix} \partial_{11} {\bf H'_{k,l,m,n}}(Z) &
\partial_{21} {\bf H'_{k,l,m,n}}(Z) \\
\partial_{12} {\bf H'_{k,l,m,n}}(Z) &
\partial_{22} {\bf H'_{k,l,m,n}}(Z) \end{pmatrix}  \\
=
\frac{(k+1)(2l-k+1) N(Z)^k}{(l+1)(2l+1)(2l+3)}\begin{pmatrix}
- {\bf F'_{l+\frac12,m-\frac12,n-\frac12}}(Z) &
- {\bf F'_{l+\frac12,m+\frac12,n-\frac12}}(Z) \\
{\bf F'_{l+\frac12,m-\frac12,n+\frac12}}(Z) &
{\bf F'_{l+\frac12,m+\frac12,n+\frac12}}(Z)
\end{pmatrix}  \\
+
\frac{(k+1)(2l-k+1)N(Z)^k}{(l+1)(2l+1)(2l+3)} \\
\times \begin{pmatrix} -(l+m+1)(l-n+1) {\bf G'_{l+\frac12,m-\frac12,n-\frac12}}(Z) &
(l-m)(l-n+1) {\bf G'_{l+\frac12,m+\frac12,n-\frac12}}(Z) \\
-(l+m+1)(l+n+2) {\bf G'_{l+\frac12,m-\frac12,n+\frac12}}(Z) &
(l-m)(l+n+2) {\bf G'_{l+\frac12,m+\frac12,n+\frac12}}(Z) \end{pmatrix}  \\
+\frac{2l(k+1)}{(2l+1)^2} \begin{pmatrix}
(l+m+1) {\bf H'_{k-1,l-\frac12,m-\frac12,n-\frac12}}(Z) &
-(l-m){\bf H'_{k-1,l-\frac12,m+\frac12,n-\frac12}}(Z) \\
-(l+m+1) {\bf H'_{k-1,l-\frac12,m-\frac12,n+\frac12}}(Z) &
(l-m) {\bf H'_{k-1,l-\frac12,m+\frac12,n+\frac12}}(Z)
\end{pmatrix}  \\
- \frac{(2l+4)(2l-k+1)}{(2l+3)^2} \begin{pmatrix}
(l-n+1) {\bf H'_{k,l+\frac12,m+\frac12,n+\frac12}}(Z) &
(l-n+1) {\bf H'_{k,l+\frac12,m+\frac12,n-\frac12}}(Z) \\
(l+n+2) {\bf H'_{k,l+\frac12,m-\frac12,n+\frac12}}(Z) &
(l+n+2) {\bf H'_{k,l+\frac12,m+\frac12,n+\frac12}}(Z)
\end{pmatrix}.
\end{multline*}
}

Let $\bigl(\begin{smallmatrix} 0 & 1 \\ 1 & 0 \end{smallmatrix}\bigr) \in
GL(2,\HC)$, then
\begin{align*}
\rho_2 \bigl(\begin{smallmatrix} 0 & 1 \\ 1 & 0 \end{smallmatrix}\bigr)
\bigl( N(Z)^k \cdot {\bf F_{l,m,n}}(Z) \bigr)
&= N(Z)^{-k-2} \cdot {\bf F'_{l,m,n}}(Z), \\
\rho_2 \bigl(\begin{smallmatrix} 0 & 1 \\ 1 & 0 \end{smallmatrix}\bigr)
\bigl( N(Z)^k \cdot {\bf G_{l,m,n}}(Z) \bigr)
&= N(Z)^{-k-2} \cdot {\bf G'_{l,m,n}}(Z), \\
\rho_2 \bigl(\begin{smallmatrix} 0 & 1 \\ 1 & 0 \end{smallmatrix}\bigr)
\bigl( {\bf H_{k,l,m,n}}(Z) \bigr) &= {\bf H'_{-k-2,l,m,n}}(Z), \\
\rho_2 \bigl(\begin{smallmatrix} 0 & 1 \\ 1 & 0 \end{smallmatrix}\bigr)
\bigl( N(Z)^{-2} \cdot Z^+ \bigr) &= - N(Z)^{-2} \cdot Z^+.
\end{align*}
Now we can compute the action of
$\bigl(\begin{smallmatrix} 0 & 0 \\ C & 0 \end{smallmatrix}\bigr)
\in \mathfrak{gl}(2,\HC)$, $C \in \HC$, on these generators:
{\small
\begin{multline*}
\begin{pmatrix}
\rho_2 \bigl(\begin{smallmatrix} 0 & 1 \\ 1 & 0 \end{smallmatrix}\bigr)
\partial_{11}
\rho_2 \bigl(\begin{smallmatrix} 0 & 1 \\ 1 & 0 \end{smallmatrix}\bigr)
(N(Z)^k \cdot {\bf F_{l,m,n}}(Z))  &
\rho_2 \bigl(\begin{smallmatrix} 0 & 1 \\ 1 & 0 \end{smallmatrix}\bigr)
\partial_{21}
\rho_2 \bigl(\begin{smallmatrix} 0 & 1 \\ 1 & 0 \end{smallmatrix}\bigr)
(N(Z)^k \cdot {\bf F_{l,m,n}}(Z))  \\
\rho_2 \bigl(\begin{smallmatrix} 0 & 1 \\ 1 & 0 \end{smallmatrix}\bigr)
\partial_{12}
\rho_2 \bigl(\begin{smallmatrix} 0 & 1 \\ 1 & 0 \end{smallmatrix}\bigr)
(N(Z)^k \cdot {\bf F_{l,m,n}}(Z))  &
\rho_2 \bigl(\begin{smallmatrix} 0 & 1 \\ 1 & 0 \end{smallmatrix}\bigr)
\partial_{22}
\rho_2 \bigl(\begin{smallmatrix} 0 & 1 \\ 1 & 0 \end{smallmatrix}\bigr)
(N(Z)^k \cdot {\bf F_{l,m,n}}(Z))
\end{pmatrix}  \\
=
- \frac{k+2}{2l} N(Z)^{k+1} \begin{pmatrix}
(l+m+1) {\bf F_{l-\frac12,m-\frac12,n-\frac12}}(Z) &
-(l-m) {\bf F_{l-\frac12,m+\frac12,n-\frac12}}(Z) \\
-(l+m+1) {\bf F_{l-\frac12,m-\frac12,n+\frac12}}(Z) &
(l-m) {\bf F_{l-\frac12,m+\frac12,n+\frac12}}(Z)
\end{pmatrix}  \\
-
\frac{2l+k+3}{2l+2} N(Z)^k \begin{pmatrix}
(l-n) {\bf F_{l+\frac12,m-\frac12,n-\frac12}}(Z) &
(l-n) {\bf F_{l+\frac12,m+\frac12,n-\frac12}}(Z) \\
(l+n+1) {\bf F_{l+\frac12,m-\frac12,n+\frac12}}(Z) &
(l+n+1) {\bf F_{l+\frac12,m+\frac12,n+\frac12}}(Z)
\end{pmatrix}  \\
+\frac1{2l+1} \begin{pmatrix}
-(l+m+1)(l-n) {\bf H_{k+1,l-\frac12,m-\frac12,n-\frac12}}(Z) &
(l-m)(l-n) {\bf H_{k+1,l-\frac12,m+\frac12,n-\frac12}}(Z) \\
-(l+m+1)(l+n+1) {\bf H_{k+1,l-\frac12,m-\frac12,n+\frac12}}(Z) &
(l-m)(l+n+1) {\bf H_{k+1,l-\frac12,m+\frac12,n+\frac12}}(Z)
\end{pmatrix};
\end{multline*}
\begin{multline*}
\begin{pmatrix}
\rho_2 \bigl(\begin{smallmatrix} 0 & 1 \\ 1 & 0 \end{smallmatrix}\bigr)
\partial_{11}
\rho_2 \bigl(\begin{smallmatrix} 0 & 1 \\ 1 & 0 \end{smallmatrix}\bigr)
(N(Z)^k \cdot {\bf G_{l,m,n}}(Z))  &
\rho_2 \bigl(\begin{smallmatrix} 0 & 1 \\ 1 & 0 \end{smallmatrix}\bigr)
\partial_{21}
\rho_2 \bigl(\begin{smallmatrix} 0 & 1 \\ 1 & 0 \end{smallmatrix}\bigr)
(N(Z)^k \cdot {\bf G_{l,m,n}}(Z))  \\
\rho_2 \bigl(\begin{smallmatrix} 0 & 1 \\ 1 & 0 \end{smallmatrix}\bigr)
\partial_{12}
\rho_2 \bigl(\begin{smallmatrix} 0 & 1 \\ 1 & 0 \end{smallmatrix}\bigr)
(N(Z)^k \cdot {\bf G_{l,m,n}}(Z))  &
\rho_2 \bigl(\begin{smallmatrix} 0 & 1 \\ 1 & 0 \end{smallmatrix}\bigr)
\partial_{22}
\rho_2 \bigl(\begin{smallmatrix} 0 & 1 \\ 1 & 0 \end{smallmatrix}\bigr)
(N(Z)^k \cdot {\bf G_{l,m,n}}(Z))
\end{pmatrix}  \\
=
- \frac{k+2}{2l} N(Z)^{k+1} \begin{pmatrix}
(l+m) {\bf G_{l-\frac12,m-\frac12,n-\frac12}}(Z) &
-(l-m-1) {\bf G_{l-\frac12,m+\frac12,n-\frac12}}(Z) \\
-(l+m) {\bf G_{l-\frac12,m-\frac12,n+\frac12}}(Z) &
(l-m-1) {\bf G_{l-\frac12,m+\frac12,n+\frac12}}(Z)
\end{pmatrix}  \\
-
\frac{2l+k+3}{2l+2} N(Z)^k \begin{pmatrix}
(l-n+1) {\bf G_{l+\frac12,m-\frac12,n-\frac12}}(Z) &
(l-n+1) {\bf G_{l+\frac12,m+\frac12,n-\frac12}}(Z) \\
(l+n+2) {\bf G_{l+\frac12,m-\frac12,n+\frac12}}(Z) &
(l+n+2) {\bf G_{l+\frac12,m+\frac12,n+\frac12}}(Z)
\end{pmatrix}  \\
+\frac1{2l+1} \begin{pmatrix}
- {\bf H_{k+1,l-\frac12,m-\frac12,n-\frac12}}(Z) &
- {\bf H_{k+1,l-\frac12,m+\frac12,n-\frac12}}(Z) \\
{\bf H_{k+1,l-\frac12,m-\frac12,n+\frac12}}(Z) &
{\bf H_{k+1,l-\frac12,m+\frac12,n+\frac12}}(Z)
\end{pmatrix};
\end{multline*}
\begin{multline*}
\begin{pmatrix}
\rho_2 \bigl(\begin{smallmatrix} 0 & 1 \\ 1 & 0 \end{smallmatrix}\bigr)
\partial_{11}
\rho_2 \bigl(\begin{smallmatrix} 0 & 1 \\ 1 & 0 \end{smallmatrix}\bigr)
{\bf H_{k,l,m,n}}(Z)  &
\rho_2 \bigl(\begin{smallmatrix} 0 & 1 \\ 1 & 0 \end{smallmatrix}\bigr)
\partial_{21}
\rho_2 \bigl(\begin{smallmatrix} 0 & 1 \\ 1 & 0 \end{smallmatrix}\bigr)
{\bf H_{k,l,m,n}}(Z)  \\
\rho_2 \bigl(\begin{smallmatrix} 0 & 1 \\ 1 & 0 \end{smallmatrix}\bigr)
\partial_{12}
\rho_2 \bigl(\begin{smallmatrix} 0 & 1 \\ 1 & 0 \end{smallmatrix}\bigr)
{\bf H_{k,l,m,n}}(Z)  &
\rho_2 \bigl(\begin{smallmatrix} 0 & 1 \\ 1 & 0 \end{smallmatrix}\bigr)
\partial_{22}
\rho_2 \bigl(\begin{smallmatrix} 0 & 1 \\ 1 & 0 \end{smallmatrix}\bigr)
{\bf H_{k,l,m,n}}(Z)
\end{pmatrix}  \\
=
- \frac{(k+1)(2l+k+3) N(Z)^k}{(l+1)(2l+1)(2l+3)}\begin{pmatrix}
- {\bf F_{l+\frac12,m-\frac12,n-\frac12}}(Z) &
- {\bf F_{l+\frac12,m+\frac12,n-\frac12}}(Z) \\
{\bf F_{l+\frac12,m-\frac12,n+\frac12}}(Z) &
{\bf F_{l+\frac12,m+\frac12,n+\frac12}}(Z)
\end{pmatrix}  \\
+ \frac{(k+1)(2l+k+3)N(Z)^k}{(l+1)(2l+1)(2l+3)} \\
\times \begin{pmatrix} (l+m+1)(l-n+1) {\bf G_{l+\frac12,m-\frac12,n-\frac12}}(Z) &
-(l-m)(l-n+1) {\bf G_{l+\frac12,m+\frac12,n-\frac12}}(Z) \\
(l+m+1)(l+n+2) {\bf G_{l+\frac12,m-\frac12,n+\frac12}}(Z) &
-(l-m)(l+n+2) {\bf G_{l+\frac12,m+\frac12,n+\frac12}}(Z) \end{pmatrix}  \\
- \frac{2l(k+1)}{(2l+1)^2} \begin{pmatrix}
(l+m+1) {\bf H_{k+1,l-\frac12,m-\frac12,n-\frac12}}(Z) &
-(l-m){\bf H_{k+1,l-\frac12,m+\frac12,n-\frac12}}(Z) \\
-(l+m+1) {\bf H_{k+1,l-\frac12,m-\frac12,n+\frac12}}(Z) &
(l-m) {\bf H_{k+1,l-\frac12,m+\frac12,n+\frac12}}(Z)
\end{pmatrix}  \\
- \frac{(2l+4)(2l+k+3)}{(2l+3)^2} \begin{pmatrix}
(l-n+1) {\bf H_{k,l+\frac12,m+\frac12,n+\frac12}}(Z) &
(l-n+1) {\bf H_{k,l+\frac12,m+\frac12,n-\frac12}}(Z) \\
(l+n+2) {\bf H_{k,l+\frac12,m-\frac12,n+\frac12}}(Z) &
(l+n+2) {\bf H_{k,l+\frac12,m+\frac12,n+\frac12}}(Z)
\end{pmatrix};
\end{multline*}
\begin{multline}  \label{1-dim-gen-actions2}
\begin{pmatrix}
\rho_2 \bigl(\begin{smallmatrix} 0 & 1 \\ 1 & 0 \end{smallmatrix}\bigr)
\partial_{11}
\rho_2 \bigl(\begin{smallmatrix} 0 & 1 \\ 1 & 0 \end{smallmatrix}\bigr)
(N(Z)^{-2} \cdot Z^+)  &
\rho_2 \bigl(\begin{smallmatrix} 0 & 1 \\ 1 & 0 \end{smallmatrix}\bigr)
\partial_{21}
\rho_2 \bigl(\begin{smallmatrix} 0 & 1 \\ 1 & 0 \end{smallmatrix}\bigr)
(N(Z)^{-2} \cdot Z^+)  \\
\rho_2 \bigl(\begin{smallmatrix} 0 & 1 \\ 1 & 0 \end{smallmatrix}\bigr)
\partial_{12}
\rho_2 \bigl(\begin{smallmatrix} 0 & 1 \\ 1 & 0 \end{smallmatrix}\bigr)
(N(Z)^{-2} \cdot Z^+)  &
\rho_2 \bigl(\begin{smallmatrix} 0 & 1 \\ 1 & 0 \end{smallmatrix}\bigr)
\partial_{22}
\rho_2 \bigl(\begin{smallmatrix} 0 & 1 \\ 1 & 0 \end{smallmatrix}\bigr)
(N(Z)^{-2} \cdot Z^+)
\end{pmatrix}  \\
= \frac32 \begin{pmatrix} {\bf H'_{0,0,0,0}}(Z) & -{\bf H'_{0,0,0,-1}}(Z) \\
-{\bf H'_{0,0,-1,0}}(Z) & {\bf H'_{0,0,-1,-1}}(Z) \end{pmatrix}
= - \frac32 \begin{pmatrix} {\bf H_{-1,0,-1,-1}}(Z) & {\bf H_{-1,0,0,-1}}(Z) \\
{\bf H_{-1,0,-1,0}}(Z) & {\bf H_{-1,0,0,0}}(Z) \end{pmatrix},
\end{multline}
}

From (\ref{BFW})-(\ref{BIW}) we see that any $\rho_2$-invariant subspace of
the kernel of $\tr \circ \partial^+ : {\cal W} \to \Sh$ contains at least one
function ${\bf H_{k,l,m,n}}(Z)$.
We introduce a $U(2) \times U(2)$-invariant subspace of ${\cal W}$
$$
{\bf H} = \BB C\text{-span of } \Bigl\{ {\bf H_{k,l,m,n}}(Z); \:
k=0, \pm1, \pm2, \dots, \: l=0,\frac12,1,\frac32,\dots, \:
-l-1 \le m,n \le l \Bigr\}
$$
-- note that ${\bf H}$ is not $\mathfrak{gl}(2,\HC)$-invariant -- and a
projection
$$
\varpi_{\bf H}: \ker(\tr \circ \partial^+ : {\cal W} \to \Sh) \to {\bf H}
$$
that maps each ${\bf H_{k,l,m,n}}(Z)$ into itself and annihilates
$Z^+ \cdot N(Z)^2$, all $N(Z)^k \cdot {\bf F_{l,m,n}}(Z)$'s and all
$N(Z)^k \cdot {\bf G_{l,m,n}}(Z)$'s. Then
\begin{multline*}
\begin{pmatrix} \varpi_{\bf H} \circ \partial_{11} {\bf H_{k,l,m,n}}(Z) &
\varpi_{\bf H} \circ \partial_{21} {\bf H_{k,l,m,n}}(Z) \\
\varpi_{\bf H} \circ \partial_{12} {\bf H_{k,l,m,n}}(Z) &
\varpi_{\bf H} \circ \partial_{22} {\bf H_{k,l,m,n}}(Z) \end{pmatrix}  \\
= \frac{2l(2l+k+2)}{(2l+1)^2} \begin{pmatrix}
(l-m) {\bf H_{k,l-\frac12,m+\frac12,n+\frac12}}(Z) &
(l-m){\bf H_{k,l-\frac12,m+\frac12,n-\frac12}}(Z) \\
(l+m+1) {\bf H_{k,l-\frac12,m-\frac12,n+\frac12}}(Z) &
(l+m+1) {\bf H_{k,l-\frac12,m-\frac12,n-\frac12}}(Z)
\end{pmatrix}  \\
+ \frac{k(2l+4)}{(2l+3)^2} \begin{pmatrix}
(l+n+2) {\bf H_{k-1,l+\frac12,m+\frac12,n+\frac12}}(Z) &
-(l-n+1) {\bf H_{k-1,l+\frac12,m+\frac12,n-\frac12}}(Z) \\
-(l+n+2) {\bf H_{k-1,l+\frac12,m-\frac12,n+\frac12}}(Z) &
(l-n+1) {\bf H_{k-1,l+\frac12,m-\frac12,n-\frac12}}(Z)
\end{pmatrix}
\end{multline*}
and
\begin{multline*}
\begin{pmatrix}
\varpi_{\bf H} \circ
\rho_2 \bigl(\begin{smallmatrix} 0 & 1 \\ 1 & 0 \end{smallmatrix}\bigr)
\partial_{11}
\rho_2 \bigl(\begin{smallmatrix} 0 & 1 \\ 1 & 0 \end{smallmatrix}\bigr)
{\bf H_{k,l,m,n}}(Z)  &
\varpi_{\bf H} \circ
\rho_2 \bigl(\begin{smallmatrix} 0 & 1 \\ 1 & 0 \end{smallmatrix}\bigr)
\partial_{21}
\rho_2 \bigl(\begin{smallmatrix} 0 & 1 \\ 1 & 0 \end{smallmatrix}\bigr)
{\bf H_{k,l,m,n}}(Z)  \\
\varpi_{\bf H} \circ
\rho_2 \bigl(\begin{smallmatrix} 0 & 1 \\ 1 & 0 \end{smallmatrix}\bigr)
\partial_{12}
\rho_2 \bigl(\begin{smallmatrix} 0 & 1 \\ 1 & 0 \end{smallmatrix}\bigr)
{\bf H_{k,l,m,n}}(Z)  &
\varpi_{\bf H} \circ
\rho_2 \bigl(\begin{smallmatrix} 0 & 1 \\ 1 & 0 \end{smallmatrix}\bigr)
\partial_{22}
\rho_2 \bigl(\begin{smallmatrix} 0 & 1 \\ 1 & 0 \end{smallmatrix}\bigr)
{\bf H_{k,l,m,n}}(Z)
\end{pmatrix}  \\
= - \frac{2l(k+1)}{(2l+1)^2} \begin{pmatrix}
(l+m+1) {\bf H_{k+1,l-\frac12,m-\frac12,n-\frac12}}(Z) &
-(l-m){\bf H_{k+1,l-\frac12,m+\frac12,n-\frac12}}(Z) \\
-(l+m+1) {\bf H_{k+1,l-\frac12,m-\frac12,n+\frac12}}(Z) &
(l-m) {\bf H_{k+1,l-\frac12,m+\frac12,n+\frac12}}(Z)
\end{pmatrix}  \\
- \frac{(2l+4)(2l+k+3)}{(2l+3)^2} \begin{pmatrix}
(l-n+1) {\bf H_{k,l+\frac12,m+\frac12,n+\frac12}}(Z) &
(l-n+1) {\bf H_{k,l+\frac12,m+\frac12,n-\frac12}}(Z) \\
(l+n+2) {\bf H_{k,l+\frac12,m-\frac12,n+\frac12}}(Z) &
(l+n+2) {\bf H_{k,l+\frac12,m+\frac12,n+\frac12}}(Z)
\end{pmatrix}
\end{multline*}

\begin{figure}
\begin{center}
\begin{subfigure}[b]{0.3\textwidth}
\centering
\includegraphics[scale=0.2]{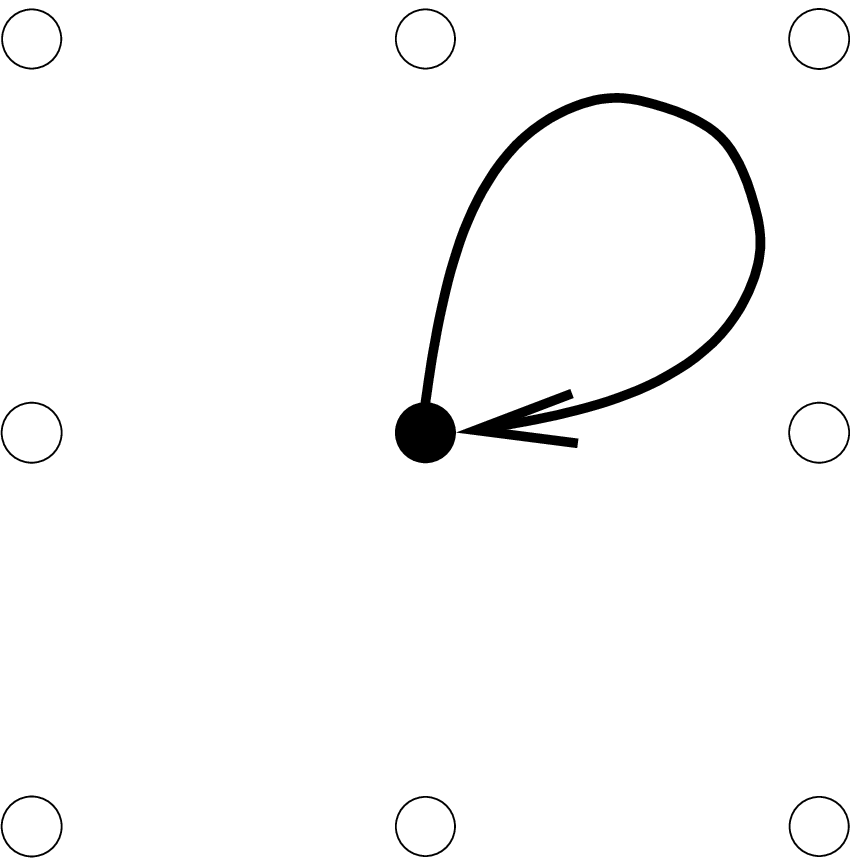}
\caption{Actions of
$\rho_2\bigl(\begin{smallmatrix} A & 0 \\ 0 & 0 \end{smallmatrix}\bigr)$ \\
and $\rho_2\bigl(\begin{smallmatrix} 0 & 0 \\ 0 & D \end{smallmatrix}\bigr)$}
\end{subfigure}
\quad
\begin{subfigure}[b]{0.3\textwidth}
\centering
\includegraphics[scale=0.2]{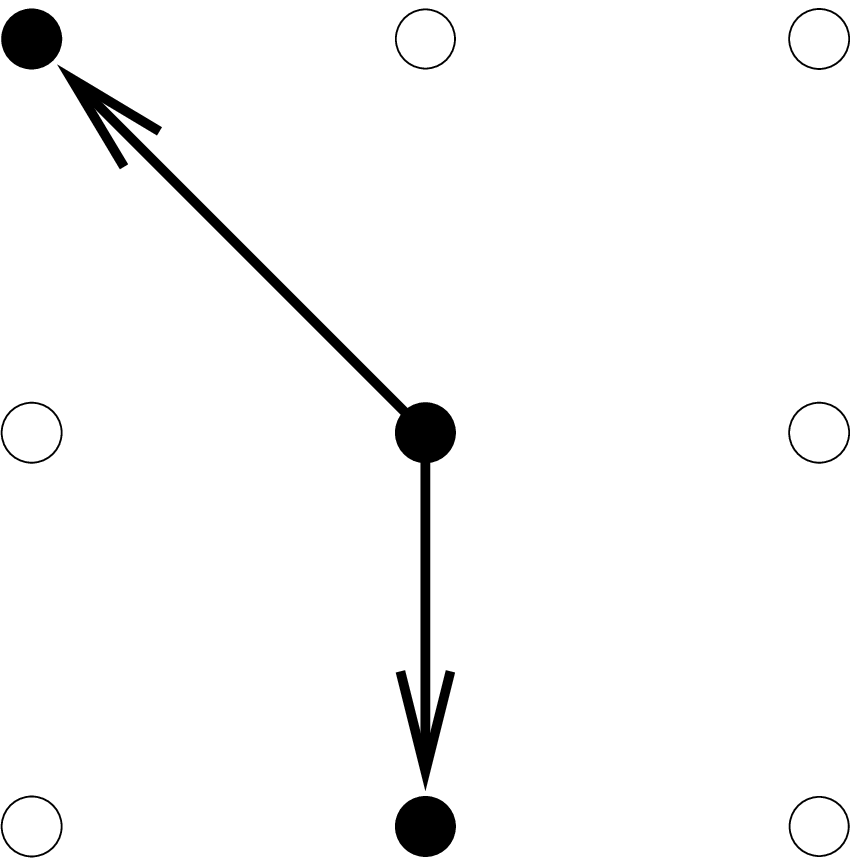}
\caption{Action of $\varpi_{\bf H} \circ
\rho_2\bigl(\begin{smallmatrix} 0 & B \\ 0 & 0 \end{smallmatrix}\bigr)$}
\end{subfigure}
\quad
\begin{subfigure}[b]{0.3\textwidth}
\centering
\includegraphics[scale=0.2]{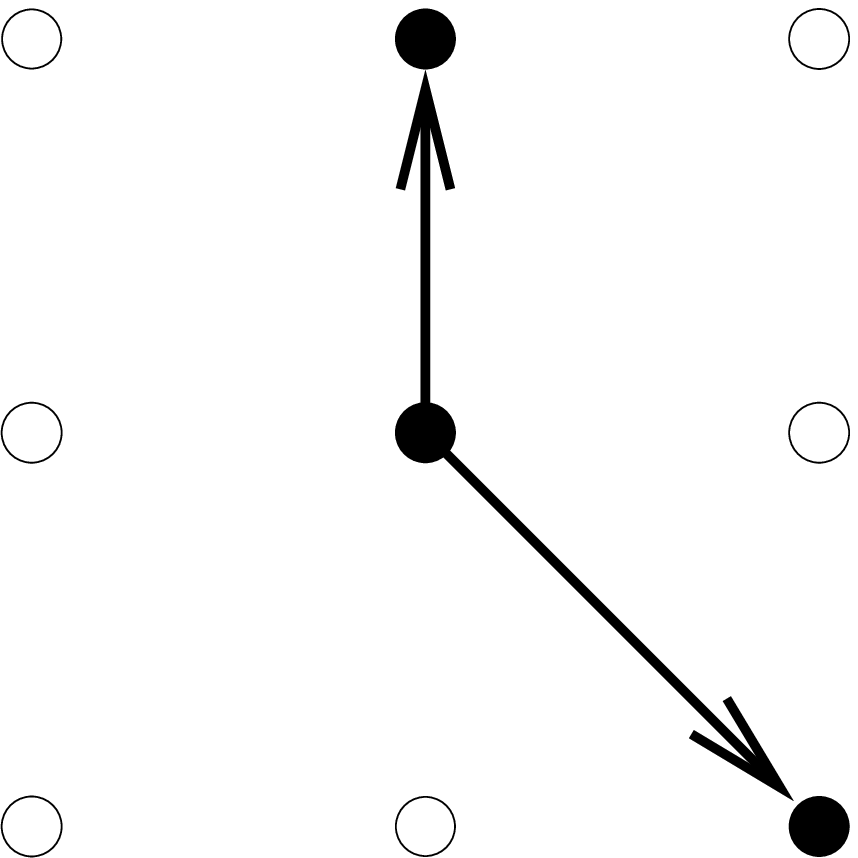}
\caption{Action of $\varpi_{\bf H} \circ
\rho_2\bigl(\begin{smallmatrix} 0 & 0 \\ C & 0 \end{smallmatrix}\bigr)$}
\end{subfigure}
\end{center}
\caption{}
\label{actions}
\end{figure}

The actions of
$\bigl( \begin{smallmatrix} A & 0 \\ 0 & 0 \end{smallmatrix} \bigr)$,
$\bigl( \begin{smallmatrix} 0 & B \\ 0 & 0 \end{smallmatrix} \bigr)$,
$\bigl( \begin{smallmatrix} 0 & 0 \\ C & 0 \end{smallmatrix} \bigr)$ and
$\bigl( \begin{smallmatrix} 0 & 0 \\ 0 & D \end{smallmatrix} \bigr)$
are illustrated in Figure \ref{actions}. In the diagram describing
$\varpi_{\bf H} \circ
\rho_2\bigl( \begin{smallmatrix} 0 & B \\ 0 & 0 \end{smallmatrix} \bigr)$
the vertical arrow disappears if $l=0$ or $2l+k+2=0$
and the diagonal arrow disappears if $k=0$.
Similarly, in the diagram describing $\varpi_{\bf H} \circ
\rho_2\bigl( \begin{smallmatrix} 0 & 0 \\ C & 0 \end{smallmatrix} \bigr)$
the vertical arrow disappears if $2l+k+3=0$ and the
diagonal arrow disappears if $k=-1$ or $l=0$.
Let
\begin{align*}
{\bf H}^+ &= \BB C\text{-span of }
\bigl\{ {\bf H_{k,l,m,n}}(Z);\: k \ge 0 \bigr\}, \\
{\bf H}^- &= \BB C\text{-span of }
\bigl\{ {\bf H_{k,l,m,n}}(Z);\: k \le -(2l+3) \bigr\}, \\
{\bf H}^0 &= \BB C\text{-span of }
\bigl\{ {\bf H_{k,l,m,n}}(Z);\: -(2l+2) \le k \le -1 \bigr\}
\end{align*}
(see Figure \ref{decomposition-fig4} and compare with Figure 2 in \cite{FL3}).
It follows that if ${\cal U}$ is a $\rho_2$-invariant subspace of
$\ker(\tr \circ \partial^+ : {\cal W} \to \Sh)$, then
$\varpi_{\bf H} ({\cal U})$ must be ${\bf H}^-$, ${\bf H}^0$, ${\bf H}^+$,
a direct sum of two of these spaces or all of ${\bf H}$.

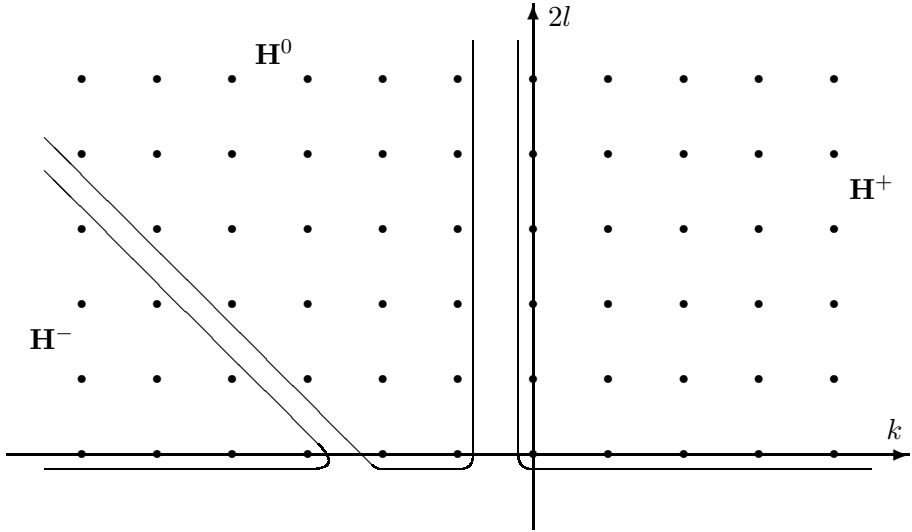
\begin{figure}
\begin{center}
\setlength{\unitlength}{1mm}
\begin{picture}(120,70)
\multiput(10,10)(10,0){11}{\circle*{1}}
\multiput(10,20)(10,0){11}{\circle*{1}}
\multiput(10,30)(10,0){11}{\circle*{1}}
\multiput(10,40)(10,0){11}{\circle*{1}}
\multiput(10,50)(10,0){11}{\circle*{1}}
\multiput(10,60)(10,0){11}{\circle*{1}}

\thicklines
\put(70,0){\vector(0,1){70}}
\put(0,10){\vector(1,0){120}}

\thinlines
\put(68,10){\line(0,1){55}}
\put(70,8){\line(1,0){45}}
\qbezier(68,10)(68,8)(70,8)

\put(62,10){\line(0,1){55}}
\put(48.6,8.6){\line(-1,1){43.6}}
\put(50,8){\line(1,0){10}}
\qbezier(60,8)(62,8)(62,10)
\qbezier(48.6,8.6)(49.2,8)(50,8)

\put(5,8){\line(1,0){35}}
\put(50,8){\line(1,0){10}}
\put(41.4,11.4){\line(-1,1){36.4}}
\qbezier(40,8)(44.8,8)(41.4,11.4)

\put(72,67){$2l$}
\put(117,12){$k$}
\put(3,24){${\bf H}^-$}
\put(33,62){${\bf H}^0$}
\put(112,44){${\bf H}^+$}
\end{picture}
\end{center}
\caption{Decomposition of $(\varpi_{\bf H} \circ \rho_2, {\bf H})$
into invariant components.}
\label{decomposition-fig4}
\end{figure}

Now let
\begin{align*}
{\cal Q}^+ &= \BB C\text{-span of }
\bigl\{ N(Z)^k \cdot {\bf F_{l,m,n}}(Z),\: N(Z)^k \cdot {\bf G_{l,m,n}}(Z),\:
{\bf H_{k,l,m,n}}(Z);\: k \ge 0 \bigr\}, \\
{\cal Q}^- &= \BB C\text{-span of }
\bigl\{ N(Z)^k \cdot {\bf F_{l,m,n}}(Z),\: N(Z)^k \cdot {\bf G_{l,m,n}}(Z),\:
{\bf H_{k,l,m,n}}(Z);\: k \le -(2l+3) \bigr\}, \\
{\cal Q}^0 &= \BB C\text{-span of }
\biggl\{ \begin{matrix}
N(Z)^k \cdot {\bf F_{l,m,n}}(Z),\: N(Z)^k \cdot {\bf G_{l,m,n}}(Z)
\text{ with } -(2l+1) \le k \le -2, \\
{\bf H_{k,l,m,n}}(Z) \text{ with } -(2l+2) \le k \le -1 \end{matrix} \biggr\}.
\end{align*}
It is easy to see that
$$
\varpi_{\bf H}({\cal Q}^+) = {\bf H}^+, \qquad
\varpi_{\bf H}({\cal Q}^-) = {\bf H}^-, \qquad
\varpi_{\bf H}({\cal Q}^0) = {\bf H}^0, \qquad
$$
${\cal Q}^+$, ${\cal Q}^-$, ${\cal Q}^0$ are invariant under the
$\rho_2$-action of $\mathfrak{gl}(2,\HC)$ and irreducible.
Moreover, these are the only irreducible subrepresentations of
$(\rho_2, \ker (\tr \circ \partial^+))$.
Furthermore, the quotient
$$
\frac{\ker (\tr \circ \partial^+ : {\cal W} \to \Sh)}
{{\cal Q}^- \oplus {\cal Q}^0 \oplus {\cal Q}^+}
$$
decomposes into five irreducible subrepresentations; they are the images of
\begin{align*}
\BB C\text{-span of }
&\bigl\{ N(Z)^{-1} \cdot {\bf F_{l,m,n}}(Z);\: l \ge 1/2 \bigr\}, \\
\BB C\text{-span of }
&\bigl\{ N(Z)^{-(2l+2)} \cdot {\bf G_{l,m,n}}(Z) ;\: l \ge 1/2 \bigr\}  \\
&= \BB C\text{-span of }
\bigl\{ N(Z)^{-1} \cdot {\bf F'_{l,m,n}}(Z);\: l \ge 1/2 \bigr\}, \\
\BB C\text{-span of }
&\bigl\{ N(Z)^{-1} \cdot {\bf G_{l,m,n}}(Z);\: l \ge 1/2 \bigr\}, \\
\BB C\text{-span of }
&\bigl\{ N(Z)^{-(2l+2)} \cdot {\bf F_{l,m,n}}(Z) ;\: l \ge 1/2 \bigr\}  \\
&= \BB C\text{-span of }
\bigl\{ N(Z)^{-1} \cdot {\bf G'_{l,m,n}}(Z);\: l \ge 1/2 \bigr\}, \\
\BB C\text{-span of } &\bigl\{ N(Z)^{-2} \cdot Z^+ \bigr\}.
\end{align*}
These subrepresentations are isomorphic to
$$
(\rho'_2,{\cal W}'/ \ker \tau_a^+), \qquad
(\rho'_2,{\cal W}'/ \ker \tau_a^-), \qquad
(\rho'_2,{\cal W}'/ \ker \tau_s^+), \qquad
(\rho'_2,{\cal W}'/ \ker \tau_s^-)
$$
(which appeared in Corollary \ref{FG-irred})
and the trivial one-dimensional representation respectively.
Combining this with Corollary \ref{W/ker-decomp-cor}, we obtain a description
of all thirteen irreducible components of $(\rho_2,{\cal W})$.

For future reference we make the following observation:

\begin{lem}  \label{nesting-lem}
  The element $N(Z)^{-2} \cdot Z^+ \in {\cal W}$ generates a subrepresentation
  ${\cal U}$ of ${\cal W}$ that has exactly two irreducible components:
  ${\cal Q}^0$ and the trivial one-dimensional representation.
  Moreover, the trivial one-dimensional representation does not appear as a
  subrepresentation of ${\cal U}$ -- it is isomorphic to the quotient
  ${\cal U}/{\cal Q}^0$.
\end{lem}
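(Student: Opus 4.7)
The plan is to compute the Lie algebra orbit of $N(Z)^{-2}\cdot Z^+$ under $\rho_2$ and to identify the composition series of ${\cal U}$ from the explicit formulas assembled in Subsection \ref{W-decomp-subsect}. First I would note that $N(Z)^{-2}\cdot Z^+$ sits inside $\ker(\tr\circ\partial^+)$ (an invariant subspace of ${\cal W}$), carries $K$-type $V_0\boxtimes V_0$, and is homogeneous of degree $-3$. The action formulas for $\rho_2\bigl(\begin{smallmatrix} A & 0 \\ 0 & 0 \end{smallmatrix}\bigr)$ and $\rho_2\bigl(\begin{smallmatrix} 0 & 0 \\ 0 & D \end{smallmatrix}\bigr)$ then reduce on this vector to scalar multiples of $(\deg+3)\,N(Z)^{-2}\cdot Z^+ = 0$, with the remaining terms vanishing on a $V_0\boxtimes V_0$ vector; in particular the entire block-diagonal subalgebra of $\mathfrak{gl}(2,\HC)$ annihilates $N(Z)^{-2}\cdot Z^+$.

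The only nontrivial motions come from the off-diagonal generators, and equations (\ref{BIW}) and (\ref{1-dim-gen-actions2}) compute them in closed form: $\rho_2\bigl(\begin{smallmatrix} 0 & B \\ 0 & 0 \end{smallmatrix}\bigr)\bigl(N(Z)^{-2}\cdot Z^+\bigr)$ is a non-zero linear combination of the $\mathbf{H_{-2,0,m,n}}(Z)$'s, while $\rho_2\bigl(\begin{smallmatrix} 0 & 0 \\ C & 0 \end{smallmatrix}\bigr)\bigl(N(Z)^{-2}\cdot Z^+\bigr)$ is a non-zero linear combination of the $\mathbf{H_{-1,0,m,n}}(Z)$'s. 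For $l=0$ the defining range $-(2l+2)\le k\le -1$ is exactly $k\in\{-2,-1\}$, so both families lie in $\mathbf{H}^0\subset {\cal Q}^0$. Since ${\cal Q}^0$ is irreducible, any one of these non-zero images generates all of ${\cal Q}^0$ under further Lie-algebra action, hence ${\cal Q}^0\subset{\cal U}$. Iterating $\rho_2$ starting from $N(Z)^{-2}\cdot Z^+$ produces only scalar multiples of itself and elements of ${\cal Q}^0$, so ${\cal U}=\BB C\cdot N(Z)^{-2}\cdot Z^+ + {\cal Q}^0$. Because none of the generators $\mathbf{F_{l,m,n}},\mathbf{G_{l,m,n}}$ (which require $l\ge 1/2$, hence carry $V_{l-1/2}\boxtimes V_{l+1/2}$ or $V_{l+1/2}\boxtimes V_{l-1/2}$) or $\mathbf{H_{k,l,m,n}}$ (which carry $V_{l+1/2}\boxtimes V_{l+1/2}$ with $l\ge 0$) is of $K$-type $V_0\boxtimes V_0$, this sum is vector-space direct, so ${\cal U}/{\cal Q}^0$ is one-dimensional with trivial action, yielding the first two assertions.

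For the last assertion I would argue by contradiction. If a trivial subrepresentation $\BB C\cdot v\subset{\cal U}$ existed, then $v$ would have to be a $V_0\boxtimes V_0$ vector annihilated by the off-diagonal generators. By the $K$-type inventory just used, the $V_0\boxtimes V_0$-part of ${\cal U}$ is exactly $\BB C\cdot N(Z)^{-2}\cdot Z^+$, but equations (\ref{BIW}) and (\ref{1-dim-gen-actions2}) exhibit non-zero images of $N(Z)^{-2}\cdot Z^+$ under these generators, a contradiction. This forces the nontrivial extension $0\to {\cal Q}^0 \to {\cal U}\to \BB C \to 0$, so the trivial summand appears only as the quotient. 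The main delicacy, and the only real calculation, is the $K$-type bookkeeping that identifies all $V_0\boxtimes V_0$ vectors of ${\cal U}$; this is routine from the generator list already displayed in Subsection \ref{W-decomp-subsect}.
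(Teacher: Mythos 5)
Your proof is correct and follows essentially the same route as the paper, which simply cites equations (\ref{BIW}) and (\ref{1-dim-gen-actions2}); you have filled in the $K$-type bookkeeping (the $V_0\boxtimes V_0$ vector of ${\cal U}$ is unique up to scalar, is killed by the block-diagonal subalgebra because it is homogeneous of degree $-3$, and is moved into ${\cal Q}^0$ by the off-diagonal generators) that those two equations are meant to encode. Nothing to correct.
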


\begin{proof}
The result follows from equations (\ref{BIW}), (\ref{1-dim-gen-actions2}).
\end{proof}

\subsection{Decomposition of $(\rho'_2,{\cal W}')$}  \label{W'-irred-decomp-subsection}

By Proposition 80 in \cite{FL1}, the representations $(\rho_2,{\cal W})$ and
$(\rho'_2,{\cal W}')$ of $\mathfrak{gl}(2,\HC)$ are dual to each other.
Thus the irreducible components of $(\rho'_2,{\cal W}')$ are dual to those of
$(\rho_2,{\cal W})$, and, in particular, these two representations have
the same number of irreducible components -- thirteen.
We would like to describe the irreducible components of $(\rho'_2,{\cal W}')$
more explicitly.

The idea is to use the quaternionic chain complex (\ref{W-sequence}) and deal
separately with $\ker \M \subset {\cal W}'$ and ${\cal W}'/\ker \M$.
Since $\ker \M$ contains the image
$\partial^+(\Sh') \simeq \Sh'/\ker \partial^+ = \Sh'/{\cal I}'_0$, the following
five irreducible components listed in Theorem \ref{rho'-decomposition}
reappear in $(\rho'_2, {\cal W}')$:
$$
(\rho', {\cal BH}^+/{\cal I}'_0), \quad (\rho', {\cal BH}^-/{\cal I}'_0), \quad
(\rho', \Sh^+/{\cal BH}^+), \quad (\rho', \Sh'^-/{\cal BH}^-), \quad
\bigl( \rho', {\cal J}'/({\cal BH}^++{\cal BH}^-) \bigr).
$$
The $\rho'_2$-invariant subspace ${\cal M} \subset \ker \M$ contains
all of the above components plus five more:
four that appeared in Corollary \ref{FG-irred}:
$$
(\rho'_2,{\cal W}'/ \ker \tau_a^+), \quad
(\rho'_2,{\cal W}'/ \ker \tau_a^-), \quad
(\rho'_2,{\cal W}'/ \ker \tau_s^+), \quad
(\rho'_2,{\cal W}'/ \ker \tau_s^-)
$$
as well as the trivial one-dimensional representation spanned by
$N(Z)^{-1} \cdot Z$.
By Proposition \ref{kerMx-lem}, these are the ten irreducible components of
$\ker \M$.
It remains to describe the three irreducible components of ${\cal W}'/\ker \M$
that are dual to $(\rho_2, {\cal Q}^+)$, $(\rho_2, {\cal Q}^-)$ and
$(\rho_2, {\cal Q}^0)$.

Recall the functions $\tilde F_{l,m,n}(Z)$, $\tilde G_{l,m,n}(Z)$,
$\tilde H_{l,m,n}(Z)$ from Proposition \ref{A-K-types}.
We also introduce functions
\begin{multline*}
\tilde I_{k,l,m,n}(Z) =
(2l+k+2) N(Z)^k \cdot \begin{pmatrix}
(l+m+1) t^l_{n\,\underline{m}}(Z) & -(l-m) t^l_{n\,\underline{m+1}}(Z) \\
-(l+m+1) t^l_{n+1\,\underline{m}}(Z) & (l-m) t^l_{n+1\,\underline{m+1}}(Z)
\end{pmatrix}  \\
+ k N(Z)^{k-1} \cdot \begin{pmatrix}
(l-n+1) t^{l+1}_{n\,\underline{m}}(Z) & (l-n+1) t^{l+1}_{n\,\underline{m+1}}(Z) \\
(l+n+2) t^{l+1}_{n+1\,\underline{m}}(Z) & (l+n+2) t^{l+1}_{n+1\,\underline{m+1}}(Z)
\end{pmatrix}, \qquad \begin{smallmatrix} k=\pm1,\pm2,\pm3,\dots \\
l=-\frac12,0,\frac12,1,\frac32,\dots \\
-l-1 \le m,n \le l \end{smallmatrix},
\end{multline*}
(when $l=-\frac12$, the function $\tilde I_{k,-\frac12,-\frac12,-\frac12}(Z)$
reduces to $k N(Z)^{k-1} \cdot Z$) and
$$
\tilde J_{l,m,n}(Z) = N(Z)^{-1} \cdot \begin{pmatrix}
(l-n) t^l_{n\,\underline{m}}(Z) & (l-n) t^l_{n\,\underline{m+1}}(Z) \\
(l+n+1) t^l_{n+1\,\underline{m}}(Z) & (l+n+1) t^l_{n+1\,\underline{m+1}}(Z)
\end{pmatrix}, \qquad \begin{smallmatrix}
l=\frac12,1,\frac32,2,\dots \\
-l \le m,n \le l-1 \end{smallmatrix}.
$$
Note that:
\begin{align*}
\text{when $k=1$, the function } &\tilde I_{1,l,m,n}(Z)
\text{ is $\tilde I_{l+1,m,n}(Z)$ introduced after Lemma \ref{FGH-reg}},  \\
\text{when $2l+k+2=0$, the function } &\tilde I_{k,l,m,n}(Z)
\text{ is proportional to $\tilde H'_{l+1,-n-1,-m-1}(Z)$},  \\
\text{when $2l+k+1=0$, the function } &\tilde I_{k,l,m,n}(Z)
\text{ is proportional to $\tilde I'_{l,-n-1,-m-1}(Z)$},  \\
&\tilde J_{\frac12,-\frac12,-\frac12}(Z) = N(Z)^{-1} \cdot Z.
\end{align*}

Recall that $K = U(2) \times U(2)$ sitting as the diagonal subgroup of
$GL(2,\HC)$.

\begin{prop}  \label{W'-K-types}
The functions
$$
N(Z)^k \cdot \tilde F_{l,m,n}(Z), \quad N(Z)^k \cdot \tilde G_{l,m,n}(Z), \quad
N(Z)^k \cdot \tilde H_{l,m,n}(Z), \quad \tilde I_{k,l,m,n}(Z), \quad
\tilde J_{l,m,n}(Z)
$$
span ${\cal W}'$ and generate the $K$-types of $(\rho'_2,{\cal W}')$.
More precisely, as representations of $SU(2) \times SU(2)$,
\begin{align*}
V_{l-\frac12} \boxtimes V_{l+\frac12} &= \BB C\text{-span of }
\left\{ N(Z)^k \cdot \tilde F_{l,m,n}(Z) ;\: \begin{smallmatrix}
-l-1 \le m \le l \\  -l \le n \le l-1 \end{smallmatrix} \right\},  \\
V_{l+\frac12} \boxtimes V_{l-\frac12} &= \BB C\text{-span of }
\left\{ N(Z)^k \cdot \tilde G_{l,m,n}(Z) ;\: \begin{smallmatrix}
-l \le m \le l-1 \\  -l-1 \le n \le l \end{smallmatrix} \right\},  \\
V_{l+\frac12} \boxtimes V_{l+\frac12} &= \BB C\text{-span of }
\left\{ N(Z)^k \cdot \tilde H_{l,m,n}(Z) ;\: -l-1 \le m,n \le l \right\},  \\
V_{l+\frac12} \boxtimes V_{l+\frac12} &= \BB C\text{-span of }
\left\{ \tilde I_{k,l,m,n}(Z) ;\: -l-1 \le m,n \le l \right\},  \\
V_{l-\frac12} \boxtimes V_{l-\frac12} &= \BB C\text{-span of }
\left\{ \tilde J_{l,m,n}(Z) ;\: -l \le m,n \le l-1 \right\},
\end{align*}
for $k$ and $l$ fixed.
\end{prop}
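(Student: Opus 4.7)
The strategy is to identify ${\cal W}' \simeq \HC \otimes \Sh'$ as a module over the diagonal $K = U(2) \times U(2) \subset GL(2,\HC)$, decompose via Clebsch--Gordan, and then match the five listed families against the resulting irreducible components.

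Under this diagonal embedding, $\rho'_2$ restricts to the tensor product of the standard left-right action on $\HC$ (realizing $V_{1/2} \boxtimes V_{1/2}$) with the action on $\Sh'$ whose $K$-types are the spans $\{N(Z)^k t^l_{n\,\underline{m}}(Z)\}_{m,n}$, each admissible pair $(k,l)$ contributing one copy of $V_l \boxtimes V_l$ (implicit in Theorem \ref{rho'-decomposition} together with the standard theory of $SU(2)$ matrix coefficients from \cite{V}). The Clebsch--Gordan rule $V_l \otimes V_{1/2} = V_{l-\frac12} \oplus V_{l+\frac12}$ on each $SU(2)$-factor then predicts, for every $(k,l)$, the four $K$-types $V_{l\pm\frac12} \boxtimes V_{l\pm\frac12}$. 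I would then verify that the coefficients $(l\pm m)$ and $(l\pm n)$ appearing in $\tilde F, \tilde G, \tilde H, \tilde J$ are exactly the Clebsch--Gordan coefficients assembling the elementary tensors $t^l_{n'\,\underline{m'}} \cdot e_{ij}$ (where $e_{ij}$ are matrix units of $\HC$) into the claimed irreducibles; this is a direct computation with Lemma \ref{rho'_2-action-lem} restricted to diagonal imaginary matrices.

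The family $\tilde I_{k,l,m,n}$ deserves separate attention because it mixes two homogeneity copies of $\Sh'$: an $N(Z)^k \cdot t^l$ piece and an $N(Z)^{k-1} \cdot t^{l+1}$ piece, both of total degree $2l+2k$. At this degree, the multiplicity of $V_{l+\frac12} \boxtimes V_{l+\frac12}$ in ${\cal W}'$ is two (one copy from each of $l' = l$ and $l' = l+1$ inside $\Sh'$); the two linearly independent $K$-type vectors per $(m,n)$ are supplied respectively by $N(Z)^k \cdot \tilde H_{l,m,n}$ (using only the $l' = l$ piece) and by $\tilde I_{k,l,m,n}$ (using the specific combination of both). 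The coefficients $(2l+k+2)$ and $k$ in the definition of $\tilde I$ are dictated by requiring the combination to transform as a single irreducible, which can be checked using Lemma 22 of \cite{FL1} and identity (\ref{Zt-identity}).

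Completeness then follows from a homogeneity-degree count: for each $d \in \BB Z$ and each half-integer $j \ge 0$, the predicted multiplicity of $V_j \boxtimes V_j$ in ${\cal W}'_d$ (which is $1$ for $j=0$ and $2$ for $j \ge \tfrac12$, subject to the appropriate parity on $d$) matches the number of basis vectors contributed by the five families; in particular, $\tilde J_{l,m,n}$ supplies the second copy of $V_j \boxtimes V_j$ at the single degree $d = 2j-1$ (corresponding to $l = j + \tfrac12$), which is the one degree where $\tilde I_{k,l-\frac12,m,n}$ cannot contribute because its definition excludes $k=0$. The main obstacle is the $\tilde I$ analysis: verifying that its two-term combination transforms as a single irreducible rather than as a mixture of $V_{l+\frac12} \boxtimes V_{l+\frac12}$ components (one of which is already realized by $\tilde H$), and carefully handling the degenerate parameter values $l=-\tfrac12$, $2l+k+2=0$, and $2l+k+1=0$ (at which $\tilde I$ reduces to an already-named function, as noted just after its definition) so that completeness holds without double-counting.
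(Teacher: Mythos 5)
Your proposal is correct and follows essentially the same route as the paper's (very terse) proof: linear independence plus a degree-by-degree $K$-type multiplicity count in each homogeneous component ${\cal W}'(d)$, and your Clebsch--Gordan bookkeeping -- including the observation that $\tilde J_{l,m,n}$ supplies the second copy of $V_{l-\frac12}\boxtimes V_{l-\frac12}$ exactly at the degree where $k=0$ is excluded from the $\tilde I$ family -- is precisely the check the paper leaves to the reader. One small caveat: the coefficients $(2l+k+2)$ and $k$ in $\tilde I_{k,l,m,n}$ are not forced by $K$-equivariance (any linear combination of the two $V_{l+\frac12}\boxtimes V_{l+\frac12}$ vectors at that degree is automatically $K$-irreducible); for this proposition all that matters is that $\tilde I_{k,l,m,n}$ is linearly independent of $N(Z)^k\cdot\tilde H_{l,m,n}$, which holds because its $t^{l+1}$-component carries the nonzero coefficient $k$.
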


\begin{proof}
Clearly, these functions are $K$-finite and linearly independent.
One checks that these functions span all of ${\cal W}'$ by checking,
for each $d \in \BB Z$, that these functions generate all the $K$-types of
$$
{\cal W}'(d) =
\{ F(Z) \in {\cal W}';\: \text{$F(Z)$ is homogeneous of degree $d$} \}.
$$
\end{proof}

\begin{lem}
We have:
$$
\partial^+(\Sh) = \BB C\text{-span of } \biggl\{ \tilde H_{l,m,n}(Z);\:
\begin{smallmatrix} l=0,\frac12,1,\frac32,\dots \\
-l-1 \le m,n \le l \end{smallmatrix} \biggr\}
\bigoplus
\BB C\text{-span of } \biggl\{ \tilde I_{k,l,m,n}(Z);\:
\begin{smallmatrix} k=\pm1,\pm2,\pm3,\dots \\
l=-\frac12, 0,\frac12,1,\frac32,\dots \\
-l-1 \le m,n \le l \end{smallmatrix} \biggr\}.
$$
\end{lem}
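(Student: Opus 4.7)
The plan is to verify the lemma by direct computation on the standard monomial basis $\{ N(Z)^k \cdot t^l_{n\,\underline m}(Z) : k \in \BB Z,\ l = 0, \tfrac12, 1, \dots,\ -l \le m, n \le l \}$ of $\Sh'$, matching the output of $\partial^+$ term-by-term against the defining formulas of $\tilde H_{l,m,n}(Z)$ and $\tilde I_{k,l,m,n}(Z)$.

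First, I would apply the product rule
$$
\partial^+ \bigl( N(Z)^k \cdot t^l_{n\,\underline m}(Z) \bigr)
= k N(Z)^{k-1} \bigl( \partial^+ N(Z) \bigr) \cdot t^l_{n\,\underline m}(Z)
+ N(Z)^k \cdot \partial^+ t^l_{n\,\underline m}(Z),
$$
using $\partial^+ N(Z) = Z$, which is immediate from $N(Z) = \det Z$ and the fact that $\partial^+$ is the adjugate-style matrix of derivatives. Then I would expand the first summand using identity (\ref{Zt-identity}) to write $Z \cdot t^l_{n\,\underline m}(Z)$ as a linear combination of $t^{l+\frac12}_{n'\,\underline{m'}}(Z)$ and $N(Z) \cdot t^{l-\frac12}_{n'\,\underline{m'}}(Z)$, and expand $\partial^+ t^l_{n\,\underline m}(Z)$ using Lemma 22 from \cite{FL1} as a $2 \times 2$ matrix of $t^{l-\frac12}_{n'\,\underline{m'}}(Z)$'s.

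After collecting terms, I expect the result to simplify, up to a nonzero overall scalar, to a multiple of $\tilde I_{k-1,\,l-\frac12,\,m-\frac12,\,n-\frac12}(Z)$ when $k \ne 0$, to a multiple of $\tilde H_{l-\frac12,\,m-\frac12,\,n-\frac12}(Z)$ when $k = 0$ and $l \ge \tfrac12$, and to zero when $k = l = m = n = 0$ (the constant function, consistent with $\ker \partial^+ = {\cal I}'_0$ from Theorem \ref{rho'-decomposition}). The two-tier structure of $\tilde I_{k,l,m,n}(Z)$ (a piece at the $N(Z)^k \cdot t^l$ level plus a piece at the $N(Z)^{k-1} \cdot t^{l+1}$ level, with coefficients $2l+k+2$ and $k$) is exactly what one sees emerging from combining the $Z \cdot t^l$ expansion (which contributes the $t^{l+\frac12}$ part with coefficient proportional to $k$) with the $\partial^+ t^l$ expansion (contributing the $t^{l-\frac12}$ part). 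The shifts $l \mapsto l-\frac12$ and $k \mapsto k-1$ are forced by the degree-lowering nature of $\partial^+$.

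The direct-sum statement then follows from Proposition \ref{W'-K-types}: the functions $\tilde H_{l,m,n}(Z)$ and $\tilde I_{k,l,m,n}(Z)$ all lie in the $V_{l+\frac12} \boxtimes V_{l+\frac12}$-isotypical component of ${\cal W}'$, and they are linearly independent since they sit at distinct homogeneity degrees ($2l$ versus $2l + 2k$ with $k \ne 0$), and for a fixed degree the $\tilde H$'s and $\tilde I$'s are distinguished indices among the listed $K$-basis. Surjectivity of $\partial^+$ onto the claimed span is immediate from the explicit formulas above, which exhibit a preimage of each $\tilde H$ and each $\tilde I$. The main obstacle is simply the combinatorial bookkeeping to verify that the coefficients arising from the product rule collapse to exactly match $(2l+k+2)$ and $k$ appearing in the definition of $\tilde I_{k,l,m,n}(Z)$; this is a routine but tedious computation in the same style as Lemma \ref{Mx-calculations}.
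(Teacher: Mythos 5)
Your approach is the same as the paper's: the proof there also expands $\Sh$ in the monomial basis $N(Z)^k \cdot t^l_{n\,\underline{m}}(Z)$ (Corollary 6 of \cite{FL3}), applies $\partial^+$ term by term via the product rule, Lemma 21 of \cite{FL1} and identity (\ref{Zt-identity}), and reads off that the images are exactly the $\tilde H$'s and $\tilde I$'s; the direct-sum statement is then absorbed into Proposition \ref{W'-K-types} exactly as you say. So the strategy is sound and matches the source.

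However, your predicted outcome of the computation contains a concrete indexing error: you claim $\partial^+\bigl(N(Z)^k \cdot t^l_{n\,\underline{m}}(Z)\bigr)$ is a multiple of $\tilde I_{k-1,\,l-\frac12,\,m-\frac12,\,n-\frac12}(Z)$ for $k \ne 0$, whereas the correct identity is
$$
\partial^+ \bigl( N(Z)^k \cdot t^l_{n\,\underline{m}}(Z) \bigr)
= \tfrac1{2l+1}\, \tilde I_{k,\,l-\frac12,\,m-\frac12,\,n-\frac12}(Z),
$$
with the index $k$ unchanged. Your own degree count shows why: $\partial^+$ lowers homogeneity by $1$, which is already fully accounted for by the shift $l \mapsto l-\tfrac12$ (since $\deg \tilde I_{k',l'} = 2k'+2l'$ and $\deg \tilde H_{l'} = 2l'$), so there is no room for an additional shift $k \mapsto k-1$; with both shifts the degree would drop by $3$. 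Concretely, from $\partial^+(N^k t^l) = kN^{k-1}\,Z\,t^l + N^k\,\partial^+ t^l$ and (\ref{Zt-identity}) one gets the coefficient $\tfrac{k}{2l+1}$ on the $N(Z)^{k-1}t^{l+\frac12}$ block and $1+\tfrac{k}{2l+1}=\tfrac{2l+k+1}{2l+1}$ on the $N(Z)^k t^{l-\frac12}$ block, which is precisely $\tfrac1{2l+1}\bigl((2l'+k+2)N^k t^{l'} + kN^{k-1}t^{l'+1}\bigr)$ with $l'=l-\tfrac12$ --- i.e. $\tfrac1{2l+1}\tilde I_{k,l-\frac12,\ast,\ast}$. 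With that correction the argument goes through as you outline it.
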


\begin{proof}
By Corollary 6 from \cite{FL3},
$$
\Sh = \BB C\text{-span of } \biggl\{ N(Z)^k \cdot t^l_{n\,\underline{m}}(Z);\:
\begin{smallmatrix} k=0,\pm1,\pm2,\dots \\
l= 0,\frac12,1,\frac32,\dots \\
-l \le m,n \le l \end{smallmatrix} \biggr\}.
$$
Using Lemma 21 in \cite{FL1} and identity (\ref{Zt-identity}), we obtain:
\begin{align*}
\partial^+ \bigl( t^0_{0\,\underline{0}}(Z) \bigr) &=0,  \\
\partial^+ \bigl( t^l_{n\,\underline{m}}(Z) \bigr)
&= \tilde H_{l-\frac12,m-\frac12,n-\frac12}(Z) \quad \text{if $l>0$},  \\
\partial^+ \bigl( N(Z)^k \cdot t^l_{n\,\underline{m}}(Z) \bigr)
&= \frac1{2l+1} \tilde I_{k,l-\frac12,m-\frac12,n-\frac12}(Z)
\quad \text{if $k \ne 0$}.
\end{align*}
\end{proof}

Now let
\begin{align*}
{\cal Q}'^+ &= \BB C\text{-span of }
\bigl\{ N(Z)^k \cdot \tilde F_{l,m,n}(Z),\: N(Z)^k \cdot \tilde G_{l,m,n}(Z),\:
N(Z)^k \cdot \tilde H_{l,m,n}(Z);\: k \ge 1 \bigr\}, \\
{\cal Q}'^- &= \BB C\text{-span of }
\bigl\{ N(Z)^k \cdot \tilde F_{l,m,n}(Z),\: N(Z)^k \cdot \tilde G_{l,m,n}(Z),\:
N(Z)^k \cdot \tilde H_{l,m,n}(Z);\: k \le -(2l+2) \bigr\}, \\
{\cal Q}'^0 &= \BB C\text{-span of }
\left\{ \begin{matrix}
N(Z)^k \cdot \tilde F_{l,m,n}(Z),\: N(Z)^k \cdot \tilde G_{l,m,n}(Z)
\text{ with } -2l \le k \le -1, \\
N(Z)^k \cdot \tilde H_{l,m,n}(Z) \text{ with } -(2l+1) \le k \le -1,  \\
\tilde J_{l,m,n}(Z) \text{ with } l=1,\frac32,2,\frac52,\dots
\end{matrix} \right\};
\end{align*}
we treat ${\cal Q}'^+$, ${\cal Q}'^-$ and ${\cal Q}'^0$
as subspaces of the quotient space ${\cal W}'/\ker \M$.

\begin{thm}
The quotient representation $(\rho'_2,{\cal W}'/\ker \M)$
is the direct sum of three irreducible components
$(\rho'_2,{\cal Q}'^+)$, $(\rho'_2,{\cal Q}'^-)$ and
$(\rho'_2,{\cal Q}'^0)$. These irreducible components are dual to
$(\rho_2,{\cal Q}^-)$, $(\rho_2,{\cal Q}^+)$ and
$(\rho_2,{\cal Q}^0)$ respectively via the invariant bilinear pairing
given in Proposition 80 in \cite{FL1}.
\end{thm}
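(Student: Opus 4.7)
The plan is to combine the duality $(\rho_2,{\cal W}) \simeq (\rho'_2,{\cal W}')^*$ from Proposition 80 of \cite{FL1} with the 13-component decomposition of $(\rho_2,{\cal W})$ worked out in Subsection \ref{W-decomp-subsect} to conclude that $(\rho'_2,{\cal W}')$ has exactly thirteen composition factors. I would then account for ten of them inside $\ker \M$. By Proposition \ref{kerMx-lem}, $\ker\M = \partial^+(\Sh') + {\cal M}$. Since $\ker(\partial^+:\Sh'\to {\cal W}')={\cal I}'_0$ (Theorem \ref{rho'-decomposition}), the image $\partial^+(\Sh')$ has five composition factors. Proposition \ref{A-space-prop} together with Corollary \ref{FG-irred} and the trivial summand ${\cal M}(-1)$ give seven composition factors inside ${\cal M}$: four doubly-regular quotients, one trivial, and two more that by Corollary \ref{XinterA} coincide with $\partial^+({\cal BH}^{\pm})$. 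Thus $\ker\M$ contributes $5+7-2=10$ composition factors and the quotient ${\cal W}'/\ker\M$ must contain exactly $13-10=3$.

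Next, I would identify these three factors with ${\cal Q}'^+$, ${\cal Q}'^-$, ${\cal Q}'^0$ by a $K$-type subtraction. Proposition \ref{W'-K-types} lists the $K$-types of ${\cal W}'$ in terms of the generators $N(Z)^k\tilde F_{l,m,n}$, $N(Z)^k\tilde G_{l,m,n}$, $N(Z)^k\tilde H_{l,m,n}$, $\tilde I_{k,l,m,n}$, $\tilde J_{l,m,n}$. Using the lemma immediately preceding the statement together with Proposition \ref{A-K-types}, the $K$-types of $\partial^+(\Sh')+{\cal M}$ are exactly those spanned by the harmonic $\tilde F,\tilde G,\tilde H$ generators (at $k=0$ and $k=-(2l+1)$), by $\tilde I_{k,l,m,n}$ for $k=1$ and for $2l+k+2=0$ or $2l+k+1=0$, by $N(Z)^{-1}Z$ and by $\tilde J_{\frac12,-\frac12,-\frac12}$. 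What remains in ${\cal W}'$ modulo $\ker\M$ is precisely the list of generators displayed in the definitions of ${\cal Q}'^+$, ${\cal Q}'^-$, ${\cal Q}'^0$. The $\rho'_2$-invariance of these three pieces in the quotient is immediate from Lemma \ref{rho'_2-action-lem}, and their irreducibility is proved by the same $K$-type chasing argument that established irreducibility of ${\cal Q}^+,{\cal Q}^-,{\cal Q}^0$ in the previous subsection, since the relevant $K$-type diagram is identical (Figure \ref{decomposition-fig4}).

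For the duality claim, the invariant pairing of Proposition 80 in \cite{FL1} is perfect, and each of ${\cal Q}^+$, ${\cal Q}^-$, ${\cal Q}^0$ is distinguished among the composition factors of ${\cal W}$ by its $K$-type support (the three components absent from the image of $\M$ in ${\cal W}/\ker(\tr\circ\partial^+)$, namely the ones indexed by the shifted ranges in Figure \ref{decomposition-fig4}). The pairing therefore must restrict to a non-degenerate pairing between ${\cal W}'/\ker\M$ and the image $\M({\cal W}')\subset\ker(\tr\circ\partial^+)$; the sign swap ${\cal Q}'^\pm \leftrightarrow {\cal Q}^{\mp}$ is forced by degree matching, because the pairing integrates a product homogeneous of degree $-4$, and the degrees $k\ge 1$ appearing in ${\cal Q}'^+$ pair non-trivially only against the degrees $k\le-(2l+3)$ appearing in ${\cal Q}^-$. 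I expect the main technical obstacle to be step two of the $K$-type bookkeeping: carefully verifying that the $\tilde I_{k,l,m,n}$ produced by $\partial^+$ from $N(Z)^k\cdot t^l_{n\,\underline m}$, combined with the harmonic generators of ${\cal M}$ written in terms of $\tilde F, \tilde G, \tilde H, \tilde I, \tilde J$, really do exhaust the complement of ${\cal Q}'^+\oplus{\cal Q}'^-\oplus{\cal Q}'^0$ without over- or under-counting at the boundary values of $k$ where the $\tilde I_{k,l,m,n}$ degenerate into $\tilde H'_{l+1,\cdot,\cdot}$ or $\tilde I'_{l,\cdot,\cdot}$.
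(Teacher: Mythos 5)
Your proposal is correct and follows essentially the same route as the paper: Proposition 80 of \cite{FL1} gives the duality with $(\rho_2,{\cal W})$, the thirteen components of $(\rho_2,{\cal W})$ from Subsection \ref{W-decomp-subsect} together with the ten components of $\ker\M$ leave three for the quotient, and these are identified with ${\cal Q}'^{\pm}$, ${\cal Q}'^0$ by matching the $K$-types of Proposition \ref{W'-K-types} against those of ${\cal Q}^{\mp}$, ${\cal Q}^0$ under the pairing (your degree-matching argument for the swap ${\cal Q}'^{\pm}\leftrightarrow{\cal Q}^{\mp}$ is exactly the point). Your bookkeeping $5+7-2=10$ for $\ker\M$ via Corollary \ref{XinterA} is, if anything, stated more carefully than in the text.
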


\begin{proof}
The result follows by identifying the functions listed in
Proposition \ref{W'-K-types} that are dual to the $K$-types of
$(\rho_2,{\cal Q}^-)$, $(\rho_2,{\cal Q}^+)$ and
$(\rho_2,{\cal Q}^0)$ respectively via the bilinear pairing
given in Proposition 80 in \cite{FL1}.
\end{proof}

Combining this with our description of irreducible components of
$(\rho'_2, \ker \M)$, we obtain a complete list of all thirteen
irreducible components of $(\rho'_2,{\cal W}')$.
Now that we know the irreducible components of $(\rho_2,{\cal W})$ and
$(\rho'_2,{\cal W}')$, it is easy to identify the indecomposable
subrepresentations of $(\rho_2,{\cal W})$ and $(\rho'_2,{\cal W}')$.

Collecting the $K$-types of each irreducible
components $(\rho'_2,{\cal W}')$, we obtain a decomposition of
${\cal W}'$ into a direct sum of thirteen $K$-invariant vector subspaces.
This decomposition is compatible with the decomposition of $(\rho_1,\Zh)$
into irreducible components
$$
(\rho_1,\Zh) = (\rho_1,\Zh^+) \oplus (\rho_1,\Zh^0) \oplus (\rho_1,\Zh^-)
$$
given in Section 4 of \cite{FL3} in the following sense.

\begin{lem}  \label{W-Zh-compatibility-lemma}
As vector spaces,
$\HC \otimes \Zh = {\cal W}' = (\HC \otimes \Zh^+) \oplus
(\HC \otimes \Zh^0) \oplus (\HC \otimes \Zh^-)$,
and
\begin{align*}
\HC \otimes \Zh^+ &= \partial^+\bigl({\cal BH}^+/{\cal I}_0\bigr) \oplus
\partial^+\bigl(\Sh^+/{\cal BH}^+\bigr) \oplus
\bigl({\cal W}'/ \ker \tau_a^+\bigr) \oplus
\bigl({\cal W}'/ \ker \tau_s^+\bigr) \oplus {\cal Q}'^+,  \\
\HC \otimes \Zh^0 &= \partial^+\bigl({\cal BH}^-/{\cal I}_0\bigr) \oplus
\partial^+\bigl({\cal J}'/({\cal BH}^++{\cal BH}^-)\bigr) \oplus
\BB C\text{-span of } \bigl\{ N(Z)^{-1} \cdot Z \bigr\} \\
&\hskip1in \oplus \bigl({\cal W}'/ \ker \tau_a^-\bigr) \oplus
\bigl({\cal W}'/ \ker \tau_s^-\bigr) \oplus {\cal Q}'^0,  \\
\HC \otimes \Zh^- &= \partial^+\bigl(\Sh'^-/{\cal BH}^-\bigr) \oplus {\cal Q}'^-.
\end{align*}

On the other hand, the image of $\M: {\cal W}' \to {\cal W}$
has three irreducible components $(\rho_2,{\cal Q}^+)$,
$(\rho_2,{\cal Q}^0)$, $(\rho_2,{\cal Q}^-)$, and
$$
{\cal Q}^+ \subset \HC \otimes \Zh^+, \qquad
{\cal Q}^0 \subset \HC \otimes \Zh^0, \qquad
{\cal Q}^- \subset \HC \otimes \Zh^-.
$$
\end{lem}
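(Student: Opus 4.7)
\medskip

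\noindent\textbf{Proof plan.} The plan is to reduce the statement to bookkeeping of $K$-types, where $K=U(2)\times U(2)$. The key observation is that the $\rho'_2$-action of $K$ on $\HC\otimes\Zh={\cal W}'$ respects the tensor-product structure: on the $\HC$-factor it acts via the $K$-action on constants (an $\HC$-bimodule action by elements of $U(2)$), and on the $\Zh$-factor it acts by the scalar $\rho_1$-action; crucially, neither action changes the power $k$ of $N(Z)$ appearing in a basis element $N(Z)^k\cdot t^l_{n\,\underline m}(Z)$ of $\Zh$. Consequently, the first step is to recall from \cite{FL3} the explicit $K$-type description
\begin{align*}
\Zh^+&=\BB C\text{-span}\bigl\{N(Z)^k\cdot t^l_{n\,\underline m}(Z);\,k\ge 0\bigr\},\\
\Zh^0&=\BB C\text{-span}\bigl\{N(Z)^k\cdot t^l_{n\,\underline m}(Z);\,-2l\le k\le -1\bigr\},\\
\Zh^-&=\BB C\text{-span}\bigl\{N(Z)^k\cdot t^l_{n\,\underline m}(Z);\,k\le -(2l+1)\bigr\}
\end{align*}
(or the analogous ranges used in \cite{FL3}). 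Tensoring with the $K$-bimodule $\HC$ does not enlarge the set of exponents of $N(Z)$ appearing, so $\HC\otimes\Zh^+$, $\HC\otimes\Zh^0$, $\HC\otimes\Zh^-$ are $K$-invariant subspaces of ${\cal W}'$ whose direct sum is all of ${\cal W}'$.

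The second step is to go through each of the thirteen irreducible components of $(\rho'_2,{\cal W}')$ identified in Sections \ref{DRinQCC-section}--\ref{W'-irred-decomp-subsection} and read off the range of exponents $k$ of $N(Z)$ from the explicit $K$-type bases. For the five components coming from $\partial^+(\Sh')$, I will apply the $\partial^+$-formulas derived in the preceding lemma to each generator $N(Z)^k\cdot t^l_{n\,\underline m}(Z)$ lying in the corresponding summand of $\Sh'$ given in Theorem \ref{rho'-decomposition}; since $\partial^+$ either preserves or lowers $k$ by one (coming from $\partial^+N(Z)=Z^+$), the ranges $k\in\{0,1\}$ for ${\cal BH}^+$ etc.\ produce exponents that match the ranges defining $\Zh^+$, $\Zh^0$, $\Zh^-$. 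For the four doubly-regular components and the trivial one-dimensional component ${\cal M}(-1)=\BB C\text{-span}\{N(Z)^{-1}\cdot Z\}$, I will use the harmonic-part descriptions in Proposition \ref{A-K-types}; the components labelled $V_\bullet\boxtimes V_\bullet$ inside ${\cal M}(2l)$ live in degree $2l\ge 0$ hence lie in $\HC\otimes\Zh^+$ or $\HC\otimes\Zh^0$ according to whether the relevant basis elements have $k\ge 0$ or $k=-1$ (the non-harmonic tail contributes a single $k=+1$ term, still inside $\Zh^+$). Finally, for ${\cal Q}'^+$, ${\cal Q}'^-$, ${\cal Q}'^0$, I will match directly against the explicit $k$-ranges in their definitions.

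The third step handles the dual statement about $(\rho_2,{\cal W})$ and the image of $\M$. By Lemma \ref{Mx-calculations}, $\M$ acts on the basis functions $N(Z)^k\cdot t^l_{n\,\underline m}(Z)\otimes\HC$ either preserving $k$ (harmonic term) or changing $k$ by $\pm 1$ (non-harmonic correction). I will compare the resulting list of $K$-types in each of ${\cal Q}^+$, ${\cal Q}^-$, ${\cal Q}^0$ (defined in Subsection \ref{W-decomp-subsect} via the basis $\bf{F_{l,m,n}}$, $\bf{G_{l,m,n}}$, $\bf{H_{k,l,m,n}}$) with the $k$-ranges of $\HC\otimes\Zh^+$, $\HC\otimes\Zh^-$, $\HC\otimes\Zh^0$ respectively, observing that the ${\cal Q}^{\pm}$ include $\bf{F}$- and $\bf{G}$-type generators with $k\ge 0$ (resp.\ $k\le -(2l+3)$), which fall into $\Zh^+$ (resp.\ $\Zh^-$), while ${\cal Q}^0$ sits in the complementary middle range.

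The main obstacle is purely combinatorial: tracking the exponents $k$ and the parameters $(l,m,n)$ through each of the thirteen components without off-by-one errors, in particular at the boundary cases $k=0,-1,-2l,-(2l+1)$ where the nearly-adjacent ranges defining $\Zh^+,\Zh^0,\Zh^-$ meet and where the $\partial^+$-shift, the non-harmonic correction in $\tilde I_{k,l,m,n}$, and the special generators $\tilde J_{l,m,n}$ and $N(Z)^{-1}\cdot Z$ need to be placed in the correct summand. None of the individual checks is hard; the challenge is assembling them into a complete and consistent table.
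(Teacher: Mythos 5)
Your plan is exactly the route the paper takes: the paper offers no argument for this lemma beyond ``collecting the $K$-types of each irreducible component'' and matching the resulting $N(Z)$-exponent ranges against the decomposition of $(\rho_1,\Zh)$ from \cite{FL3}, which is precisely your bookkeeping scheme, and your treatment of the thirteen components and of the image of $\M$ is organized correctly. The one substantive problem is the input data you start from: the ranges you recall for $\Zh^0$ and $\Zh^-$ are off by one. The correct decomposition from \cite{FL3} is
$\Zh^+=\BB C\text{-span}\{N(Z)^k t^l_{n\,\underline m};\,k\ge 0\}$,
$\Zh^0=\BB C\text{-span}\{N(Z)^k t^l_{n\,\underline m};\,-(2l+1)\le k\le -1\}$,
$\Zh^-=\BB C\text{-span}\{N(Z)^k t^l_{n\,\underline m};\,k\le -(2l+2)\}$.
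With the ranges as you wrote them, $N(Z)^{-1}$ itself ($l=0$, $k=-1$) fails the condition $-2l\le k$ and would be assigned to $\Zh^-$, even though it is the generator of $\Zh^0$ used throughout Section \ref{Alg-section}; likewise the harmonic parts of $\tilde F'_{l,m,n}$ and $\tilde G'_{l,m,n}$, which are $N(Z)^{-1}t^l_{\cdot\,\underline\cdot}(Z^{-1})=\pm N(Z)^{-(2l+1)}t^l_{\cdot\,\underline\cdot}(Z)$, and the generators $N(Z)^{-(2l+1)}\cdot\tilde H_{l,m,n}$ of ${\cal Q}'^0$ would all land in $\HC\otimes\Zh^-$, contradicting the second and third displayed identities of the lemma. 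Since you hedged on the ranges and correctly identified the boundary cases $k=-1,-2l,-(2l+1)$ as the danger zone, this is a correctable slip rather than a flaw in the method; once the ranges are fixed, all the boundary checks close up (e.g.\ $\partial^+$ of $N(Z)^{-2l}t^l_{n\,\underline m}\in{\cal BH}^-$ produces terms $N(Z)^{-2l}t^{l-\frac12}$ and $N(Z)^{-2l-1}t^{l+\frac12}$, both sitting exactly on the boundary $k=-(2l'+1)$ of $\Zh^0$).
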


\section{Polarization of Vacuum and Equivariant Maps $(\rho'_2,{\cal W}') \to
(\pi_l \otimes \pi_r, \widetilde{{\cal V} \otimes {\cal V}'})$}  \label{VP-section}

\subsection{Equivariant Maps $(\rho'_2,{\cal W}') \to
(\pi_l \otimes \pi_r, \widetilde{{\cal V} \otimes {\cal V}'})$}

Recall the $\HC$-modules $\BB S$ and $\BB S'$ introduced just before equation
(\ref{SotimesS}).
We denote by $\tilde{\cal V}$ the space of $\BB S$-valued holomorphic
left regular functions on $\HC$ (possibly with singularities) and by
$\tilde{\cal V}'$ the space of $\BB S'$-valued holomorphic right regular
functions on $\HC$ (possibly with singularities).
The group $GL(2,\HC)$ acts on these spaces via
\begin{align*}
\pi_l(h): \: f(Z) \quad &\mapsto \quad (\pi_l(h)f)(Z) =
\frac {(cZ+d)^{-1}}{N(cZ+d)} \cdot f \bigl( (aZ+b)(cZ+d)^{-1} \bigr),  \\
\pi_r(h): \: g(Z) \quad &\mapsto \quad (\pi_r(h)g)(Z) =
g \bigl( (a'-Zc')^{-1}(-b'+Zd') \bigr) \cdot \frac {(a'-Zc')^{-1}}{N(a'-Zc')}
\end{align*}
respectively, where $f \in \tilde{\cal V}$, $g \in \tilde{\cal V}'$,
$h = \bigl(\begin{smallmatrix} a' & b' \\ c' & d' \end{smallmatrix}\bigr)
\in GL(2,\HC)$ and 
$h^{-1} = \bigl(\begin{smallmatrix} a & b \\ c & d \end{smallmatrix}\bigr)$.
Inside $\tilde{\cal V}$ and $\tilde{\cal V}'$, we have subspaces
\begin{align*}
{\cal V}^+ &= \{ f \in \tilde{\cal V};\:
\text{$f: \HC \to \BB S$ is a polynomial map}\}, \\
{\cal V}^- &= \bigl\{ f \in \tilde{\cal V};\:
\pi_l \bigl( \begin{smallmatrix} 0 & 1 \\ 1 & 0 \end{smallmatrix} \bigr) f
= N(Z)^{-1} \cdot Z^{-1} \cdot f(Z^{-1}) \in {\cal V}^+ \bigr\}, \\
{\cal V}'^+ &= \{ g \in \tilde{\cal V}';\:
\text{$g: \HC \to \BB S'$ is a polynomial map}\}, \\
{\cal V}'^- &= \bigl\{ g \in \tilde{\cal V}';\:
\pi_r \bigl( \begin{smallmatrix} 0 & 1 \\ 1 & 0 \end{smallmatrix} \bigr) g
= -N(Z)^{-1} \cdot g(Z^{-1}) \cdot Z^{-1} \in {\cal V}'^+ \bigr\}.
\end{align*}

We can form a tensor product representation
$(\pi_l \otimes \pi_r, \tilde{\cal V} \otimes \tilde{\cal V}')$
and consider a larger space
$$
\widetilde{{\cal V} \otimes {\cal V}'} = \left\{ \begin{matrix}
\text{holomorphic $\HC$-valued functions in two variables} \\
\text{$Z_1,Z_2 \in \HC$ (possibly with singularities) that are left regular} \\
\text{with respect to $Z_1$ and right regular with respect to $Z_2$}
\end{matrix} \right\}.
$$
The action of $GL(2,\HC)$ on these functions is given by
\begin{multline*}
(\pi_l \otimes \pi_r)(h): \: F(Z_1,Z_2) \quad \mapsto \quad
\bigl( (\pi_l \otimes \pi_r)(h)F \bigr)(Z_1,Z_2) \\
= \frac {(cZ_1+d)^{-1}}{N(cZ_1+d)} \cdot
F \bigl( (aZ_1+b)(cZ_1+d)^{-1}, (a'-Z_2c')(-b'+Z_2d')^{-1} \bigr) \cdot
\frac {(a'-Z_2c')^{-1}}{N(a'-Z_2c')}.
\end{multline*}
Differentiating, we obtain the corresponding action of the Lie algebra
$\mathfrak{gl}(2,\HC)$ also denoted by $\pi_l \otimes \pi_r$.

We denote by $\DR$ the restriction to the diagonal map
\begin{equation}  \label{DiagRes}
\DR: F(Z_1,Z_2) \mapsto F(Z,Z).
\end{equation}
Clearly, $\DR$ intertwines the actions of $\pi_l \otimes \pi_r$ and $\rho_2$.

We consider maps ${\cal W}' \to \widetilde{{\cal V} \otimes {\cal V}'}$
\begin{equation}  \label{fork}
(J_R F)(Z_1,Z_2) = \frac {12i}{\pi^3} \int_{W \in U(2)_R}
\frac{(W-Z_1)^{-1}}{N(W-Z_1)} \cdot F(W) \cdot \frac{(W-Z_2)^{-1}}{N(W-Z_2)} \,dV.
\end{equation}
If $Z_1, Z_2 \in \BB D^-_R \sqcup \BB D^+_R$, the integrand has no singularities
and the result is a holomorphic function in two variables $Z_1, Z_2$
which is harmonic in each variable separately.
We will see soon that the result depends on whether $Z_1$ and $Z_2$
are both in $\BB D^+_R$, both in $\BB D^-_R$ or one is in $\BB D^+_R$ and
the other is in $\BB D^-_R$.
Thus the expression (\ref{fork}) determines four different maps.
We use notations $J_R^{++}$ and $J_R^{--}$ to signify
$Z_1, Z_2 \in \BB D^+_R$ and $Z_1, Z_2 \in \BB D^-_R$ respectively.
(Notations $J_R^{+-}$ and $J_R^{-+}$ will be introduced in the next subsection.)
These maps $J_R$ are closely related to the maps $I_R$ given by
equation (34) in Chapter 6 of \cite{FL3}
\begin{equation}  \label{I_R}
\Zh \ni f \quad \mapsto \quad (I_R f)(Z_1,Z_2) =
\frac i{2\pi^3} \int_{W \in U(2)_R} \frac{f(W) \,dV}{N(W-Z_1) \cdot N(W-Z_2)}
\quad \in \widetilde{{\cal H} \otimes {\cal H}},
\end{equation}
where $\widetilde{{\cal H} \otimes {\cal H}}$ denotes the space of
holomorphic $\BB C$-valued functions in two variables $Z_1,Z_2 \in \HC$
(possibly with singularities) that are harmonic in each variable separately.
Indeed, $I_R$ extends to a map on $\HC \otimes \Zh = {\cal W}'$, and
\begin{equation}  \label{I-J-relation}
J_R F(Z_1,Z_2) = 24 \overrightarrow{\partial}_{Z_1} \bigl( I_RF(Z_1,Z_2) \bigr)
\overleftarrow{\partial}_{Z_2}.
\end{equation}

Recall from Section 2 of \cite{FL3} that the group $U(2,2)_R$ is a conjugate
of $U(2,2)$, which is a real form of $GL(2,\HC)$ preserving $U(2)_R$,
$\BB D_R^+$ and $\BB D_R^-$.

\begin{prop}  \label{equivariance}
The maps $F \mapsto (J_R F)(Z_1,Z_2)$ are $U(2,2)_R$ and
$\mathfrak{gl}(2,\HC)$-equivariant maps from $(\rho'_2,{\cal W}')$
to $(\pi_l \otimes \pi_r, \widetilde{{\cal V} \otimes {\cal V}'})$.
\end{prop}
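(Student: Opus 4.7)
The plan is to establish the group-level equivariance for $h \in U(2,2)_R$ by a direct change of variables in the integral (\ref{fork}), and then pass to Lie algebra equivariance over $\mathfrak{gl}(2,\HC)$ by complexification. Since $U(2,2)_R$ is a real form of $GL(2,\HC)$ and the defining formulas of $\rho'_2$ and $\pi_l \otimes \pi_r$ are polynomial (hence $\BB C$-linear) in the matrix entries, differentiating the $U(2,2)_R$-equivariance yields equivariance under $\mathfrak{u}(2,2)$, which extends by $\BB C$-linearity to all of $\mathfrak{gl}(2,\HC) = \mathfrak{u}(2,2) \otimes_{\BB R} \BB C$.

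For the group equivariance, I would fix $h \in U(2,2)_R$ with $h^{-1} = \bigl(\begin{smallmatrix} a & b \\ c & d \end{smallmatrix}\bigr)$ and $h = \bigl(\begin{smallmatrix} a' & b' \\ c' & d' \end{smallmatrix}\bigr)$, and perform the M\"obius substitution $W = (a\tilde W + b)(c\tilde W + d)^{-1}$ in the integral $(J_R \rho'_2(h) F)(Z_1,Z_2)$. Because $h \in U(2,2)_R$ preserves $U(2)_R$ and $\BB D_R^{\pm}$, both the contour and the positions of $Z_1,Z_2$ remain valid. Three pieces must then be tracked: (i) the factor $F\bigl((a\tilde W+b)(c\tilde W+d)^{-1}\bigr)$ is rewritten, via the defining formula for $\rho'_2$, as $(\rho'_2(h)F)(\tilde W)$ multiplied by the explicit internal factors $\frac{(a'-\tilde W c')}{N(a'-\tilde W c')}$ on the left and $\frac{(c\tilde W + d)}{N(c\tilde W + d)}$ on the right; (ii) the two Cauchy-Fueter kernels transform by the standard two-sided cocycle identity (a direct consequence of Proposition 11 from \cite{FL1}), producing the external factors $\frac{(cZ_1+d)^{-1}}{N(cZ_1+d)}$ on the left and $\frac{(a'-Z_2c')^{-1}}{N(a'-Z_2c')}$ on the right, together with additional internal cocycle factors in $\tilde W$; (iii) the volume form $dV$ on $U(2)_R$ picks up a Jacobian expressible as a specific power of $N(c\tilde W+d)$ and $N(a'-\tilde W c')$. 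A direct calculation should then show that the internal $\tilde W$-factors from (i), (ii) and (iii) combine to $1$, leaving only the external factors matching the action formula of $\pi_l \otimes \pi_r$.

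The main obstacle is the bookkeeping of the quaternionic cocycle factors and their inverses and norms: since the two kernels sit on opposite sides of $F(W)$ and involve two independent M\"obius actions (one on $Z_1$ via $h^{-1}$, the other on $Z_2$ via $h$), one must verify that the left-hand factors from the $Z_1$-kernel, the right-hand factors from the $Z_2$-kernel, the sandwich factors from $\rho'_2$, and the symmetric Jacobian all cancel in precisely the correct order -- this is where the quaternionic non-commutativity makes the verification delicate. An alternative route would be to exploit the identity (\ref{I-J-relation}), $J_R F = 24\,\overrightarrow{\partial}_{Z_1}(I_R F)\overleftarrow{\partial}_{Z_2}$, combined with the equivariance of $I_R$ established in \cite{FL3} and the intertwining properties of the quaternionic derivatives $\overrightarrow{\partial}$ and $\overleftarrow{\partial}$; however, this route requires first reconciling the action $\rho'_2$ on ${\cal W}' = \HC \otimes \Zh$ with the scalar action on $\Zh$ used for $I_R$, which is itself non-trivial, so I expect the direct change-of-variables argument to be cleanest.
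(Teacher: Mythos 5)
Your proposal is correct and follows essentially the same route as the paper: the paper's proof also performs the M\"obius change of variables $W \mapsto \tilde W$ in the integral for $(J_R\,\rho'_2(h)F)(Z_1,Z_2)$, cancels the internal cocycle and Jacobian factors using Lemmas 10 and 61 of \cite{FL1}, and then deduces the $\mathfrak{gl}(2,\HC)$-equivariance from $\mathfrak{gl}(2,\HC) \simeq \BB C \otimes \mathfrak{u}(2,2)_R$.
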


\begin{proof}
We need to show that, for all $h \in U(2,2)_R$, the maps (\ref{fork})
commute with the action of $h$. Writing
$h= \bigl(\begin{smallmatrix} a' & b' \\ c' & d' \end{smallmatrix}\bigr)$,
$h^{-1}= \bigl(\begin{smallmatrix} a & b \\ c & d \end{smallmatrix}\bigr)$,
$$
\tilde Z_1 = (aZ_1+b)(cZ_1+d)^{-1}, \qquad
\tilde Z_2 = (aZ_2+b)(cZ_2+d)^{-1}, \qquad
\tilde W = (aW+b)(cW+d)^{-1}
$$
and using Lemmas 10 and 61 from \cite{FL1} we obtain:
\begin{multline*}
\int_{W \in U(2)_R} \frac{(W-Z_1)^{-1}}{N(W-Z_1)} \cdot (\rho'_2(h)F)(W) \cdot
\frac{(W-Z_2)^{-1}}{N(W-Z_2)} \,dV  \\
= \int_{W \in U(2)_R} \frac{(W-Z_1)^{-1}}{N(W-Z_1)} \cdot
\frac{(a'-Wc')}{N(a'-Wc')} \cdot F(\tilde W) \cdot \frac{(cW+d)}{N(cW+d)}
\cdot \frac{(W-Z_2)^{-1}}{N(W-Z_2)} \,dV  \\
= \int_{W \in U(2)_R} \frac{(cZ_1+d)^{-1} \cdot (\tilde W - \tilde Z_1)^{-1} \cdot
F(\tilde W) \cdot (\tilde W - \tilde Z_2)^{-1} \cdot (a'-Z_2c')^{-1} \,dV}
{N(cZ_1+d) \cdot N(a'-Wc')^2 \cdot N(\tilde W - \tilde Z_1) \cdot
N(\tilde W - \tilde Z_2) \cdot N(cW+d)^2 \cdot N(a'-Z_2c')}  \\
= \frac{(cZ_1+d)^{-1}}{N(cZ_1+d)} \cdot \int_{\tilde W \in U(2)_R}
\frac{(\tilde W - \tilde Z_1)^{-1}}{N(\tilde W - \tilde Z_1)} \cdot F(\tilde W)
\cdot \frac{(\tilde W - \tilde Z_2)^{-1}}{N(\tilde W - \tilde Z_2)} \,dV
\cdot \frac{(a'-Z_2c')^{-1}}{N(a'-Z_2c')}.
\end{multline*}
This proves the $U(2,2)_R$-equivariance.
The $\mathfrak{gl}(2,\HC)$-equivariance then follows since
$\mathfrak{gl}(2,\HC) \simeq \BB C \otimes \mathfrak{u}(2,2)_R$.
\end{proof}

We compose the maps $J_R$ with the restriction onto the diagonal map $\DR$
defined by (\ref{DiagRes}).
Note that the subspace
${\cal V}^+ \otimes {\cal V}'^+ \subset \widetilde{{\cal V} \otimes {\cal V}'}$
can be described as the $\HC$-valued polynomials in
$\widetilde{{\cal V} \otimes {\cal V}'}$.

\begin{thm}  \label{J_R(W^+-)}
The maps $F \mapsto (J_R F)(Z_1,Z_2)$ have the following properties:
\begin{enumerate}
\item  \label{one}
If $Z_1, Z_2 \in \BB D^+_R$, then $J_R^{++}$ maps ${\cal W}'$ into
${\cal V}^+ \otimes {\cal V}'^+ \subset \widetilde{{\cal V} \otimes {\cal V}'}$,
annihilates all irreducible components of $(\rho'_2,{\cal W}')$,
except for ${\cal Q}'^+$, and
$$
\DR \circ (J_R^{++} F)(Z_1,Z_2) = \M F, \qquad
\text{if $F \in \HC \otimes \Zh^+$}.
$$

\item
If $Z_1, Z_2 \in \BB D^-_R$, then $J_R^{--}$ maps ${\cal W}'$ into
${\cal V}^- \otimes {\cal V}'^- \subset
\widetilde{{\cal V} \otimes {\cal V}'}$, annihilates all irreducible
components of $(\rho'_2,{\cal W}')$, except for ${\cal Q}'^-$, and
$$
\DR \circ (J_R^{--} F)(Z_1,Z_2) = \M F, \qquad
\text{if $F \in \HC \otimes \Zh^-$}.
$$
\end{enumerate}
\end{thm}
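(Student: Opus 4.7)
The plan is to combine three ingredients: (a) direct analysis of the integral kernel in (\ref{fork}) to pin down regularity and holomorphy, (b) the identity (\ref{I-J-relation}) linking $J_R$ to the scalar-valued map $I_R$ studied in \cite{FL3}, and (c) the $\mathfrak{gl}(2,\HC)$-equivariance from Proposition \ref{equivariance} applied to the irreducible decomposition of $(\rho'_2,{\cal W}')$ assembled in the previous section.

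First I would verify the regularity and target-space part. When $Z_1,Z_2 \in \BB D^+_R$ (resp.\ $\BB D^-_R$), the integrand in (\ref{fork}) has no singularities on $U(2)_R$, so differentiation under the integral sign shows $J_R^{++}F$ is holomorphic in $(Z_1,Z_2)$. Left regularity in $Z_1$ and right regularity in $Z_2$ follow from the fact that the Cauchy–Fueter kernel $k(W-Z)$ is annihilated by $\nabla^+$ in $Z$ on either side. Hence the image lies in $\widetilde{{\cal V}\otimes{\cal V}'}$. To force the image into the polynomial subspace ${\cal V}^+\otimes{\cal V}'^+$, expand both Cauchy–Fueter kernels in the matrix coefficient series of Proposition 26 of \cite{FL1}, so that $k(W-Z_1)\cdot F(W)\cdot k(W-Z_2)$ becomes a sum of products $\bigl[t^{l_1}(Z_1)\bigr]\cdot\bigl[t^{l_1+\frac12}(W^{-1})/N(W)\bigr]\cdot F(W)\cdot \bigl[t^{l_2+\frac12}(W^{-1})/N(W)\bigr]\cdot\bigl[t^{l_2}(Z_2)\bigr]$. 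Integrating over $U(2)_R$ and using the orthogonality relations (19) of \cite{FL3} kills every term in which $F(W)$ does not supply a matching positive power of $N(W)$; the surviving terms produce a polynomial in $(Z_1,Z_2)$. The argument for $J_R^{--}$ is identical after swapping the regions (equivalently, applying $\pi_l\otimes\pi_r$ of the inversion $\bigl(\begin{smallmatrix} 0 & 1 \\ 1 & 0\end{smallmatrix}\bigr)$), yielding the image inside ${\cal V}^-\otimes{\cal V}'^-$.

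Next I would identify the kernel of $J_R^{++}$ using equivariance. Because $J_R^{++}$ is $\mathfrak{gl}(2,\HC)$-equivariant and the target ${\cal V}^+\otimes{\cal V}'^+$ contains only polynomial functions, its kernel is a $\rho'_2$-invariant subspace of ${\cal W}'$; thus it suffices to check vanishing on one nonzero $K$-type generator of each of the thirteen irreducible components of $(\rho'_2,{\cal W}')$. For the ten components inside ${\cal M}$ (the five coming from $\partial^+(\Sh')$, the four doubly-regular components $(\rho'_2,{\cal W}'/\ker\tau^\pm_{a/s})$, and the trivial line $\BB C\cdot N(Z)^{-1}Z$), the harmonicity arguments of Lemma \ref{p(NF)} apply essentially verbatim: the integrand against any $N(W)^k H(W)$ with $k\neq 0$ contracted with the harmonic factors $k(W-Z_1)\cdot(\text{--})\cdot k(W-Z_2)$ integrates to zero by orthogonality on $U(2)_R$, while the single exceptional harmonic piece $N(Z)^{-1}Z\in{\cal M}(-1)$ is handled by an explicit computation. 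For the surviving non-harmonic quotient ${\cal W}'/\ker\M$, use the compatibility of Lemma \ref{W-Zh-compatibility-lemma}: the three irreducible constituents ${\cal Q}'^\pm,{\cal Q}'^0$ lie in $\HC\otimes\Zh^\pm,\HC\otimes\Zh^0$ respectively, and ${\cal Q}'^-\subset\HC\otimes\Zh^-$ together with ${\cal Q}'^0\subset\HC\otimes\Zh^0$ are annihilated by $J_R^{++}$ because already $I_R$ annihilates $\Zh^-\oplus\Zh^0$ when $Z_1,Z_2\in\BB D^+_R$ (this is precisely the content of the $I_R$-analogue proved in \cite{FL3}), and then (\ref{I-J-relation}) transfers the vanishing to $J_R^{++}$. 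Only ${\cal Q}'^+$ survives.

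Finally I would establish the diagonal identity $\DR\circ J_R^{++}F=\M F$ for $F\in\HC\otimes\Zh^+$. Via (\ref{I-J-relation}) this reduces to showing
\[
24\,\bigl[\partial_{Z_1}(I_RF)(Z_1,Z_2)\,\partial_{Z_2}\bigr]\big|_{Z_1=Z_2=Z} \;=\; \M F(Z).
\]
Here I would invoke the scalar diagonal formula for $I_R$ proved in \cite{FL3}, which on $\Zh^+$ (extended coefficient-wise to $\HC\otimes\Zh^+$) expresses $(I_RF)(Z,Z)$ in terms of $F$, and then use $\M F=4(\partial F\partial -\partial\partial^+F^+)$ together with the fact that $I_RF$ is harmonic in each variable separately (so $\partial\partial^+$ acting on either $Z_1$ or $Z_2$ vanishes on $I_RF$). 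Differentiating the diagonal composition and carefully separating the pure $Z_1$- and $Z_2$-derivatives from the mixed ones, and matching them against the two terms $\partial F\partial$ and $\partial\partial^+F^+$ of $\M$, yields the identity. The alternative, and probably safer, route is to verify the identity term by term on the $K$-type generators $\tilde H_{l,m,n}(Z)\in{\cal Q}'^+$ using Lemma \ref{Mx-calculations} together with the matrix-coefficient expansion $k(W-Z)\cdot\tilde H_{l,m,n}(W)\cdot k(W-Z)$ via Proposition 26 of \cite{FL1} and the orthogonality relations; this matches exactly with $\M\tilde H_{l,m,n}(Z)$. The case of $J_R^{--}$ on $\HC\otimes\Zh^-$ is the conjugate statement obtained by conjugation with $\pi_l\otimes\pi_r\bigl(\begin{smallmatrix}0&1\\1&0\end{smallmatrix}\bigr)$ and equivariance of $\M$.

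The principal difficulty is the last step: correctly matching the mixed-partial operator $\partial_{Z_1}\cdots\partial_{Z_2}|_{Z_1=Z_2}$, which is a single second-order operator with both derivatives acting from opposite sides, against the Maxwell operator $\M=\nabla(\cdot)\nabla-\square(\cdot)^+$, whose second summand is not of that symmetric shape. The reconciliation uses the harmonicity of $I_RF$ in each argument (which makes $\partial\partial^+$ vanish when applied to $I_RF$ with respect to either variable alone) to convert $\square F^+$ into a cross-variable derivative, so the asymmetry disappears only once we take the diagonal — this is the delicate computational point that will require most care.
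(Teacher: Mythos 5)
Your proposal follows the same skeleton as the paper's proof: use the relation (\ref{I-J-relation}) together with the scalar results of \cite{FL3} (Theorem 12 there) and Lemma \ref{W-Zh-compatibility-lemma} to place the image in ${\cal V}^+\otimes{\cal V}'^+$ and to kill the components sitting in $\HC\otimes(\Zh^0\oplus\Zh^-)$, then check the remaining components on explicit $K$-type generators via the matrix coefficient expansion of the kernel and the orthogonality relations. Two remarks. First, your claim that Lemma \ref{p(NF)} applies ``essentially verbatim'' to the four doubly regular components is too quick: their generators $\tilde F_{\frac12,\frac12,-\frac12}$, $\tilde G_{\frac12,-\frac12,\frac12}$ are \emph{harmonic} (the $k=0$ case), so the ``wrong power of $N(W)$'' argument does not dispose of them; the paper handles them, together with the generator $N(W)\cdot W$ of $\partial^+(\Sh^+)$, by genuine two-kernel orthogonality computations of the kind you describe in your image step, and you would need to carry those out. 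Second, for the diagonal identity your proposal takes the hard road: the paper simply invokes Theorem 77 of \cite{FL1}, which \emph{is} the statement $\DR\circ J_R^{++}F=\M F$ on ${\cal W}'^+$ (up to the factor of $4$ between $dV$ and $dZ^4$), whereas you propose to re-derive it by reconciling $\partial_{Z_1}(I_RF)\partial_{Z_2}|_{Z_1=Z_2}$ with $\nabla F\nabla-\square F^+$. That reconciliation is exactly the delicate computation already done in the first paper, and you correctly flag it as the main difficulty; citing the earlier theorem (or falling back on your proposed $K$-type verification) closes the gap, so the argument is sound but you would save yourself the hardest step by quoting the known second-order pole formula rather than reproving it.
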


\begin{proof}
We prove part \ref{one} only, the other part can be proven in the same way.
So, suppose that $Z_1, Z_2 \in \BB D^+_R$.
It follows immediately from Theorem 12 in \cite{FL3},
Lemma \ref{W-Zh-compatibility-lemma} and equation (\ref{I-J-relation})
that the image of $J_R^{++}$ lies in ${\cal V}^+ \otimes {\cal V}'^+$ and that
$J_R^{++}$ annihilates the irreducible components of $(\rho'_2,{\cal W}')$
that lie in $\HC \otimes (\Zh^0 \oplus \Zh^-)$ (as described in
Lemma \ref{W-Zh-compatibility-lemma}).
Then we check the effect of $J_R^{++}$ on a suitable generator of each of
the remaining irreducible component of $(\rho'_2,{\cal W}')$.
For the irreducible components from Corollary \ref{FG-irred}
$$
(\rho'_2,{\cal W}'/ \ker \tau_a^+) \qquad \text{and} \qquad
(\rho'_2,{\cal W}'/ \ker \tau_s^+),
$$
choose generators $\tilde F_{\frac12,\frac12,-\frac12}(W)$ and 
$\tilde G_{\frac12,-\frac12,\frac12}(W)$ respectively.
The image of $\Sh^+$ under $\partial^+$ is generated by
$\partial^+(N(Z)^2) = 2N(Z) \cdot Z$ and contains two irreducible components.
We show the calculations for $N(Z) \cdot Z$ only, the other cases are similar.
Using Lemma 23 and the matrix coefficient expansion of
$\frac{(W-Z_1)^{-1}}{N(W-Z_1)}$ given by Proposition 26 in \cite{FL1}
(see also Proposition \ref{Prop26}), we compute:
\begin{multline*}
J_R^{++} \bigl( N(W) \cdot W \bigr) (Z_1,Z_2)
= \frac{24i}{\pi^3} \int_{W \in U(2)_R} \frac{(W-Z_1)^{-1}}{N(W-Z_1)} \cdot
N(W) \cdot W \cdot \frac{(W-Z_2)^{-1}}{N(W-Z_2)} \,dV  \\
= \frac{24i}{\pi^3} \sum_{l,m,n,l',m',n'}
\left(\begin{smallmatrix} (l-m+ \frac 12) t^l_{n \, \underline{m+ \frac 12}}(Z_1)  \\
(l+m+ \frac 12) t^l_{n \, \underline{m- \frac 12}}(Z_1) \end{smallmatrix}\right)  \\
\times \int_{W \in U(2)_R} 
\left( \begin{smallmatrix} t^{l+\frac 12}_{m \, \underline{n- \frac 12}}(W^{-1}), &
t^{l+\frac 12}_{m \, \underline{n+ \frac 12}}(W^{-1}) \end{smallmatrix} \right)
\cdot \frac{W}{N(W)} \cdot
\left(\begin{smallmatrix} (l'-n'+\frac 12) t^{l'}_{n'-\frac12\,\underline{m'}}(W^{-1}) \\
(l'+n'+\frac12) t^{l'}_{n'+\frac12\,\underline{m'}}(W^{-1})\end{smallmatrix}\right)\,dV\\
\times \left(\begin{smallmatrix} t^{l'-\frac 12}_{m'+\frac12\,\underline{n'}}(Z_2), &
t^{l'-\frac12}_{m'-\frac12\,\underline{n'}}(Z_2) \end{smallmatrix}\right)  \\
= \frac{24i}{\pi^3} \sum_{l,m,n,l',m',n'}
\left(\begin{smallmatrix} (l-m+ \frac 12) t^l_{n \, \underline{m+ \frac 12}}(Z_1)  \\
(l+m+ \frac 12) t^l_{n \, \underline{m- \frac 12}}(Z_1) \end{smallmatrix}\right)
\cdot \left(\begin{smallmatrix} t^{l'-\frac 12}_{m'+\frac12\,\underline{n'}}(Z_2), &
t^{l'-\frac12}_{m'-\frac12\,\underline{n'}}(Z_2) \end{smallmatrix}\right)  \\
\times \int_{W \in U(2)_R} \frac1{N(W)}
\left( \begin{smallmatrix} t^l_{m+\frac12 \, \underline{n}}(W^{-1}), &
t^l_{m-\frac12 \, \underline{n}}(W^{-1}) \end{smallmatrix} \right) \cdot
\left(\begin{smallmatrix} (l'-n'+\frac 12) t^{l'}_{n'-\frac12\,\underline{m'}}(W^{-1}) \\
(l'+n'+\frac12) t^{l'}_{n'+\frac12\,\underline{m'}}(W^{-1})\end{smallmatrix}\right)
\,dV =0
\end{multline*}
by the orthogonality relations (19) in \cite{FL3}, since $l' \ge 1/2$.
We conclude from Proposition \ref{equivariance} that $J_R^{++}$ annihilates
all of $(\rho'_2,\partial^+(\Sh^+))$.

Finally, the statement about the composition $\DR \circ J_R^{++}$ follows from
Theorem 77 in \cite{FL1}. (Note that the differential form $dV$ that appears
in (\ref{fork}) differs from $dZ^4$ that appears in Theorem 77 in \cite{FL1}
by a factor of $1/4$.)
\end{proof}

\subsection{Polarization of Vacuum}

Now we suppose $Z_1 \in \BB D^+_R$ and $Z_2 \in \BB D^-_R$
(or the other way around), this case is much more subtle.
We reduce the spinor case of $J_R$ to the already known scalar case of $I_R$
as much as possible via the relation (\ref{I-J-relation}).

\begin{prop}  \label{J_R(W^0)-prop}
Let $Z_1 \in \BB D^+_R$ and $Z_2 \in \BB D^-_R$, then the map
$J_R$ annihilates all irreducible components of $(\rho'_2,{\cal W}')$,
except for ${\cal Q}'^0$ and the trivial one-dimensional representation
spanned by $N(W)^{-1} \cdot W$. Moreover,
$$
J_R \bigl( N(W)^{-1} \cdot W \bigr) (Z_1,Z_2)
= 24 \frac{(Z_1-Z_2)^{-1}}{N(Z_1-Z_2)}.
$$

Similarly, if $Z_1 \in \BB D^-_R$ and $Z_2 \in \BB D^+_R$, then the map
$J_R$ annihilates all irreducible components of $(\rho'_2,{\cal W}')$,
except for ${\cal Q}'^0$ and the trivial one-dimensional representation
spanned by $N(W)^{-1} \cdot W$. Moreover,
$$
J_R \bigl( N(W)^{-1} \cdot W \bigr) (Z_1,Z_2)
= - 24 \frac{(Z_1-Z_2)^{-1}}{N(Z_1-Z_2)}.
$$
\end{prop}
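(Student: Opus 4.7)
The plan is to exploit the identity (\ref{I-J-relation}), $J_R = 24\,\overrightarrow{\partial}_{Z_1}\circ I_R \circ\overleftarrow{\partial}_{Z_2}$, to reduce the spinor problem to the scalar map $I_R$ studied in \cite{FL3}, and to use the equivariance of $J_R$ (Proposition \ref{equivariance}) to work one irreducible component at a time. Since $\ker J_R$ is a $\mathfrak{gl}(2,\HC)$-invariant subspace of ${\cal W}'$, it is a sum of irreducible constituents, so it suffices to evaluate $J_R$ on one $K$-type generator from each of the thirteen components listed in Lemma \ref{W-Zh-compatibility-lemma}.

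First I would apply the scalar statement from \cite{FL3} that, in the mixed region $\{Z_1\in\BB D^+_R,\,Z_2\in\BB D^-_R\}$, $I_R$ annihilates both $\Zh^+$ and $\Zh^-$. Applied componentwise to $\HC$-valued functions, this kills the seven irreducible constituents of $(\rho'_2,{\cal W}')$ lying in $\HC\otimes(\Zh^+\oplus\Zh^-)$. For the four constituents of $\HC\otimes\Zh^0$ that must also be annihilated --- namely $\partial^+({\cal BH}^-/{\cal I}'_0)$, $\partial^+({\cal J}'/({\cal BH}^++{\cal BH}^-))$, ${\cal W}'/\ker\tau_a^-$, and ${\cal W}'/\ker\tau_s^-$ --- I would select an explicit $K$-type generator of each and compute the three-factor sphere integral (\ref{fork}) exactly as in the proof of Theorem \ref{J_R(W^+-)}: expand each factor $(W-Z_i)^{-1}/N(W-Z_i)$ by Proposition 26 of \cite{FL1} in its respective convergence domain, multiply out the matrix coefficients by Lemma 23 of \cite{FL1}, and apply the orthogonality relations (19) of \cite{FL3} over $U(2)_R$. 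In each of these four cases the resulting power of $N(W)$ is incompatible with the single weight for which the $U(2)_R$ integral is nonzero, so the integral vanishes.

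Next, to evaluate $J_R(N(W)^{-1}\cdot W)(Z_1,Z_2)$ I would proceed via (\ref{I-J-relation}), using the closed-form value of $I_R$ in the mixed region applied to the scalar components of $W/N(W)$ --- which are expressible as matrix coefficients of weight $l=\tfrac12$ via Lemma 23 of \cite{FL1} --- and then applying $\overrightarrow{\partial}_{Z_1}$ and $\overleftarrow{\partial}_{Z_2}$ via the identity $\nabla\bigl(1/N(Z-W)\bigr)=-2(Z-W)^+/N(Z-W)^2$. The outcome is the claimed $24\,(Z_1-Z_2)^{-1}/N(Z_1-Z_2)$. Interchanging the roles of the two domains (i.e.\ taking $Z_1\in\BB D^-_R$, $Z_2\in\BB D^+_R$) swaps which Cauchy kernel is expanded around the origin and which around infinity, flipping the overall sign and producing the second formula. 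Nonvanishing of $J_R$ on the remaining irreducible component ${\cal Q}'^0$ is then checked directly on one additional $K$-type generator, for instance $N(W)^{-1}\cdot \tilde H_{\frac12,m,n}(W)$, by the same matrix-coefficient integration procedure.

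The main obstacle is carrying out the explicit three-factor quaternionic integral for $N(W)^{-1}\cdot W$: the non-commutativity of $\HC$ forces careful ordering of the three factors under the integral, and one must simultaneously use the $\BB D^+_R$-convergent expansion of one Cauchy kernel and the $\BB D^-_R$-convergent expansion of the other before the $U(2)_R$ orthogonality relations can collapse the double sum to a single term and yield a kernel of the form $(Z_1-Z_2)^{-1}/N(Z_1-Z_2)$.
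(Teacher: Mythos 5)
Your proposal is correct and follows essentially the same route as the paper's proof: reduce to the scalar map $I_R$ via the relation $J_R = 24\,\overrightarrow{\partial}_{Z_1}(I_R\,\cdot)\overleftarrow{\partial}_{Z_2}$ to dispose of the components in $\HC\otimes(\Zh^+\oplus\Zh^-)$, then use equivariance to test each remaining component on a single $K$-type generator by expanding the Cauchy--Fueter kernels (Proposition 26 of \cite{FL1}) and invoking the $U(2)_R$ orthogonality relations. The only difference is cosmetic: the paper tests $\partial^+(\Sh')$ via the two generators $N(Z)\cdot Z$ and $N(Z)^{-3}\cdot Z$ rather than one generator per component, and it leaves the explicit evaluation of $J_R(N(W)^{-1}\cdot W)$ and the non-vanishing on ${\cal Q}'^0$ implicit, whereas you sketch them.
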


\begin{proof}
Since $I_R$ annihilates $\Zh^+ \oplus \Zh^-$, it follows from equation
(\ref{I-J-relation}) that $J_R$ annihilates the irreducible components of
$(\rho'_2,{\cal W}')$ that lie in $\HC \otimes (\Zh^+ \oplus \Zh^-)$
(as described in Lemma \ref{W-Zh-compatibility-lemma}).
Then we check the effect of $J_R$ on a suitable generator of each of
the remaining irreducible component of $(\rho'_2,{\cal W}')$.

The image of $\Sh'$ under $\partial^+$ is generated by two generators
$$
\partial^+(N(Z)^2) = 2N(Z) \cdot Z \qquad \text{and} \qquad
\partial^+(N(Z)^{-2}) = -2N(Z)^{-3} \cdot Z;
$$
these take care of irreducible components contained in $\partial^+(\Sh')$.
For the irreducible components
$(\rho'_2,{\cal W}'/ \ker \tau_a^-)$ and $(\rho'_2,{\cal W}'/ \ker \tau_s^-)$
make a choice of generators such as
$$
\tilde F'_{\frac12,\frac12,-\frac12}(W)
= \frac2{N(W)^2} \begin{pmatrix} 0 & 0 \\ w_{11} & w_{12} \end{pmatrix}
\qquad \text{and} \qquad
\tilde G'_{\frac12,-\frac12,\frac12}(W)
= \frac1{N(W)^2} \begin{pmatrix} 0 & w_{11} \\ 0 & w_{21} \end{pmatrix}
$$
respectively.
We show the calculations for $\tilde F'_{\frac12,\frac12,-\frac12}(W)$ with
$Z_1 \in \BB D^+_R$, $Z_2 \in \BB D^-_R$ only;
the other cases are similar and easier.
Using Lemma 23 and the matrix coefficient expansion of
$\frac{(W-Z_1)^{-1}}{N(W-Z_1)}$ given by Proposition 26 in \cite{FL1}
(see also Proposition \ref{Prop26}), we compute:
\begin{multline*}
J_R \bigl( \tilde F'_{\frac12,\frac12,-\frac12}(W) \bigr) (Z_1,Z_2)  \\
= \frac{24i}{\pi^3} \int_{W \in U(2)_R} \frac{(W-Z_1)^{-1}}{N(W-Z_1)} \cdot
\frac1{N(W)^2} \cdot \begin{pmatrix} 0 & 0 \\ w_{11} & w_{12} \end{pmatrix}
\cdot \frac{(W-Z_2)^{-1}}{N(W-Z_2)} \,dV  \\
= - \frac{24i}{\pi^3} \sum_{l,m,n,l',m',n'}
\left(\begin{smallmatrix} (l-m+ \frac 12) t^l_{n \, \underline{m+ \frac 12}}(Z_1)  \\
(l+m+ \frac 12) t^l_{n \, \underline{m- \frac 12}}(Z_1) \end{smallmatrix}\right)  \\
\times \int_{W \in U(2)_R} 
\left( \begin{smallmatrix} t^{l+\frac 12}_{m \, \underline{n- \frac 12}}(W^{-1}), &
t^{l+\frac 12}_{m \, \underline{n+ \frac 12}}(W^{-1}) \end{smallmatrix} \right) \cdot
\frac1{N(W)^3} \cdot \begin{pmatrix} 0 & 0 \\ w_{11} & w_{12} \end{pmatrix} \cdot
\left(\begin{smallmatrix} (l'-m'+\frac 12) t^{l'}_{n'\, \underline{m'+\frac 12}}(W)\\
(l'+m'+\frac 12) t^{l'}_{n'\,\underline{m'-\frac12}}(W) \end{smallmatrix}\right)\,dV \\
\times N(Z_2)^{-1} \cdot
\left(\begin{smallmatrix} t^{l'+\frac12}_{m'\,\underline{n'-\frac12}}(Z_2^{-1}), &
t^{l'+\frac12}_{m' \,\underline{n'+\frac 12}}(Z_2^{-1}) \end{smallmatrix}\right)  \\
= \frac{-24i}{\pi^3 N(Z_2)} \sum_{l,m,n,l',m',n'}
\left(\begin{smallmatrix} (l-m+ \frac 12) t^l_{n \, \underline{m+ \frac 12}}(Z_1)  \\
(l+m+ \frac 12) t^l_{n \, \underline{m- \frac 12}}(Z_1) \end{smallmatrix}\right)
\cdot \left(\begin{smallmatrix} t^{l'+\frac12}_{m'\,\underline{n'-\frac12}}(Z_2^{-1}), &
t^{l'+\frac12}_{m' \,\underline{n'+\frac 12}}(Z_2^{-1}) \end{smallmatrix}\right)  \\
\times \int_{W \in U(2)_R} 
\left( \begin{smallmatrix} t^{l+\frac 12}_{m \, \underline{n- \frac 12}}(W^{-1}), &
t^{l+\frac 12}_{m \, \underline{n+ \frac 12}}(W^{-1}) \end{smallmatrix} \right)
\cdot \frac1{N(W)^3} \cdot \left(\begin{smallmatrix} 0  \\
(l'-n'+1) t^{l'+\frac12}_{n'-\frac12\,\underline{m'}}(W) \end{smallmatrix}\right)
\,dV =0
\end{multline*}
by the orthogonality relations (19) in \cite{FL3}, since the power of $N(W)$
is not $-2$. We conclude from Proposition \ref{equivariance} that $J_R$
annihilates all of $(\rho'_2,{\cal W}'/ \ker \tau_a^-)$.
\end{proof}

For $Z_1, Z_2 \in \HC^{\times}$, let $\lambda_1$ and $\lambda_2$ denote the
eigenvalues of $Z_1Z_2^{-1}$, and introduce an open subset of
$\HC^{\times} \times \HC^{\times}$
$$
\Omega_0 = \left\{ (Z_1,Z_2) \in \HC^{\times} \times \HC^{\times} ;\:
\begin{smallmatrix} \lambda_1 \ne 1,\: \lambda_2 \ne 1,\:
\text{neither $\frac{1-\lambda_1}{1-\lambda_2}$ nor} \\
\text{$\frac{1-\lambda_1^{-1}}{1-\lambda_2^{-1}}$ is a negative real number}
\end{smallmatrix}\right\}.
$$
Let $Z_1 \in \BB D^+_R$ and $Z_2 \in \BB D^-_R$ and recall
the relation (\ref{I-J-relation}) between $J_R$ and $I_R$.
From Section 6 of \cite{FL3}, we see that, for any $F \in {\cal W}'$,
$(J_RF)(Z_1,Z_2)$ extends analytically across $\Omega_0$, and we have
a well defined operator $J_R^{+-}$ on ${\cal W}'$:
$$
(J_R^{+-} F)(Z_1,Z_2) = \text{analytic extension of $(J_RF)(Z_1,Z_2)$
from $\BB D^+_R \times \BB D^-_R$ to $\Omega_0$}.
$$
The operator $J_R^{+-}$ is $U(2) \times U(2)$ and
$\mathfrak{gl}(2,\HC)$-equivariant (which follows from
Proposition \ref{equivariance}) and annihilates all irreducible components of
$(\rho'_2,{\cal W}')$, except for ${\cal Q}'^0$ and the trivial
one-dimensional representation spanned by $N(W)^{-1} \cdot W$
(which follows from Proposition \ref{J_R(W^0)-prop}).
While $J_R^{+-} F$ is independent of the choice of $R>0$,
we keep the subscript $R$ to distinguish this analytic function from
a formal series $J^{+-} F$ that will be defined in
Subsection \ref{convolution-alg-subsection}.

Similarly, we can switch the domains of $Z_1$ and $Z_2$ and define another
operator $J_R^{-+}$ on ${\cal W}'$:
$$
(J_R^{-+} F)(Z_1,Z_2) = \text{analytic extension of $(J_RF)(Z_1,Z_2)$
from $\BB D^-_R \times \BB D^+_R$ to $\Omega_0$}.
$$
The operator $J_R^{-+}$ is also $U(2) \times U(2)$ and
$\mathfrak{gl}(2,\HC)$-equivariant and annihilates all
irreducible components of $(\rho'_2,{\cal W}')$, except for
${\cal Q}'^0$ and the trivial one-dimensional representation spanned by
$N(W)^{-1} \cdot W$.

We introduce the following notation: if $\lambda \in \BB C$, let
$$
\sgn (\im \lambda) = \begin{cases}
1 & \text{ if $\lambda$ lies in the upper half plane of $\BB C$}, \\
-1 & \text{ if $\lambda$ lies in the lower half plane of $\BB C$}, \\
\text{undefined} & \text{ if $\lambda \in \BB R$}.
\end{cases}
$$

\begin{thm}  \label{Mx^0-operator-thm}
We have a well defined operator on ${\cal W}'$
\begin{equation}  \label{Mx^0-operator}
(\M^0 F)(Z) = \lim_{\genfrac{}{}{0pt}{}{Z_1, Z_2 \to Z, \: N(Z_1-Z_2) \ne 0}
{\sgn (\im \lambda_1) = \sgn (\im \lambda_2)}}
- \bigl( (J_R^{+-} + J_R^{-+}) F \bigr)(Z_1,Z_2),
\qquad Z \in U(2)_R,
\end{equation}
where $\lambda_1$ and $\lambda_2$ are the eigenvalues of $Z_1Z_2^{-1}$.
The operator $\M^0$ has values in ${\cal W}$,
is $\mathfrak{gl}(2,\HC)$-equivariant, annihilates all irreducible components
of $(\rho'_2,{\cal W}')$, except for ${\cal Q}'^0$, and equals $\M$ on
${\cal Q}'^0$.

Furthermore, the operator $\M^0$ on ${\cal W}'$ can be computed as follows:
\begin{multline*}
(\M^0 F)(Z) = - \frac{12i}{\pi^3} \lim_{\theta \to 0} \lim_{s \to 1} \biggl(
\int_{W \in U(2)_R} \frac{(W-se^{i\theta}Z)^{-1}}{N(W-se^{i\theta}Z)}
\cdot F(W) \cdot \frac{(W-s^{-1}e^{-i\theta}Z)^{-1}}{N(W-s^{-1}e^{-i\theta}Z)} \,dV\\
+ \int_{W \in U(2)_R} \frac{(W-s^{-1}e^{i\theta}Z)^{-1}}{N(W-s^{-1}e^{i\theta}Z)}
\cdot F(W) \cdot \frac{(W-se^{-i\theta}Z)^{-1}}{N(W-se^{-i\theta}Z)} \,dV
\biggr), \qquad Z \in U(2)_R.
\end{multline*}
\end{thm}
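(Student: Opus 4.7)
The plan is to analyze $-(J_R^{+-} + J_R^{-+})$ on each irreducible component of $(\rho'_2, {\cal W}')$ and show that the prescribed diagonal limit exists, coincides with $\M$ on ${\cal Q}'^0$, and vanishes on every other component. First, Proposition~\ref{J_R(W^0)-prop} already establishes that both $J_R^{+-}$ and $J_R^{-+}$ annihilate every irreducible component except ${\cal Q}'^0$ and the trivial one-dimensional subrepresentation spanned by $N(W)^{-1} \cdot W$. On this latter component, Proposition~\ref{J_R(W^0)-prop} gives
\begin{equation*}
J_R^{+-}\bigl(N(W)^{-1} \cdot W\bigr)(Z_1,Z_2) = 24 \frac{(Z_1-Z_2)^{-1}}{N(Z_1-Z_2)} = -J_R^{-+}\bigl(N(W)^{-1} \cdot W\bigr)(Z_1,Z_2),
\end{equation*}
so the sum vanishes identically. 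Hence the non-trivial content of $\M^0$ is concentrated on ${\cal Q}'^0$.

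Next I would handle the existence of the limit (\ref{Mx^0-operator}) and its identification with $\M$ on ${\cal Q}'^0$ by transferring the scalar analogue from Section~6 of \cite{FL3} via the differential relation $J_R F = 24\, \overrightarrow{\partial}_{Z_1} (I_R F) \overleftarrow{\partial}_{Z_2}$ from (\ref{I-J-relation}). The parametrizations $Z_1 = s e^{i\theta} Z$, $Z_2 = s^{-1} e^{-i\theta} Z$ and $Z_1 = s^{-1} e^{i\theta} Z$, $Z_2 = s e^{-i\theta} Z$ are adapted so that, for $s$ close to $1$ (but $s \ne 1$) and small $\theta \ne 0$, one pair lies in $\BB D^-_R \times \BB D^+_R$ and the other in $\BB D^+_R \times \BB D^-_R$, while the eigenvalues of $Z_1 Z_2^{-1}$ are the common scalar $s^{\pm 2} e^{2i\theta}$, automatically satisfying $\sgn(\im \lambda_1) = \sgn(\im \lambda_2)$ and lying inside $\Omega_0$. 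This makes each integrand a genuine instance of $J_R^{-+}F$ and $J_R^{+-}F$ respectively, and the double limit $s \to 1$, $\theta \to 0$ is exactly the regularized diagonal limit in (\ref{Mx^0-operator}).

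To identify $\M^0$ with $\M$ on ${\cal Q}'^0$, I would invoke the formal identity $J = J^{++} + J^{--} - J^{+-} - J^{-+}$ together with $(JF)(Z,Z) = (\M F)(Z)$ from (\ref{J=Mx}). Since ${\cal Q}'^0 \subset \HC \otimes \Zh^0$ by Lemma~\ref{W-Zh-compatibility-lemma}, Theorem~\ref{J_R(W^+-)} forces $J_R^{++}F$ and $J_R^{--}F$ to vanish for $F \in {\cal Q}'^0$, so that $\M F = -\lim (J_R^{+-} + J_R^{-+})F$ precisely as asserted. The $\mathfrak{gl}(2,\HC)$-equivariance of $\M^0$ then follows from Proposition~\ref{equivariance} applied to $J_R^{+-}$ and $J_R^{-+}$ on $\Omega_0$ together with the fact that the rescaling parametrization is compatible with the diagonal action of $\mathfrak{u}(2,2)_R$ (and hence extends to all of $\mathfrak{gl}(2,\HC) \simeq \BB C \otimes \mathfrak{u}(2,2)_R$). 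That $\M^0 F \in {\cal W}$ follows because on the $K$-types of ${\cal Q}'^0$ described in Proposition~\ref{W'-K-types} the result reproduces $\M F$, which lies in ${\cal Q}^0 \subset {\cal W}$.

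The main obstacle is justifying the cancellation of singularities as $Z_1, Z_2 \to Z$ in the sum $J_R^{+-} F + J_R^{-+} F$ for $F \in {\cal Q}'^0$: individually each term blows up on the diagonal, but the singular parts have opposite signs along paths with $\sgn(\im \lambda_1) = \sgn(\im \lambda_2)$. One would carry this out $K$-type by $K$-type on ${\cal Q}'^0$, using the matrix coefficient expansions of the Cauchy-Fueter kernel employed in the proofs of Theorem~\ref{J_R(W^+-)} and Proposition~\ref{J_R(W^0)-prop}, and mirroring the scalar cancellation computed for $I_R$ in \cite{FL3}. This cancellation, together with the verification that the finite remainder reproduces $\M F$ on each $K$-type, is where the bulk of the technical work lies.
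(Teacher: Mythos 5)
Your overall skeleton matches the paper's in two respects: the annihilation of all components other than ${\cal Q}'^0$ (including the cancellation of the $\pm 24\,(Z_1-Z_2)^{-1}/N(Z_1-Z_2)$ contributions on the one-dimensional component) does come from Proposition \ref{J_R(W^0)-prop}, and the existence of the regularized diagonal limit is indeed obtained by pushing the explicit scalar formula for $(I_R^{+-}+I_R^{-+})N(W)^{-1}$ from \cite{FL3} through the relation (\ref{I-J-relation}) and then invoking $\mathfrak{gl}(2,\HC)$-equivariance. The genuine gap is in the step identifying $\M^0$ with $\M$ on ${\cal Q}'^0$: you invoke $(JF)(Z,Z)=(\M F)(Z)$ from (\ref{J=Mx}), but that equation is only the introduction's summary of the results of Section \ref{VP-section}; since Theorem \ref{J_R(W^+-)} makes $J^{++}$ and $J^{--}$ vanish on ${\cal Q}'^0 \subset \HC\otimes\Zh^0$, the $J^{+-}+J^{-+}$ part of (\ref{J=Mx}) restricted to ${\cal Q}'^0$ \emph{is} the assertion being proved, so the argument is circular. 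Your fallback --- ``verify that the finite remainder reproduces $\M F$ on each $K$-type'' --- is exactly where the content of the theorem lives, and you leave it undone; a $K$-type-by-$K$-type check is also more than is needed.

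The paper closes this gap with a single explicit computation: since ${\cal Q}'^0$ is irreducible and both $\M$ and $\M^0$ are equivariant, it suffices to check equality on the one generator $N(Z)^{-1}$, and by $U(2)\times U(2)$-equivariance one may further restrict $Z$, $Z_1$, $Z_2$ to diagonal matrices. There the series for $(J_R^{+-}+J_R^{-+})(N(W)^{-1})$ collapses via (\ref{t-diag}) to the closed form (\ref{(J+J)(1/N)}) involving $\log(a_1/a_2)-\log(d_1/d_2)$, whose limit along $\sgn(\im\lambda_1)=\sgn(\im\lambda_2)$ is evaluated and matched against the direct computation (\ref{Mx(gen)}) of $\M(N(Z)^{-1})$. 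Some version of this concrete calculation is indispensable --- among other things, it is what pins down the normalization constant $\frac{12i}{\pi^3}$ in (\ref{fork}) --- so your proposal as written does not yet constitute a proof.
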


Note that the space ${\cal W}'$ consists of rational functions, and rational
functions on $\HC$ as well as analytic ones are completely determined by
their values on $U(2)_R$.

\begin{proof}
First, we show that the limit (\ref{Mx^0-operator}) exists.
The map $(J_R^{+-} + J_R^{-+})$ is related to the map $(I_R^{+-} + I_R^{-+})$
from Theorem 15 in \cite{FL3} via
\begin{equation}  \label{I-J-relation-repeat}
\bigl( (J_R^{+-} + J_R^{-+}) F \bigr)(Z_1,Z_2)
= 24 \overrightarrow{\partial}_{Z_1} \bigl( (I_R^{+-} + I_R^{-+}) F (Z_1,Z_2)
\bigr) \overleftarrow{\partial}_{Z_2},
\end{equation}
which is essentially equation (\ref{I-J-relation}).
We saw in the course of proof of Theorem 15 in \cite{FL3}
(see also Theorem \ref{Thm15}) that the image of 
$(I_R^{+-} + I_R^{-+})$ is generated by
\begin{equation}  \label{I(N(W)^{-1})}
\bigl( (I_R^{+-}+ I_R^{-+}) N(W)^{-1} \bigr)(Z_1,Z_2)
= -\frac1{N(Z_2)} \cdot
\begin{cases} \frac{\log\lambda_2-\log\lambda_1}{\lambda_2-\lambda_1} &
\text{if $\lambda_1 \ne \lambda_2$;} \\
\lambda^{-1} & \text{if $\lambda_1 = \lambda_2 = \lambda$, $\lambda \ne 1$,}
\end{cases}
\end{equation}
where $\log$ denotes the branch of logarithm with a cut along the positive
real axis. If we restrict $\bigl( (I_R^{+-}+ I_R^{-+}) N(W)^{-1} \bigr)(Z_1,Z_2)$
to the open set where $\sgn (\im \lambda_1) = \sgn (\im \lambda_2)$,
we see that this restriction is 
$$
\frac1{N(Z_2)} \cdot \bigl( \text{function of
  $\lambda_1, \lambda_2 \in \BB C$ that is holomorphic near
  $(\lambda_1,\lambda_2)=(1,1)$} \bigr).
$$
Since the map $(I_R^{+-}+ I_R^{-+})$ is $\mathfrak{gl}(2,\HC)$-equivariant,
it follows that the same is true for
$\bigl( (I_R^{+-}+ I_R^{-+}) F \bigr)(Z_1,Z_2)$ with any $F \in {\cal W}'$.
Therefore, the limit (\ref{Mx^0-operator}) exists.

Clearly, the operator $\M^0$ on ${\cal W}'$ is
$\mathfrak{gl}(2,\HC)$-equivariant and, by Proposition \ref{J_R(W^0)-prop},
annihilates all irreducible components of $(\rho'_2,{\cal W}')$, except for
${\cal Q}'^0$.
It remains to show that $\M^0$ equals $\M$ on ${\cal Q}'^0$ on one
particular generator, then the other statements of the theorem follow
immediately, including the part that the values of $\M^0$ lie in ${\cal W}$.
For this purpose we choose a generator
$$
N(Z)^{-1} = N(Z)^{-1} \cdot \tilde H_{0,0,0}(Z)
+ N(Z)^{-1} \cdot \tilde H_{0,-1,-1}(Z) \quad \in {\cal Q}'^0.
$$
Moreover, since $\M^0$ is $(U(2) \times U(2))$-equivariant, it is sufficient
to show that $\M^0 N(Z)^{-1} = \M N(Z)^{-1}$ when $Z$ is diagonal.
And since the limit (\ref{Mx^0-operator}) is known to exist, we can assume
that $Z_1$, $Z_2$ are diagonal as well. We have:
$$
\M (N(Z)^{-1}) = 4 N(Z)^{-2} + 8 N(Z)^{-3} \cdot (Z^+)^2
$$
and
\begin{equation}  \label{Mx(gen)}
\M \left( \frac1{N(Z)} \right) = \frac4{a^2d^2}
\begin{pmatrix} 1 + 2d/a & 0 \\ 0 & 1 +2a/d \end{pmatrix}, \qquad \text{if }
Z= \bigl(\begin{smallmatrix} a & 0 \\ 0 & d \end{smallmatrix} \bigr).
\end{equation}
Suppose first that $Z_1 \in \BB D^+_R$ and $Z_2 \in \BB D^-_R$.
Recall from Section 6 of \cite{FL3} that
$$
(I_R N(W)^{-1})(Z_1,Z_2) = \sum_{l,m,n} \frac1{2l+1}
t^l_{n\,\underline{m}}(Z_1) \cdot N(Z_2)^{-1} \cdot t^l_{m\,\underline{n}}(Z_2^{-1}).
$$
By (\ref{I-J-relation}),
\begin{multline*}
(J_R N(W)^{-1})(Z_1,Z_2) =  24 \overrightarrow{\partial}_{Z_1}
\bigl( (I_R N(W)^{-1})(Z_1,Z_2) \bigr) \overleftarrow{\partial}_{Z_2}  \\
= - \sum_{l,m,n} \frac{24}{2l+1}
\begin{pmatrix}
(l-m) t^{l-\frac12}_{n+\frac12 \, \underline{m+\frac12}}(Z_1) &
(l-m) t^{l-\frac12}_{n-\frac12 \, \underline{m+\frac12}}(Z_1) \\
(l+m) t^{l-\frac12}_{n+\frac12 \, \underline{m-\frac12}}(Z_1) &
(l+m) t^{l-\frac12}_{n-\frac12 \, \underline{m-\frac12}}(Z_1)
\end{pmatrix}  \\
\times \frac1{N(Z_2)} \begin{pmatrix}
(l-m+1) t^{l+\frac12}_{m-\frac12\,\underline{n-\frac12}}(Z_2^{-1}) &
(l-m+1) t^{l+\frac12}_{m-\frac12\,\underline{n+\frac12}}(Z_2^{-1})  \\
(l+m+1) t^{l+\frac12}_{m+\frac12\,\underline{n-\frac12}}(Z_2^{-1}) &
(l+m+1) t^{l+\frac12}_{m+\frac12\,\underline{n+\frac12}}(Z_2^{-1}) \end{pmatrix}.
\end{multline*}
Additionally, assume that $Z_1$ and $Z_2$ are diagonal:
$Z_1 = \bigl(\begin{smallmatrix} a_1 & 0 \\ 0 & d_1 \end{smallmatrix}\bigr)$
with $|a_1|, |d_1| < R$ and
$Z_2 = \bigl(\begin{smallmatrix} a_2 & 0 \\ 0 & d_2 \end{smallmatrix}\bigr)$
with $|a_2|, |d_2| > R$, then
\begin{equation}  \label{t-diag}
t^l_{n\,\underline{m}}
\bigl(\begin{smallmatrix} a & 0 \\ 0 & d \end{smallmatrix}\bigr)
= \begin{cases}
a^{l-m}d^{l+m} & \text{ if $m=n$};  \\
0 & \text{ otherwise},
\end{cases}
\end{equation}
and only the terms with $n=m-1,m,m+1$ are non-zero, and it is easy to see that
\begin{multline}  \label{J_R^+-(1/N)}
(J_R N(W)^{-1})(Z_1,Z_2) \\
= \sum_{l,m} \frac{24}{2l+1} \begin{pmatrix}
\frac{\partial^2}{\partial a_1 \partial a_2}
+ \frac{\partial^2}{\partial a_1 \partial d_2} & 0 \\
0 & \frac{\partial^2}{\partial d_1 \partial d_2}
+ \frac{\partial^2}{\partial a_2 \partial d_1}
\end{pmatrix}
a_1^{l-m} a_2^{-l+m-1} d_1^{l+m} d_2^{-l-m-1}  \\
= 24 \begin{pmatrix}
\frac{\partial^2}{\partial a_1 \partial a_2}
+ \frac{\partial^2}{\partial a_1 \partial d_2} & 0 \\
0 & \frac{\partial^2}{\partial d_1 \partial d_2}
+ \frac{\partial^2}{\partial a_2 \partial d_1}
\end{pmatrix}
\frac{\log(1-d_1/d_2) - \log(1-a_1/a_2)}{a_2d_2(a_1/a_2 - d_1/d_2)}.
\end{multline}
Now suppose that $Z_1 \in \BB D^-_R$ and $Z_2 \in \BB D^+_R$, then
\begin{multline*}
(J_R N(W)^{-1})(Z_1,Z_2) =  24 \overrightarrow{\partial}_{Z_1}
\bigl( (I_R N(W)^{-1})(Z_1,Z_2) \bigr) \overleftarrow{\partial}_{Z_2}  \\
= - \sum_{l,m,n} \frac{24}{2l+1} \frac1{N(Z_1)} \begin{pmatrix}
(l-m+1) t^{l+\frac12}_{m-\frac12\,\underline{n-\frac12}}(Z_1^{-1}) &
(l-m+1) t^{l+\frac12}_{m-\frac12\,\underline{n+\frac12}}(Z_1^{-1})  \\
(l+m+1) t^{l+\frac12}_{m+\frac12\,\underline{n-\frac12}}(Z_1^{-1}) &
(l+m+1) t^{l+\frac12}_{m+\frac12\,\underline{n+\frac12}}(Z_1^{-1}) \end{pmatrix}  \\
\times \begin{pmatrix}
(l-m) t^{l-\frac12}_{n+\frac12 \, \underline{m+\frac12}}(Z_2) &
(l-m) t^{l-\frac12}_{n-\frac12 \, \underline{m+\frac12}}(Z_2) \\
(l+m) t^{l-\frac12}_{n+\frac12 \, \underline{m-\frac12}}(Z_2) &
(l+m) t^{l-\frac12}_{n-\frac12 \, \underline{m-\frac12}}(Z_2)
\end{pmatrix}.
\end{multline*}
If $Z_1$ and $Z_2$ are diagonal:
$Z_1 = \bigl(\begin{smallmatrix} a_1 & 0 \\ 0 & d_1 \end{smallmatrix}\bigr)$
with $|a_1|, |d_1| > R$ and
$Z_2 = \bigl(\begin{smallmatrix} a_2 & 0 \\ 0 & d_2 \end{smallmatrix}\bigr)$
with $|a_2|, |d_2| < R$, then, by (\ref{t-diag}), only the terms with
$n=m-1,m,m+1$ are non-zero, and it is easy to see that
\begin{multline}  \label{J_R^-+(1/N)}
(J_R N(W)^{-1})(Z_1,Z_2) \\
= \sum_{l,m} \frac{24}{2l+1} \begin{pmatrix}
\frac{\partial^2}{\partial a_1 \partial a_2}
+ \frac{\partial^2}{\partial a_1 \partial d_2} & 0 \\
0 & \frac{\partial^2}{\partial d_1 \partial d_2}
+ \frac{\partial^2}{\partial a_2 \partial d_1}
\end{pmatrix}
a_1^{-l+m-1} a_2^{l-m} d_1^{-l-m-1} d_2^{l+m}  \\
= 24 \begin{pmatrix}
\frac{\partial^2}{\partial a_1 \partial a_2}
+ \frac{\partial^2}{\partial a_1 \partial d_2} & 0 \\
0 & \frac{\partial^2}{\partial d_1 \partial d_2}
+ \frac{\partial^2}{\partial a_2 \partial d_1}
\end{pmatrix}
\frac{\log(1-d_2/d_1)-\log(1-a_2/a_1)}{a_1d_1(a_2/a_1 - d_2/d_1)}.
\end{multline}
Adding (\ref{J_R^+-(1/N)}) and (\ref{J_R^-+(1/N)}), we obtain
\begin{multline}  \label{(J+J)(1/N)}
\bigl( (J_R^{+-} + J_R^{-+}) N(W)^{-1} \bigr)(Z_1,Z_2)  \\
= 24 \begin{pmatrix}
\frac{\partial^2}{\partial a_1 \partial a_2}
+ \frac{\partial^2}{\partial a_1 \partial d_2} & 0 \\
0 & \frac{\partial^2}{\partial d_1 \partial d_2}
+ \frac{\partial^2}{\partial a_2 \partial d_1}
\end{pmatrix}
\frac{\log(a_1/a_2) - \log(d_1/d_2)}{a_2d_2(a_1/a_2 - d_1/d_2)},
\end{multline}
where
$Z_1 = \bigl(\begin{smallmatrix} a_1 & 0 \\ 0 & d_1 \end{smallmatrix}\bigr)$,
$Z_2 = \bigl(\begin{smallmatrix} a_2 & 0 \\ 0 & d_2 \end{smallmatrix}\bigr)
\in U(2)_R$.
Note that in this equation $\log$ denotes the branch of logarithm with a
cut along the positive real axis.
Finally, we take a limit of (\ref{(J+J)(1/N)}) as
$Z_1, Z_2 \to
Z = \bigl(\begin{smallmatrix} a & 0 \\ 0 & d \end{smallmatrix}\bigr)
\in U(2)_R$.
Since we know that the limit exists, to find its value, we can, for example,
set $a_1=a$, $d_1=d_2=d$ and let $a_1/a_2 \to 1$. We obtain
$$
\bigl( (J_R^{+-} + J_R^{-+}) N(W)^{-1} \bigr)(Z_1,Z_2) \to
- \frac4{a^2d^2}
\begin{pmatrix} 1 + 2d/a & 0 \\ 0 & 1 +2a/d \end{pmatrix}
= - \M \left( \frac1{N(Z)} \right)
$$
as $Z_1, Z_2 \to
Z = \bigl(\begin{smallmatrix} a & 0 \\ 0 & d \end{smallmatrix}\bigr)
\in U(2)_R$ (recall our earlier computation (\ref{Mx(gen)})).
This completes our proof that $\M^0 N(Z)^{-1} = \M N(Z)^{-1}$.
\end{proof}

We note that Theorem \ref{Mx^0-operator-thm} in this paper and
Theorem 15 in \cite{FL3} can be viewed as mathematical versions of the
regularization of vacuum polarization in QED and scalar QED, as it was
discussed in Subsection 4.5 of \cite{FL1}.
One just has to add, respectively, operators $J_R^{++} + J_R^{--}$ and
$I_R^{++}+I_R^{--}$, which do not contain any singularities and are also
related by the same identity (\ref{I-J-relation-repeat}).
Note, however, that the non-scalar case (Theorem \ref{Mx^0-operator-thm})
contains an additional subtraction of the one-dimensional representation
component from $F \in {\cal W}'$ (after factorization by the intersection
of the kernels of the maps $J_R^{\pm\pm}$).
Thus, one can say that the trivial one-dimensional component of ${\cal W}'$
that appears in the decomposition of $(\rho'_2,{\cal W}')$ into
irreducible components in Subsection \ref{W'-irred-decomp-subsection}
lies at the heart of the regularization of the vacuum polarization.
The subtraction of this component in the Minkowski picture is a
subtle procedure that is a part of the art of renormalization in
four-dimensional QED.

\begin{figure}
\begin{center}
\begin{subfigure}[b]{0.25\textwidth}
\centering
\includegraphics[scale=0.4]{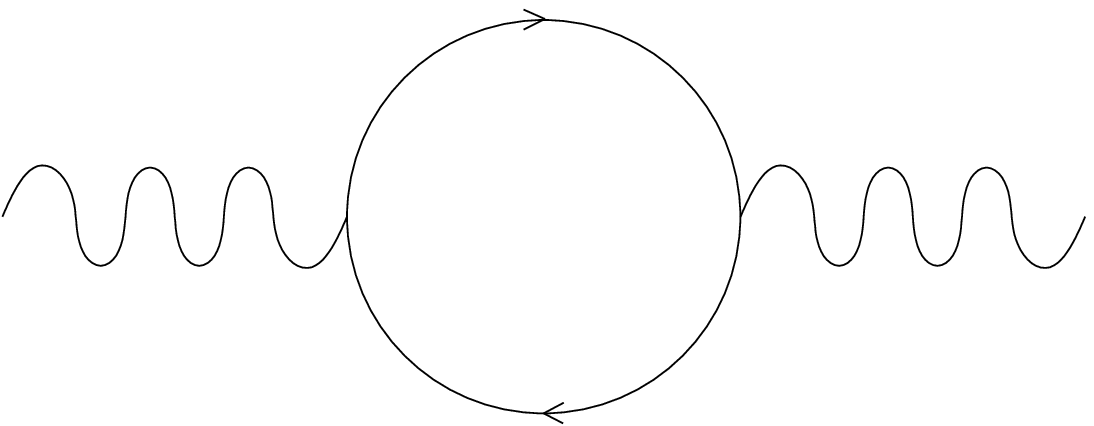}
\caption{Spinor case}
\end{subfigure}
\qquad
\begin{subfigure}[b]{0.4\textwidth}
\centering
\includegraphics[scale=0.4]{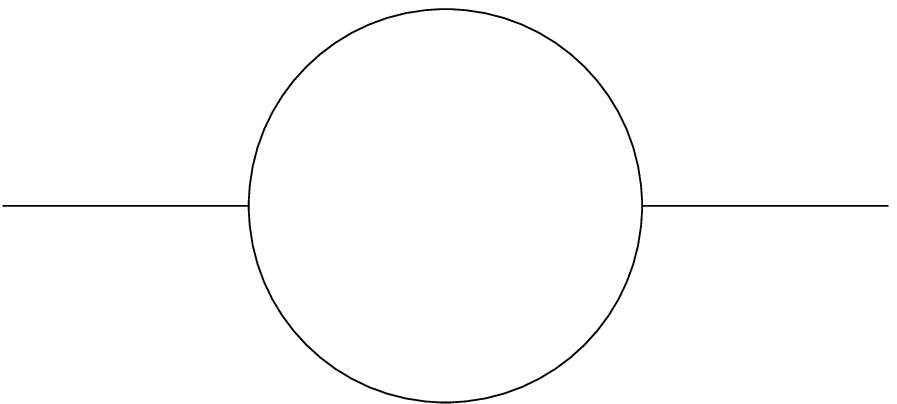}
\caption{Scalar case}
\end{subfigure}
\end{center}
\caption{Vacuum polarization Feynman diagrams.}
\label{vpolar-diag}
\end{figure}

\section{Algebras of Quaternionic Functions}  \label{Alg-section}

In this section we construct $\mathfrak{gl}(2,\HC)$-invariant algebra
structures on $(\rho_1,\Zh)$ and $(\rho'_2, {\cal W}'/\ker\M)$.
The first two subsections deal with the scalar case $(\rho_1,\Zh)$
and the following two subsections deal with the spinor case
$(\rho'_2, {\cal W}'/\ker\M)$.
In the last subsection we conjecture that these algebras have the structures
of weak cyclic $A_{\infty}$-algebras.

\subsection{Scalar Version of the Convolution Algebra}

Recall harmonic polynomial functions on $\HC^{\times}$:
\begin{align*}
  {\cal H}^+ &=   \bigl\{ \phi \in \BB C[z_{11},z_{12},z_{21},z_{22}] ;\:
  \square \phi =0 \bigr\}  \\
  &= \BB C\text{-span of } \bigl\{  t^l_{n \, \underline{m}}(Z) \bigr\},  \\
  {\cal H}^- &= \bigl\{ \phi \in \BB C[z_{11},z_{12},z_{21},z_{22}, N(Z)^{-1}] ;\:
  N(Z)^{-1} \cdot \phi(Z^{-1}) \in {\cal H^+} \bigr\}  \\
  &= \BB C\text{-span of } \bigl\{
  N(Z)^{-1} \cdot t^l_{m \, \underline{n}}(Z^{-1}) \bigr\},  \\
  {\cal H} &= \bigl\{ \phi \in \BB C[z_{11},z_{12},z_{21},z_{22}, N(Z)^{-1}] ;\:
  \square \phi =0 \bigr\}  \\
  &= {\cal H}^+ \oplus {\cal H}^-,
\end{align*}
where $l = 0, \frac12, 1, \frac32, \dots$, $m,n \in \BB Z +l$,
$-l \le m,n \le l$.
The Lie algebra $\mathfrak{gl}(2,\HC)$ acts on ${\cal H}^+$, ${\cal H}^-$ and
${\cal H}$ via two slightly different actions $\pi^0_l$ and $\pi^0_r$
(Subsections 2.4-2.5 in \cite{FL1}).
There is a non-degenerate $\mathfrak{gl}(2,\HC)$-invariant
bilinear pairing between $(\pi^0_l, {\cal H}^+)$ and $(\pi^0_r, {\cal H}^-)$
given by the integral formula
$$
\langle \phi_1, \phi_2 \rangle_{\cal H} = \frac1{2\pi^2} \int_{Z \in SU(2)}
(\degt_Z \phi_1)(Z) \cdot \phi_2(Z) \,dS,
\qquad \phi_1 \in {\cal H}^+, \: \phi_2 \in {\cal H}^-,
$$
(equation (32) in \cite{FL1}).
We extend this pairing to ${\cal H}^- \times {\cal H}^+$ by the same formula
(or antisymmetry), and then declare pairings on ${\cal H}^+ \times {\cal H}^+$
and ${\cal H}^- \times {\cal H}^-$ to be zero (even though the integral
need not be zero in these cases).
Thus we obtain a non-degenerate antisymmetric $\mathfrak{gl}(2,\HC)$-invariant
bilinear pairing between $(\pi^0_l, {\cal H})$ and $(\pi^0_r, {\cal H})$.

We consider ${\cal H} \otimes {\cal H}$.
This space consists of polynomial $\BB C$-valued functions
in two variables $Z_1, Z_2 \in \HC^{\times}$ that are harmonic
with respect to $Z_1$ and $Z_2$.
We define a convolution operation on ${\cal H} \otimes {\cal H}$ as
\begin{equation}  \label{conv-HtensorH}
(f \ast g) (Z_1,Z_2) =
  \langle \phi_2, \phi'_1 \rangle_{\cal H} \cdot \phi_1(Z_1) \otimes \phi'_2(Z_2),
\end{equation}
where $f=\phi_1(Z_1) \otimes \phi_2(Z_2)$,
$g=\phi'_1(Z_1) \otimes \phi'_2(Z_2) \in {\cal H} \otimes {\cal H}$.
This operation gives ${\cal H} \otimes {\cal H}$ the structure of an
associative algebra.
Since the bilinear pairing is $\mathfrak{gl}(2,\HC)$-invariant,
the above convolution product is $\mathfrak{gl}(2,\HC)$-equivariant
with respect to the $\pi^0_l \otimes \pi^0_r$ action on
${\cal H} \otimes {\cal H}$.

Next, we consider a space ${\cal HH}$ consisting of infinite sums
$\sum_i \phi_i(Z_1) \otimes \phi'_i(Z_2)$, where $\phi_i, \phi'_i \in {\cal H}$,
such that
$$
|\operatorname{degree\:of\,} \phi_i + \operatorname{degree\:of\,} \phi'_i|
$$
is bounded.
More precisely, ${\cal HH}$ consists of formal series of the form
\begin{multline} \label{f-series}
  \sum_{\genfrac{}{}{0pt}{}{l,m,n}{l',m',n'}}
  a_{\genfrac{}{}{0pt}{}{l,m,n}{l',m',n'}}
  t^l_{n \, \underline{m}}(Z_1) \cdot t^{l'}_{n' \, \underline{m'}}(Z_2)
  + \sum_{\genfrac{}{}{0pt}{}{l,m,n}{l',m',n'}}
  b_{\genfrac{}{}{0pt}{}{l,m,n}{l',m',n'}}
  t^l_{n \, \underline{m}}(Z_1) \cdot
  N(Z_2)^{-1} \cdot t^{l'}_{m' \, \underline{n'}}(Z_2^{-1})  \\
  + \sum_{\genfrac{}{}{0pt}{}{l,m,n}{l',m',n'}}
  c_{\genfrac{}{}{0pt}{}{l,m,n}{l',m',n'}}
  N(Z_1)^{-1} \cdot t^l_{m \, \underline{n}}(Z_1^{-1}) \cdot
  t^{l'}_{n' \, \underline{m'}}(Z_2)  \\
  + \sum_{\genfrac{}{}{0pt}{}{l,m,n}{l',m',n'}}
  d_{\genfrac{}{}{0pt}{}{l,m,n}{l',m',n'}}
  N(Z_1)^{-1} \cdot t^l_{m \, \underline{n}}(Z_1^{-1}) \cdot
  N(Z_2)^{-1} \cdot t^{l'}_{m' \, \underline{n'}}(Z_2^{-1})
\end{multline}
such that only finitely many coefficients
$a_{\genfrac{}{}{0pt}{}{l,m,n}{l',m',n'}}$'s,
$d_{\genfrac{}{}{0pt}{}{l,m,n}{l',m',n'}}$'s are non-zero and
non-zero coefficients $b_{\genfrac{}{}{0pt}{}{l,m,n}{l',m',n'}}$'s,
$c_{\genfrac{}{}{0pt}{}{l,m,n}{l',m',n'}}$'s have bounded difference of indices
$|l-l'|$.

\begin{lem}  \label{conv-prod-lem}
The convolution operation (\ref {conv-HtensorH}) extends to ${\cal HH}$
and gives it the structure of a $\mathfrak{gl}(2,\HC)$-invariant associative
algebra. Moreover, if $f \in {\cal H} \otimes {\cal H}$ and $g \in {\cal HH}$,
then both $f \ast g$ and $g \ast f$ lie in ${\cal H} \otimes {\cal H}$.
\end{lem}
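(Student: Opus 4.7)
The plan is to view $\mathcal{HH}$ as an algebra of ``infinite matrices'' with bounded bandwidth under an explicit selection rule coming from orthogonality of matrix coefficients, and then to read the convolution (\ref{conv-HtensorH}) as matrix multiplication in this picture.

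First I would use the orthogonality relations [(17) and (19) in \cite{FL3}] to compute the pairing on $K$-types and verify that
$$
\bigl\langle t^l_{n\,\underline{m}}(Z),\; N(Z)^{-1} \cdot t^{l'}_{m'\,\underline{n'}}(Z^{-1}) \bigr\rangle_{\cal H}
= c_l \,\delta_{ll'}\delta_{mm'}\delta_{nn'},
$$
with $c_l \ne 0$, while $\langle \cdot, \cdot\rangle_{\cal H}$ vanishes between two elements of ${\cal H}^+$ or two elements of ${\cal H}^-$ by the definition given just above the lemma. This is the only analytic fact required; it is the ``selection rule'' which will make all series that arise collapse to single summands at the level of individual $K$-type coefficients.

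Next I would translate the series (\ref{f-series}) into matrix language: the $a$- and $d$-families form finite-support infinite matrices indexed by $K$-types of the two tensor factors, while the $b$- and $c$-families form ``banded'' matrices in which only pairs of indices $(l,l')$ with $|l-l'|$ bounded are allowed. Applying (\ref{conv-HtensorH}) term-by-term and invoking the selection rule, I would check that the product of two elements of $\mathcal{HH}$ is again of the form (\ref{f-series}) with each coefficient given by a single summand rather than an infinite sum; this is where the bandwidth hypothesis becomes necessary. The block structure also shows that finite-support times anything stays finite-support, and banded times banded stays banded (by the triangle inequality on $|l-l'|$), so all four coefficient families of $f \ast g$ satisfy the conditions required to lie in $\mathcal{HH}$. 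The last assertion of the lemma then follows immediately: if $f \in \mathcal{H} \otimes \mathcal{H}$ is a finite sum, then only finitely many $K$-types occur in its inner factor, and the selection rule forces $f \ast g$ and $g \ast f$ to be finite sums.

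Associativity is then checked on pure tensors $\phi_1(Z_1) \otimes \phi_2(Z_2)$, $\phi'_1(Z_1) \otimes \phi'_2(Z_2)$, $\phi''_1(Z_1) \otimes \phi''_2(Z_2)$: both $(f\ast g)\ast h$ and $f\ast(g\ast h)$ reduce to $\langle\phi_2,\phi'_1\rangle_{\cal H}\cdot\langle\phi'_2,\phi''_1\rangle_{\cal H}\cdot\phi_1(Z_1)\otimes\phi''_2(Z_2)$, and the general case follows by the bookkeeping above. The $\g{gl}(2,\HC)$-invariance is inherited term-by-term from the invariance of $\langle\cdot,\cdot\rangle_{\cal H}$ together with the fact that the action $\pi^0_l \otimes \pi^0_r$ is defined diagonally on each tensor factor. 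The main obstacle, as I see it, is not conceptual but organizational: one must keep track of the four possible types $a,b,c,d$ in (\ref{f-series}) and verify case-by-case that all cross-products remain within the four admissible types with the correct bandwidth bounds. I do not anticipate any deeper difficulty beyond this finite bookkeeping.
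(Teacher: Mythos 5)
Your proposal is correct and takes essentially the same approach as the paper, whose entire proof is the single observation that for a fixed index $l_0$ only finitely many coefficients with that index are non-zero --- precisely the bandwidth/selection-rule finiteness you spell out in matrix language. One minor imprecision: after the orthogonality selection rule is applied, each coefficient of the product is a \emph{finite} sum over the intermediate $K$-types $(l',m',n')$ (finite because of the bounded difference $|l-l'|$), not literally a single summand.
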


\begin{proof}
The result follows from an observation that, for a fixed index $l_0$,
there are only finitely many non-zero coefficients
$$
a_{\genfrac{}{}{0pt}{}{l_0,m,n}{l',m',n'}}, \quad
b_{\genfrac{}{}{0pt}{}{l_0,m,n}{l',m',n'}}, \quad
c_{\genfrac{}{}{0pt}{}{l_0,m,n}{l',m',n'}}, \quad
d_{\genfrac{}{}{0pt}{}{l_0,m,n}{l',m',n'}}
$$
with that particular index, and similarly for index $l'_0$.
\end{proof}

Unlike ${\cal H} \otimes {\cal H}$, ${\cal HH}$ has a unit.
The expression for the unit is obtained by formally combining two copies of
matrix coefficient expansions of $N(Z_1-Z_2)^{-1}$
given in Proposition 25 from \cite{FL1} (see also Proposition \ref{Prop25}):
\begin{equation}  \label{1_HH}
1_{\cal HH} = - \sum_{l,m,n}
t^l_{n\,\underline{m}}(Z_1) \cdot N(Z_2)^{-1} \cdot t^l_{m\,\underline{n}}(Z_2^{-1})
+ \sum_{l,m,n} N(Z_1)^{-1} \cdot t^l_{m\,\underline{n}}(Z_1^{-1})
\cdot t^l_{n\,\underline{m}}(Z_2) \quad \in {\cal HH}.
\end{equation}
The fact that $1_{\cal HH}$ is indeed a unit follows from the definition of the
convolution operation and orthogonality relations (17) in \cite{FL3}.

The Lie algebra $\mathfrak{gl}(2,\HC)$ can act on ${\cal H} \otimes {\cal H}$
by at least three different actions:
$\pi^0_l \otimes 1$, $1 \otimes \pi^0_r$ and $\pi^0_l \otimes \pi^0_r$.
Clearly, all three actions extend to ${\cal HH}$.
Then the convolution operation on ${\cal HH}$ is
$(\pi^0_l \otimes \pi^0_r)$-equivariant.


For each $d \in \BB Z$, we define operators
$(\deg+d)^{-1}_{Z_1}$ and $(\deg+d)^{-1}_{Z_2}$ on ${\cal HH}$
as follows. If $f \in {\cal HH}$ is expressed as a series
(\ref{f-series}), then
\begin{multline*}
(\deg+d)^{-1}_{Z_1} f(Z_1,Z_2) = \sum_{\genfrac{}{}{0pt}{}{l>-d/2}{m,n,l',m',n'}}
\frac{a_{\genfrac{}{}{0pt}{}{l,m,n}{l',m',n'}}}{2l+d}
t^l_{n \, \underline{m}}(Z_1) \cdot t^{l'}_{n' \, \underline{m'}}(Z_2)  \\
+ \sum_{\genfrac{}{}{0pt}{}{l>-d/2}{m,n,l',m',n'}}
\frac{b_{\genfrac{}{}{0pt}{}{l,m,n}{l',m',n'}}}{2l+d}
t^l_{n \, \underline{m}}(Z_1) \cdot
N(Z_2)^{-1} \cdot t^{l'}_{m' \, \underline{n'}}(Z_2^{-1})  \\
- \sum_{\genfrac{}{}{0pt}{}{l>d/2-1}{m,n,l',m',n'}}
\frac{c_{\genfrac{}{}{0pt}{}{l,m,n}{l',m',n'}}}{2l+2-d}
N(Z_1)^{-1} \cdot t^l_{m \, \underline{n}}(Z_1^{-1}) \cdot
t^{l'}_{n' \, \underline{m'}}(Z_2)  \\
- \sum_{\genfrac{}{}{0pt}{}{l>d/2-1}{m,n,l',m',n'}}
\frac{d_{\genfrac{}{}{0pt}{}{l,m,n}{l',m',n'}}}{2l+2-d}
N(Z_1)^{-1} \cdot t^l_{m \, \underline{n}}(Z_1^{-1}) \cdot
N(Z_2)^{-1} \cdot t^{l'}_{m' \, \underline{n'}}(Z_2^{-1}).
\end{multline*}
Note that if $d \ne 1$, certain terms get discarded:
if $d>1$, terms
$$
N(Z_1)^{-1} \cdot t^l_{m \, \underline{n}}(Z_1^{-1}) \cdot
t^{l'}_{n' \, \underline{m'}}(Z_2)
\quad \text{and} \quad
N(Z_1)^{-1} \cdot t^l_{m \, \underline{n}}(Z_1^{-1}) \cdot
N(Z_2)^{-1} \cdot t^{l'}_{m' \, \underline{n'}}(Z_2^{-1})
$$
with $0 \le l \le d/2-1$ are discarded; and if $d<1$, terms
$$
t^l_{n \, \underline{m}}(Z_1) \cdot t^{l'}_{n' \, \underline{m'}}(Z_2)
\quad \text{and} \quad
t^l_{n \, \underline{m}}(Z_1) \cdot
N(Z_2)^{-1} \cdot t^{l'}_{m' \, \underline{n'}}(Z_2^{-1})
$$
with $0 \le l \le -d/2$ are discarded.
Then $(\deg+d)^{-1}_{Z_2} f(Z_1,Z_2)$ is defined similarly.

\begin{lem}  \label{deg-inverse-comm-rels}
Let $f(Z_1,Z_2) \in {\cal HH}$, $d \in \BB Z$ and $A, B, C, D \in \HC$.
We have the following commutation relations:
\begin{align*}
(\deg+d)_{Z_1}^{-1} (\pi^0_l \otimes 1)
\bigl(\begin{smallmatrix} A & 0 \\ 0 & D \end{smallmatrix}\bigr) f(Z_1,Z_2)
&= (\pi^0_l \otimes 1) \bigl(\begin{smallmatrix}
  A & 0 \\ 0 & D \end{smallmatrix}\bigr)
(\deg+d)_{Z_1}^{-1} f(Z_1,Z_2), \\
(\deg+d)_{Z_1}^{-1} (\pi^0_l \otimes 1)
\bigl(\begin{smallmatrix} 0 & B \\ 0 & 0 \end{smallmatrix}\bigr) f(Z_1,Z_2)
&= (\pi^0_l \otimes 1) \bigl(\begin{smallmatrix}
  0 & B \\ 0 & 0 \end{smallmatrix}\bigr)
(\deg+d-1)_{Z_1}^{-1} f(Z_1,Z_2), \\
(\deg+d)_{Z_1}^{-1} (\pi^0_l \otimes 1)
\bigl(\begin{smallmatrix} 0 & 0 \\ C & 0 \end{smallmatrix}\bigr) f(Z_1,Z_2)
&= (\pi^0_l \otimes 1) \bigl(\begin{smallmatrix}
  0 & 0 \\ C & 0 \end{smallmatrix}\bigr)
(\deg+d+1)_{Z_1}^{-1} f(Z_1,Z_2),
\end{align*}
and similarly for $(\deg+d)_{Z_2}^{-1}$ and $1 \otimes \pi^0_r$.
\end{lem}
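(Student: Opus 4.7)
The plan is to reduce each commutation relation to the degree-shift behaviour of the generators of $\mathfrak{gl}(2,\HC)$ under $\pi^0_l$ acting on the $Z_1$-variable, and then to verify that the index truncations built into $(\deg+d)^{-1}_{Z_1}$ are compatible on the two sides of each identity.

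First I would record how each block acts on $\deg_{Z_1}$. Using the explicit form of $\pi^0_l$ on harmonic functions (in analogy with Lemma \ref{rho-action-lem}), the block-diagonal matrices $\bigl(\begin{smallmatrix} A & 0 \\ 0 & D \end{smallmatrix}\bigr)$ act through $\tr(AZ\partial)$ and $\tr(ZD\partial)$ plus scalar shifts, all commuting with $\deg_{Z_1}$; $\bigl(\begin{smallmatrix} 0 & B \\ 0 & 0 \end{smallmatrix}\bigr)$ acts through $-\tr(B\partial)$, lowering $\deg_{Z_1}$ by one; and $\bigl(\begin{smallmatrix} 0 & 0 \\ C & 0 \end{smallmatrix}\bigr)$ acts through $\tr(Z_1CZ_1\partial)$ plus degree-one multiplication terms, raising $\deg_{Z_1}$ by one. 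On each eigenspace of $\deg_{Z_1}$ this yields the commutator identity $[\deg_{Z_1},(\pi^0_l\otimes 1)(X)] = \epsilon(X)\cdot(\pi^0_l\otimes 1)(X)$ with $\epsilon\in\{0,-1,+1\}$, from which the formal inversion $(\deg+d)^{-1}(\pi^0_l\otimes 1)(X)=(\pi^0_l\otimes 1)(X)(\deg+d+\epsilon)^{-1}$ reproduces the three claimed relations exactly.

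The substantive content is then to check that the truncations used to define $(\deg+d)^{-1}_{Z_1}$ respect this inversion. I would proceed term by term on the basis appearing in (\ref{f-series}). Because $\pi^0_l$ preserves the splitting ${\cal H}={\cal H}^+\oplus{\cal H}^-$ along the $Z_1$-direction, each generator acts separately within each of the four series; on the $a$/$b$-series (functions of the form $t^l_{n\,\underline{m}}(Z_1)\cdot(\text{anything in }Z_2)$, degree $2l$ in $Z_1$) the actions of $B$ and $C$ uniformly shift $l\mapsto l-1/2$ and $l\mapsto l+1/2$ respectively, while on the $c$/$d$-series ($N(Z_1)^{-1}t^l_{m\,\underline{n}}(Z_1^{-1})\cdot(\text{anything})$, degree $-2l-2$ in $Z_1$) the same generators shift $l$ by $+1/2$ and $-1/2$. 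It then suffices to match a single index inequality on the two sides. For instance, on an $a$-series term with $X=\bigl(\begin{smallmatrix} 0 & B \\ 0 & 0 \end{smallmatrix}\bigr)$, the left-hand side applies $(\deg+d)^{-1}_{Z_1}$ after the shift to $l-1/2$ and keeps $l-1/2>-d/2$, i.e.\ $l>(1-d)/2$; the right-hand side applies $(\deg+d-1)^{-1}_{Z_1}$ first and keeps $l>-(d-1)/2=(1-d)/2$. The analogous check for $C$, and for the $c$/$d$-series under the cut-off $l>d/2-1$, all match by the same substitution. The block-diagonal case is immediate since $\deg_{Z_1}$ is unchanged.

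The main obstacle is purely book-keeping --- four series with four distinct cut-offs and a $\pi^0_l$-invariant splitting of ${\cal H}$, all of which must be tracked in parallel --- but no new analytic input is needed beyond knowing that $\pi^0_l$ preserves ${\cal H}^\pm$ and shifts $l$ uniformly by $\pm 1/2$ on each basis element. The statement for $(\deg+d)^{-1}_{Z_2}$ with $1\otimes\pi^0_r$ follows by the entirely parallel argument applied to the $Z_2$-factor.
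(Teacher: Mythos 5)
The paper states this lemma without proof, and your argument supplies exactly the routine verification the authors evidently had in mind: the commutator identity $[\deg_{Z_1},(\pi^0_l\otimes 1)(X)]=\epsilon(X)(\pi^0_l\otimes 1)(X)$ with $\epsilon\in\{0,-1,+1\}$, combined with the fact that $\pi^0_l$ preserves the splitting of the $Z_1$-factor into ${\cal H}^+\oplus{\cal H}^-$ and shifts $l$ uniformly by $0$ or $\pm\frac12$ on each basis term. Your index bookkeeping is right — in every case the truncation inequality on the two sides reduces to the same condition (e.g.\ $l+\tfrac12>-d/2$ for $C$ on the $a/b$-series) and the scalar factors agree, so the proof is correct and complete.
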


\begin{lem}  \label{deg-inverse-star-rels}
Let $f, g \in {\cal HH}$ and $d \in \BB Z$. We have the following relations:
\begin{align*}
(\deg+d)_{Z_1}^{-1} (f \ast g) &= \bigl( (\deg+d)_{Z_1}^{-1} f \bigr) \ast g,  \\
(\deg+d)_{Z_2}^{-1} (f \ast g) &= f \ast \bigl( (\deg+d)_{Z_2}^{-1} g \bigr),  \\
\bigl( (\deg+d)_{Z_2}^{-1} f \bigr) \ast g &=
  - f \ast \bigl( (\deg+2-d)_{Z_1}^{-1} g \bigr).
\end{align*}
\end{lem}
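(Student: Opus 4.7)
The plan is to reduce everything to elementary tensors $f=\phi_1(Z_1)\otimes\phi_2(Z_2)$ and $g=\phi'_1(Z_1)\otimes\phi'_2(Z_2)$, where each $\phi_i,\phi'_i$ is one of the basis functions $t^l_{n\,\underline m}(Z)$ or $N(Z)^{-1}\cdot t^l_{m\,\underline n}(Z^{-1})$, and then verify each identity by bilinearity. The first two relations are essentially bookkeeping: the operator $(\deg+d)^{-1}_{Z_1}$ touches only the first tensor factor and multiplies $\phi_1$ by a scalar $\lambda_{\phi_1,d}$ (equal to $1/(2l+d)$ if $\phi_1\in{\cal H}^+$ is homogeneous of degree $2l$, or $-1/(2l+2-d)$ if $\phi_1\in{\cal H}^-$ is homogeneous of degree $-2l-2$), while the convolution $f\ast g$ only multiplies by the scalar pairing $\langle\phi_2,\phi'_1\rangle_{\cal H}$ and retains $\phi_1(Z_1)$ in its first slot. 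So $(\deg+d)^{-1}_{Z_1}(f\ast g)$ and $\bigl((\deg+d)^{-1}_{Z_1}f\bigr)\ast g$ both produce the same scalar multiple $\lambda_{\phi_1,d}\cdot\langle\phi_2,\phi'_1\rangle_{\cal H}\cdot\phi_1(Z_1)\otimes\phi'_2(Z_2)$. The identity for $(\deg+d)^{-1}_{Z_2}$ is symmetric.

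For the third, and only non-trivial, identity I plan to exploit the fact that the pairing $\langle\phi_2,\phi'_1\rangle_{\cal H}$ forces $\phi_2$ and $\phi'_1$ to lie in opposite halves ${\cal H}^{\pm}$ and to share the same index $l$. Specifically, if $\phi_2\in{\cal H}^+$ has degree $2l$ and $\phi'_1\in{\cal H}^-$ has degree $-2l-2$, then $(\deg+d)^{-1}_{Z_2}$ scales $\phi_2$ by $1/(2l+d)$ while $(\deg+2-d)^{-1}_{Z_1}$ scales $\phi'_1$ by $-1/(2l+2-(2-d))=-1/(2l+d)$; the minus sign on the right-hand side of the identity then makes the two sides agree. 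In the opposite case, where $\phi_2\in{\cal H}^-$ has degree $-2l-2$ and $\phi'_1\in{\cal H}^+$ has degree $2l$, the first operator contributes a factor $-1/(2l+2-d)$ and the second contributes $1/(2l+(2-d))=1/(2l+2-d)$, and again the extra minus sign matches them. Invoking bilinearity and the convergence conditions built into ${\cal HH}$ (so that the infinite sums actually make sense after applying these operators, cf.\ Lemma \ref{conv-prod-lem}), the identity extends from elementary tensors to arbitrary elements.

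The only subtle point, and what I expect to be the main technical obstacle, is bookkeeping for the terms that get discarded by $(\deg+d)^{-1}_{Z_i}$ when the corresponding denominator vanishes. I would verify that the two sides of the third identity discard \emph{exactly the same} terms: in the first case above, both sides drop the index $l$ with $2l+d=0$, and in the second case both drop $2l+2-d=0$; this symmetry is precisely why the shift from $d$ on one side to $2-d$ on the other is forced. Once this matching of discarded indices is confirmed, the proof reduces to the scalar-multiplication check sketched in the preceding paragraph.
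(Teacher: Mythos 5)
Your proof is correct; the paper states this lemma without proof, and your term-by-term verification on basis tensors --- using that the pairing forces $\phi_2$ and $\phi'_1$ to have degrees summing to $-2$, so that the scalars $1/(2l+d)$ and $-1/(2l+2-(2-d))$ produced by the two sides are negatives of each other --- is exactly the routine check the authors leave to the reader. The only nitpick is that the discarded terms are those with $2l+d\le 0$ (resp.\ $2l+2-d\le 0$) rather than $2l+d=0$, but as you observe the discarded index sets agree on the two sides of each identity, so the argument stands.
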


We define an equivariant map
$$
I=I^{++} - (I^{+-}+I^{-+}) + I^{--} :
(\rho_1, \Zh) \to (\pi^0_l \otimes \pi^0_r, {\cal HH})
$$
as follows.
Recall the maps $I_R$ given by equation (34) in Chapter 6 of \cite{FL3},
their definition is copied here in (\ref{I_R}).
By Lemma 11 and Theorem 12 in \cite{FL3}, if $Z_1, Z_2 \in \BB D^+_R$,
$I_R$ is a $\mathfrak{gl}(2,\HC)$-equivariant map
$(\rho_1,\Zh) \to (\pi^0_l \otimes \pi^0_r, {\cal H}^+ \otimes {\cal H}^+)$
independent of the choice of $R>0$; we call this map $I^{++}$.
Similarly, if $Z_1, Z_2 \in \BB D^-_R$, $I_R$ is a
$\mathfrak{gl}(2,\HC)$-equivariant map
$(\rho_1,\Zh) \to (\pi^0_l \otimes \pi^0_r, {\cal H}^- \otimes {\cal H}^-)$
also independent of the choice of $R>0$; we call this map $I^{--}$.
If $f \in \Zh$,
$$
I^{+-}(f) = \sum_{\genfrac{}{}{0pt}{}{l,m,n}{l',m',n'}}
b(f)_{\genfrac{}{}{0pt}{}{l,m,n}{l',m',n'}}
t^l_{n \, \underline{m}}(Z_1) \cdot
N(Z_2)^{-1} \cdot t^{l'}_{m' \, \underline{n'}}(Z_2^{-1}),
$$
where
$$
b(f)_{\genfrac{}{}{0pt}{}{l,m,n}{l',m',n'}} =
\frac{i}{2\pi^3} \int_{U(2)_R} f(W) \cdot N(W)^{-1} \cdot
t^l_{m\,\underline{n}}(W^{-1}) \cdot t^{l'}_{n'\,\underline{m'}}(W) \,dV.
$$
On the one hand, this integral does not depend on $R>0$.
On the other hand, by the matrix coefficient expansion of $N(Z-W)^{-1}$
given in Proposition 25 from \cite{FL1} (see also Proposition \ref{Prop25}),
for each $R>0$, the series $I^{+-}(f)$ converges to $(I_Rf)(Z_1,Z_2)$
whenever $Z_1 \in \BB D^+_R$ and $Z_2 \in \BB D^-_R$.
Similarly,
$$
I^{-+}(f) = \sum_{\genfrac{}{}{0pt}{}{l,m,n}{l',m',n'}}
c(f)_{\genfrac{}{}{0pt}{}{l,m,n}{l',m',n'}}
N(Z_1)^{-1} \cdot t^l_{m \, \underline{n}}(Z_1^{-1}) \cdot
t^{l'}_{n' \, \underline{m'}}(Z_2),
$$
where
$$
c(f)_{\genfrac{}{}{0pt}{}{l,m,n}{l',m',n'}} =
b(f)_{\genfrac{}{}{0pt}{}{l',m',n'}{l,m,n}} =
\frac{i}{2\pi^3} \int_{U(2)_R} f(W) \cdot t^l_{n \, \underline{m}}(W) \cdot
N(W)^{-1} \cdot t^{l'}_{m' \, \underline{n'}}(W^{-1}) \,dV.
$$
This integral is independent of $R>0$ and, for each $R>0$,
the series $I^{-+}(f)$ converges to $(I_Rf)(Z_1,Z_2)$ whenever
$Z_1 \in \BB D^-_R$ and $Z_2 \in \BB D^+_R$.

\begin{prop}  \label{gen-mult-prop}
We have:
  $$
  I(N(Z)^{-1}) = (\deg+1)^{-1}_{Z_1} 1_{\cal HH}
  = - (\deg+1)^{-1}_{Z_2} 1_{\cal HH}.
  $$
  In particular, for any $g \in {\cal HH}$,
  $$
  I(N(Z)^{-1}) \ast g = (\deg+1)^{-1}_{Z_1} g, \qquad
  g \ast I(N(Z)^{-1}) = -(\deg+1)^{-1}_{Z_2} g.
  $$
\end{prop}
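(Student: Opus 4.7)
My plan is to compute both sides as explicit formal series in ${\cal HH}$, match them, and then derive the ``in particular'' statement as a formal consequence of Lemma \ref{deg-inverse-star-rels}.

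For the right-hand side, I would apply $(\deg+1)^{-1}_{Z_1}$ term-by-term to the explicit series (\ref{1_HH}) for $1_{\cal HH}$. Both summand types are homogeneous in $Z_1$ (of degrees $2l$ and $-2l-2$), and since $d=1$ no terms are discarded; each coefficient is divided by $\pm(2l+1)$ according to the definition of $(\deg+d)^{-1}_{Z_1}$, and bookkeeping of signs produces $-\sum_{l,m,n}(2l+1)^{-1}$ times the sum of the two pieces that appear (with opposite signs) in $1_{\cal HH}$.

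For the left-hand side I would write $I = I^{++} - I^{+-} - I^{-+} + I^{--}$. The two mixed-sign pieces applied to $N(W)^{-1}$ appear as explicit series in the course of the proof of Theorem \ref{Mx^0-operator-thm}, each contributing exactly one of the two terms in the formula for $(\deg+1)^{-1}_{Z_1} 1_{\cal HH}$. For the same-sign pieces I would substitute the matrix coefficient expansion of $1/N(W-Z_i)$ from Proposition~25 in \cite{FL1} into the defining integral (\ref{I_R}) and apply the orthogonality relations (17) in \cite{FL3}; the resulting integrand carries a factor $N(W)^{-3}$ that has the wrong weight to survive the orthogonality, so $I^{++}(N(W)^{-1}) = I^{--}(N(W)^{-1}) = 0$. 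Comparing the two expressions yields the first identity. For the second identity $I(N(Z)^{-1}) = -(\deg+1)^{-1}_{Z_2} 1_{\cal HH}$, the cleanest route is to use the third relation in Lemma \ref{deg-inverse-star-rels} with $d=1$ and $f = g = 1_{\cal HH}$: this gives $\bigl((\deg+1)^{-1}_{Z_2} 1_{\cal HH}\bigr) \ast 1_{\cal HH} = -1_{\cal HH} \ast \bigl((\deg+1)^{-1}_{Z_1} 1_{\cal HH}\bigr)$, and since $1_{\cal HH}$ is a two-sided unit this collapses to the required equality.

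The ``in particular'' statement follows directly from the first two relations in Lemma \ref{deg-inverse-star-rels} together with the fact that $1_{\cal HH}$ is the unit: $I(N(Z)^{-1}) \ast g = \bigl((\deg+1)^{-1}_{Z_1} 1_{\cal HH}\bigr) \ast g = (\deg+1)^{-1}_{Z_1}(1_{\cal HH} \ast g) = (\deg+1)^{-1}_{Z_1} g$, and analogously on the right. The main obstacle I anticipate is verifying the vanishing $I^{++}(N(W)^{-1}) = I^{--}(N(W)^{-1}) = 0$; this is the only step genuinely requiring computation rather than formal manipulation, though it is more careful bookkeeping of $N(W)$-powers than a real obstruction.
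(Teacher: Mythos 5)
Your proof is correct and follows essentially the same route as the paper, which simply quotes the explicit series for $I(N(Z)^{-1})$ from Section 6 of \cite{FL3} (your derivation of that series by checking that $I^{++}$ and $I^{--}$ annihilate $N(Z)^{-1}$ while the mixed pieces supply the two sums is a fine substitute), compares it term by term with (\ref{1_HH}), and then invokes Lemma \ref{deg-inverse-star-rels} together with the unit property of $1_{\cal HH}$. One trivial correction: the orthogonality relations used for integrals over $U(2)_R$ are (19) of \cite{FL3}, not (17), which the paper reserves for integrals over spheres.
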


\begin{proof}
From Section 6 in \cite{FL3}, we have:
\begin{multline*}
I(N(Z)^{-1})(Z_1,Z_2)  \\
= - \sum_{l,m,n} \frac{N(Z_2)^{-1}}{2l+1} t^l_{n \, \underline{m}}(Z_1) \cdot
t^l_{m \, \underline{n}}(Z_2^{-1})
- \sum_{l,m,n} \frac{N(Z_1)^{-1}}{2l+1} t^l_{m \, \underline{n}}(Z_1^{-1}) \cdot
t^l_{n \, \underline{m}}(Z_2).
\end{multline*}
Then the result follows from (\ref{1_HH}) and
Lemma \ref{deg-inverse-star-rels}.
\end{proof}

\subsection{Scalar Version of the Algebra of Quaternionic Functions}  \label{scalar-algebra-subsection}

In this subsection we give $(\rho_1,\Zh)$ the structure of a
$\mathfrak{gl}(2,\HC)$-invariant algebra.

\begin{df}
  Let ${\cal HH}^{\omega}$ denote the subspace of ${\cal HH}$ generated by
  ${\cal H} \otimes {\cal H}$, $I(\Zh)$, application of operators
  $(\deg+d)^{-1}_{Z_1}$ and $(\deg+d)^{-1}_{Z_2}$, $d \in \BB Z$,
  as well as actions $\pi^0_l \otimes 1$ and $1 \otimes \pi^0_r$ of
  $\mathfrak{gl}(2,\HC)$.
\end{df}


Thus, by definition, ${\cal HH}^{\omega}$ is invariant under the
$\pi^0_l \otimes \pi^0_r$ action of $\mathfrak{gl}(2,\HC)$.
We want to reduce the number of generators of ${\cal HH}^{\omega}$.

\begin{lem}  \label{HH-omega-generators}
  The space ${\cal HH}^{\omega}$ is generated by ${\cal H} \otimes {\cal H}$,
  elements of the type
  $$
  I(N(Z)^{-1}), \quad
  (\deg+d_1)^{-1}_{Z_1} I(N(Z)^{-1}), \quad
  (\deg+d_1)^{-1}_{Z_2} I(N(Z)^{-1}),
  $$
  $$
  (\deg+d_2)^{-1}_{Z_1} (\deg+d_1)^{-1}_{Z_1} I(N(Z)^{-1}), \quad
  (\deg+d_2)^{-1}_{Z_1} (\deg+d_1)^{-1}_{Z_2} I(N(Z)^{-1}),
  $$
  $$
  (\deg+d_2)^{-1}_{Z_2} (\deg+d_1)^{-1}_{Z_2} I(N(Z)^{-1}), \quad
  (\deg+d_3)^{-1}_{Z_1} (\deg+d_2)^{-1}_{Z_1}(\deg+d_1)^{-1}_{Z_1} I(N(Z)^{-1}),\dots
  $$
  as well as actions $\pi^0_l \otimes 1$ and $1 \otimes \pi^0_r$ of
  $\mathfrak{gl}(2,\HC)$.
\end{lem}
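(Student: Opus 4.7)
The plan is to show that the two additional generator classes given in the definition of ${\cal HH}^{\omega}$, namely all of $I(\Zh)$ and arbitrary iterated applications of $(\deg+d)^{-1}_{Z_1}$ and $(\deg+d)^{-1}_{Z_2}$, can be reduced to the explicit list in the lemma statement.

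First I would reduce $I(\Zh)$. Using the decomposition $\Zh = \Zh^+ \oplus \Zh^0 \oplus \Zh^-$ from \cite{FL3}, the restrictions $I|_{\Zh^+} = I^{++}|_{\Zh^+}$ and $I|_{\Zh^-} = I^{--}|_{\Zh^-}$ follow because $I^{+-}+I^{-+}$ annihilates $\Zh^+ \oplus \Zh^-$ (the scalar analogue of Proposition \ref{J_R(W^0)-prop}, i.e.\ Theorem 15 in \cite{FL3}), so their images lie in ${\cal H}^+ \otimes {\cal H}^+$ and ${\cal H}^- \otimes {\cal H}^-$ respectively, both contained in the generator ${\cal H} \otimes {\cal H}$. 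On the remaining piece $\Zh^0$, which is irreducible under $\rho_1$ and contains the non-zero vector $N(Z)^{-1}$, the full subspace is swept out as $\rho_1(\mathcal{U}(\mathfrak{gl}(2,\HC))) \cdot N(Z)^{-1}$. By the $\mathfrak{gl}(2,\HC)$-equivariance of $I$, this gives
$$
I(\Zh^0) \;=\; (\pi^0_l \otimes \pi^0_r)(\mathcal{U}(\mathfrak{gl}(2,\HC))) \cdot I(N(Z)^{-1}),
$$
and since $\pi^0_l \otimes \pi^0_r = (\pi^0_l \otimes 1) + (1 \otimes \pi^0_r)$, everything in $I(\Zh^0)$ is reached from $I(N(Z)^{-1})$ by the two half-actions allowed in the definition.

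Next I would eliminate arbitrary compositions of $(\deg+d)^{-1}_{Z_i}$ operators in favor of the specific iterated forms listed. For this, the key ingredients are Lemma \ref{deg-inverse-comm-rels}, which shows how $(\deg+d)^{-1}_{Z_i}$ commutes with the Lie algebra actions up to a shift of $d$ by $0$ or $\pm 1$, together with the elementary observation that ${\cal H} \otimes {\cal H}$ is preserved both by the Lie algebra actions and by the operators $(\deg+d)^{-1}_{Z_i}$ (the latter acts on each $K$-isotypic component as multiplication by $\pm(2l+d)^{-1}$, dropping terms with $2l+d=0$ and otherwise staying within ${\cal H} \otimes {\cal H}$). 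Consequently, an arbitrary word of operators applied to an element of ${\cal H} \otimes {\cal H}$ still lies in ${\cal H} \otimes {\cal H}$ and produces no new generators. For a word applied to $I(N(Z)^{-1})$, I would inductively push every $(\deg+d)^{-1}_{Z_i}$ through the Lie algebra actions that lie to its right, adjusting $d$ according to Lemma \ref{deg-inverse-comm-rels}; after finitely many such moves all $(\deg+d)^{-1}_{Z_i}$'s stand adjacent to $I(N(Z)^{-1})$, and all Lie algebra actions are on the outside, which is precisely the form in the lemma.

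The main technical obstacle I anticipate is bookkeeping the index shifts and the terms that $(\deg+d)^{-1}_{Z_i}$ discards at singular values of $d$: when commuting $(\deg+d)^{-1}_{Z_i}$ past a raising or lowering operator, one must verify that the ``defect'' terms produced by the discarded summands either vanish or land in ${\cal H} \otimes {\cal H}$ and thus require no new generator. This is essentially a case analysis on the four types of matrices $\bigl(\begin{smallmatrix} A & 0 \\ 0 & 0 \end{smallmatrix}\bigr)$, $\bigl(\begin{smallmatrix} 0 & B \\ 0 & 0 \end{smallmatrix}\bigr)$, $\bigl(\begin{smallmatrix} 0 & 0 \\ C & 0 \end{smallmatrix}\bigr)$, $\bigl(\begin{smallmatrix} 0 & 0 \\ 0 & D \end{smallmatrix}\bigr)$, carried out in parallel with the proof of Lemma \ref{deg-inverse-comm-rels}; no conceptual difficulty arises beyond careful accounting.
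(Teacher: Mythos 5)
Your proposal is correct and follows essentially the same route as the paper's (much terser) proof: decompose $\Zh = \Zh^- \oplus \Zh^0 \oplus \Zh^+$, observe $I(\Zh^- \oplus \Zh^+) \subset {\cal H} \otimes {\cal H}$ and that $\Zh^0$ is generated by $N(Z)^{-1}$, note that $(\deg+d)^{-1}_{Z_i}$ preserves ${\cal H} \otimes {\cal H}$, and invoke Lemma \ref{deg-inverse-comm-rels} to push the $(\deg+d)^{-1}_{Z_i}$ operators inward past the Lie algebra actions. Your extra attention to the discarded terms at singular values of $d$ (which land in ${\cal H} \otimes {\cal H}$) is a detail the paper leaves implicit but handles correctly.
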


\begin{proof}
Since $\Zh = \Zh^- \oplus \Zh^0 \oplus \Zh^+$,
$I(\Zh^- \oplus \Zh^+) \subset {\cal H} \otimes {\cal H}$
and $\Zh^0$ is generated by $N(Z)^{-1}$,
${\cal HH}^{\omega}$ can be generated by ${\cal H} \otimes {\cal H}$ and
$I(N(Z)^{-1})$ instead of ${\cal H} \otimes {\cal H}$ and $I(\Zh)$.
Notice that applying $(\deg+d)^{-1}_{Z_1}$ or $(\deg+d)^{-1}_{Z_2}$ to an element
of ${\cal H} \otimes {\cal H}$ results in another element of
${\cal H} \otimes {\cal H}$.
Then the result follows from Lemma \ref{deg-inverse-comm-rels}.
\end{proof}

\begin{prop}  \label{HH-mult-closure-prop}
  The space ${\cal HH}^{\omega}$ is closed under the convolution operation:
  if $f, g \in {\cal HH}^{\omega}$, then $f \ast g$ also lies in
  ${\cal HH}^{\omega}$.
\end{prop}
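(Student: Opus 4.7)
The plan is to reduce to the generators furnished by Lemma \ref{HH-omega-generators} and then systematically migrate every $(\deg+d)^{-1}$ operator and every $\mathfrak{gl}(2,\HC)$-action outside of a given convolution $f \ast g$, leaving a computable innermost core.

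First, directly from
\[
(\phi_1 \otimes \phi_2) \ast (\phi'_1 \otimes \phi'_2)
= \langle \phi_2, \phi'_1 \rangle_{\cal H} \cdot \phi_1 \otimes \phi'_2,
\]
one sees that $\pi^0_l \otimes 1$ acts only on the first tensor factor, so $(\pi^0_l \otimes 1)(X)(f \ast g) = \bigl((\pi^0_l \otimes 1)(X) f\bigr) \ast g$; dually, $(1 \otimes \pi^0_r)(X)(f \ast g) = f \ast \bigl((1 \otimes \pi^0_r)(X) g\bigr)$. Combined with Lemma \ref{deg-inverse-comm-rels}, this allows us to export any Lie algebra action occurring inside $f$ or $g$ to the outside of $f \ast g$. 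By bilinearity of $\ast$, the claim thus reduces to computing $f \ast g$ when both factors are drawn from the reduced generators of Lemma \ref{HH-omega-generators}. If either factor lies in ${\cal H} \otimes {\cal H}$, Lemma \ref{conv-prod-lem} immediately gives $f \ast g \in {\cal H} \otimes {\cal H} \subset {\cal HH}^{\omega}$.

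The remaining case has both factors of the form (after reordering commuting $Z_1$- and $Z_2$-operators)
\[
f = (\deg+d_p)^{-1}_{Z_1} \cdots (\deg+d_1)^{-1}_{Z_1}\,
(\deg+e_q)^{-1}_{Z_2} \cdots (\deg+e_1)^{-1}_{Z_2}\, I(N(Z)^{-1}),
\]
and similarly for $g$ with data $(d'_i, e'_j)$. Rule 1 of Lemma \ref{deg-inverse-star-rels} peels off each $(\deg+d_i)^{-1}_{Z_1}$ from $f$ to the outside, and rule 2 peels off each $(\deg+e'_j)^{-1}_{Z_2}$ from $g$. Iterated application of rule 3 then converts the remaining $Z_2$-operators on the left factor into $Z_1$-operators on the right factor via $(\deg+e_i)^{-1}_{Z_2} \leftrightarrow (\deg+2-e_i)^{-1}_{Z_1}$, picking up an overall sign $(-1)^q$. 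The innermost surviving convolution is $I(N(Z)^{-1}) \ast H$, where $H$ is some iterated product of $(\deg+c)^{-1}_{Z_1}$'s applied to $I(N(Z)^{-1})$, and Proposition \ref{gen-mult-prop} identifies this with $(\deg+1)^{-1}_{Z_1} H$ — itself a generator of ${\cal HH}^{\omega}$. Reinstating the externally peeled-off operators keeps us inside ${\cal HH}^{\omega}$ by its defining closure.

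The one point that requires care is the bookkeeping: that the $(\deg+d_i)^{-1}_{Z_1}$ of differing $d_i$ commute (they are simultaneously diagonal on the monomial basis visible in Lemma \ref{HH-omega-generators}), that $Z_1$- and $Z_2$-operators commute freely, and that the cumulative sign $(-1)^q$ and the index shifts $e_i \mapsto 2-e_i$ produced by rule 3 are tracked consistently. No calculation beyond the identities already catalogued in Lemmas \ref{deg-inverse-comm-rels}, \ref{deg-inverse-star-rels} and Proposition \ref{gen-mult-prop} is required.
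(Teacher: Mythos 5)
Your proof is correct and follows essentially the same route as the paper's: reduce to the generators of Lemma \ref{HH-omega-generators}, migrate the $(\deg+d)^{-1}_{Z_1}$, $(\deg+d)^{-1}_{Z_2}$ operators using Lemma \ref{deg-inverse-star-rels}, and finish with Proposition \ref{gen-mult-prop}. The only imprecision is that your two one-sided identities do not by themselves export a $(1 \otimes \pi^0_r)$-action sitting inside $f$ (or a $(\pi^0_l \otimes 1)$-action inside $g$); for those you need the transfer identity $\bigl( (1 \otimes \pi^0_r)(X)f \bigr) \ast g = - f \ast \bigl( (\pi^0_l \otimes 1)(X)g \bigr)$ coming from the invariance of the pairing $\langle\,\cdot\,,\,\cdot\,\rangle_{\cal H}$ --- equivalently the diagonal Leibniz rule $\bigl( (\pi^0_l \otimes \pi^0_r)(X)f \bigr) \ast g = (\pi^0_l \otimes \pi^0_r)(X)(f \ast g) - f \ast \bigl( (\pi^0_l \otimes \pi^0_r)(X)g \bigr)$ that the paper invokes.
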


\begin{proof}
  First, observe that if $f$ or $g \in {\cal H} \otimes {\cal H}$,
  then, by Lemma \ref{conv-prod-lem},
  $$
  f \ast g \in {\cal H} \otimes {\cal H} \subset {\cal HH}^{\omega}.
  $$
  Since the convolution operation is $(\pi^0_l \otimes \pi^0_r)$-equivariant,
  $$
  \bigl( (\pi^0_l \otimes \pi^0_r)(X)f \bigr) \ast g =
  (\pi^0_l \otimes \pi^0_r)(X) (f \ast g)
  - f \ast \bigl( (\pi^0_l \otimes \pi^0_r)(X)g \bigr),
  $$
  for all $X \in \mathfrak{gl}(2,\HC)$.
  And by Lemmas \ref{deg-inverse-star-rels} and \ref{HH-omega-generators},
  it is sufficient to prove the proposition for the case $f$ is one of the
  generators of ${\cal HH}^{\omega}$ of the form $I(N(Z)^{-1})$ with operators
  $(\deg+d_i)^{-1}_{Z_1}$ and $(\deg+d_j)^{-1}_{Z_2}$ applied several times.
  Then Lemma \ref{deg-inverse-star-rels} reduces this further to the case
  $f=I(N(Z)^{-1})$, and the result follows from
  Proposition \ref{gen-mult-prop}.
\end{proof}

Next, we realize elements of ${\cal HH}^{\omega}$ as analytic functions as
opposed to formal series.
For $Z_1, Z_2 \in \HC^{\times}$, let $\lambda_1$ and $\lambda_2$ denote the
eigenvalues of $Z_1Z_2^{-1}$, and introduce open subsets of
$\HC^{\times} \times \HC^{\times}$
\begin{align*}
\Omega &= \{ (Z_1,Z_2) \in \HC^{\times} \times \HC^{\times} ;\:
\text{neither $\lambda_1$ nor $\lambda_2$ is a positive real number} \}, \\
\Lambda^+ &= \{ (Z_1,Z_2) \in \HC^{\times} \times \HC^{\times} ;\:
\sgn (\im \lambda_1)>0,\: \sgn (\im \lambda_2)>0 \},  \\
\Lambda^- &= \{ (Z_1,Z_2) \in \HC^{\times} \times \HC^{\times} ;\:
\sgn (\im \lambda_1)<0,\: \sgn (\im \lambda_2)<0 \}.
\end{align*}
Then $\Lambda^+, \Lambda^- \subset \Omega$.

\begin{lem}
  The elements of ${\cal HH}^{\omega}$ can be realized as analytic functions
  on $\Omega$.
\end{lem}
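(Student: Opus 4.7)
The plan is to verify the claim on a convenient set of generators of ${\cal HH}^{\omega}$ and then check that the operations used to build ${\cal HH}^{\omega}$ preserve analyticity on $\Omega$. By Lemma~\ref{HH-omega-generators} these operations are (i) pointwise multiplication/addition of the basic building blocks in ${\cal H}\otimes{\cal H}$, (ii) repeated application of the partial inverses $(\deg+d)^{-1}_{Z_1}$ and $(\deg+d)^{-1}_{Z_2}$ to $I(N(Z)^{-1})$, and (iii) the Lie algebra actions $\pi^0_l\otimes 1$ and $1\otimes \pi^0_r$ of $\mathfrak{gl}(2,\HC)$. I would treat each in turn.

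First I would observe that ${\cal H}\otimes {\cal H}$ consists of finite linear combinations of products of the rational functions $t^l_{n\,\underline{m}}(Z_i)$ and $N(Z_i)^{-1}\cdot t^l_{m\,\underline{n}}(Z_i^{-1})$; these are regular on all of $\HC^{\times}\times\HC^{\times}\supset \Omega$. Next, for the base generator $I(N(Z)^{-1})$, I would invoke the closed form already obtained in the proof of Theorem~\ref{Mx^0-operator-thm}: the two matrix coefficient series making up $I(N(Z)^{-1})$ (see Proposition~\ref{gen-mult-prop}) are exactly $-I^{+-}(N(Z)^{-1})-I^{-+}(N(Z)^{-1})$, and the sum extends analytically to the function
\[
-\frac{1}{N(Z_2)}\cdot\frac{\log\lambda_2-\log\lambda_1}{\lambda_2-\lambda_1}
\]
(with the principal branch of $\log$ having its cut along the positive real axis). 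Because $(Z_1,Z_2)\in\Omega$ means precisely that neither eigenvalue $\lambda_i$ of $Z_1Z_2^{-1}$ is a positive real number, this function is holomorphic on $\Omega$.

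For the main inductive step I would realize $(\deg+d)^{-1}_{Z_1}$ as an integral operator. Given an element $\phi\in{\cal HH}^{\omega}$ already known to be analytic on $\Omega$, one decomposes $\phi$ in the $Z_1$-variable into its ``positive'' part $\phi_{+}$ (a series in non-negative powers of the entries of $Z_1$) and its ``negative'' part $\phi_{-}$ (a series in the entries of $N(Z_1)^{-1}\cdot Z_1^{-1}$); after discarding the finitely many resonant $K$-types with $2l+d=0$, one has
\[
\bigl((\deg+d)^{-1}_{Z_1}\phi\bigr)(Z_1,Z_2)=\int_0^1 t^{d-1}\phi_{+}(tZ_1,Z_2)\,dt-\int_1^{\infty} t^{d-1}\phi_{-}(tZ_1,Z_2)\,dt,
\]
matching the definition on matrix coefficients. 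The decisive point is that for $t\in(0,\infty)$ the eigenvalues of $(tZ_1)Z_2^{-1}$ are $t\lambda_1,t\lambda_2$, and these avoid the positive real axis whenever $\lambda_1,\lambda_2$ do; hence the integrand stays analytic on $\Omega$ throughout the integration, and the resulting function is analytic on $\Omega$. Identical arguments handle $(\deg+d)^{-1}_{Z_2}$. Finally, $\pi^0_l(X)\otimes 1$ and $1\otimes \pi^0_r(X)$ are first-order differential operators whose coefficients are polynomial in $Z_1,Z_2$ and in $N(Z_i)^{-1}$, and so trivially preserve analyticity on $\Omega$.

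The hard part will be step three. Two subtleties need care: (a) the $Z_1$-decomposition $\phi=\phi_+ +\phi_-$ is not an intrinsic decomposition of an analytic function on $\Omega$ but rather an artifact of the series representation, so one needs to propagate the decomposition inductively along the chain of generators in Lemma~\ref{HH-omega-generators} (at each step one knows explicitly how $I(N(Z)^{-1})$ splits into $I^{+-}$- and $I^{-+}$-type pieces, and each subsequent $(\deg+d)^{-1}_{Z_i}$ or Lie algebra action sends such pieces to analogous pieces); (b) one must check the convergence of the integrals at $t=0$ and $t=\infty$ after the finite set of resonant $K$-types is removed, which reduces to bounding the $K$-type multiplicities along the way and noting that resonances $2l+d=0$ happen only for a single $l$ and are precisely the terms discarded in the series definition. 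Once (a) and (b) are in place, the analyticity of all generators on $\Omega$ follows, and induction on the number of operators applied in Lemma~\ref{HH-omega-generators} completes the proof.
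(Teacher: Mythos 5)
Your proposal follows essentially the same route as the paper: treat the elements of ${\cal H}\otimes{\cal H}$ as everywhere-regular rational functions, use the closed logarithmic form of $I(N(Z)^{-1})$ from \cite{FL3} to get analyticity on $\Omega$, realize $(\deg+d)^{-1}_{Z_1}$, $(\deg+d)^{-1}_{Z_2}$ as integral operators (as in the $(\deg+2)^{-1}$ construction), and note that the Lie algebra actions are differential operators preserving analyticity. Your observation that the eigenvalues of $(tZ_1)Z_2^{-1}$ are $t\lambda_1,t\lambda_2$, so the ray of integration stays inside $\Omega$, is a worthwhile detail the paper leaves implicit; only minor quibble is a sign — $I(N(Z)^{-1})=-(I^{+-}+I^{-+})(N(Z)^{-1})$ equals $+\frac1{N(Z_2)}\frac{\log\lambda_2-\log\lambda_1}{\lambda_2-\lambda_1}$ — which does not affect the analyticity argument.
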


\begin{proof}
Elements of ${\cal H} \otimes {\cal H}$ are polynomials, hence can be treated
as analytic functions on $\Omega$.
On the other hand,
$$
I(N(Z)^{-1}) = -I^{+-}(N(Z)^{-1}) -I^{-+}(N(Z)^{-1}).
$$
As was shown in Section 6 of \cite{FL3},
the series $I^{+-}(N(Z)^{-1})$ converges on $\BB D^+_R \times \BB D^-_R$
and extends analytically to $\Omega$.
Similarly, the series $I^{-+}(N(Z)^{-1})$ converges on
$\BB D^-_R \times \BB D^+_R$ and also extends analytically to $\Omega$.
Just as we expressed $(\deg+2)^{-1}$ as an integral operator in
Subsection \ref{deg+2-inverse-subsection}, integral expressions can be found
for $(\deg+d)^{-1}_{Z_1}$ and $(\deg+d)^{-1}_{Z_2}$, $d \in \BB Z$.
This shows that the result of application of such operators to 
$I^{+-}(N(Z)^{-1})$ and $I^{-+}(N(Z)^{-1})$ is analytic on $\Omega$ as well.
Finally, the actions $\pi^0_l \otimes 1$ and $1 \otimes \pi^0_r$ of
$\mathfrak{gl}(2,\HC)$ preserve analyticity of functions.
\end{proof}

Each $f \in {\cal HH}^{\omega}$ is harmonic with respect to $Z_1$ and $Z_2$:
$$
\square_{Z_1} f(Z_1,Z_2) = 0 = \square_{Z_2} f(Z_1,Z_2).
$$
We have the following inclusions:
$$
{\cal H} \otimes {\cal H} \subset {\cal HH}^{\omega} \subset {\cal HH}.
$$
Thus, we can think of ${\cal HH}^{\omega}$ as a completion of
${\cal H} \otimes {\cal H}$.

\begin{df}  \label{extendable-scalar-def}
  We call a function $f \in {\cal HH}^{\omega}$ {\em extendable} if,
  for each $Z_0 \in \HC^{\times}$, there exists an open neighborhood
  $V \subset \HC^{\times} \times \HC^{\times}$ of $(Z_0,Z_0)$ and two
  functions $f^+$ and $f^-$ analytic on
  $V$ such that $f = f^+$ for all points in $\Lambda^+ \cap V$ and
  $f = f^-$ for all points in $\Lambda^- \cap V$.
\end{df}

Informally, extendable functions are functions $f$ whose restrictions
$f|_{\Lambda^+}$ and $f|_{\Lambda^-}$ extend analytically across the diagonal in
$\HC^{\times} \times \HC^{\times}$.
Clearly, elements of ${\cal H} \otimes {\cal H}$ are extendable.
Also, the following observation is obvious, but will be used in the future.

\begin{lem}  \label{extendable-invariant-lem}
  The extendable functions in ${\cal HH}^{\omega}$ form a subspace that is
  invariant under the actions $\pi^0_l \otimes 1$, $1 \otimes \pi^0_r$ and
  $\pi^0_l \otimes \pi^0_r$ of $\mathfrak{gl}(2,\HC)$.
\end{lem}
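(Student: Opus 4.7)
The plan is to reduce the statement to the elementary fact that differentiation commutes with analytic continuation. The three Lie algebra actions $\pi^0_l \otimes 1$, $1 \otimes \pi^0_r$ and $\pi^0_l \otimes \pi^0_r$ are each given by first order differential operators: the formulas for $\pi^0_l$ and $\pi^0_r$ (or equivalently the $\rho$-style formulas of the type in Lemmas \ref{rho-action-lem}--\ref{rho'_2-action-lem}, restricted to the harmonic subspace, see \cite{FL1}) show that $\pi^0_l(X)$ acts by a first order differential operator in $Z_1$ whose coefficients are polynomials in $Z_1$, and likewise $\pi^0_r(X)$ acts in $Z_2$ alone with polynomial coefficients in $Z_2$. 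Hence for each $X \in \g{gl}(2,\HC)$, the operators $(\pi^0_l \otimes 1)(X)$, $(1 \otimes \pi^0_r)(X)$ and $(\pi^0_l \otimes \pi^0_r)(X)$ are all first order linear differential operators on $\HC^{\times} \times \HC^{\times}$ with entire (in fact polynomial) coefficients.

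Given this, the argument is as follows. Fix an extendable $f \in {\cal HH}^{\omega}$, a generator $X$ of $\g{gl}(2,\HC)$, and set $g = (\pi^0_l \otimes 1)(X) f$ (the cases of $1 \otimes \pi^0_r$ and $\pi^0_l \otimes \pi^0_r$ are identical). Then $g \in {\cal HH}^{\omega}$ by the $\g{gl}(2,\HC)$-invariance of ${\cal HH}^{\omega}$. To verify extendability, fix $Z_0 \in \HC^{\times}$ and choose, by hypothesis on $f$, an open neighborhood $V \subset \HC^{\times} \times \HC^{\times}$ of $(Z_0,Z_0)$ together with analytic functions $f^+, f^-$ on $V$ satisfying $f = f^{\pm}$ on $\Lambda^{\pm} \cap V$. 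Let $D$ denote the first order differential operator realizing $(\pi^0_l \otimes 1)(X)$, whose coefficients are defined and analytic on all of $\HC^{\times} \times \HC^{\times}$. Then $Df^+$ and $Df^-$ are analytic on $V$, and since $D$ acts locally and $f = f^{\pm}$ on the open sets $\Lambda^{\pm} \cap V$, we have $g = Df = Df^{\pm}$ there. Thus $Df^{\pm}$ serve as the required analytic extensions of $g|_{\Lambda^{\pm}}$ to $V$, and $g$ is extendable.

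Since every element of $\g{gl}(2,\HC)$ is a linear combination of the generators and the set of extendable functions is manifestly a vector subspace, this proves the lemma. The only conceivable subtlety is confirming that the three Lie algebra actions genuinely do act by differential operators whose coefficients are globally analytic on $\HC^{\times} \times \HC^{\times}$ (so that their application cannot introduce new singularities inside the neighborhood $V$); this is immediate from the explicit formulas, which have polynomial coefficients, and is the only point one needs to check. No analytic continuation argument is required beyond the one already encoded in the definition of extendability.
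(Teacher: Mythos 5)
Your argument is correct and is precisely the justification the paper leaves implicit: the paper states this lemma without proof, calling it an obvious observation, and the reason it is obvious is exactly the one you give — the three actions are first order differential operators with polynomial coefficients, which act locally and preserve analyticity on any open set, so they carry the local analytic extensions $f^{\pm}$ on $V$ to analytic extensions $Df^{\pm}$ of $Df$. Nothing is missing.
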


\begin{rem}
We expect all functions in ${\cal HH}^{\omega}$ to be extendable.
\end{rem}

\begin{lem}  \label{I(Zh)-extendable}
  For each $f \in \Zh$, $I(f)$ is extendable and can be written as a
  finite linear combination of analytic functions on $\Omega$ that are
  homogeneous in $Z_1$ and $Z_2$.
\end{lem}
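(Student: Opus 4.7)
The plan is to reduce the claim, via the $\g{gl}(2,\HC)$-equivariance of $I$ and the decomposition $\Zh = \Zh^+ \oplus \Zh^0 \oplus \Zh^-$ into irreducible components from Section 4 of \cite{FL3}, to a single check for each component.  Since every $f \in \Zh = \BB C[z_{11},z_{12},z_{21},z_{22}, N(Z)^{-1}]$ is a finite linear combination of bi-homogeneous monomials and $I$ intertwines the scaling subgroup of $GL(2,\HC)$, it suffices to treat one homogeneous generator per irreducible summand and then invoke linearity to assemble the general case.

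For $f \in \Zh^+$ (respectively $\Zh^-$) the scalar analogue of Theorem \ref{J_R(W^+-)} proved in Section 6 of \cite{FL3} shows that $I^{+-}, I^{-+}, I^{--}$ (respectively $I^{+-}, I^{-+}, I^{++}$) all annihilate $f$, so $I(f)$ equals $I^{++}(f) \in {\cal H}^+ \otimes {\cal H}^+$ (respectively $I^{--}(f) \in {\cal H}^- \otimes {\cal H}^-$).  These spaces consist of finite sums of tensor products $\phi_1(Z_1) \otimes \phi_2(Z_2)$ with each $\phi_i$ a harmonic polynomial (or $N(Z)^{-1}$ times a harmonic polynomial in $Z^{-1}$), so each summand is a bi-homogeneous analytic function defined on all of $\HC \times \HC$ (respectively on $\HC^{\times} \times \HC^{\times}$); extendability is immediate because every summand is already analytic on a full neighborhood of every diagonal point.

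For $f \in \Zh^0$ both $I^{++}$ and $I^{--}$ vanish, so $I(f) = -(I^{+-}+I^{-+})(f)$.  I will focus on the generator $f = N(Z)^{-1}$: formula (\ref{I(N(W)^{-1})}) yields
$$
I(N(Z)^{-1})(Z_1,Z_2) = \frac{1}{N(Z_2)} \cdot \frac{\log\lambda_2 - \log\lambda_1}{\lambda_2 - \lambda_1}
$$
on $\Omega$, where $\lambda_1, \lambda_2$ are the eigenvalues of $Z_1 Z_2^{-1}$; a direct scaling computation gives bi-homogeneity of bi-degree $(-1,-1)$.  The crucial point for extendability at a diagonal base point $(Z_0, Z_0)$ is that on $\Lambda^+ \cap V$ both $\lambda_1, \lambda_2$ lie in the upper half-plane near $1$ and the cut-on-positive-reals branch of $\log$ agrees with the principal branch, while on $\Lambda^- \cap V$ the two branches differ by $2\pi i$ in each $\log \lambda_j$ and that offset cancels in the difference $\log\lambda_2 - \log\lambda_1$; in both regions the formula therefore coincides with the expression obtained by substituting the principal logarithm, which is analytic on a neighborhood of $\lambda = 1$, with the apparent singularity at $\lambda_2 = \lambda_1$ removable (the limit equals $1/\lambda_1$).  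Thus $I(N(Z)^{-1})$ is extendable, with $f^+ = f^-$ given by this single principal-branch expression.

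Finally, since $(\rho_1, \Zh^0)$ is irreducible, $\Zh^0$ is generated by $N(Z)^{-1}$ as a module over the universal enveloping algebra of $\g{gl}(2,\HC)$, and equivariance gives
$$
I(\rho_1(X) N(Z)^{-1}) = (\pi^0_l \otimes \pi^0_r)(X) I(N(Z)^{-1})
$$
for every $X$ in that algebra.  The generators of $\g{gl}(2,\HC)$ act through $\pi^0_l \otimes \pi^0_r$ by first-order polynomial differential operators with polynomial coefficients which shift bi-degrees by fixed integer amounts and preserve extendability by Lemma \ref{extendable-invariant-lem}; iterating, $I(\Zh^0)$ consists of finite linear combinations of extendable bi-homogeneous analytic functions on $\Omega$, completing the plan.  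The genuine obstacle is the analytic continuation of the log formula across the diagonal in the third paragraph; once that single extension is established, the remainder is a routine application of equivariance and the irreducibility of $\Zh^0$.
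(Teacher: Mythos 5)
Your proof is correct and follows essentially the same route as the paper: decompose $\Zh = \Zh^- \oplus \Zh^0 \oplus \Zh^+$, observe that $I(\Zh^- \oplus \Zh^+) \subset {\cal H} \otimes {\cal H}$ consists of finite sums of bi-homogeneous polynomials, and handle $\Zh^0$ by checking the generator $N(Z)^{-1}$ via formula (\ref{I(N(W)^{-1})}) and then invoking irreducibility together with Lemma \ref{extendable-invariant-lem}. The only difference is that you spell out the branch-of-logarithm cancellation across $\Lambda^{\pm}$, which the paper leaves implicit in its citation of (\ref{I(N(W)^{-1})}); that verification is accurate.
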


\begin{proof}
Recall that $\Zh = \Zh^- \oplus \Zh^0 \oplus \Zh^+$.
If $f \in I(\Zh^- \oplus \Zh^+)$, then
$I(f) \in {\cal H} \otimes {\cal H}$ is extendable
and can be written as a finite linear combination of homogeneous functions.
On the other hand, expression (\ref{I(N(W)^{-1})}) shows that
$I(N(Z)^{-1})$ is extendable and homogeneous in $Z_1$ and $Z_2$
(of degree $-1$ in each variable).
Since $N(Z)^{-1}$ generates $\Zh^0$, it follows from
Lemma \ref{extendable-invariant-lem} that the result is true for
all $f \in \Zh^0$.
\end{proof}

Let $f$ be an extendable function. Even though it may be singular along
the diagonal, we still can construct two analytic functions on $\HC^{\times}$:
\begin{align*}
(\DR^+ f)(Z) &=
\lim_{\genfrac{}{}{0pt}{}{Z_1, Z_2 \to Z}{(Z_1,Z_2) \in \Lambda^+}} f(Z_1,Z_2)
\qquad \text{and}  \\
(\DR^- f)(Z) &=
\lim_{\genfrac{}{}{0pt}{}{Z_1, Z_2 \to Z}{(Z_1,Z_2) \in \Lambda^-}} f(Z_1,Z_2).
\end{align*}
In some cases, applying $\DR^+$ and $\DR^-$ may yield different results,
we will see a concrete example in Proposition \ref{n-mult-scalar}.
Note also that functions $\DR^+ f$ and $\DR^- f$ need not be elements of $\Zh$
because they may not be polynomials on $\HC^{\times}$.
Nevertheless, the operators $\DR^+$ and $\DR^-$ intertwine the
$\mathfrak{gl}(2,\HC)$-actions $\pi^0_l \otimes \pi^0_r$ on extendable
functions in ${\cal HH}^{\omega}$ and $\rho_1$ on $\BB C$-valued
analytic functions on $\HC^{\times}$.

\begin{lem}  \label{Diag_on_HxH-lem}
  If $f(Z_1,Z_2) \in {\cal H} \otimes {\cal H}$, then
  $$
  \DR^+ f = \DR^- f \quad \in \Zh.
  $$
\end{lem}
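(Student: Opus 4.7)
The plan is to observe that elements of ${\cal H} \otimes {\cal H}$ are, essentially by construction, analytic on the entire product $\HC^{\times} \times \HC^{\times}$, including a neighborhood of every diagonal point $(Z_0,Z_0)$ with $Z_0 \in \HC^{\times}$. Consequently the ambiguity between the two one-sided limits $\DR^+$ and $\DR^-$ never arises for such functions, and both reduce to the ordinary restriction to the diagonal.

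More precisely, first I would write an arbitrary $f \in {\cal H} \otimes {\cal H}$ as a finite sum $f = \sum_i \phi_i(Z_1)\,\phi'_i(Z_2)$ with $\phi_i, \phi'_i \in {\cal H}$. Since ${\cal H} \subset \BB C[z_{11},z_{12},z_{21},z_{22}, N(Z)^{-1}]$, each factor is a rational function on $\HC$ whose only singular locus is $\{N(Z)=0\}$, and in particular each $\phi_i$, $\phi'_i$ is holomorphic on $\HC^{\times}$. It follows that $f$ is holomorphic on the product $\HC^{\times} \times \HC^{\times}$; no singularities can develop along the diagonal because the only poles of $\phi_i(Z_1)\phi'_i(Z_2)$ lie on $\{N(Z_1)=0\} \cup \{N(Z_2)=0\}$, which misses $(Z_0,Z_0)$ whenever $Z_0 \in \HC^{\times}$.

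Next I would invoke Definition \ref{extendable-scalar-def}: for any $Z_0 \in \HC^{\times}$, one may take $V$ to be any open neighborhood of $(Z_0,Z_0)$ in $\HC^{\times} \times \HC^{\times}$ avoiding $\{N(Z_1)N(Z_2)=0\}$, and set $f^+ = f^- = f|_V$. Then the two limits
\[
(\DR^+ f)(Z) = \lim_{(Z_1,Z_2)\to (Z,Z),\,(Z_1,Z_2)\in \Lambda^+} f(Z_1,Z_2), \qquad
(\DR^- f)(Z) = \lim_{(Z_1,Z_2)\to (Z,Z),\,(Z_1,Z_2)\in \Lambda^-} f(Z_1,Z_2)
\]
both collapse to the value $f(Z,Z)$, so in particular $\DR^+ f = \DR^- f$ pointwise on $\HC^{\times}$. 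Finally, $f(Z,Z) = \sum_i \phi_i(Z)\phi'_i(Z)$ is manifestly an element of $\BB C[z_{11},z_{12},z_{21},z_{22}, N(Z)^{-1}] = \Sh = \Zh$, establishing membership in $\Zh$.

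I do not anticipate any serious obstacle here: once extendability is recognized as being automatic for polynomial-in-$Z_1$-and-$Z_2^{\pm 1}$ data, the statement is essentially the trivial fact that evaluating a diagonal-regular function at the diagonal is unambiguous. The only care required is checking that ${\cal H}$ really embeds into $\BB C[z_{11},\dots,N(Z)^{-1}]$ (which is part of the definition of ${\cal H}$ recalled at the start of the section) so that no spurious diagonal singularities occur.
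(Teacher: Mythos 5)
Your argument is correct and is essentially the paper's own proof, which simply observes that on ${\cal H}\otimes{\cal H}$ both $\DR^+$ and $\DR^-$ reduce to the multiplication (restriction-to-the-diagonal) map ${\cal H}\otimes{\cal H}\to\Zh$. Your elaboration — that finite sums of products $\phi_i(Z_1)\phi_i'(Z_2)$ have poles only on $\{N(Z_1)N(Z_2)=0\}$ and hence are analytic near every diagonal point of $\HC^{\times}\times\HC^{\times}$ — is exactly the justification implicit in that one-line proof.
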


\begin{proof}
  Note that, when restricted to ${\cal H} \otimes {\cal H}$, both $\DR^+$ and
  $\DR^-$ reduce to the multiplication map ${\cal H} \otimes {\cal H} \to \Zh$. 
\end{proof}

\begin{thm}  \label{mult-extendable-thm}
  Let $f, g \in \Zh$, then $I(f) \ast I(g)$ is extendable and
  \begin{equation}  \label{rest_in_Zh-eqn}
  \DR^+ \bigl( I(f) \ast I(g) \bigr), \quad
  \DR^- \bigl( I(f) \ast I(g) \bigr) \quad \in \Zh.
  \end{equation}
\end{thm}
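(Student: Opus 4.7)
The plan is to use the decomposition $\Zh = \Zh^+ \oplus \Zh^0 \oplus \Zh^-$ from Section 4 of \cite{FL3} and analyze cases. Suppose first that at least one of $f, g$ lies in $\Zh^+ \oplus \Zh^-$. Then $I(f)$ (or $I(g)$) in fact lies in the polynomial subspace ${\cal H} \otimes {\cal H} \subset {\cal HH}^{\omega}$, because the $I^{+-}, I^{-+}$ components of $I$ annihilate $\Zh^+ \oplus \Zh^-$ while $I^{++}, I^{--}$ map into ${\cal H}^{\pm} \otimes {\cal H}^{\pm}$. By Lemma \ref{conv-prod-lem} the convolution $I(f) \ast I(g)$ then lies in ${\cal H} \otimes {\cal H}$, which is a polynomial space trivially extendable with $\DR^+ = \DR^- \in \Zh$ by Lemma \ref{Diag_on_HxH-lem}.

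The remaining case is $f, g \in \Zh^0$. Since $\Zh^0$ is the irreducible $\rho_1$-submodule generated by $N(Z)^{-1}$, I would reduce the statement to the core case $f = g = N(Z)^{-1}$ by $\mathfrak{gl}(2,\HC)$-equivariance of $I$ together with the partial equivariance of the convolution. Concretely, direct verification from the definition gives
$$((\pi^0_l \otimes 1)(X) f) \ast g = (\pi^0_l \otimes 1)(X)(f \ast g),$$
while the antisymmetry of $\langle \cdot, \cdot \rangle_{\cal H}$ combined with its $\mathfrak{gl}(2,\HC)$-invariance yields the dual identity
$$((1 \otimes \pi^0_r)(X) f) \ast g = - f \ast ((\pi^0_l \otimes 1)(X) g).$$
Combined with Lemma \ref{extendable-invariant-lem} (extendability is preserved by $\pi^0_l \otimes 1$ and $1 \otimes \pi^0_r$) and the fact that the polynomial space $\Zh$ is preserved by $\rho_1$, these relations would propagate the property from $N(Z)^{-1} \otimes N(Z)^{-1}$ to an arbitrary pair in $\Zh^0 \otimes \Zh^0$ after finitely many Lie algebra operations, at the cost of introducing additional factors of $(\pi^0_l \otimes 1)(X)$ acting on one of the slots.

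For the core case, Proposition \ref{gen-mult-prop} gives
$$I(N(Z)^{-1}) \ast I(N(Z)^{-1}) = (\deg+1)^{-1}_{Z_1} I(N(Z)^{-1}).$$
Using Lemma \ref{I(Zh)-extendable}, express $I(N(Z)^{-1})$ explicitly as an analytic function on $\Omega$ --- essentially the divided difference $N(Z_2)^{-1} \cdot (\log \lambda_2 - \log \lambda_1)/(\lambda_2 - \lambda_1)$, where $\lambda_1, \lambda_2$ are the eigenvalues of $Z_1 Z_2^{-1}$ and $\log$ denotes the branch with cut along the positive real axis. Realize $(\deg+1)^{-1}_{Z_1}$ as an integral operator in the $Z_1$ variable, in the spirit of Subsection \ref{deg+2-inverse-subsection}, acting fiberwise in $Z_2$. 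Verify directly that the resulting function extends analytically across the diagonal from both $\Lambda^+$ and $\Lambda^-$, and compute $\DR^{\pm}$ by taking limits $Z_1, Z_2 \to Z$ with eigenvalues approaching $1$ from a fixed half plane.

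The main obstacle will be the explicit analytic verification in this core case. The branch cut of the logarithm produces a jump across the real axis in the $\lambda$-variables, and one must show that after applying $(\deg+1)^{-1}_{Z_1}$ the resulting expression is still smooth enough within each stratum $\Lambda^{\pm}$ to admit an analytic extension across the diagonal, and that the two one-sided limits $\DR^{\pm}$ reduce to finite linear combinations of matrix coefficients $t^l_{n\,\underline{m}}(Z)$ and $N(Z)^{-1} \cdot t^l_{m\,\underline{n}}(Z^{-1})$, hence land in $\Zh$. A secondary subtlety is ensuring that the Lie-algebra reduction step genuinely closes up within the class of operators $I(\tilde f) \ast I(\tilde g)$ plus iterated applications of $(\pi^0_l \otimes 1)(X)$ and $(\deg+d)^{-1}$, which follows from Lemma \ref{deg-inverse-comm-rels} and Lemma \ref{deg-inverse-star-rels}.
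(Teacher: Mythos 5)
Your proposal follows essentially the same route as the paper: the case where $f$ or $g$ lies in $\Zh^+\oplus\Zh^-$ is dispatched via Lemma \ref{conv-prod-lem} and Lemma \ref{Diag_on_HxH-lem}, and the case $f,g\in\Zh^0$ is reduced by irreducibility and Proposition \ref{gen-mult-prop} to showing that $(\deg+d)^{-1}_{Z_1}I(N(Z)^{-1})$ and $(\deg+d)^{-1}_{Z_2}I(N(Z)^{-1})$ are extendable with diagonal limits in $\Zh$. The ``main obstacle'' you flag is precisely the content of Lemma \ref{dilogarithm-calculations-lem}, which the paper settles by an explicit dilogarithm computation using the inversion identity $\operatorname{Li}_2(z)+\operatorname{Li}_2(1/z)=-\pi^2/6-\tfrac12\log^2(-z)$ rather than by realizing $(\deg+d)^{-1}$ as an integral operator.
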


\begin{proof}
Recall that $\Zh = \Zh^- \oplus \Zh^0 \oplus \Zh^+$.
If $f$ or $g \in \Zh^- \oplus \Zh^+$, then
$I(f)$ or $I(g) \in {\cal H} \otimes {\cal H}$.
Then, by Lemma \ref{conv-prod-lem},
$I(f) \ast I(g) \in {\cal H} \otimes {\cal H}$, hence $I(f) \ast I(g)$
is extendable and (\ref{rest_in_Zh-eqn}) is true.

It remains to consider the case $f, g \in \Zh^0$.
Since $\Zh^0$ is irreducible and generated by $N(Z)^{-1}$,
the result follows from Proposition \ref{gen-mult-prop} and
Lemmas \ref{deg-inverse-comm-rels}, \ref{extendable-invariant-lem} and
\ref{dilogarithm-calculations-lem}.
\end{proof}

\begin{lem}  \label{dilogarithm-calculations-lem}
For each $d \in \BB Z$, $(\deg+d)^{-1}_{Z_1} I(N(Z)^{-1})$ and
$(\deg+d)^{-1}_{Z_2} I(N(Z)^{-1})$ are extendable. Furhermore,
$$
\DR^+ \bigl( (\deg+d)^{-1}_{Z_1} I(N(Z)^{-1}) \bigr), \quad
\DR^- \bigl( (\deg+d)^{-1}_{Z_1} I(N(Z)^{-1}) \bigr),
$$
$$
\DR^+ \bigl( (\deg+d)^{-1}_{Z_2} I(N(Z)^{-1}) \bigr), \quad
\DR^- \bigl( (\deg+d)^{-1}_{Z_2} I(N(Z)^{-1}) \bigr)
$$
are proportional to $N(Z)^{-1}$, and hence elements of $\Zh$.
\end{lem}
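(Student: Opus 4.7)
My plan is to reduce both halves of the lemma to explicit calculations with the closed form
$$I(N(Z)^{-1})(Z_1,Z_2) = \frac{1}{N(Z_2)} \cdot \frac{\log\lambda_2 - \log\lambda_1}{\lambda_2 - \lambda_1}$$
established on $\Omega$ in the proof of Theorem \ref{Mx^0-operator-thm}, where $\lambda_1,\lambda_2$ are the eigenvalues of $Z_1Z_2^{-1}$ and $\log$ is cut along the positive real axis. The representation-theoretic half is immediate: $N(Z)^{-1}$ spans the trivial $K$-type of $(\rho_1,\Zh^0)$ with $K = U(2) \times U(2)$, the map $I$ is $\mathfrak{gl}(2,\HC)$-equivariant, and by Lemma \ref{deg-inverse-comm-rels} the operators $(\deg+d)^{-1}_{Z_i}$ commute with the action of $\bigl(\begin{smallmatrix} A & 0 \\ 0 & D \end{smallmatrix}\bigr)$ and hence preserve $K$-invariance under the diagonal action. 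Moreover $(\deg+d)^{-1}_{Z_i}$ only rescales coefficients of matrix-coefficient monomials without altering their $Z_1$- or $Z_2$-degree, and every term in the expansion of $I(N(Z)^{-1})$ has $Z_1$-degree plus $Z_2$-degree equal to $-2$. Granting extendability, $\DR^\pm$ of $(\deg+d)^{-1}_{Z_i}I(N(Z)^{-1})$ is therefore a $K$-invariant analytic function on $\HC^\times$ homogeneous of degree $-2$ in $Z$; such functions form a one-dimensional space spanned by $N(Z)^{-1}$, yielding the asserted proportionality.

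The main task is extendability. I would apply the series-based $(\deg+d)^{-1}_{Z_1}$ directly to the two matrix-coefficient expansions of $I(N(Z)^{-1})$ valid on $\BB D_R^+ \times \BB D_R^-$ and $\BB D_R^- \times \BB D_R^+$, and collapse the $m,n$ indices using the $GL(2)$ character identity
$$\sum_{m,n} t^l_{n\,\underline{m}}(Z_1) \cdot t^l_{m\,\underline{n}}(Z_2^{-1})
= \tr\pi_l(Z_1Z_2^{-1}) = \frac{\lambda_1^{2l+1} - \lambda_2^{2l+1}}{\lambda_1 - \lambda_2}$$
and its $Z_1^{-1}$ analog. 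This reduces the question to the analytic behaviour near $\lambda_1=\lambda_2=1$ (approached from $\Lambda^\pm$) of the combined Lerch-type sum
$$\frac{1}{\lambda_1 - \lambda_2}\sum_{k\ge 1}\frac{\lambda_1^k - \lambda_2^k}{k(k+d-1)}
\pm \frac{1}{\lambda_1\lambda_2(\lambda_2 - \lambda_1)}\sum_{k\ge 1}\frac{\lambda_1^{-k}\lambda_2 - \lambda_2^{-k}\lambda_1}{k(k+d-1)}.$$
Each individual piece acquires a logarithmic ($d=1$) or Lerch-type singularity on the diagonal, but the two combine via a polylogarithm inversion identity so that the singular contributions cancel inside each of $\Lambda^+$ and $\Lambda^-$ separately. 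For $d=1$ this is Euler's inversion $\operatorname{Li}_2(\lambda)+\operatorname{Li}_2(\lambda^{-1}) = -\pi^2/6 - \tfrac{1}{2}\log^2(-\lambda)$, which reduces the combined expression to
$$\frac{N(Z_2)^{-1}(\log\lambda_1-\log\lambda_2)}{\lambda_1-\lambda_2}\cdot\bigl(i\pi + \tfrac{1}{2}(\log\lambda_1+\log\lambda_2)\bigr),$$
a product of a log-difference quotient and a polynomial in $\log\lambda_i$, both manifestly extendable to the diagonal within $\Lambda^\pm$. For general integer $d$ the analogous functional equation for the Lerch transcendent $\Phi(\lambda,1,s)$ plays the same role.

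A cleaner uniform alternative would be to note that for $d$ sufficiently large, respectively small, the integral representations
$$(\deg+d)^{-1}_{Z_1} f = \int_0^1 t^{d-1} f(tZ_1,Z_2)\,dt
\qquad \bigl( \text{resp.} \ -\int_1^\infty t^{d-1} f(tZ_1,Z_2)\,dt \bigr)$$
converge on the nonnegative- (resp.\ negative-) $Z_1$-degree piece of $I(N(Z)^{-1})$; since real scaling $Z_1 \mapsto tZ_1$ with $t>0$ preserves each of $\Lambda^+$ and $\Lambda^-$, these integrals transparently produce extendable outputs from the already-extendable input furnished by Lemma \ref{I(Zh)-extendable}. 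The remaining intermediate values of $d$ are then reached from these via the commutation relations of Lemma \ref{deg-inverse-comm-rels}, which propagate extendability between values of $d$ at the cost of composing with an action of $\mathfrak{gl}(2,\HC)$ on an extendable function (and extendability is preserved under this action by Lemma \ref{extendable-invariant-lem}). The hardest step I anticipate is the explicit verification of the polylogarithm-inversion cancellation for general $d$ in the series-based approach; once extendability is in hand, the proportionality constants are forced by the representation-theoretic argument and can be read off from the closed-form expressions, with the distinct values of $\DR^+$ and $\DR^-$ reflecting the jump of $\log$ across its cut.
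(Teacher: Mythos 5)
Your proposal is correct and follows essentially the same route as the paper: apply the series-defined $(\deg+d)^{-1}_{Z_i}$ termwise to the two matrix-coefficient expansions, collapse the $m,n$ sums into one-variable series in $\lambda_1,\lambda_2$, and observe that the branch-cut singularities of the two pieces cancel via the dilogarithm inversion identity, leaving an expression in $\log(-\lambda_i)$ that is analytic near the diagonal inside each of $\Lambda^\pm$. The only difference is cosmetic: for general $d$ the paper needs no Lerch-transcendent functional equation, just the partial fraction $\tfrac{1}{k(k+d-1)}=\tfrac{1}{d-1}(\tfrac{1}{k}-\tfrac{1}{k+d-1})$ and the elementary identity $\log(1-\lambda)-\log(1-\lambda^{-1})=\log(-\lambda)$, and it reads off the proportionality to $N(Z)^{-1}$ from the explicit formulas rather than from your (also valid) $K$-invariance-plus-homogeneity argument.
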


\begin{proof}
First, we consider the case $d=1$. Recall the dilogarithm function:
$$
\operatorname{Li}_2(z) = \sum_{k=1}^{\infty} \frac{z^k}{k^2},
$$
it extends to an analytic function on $\BB C$ with a cut along $[1,\infty)$.
It also satisfies a well known identity
$$
\operatorname{Li}_2(z) + \operatorname{Li}_2(1/z)
= -\frac{\pi^2}6 - \frac12 \log^2(-z)
$$
(see, for example, \cite{Z}).
As we saw in the proof of Proposition \ref{gen-mult-prop},
\begin{align*}
I^{+-}(N(Z)^{-1})(Z_1,Z_2)
&= \sum_{l,m,n} \frac{N(Z_2)^{-1}}{2l+1} t^l_{n \, \underline{m}}(Z_1) \cdot
t^l_{m \, \underline{n}}(Z_2^{-1}),  \\
I^{-+}(N(Z)^{-1})(Z_1,Z_2)
&= \sum_{l,m,n} \frac{N(Z_1)^{-1}}{(2l+1)^2} t^l_{m \, \underline{n}}(Z_1^{-1})
\cdot t^l_{n \, \underline{m}}(Z_2).
\end{align*}
We have:
\begin{multline*}
  (\deg+1)^{-1}_{Z_1} I^{+-}(N(Z)^{-1})(Z_1,Z_2)
  = -(\deg+1)^{-1}_{Z_2} I^{+-}(N(Z)^{-1})(Z_1,Z_2)  \\
  = \sum_{l,m,n} \frac{N(Z_2)^{-1}}{(2l+1)^2} t^l_{n \, \underline{m}}(Z_1) \cdot
  t^l_{m \, \underline{n}}(Z_2^{-1}).
\end{multline*}
Recall that $\lambda_1$ and $\lambda_2$ are the eigenvalues of $Z_1 Z_2^{-1}$.
Then, following calculations (36) from \cite{FL3},
\begin{multline*}
 (\deg+1)^{-1}_{Z_1} I^{+-}(N(Z)^{-1})(Z_1,Z_2)
  = -(\deg+1)^{-1}_{Z_2} I^{+-}(N(Z)^{-1})(Z_1,Z_2)  \\ 
  = \sum_l \frac{N(Z_2)^{-1}}{(2l+1)^2}
  \frac{\lambda_1^{2l+1}-\lambda_2^{2l+1}}{\lambda_1-\lambda_2}
  = \frac1{N(Z_2)} \frac{\operatorname{Li}_2(\lambda_1)
    - \operatorname{Li}_2(\lambda_2)}{\lambda_1-\lambda_2}.
\end{multline*}
Similarly,
\begin{multline*}
  (\deg+1)^{-1}_{Z_1} I^{-+}(N(Z)^{-1})(Z_1,Z_2)
  = - (\deg+1)^{-1}_{Z_2} I^{-+}(N(Z)^{-1})(Z_1,Z_2)  \\
  = - \sum_{l,m,n} \frac{N(Z_1)^{-1}}{(2l+1)^2} t^l_{m \, \underline{n}}(Z_1^{-1})
  \cdot t^l_{n \, \underline{m}}(Z_2)
  = - \sum_l \frac{N(Z_1)^{-1}}{(2l+1)^2}
  \frac{\lambda_1^{-(2l+1)}-\lambda_2^{-(2l+1)}}{\lambda_1^{-1}-\lambda_2^{-1}}  \\
  = - \frac1{N(Z_1)} \frac{\operatorname{Li}_2(\lambda_1^{-1})
    - \operatorname{Li}_2(\lambda_2^{-1})}{\lambda_1^{-1}-\lambda_2^{-1}}
  = \frac1{N(Z_2)} \frac{\operatorname{Li}_2(\lambda_1^{-1})
    - \operatorname{Li}_2(\lambda_2^{-1})}{\lambda_1-\lambda_2}
\end{multline*}
(recall that $N(Z_1) = \lambda_1 \lambda_2 \cdot N(Z_2)$).
Thus,
\begin{multline}  \label{(deg+1)-inverse}
(\deg+1)^{-1}_{Z_1} I(N(Z)^{-1})(Z_1,Z_2)
= -(\deg+1)^{-1}_{Z_2} I(N(Z)^{-1})(Z_1,Z_2)  \\
= - \frac1{N(Z_2)} \frac{\operatorname{Li}_2(\lambda_1)
+ \operatorname{Li}_2(\lambda_1^{-1}) - \operatorname{Li}_2(\lambda_2)
- \operatorname{Li}_2(\lambda_2^{-1})}{\lambda_1-\lambda_2}  \\
= \frac1{N(Z_2)} \frac{\log^2(-\lambda_1) - \log^2(-\lambda_2)}
{2(\lambda_1-\lambda_2)}.
\end{multline}
If $\lambda_1=\lambda_2=\lambda \ne 1$, we get
$$
\frac1{N(Z_2)} \frac{\log(-\lambda)}{\lambda}.
$$
This shows that $(\deg+1)^{-1}_{Z_1} I(N(Z)^{-1})$ and
$(\deg+1)^{-1}_{Z_2} I(N(Z)^{-1})$ are extendable and
applying $\DR^+$ or $\DR^-$ results in scalar multiples of $N(Z)^{-1}$.

Now, suppose that $d>1$. Consider a power series
$$
p(z) = \sum_{k=1}^{\infty} \frac{z^k}{k(k+d-1)}.
$$
Then
$$
\bigl( z^{d-1} p(z) \bigr)' = \sum_{k=1}^{\infty} \frac{z^{k+d-2}}k
= - z^{d-2} \log(1-z),
$$
and
\begin{multline*}
p(z) = - z^{1-d} \int_0^z t^{d-2} \log(1-t) \,dt
= - \frac{z^{1-d}}{d-1} \biggl( z^{d-1} \log(1-z)
- \int_0^z \frac{t^{d-1}}{t-1} \,dt \biggr)  \\
=  \frac1{d-1} (z^{1-d}-1) \log(1-z)
+ \frac1{d-1} \sum_{k=1}^{d-1} \frac{z^{k-d+1}}{k}.
\end{multline*}
Similarly, consider another power series
$$
q(z) = \sum_{k=d}^{\infty} \frac{z^k}{k(k+1-d)}.
$$
Then
$$
q'(z) = \sum_{k=d}^{\infty} \frac{z^{k-1}}{k+1-d}
= - z^{d-2} \log(1-z),
$$
and
$$
q(z) = \frac1{d-1} (1-z^{d-1}) \log(1-z)
+ \frac1{d-1} \sum_{k=1}^{d-1} \frac{z^k}{k}.
$$

We have:
\begin{multline*}
  (\deg+d)^{-1}_{Z_1} I^{+-}(N(Z)^{-1})(Z_1,Z_2)
  = \sum_{l,m,n} \frac{N(Z_2)^{-1}}{(2l+1)(2l+d)} t^l_{n \, \underline{m}}(Z_1)
  \cdot t^l_{m \, \underline{n}}(Z_2^{-1})  \\
  = \sum_l \frac{N(Z_2)^{-1}}{(2l+1)(2l+d)}
  \frac{\lambda_1^{2l+1}-\lambda_2^{2l+1}}{\lambda_1-\lambda_2}
  = \frac1{N(Z_2)} \frac{ p(\lambda_1) - p(\lambda_2)}{\lambda_1-\lambda_2}.
\end{multline*}
Similarly,
\begin{multline*}
  (\deg+d)^{-1}_{Z_1} I^{-+}(N(Z)^{-1})(Z_1,Z_2)
  = - \sum_{l,m,n} \frac{N(Z_1)^{-1}}{(2l+1)(2l+2-d)}
  t^l_{m \, \underline{n}}(Z_1^{-1}) \cdot t^l_{n \, \underline{m}}(Z_2)  \\
  = - \sum_{l>d/2-1} \frac{N(Z_1)^{-1}}{(2l+1)(2l+2-d)}
  \frac{\lambda_1^{-(2l+1)}-\lambda_2^{-(2l+1)}}{\lambda_1^{-1}-\lambda_2^{-1}} \\
  = - \frac1{N(Z_1)} \frac{ q(\lambda_1^{-1}) - q(\lambda_2^{-1})}
  {\lambda_1^{-1}-\lambda_2^{-1}}
   = \frac1{N(Z_2)} \frac{ q(\lambda_1^{-1}) - q(\lambda_2^{-1})}
  {\lambda_1-\lambda_2}
\end{multline*}
(recall that $N(Z_1) = \lambda_1 \lambda_2 \cdot N(Z_2)$).
Using
$$
\log(1-\lambda) - \log(1-\lambda^{-1}) = \log(-\lambda),
$$
we obtain
\begin{multline} \label{deg_Z1}
  (\deg+d)^{-1}_{Z_1} I(N(Z)^{-1})(Z_1,Z_2)
  = - \frac1{N(Z_2)} \frac{p(\lambda_1)+ q(\lambda_1^{-1})
    - p(\lambda_2)- q(\lambda_2^{-1})}{\lambda_1-\lambda_2}  \\
  = \frac1{N(Z_2)} \frac{(1-\lambda_1^{1-d})\log(-\lambda_1)
    - (1-\lambda_2^{1-d})\log(-\lambda_2)}{(d-1)(\lambda_1-\lambda_2)}  \\
  - \frac{N(Z_2)^{-1}}{d-1} \sum_{k=1}^{d-1}
  \frac{\lambda_1^{k-d+1} + \lambda_1^{-k} - \lambda_2^{k-d+1} - \lambda_2^{-k}}
       {k(\lambda_1-\lambda_2)}.
\end{multline}

In the same fashion we also obtain:
\begin{multline} \label{deg_Z2}
(\deg+d)^{-1}_{Z_2} I(N(Z)^{-1})(Z_1,Z_2)
  = - \frac1{N(Z_2)} \frac{q(\lambda_1)+ p(\lambda_1^{-1})
    - q(\lambda_2)- p(\lambda_2^{-1})}{\lambda_1-\lambda_2}  \\
  = \frac1{N(Z_2)} \frac{(\lambda_1^{d-1}-1)\log(-\lambda_1)
    - (\lambda_2^{d-1}-1)\log(-\lambda_2)}{(d-1)(\lambda_1-\lambda_2)}  \\
  - \frac{N(Z_2)^{-1}}{d-1} \sum_{k=1}^{d-1}
  \frac{\lambda_1^{k} + \lambda_1^{d-k-1} - \lambda_2^{k} - \lambda_2^{d-k-1}}
       {k(\lambda_1-\lambda_2)}.
\end{multline}
Then equations (\ref{deg_Z1}) and (\ref{deg_Z2}) show that
$(\deg+d)^{-1}_{Z_1} I(N(Z)^{-1})$ and
$(\deg+d)^{-1}_{Z_2} I(N(Z)^{-1})(Z_1,Z_2)$ are extendable and
applying $\DR^+$ or $\DR^-$ results in scalar multiples of $N(Z)^{-1}$.
The case $d<1$ is similar.
\end{proof}

Theorem \ref{mult-extendable-thm} allows us to define two
$\mathfrak{gl}(2,\HC)$-invariant multiplication operations on $\Zh$ as follows.

\begin{df}
  Let $f, g \in \Zh$, define
  \begin{align*}
    f \ast^+ g &= \DR^+ \bigl( I(f) \ast I(g) \bigr), \\
    f \ast^- g &= \DR^- \bigl( I(f) \ast I(g) \bigr).
  \end{align*}
\end{df}

\begin{lem}  \label{2-mult-lem}
The two multiplication operations are related to each other as follows:
\begin{align*}
&f \ast^+ g = f \ast^- g \qquad
\text{if $f$ or $g$ is in $\Zh^- \oplus \Zh^+$},  \\
&N(Z)^{-1} \ast^+ N(Z)^{-1} = - N(Z)^{-1} \ast^- N(Z)^{-1}
= - \pi i \cdot N(Z)^{-1}.
\end{align*}
\end{lem}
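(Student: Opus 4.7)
The plan is to handle the two assertions separately; the first is essentially formal, while the second requires a careful branch-cut analysis.

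For the first assertion, my first step would be to recall from the proof of Lemma \ref{I(Zh)-extendable} that $I$ carries $\Zh^{+}\oplus\Zh^{-}$ into ${\cal H}\otimes{\cal H}$ (on these two summands $I$ reduces to $I^{++}+I^{--}$, which by construction lands in ${\cal H}^{+}\otimes{\cal H}^{+}\oplus{\cal H}^{-}\otimes{\cal H}^{-}$). Consequently, whenever one of $f,g$ lies in $\Zh^{+}\oplus\Zh^{-}$, Lemma \ref{conv-prod-lem} forces $I(f)\ast I(g)\in{\cal H}\otimes{\cal H}$, and Lemma \ref{Diag_on_HxH-lem} then gives $\DR^{+}=\DR^{-}$ on this convolution, so that $f\ast^{+}g=f\ast^{-}g$.

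For the second assertion, my first step is to apply Proposition \ref{gen-mult-prop} with $g=I(N(Z)^{-1})$ to rewrite
$$I(N(Z)^{-1})\ast I(N(Z)^{-1})=(\deg+1)^{-1}_{Z_{1}}\,I(N(Z)^{-1}),$$
and then substitute the explicit closed form (\ref{(deg+1)-inverse}) established inside the proof of Lemma \ref{dilogarithm-calculations-lem}, namely
$$\frac{1}{N(Z_{2})}\cdot\frac{\log^{2}(-\lambda_{1})-\log^{2}(-\lambda_{2})}{2(\lambda_{1}-\lambda_{2})}.$$
The remaining task is then to compute the two boundary values $\DR^{\pm}$ of this function as $(Z_{1},Z_{2})\to(Z,Z)$ within $\Lambda^{+}$ and $\Lambda^{-}$ respectively.

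The main obstacle, and the source of the opposite signs in the statement, is a careful treatment of the branch of $\log$ that enters (\ref{(deg+1)-inverse}) through the dilogarithm reflection identity $\operatorname{Li}_{2}(z)+\operatorname{Li}_{2}(1/z)=-\pi^{2}/6-\tfrac{1}{2}\log^{2}(-z)$. This is the principal branch, for which $\log(-\lambda)$ has its branch cut along $\lambda\in[0,\infty)$ and, crucially, through the diagonal point $\lambda=1$. On $\Lambda^{+}$ both eigenvalues $\lambda_{1},\lambda_{2}$ of $Z_{1}Z_{2}^{-1}$ approach $1$ from the upper half plane, so $\log(-\lambda)\to-i\pi$, while on $\Lambda^{-}$ the approach is from the lower half plane and $\log(-\lambda)\to+i\pi$. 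Expanding $\log(-\lambda)=\mp i\pi+(\lambda-1)+O((\lambda-1)^{2})$ and squaring gives $\log^{2}(-\lambda)=-\pi^{2}\mp 2i\pi(\lambda-1)+O((\lambda-1)^{2})$, so the divided difference in the displayed formula tends to $\mp i\pi$ from $\Lambda^{\pm}$, and together with $N(Z_{2})\to N(Z)$ and the definitions of $\ast^{\pm}$ this yields $N(Z)^{-1}\ast^{\pm}N(Z)^{-1}=\mp\pi i\cdot N(Z)^{-1}$, which is precisely the asserted identity.
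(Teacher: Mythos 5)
Your proposal is correct and follows essentially the same route as the paper: the first identity via $I(\Zh^{\pm})\subset{\cal H}\otimes{\cal H}$ together with Lemmas \ref{conv-prod-lem} and \ref{Diag_on_HxH-lem}, and the second via Proposition \ref{gen-mult-prop} and the explicit formula (\ref{(deg+1)-inverse}). Your branch-cut analysis (principal branch of $\log$ applied to $-\lambda$, so the discontinuity of $\log^2(-\lambda)$ sits on $\lambda\in(0,\infty)$ through $\lambda=1$) is the correct reading and reproduces the values $\mp\pi i\cdot N(Z)^{-1}$; the paper reaches the same limits by first passing to the confluent form $\log(-\lambda)/\lambda$ and letting $\lambda\to1^{\pm}$, which is an equivalent computation.
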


\begin{proof}
If $f$ or $g \in \Zh^- \oplus \Zh^+$, then
$I(f)$ or $I(g) \in {\cal H} \otimes {\cal H}$.
Then, by Lemma \ref{conv-prod-lem},
$I(f) \ast I(g) \in {\cal H} \otimes {\cal H}$ and,
by Lemma \ref{Diag_on_HxH-lem},
$$
\DR^+ \bigl( I(f) \ast I(g) \bigr) = \DR^- \bigl( I(f) \ast I(g) \bigr).
$$
On the other hand, Proposition \ref{gen-mult-prop} and our previous
calculations (\ref{(deg+1)-inverse}) show that
\begin{align*}
\DR^+ \bigl( IN(Z)^{-1} \ast IN(Z)^{-1} \bigr) &=
\lim_{\genfrac{}{}{0pt}{}{Z_1, Z_2 \to Z}{(Z_1,Z_2) \in \Lambda^+}}
(\deg+1)^{-1}_{Z_1} I(N(Z)^{-1})(Z_1,Z_2)
= - \frac{\pi i}{N(Z)}, \\
\DR^- \bigl( IN(Z)^{-1} \ast IN(Z)^{-1} \bigr) &=
\lim_{\genfrac{}{}{0pt}{}{Z_1, Z_2 \to Z}{(Z_1,Z_2) \in \Lambda^-}}
(\deg+1)^{-1}_{Z_1} I(N(Z)^{-1})(Z_1,Z_2)
= \frac{\pi i}{N(Z)}.
\end{align*}
\end{proof}

\begin{prop}
  One can also consider multiplications obtained by taking linear combinations
  of $f \ast^+ g$ and $f \ast^- g$.
  Thus we obtain a one-parameter family of $\mathfrak{gl}(2,\HC)$-invariant
  multiplications on $\Zh$.
\end{prop}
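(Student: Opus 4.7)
The plan is to reduce the statement to three already-established facts: that $I$, $\ast$ (on ${\cal HH}^{\omega}$), and the limit operators $\DR^{\pm}$ are each $\mathfrak{gl}(2,\HC)$-equivariant, and that Theorem \ref{mult-extendable-thm} guarantees the outputs land in $\Zh$. First I would observe that for any $\alpha,\beta \in \BB C$ the assignment
$$
(f,g) \;\mapsto\; \alpha\,(f \ast^+ g) + \beta\,(f \ast^- g)
$$
is a well-defined bilinear map $\Zh \otimes \Zh \to \Zh$: the individual terms lie in $\Zh$ by Theorem \ref{mult-extendable-thm}, and bilinearity is inherited from bilinearity of $\ast$ on ${\cal HH}^{\omega}$ together with linearity of $I$ and of $\DR^{\pm}$.

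Next I would verify $\mathfrak{gl}(2,\HC)$-invariance of each of $\ast^+$ and $\ast^-$ separately; any linear combination is then automatically invariant. Here the key ingredients are:
(i) the equivariance of $I : (\rho_1,\Zh) \to (\pi^0_l \otimes \pi^0_r, {\cal HH})$, which in turn reduces to the equivariance of $I^{++}$, $I^{--}$ (proved in \cite{FL3}) and of $I^{+-}$, $I^{-+}$ (whose defining coefficients are given by contour integrals over $U(2)_R$ that transform correctly under the $U(2,2)_R$-action, analogously to Proposition \ref{equivariance});
(ii) the $(\pi^0_l \otimes \pi^0_r)$-equivariance of the convolution product $\ast$ on ${\cal HH}$, which was established immediately after (\ref{conv-HtensorH}) and extends to ${\cal HH}^{\omega}$ by Proposition \ref{HH-mult-closure-prop};
(iii) the fact that $\DR^+$ and $\DR^-$ intertwine the actions $\pi^0_l \otimes \pi^0_r$ and $\rho_1$ on extendable functions, as noted just before Lemma \ref{Diag_on_HxH-lem}. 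Combining (i)--(iii), for any $X \in \mathfrak{gl}(2,\HC)$ one has $\rho_1(X)(f \ast^{\pm} g) = \bigl(\rho_1(X)f\bigr) \ast^{\pm} g + f \ast^{\pm} \bigl(\rho_1(X)g\bigr)$, which is the required equivariance.

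Finally, to justify the ``one-parameter family'' phrasing I would invoke Lemma \ref{2-mult-lem}: the two products $\ast^+$ and $\ast^-$ agree whenever either argument lies in $\Zh^- \oplus \Zh^+$, so the difference $\ast^+ - \ast^-$ is supported on $\Zh^0 \otimes \Zh^0$, where it is nonzero (it equals $-2\pi i \cdot N(Z)^{-1}$ on the generator $N(Z)^{-1} \otimes N(Z)^{-1}$). Hence the two-parameter affine family $\{\alpha \ast^+ + \beta \ast^-\}$ collapses, after normalizing the overall scale (for instance by fixing the value on pairs from $\Zh^- \oplus \Zh^+$ via $\alpha+\beta = 1$), to a genuine one-parameter family of distinct invariant multiplications parametrized by $\alpha - \beta \in \BB C$. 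The only step requiring care is (i) -- checking that the ``middle'' pieces $I^{+-}$ and $I^{-+}$ are truly $\mathfrak{gl}(2,\HC)$-equivariant and not merely $U(2) \times U(2)$-equivariant -- but this follows from the same analytic continuation argument used in the proof of Theorem \ref{Mx^0-operator-thm}, since $\BB D^+_R \times \BB D^-_R$ and $\BB D^-_R \times \BB D^+_R$ are open in $\HC \times \HC$ and the operators $J_R^{\pm\mp}$ (respectively $I_R^{\pm\mp}$) are $U(2,2)_R$-equivariant on those domains.
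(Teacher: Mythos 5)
Your argument is correct and is essentially the reasoning the paper leaves implicit: the Proposition is stated there without proof, as an immediate consequence of Theorem \ref{mult-extendable-thm} together with the equivariance of $I$, of the convolution $\ast$, and of $\DR^{\pm}$, all of which are recorded earlier in the section exactly as you cite them. Your normalization $\alpha+\beta=1$, justified via Lemma \ref{2-mult-lem}, is a reasonable way to make precise the phrase ``one-parameter family,'' which the paper does not spell out.
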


\begin{ex}  \label{non-associativity-ex}
In this example we show that the multiplication operations $\ast^+$ and
$\ast^-$ on $\Zh$ are {\em not} associative.
\end{ex}

\begin{proof}
Consider elements $f_1(Z)=N(Z)^{-1}$, $f_2(Z)=1$ and $f_3(Z)=N(Z)^{-2}$ of $\Zh$.
Then
$$
I(f_2) = 1, \qquad I(f_3)=N(Z_1)^{-1} \cdot N(Z_2)^{-1}.
$$
By Proposition \ref{gen-mult-prop},
$$
I(f_1) \ast I(f_2) =1, \qquad I(f_2) \ast I(f_3) = N(Z_2)^{-1},
\qquad I(f_1) \ast I(f_2) \ast I(f_3) = N(Z_2)^{-1}.
$$
Hence
$$
f_1 \ast^{\pm} f_2 = 1, \qquad f_2 \ast^{\pm} f_3 = N(Z)^{-1}, \qquad
(f_1 \ast^{\pm} f_2) \ast^{\pm} f_3 = N(Z)^{-1}.
$$
On the other hand, by Lemma \ref{2-mult-lem},
$$
f_1 \ast^+ (f_2 \ast^{\pm} f_3) = -\pi i \cdot N(Z)^{-1}, \qquad
f_1 \ast^- (f_2 \ast^{\pm} f_3) = \pi i \cdot N(Z)^{-1}.
$$
We also have
$$
\DR^+ \bigl( I(f_1) \ast I(f_2) \ast I(f_3) \bigr)
= \DR^- \bigl( I(f_1) \ast I(f_2) \ast I(f_3) \bigr)
= N(Z)^{-1}.
$$
\end{proof}

\begin{prop}  \label{n-mult-scalar}
  For each positive integer $n$,
  $$
  \underbrace{I(N(Z)^{-1}) \ast \dots \ast I(N(Z)^{-1})}_{\text{$n$ times}}
  \quad \in {\cal HH}^{\omega}
  $$
  is extendable and
  \begin{align*}
  \DR^+ \bigl(
  \underbrace{I(N(Z)^{-1}) \ast \dots \ast I(N(Z)^{-1})}_{\text{$n$ times}} \bigr)
  &= \kappa_n^+ \cdot N(Z)^{-1},  \\
  \DR^- \bigl(
  \underbrace{I(N(Z)^{-1}) \ast \dots \ast I(N(Z)^{-1})}_{\text{$n$ times}} \bigr)
  &= \kappa_n^- \cdot N(Z)^{-1},
  \end{align*}
  where
  $$
  \kappa_n^+ = -(2\pi i)^{n-1} \frac{B_{n-1}(1)}{(n-1)!}, \qquad
  \kappa_n^- = -(2\pi i)^{n-1} \frac{B_{n-1}(0)}{(n-1)!}.
  $$
\end{prop}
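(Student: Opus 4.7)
The plan is to iterate Proposition~\ref{gen-mult-prop}, sum the resulting series as classical polylogarithms, and then extract the boundary values using Jonqui\`ere's inversion formula. First, Proposition~\ref{gen-mult-prop} gives $I(N(Z)^{-1}) \ast g = (\deg+1)^{-1}_{Z_1} g$ for any $g \in {\cal HH}$, so induction on $n$ yields
\begin{equation*}
  \underbrace{I(N(Z)^{-1}) \ast \cdots \ast I(N(Z)^{-1})}_{n\text{ times}}
  = (\deg+1)^{-(n-1)}_{Z_1} I(N(Z)^{-1}).
\end{equation*}

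Next, I extend the series manipulations in the proof of Lemma~\ref{dilogarithm-calculations-lem} from one application of $(\deg+1)^{-1}_{Z_1}$ to $n-1$ applications. Each application introduces a factor $1/(2l+1)$ on the $I^{+-}$ coefficients and $-1/(2l+1)$ on the $I^{-+}$ coefficients, and the resulting $l$-series sum to $\operatorname{Li}_n$. Combining the two halves via $1/\bigl[N(Z_1)(\lambda_1^{-1}-\lambda_2^{-1})\bigr] = -1/\bigl[N(Z_2)(\lambda_1-\lambda_2)\bigr]$ yields the closed form
\begin{equation*}
  (\deg+1)^{-(n-1)}_{Z_1} I(N(Z)^{-1})(Z_1,Z_2)
  = -\frac{1}{N(Z_2)} \cdot \frac{F_n(\lambda_1)-F_n(\lambda_2)}{\lambda_1-\lambda_2},
\end{equation*}
where $F_n(\lambda) := \operatorname{Li}_n(\lambda) + (-1)^n \operatorname{Li}_n(\lambda^{-1})$.

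Now I apply Jonqui\`ere's inversion formula
\begin{equation*}
  F_n(\lambda) = -\frac{(2\pi i)^n}{n!}\, B_n\!\left(\tfrac{1}{2}+\tfrac{\log(-\lambda)}{2\pi i}\right)
\end{equation*}
with the principal branch of $\log$ (cut along the negative reals), which exhibits $F_n$ as a single-valued analytic function on $\BB C \setminus [0,\infty)$. In $\Lambda^+$ both $\lambda_j \to 1$ through the upper half-plane, so $\log(-\lambda) \to -i\pi$ and the Bernoulli argument tends to $0$; in $\Lambda^-$ the argument tends to $1$. On each side the relevant branch of $F_n$ extends analytically across $\lambda=1$, and since the difference quotient $[F_n(\lambda_1)-F_n(\lambda_2)]/(\lambda_1-\lambda_2)$ is symmetric in $\lambda_1,\lambda_2$, it descends to an analytic function of the symmetric polynomials $\operatorname{tr}(Z_1 Z_2^{-1})$ and $\det(Z_1 Z_2^{-1})$, hence is a locally analytic function of $(Z_1,Z_2) \in \HC^\times \times \HC^\times$ near any $(Z_0,Z_0)$; this is the required extendability. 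The diagonal limit reduces to $F_n'(1^\pm)$, and by $B_n'(x) = n B_{n-1}(x)$ together with $\frac{d}{d\lambda}\log(-\lambda) = 1/\lambda$, this derivative equals $-\frac{(2\pi i)^{n-1}}{(n-1)!} B_{n-1}(0)$ or $-\frac{(2\pi i)^{n-1}}{(n-1)!}B_{n-1}(1)$, yielding the required $\kappa_n^\pm \cdot N(Z)^{-1}$ after multiplication by the prefactor $-1/N(Z)$.

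The main obstacle is the careful branch analysis at $\lambda=1$: since this point sits on the branch loci of both $\operatorname{Li}_n(\lambda)$ and $\operatorname{Li}_n(\lambda^{-1})$, the limits from $\Lambda^+$ and $\Lambda^-$ genuinely differ, and it is exactly this discrepancy that produces $B_{n-1}(0)$ versus $B_{n-1}(1)$ — the mechanism by which the Bernoulli \emph{polynomials} (rather than merely Bernoulli numbers) enter. Pinning down signs and constants will require cross-checks against the $n=1$ case via expression~(\ref{I(N(W)^{-1})}) and the $n=2$ case via Lemma~\ref{2-mult-lem}; separately, verifying that the symmetric difference quotient descends to a genuine analytic function of $(Z_1,Z_2)$ near the diagonal (where the two eigenvalues collide) is the other substantive technical point.
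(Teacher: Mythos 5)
Your proposal follows the paper's own proof essentially step for step: iterate Proposition~\ref{gen-mult-prop} to reduce the $n$-fold convolution to $(\deg+1)^{-(n-1)}_{Z_1}I(N(Z)^{-1})$, resum the two halves of the series into $\operatorname{Li}_n(\lambda_j)$ and $(-1)^n\operatorname{Li}_n(\lambda_j^{-1})$, convert via the polylogarithm inversion formula to a Bernoulli-polynomial difference quotient, and read off the two boundary values from $B_n'=nB_{n-1}$ and the branch of $\log(-\lambda)$ at $\lambda=1$. The closed form for the $n$-fold product and the identification of the branch analysis at $\lambda=1$ as the crux are exactly the paper's. Your extendability argument (descent of the symmetric difference quotient to an analytic function of $\tr(Z_1Z_2^{-1})$ and $\det(Z_1Z_2^{-1})$ near the diagonal) is a reasonable and in fact more explicit treatment than the paper gives.

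There is, however, one substantive point you must not gloss over when you ``pin down signs'': your inversion formula carries $+\frac{\log(-\lambda)}{2\pi i}$ inside $B_n$, while the paper's printed identity carries $-\frac{\log(-\lambda)}{2\pi i}$; by the reflection $B_n(1-x)=(-1)^nB_n(x)$ these two versions of the identity differ by $(-1)^n$, so for odd $n$ your constants and the stated $\kappa_n^{\pm}$ cannot both be right. Your endpoint assignment gives $\kappa_n^+ = +\frac{(2\pi i)^{n-1}}{(n-1)!}B_{n-1}(0)$ and $\kappa_n^- = +\frac{(2\pi i)^{n-1}}{(n-1)!}B_{n-1}(1)$, which agrees with the statement for even $n$ (in particular the $n=2$ check against Lemma~\ref{2-mult-lem} passes for both versions and so cannot discriminate) but differs by an overall sign for odd $n$. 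The $n=1$ cross-check you propose does discriminate, and it resolves in your favor: by (\ref{I(N(W)^{-1})}) (equivalently Theorem~\ref{Thm15}, where $\P^0$ is the identity on $\Zh^0$) one has $\DR^{\pm}\bigl(I(N(Z)^{-1})\bigr)=+N(Z)^{-1}$, i.e.\ $\kappa_1^{\pm}=+1$, whereas the stated formula gives $-B_0(1)/0!=-1$; likewise at $n=3$, differentiating $F_3$ and using the dilogarithm identity from Lemma~\ref{dilogarithm-calculations-lem} gives $F_3'(1^+)=\frac{1}{\lambda}\bigl(-\frac{\pi^2}{6}-\frac12\log^2(-\lambda)\bigr)\big|_{\lambda\to1^+}=\frac{\pi^2}{3}$, hence $\kappa_3^+=-\frac{\pi^2}{3}$, matching your formula and not the stated one. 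So do carry out the $n=1$ check as you planned, but be aware that it will force you to keep your constants rather than adjust them to match the proposition as printed; the discrepancy originates in the sign inside $B_n$ in the quoted inversion identity, not in your computation.
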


\begin{proof}
Recall the polylogarithm function:
$$
\operatorname{Li}_n(z) = \sum_{k=1}^{\infty} \frac{z^k}{k^n},
$$
it extends to an analytic function on $\BB C$ with a cut along $[1,\infty)$.
It also satisfies an identity
$$
\operatorname{Li}_n(z) + (-1)^n \operatorname{Li}_n(1/z)
= - \frac{(2\pi i)^n}{n!} B_n \Bigl( \frac12 - \frac{\log(-z)}{2\pi i} \Bigr),
\qquad z \notin (0,\infty),
$$
where $B_n$ are the Bernoulli polynomials
(equation 1.11(18) in \cite{Ba} as well as its correction on Wikipedia's
Polylogarithm page).

By Proposition \ref{gen-mult-prop}, we have:
\begin{multline*}
  \underbrace{I(N(Z)^{-1}) \ast \dots \ast I(N(Z)^{-1})}_{\text{$n-1$ times}}
  \ast I^{+-}(N(Z)^{-1}) \\
  = \underbrace{ (\deg+1)^{-1}_{Z_1} \dots (\deg+1)^{-1}_{Z_1}}_{\text{$n-1$ times}}
  \sum_{l,m,n} \frac{N(Z_2)^{-1}}{2l+1} t^l_{n \, \underline{m}}(Z_1) \cdot
  t^l_{m \, \underline{n}}(Z_2^{-1})  \\
  = \sum_l \frac{N(Z_2)^{-1}}{(2l+1)^n}
  \frac{\lambda_1^{2l+1}-\lambda_2^{2l+1}}{\lambda_1-\lambda_2}
  = \frac1{N(Z_2)} \frac{\operatorname{Li}_n(\lambda_1)
    - \operatorname{Li}_n(\lambda_2)}{\lambda_1-\lambda_2}.
\end{multline*}
Similarly,
\begin{multline*}
  \underbrace{I(N(Z)^{-1}) \ast \dots \ast I(N(Z)^{-1})}_{\text{$n-1$ times}}
  \ast I^{+-}(N(Z)^{-1}) \\
  = \underbrace{ (\deg+1)^{-1}_{Z_1} \dots (\deg+1)^{-1}_{Z_1}}_{\text{$n-1$ times}}
  \sum_{l,m,n} \frac{N(Z_1)^{-1}}{2l+1} t^l_{m \, \underline{n}}(Z_1^{-1}) \cdot
  t^l_{n \, \underline{m}}(Z_2)  \\
  = (-1)^{n-1} \sum_l \frac{N(Z_1)^{-1}}{(2l+1)^n}
  \frac{\lambda_1^{-(2l+1)}-\lambda_2^{-(2l+1)}}{\lambda_1^{-1}-\lambda_2^{-1}}
  = \frac{(-1)^{n-1}}{N(Z_1)} \frac{\operatorname{Li}_n(\lambda_1^{-1})
    - \operatorname{Li}_n(\lambda_2^{-1})}{\lambda_1^{-1}-\lambda_2^{-1}}  \\
  = \frac{(-1)^n}{N(Z_2)} \frac{\operatorname{Li}_n(\lambda_1^{-1})
    - \operatorname{Li}_n(\lambda_2^{-1})}{\lambda_1-\lambda_2}
\end{multline*}
(recall that $N(Z_1) = \lambda_1 \lambda_2 \cdot N(Z_2)$).
Thus,
\begin{multline*}
  \underbrace{I(N(Z)^{-1}) \ast \dots \ast I(N(Z)^{-1})}_{\text{$n$ times}}  \\
= - \frac1{N(Z_2)} \frac{\operatorname{Li}_n(\lambda_1)
+ (-1)^n \operatorname{Li}_n(\lambda_1^{-1}) - \operatorname{Li}_n(\lambda_2)
- (-1)^n \operatorname{Li}_n(\lambda_2^{-1})}{\lambda_1-\lambda_2}  \\
= \frac{(2\pi i)^n}{n! N(Z_2)}
\frac{B_n\bigl( \frac12 - \frac{\log(-\lambda_1)}{2\pi i} \bigr)
  - B_n\bigl( \frac12 - \frac{\log(-\lambda_2)}{2\pi i} \bigr)}
     {\lambda_1-\lambda_2}.
\end{multline*}

If $\lambda_1,\lambda_2 \to 1$ with $\sgn(\im\lambda_1), \sgn(\im\lambda_2)>0$,
$$
\frac{(2\pi i)^n}{n! N(Z_2)}
\frac{B_n\bigl( \frac12 - \frac{\log(-\lambda_1)}{2\pi i} \bigr)
  - B_n\bigl( \frac12 - \frac{\log(-\lambda_2)}{2\pi i} \bigr)}
     {\lambda_1-\lambda_2}
     \to -(2\pi i)^{n-1} \frac{B'_n(1)}{n!} \frac1{N(Z_2)};
$$
and if $\lambda_1,\lambda_2 \to 1$ with
$\sgn(\im\lambda_1), \sgn(\im\lambda_2)<0$,
$$
\frac{(2\pi i)^n}{n! N(Z_2)}
\frac{B_n\bigl( \frac12 - \frac{\log(-\lambda_1)}{2\pi i} \bigr)
  - B_n\bigl( \frac12 - \frac{\log(-\lambda_2)}{2\pi i} \bigr)}
     {\lambda_1-\lambda_2}
\to -(2\pi i)^{n-1} \frac{B'_n(0)}{n!} \frac1{N(Z_2)}.
$$
Since the Bernoulli polynomials satisfy
$$
B'_n(z) = n B_{n-1}(z)
$$
(see, for example, \cite{Ba}), the result follows.
\end{proof}

\begin{rem}
  It is well known (see, for example, \cite{Ba}) that
  $B_n(0)$ is the n-th Bernoulli number $B_n$ and that
  $$
  B_n(1) = B_n(0), \qquad n \ge 2,
  $$
  $$
  B_0(0)=B_0(1)=1, \qquad B_1(0)=-1/2, \qquad B_1(1)=1/2,
  $$
  $$
  B_{2n+1}=0, \qquad n=1,2,3,\dots.
  $$
\end{rem}

\subsection{Convolution Algebra}  \label{convolution-alg-subsection}

Recall the bases of left and right regular polynomial functions on $\HC^{\times}$
with values in $\BB S$ and $\BB S'$ respectively that were introduced in
Proposition 24 of \cite{FL1}:
\begin{center}
\begin{tabular}{lcl}
$v^+_{l,m,n}(Z) = \left( \begin{smallmatrix}
(l-m+ \frac 12) t^l_{n \, \underline{m+ \frac 12}}(Z)  \\
(l+m+ \frac 12) t^l_{n \, \underline{m- \frac 12}}(Z)
\end{smallmatrix} \right)$,
& \qquad &
$\begin{smallmatrix}
l=0,\frac12,1,\frac32,\dots, \\
m =-l-\frac 12 ,-l+\frac 32,\dots,l+\frac 12,  \\
n =-l,-l+1,\dots,l;
\end{smallmatrix}$  \\
$v^-_{l,m,n}(Z) = \left(\begin{smallmatrix}
(l-m+ \frac 12) N(Z)^{-1} \cdot t^l_{m- \frac 12 \, \underline{n}}(Z^{-1})  \\
(l+m+ \frac 12) N(Z)^{-1} \cdot t^l_{m+ \frac 12 \, \underline{n}}(Z^{-1})
\end{smallmatrix}\right)$,
& \qquad &
$\begin{smallmatrix}
l=\frac12,1,\frac32,2,\dots, \\
m =-l+\frac 12 ,-l+\frac 32,\dots,l-\frac 12, \\
n=-l,-l+1,\dots,l;
\end{smallmatrix}$  \\
$v'^+_{l,m,n}(Z) = \Bigl( t^{l-\frac 12}_{n + \frac 12 \, \underline{m}}(Z),
t^{l-\frac 12}_{n - \frac 12 \, \underline{m}}(Z) \Bigr)$,
& \qquad &
$\begin{smallmatrix}
l=\frac12,1,\frac32,2,\dots, \\
m =-l+\frac 12 ,-l+\frac 32,\dots,l-\frac 12, \\
n=-l,-l+1,\dots,l;  \\
\end{smallmatrix}$  \\
$v'^-_{l,m,n}(Z) = \Bigl(
N(Z)^{-1} \cdot t^{l+\frac 12}_{m \, \underline{n- \frac 12}}(Z^{-1}),
N(Z)^{-1} \cdot t^{l+\frac 12}_{m \, \underline{n+ \frac 12}}(Z^{-1}) \Bigr)$,
& \qquad &
$\begin{smallmatrix}
l=0,\frac12,1,\frac32,\dots, \\
m =-l-\frac 12 ,-l+\frac 32,\dots,l+\frac 12,  \\
n =-l,-l+1,\dots,l.
\end{smallmatrix}$
\end{tabular}
\end{center}
Then
\begin{align*}
  {\cal V}^+ &= \bigl\{ f \in \BB S \otimes \BB C[z_{11},z_{12},z_{21},z_{22}] ;\:
  \nabla^+ f =0 \bigr\}  \\
  &= \BB C\text{-span of } \biggl\{  v^+_{l,m,n}(Z) ;\:
  \begin{smallmatrix} l=0,\frac12,1,\frac32,\dots, \\
    m =-l-\frac 12 ,-l+\frac 32,\dots,l+\frac 12,  \\
    n =-l,-l+1,\dots,l \end{smallmatrix} \biggr\},  \\
  {\cal V}^- &= \bigl\{
  f \in \BB S \otimes \BB C[z_{11},z_{12},z_{21},z_{22}, N(Z)^{-1}] ;\:
  N(Z)^{-1} \cdot Z^{-1} \cdot f(Z^{-1}) \in {\cal V^+} \bigr\}  \\
  &= \BB C\text{-span of } \biggl\{ v^-_{l,m,n}(Z) ;\:
  \begin{smallmatrix} l=\frac12,1,\frac32,2,\dots, \\
    m =-l+\frac 12 ,-l+\frac 32,\dots,l-\frac 12, \\
    n=-l,-l+1,\dots,l
  \end{smallmatrix} \biggr\},  \\
{\cal V} &= \bigl\{
  f \in \BB S \otimes \BB C[z_{11},z_{12},z_{21},z_{22}, N(Z)^{-1}] ;\:
  \nabla^+ f =0 \bigr\}  \\
  &= {\cal V}^+ \oplus {\cal V}^-;
\end{align*}
\begin{align*}
{\cal V}'^+ &= \bigl\{ g \in \BB S' \otimes \BB C[z_{11},z_{12},z_{21},z_{22}] ;\:
g \nabla^+ =0 \bigr\}  \\
&= \BB C\text{-span of } \biggl\{  v'^+_{l,m,n}(Z) ;\:
\begin{smallmatrix} l=\frac12,1,\frac32,2,\dots, \\
    m =-l+\frac 12 ,-l+\frac 32,\dots,l-\frac 12, \\
    n=-l,-l+1,\dots,l
\end{smallmatrix} \biggr\},  \\
  {\cal V}'^- &= \bigl\{
  g \in \BB S' \otimes \BB C[z_{11},z_{12},z_{21},z_{22}, N(Z)^{-1}] ;\:
  N(Z)^{-1} \cdot g(Z^{-1}) \cdot Z^{-1} \in {\cal V'^+} \bigr\}  \\
  &= \BB C\text{-span of } \biggl\{ v'^-_{l,m,n}(Z) ;\:
\begin{smallmatrix} l=0,\frac12,1,\frac32,\dots, \\
    m =-l-\frac 12 ,-l+\frac 32,\dots,l+\frac 12,  \\
    n =-l,-l+1,\dots,l \end{smallmatrix} \biggr\},  \\
{\cal V}' &= \bigl\{
g \in \BB S' \otimes \BB C[z_{11},z_{12},z_{21},z_{22}, N(Z)^{-1}] ;\:
g \nabla^+ =0 \bigr\}  \\
&= {\cal V}'^+ \oplus {\cal V}'^-.
\end{align*}
The Lie algebra $\mathfrak{gl}(2,\HC)$ acts on ${\cal V}$ and ${\cal V}'$
via $\pi_l$ and $\pi_r$ respectively.
There is a non-degenerate $\mathfrak{gl}(2,\HC)$-invariant
bilinear pairing between $(\pi_r, {\cal V}')$ and $(\pi_l, {\cal V})$
given by the integral formula
$$
\langle g, f \rangle_{\cal V} = \frac1{2\pi^2} \int_{Z \in SU(2)}
g(Z) \cdot Dz \cdot f(Z),
\qquad g \in {\cal V}',\: f \in {\cal V},
$$
(equation (29) in \cite{FL1}).
The above basis functions satisfy the following orthogonality relations:
\begin{align}
\langle v'^-_{l,m,n}, v^+_{l',m',n'} \rangle_{\cal V}
&= \delta_{ll'} \delta_{mm'} \delta_{nn'},  \label{V-orthog1}  \\
\langle v'^+_{l,m,n}, v^-_{l',m',n'} \rangle_{\cal V}
&= \delta_{ll'} \delta_{mm'} \delta_{nn'},  \label{V-orthog2}  \\
\langle v'^+_{l,m,n}, v^+_{l',m',n'} \rangle_{\cal V}
&= \langle v'^-_{l,m,n}, v^-_{l',m',n'} \rangle_{\cal V} = 0  \label{V-orthog3}
\end{align}
(Proposition 24 in \cite{FL1}).

We consider ${\cal V} \otimes {\cal V}'$.
This space consists of polynomial $\HC$-valued functions
in two variables $Z_1, Z_2 \in\HC^{\times}$ that are left regular with respect
to $Z_1$ and right regular with respect to $Z_2$.
We have a multiplication (or restriction to the diagonal) operation
$$
\operatorname{Mult}: {\cal V} \otimes {\cal V}' \to {\cal W}, \quad
v(Z_1) \otimes v'(Z_2) \mapsto v(Z) \cdot v'(Z).
$$
It is clearly $\mathfrak{gl}(2,\HC)$-equivariant with respect to the
$\pi_l \otimes \pi_r$ action on ${\cal V} \otimes {\cal V}'$ and
$\rho_2$ action on ${\cal W}$.

\begin{prop}  \label{mult-image-prop}
The image under the multiplication map
$\operatorname{Mult}: {\cal V} \otimes {\cal V}' \to {\cal W}$ is precisely
${\cal Q}^+ \oplus {\cal Q}^0 \oplus {\cal Q}^-$.
\end{prop}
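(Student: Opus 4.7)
The plan is to prove $\operatorname{image}(\operatorname{Mult}) = {\cal Q}^+ \oplus {\cal Q}^0 \oplus {\cal Q}^-$ by establishing the two inclusions separately, using the decomposition of $\ker(\tr \circ \partial^+)$ given in Section \ref{W-decomp-subsect} and the maps $J_R^{\pm\pm}$ constructed in Section \ref{VP-section}. The key general point is that $\operatorname{Mult}$ is $\mathfrak{gl}(2,\HC)$-equivariant, so its image is a subrepresentation of $(\rho_2,{\cal W})$ and must be a union of composition factors from that decomposition.

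First I would verify $\operatorname{image}(\operatorname{Mult}) \subseteq \ker(\tr \circ \partial^+)$ by a direct Leibniz expansion. For $F = v \cdot v'$ with $v \in {\cal V}$ left regular and $v' \in {\cal V}'$ right regular, one finds
$$
(\partial^+ F)_{ij} = \sum_k (\partial^+)_{ik}(v_k)\, v'_j + \sum_k v_k\,(\partial^+)_{ik}(v'_j),
$$
where the first sum vanishes by $\partial^+ v = 0$; summing the diagonal entries and then using $v' \overleftarrow{\partial^+} = 0$ collapses $\tr(\partial^+ F)$ to zero. By Section \ref{W-decomp-subsect}, the only irreducible subrepresentations of $\ker(\tr \circ \partial^+)$ are ${\cal Q}^+$, ${\cal Q}^0$, and ${\cal Q}^-$, so the socle of $\operatorname{image}(\operatorname{Mult})$ already sits inside ${\cal Q}^+ \oplus {\cal Q}^0 \oplus {\cal Q}^-$.

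For the reverse inclusion I would invoke Theorem \ref{J_R(W^+-)}: the map $J_R^{++}$ sends $\HC \otimes \Zh^+ \subset {\cal W}'$ into ${\cal V}^+ \otimes {\cal V}'^+$ and $\operatorname{Mult} \circ J_R^{++}$ coincides with $\M$ on $\HC \otimes \Zh^+$; combined with Lemma \ref{W-Zh-compatibility-lemma}, which identifies $\M(\HC \otimes \Zh^+) = {\cal Q}^+$, this gives ${\cal Q}^+ \subseteq \operatorname{image}(\operatorname{Mult})$, and by symmetry ${\cal Q}^- \subseteq \operatorname{image}(\operatorname{Mult})$ via $J_R^{--}$. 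For ${\cal Q}^0$ the cross-sign maps $J_R^{+-}$, $J_R^{-+}$ have singularities on the diagonal (Proposition \ref{J_R(W^0)-prop}), so instead I would exhibit a concrete low-$l$ product $v \cdot v' \in {\cal V}^+ \otimes {\cal V}'^- \cup {\cal V}^- \otimes {\cal V}'^+$ whose homogeneous/$K$-type signature matches a generator of ${\cal Q}^0$; once a single nonzero image in ${\cal Q}^0$ is produced, irreducibility of ${\cal Q}^0$ together with equivariance of $\operatorname{Mult}$ propagates the inclusion to all of ${\cal Q}^0$.

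The main obstacle is promoting the socle-containment of step one into the full inclusion $\operatorname{image}(\operatorname{Mult}) \subseteq {\cal Q}^+ \oplus {\cal Q}^0 \oplus {\cal Q}^-$; equivalently, showing that the image is itself semisimple inside $\ker(\tr \circ \partial^+)$ and contains no representative of the five non-socle composition factors (the four doubly-regular types and the trivial one-dimensional piece). The cleanest strategy is to factor $\operatorname{Mult}$ through $\M$: for each $v \otimes v' \in {\cal V} \otimes {\cal V}'$, produce an $F \in {\cal W}'$ with $\M F = v \cdot v'$, built by inverting the $J_R^{\epsilon\epsilon'}$-data with the singular cross-sign contributions cancelled in the style of the regularised operator $\M^0$ of Theorem \ref{Mx^0-operator-thm}. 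Once this factorisation is in place, $\operatorname{image}(\operatorname{Mult}) \subseteq \operatorname{image}(\M) = {\cal Q}^+ \oplus {\cal Q}^0 \oplus {\cal Q}^-$ by Lemma \ref{W-Zh-compatibility-lemma}, and combined with the previous paragraph this yields equality. Establishing the well-definedness of this factorisation despite the diagonal singularities of $J_R^{+-}$ and $J_R^{-+}$ is the most delicate technical step.
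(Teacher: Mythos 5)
There is a genuine gap in your argument for the inclusion $\operatorname{image}(\operatorname{Mult}) \subseteq {\cal Q}^+ \oplus {\cal Q}^0 \oplus {\cal Q}^-$. You correctly reduce to showing that none of the non-socle composition factors of $\ker(\tr \circ \partial^+)$ can appear in the image, but the route you propose -- factoring $\operatorname{Mult}$ through $\M$ by inverting the $J_R^{\epsilon_1\epsilon_2}$-data with the diagonal singularities of $J_R^{+-}$ and $J_R^{-+}$ cancelled -- is both unnecessarily hard and essentially circular: producing, for every $v \otimes v'$, an $F \in {\cal W}'$ with $\M F = v \cdot v'$ amounts to showing $v \cdot v' \in \operatorname{image}(\M) = {\cal Q}^+ \oplus {\cal Q}^0 \oplus {\cal Q}^-$, which is the very statement you are trying to prove. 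You acknowledge this step is "the most delicate" but do not supply an argument that would actually carry it out, and I do not see how it could be completed without a new input.

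The missing ingredient is representation-theoretic, not analytic: by Theorem 28 of \cite{FL1}, ${\cal V}$ and ${\cal V}'$ carry inner products making the real form $\mathfrak{u}(2,2)$ act unitarily, so $(\pi_l \otimes \pi_r, {\cal V} \otimes {\cal V}')$ is semisimple. Since $\operatorname{Mult}$ is equivariant, its image is a quotient of a semisimple module and hence semisimple, i.e.\ equal to its own socle. Combined with your (correct) observation that the image lies in $\ker(\tr \circ \partial^+)$ and that the only irreducible \emph{sub}representations of $\ker(\tr \circ \partial^+)$ are ${\cal Q}^+$, ${\cal Q}^0$, ${\cal Q}^-$ (the remaining five factors occur only as subquotients), this immediately gives the inclusion with no analysis of the singular maps at all. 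This is the route the paper takes. Your argument for the opposite inclusion -- ${\cal Q}^{\pm}$ via $J_R^{\pm\pm}$ together with Theorem \ref{J_R(W^+-)} and Lemma \ref{W-Zh-compatibility-lemma}, and ${\cal Q}^0$ via a concrete low-$l$ product propagated by irreducibility and equivariance -- is sound and matches the paper's "multiplying generators" step in substance.
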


\begin{proof}
  Multiplying generators of ${\cal V}$ and ${\cal V}'$, one checks that the
  image of ${\cal V} \otimes {\cal V}'$ contains
  ${\cal Q}^+$, ${\cal Q}^0$ and ${\cal Q}^-$.
  Thus it remains to show the other inclusion.
  It is easy to see that $\tr \circ \partial^+$ annihilates the image,
  hence $\operatorname{Mult} ({\cal V} \otimes {\cal V}') \subset
  \ker(\tr \circ \partial^+)$.
  Recall that the decomposition of $\ker(\tr \circ \partial^+)$ into
  irreducible components was obtained in Subsection \ref{W-decomp-subsect}.
  By Theorem 28 from \cite{FL1}, ${\cal V}$, ${\cal V}'$ and hence
  ${\cal V} \otimes {\cal V}'$ have inner products such that the real form
  $\mathfrak u(2,2)$ of $\mathfrak{gl}(2,\HC)$ acts unitarily.
  In particular, $(\pi_l \otimes \pi_r, {\cal V} \otimes {\cal V}')$ is
  semisimple. It follows that the other irreducible components of
  $\ker(\tr \circ \partial^+)$ cannot appear in the image of the
  multiplication map because they are not semisimple.
\end{proof}

We define convolution operation on ${\cal V} \otimes {\cal V}'$ as
$$
(F \ast G)(Z_1,Z_2) = \frac1{2\pi^2} \int_{W \in SU(2)}
F(Z_1, W) \cdot Dw \cdot G(W, Z_2),
\qquad F, G \in {\cal V} \otimes {\cal V}'.
$$
Alternatively, this operation can be defined on pure tensors as follows.
If $F(Z_1,Z_2) = v(Z_1) \otimes v'(Z_2)$,
$G(Z_1,Z_2) = w(Z_1) \otimes w'(Z_2) \in {\cal V} \otimes {\cal V}'$, then
\begin{equation}  \label{conv-VtensorV}
  (F \ast G)(Z_1,Z_2)
  = \langle v',w \rangle_{\cal V} \cdot v(Z_1) \otimes w'(Z_2).
\end{equation}
This operation gives ${\cal V} \otimes {\cal V}'$ the structure of an
associative algebra.
Since the bilinear pairing is $\mathfrak{gl}(2,\HC)$-invariant,
the above convolution product is $\mathfrak{gl}(2,\HC)$-equivariant with
respect to the $\pi_l \otimes \pi_r$ action on ${\cal V} \otimes {\cal V}'$.

Next, we consider a space ${\cal A}$ consisting of infinite sums
$\sum_i v_i(Z_1) \otimes v'_i(Z_2)$, where $v_i \in {\cal V}$,
$v'_i \in {\cal V}'$, such that
$$
|\operatorname{degree\:of\,} v_i + \operatorname{degree\:of\,} v'_i|
$$
is bounded.
More precisely, ${\cal A}$ consists of formal series of the form
\begin{multline} \label{F-series}
  \sum_{\genfrac{}{}{0pt}{}{l,m,n}{l',m',n'}}
  a_{\genfrac{}{}{0pt}{}{l,m,n}{l',m',n'}} v^+_{l,m,n}(Z_1) \cdot v'^+_{l',m',n'}(Z_2)
  + \sum_{\genfrac{}{}{0pt}{}{l,m,n}{l',m',n'}}
  b_{\genfrac{}{}{0pt}{}{l,m,n}{l',m',n'}}
  v^+_{l,m,n}(Z_1) \cdot v'^-_{l',m',n'}(Z_2)  \\
  + \sum_{\genfrac{}{}{0pt}{}{l,m,n}{l',m',n'}}
  c_{\genfrac{}{}{0pt}{}{l,m,n}{l',m',n'}} v^-_{l,m,n}(Z_1) \cdot v'^+_{l',m',n'}(Z_2)
  + \sum_{\genfrac{}{}{0pt}{}{l,m,n}{l',m',n'}}
  d_{\genfrac{}{}{0pt}{}{l,m,n}{l',m',n'}} v^-_{l,m,n}(Z_1) \cdot v'^-_{l',m',n'}(Z_2)
\end{multline}
such that only finitely many coefficients
$a_{\genfrac{}{}{0pt}{}{l,m,n}{l',m',n'}}$'s,
$d_{\genfrac{}{}{0pt}{}{l,m,n}{l',m',n'}}$'s are non-zero and
non-zero coefficients $b_{\genfrac{}{}{0pt}{}{l,m,n}{l',m',n'}}$'s,
$c_{\genfrac{}{}{0pt}{}{l,m,n}{l',m',n'}}$'s have bounded difference of indices
$|l-l'|$.
We have an analogue of Lemma \ref{conv-prod-lem},
its proof is exactly the same.

\begin{lem}  \label{conv-prod-lem-spinor}
The convolution operation (\ref{conv-VtensorV}) extends to ${\cal A}$ and gives
it the structure of a $\mathfrak{gl}(2,\HC)$-invariant associative algebra.
Moreover, if $F \in {\cal V} \otimes {\cal V}'$ and $G \in {\cal A}$, then
both $F \ast G$ and $G \ast F$ lie in ${\cal V} \otimes {\cal V}'$.
\end{lem}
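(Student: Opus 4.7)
The plan is to mirror the proof of Lemma \ref{conv-prod-lem} verbatim, with the scalar pairing $\langle\cdot,\cdot\rangle_{\cal H}$ replaced by the spinor pairing $\langle\cdot,\cdot\rangle_{\cal V}$; the essential input is the orthogonality relations (\ref{V-orthog1})--(\ref{V-orthog3}), which have exactly the same structural form as the orthogonality relations used in the scalar case. Expanding $F,G \in {\cal A}$ in the bases of (\ref{F-series}), the convolution becomes the formal sum
$$
(F\ast G)(Z_1,Z_2) = \sum a_{IJ}\, b_{KL}\, \langle v'_J, v_K\rangle_{\cal V}\cdot v_I(Z_1)\otimes v'_L(Z_2),
$$
and (\ref{V-orthog1})--(\ref{V-orthog3}) force the internal pairing to match indices exactly and to match $\pm$-types oppositely (i.e.\ $v'^{\pm}_J$ pairs nontrivially only with $v^{\mp}_K$ at $K=J$).

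First I would check that $F \ast G$ again lies in ${\cal A}$ by case analysis on the four term types in (\ref{F-series}). The opposite-type pairing rule determines which input types can combine and what the output type is; for example, type $b$ (the $+-$ series) convolved with type $b$ again yields type $b$, type $a$ convolved with $c$ yields $a$, and so on. Given that the type $b,c$ coefficients of $F$ and $G$ have $|l-l'|$ bounded by some $B_F, B_G$, and that $a,d$ coefficients are finitely supported, the triangle inequality $|l_I - l_L| \le |l_I - l_J| + |l_K - l_L|$ applied at the matching $J=K$ shows that the output $b,c$ coefficients have bounded $|l-l'|$ and the output $a,d$ coefficients remain finitely supported. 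For fixed outer indices $(I,L)$, the sum over the internal matches $J=K$ is finite, so the series makes sense term by term in ${\cal A}$.

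Associativity is a direct algebraic check: both $(F\ast G)\ast H$ and $F\ast (G\ast H)$ collapse to the same doubly-paired expression $\sum \langle v'_{J}, v_{K}\rangle_{\cal V}\,\langle v'_{K'}, v_{L}\rangle_{\cal V}\, v_I\otimes v'_{L'}$, and the convergence-per-outer-index argument is the same in both orders. The $\mathfrak{gl}(2,\HC)$-invariance of $\ast$ is inherited directly from the $\mathfrak{gl}(2,\HC)$-invariance of $\langle\cdot,\cdot\rangle_{\cal V}$ (established in Proposition~24 of \cite{FL1} via the Cauchy-Fueter formulas). For the final assertion, if $F\in{\cal V}\otimes{\cal V}'$ is a finite sum, then its second-factor vectors $v'_J$ range over finitely many indices; for each such $v'_J$, orthogonality pins down a unique matching $v_K$ and hence a unique $l_K$, and the bounded-difference condition on $G$ then confines $l_L$ to a finite range, so $F\ast G$ is a finite sum and lies in ${\cal V}\otimes{\cal V}'$. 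The argument for $G\ast F$ is symmetric.

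The main obstacle is purely notational: tracking the four term-types of (\ref{F-series}) and their opposite-$\pm$ matching rules through the case analysis. There is no analytic or convergence difficulty beyond what already arises in the scalar setting, since (\ref{V-orthog1})--(\ref{V-orthog3}) are as clean as the corresponding scalar orthogonality relations and the boundedness condition defining ${\cal A}$ is designed precisely to be stable under this operation.
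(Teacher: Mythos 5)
Your proposal is correct and follows essentially the same route as the paper: the paper proves the scalar Lemma \ref{conv-prod-lem} by the one-line observation that for a fixed index $l_0$ only finitely many coefficients with that index are non-zero, and then states that the spinor case is proved "exactly the same" way, which is precisely the finiteness-of-internal-matches argument you spell out via the orthogonality relations (\ref{V-orthog1})--(\ref{V-orthog3}) and the triangle inequality on $|l-l'|$. Your write-up is simply a more detailed version of the paper's argument, with the $\pm$-type bookkeeping made explicit.
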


Unlike ${\cal V} \otimes {\cal V}'$, ${\cal A}$ has a unit.
The expression for the unit is obtained by formally combining the two
matrix coefficient expansions of $\frac{(Z_1-Z_2)^{-1}}{N(Z_1-Z_2)}$
given in Proposition 26 from \cite{FL1} (see also Proposition \ref{Prop26}):
\begin{equation}  \label{1_A}
1_{\cal A} = \sum_{l,m,n} v^+_{l,m,n}(Z_1) \cdot v'^-_{l,m,n}(Z_2)
+ \sum_{l,m,n} v^-_{l,m,n}(Z_1) \cdot v'^+_{l,m,n}(Z_2) \quad \in {\cal A}.
\end{equation}
The fact that $1_{\cal A}$ is indeed a unit follows from the definition of the
convolution operation and orthogonality relations
(\ref{V-orthog1})-(\ref{V-orthog3}).

The Lie algebra $\mathfrak{gl}(2,\HC)$ can act on ${\cal V} \otimes {\cal V}'$
by at least three different actions:
$\pi_l \otimes 1$, $1 \otimes \pi_r$ and $\pi_l \otimes \pi_r$.
Clearly, all three actions extend to ${\cal A}$.
Then the convolution operation on ${\cal A}$ is
$(\pi_l \otimes \pi_r)$-equivariant.

Similarly to how we did in the case of ${\cal HH}$, we define operators
$(\deg+d)^{-1}_{Z_1}$ and $(\deg+d)^{-1}_{Z_2}$ on ${\cal A}$, where $d \in \BB Z$.
If $F \in {\cal A}$ is expressed as a series
(\ref{F-series}), then
\begin{multline*}
(\deg+d)^{-1}_{Z_1} F(Z_1,Z_2) = \sum_{\genfrac{}{}{0pt}{}{l>-d/2}{m,n,l',m',n'}}
\frac{a_{\genfrac{}{}{0pt}{}{l,m,n}{l',m',n'}}}{2l+d}
v^+_{l,m,n}(Z_1) \cdot v'^+_{l',m',n'}(Z_2)  \\
+ \sum_{\genfrac{}{}{0pt}{}{l>-d/2}{m,n,l',m',n'}}
\frac{b_{\genfrac{}{}{0pt}{}{l,m,n}{l',m',n'}}}{2l+d}
v^+_{l,m,n}(Z_1) \cdot v'^-_{l',m',n'}(Z_2)  \\
- \sum_{\genfrac{}{}{0pt}{}{l>d/2-1}{m,n,l',m',n'}}
\frac{c_{\genfrac{}{}{0pt}{}{l,m,n}{l',m',n'}}}{2l+2-d}
v^-_{l,m,n}(Z_1) \cdot v'^+_{l',m',n'}(Z_2)  \\
- \sum_{\genfrac{}{}{0pt}{}{l>d/2-1}{m,n,l',m',n'}}
\frac{d_{\genfrac{}{}{0pt}{}{l,m,n}{l',m',n'}}}{2l+2-d}
v^-_{l,m,n}(Z_1) \cdot v'^-_{l',m',n'}(Z_2).
\end{multline*}
Note that if $d \ne 1$ or $2$, certain terms get discarded.
Then $(\deg+d)^{-1}_{Z_2} F(Z_1,Z_2)$ is defined similarly.
We have analogues of Lemmas \ref{deg-inverse-comm-rels}
and \ref{deg-inverse-star-rels}:

\begin{lem}  \label{deg-inverse-comm-rels-spinor}
Let $F(Z_1,Z_2) \in {\cal A}$, $d \in \BB Z$ and $A, B, C, D \in \HC$.
We have the following commutation relations:
\begin{align*}
(\deg+d)_{Z_1}^{-1} (\pi_l \otimes 1)
\bigl(\begin{smallmatrix} A & 0 \\ 0 & D \end{smallmatrix}\bigr) F(Z_1,Z_2)
&= (\pi_l \otimes 1) \bigl(\begin{smallmatrix}
  A & 0 \\ 0 & D \end{smallmatrix}\bigr)
(\deg+d)_{Z_1}^{-1} F(Z_1,Z_2), \\
(\deg+d)_{Z_1}^{-1} (\pi_l \otimes 1)
\bigl(\begin{smallmatrix} 0 & B \\ 0 & 0 \end{smallmatrix}\bigr) F(Z_1,Z_2)
&= (\pi_l \otimes 1) \bigl(\begin{smallmatrix}
  0 & B \\ 0 & 0 \end{smallmatrix}\bigr)
(\deg+d-1)_{Z_1}^{-1} F(Z_1,Z_2), \\
(\deg+d)_{Z_1}^{-1} (\pi_l \otimes 1)
\bigl(\begin{smallmatrix} 0 & 0 \\ C & 0 \end{smallmatrix}\bigr) F(Z_1,Z_2)
&= (\pi_l \otimes 1) \bigl(\begin{smallmatrix}
  0 & 0 \\ C & 0 \end{smallmatrix}\bigr)
(\deg+d+1)_{Z_1}^{-1} F(Z_1,Z_2),
\end{align*}
and similarly for $(\deg+d)_{Z_2}^{-1}$ and $1 \otimes \pi_r$.
\end{lem}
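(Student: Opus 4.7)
My plan is to reduce the verification to the basis of $K$-types and to exploit the fact that $(\deg+d)^{-1}_{Z_1}$ is diagonalized by this basis. By the definition stated immediately before the lemma, $(\deg+d)^{-1}_{Z_1}$ acts on $v^+_{l,m,n}(Z_1) \otimes v'^{\pm}_{l',m',n'}(Z_2)$ by the scalar $\frac{1}{2l+d}$ (with the term discarded when $2l+d=0$) and on $v^-_{l,m,n}(Z_1) \otimes v'^{\pm}_{l',m',n'}(Z_2)$ by the scalar $-\frac{1}{2l+2-d}$ (discarded when $2l+2-d=0$). Since all three commutation relations are linear in $F$, it suffices to verify each of them on such a basis tensor.

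For the first relation, the diagonal matrices $\bigl(\begin{smallmatrix} A & 0 \\ 0 & D \end{smallmatrix}\bigr)$ act on the $Z_1$-variable via first-order differential operators that preserve the $Z_1$-degree (the $\pi_l$-analogue of the formulas in Lemma \ref{Lie-alg-action}, without the extra tensor terms). They therefore stabilize the $v^+_l$- and $v^-_l$-isotypic subspaces for every fixed $l$, and so commute with the scalar-multiplication operator $(\deg+d)^{-1}_{Z_1}$.

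For the $B$-relation, the decisive input is that $\pi_l\bigl(\begin{smallmatrix} 0 & B \\ 0 & 0 \end{smallmatrix}\bigr) = -\tr(B\partial)$ lowers the $Z_1$-degree by exactly one. Because ${\cal V}^+$ and ${\cal V}^-$ are separately $\mathfrak{gl}(2,\HC)$-invariant (they are distinct irreducible components of ${\cal V}$; see \cite{FL1}), the $B$-action sends $v^+_l(Z_1)$ into a combination of $v^+_{l-1/2}(Z_1)$ (degree drops from $2l$ to $2(l-\tfrac12)$) and $v^-_l(Z_1)$ into a combination of $v^-_{l+1/2}(Z_1)$ (degree drops from $-(2l+1)$ to $-(2(l+\tfrac12)+1)$). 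The identity then collapses to the two algebraic tautologies
$$
\frac{1}{2(l-\tfrac12)+d} = \frac{1}{2l+(d-1)}, \qquad -\frac{1}{2(l+\tfrac12)+2-d} = -\frac{1}{2l+2-(d-1)}.
$$
The $C$-relation is entirely symmetric: $\pi_l\bigl(\begin{smallmatrix} 0 & 0 \\ C & 0 \end{smallmatrix}\bigr)$ raises the $Z_1$-degree by one, so it induces the opposite shifts $v^+_l \to v^+_{l+1/2}$ and $v^-_l \to v^-_{l-1/2}$, and the corresponding scalar identities give the claimed shift $d \to d+1$.

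The analogous relations for $(\deg+d)^{-1}_{Z_2}$ and $1 \otimes \pi_r$ follow at once by interchanging the roles of $Z_1$ and $Z_2$ in the argument above. The only technical point to verify is consistency at the boundaries of the index ranges: whenever $B$ or $C$ would shift a label out of the allowed range (for instance, $v^+_0 \to v^+_{-1/2}$ or $v^-_{1/2} \to v^-_0$), the invariance of ${\cal V}^{\pm}$ already forces the action to produce zero, so both sides of the claimed identity vanish; and whenever a scalar coefficient $(2l+d)$ or $(2l+2-d)$ would vanish, the corresponding tensor is excluded on both sides of the identity by the very definition of $(\deg+d)^{-1}$, so no inconsistency arises. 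No step in this plan presents a genuine obstacle; it is a careful bookkeeping argument that parallels, and is even slightly simpler than, the scalar version of Lemma \ref{deg-inverse-comm-rels}.
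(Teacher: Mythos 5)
Your proof is correct and is exactly the routine $K$-type bookkeeping the paper has in mind: the lemma is stated without proof, as the spinor analogue of Lemma \ref{deg-inverse-comm-rels}, which is likewise left to the reader. One harmless slip: $v^-_{l,m,n}$ is homogeneous of degree $-(2l+2)$, not $-(2l+1)$, but since the $\bigl(\begin{smallmatrix} 0 & B \\ 0 & 0 \end{smallmatrix}\bigr)$-action still shifts the index $l \mapsto l+\tfrac12$, your scalar identities --- and the matching of the discarded index ranges on both sides, which is what those identities really encode --- are unaffected.
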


\begin{lem}  \label{deg-inverse-star-rels-spinor}
Let $F, G \in {\cal A}$ and $d \in \BB Z$. We have the following relations:
\begin{align*}
(\deg+d)_{Z_1}^{-1} (F \ast G) &= \bigl( (\deg+d)_{Z_1}^{-1} F \bigr) \ast G,  \\
(\deg+d)_{Z_2}^{-1} (F \ast G) &= F \ast \bigl( (\deg+d)_{Z_2}^{-1} G \bigr),  \\
\bigl( (\deg+d)_{Z_2}^{-1} F \bigr) \ast G &=
  - F \ast \bigl( (\deg+3-d)_{Z_1}^{-1} G \bigr).
\end{align*}
\end{lem}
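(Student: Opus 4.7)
My plan is to verify all three identities by direct computation on pure tensor basis elements of ${\cal A}$, extending by linearity. Write $F = v_1(Z_1) \otimes v'_1(Z_2)$ and $G = v_2(Z_1) \otimes v'_2(Z_2)$, where each factor is drawn from the bases $\{v^{\pm}_{l,m,n}\}$ or $\{v'^{\pm}_{l,m,n}\}$. The crucial observation is that each of these basis functions is a homogeneous eigenvector of $\deg$: the degrees are $2l$, $-(2l+2)$, $2l-1$, $-(2l+3)$ for $v^+_{l,m,n}$, $v^-_{l,m,n}$, $v'^+_{l,m,n}$, $v'^-_{l,m,n}$ respectively. Consequently, $(\deg+d)^{-1}_{Z_i}$ acts on each basis element simply by multiplication by $(\deg_{Z_i} + d)^{-1}$ (or discards it when this quantity vanishes, matching the convention in the explicit definition given before the lemma).

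For the first identity, I will compute
$$
F \ast G = \langle v'_1, v_2 \rangle_{\cal V} \cdot v_1(Z_1) \otimes v'_2(Z_2)
$$
and observe that the $Z_1$-dependence of the convolution is carried entirely by the factor $v_1$ inherited from $F$. Hence $(\deg+d)^{-1}_{Z_1}$ rescales this by $(\deg_{Z_1}(v_1)+d)^{-1}$, which is the same scalar produced by applying $(\deg+d)^{-1}_{Z_1}$ to $v_1$ before forming the convolution. The second identity follows by the mirror argument with the roles of $Z_1, v_1$ and $Z_2, v'_2$ interchanged (the $Z_2$-dependence of $F\ast G$ comes from $v'_2$ in $G$).

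The third identity is the substantive one and is where the sign originates. Writing $\alpha = \deg_{Z_2}(v'_1)+d$ and $\beta = \deg_{Z_1}(v_2)+3-d$, the two sides become
$$
\tfrac{1}{\alpha}\langle v'_1,v_2\rangle_{\cal V}\, v_1\otimes v'_2
\quad\text{and}\quad
-\tfrac{1}{\beta}\langle v'_1,v_2\rangle_{\cal V}\, v_1\otimes v'_2.
$$
By the orthogonality relations (\ref{V-orthog1})--(\ref{V-orthog3}), the pairing $\langle v'_1,v_2\rangle_{\cal V}$ is nonzero only in the two cases $(v'_1,v_2) = (v'^-_{l,m,n},v^+_{l,m,n})$ and $(v'_1,v_2) = (v'^+_{l,m,n},v^-_{l,m,n})$. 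In case one, $\deg_{Z_2}(v'_1)+\deg_{Z_1}(v_2) = -(2l+3)+2l = -3$; in case two, $(2l-1)+(-(2l+2)) = -3$ as well. Either way $\alpha+\beta = 0$, so $\tfrac{1}{\alpha} = -\tfrac{1}{\beta}$ and the two sides agree.

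The main (and only) subtlety will be the boundary cases where an operator discards a term. If $\alpha = 0$ then the left-hand side vanishes, and whenever the pairing $\langle v'_1, v_2\rangle_{\cal V}$ is nonzero one has $\beta = -\alpha = 0$, so the right-hand side vanishes too; conversely, if $\alpha \neq 0$ then $\beta \neq 0$. Thus the convention that undefined-scalar terms are dropped is internally consistent on both sides. Once the three identities are checked termwise on basis elements, extension to arbitrary $F, G \in {\cal A}$ is automatic from Lemma \ref{conv-prod-lem-spinor}, since both sides are defined by the same (finitely supported or bounded-difference-of-indices) sums over basis contributions.
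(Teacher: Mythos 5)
Your proof is correct, and it is the argument the paper implicitly intends (the paper states this lemma without proof, as the "analogue" of the equally unproved scalar Lemma \ref{deg-inverse-star-rels}): the three identities reduce, on pure tensors of basis elements, to the facts that the convolution preserves the outer factors $v_1$, $v'_2$ and that $\deg_{Z_2}(v'_1)+\deg_{Z_1}(v_2)=-3$ in the only two cases where $\langle v'_1,v_2\rangle_{\cal V}\ne 0$, whence $\alpha=-\beta$. One minor imprecision: the operators $(\deg+d)^{-1}_{Z_i}$ as defined discard not only the terms where $\deg+d$ vanishes but all terms of the "wrong sign" (e.g.\ $l\le -d/2$ for the $v^+$ type); this does not affect your argument, since $\alpha=-\beta$ forces the two sides of the third identity to discard exactly the same terms under either reading of the convention, and the kept terms agree as you computed.
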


We define an equivariant map
$$
J=J^{++} - (J^{+-}+J^{-+}) + J^{--} :
(\rho'_2, {\cal W}') \to (\pi_l \otimes \pi_r, {\cal A})
$$
as follows.
Recall the maps $J_R$ given by equation (\ref{fork}).
By Theorem \ref{J_R(W^+-)}, if $Z_1, Z_2 \in \BB D^+_R$,
$J_R$ is a $\mathfrak{gl}(2,\HC)$-equivariant map
$(\rho'_2,{\cal W}') \to (\pi_l \otimes \pi_r,{\cal V}^+ \otimes {\cal V}'^+)$
independent of the choice of $R>0$; we call this map $J^{++}$.
Similarly, if $Z_1, Z_2 \in \BB D^-_R$, $J_R$ is a
$\mathfrak{gl}(2,\HC)$-equivariant map
$(\rho'_2,{\cal W}') \to (\pi_l \otimes \pi_r, {\cal V}^- \otimes {\cal V}'^-)$
also independent of $R>0$; we call this map $J^{--}$.
If $F \in {\cal W}'$,
$$
J^{+-}(F) = \sum_{\genfrac{}{}{0pt}{}{l,m,n}{l',m',n'}}
b(F)_{\genfrac{}{}{0pt}{}{l,m,n}{l',m',n'}} v^+_{l,m,n}(Z_1) \cdot v'^-_{l',m',n'}(Z_2),
$$
where
$$
b(F)_{\genfrac{}{}{0pt}{}{l,m,n}{l',m',n'}} =
\frac{12}{\pi^3i} \int_{U(2)_R}
v'^-_{l,m,n}(W) \cdot F(W) \cdot v^+_{l',m',n'}(W) \,dV.
$$
On the one hand, this integral does not depend on $R>0$.
On the other hand, by the matrix coefficient expansions of
$\frac{(Z-W)^{-1}}{N(Z-W)}$
given in Proposition 26 from \cite{FL1} (see also Proposition \ref{Prop26}),
for each $R>0$, the series $J^{+-}(F)$ converges to $(J_RF)(Z_1,Z_2)$
whenever $Z_1 \in \BB D^+_R$ and $Z_2 \in \BB D^-_R$.
Similarly,
$$
J^{-+}(F) = \sum_{\genfrac{}{}{0pt}{}{l,m,n}{l',m',n'}}
c(F)_{\genfrac{}{}{0pt}{}{l,m,n}{l',m',n'}} v^-_{l,m,n}(Z_1) \cdot v'^+_{l',m',n'}(Z_2),
$$
where
$$
c(F)_{\genfrac{}{}{0pt}{}{l,m,n}{l',m',n'}} =
\frac{12}{\pi^3i} \int_{U(2)_R}
v'^+_{l,m,n}(W) \cdot F(W) \cdot v^-_{l',m',n'}(W) \,dV.
$$
This integral is independent of $R>0$ and, for each $R>0$,
the series $J^{-+}(F)$ converges to $(J_RF)(Z_1,Z_2)$ whenever
$Z_1 \in \BB D^-_R$ and $Z_2 \in \BB D^+_R$.

\begin{lem}  \label{1_A-lem}
  We have:
  $$
  J\bigl( N(Z)^{-1} \cdot Z \bigr) = 24 \cdot 1_{\cal A}.
  $$
\end{lem}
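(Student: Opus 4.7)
The strategy is to analyze the four summands of $J = J^{++} + J^{--} - J^{+-} - J^{-+}$ separately on $F_0 := N(Z)^{-1}\cdot Z$. By Proposition \ref{A-K-types}, $F_0$ spans ${\cal M}(-1) = V_0 \boxtimes V_0$, which is the trivial one-dimensional irreducible subrepresentation of $(\rho'_2, {\cal W}')$; in particular $F_0$ lies in neither ${\cal Q}'^+$ nor ${\cal Q}'^-$. By parts 1 and 2 of Theorem \ref{J_R(W^+-)}, $J^{++}$ annihilates every irreducible component of $(\rho'_2,{\cal W}')$ except ${\cal Q}'^+$, and $J^{--}$ annihilates every component except ${\cal Q}'^-$. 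Hence $J^{++}(F_0) = J^{--}(F_0) = 0$, and the claim reduces to showing
$$
-J^{+-}(F_0) - J^{-+}(F_0) = 24\cdot 1_{\cal A}.
$$

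For the two cross terms, Proposition \ref{J_R(W^0)-prop} supplies the explicit closed forms
$(J_R F_0)(Z_1,Z_2) = 24\,\frac{(Z_1-Z_2)^{-1}}{N(Z_1-Z_2)}$ for $Z_1\in\BB D_R^+$, $Z_2\in\BB D_R^-$, and the negative of this when the roles of $Z_1$ and $Z_2$ are switched. By the construction of $J^{+-}$ and $J^{-+}$ recalled just before Lemma \ref{1_A-lem}, the series $J^{+-}(F_0)\in{\cal A}$ is the unique element of the form $\sum b_{l,m,n,l',m',n'}\,v^+_{l,m,n}(Z_1)\,v'^-_{l',m',n'}(Z_2)$ whose $U(2)_R$-integrals of basis functions produce, upon summation, the analytic function $(J_R F_0)(Z_1,Z_2)$ on $\BB D_R^+\times\BB D_R^-$; analogously $J^{-+}(F_0)$ is determined by the requirement that it converge to $(J_R F_0)(Z_1,Z_2)$ on $\BB D_R^-\times\BB D_R^+$ in the bases $v^-_{l,m,n}(Z_1)\,v'^+_{l',m',n'}(Z_2)$.

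The identification of these series with the two summands of $1_{\cal A}$ is then accomplished by invoking the two matrix coefficient expansions of the Cauchy-Fueter kernel given in Proposition 26 of \cite{FL1} (also stated as Proposition \ref{Prop26}): one expansion is valid on the region $Z_1Z_2^{-1}\in\BB D^+$ and expresses $\frac{(Z_1-Z_2)^{-1}}{N(Z_1-Z_2)}$ in terms of the products $v^+_{l,m,n}(Z_1)\,v'^-_{l,m,n}(Z_2)$; the other is valid on $Z_2Z_1^{-1}\in\BB D^+$ and expresses the same kernel in terms of $v^-_{l,m,n}(Z_1)\,v'^+_{l,m,n}(Z_2)$. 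These are precisely the two sets of products appearing in the definition (\ref{1_A}) of $1_{\cal A}$, and by uniqueness of the ${\cal A}$-expansion we obtain
$$
J^{+-}(F_0) = -24\sum_{l,m,n} v^+_{l,m,n}(Z_1)\cdot v'^-_{l,m,n}(Z_2), \qquad
J^{-+}(F_0) = -24\sum_{l,m,n} v^-_{l,m,n}(Z_1)\cdot v'^+_{l,m,n}(Z_2),
$$
whose negative sum equals exactly $24\cdot 1_{\cal A}$.

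The main technical obstacle is consistent bookkeeping of signs: the $\pm 24$ coming from the two orderings of $Z_1,Z_2$ in Proposition \ref{J_R(W^0)-prop} must combine correctly with the signs built into the Cauchy-Fueter kernel expansions of Proposition 26 in \cite{FL1}, the decomposition $J = J^{++} + J^{--} - J^{+-} - J^{-+}$, and the defining formula (\ref{1_A}) of the unit. Once these sign conventions are aligned (and one notes that the same expansions were used in \cite{FL1} to establish the Cauchy-Fueter formulas (\ref{lr-intro})-(\ref{rr-intro})), no further computation is required: the result is a direct consequence of the uniqueness of matrix coefficient expansions of elements of ${\cal A}$.
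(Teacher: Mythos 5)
Your proof is correct and follows essentially the same route as the paper, whose entire argument is "the result follows immediately from Proposition \ref{J_R(W^0)-prop} and expression (\ref{1_A})"; you have simply made explicit the vanishing of $J^{++}$ and $J^{--}$ on the one-dimensional component (via Theorem \ref{J_R(W^+-)}) and the sign bookkeeping in matching the expansions of Proposition \ref{Prop26} against (\ref{1_A}). The signs do check out as you claim.
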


\begin{proof}
The result follows immediately from Proposition \ref{J_R(W^0)-prop}
and expression (\ref{1_A}).
\end{proof}

Choose a generator
$$
N(Z)^{-1} \cdot \tilde H_{0,0,0}(Z) =
N(Z)^{-1} \cdot \bigl(\begin{smallmatrix} 1 & 0 \\ 0 & 0 \end{smallmatrix}\bigr)
\quad \in {\cal Q}'^0.
$$
We conclude this subsection with an analogue of Proposition \ref{gen-mult-prop}:

\begin{prop}  \label{gen-mult-spinor-prop}
  We have:
  $$
  J \bigl( N(W)^{-1} \cdot \tilde H_{0,0,0}(W) \bigr)
  = 24 (\partial_{11})_{Z_1} (\deg+1)_{Z_1}^{-1} 1_{\cal A}
  = 24 (\partial_{11})_{Z_2} (\deg+1)_{Z_2}^{-1} 1_{\cal A}.
  $$
In particular, for any $G \in {\cal A}$,
\begin{align*}
  J \bigl( N(W)^{-1} \cdot \tilde H_{0,0,0}(W) \bigr) \ast G &=
  24 (\partial_{11})_{Z_1} (\deg+1)_{Z_1}^{-1} G(Z_1,Z_2),  \\
  G \ast J \bigl( N(W)^{-1} \cdot \tilde H_{0,0,0}(W) \bigr) &=
  24 (\partial_{11})_{Z_2} (\deg+1)_{Z_2}^{-1} G(Z_1,Z_2).
\end{align*}
\end{prop}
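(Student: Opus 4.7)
The plan is to reduce the spinor-valued identity to its scalar analogue Proposition \ref{gen-mult-prop} via the differential relation (\ref{I-J-relation}), $J(F)(Z_1,Z_2) = 24\,\overrightarrow{\partial}_{Z_1} I(F)(Z_1,Z_2) \overleftarrow{\partial}_{Z_2}$. First I will observe that $\tilde H_{0,0,0}(W) = \bigl(\begin{smallmatrix} 1 & 0 \\ 0 & 0 \end{smallmatrix}\bigr)$ is a constant matrix, so the scalar integral $I$ extends component-wise and gives $I\bigl(N(W)^{-1}\cdot\tilde H_{0,0,0}(W)\bigr) = I(N(W)^{-1})\cdot\tilde H_{0,0,0}$; Proposition \ref{gen-mult-prop} then rewrites this as $\bigl((\deg+1)^{-1}_{Z_1}\,1_{\cal HH}\bigr)\cdot\tilde H_{0,0,0}$. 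Substituting into (\ref{I-J-relation}) will reduce the first equality of the proposition to the key identity
$$\overrightarrow{\partial}_{Z_1}\bigl(1_{\cal HH}\cdot\tilde H_{0,0,0}\bigr)\overleftarrow{\partial}_{Z_2} = (\partial_{11})_{Z_1}\,1_{\cal A},$$
after commuting $(\deg+1)^{-1}_{Z_1}$ through the matrix-valued derivatives via Lemmas \ref{deg-inverse-comm-rels} and \ref{deg-inverse-comm-rels-spinor}, with careful tracking of the $-1$ degree shift produced by $\overrightarrow{\partial}_{Z_1}\cdots\overleftarrow{\partial}_{Z_2}$.

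The second stage is to establish the key identity by a term-by-term matrix-coefficient comparison. An elementary computation --- using the convention that the $(i,k)$-entry of the matrix $\partial$ is $\partial_{ki} = \partial/\partial z_{ki}$ --- shows that, for scalar $g$, the $(i,j)$-entry of $\overrightarrow{\partial}_{Z_1}(g\cdot\tilde H_{0,0,0})\overleftarrow{\partial}_{Z_2}$ is the single mixed derivative $\partial^{Z_1}_{1i}\partial^{Z_2}_{j1}g$, because multiplying by $\tilde H_{0,0,0}$ kills the $k=2$ columns of $\overrightarrow{\partial}_{Z_1}$ and the $k=2$ rows of $\overleftarrow{\partial}_{Z_2}$, collapsing the full matrix derivative to a single scalar $\partial^{Z_1}_{11}$ applied to a column-row product structure. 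I will then use Lemma 22 of \cite{FL1} together with identity (\ref{del-t(Z^{-1})}) to evaluate these derivatives on the $t^l_{n\underline{m}}$-basis appearing in the expansion (\ref{1_HH}) of $1_{\cal HH}$, and compare with the matrix-coefficient expansion (\ref{1_A}) of $1_{\cal A}$ in terms of $v^{\pm}_{l,m,n}$ and $v'^{\mp}_{l,m,n}$, whose entries involve $t^l_{\bullet\,\underline{\bullet\pm 1/2}}$; I expect the two sides to match term by term. The second equality $J(F) = 24(\partial_{11})_{Z_2}(\deg+1)^{-1}_{Z_2}1_{\cal A}$ will follow from the same argument started from the alternative expression $I(N(W)^{-1}) = -(\deg+1)^{-1}_{Z_2}\,1_{\cal HH}$, with the sign cancelling the sign from transposing $\overleftarrow{\partial}_{Z_2}$ past the relevant operators.

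The hard part will be the combinatorial bookkeeping of index shifts in this matrix-coefficient comparison: the half-integer shifts in the indices of $v^{\pm}_{l,m,n}$ and $v'^{\mp}_{l,m,n}$ (from the Cauchy-Fueter expansion of Proposition 26 in \cite{FL1}), the integer shifts produced by $\partial_{ij}t^l_{n\underline{m}}$ and by the derivatives of $N(Z_2)^{-1}t^l_{m\underline{n}}(Z_2^{-1})$, and the rank-one cut made by multiplying by $\tilde H_{0,0,0}$ must all align correctly for the matching to go through. Once the key identity is in hand, the convolution formulas $J(F)\ast G = 24(\partial_{11})_{Z_1}(\deg+1)^{-1}_{Z_1}G$ and $G\ast J(F) = 24(\partial_{11})_{Z_2}(\deg+1)^{-1}_{Z_2}G$ will follow immediately from Lemma \ref{deg-inverse-star-rels-spinor} (passing $(\deg+1)^{-1}_{Z_i}$ through convolution), the fact that the scalar derivative $(\partial_{11})_{Z_i}$ commutes with convolution that integrates over an auxiliary $W$-variable and acts only on the factor depending on $Z_i$, and the unit property $1_{\cal A}\ast G = G = G\ast 1_{\cal A}$ in the algebra $\cal A$.
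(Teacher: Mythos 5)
Your proposal is correct, and it takes a genuinely different route from the paper. The paper computes $J^{+-}$ and $J^{-+}$ of $N(W)^{-1}\cdot\tilde H_{0,0,0}(W)$ head-on: it writes out the coefficients $b(F)$, $c(F)$ as integrals over $U(2)_R$, applies the orthogonality relations (19) of \cite{FL3} to get the selection rules on $(l,m,n)$, and then recognizes the resulting series as $(\partial_{11})_{Z_1}$ of the two halves of $1_{\cal A}$ via Lemma 22 of \cite{FL1} and identity (\ref{del-t(Z^{-1})}); finally $J^{++}=J^{--}=0$ by Theorem \ref{J_R(W^+-)}. You instead recycle the scalar Proposition \ref{gen-mult-prop} through the relation (\ref{I-J-relation}), which concentrates all the work into the single formal-series identity $\overrightarrow{\partial}_{Z_1}(1_{\cal HH}\cdot\tilde H_{0,0,0})\overleftarrow{\partial}_{Z_2}=(\partial_{11})_{Z_1}1_{\cal A}$; this buys you a proof that no longer touches the orthogonality integrals and makes the parallel with the scalar case transparent (it is the same reduction the paper itself uses in Theorem \ref{Mx^0-operator-thm}). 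Two points deserve explicit care in your write-up. First, the key identity is cleanest to verify region by region rather than coefficient by coefficient: the first sum of $1_{\cal HH}$ is $-1/N(Z_1-Z_2)$ and the first sum of $1_{\cal A}$ is $k(Z_2-Z_1)=-k(Z_1-Z_2)=-\partial_{Z_2}\bigl(1/N(Z_1-Z_2)\bigr)$ (note the reversed argument --- this is where the relative sign between the two sums of $1_{\cal HH}$ and the uniform $+$ signs in $1_{\cal A}$ reconcile), so the $(i,j)$-entries to be compared are $\partial^{Z_1}_{1i}\partial^{Z_2}_{j1}g$ versus $\partial^{Z_1}_{11}\partial^{Z_2}_{ji}g$ with $g=\pm1/N(Z_1-Z_2)$; these agree trivially except at $(i,j)=(2,2)$, where you need $\partial_{12}\partial_{21}g=\partial_{11}\partial_{22}g$, i.e.\ the \emph{harmonicity} of $1/N$ --- your description of the derivative matrix ``collapsing'' glosses over this one entry. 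Second, the degree bookkeeping works out because $\partial_{Z_1}(\deg+1)^{-1}_{Z_1}=(\deg+2)^{-1}_{Z_1}\partial_{Z_1}$ and $(\partial_{11})_{Z_1}(\deg+1)^{-1}_{Z_1}=(\deg+2)^{-1}_{Z_1}(\partial_{11})_{Z_1}$ produce the \emph{same} shifted operator on both sides, and no homogeneous component of degree $-1$ (before) or $-2$ (after) occurs in $1_{\cal HH}$ or $1_{\cal A}$, so nothing is discarded; the analogous check for the $Z_2$ form, together with $(\partial_{11})_{Z_2}1_{\cal A}=-(\partial_{11})_{Z_1}1_{\cal A}$, gives the second equality as you indicate. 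The convolution statements then follow exactly as in the paper.
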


\begin{proof}
We start by computing $J^{+-}$ of $N(W)^{-1} \cdot \tilde H_{0,0,0}(W)$:
$$
J^{+-} \bigl( N(W)^{-1} \cdot \tilde H_{0,0,0}(W) \bigr) (Z_1,Z_2)
= \sum_{\genfrac{}{}{0pt}{}{l,m,n}{l',m',n'}}
b(F)_{\genfrac{}{}{0pt}{}{l,m,n}{l',m',n'}} v^+_{l,m,n}(Z_1) \cdot v'^-_{l',m',n'}(Z_2),
$$
where
\begin{multline*}
\frac{\pi^3i}{12} \cdot b(F)_{\genfrac{}{}{0pt}{}{l,m,n}{l',m',n'}}
= \int_{U(2)_R} v'^-_{l,m,n}(W) \cdot N(W)^{-1} \cdot
\bigl(\begin{smallmatrix} 1 & 0 \\ 0 & 0 \end{smallmatrix}\bigr) \cdot
v^+_{l',m',n'}(W) \,dV  \\
= \int_{U(2)_R}
\left( \begin{smallmatrix} t^{l+\frac 12}_{m \, \underline{n- \frac 12}}(W^{-1}), &
t^{l+\frac 12}_{m \, \underline{n+ \frac 12}}(W^{-1}) \end{smallmatrix} \right)
\cdot N(W)^{-2} \cdot
\bigl(\begin{smallmatrix} 1 & 0 \\ 0 & 0 \end{smallmatrix}\bigr) \cdot
\left(\begin{smallmatrix} (l'-m'+\frac 12) t^{l'}_{n'\, \underline{m'+\frac 12}}(W)\\
(l'+m'+\frac 12) t^{l'}_{n'\,\underline{m'-\frac12}}(W)\end{smallmatrix}\right)\,dV\\
= \Bigl( l'-m'+\frac12 \Bigr)
\int_{U(2)_R} N(W)^{-2} \cdot t^{l+\frac 12}_{m \, \underline{n-\frac 12}}(W^{-1})
\cdot t^{l'}_{n'\,\underline{m'+\frac 12}}(W) \,dV.
\end{multline*}
By the orthogonality relations (19) in \cite{FL3} this coefficient is zero
unless $l+1/2=l'$, $m=m'+1/2$ and $n-1/2=n'$.
So, let us assume that this is the case.
Using Lemma 22 from \cite{FL1}, we obtain:
\begin{multline*}
J^{+-} \bigl( N(W)^{-1} \cdot \tilde H_{0,0,0}(W) \bigr) (Z_1,Z_2)  \\
= \frac{-24}{N(Z_2)} \sum_{l,m,n}
\frac{\begin{smallmatrix} l-m+\frac32 \end{smallmatrix}}{2l+2} \cdot
\left(\begin{smallmatrix} (l-m+\frac12) t^l_{n\,\underline{m+ \frac 12}}(Z_1)  \\
(l+m+\frac12) t^l_{n\,\underline{m-\frac 12}}(Z_1) \end{smallmatrix}\right)
\cdot \left(\begin{smallmatrix} t^{l+1}_{m-\frac12\,\underline{n-1}}(Z_2^{-1}), &
t^{l+1}_{m-\frac12 \,\underline{n}}(Z_2^{-1}) \end{smallmatrix}\right)  \\
= - 24 \sum_{l,m,n} \frac{(\partial_{11})_{Z_1}}{2l+2} \cdot
\left(\begin{smallmatrix}(l-m+\frac32) t^{l+\frac12}_{n-\frac12\,\underline{m}}(Z_1)\\
(l+m+\frac12) t^{l+\frac12}_{n-\frac12\,\underline{m-1}}(Z_1) \end{smallmatrix}\right)
\cdot N(Z_2)^{-1} \cdot
\left(\begin{smallmatrix} t^{l+1}_{m-\frac12\,\underline{n-1}}(Z_2^{-1}), &
t^{l+1}_{m-\frac12 \,\underline{n}}(Z_2^{-1}) \end{smallmatrix}\right).
\end{multline*}
Alternatively, using (\ref{del-t(Z^{-1})}), we can rewrite
$J^{+-} \bigl( N(W)^{-1} \cdot \tilde H_{0,0,0}(W) \bigr)$ as
\begin{multline*}
J^{+-} \bigl( N(W)^{-1} \cdot \tilde H_{0,0,0}(W) \bigr) (Z_1,Z_2)  \\
= 24 \sum_{l,m,n}
\frac{(\partial_{11})_{Z_2}}{2l+2} \cdot
\left(\begin{smallmatrix} (l-m+\frac12) t^l_{n\,\underline{m+\frac 12}}(Z_1) \\
(l+m+\frac12) t^l_{n\,\underline{m-\frac 12}}(Z_1) \end{smallmatrix}\right)
\cdot N(Z_2)^{-1} \cdot
\left(\begin{smallmatrix} t^{l+\frac12}_{m\,\underline{n-\frac12}}(Z_2^{-1}), &
t^{l+\frac12}_{m \,\underline{n+\frac12}}(Z_2^{-1}) \end{smallmatrix}\right).
\end{multline*}

Next, we find $J^{-+}$ of $N(W)^{-1} \cdot \tilde H_{0,0,0}(W)$:
$$
J_R \bigl( N(W)^{-1} \cdot \tilde H_{0,0,0}(W) \bigr) (Z_1,Z_2)
= \sum_{\genfrac{}{}{0pt}{}{l,m,n}{l',m',n'}}
c(F)_{\genfrac{}{}{0pt}{}{l,m,n}{l',m',n'}} v^-_{l,m,n}(Z_1) \cdot v'^+_{l',m',n'}(Z_2),
$$
where
\begin{multline*}
\frac{\pi^3i}{12} \cdot c(F)_{\genfrac{}{}{0pt}{}{l,m,n}{l',m',n'}}
= \int_{U(2)_R} v'^+_{l,m,n}(W) \cdot N(W)^{-1} \cdot
\bigl(\begin{smallmatrix} 1 & 0 \\ 0 & 0 \end{smallmatrix}\bigr) \cdot
v^-_{l',m',n'}(W) \,dV  \\
= \int_{U(2)_R}
\left(\begin{smallmatrix} t^{l-\frac 12}_{m + \frac 12 \, \underline{n}}(W), &
t^{l-\frac 12}_{m - \frac 12 \, \underline{n}}(W) \end{smallmatrix}\right)
\cdot N(W)^{-2} \cdot
\bigl(\begin{smallmatrix} 1 & 0 \\ 0 & 0 \end{smallmatrix}\bigr)
\cdot \left(\begin{smallmatrix}
(l'-n'+\frac12) t^{l'}_{n'-\frac 12 \, \underline{m'}}(W^{-1})  \\
(l'+n'+\frac12) t^{l'}_{n'+\frac 12 \, \underline{m'}}(W^{-1})
\end{smallmatrix}\right) \,dV  \\
= \Bigl( l'-n'+\frac12 \Bigr) \int_{U(2)_R}
N(W)^{-2} \cdot t^{l-\frac 12}_{m + \frac 12 \, \underline{n}}(W)
\cdot t^{l'}_{n'-\frac 12 \, \underline{m'}}(W^{-1}) \,dV.
\end{multline*}
By the orthogonality relations (19) in \cite{FL3} this coefficient is zero
unless $l-1/2=l'$, $m+1/2=m'$ and $n=n'-1/2$.
So, let us assume that this is the case.
Using (\ref{del-t(Z^{-1})}), we obtain:
\begin{multline*}
J^{-+} \bigl( N(W)^{-1} \cdot \tilde H_{0,0,0}(W) \bigr) (Z_1,Z_2)  \\
= - 24 \sum_{\genfrac{}{}{0pt}{}{l,m,n}{l \ge 1}}
\frac{\begin{smallmatrix} l-n-\frac12 \end{smallmatrix}}{2l \cdot N(Z_1)}
\left(\begin{smallmatrix} (l-n+\frac12) t^l_{n-\frac12 \,\underline{m}}(Z_1^{-1})  \\
(l+n+\frac 12) t^l_{n+\frac12 \,\underline{m}}(Z_1^{-1})\end{smallmatrix}\right)
\cdot \left(\begin{smallmatrix} t^{l-1}_{m+1 \, \underline{n+\frac12}}(Z_2), &
t^{l-1}_{m \, \underline{n+\frac12}}(Z_2) \end{smallmatrix}\right)  \\
= \frac{24}{N(Z_1)} \sum_{\genfrac{}{}{0pt}{}{l,m,n}{l \ge 1}}
\frac{(\partial_{11})_{Z_1}}{2l} \left(\begin{smallmatrix}
(l-n-\frac12) t^{l-\frac12}_{n \, \underline{m+\frac12}}(Z_1^{-1})  \\
(l+n+\frac 12) t^{l-\frac12}_{n+1 \,\underline{m+\frac12}}(Z_1^{-1})
\end{smallmatrix}\right) \cdot
\left(\begin{smallmatrix} t^{l-1}_{m+1 \, \underline{n+\frac12}}(Z_2), &
t^{l-1}_{m \, \underline{n+\frac12}}(Z_2) \end{smallmatrix}\right).
\end{multline*}
Alternatively, using Lemma 22 from \cite{FL1}, we can rewrite
$J^{-+} \bigl( N(W)^{-1} \cdot \tilde H_{0,0,0}(W) \bigr)$ as
\begin{multline*}
J^{-+} \bigl( N(W)^{-1} \cdot \tilde H_{0,0,0}(W) \bigr) (Z_1,Z_2)  \\
= \frac{-24}{N(Z_1)} \sum_{\genfrac{}{}{0pt}{}{l,m,n}{l \ge 1}}
\frac{(\partial_{11})_{Z_2}}{2l}
\left(\begin{smallmatrix} (l-n+\frac12) t^l_{n-\frac12\,\underline{m}}(Z_1^{-1})  \\
(l+n+\frac 12) t^l_{n+\frac12\,\underline{m}}(Z_1^{-1})\end{smallmatrix}\right)
\cdot \left(\begin{smallmatrix} t^{l-\frac12}_{m+\frac12\,\underline{n}}(Z_2),&
t^{l-\frac12}_{m-\frac12 \, \underline{n}}(Z_2) \end{smallmatrix}\right).
\end{multline*}

By Theorem \ref{J_R(W^+-)},
$$
J^{++} \bigl( N(W)^{-1} \cdot \tilde H_{0,0,0}(W) \bigr) =
J^{--} \bigl( N(W)^{-1} \cdot \tilde H_{0,0,0}(W) \bigr) =0.
$$
Thus,
\begin{multline*}
J\bigl( N(W)^{-1} \cdot \tilde H_{0,0,0}(W) \bigr) (Z_1,Z_2)
= -(J^{+-}+J^{-+}) \bigl( N(W)^{-1} \cdot \tilde H_{0,0,0}(W) \bigr) (Z_1,Z_2)  \\
= 24 (\partial_{11})_{Z_1} (\deg+1)_{Z_1}^{-1} \Bigl(
\sum_{l,m,n} v^+_{l,m,n}(Z_1) \cdot v'^-_{l,m,n}(Z_2)
+ \sum_{l,m,n} v^-_{l,m,n}(Z_1) \cdot v'^+_{l,m,n}(Z_2) \Bigr)  \\
= 24 (\partial_{11})_{Z_2} (\deg+1)_{Z_2}^{-1} \Bigl(
\sum_{l,m,n} v^+_{l,m,n}(Z_1) \cdot v'^-_{l,m,n}(Z_2)
+ \sum_{l,m,n} v^-_{l,m,n}(Z_1) \cdot v'^+_{l,m,n}(Z_2) \Bigr).
\end{multline*}
Then the result follows from (\ref{1_A}) and Lemmas
\ref{deg-inverse-comm-rels-spinor}, \ref{deg-inverse-star-rels-spinor}.
\end{proof}

\begin{rem}
The same argument shows that, for $A \in \HC$,
  $$
  J \bigl( N(W)^{-1} \cdot A \bigr)
  = 24 \tr(A \partial)_{Z_1} (\deg+1)_{Z_1}^{-1} 1_{\cal A}
  = 24 \tr(A \partial)_{Z_2} (\deg+1)_{Z_2}^{-1} 1_{\cal A}.
  $$
  In particular,
  $$
  J \bigl( N(W)^{-1} \bigr)
  = 24 (\partial_{11} + \partial_{22})_{Z_1} (\deg+1)_{Z_1}^{-1} 1_{\cal A}
  = 24 (\partial_{11} + \partial_{22})_{Z_2} (\deg+1)_{Z_2}^{-1} 1_{\cal A}.
  $$
\end{rem}

\subsection{Algebra of Quaternionic Functions}

In this subsection we give $(\rho'_2,{\cal W}'/\ker J)$ the structure of a
$\mathfrak{gl}(2,\HC)$-invariant algebra. Many steps are proved by reduction
to the already developed scalar case of $(\rho_1,\Zh)$.

\begin{df}
  Let ${\cal A}^{\omega}$ denote the subspace of ${\cal A}$ generated by
  ${\cal V} \otimes {\cal V}'$, $J({\cal W}')$, application of operators
  $(\deg+d)^{-1}_{Z_1}$ and $(\deg+d)^{-1}_{Z_2}$, $d \in \BB Z$,
  as well as actions $\pi_l \otimes 1$ and $1 \otimes \pi_r$ of
  $\mathfrak{gl}(2,\HC)$.
\end{df}

Thus, by definition, ${\cal A}^{\omega}$ is invariant under the
$\pi_l \otimes \pi_r$ action of $\mathfrak{gl}(2,\HC)$.
As was done in the case of ${\cal HH}^{\omega}$, we want to reduce the number
of generators of ${\cal A}^{\omega}$.

\begin{lem}  \label{A-omega-generators}
  The space ${\cal A}^{\omega}$ is generated by ${\cal V} \otimes {\cal V}'$,
  elements of the type
  $$
  J(N(Z)^{-1}), \quad
  (\deg+d_1)^{-1}_{Z_1} J(N(Z)^{-1}), \quad
  (\deg+d_1)^{-1}_{Z_2} J(N(Z)^{-1}),
  $$
  $$
  (\deg+d_2)^{-1}_{Z_1} (\deg+d_1)^{-1}_{Z_1} J(N(Z)^{-1}), \quad
  (\deg+d_2)^{-1}_{Z_1} (\deg+d_1)^{-1}_{Z_2} J(N(Z)^{-1}),
  $$
  $$
  (\deg+d_2)^{-1}_{Z_2} (\deg+d_1)^{-1}_{Z_2} J(N(Z)^{-1}), \quad
  (\deg+d_3)^{-1}_{Z_1} (\deg+d_2)^{-1}_{Z_1}(\deg+d_1)^{-1}_{Z_1} J(N(Z)^{-1}),\dots
  $$
  as well as actions $\pi_l \otimes 1$ and $1 \otimes \pi_r$ of
  $\mathfrak{gl}(2,\HC)$.
\end{lem}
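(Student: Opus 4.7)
The plan is to follow the proof of Lemma \ref{HH-omega-generators}. By definition, ${\cal A}^\omega$ is closed under the listed operations and contains all listed generators; so it suffices to show that every element of $J({\cal W}')$ lies in the span of ${\cal V} \otimes {\cal V}'$ together with $J(N(Z)^{-1})$ under applications of $(\deg+d)^{-1}_{Z_i}$ and the actions $\pi_l \otimes 1$, $1 \otimes \pi_r$.

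First I would invoke the decomposition of ${\cal W}'$ into its thirteen irreducible subquotients from Subsection \ref{W'-irred-decomp-subsection} and track the behaviour of $J$ on each. By Theorem \ref{J_R(W^+-)}, on ${\cal Q}'^+$ only $J^{++}$ contributes, landing in ${\cal V}^+ \otimes {\cal V}'^+ \subset {\cal V} \otimes {\cal V}'$; likewise $J({\cal Q}'^-) \subset {\cal V}^- \otimes {\cal V}'^-$. By Proposition \ref{J_R(W^0)-prop} combined with Theorem \ref{J_R(W^+-)}, $J$ vanishes on the nine remaining components apart from ${\cal Q}'^0$ and the trivial one-dimensional subrepresentation $\BB C \cdot N(Z)^{-1} \cdot Z$. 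So only these two produce non-trivial contributions to $J({\cal W}')$ beyond ${\cal V} \otimes {\cal V}'$.

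For ${\cal Q}'^0$: since this component is irreducible and contains $N(Z)^{-1}$ as a non-zero element, the $\rho'_2$-orbit of $N(Z)^{-1}$ equals ${\cal Q}'^0$. The equivariance of $J$ (Proposition \ref{equivariance}) together with the identity $(\pi_l \otimes \pi_r)(X) = (\pi_l \otimes 1)(X) + (1 \otimes \pi_r)(X)$ places $J({\cal Q}'^0)$ inside the subspace obtained from $J(N(Z)^{-1})$ by iterated applications of $\pi_l \otimes 1$ and $1 \otimes \pi_r$. Lemma \ref{deg-inverse-comm-rels-spinor} then allows any additional $(\deg+d)^{-1}_{Z_i}$ operators to be commuted past the Lie algebra actions into the precise sequential form displayed in the statement.

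The trivial one-dimensional component is the main obstacle, since $N(Z)^{-1} \cdot Z$ is not in the $\rho'_2$-orbit of $N(Z)^{-1}$. The device is to apply the Lie algebra element $\bigl(\begin{smallmatrix} 0 & 0 \\ I & 0 \end{smallmatrix}\bigr)$ via $\rho'_2$ to $N(Z)^{-1} \cdot I \in {\cal Q}'^0$; differentiating the group action formula yields
$$
\rho'_2\bigl(\begin{smallmatrix} 0 & 0 \\ I & 0 \end{smallmatrix}\bigr)\bigl(N(Z)^{-1} \cdot I\bigr) = \tr(Z) \cdot N(Z)^{-1} \cdot I - 2 N(Z)^{-1} \cdot Z.
$$
Applying $J$ and using equivariance, the left-hand side equals $(\pi_l \otimes \pi_r)\bigl(\begin{smallmatrix} 0 & 0 \\ I & 0 \end{smallmatrix}\bigr) J(N(Z)^{-1})$, which is manifestly in our span; the right-hand side equals $J(\tr(Z) \cdot N(Z)^{-1} \cdot I) - 48 \cdot 1_{\cal A}$ by Lemma \ref{1_A-lem}. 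It remains to show that $J(\tr(Z) \cdot N(Z)^{-1} \cdot I)$ lies in our span. Decomposing this degree-$(-1)$ element along the composition series of ${\cal W}'$, at degree $-1$ only contributions from ${\cal Q}'^0$, the trivial one-dimensional representation, and $\ker J$ can appear, so the problem reduces to the previous paragraph plus verification that the resulting net coefficient of $1_{\cal A}$ differs from $48$, allowing us to solve for $1_{\cal A}$. The main obstacle is this last verification, which requires tracking exactly how $\tr(Z) \cdot N(Z)^{-1} \cdot I$ decomposes through the non-split extension linking ${\cal Q}'^0$ and the trivial one-dimensional subrepresentation.
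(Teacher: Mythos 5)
Your reduction to the four components surviving $J$, the treatment of ${\cal Q}'^+$ and ${\cal Q}'^-$ via Theorem \ref{J_R(W^+-)}, and the handling of ${\cal Q}'^0$ by equivariance plus the commutation relations of Lemma \ref{deg-inverse-comm-rels-spinor} all match the paper. The gap is in your treatment of the trivial one-dimensional component, and it is twofold. First, your premise that $N(Z)^{-1}\cdot Z$ is \emph{not} in the $\rho'_2$-submodule generated by $N(Z)^{-1}$ is the opposite of what is true in ${\cal W}'/\ker J$: by the duality between $(\rho'_2,{\cal W}')$ and $(\rho_2,{\cal W})$ together with Lemma \ref{nesting-lem}, the one-dimensional representation occurs as a genuine \emph{subrepresentation} of $J({\cal W}')$ while ${\cal Q}'^0$ occurs only as a subquotient; consequently the cyclic module generated by $N(Z)^{-1}$ --- whose image in the subquotient ${\cal Q}'^0$ is nonzero and generates it --- necessarily contains the trivial subrepresentation sitting underneath. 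This single structural observation is the paper's entire argument for that component, and it removes the need for any generator beyond $N(Z)^{-1}$.

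Second, the explicit computation you substitute for this observation is not carried to completion: you reduce the claim to ``verification that the resulting net coefficient of $1_{\cal A}$ differs from $48$,'' which you acknowledge requires tracking how $\tr(Z)\cdot N(Z)^{-1}$ decomposes through the non-split extension linking ${\cal Q}'^0$ and the trivial subrepresentation --- but that decomposition is exactly the structural information you would get for free from Lemma \ref{nesting-lem}, and without it your argument could in principle terminate in a vanishing coefficient and prove nothing. As written, the proof of the lemma is therefore incomplete at precisely the point where the one-dimensional component (the subtlest part of the whole construction, tied to the vacuum-polarization regularization) must be captured.
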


\begin{proof}
  By Theorem \ref{J_R(W^+-)} and Proposition \ref{J_R(W^0)-prop},
  $J({\cal W}') \simeq {\cal W}'/ \ker J$ has four irreducible components:
  $$
  {\cal Q}'^+, \qquad {\cal Q}'^0, \qquad{\cal Q}'^-
  $$
  and the trivial one-dimensional representation generated by
  $N(Z)^{-1} \cdot Z$.
  Since $(\rho'_2, {\cal W}')$ and $(\rho_2, {\cal W})$ are linear dual
  to each other, by Lemma \ref{nesting-lem}, the one-dimensional representation
  appears as a subrepresentation in $J({\cal W}')$ and the irreducible
  component ${\cal Q}'^0$ only as a subquotient.
  By Theorem \ref{J_R(W^+-)} again,
  $$
  J({\cal Q}'^+ \oplus {\cal Q}'^-) \subset {\cal V} \otimes {\cal V}'.
  $$
  On the other hand, $N(Z)^{-1}$ generates both ${\cal Q}'^0$ and the
  one-dimensional component.
  Then the proof proceeds the same way as that of
  Lemma \ref{HH-omega-generators}.
\end{proof}

\begin{prop}  \label{A-mult-closure-prop}
  The space ${\cal A}^{\omega}$ is closed under the convolution operation:
  if $F, G \in {\cal A}^{\omega}$, then $F \ast G$ also lies in
  ${\cal A}^{\omega}$.
\end{prop}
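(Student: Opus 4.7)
The plan is to follow exactly the strategy used in the scalar case, Proposition \ref{HH-mult-closure-prop}, replacing each ingredient by its spinor counterpart developed in this subsection.

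First I would dispose of the easy case: if either $F$ or $G$ lies in ${\cal V} \otimes {\cal V}'$, then by Lemma \ref{conv-prod-lem-spinor} the product $F \ast G$ already lies in ${\cal V} \otimes {\cal V}' \subset {\cal A}^{\omega}$. Next, since convolution is $(\pi_l \otimes \pi_r)$-equivariant, for all $X \in \mathfrak{gl}(2,\HC)$,
$$
\bigl( (\pi_l \otimes \pi_r)(X)F \bigr) \ast G
= (\pi_l \otimes \pi_r)(X)(F \ast G) - F \ast \bigl( (\pi_l \otimes \pi_r)(X)G \bigr),
$$
and similarly on the right. Hence if the claim holds for a given $F$ (resp.\ $G$) it propagates to $(\pi_l \otimes 1)(X)F$, $(1 \otimes \pi_r)(X)F$ and to the analogous operations on $G$. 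Combined with Lemma \ref{deg-inverse-star-rels-spinor}, which lets us move the operators $(\deg+d)^{-1}_{Z_1}$ and $(\deg+d)^{-1}_{Z_2}$ across the convolution (possibly exchanging which factor they act on and shifting $d$), the list of generators in Lemma \ref{A-omega-generators} reduces the entire question to the single case
$$
F = J(N(Z)^{-1}).
$$

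For this reduced case I would invoke Proposition \ref{gen-mult-spinor-prop} together with the remark immediately following it, which gives
$$
J(N(Z)^{-1}) = 24\,(\partial_{11} + \partial_{22})_{Z_1} (\deg+1)^{-1}_{Z_1}\, 1_{\cal A}
= 24\,(\partial_{11} + \partial_{22})_{Z_2} (\deg+1)^{-1}_{Z_2}\, 1_{\cal A}.
$$
Since $1_{\cal A}$ is the unit of the convolution algebra ${\cal A}$, and $(\deg+1)^{-1}_{Z_1}$ commutes with $\ast$ as described by Lemma \ref{deg-inverse-star-rels-spinor}, one obtains
$$
J(N(Z)^{-1}) \ast G = 24\,(\partial_{11}+\partial_{22})_{Z_1} (\deg+1)^{-1}_{Z_1} G(Z_1,Z_2),
$$
and a symmetric formula for $G \ast J(N(Z)^{-1})$ in the variable $Z_2$. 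The operator $(\deg+1)^{-1}_{Z_1}$ is built into the definition of ${\cal A}^{\omega}$, and the differentiations $\partial_{ij,Z_1}$, $\partial_{ij,Z_2}$ are produced (up to lower-order multiplicative corrections that are themselves elements of the enveloping algebra generated by the diagonal $\bigl(\begin{smallmatrix} A & 0 \\ 0 & D \end{smallmatrix}\bigr)$-actions) by the $\pi_l \otimes 1$ and $1 \otimes \pi_r$ actions of $\bigl(\begin{smallmatrix} 0 & B \\ 0 & 0 \end{smallmatrix}\bigr)$ and $\bigl(\begin{smallmatrix} 0 & 0 \\ C & 0 \end{smallmatrix}\bigr)$. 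These are exactly the operations that ${\cal A}^{\omega}$ is declared closed under, so $J(N(Z)^{-1}) \ast G \in {\cal A}^{\omega}$, and together with the reduction above this closes the argument.

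The main obstacle is purely bookkeeping rather than conceptual: one needs to carefully separate the pure first-order differentiation part of each Lie algebra action from its zeroth-order multiplicative term (arising from the conformal weight factor $(cZ+d)^{-1}/N(cZ+d)$ in $\pi_l$, and the corresponding factor in $\pi_r$), and absorb the latter using actions of diagonal matrices. Once this is handled, the argument is a faithful transcription of the scalar case proven in Proposition \ref{HH-mult-closure-prop}, with ${\cal V} \otimes {\cal V}'$, $J$, $1_{\cal A}$, Lemma \ref{conv-prod-lem-spinor}, Lemma \ref{deg-inverse-star-rels-spinor}, Lemma \ref{A-omega-generators} and Proposition \ref{gen-mult-spinor-prop} playing the roles of ${\cal H} \otimes {\cal H}$, $I$, $1_{\cal HH}$, Lemma \ref{conv-prod-lem}, Lemma \ref{deg-inverse-star-rels}, Lemma \ref{HH-omega-generators} and Proposition \ref{gen-mult-prop} respectively.
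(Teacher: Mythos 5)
Your proposal is correct and follows exactly the route the paper intends: the paper's own proof of this proposition is simply a reference back to the scalar case (Proposition \ref{HH-mult-closure-prop}), and your argument is a faithful transcription of that proof with the spinor counterparts (Lemmas \ref{conv-prod-lem-spinor}, \ref{deg-inverse-star-rels-spinor}, \ref{A-omega-generators} and Proposition \ref{gen-mult-spinor-prop}) substituted in. The only minor over-caution is your worry about zeroth-order corrections when producing $\partial_{11}+\partial_{22}$ from the Lie algebra action: the $\bigl(\begin{smallmatrix} 0 & B \\ 0 & 0 \end{smallmatrix}\bigr)$-action is pure differentiation $-\tr(B\partial)$ with no multiplicative term, so no absorption is needed.
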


\begin{proof}
  The proof is essentially the same as that of
  Proposition \ref{HH-mult-closure-prop}.
\end{proof}

As was done in the case of ${\cal HH}^{\omega}$, we want to realize
elements of ${\cal A}^{\omega}$ as analytic functions.
Recall open subset $\Omega$ of $\HC^{\times} \times \HC^{\times}$
introduced in Subsection \ref{scalar-algebra-subsection}.
There is a natural map $\omega$ from ${\cal A}^{\omega}$ into
$\HC$-valued analytic functions on $\Omega$.
Indeed, elements of ${\cal V} \otimes {\cal V}'$ are polynomials,
hence can be treated as analytic functions on $\Omega$.
On the other hand, by (\ref{I-J-relation}),
$$
J(N(Z)^{-1}) = 24 \overrightarrow{\partial}_{Z_1} I(N(Z)^{-1})
\overleftarrow{\partial}_{Z_2}
$$
is an analytic function on $\Omega$ also.
Then operators $(\deg+d)^{-1}_{Z_1}$ and $(\deg+d)^{-1}_{Z_2}$, $d \in \BB Z$,
as well as actions  $\pi_l \otimes 1$ and $1 \otimes \pi_r$ of
$\mathfrak{gl}(2,\HC)$ preserve analyticity of functions.
Unlike the case of ${\cal HH}^{\omega}$, this map $\omega$ has a non-trivial
kernel: $\omega(1_{\cal A}) =0$, by Proposition \ref{J_R(W^0)-prop}.
We denote the composition $\omega \circ J$ by $\tilde J$ and
the space of analytic functions on $\Omega$ that are in the image
of $\omega$ by $\tilde{\cal A}^{\omega}$.
Note that $\ker\tilde J = \ker\M$ and
\begin{equation}  \label{I-J-relation2}
\tilde J(F) = 24 \overrightarrow{\partial}_{Z_1} I(F)
\overleftarrow{\partial}_{Z_2}, \qquad F \in {\cal W}'.
\end{equation}
Each $F \in \tilde{\cal A}^{\omega}$ is left regular with respect to $Z_1$ and
right regular with respect to $Z_2$:
$$
\overrightarrow{\nabla}_{Z_1} F(Z_1,Z_2) = 0
= F(Z_1,Z_2) \overleftarrow{\nabla}_{Z_2}.
$$
Since
$$
{\cal V} \otimes {\cal V}' \subset \tilde{\cal A}^{\omega},
$$
we can think of $\tilde{\cal A}^{\omega}$ as a completion of
${\cal V} \otimes {\cal V}'$.

Recall open subsets $\Lambda^+$, $\Lambda^-$ of $\HC^{\times} \times \HC^{\times}$
introduced in Subsection \ref{scalar-algebra-subsection}.
Definition \ref{extendable-scalar-def} extends to $\tilde{\cal A}^{\omega}$.

\begin{df}
  We call a function $F \in \tilde{\cal A}^{\omega}$ {\em extendable} if,
  for each $Z_0 \in \HC^{\times}$, there exists an open neighborhood
  $V \subset \HC^{\times} \times \HC^{\times}$ of $(Z_0,Z_0)$ and two
  $\HC$-valued functions $F^+$ and $F^-$ analytic on
  $V$ such that $F = F^+$ for all points in $\Lambda^+ \cap V$ and
  $F = F^-$ for all points in $\Lambda^- \cap V$.
\end{df}

Clearly, elements of ${\cal V} \otimes {\cal V}'$ are extendable.

\begin{lem}  \label{extendable-invariant-lem-spinor}
The extendable functions in $\tilde{\cal A}^{\omega}$ form a subspace
that is invariant under the actions $\pi_l \otimes 1$, $1 \otimes \pi_r$ and
$\pi_l \otimes \pi_r$ of $\mathfrak{gl}(2,\HC)$.
\end{lem}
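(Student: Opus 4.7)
The plan is to break the statement into two pieces: first that the extendable functions form a linear subspace, and second that this subspace is preserved by each of the three actions. The subspace property is immediate. If $F_1, F_2 \in \tilde{\cal A}^{\omega}$ are extendable and $Z_0 \in \HC^{\times}$ is given, then there exist neighborhoods $V_1, V_2$ of $(Z_0,Z_0)$ and analytic functions $F_i^{\pm}$ on $V_i$ with $F_i = F_i^{\pm}$ on $\Lambda^{\pm} \cap V_i$; on the intersection $V_1 \cap V_2$ the functions $F_1^{\pm} + F_2^{\pm}$ provide the required extensions of $F_1 + F_2$, and scaling is analogous.

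For the invariance under the three Lie algebra actions, the key observation is that each of $\pi_l \otimes 1$, $1 \otimes \pi_r$, and $\pi_l \otimes \pi_r$ acts on $\tilde{\cal A}^{\omega}$ by first-order differential operators whose coefficients are polynomial in $Z_1$ and $Z_2$ separately, with no singularity along the diagonal $Z_1 = Z_2$. Indeed, by differentiating the group formulas for $\pi_l$ and $\pi_r$ (as in Lemmas \ref{Lie-alg-action}, \ref{rho-action-lem}--\ref{rho'_2-action-lem}), the action of a generator such as $\bigl(\begin{smallmatrix} A & 0 \\ 0 & 0 \end{smallmatrix}\bigr)$, $\bigl(\begin{smallmatrix} 0 & B \\ 0 & 0 \end{smallmatrix}\bigr)$, $\bigl(\begin{smallmatrix} 0 & 0 \\ C & 0 \end{smallmatrix}\bigr)$ or $\bigl(\begin{smallmatrix} 0 & 0 \\ 0 & D \end{smallmatrix}\bigr)$ via $\pi_l \otimes 1$ is a $\BB C$-linear combination of partial derivatives $\partial/\partial (z_1)_{ij}$ and multiplications by polynomial matrix-valued functions of $Z_1$ only, with analogous statements for $1 \otimes \pi_r$ in the $Z_2$ variable. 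In particular, each such operator $\mathcal{D}$ maps analytic functions on an open subset $V \subset \HC^{\times} \times \HC^{\times}$ to analytic functions on the same $V$.

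Given an extendable $F \in \tilde{\cal A}^{\omega}$ with local analytic extensions $F^{\pm}$ on a neighborhood $V$ of $(Z_0,Z_0)$, the transformed function $\mathcal{D} F$ agrees with $\mathcal{D} F^{\pm}$ on $\Lambda^{\pm} \cap V$ (since $\mathcal{D}$ is a local operator) and $\mathcal{D} F^{\pm}$ are analytic on $V$; hence $\mathcal{D} F$ is extendable near $(Z_0, Z_0)$. Since $Z_0 \in \HC^{\times}$ was arbitrary and $\mathfrak{gl}(2,\HC)$ is generated as a Lie algebra by the above matrices, it follows that each of the three actions preserves the subspace of extendable functions. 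There is no serious obstacle here --- the argument is just bookkeeping of local analyticity under polynomial-coefficient differential operators, exactly parallel to the scalar case treated in Lemma \ref{extendable-invariant-lem}.
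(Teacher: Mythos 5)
Your proof is correct and is exactly the argument the paper has in mind: the paper states this lemma (and its scalar counterpart, Lemma \ref{extendable-invariant-lem}) without proof, calling the observation obvious, and the content of "obvious" is precisely your point that the Lie algebra generators act by first-order differential operators with polynomial coefficients, which are local and preserve analyticity, so they carry the local extensions $F^{\pm}$ on $V$ to local extensions $\mathcal{D}F^{\pm}$ agreeing with $\mathcal{D}F$ on the open sets $\Lambda^{\pm}\cap V$. Nothing is missing.
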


\begin{rem}
We expect all functions in $\tilde{\cal A}^{\omega}$ to be extendable.
\end{rem}

Let $F \in \tilde{\cal A}^{\omega}$ be an extendable function,
define two analytic functions on $\HC^{\times}$:
\begin{align*}
(\DR^+ F)(Z) &=
\lim_{\genfrac{}{}{0pt}{}{Z_1, Z_2 \to Z}{(Z_1,Z_2) \in \Lambda^+}} F(Z_1,Z_2)
\qquad \text{and}  \\
(\DR^- F)(Z) &=
\lim_{\genfrac{}{}{0pt}{}{Z_1, Z_2 \to Z}{(Z_1,Z_2) \in \Lambda^-}} F(Z_1,Z_2).
\end{align*}
As in the scalar case, applying $\DR^+ F$ and $\DR^- F$ may yield different
results; functions $\DR^+ F$ and $\DR^- F$ need not be elements of ${\cal W}$
because they may not be polynomials on $\HC^{\times}$.
Nevertheless, the operators $\DR^+$ and $\DR^-$ intertwine the
$\mathfrak{gl}(2,\HC)$-actions $\pi_l \otimes \pi_r$ on extendable
functions in $\tilde{\cal A}^{\omega}$ and $\rho_2$ on $\HC$-valued
analytic functions on $\HC^{\times}$.

\begin{lem}  \label{Diag_on_VxV'-lem}
  If $F(Z_1,Z_2) \in {\cal V} \otimes {\cal V}'$, then
  $$
  \DR^+ F = \DR^- F
  \quad \in {\cal Q}^+ \oplus {\cal Q}^0 \oplus {\cal Q}^-.
  $$
\end{lem}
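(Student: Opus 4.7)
The plan is to argue that this lemma reduces to a regularity observation plus an appeal to Proposition~\ref{mult-image-prop}. Elements of ${\cal V} \otimes {\cal V}'$ are, by construction, finite linear combinations of pure tensors $v(Z_1) \otimes v'(Z_2)$ where $v \in {\cal V}$ is a polynomial in $z_{ij}$ and $N(Z)^{-1}$ (and similarly for $v'$). So every $F \in {\cal V} \otimes {\cal V}'$ is a Laurent polynomial in $(Z_1,Z_2)$ whose only singularities lie along $\{N(Z_1)=0\}\cup\{N(Z_2)=0\}$; in particular, $F$ is analytic on a neighborhood of every diagonal point $(Z_0,Z_0)$ with $Z_0 \in \HC^{\times}$, and no analytic extension across the diagonal is needed.

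First I would verify extendability: given $Z_0 \in \HC^{\times}$, choose $V \subset \HC^{\times} \times \HC^{\times}$ a small open neighborhood of $(Z_0,Z_0)$ disjoint from $\{N(Z_1)=0\}\cup\{N(Z_2)=0\}$. On $V$ the function $F$ is already analytic, so one may take $F^+ = F^- = F|_V$, which witnesses the extendability condition. Consequently
$$
(\DR^+ F)(Z) = \lim_{\genfrac{}{}{0pt}{}{Z_1,Z_2 \to Z}{(Z_1,Z_2)\in\Lambda^+}} F(Z_1,Z_2)
= F(Z,Z) =
\lim_{\genfrac{}{}{0pt}{}{Z_1,Z_2 \to Z}{(Z_1,Z_2)\in\Lambda^-}} F(Z_1,Z_2)
= (\DR^- F)(Z)
$$
for every $Z \in \HC^{\times}$, and both coincide with the value of the multiplication map $\operatorname{Mult}(F)(Z)=F(Z,Z)$ introduced just before Proposition~\ref{mult-image-prop}.

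Finally, Proposition~\ref{mult-image-prop} identifies the image of $\operatorname{Mult}: {\cal V}\otimes{\cal V}' \to {\cal W}$ as exactly ${\cal Q}^+ \oplus {\cal Q}^0 \oplus {\cal Q}^-$, so $\DR^\pm F \in {\cal Q}^+ \oplus {\cal Q}^0 \oplus {\cal Q}^-$, as claimed. There is no real obstacle here: the content of the lemma is the regularity of polynomial tensors on the diagonal complement of the norm-zero loci, and the decomposition statement is a direct quotation of Proposition~\ref{mult-image-prop}. The reason the lemma is worth stating is that it is the analogue for the spinor case of Lemma~\ref{Diag_on_HxH-lem}, and will be used subsequently to ensure that the $\ast^\pm$ products on ${\cal W}'/\ker J$ agree on the ``trivial'' part of the product $J(F)\ast J(G)$ that already lies in ${\cal V}\otimes{\cal V}'$, so that all discrepancy between $\ast^+$ and $\ast^-$ arises from the non-polynomial generators (e.g.\ $\tilde J(N(Z)^{-1}\cdot \tilde H_{0,0,0})$).
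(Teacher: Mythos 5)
Your argument is correct and is essentially the paper's own proof: the paper likewise observes that on ${\cal V} \otimes {\cal V}'$ both $\DR^+$ and $\DR^-$ reduce to the multiplication (restriction-to-the-diagonal) map $\operatorname{Mult}$, and then invokes Proposition \ref{mult-image-prop} to identify the image as ${\cal Q}^+ \oplus {\cal Q}^0 \oplus {\cal Q}^-$. You merely spell out the regularity of Laurent-polynomial tensors near the diagonal more explicitly, which the paper leaves implicit.
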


\begin{proof}
  Note that, when restricted to ${\cal V} \otimes {\cal V}$, both $\DR^+$ and
  $\DR^-$ reduce to the multiplication map
  $\operatorname{Mult}: {\cal V} \otimes {\cal V}' \to {\cal W}$, then the
  result follows from Proposition \ref{mult-image-prop}.
\end{proof}

\begin{lem}  \label{J(W')-extendable-lem}
  For each $F \in {\cal W'}$, $\tilde J(F)$ is extendable and can be written as
  a finite linear combination of $\HC$-valued analytic functions on $\Omega$
  that are homogeneous in $Z_1$ and $Z_2$. Moreover,
  $$
  \DR^+ \tilde J(F) = \DR^- \tilde J(F) = \M F
  \quad \in {\cal Q}^+ \oplus {\cal Q}^0 \oplus {\cal Q}^-.
  $$
\end{lem}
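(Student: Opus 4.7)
The plan is to reduce everything to the already-established scalar case via the identity (\ref{I-J-relation2}), $\tilde J(F) = 24 \overrightarrow{\partial}_{Z_1} I(F) \overleftarrow{\partial}_{Z_2}$, where $I$ extends from $\Zh$ to ${\cal W}' = \HC \otimes \Zh$ by acting componentwise on the scalar factor. Writing $F = \sum_i q_i \otimes f_i$ with $q_i \in \HC$ and $f_i \in \Zh$ reduces the first two assertions — extendability and a finite decomposition into jointly homogeneous analytic functions on $\Omega$ — to Lemma \ref{I(Zh)-extendable} applied to each $f_i$, since quaternionic partial differentiation is a local operation that commutes with analytic continuation and preserves the class of finite combinations of homogeneous analytic functions (merely lowering the total degree by one on each side).

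For the formula $\DR^\pm \tilde J(F) = \M F$, I would decompose $F = F^+ + F^0 + F^-$ according to Lemma \ref{W-Zh-compatibility-lemma}. On the $F^\pm$ pieces, the scalar facts that $I^{+-}$, $I^{-+}$, and $I^{\mp\mp}$ all annihilate $\Zh^\pm$ translate, after applying the double derivative, into the vanishing of $\omega(J^{+-})$, $\omega(J^{-+})$, and $\omega(J^{\mp\mp})$ on $F^\pm$. Hence $\tilde J(F^+) = \omega(J^{++}(F^+))$ lies in ${\cal V}^+ \otimes {\cal V}'^+$ and $\tilde J(F^-) = \omega(J^{--}(F^-))$ lies in ${\cal V}^- \otimes {\cal V}'^-$; both are analytic on all of $\HC^{\times} \times \HC^{\times}$, so no continuation across the diagonal is required, and Theorem \ref{J_R(W^+-)} directly yields $\DR \tilde J(F^\pm) = \M F^\pm$ with $\DR^+ = \DR^-$.

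The substantive case is $F^0 \in \HC \otimes \Zh^0$, where only $J^{+-}$ and $J^{-+}$ contribute, so $\tilde J(F^0) = -\omega(J^{+-}(F^0) + J^{-+}(F^0))$. Here I would invoke Theorem \ref{Mx^0-operator-thm}: its limit condition $\sgn(\im\lambda_1) = \sgn(\im\lambda_2)$ covers sequences through both $\Lambda^+$ and $\Lambda^-$, so the existence of that limit forces $\DR^+$ and $\DR^-$ each to exist and agree, with common value $\M^0 F^0$. Since $\M^0$ coincides with $\M$ on ${\cal Q}'^0$ and both operators vanish on the remaining five irreducible components of $\HC \otimes \Zh^0$ — the two images of $\partial^+(\Sh')$ present there, the two doubly-regular-type components ${\cal W}'/\ker\tau_a^-$ and ${\cal W}'/\ker\tau_s^-$, and the trivial one-dimensional subrepresentation spanned by $N(Z)^{-1} \cdot Z$ — we obtain $\M^0 F^0 = \M F^0$; the membership $\M F \in {\cal Q}^+ \oplus {\cal Q}^0 \oplus {\cal Q}^-$ is automatic from the image description of $\M$ in Lemma \ref{W-Zh-compatibility-lemma}. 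The main obstacle is ensuring that Theorem \ref{Mx^0-operator-thm} actually delivers extendability — analytic continuation to an open neighborhood of the diagonal — rather than merely existence of directional limits; this is handled by the scalar computation in that theorem's proof, where $(I_R^{+-} + I_R^{-+})(N(W)^{-1})$ is shown to restrict to a function holomorphic in $(\lambda_1, \lambda_2)$ near $(1,1)$ on each half-plane, and $\mathfrak{gl}(2,\HC)$-equivariance then propagates this analyticity from the generator $N(W)^{-1}$ to all of ${\cal Q}'^0$.
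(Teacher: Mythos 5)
Your proposal is correct and follows essentially the same route as the paper: the paper's proof simply cites equation (\ref{I-J-relation2}) together with Lemma \ref{I(Zh)-extendable} for the first assertion, and Theorems \ref{J_R(W^+-)} and \ref{Mx^0-operator-thm} for the identity $\DR^{\pm}\tilde J(F)=\M F$. Your decomposition $F=F^++F^0+F^-$ and the case analysis are exactly the details implicit in that citation, so there is nothing to add.
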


\begin{proof}
The first part follows from Lemma \ref{I(Zh)-extendable} and equation
(\ref{I-J-relation2}). The second part follows from
Theorems \ref{J_R(W^+-)} and \ref{Mx^0-operator-thm}.
\end{proof}

\begin{lem}  \label{deg(1/N)-extendable-lem}
For each $d \in \BB Z$, $(\deg+d)^{-1}_{Z_1} \tilde J(N(Z)^{-1})$ and
$(\deg+d)^{-1}_{Z_2} \tilde J(N(Z)^{-1})$ are extendable.
\end{lem}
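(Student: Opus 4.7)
The plan is to reduce the statement to the scalar analogue Lemma \ref{dilogarithm-calculations-lem} via the identity (\ref{I-J-relation2}), which expresses $\tilde J$ as a differential-operator composition with $I$. The starting point is
$$
\tilde J(N(Z)^{-1}) = 24\, \overrightarrow{\partial}_{Z_1}\, I(N(Z)^{-1})\, \overleftarrow{\partial}_{Z_2}.
$$
Because $\overrightarrow{\partial}_{Z_1}$ lowers the $Z_1$-degree by one and $\overleftarrow{\partial}_{Z_2}$ lowers the $Z_2$-degree by one, on each $K$-isotypic component one has the commutation relations
$$
(\deg+d)^{-1}_{Z_1}\, \overrightarrow{\partial}_{Z_1} = \overrightarrow{\partial}_{Z_1}\, (\deg+d-1)^{-1}_{Z_1}, \qquad (\deg+d)^{-1}_{Z_2}\, \overleftarrow{\partial}_{Z_2} = \overleftarrow{\partial}_{Z_2}\, (\deg+d-1)^{-1}_{Z_2},
$$
provided the denominators that appear are non-zero. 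Thus
$$
(\deg+d)^{-1}_{Z_1}\, \tilde J(N(Z)^{-1}) = 24\, \overrightarrow{\partial}_{Z_1}\, \bigl[(\deg+d-1)^{-1}_{Z_1}\, I(N(Z)^{-1})\bigr]\, \overleftarrow{\partial}_{Z_2},
$$
and analogously for the operator $(\deg+d)^{-1}_{Z_2}$.

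First I would verify the above shift identities directly on the matrix-coefficient basis using the expressions for $I^{+-}(N(Z)^{-1})$ and $I^{-+}(N(Z)^{-1})$ recalled in the proof of Lemma \ref{dilogarithm-calculations-lem}, paying attention to the ``discarded terms'' built into the definitions of $(\deg+d)^{-1}_{Z_1}$ and $(\deg+d)^{-1}_{Z_2}$. In each term of the double series the resulting denominator $2l+d-1$ (respectively $2l+2-d+1 = 2l+3-d$) is non-zero on the range actually appearing in $I(N(Z)^{-1})$, so the identity survives without spurious cancellations. Next I would invoke Lemma \ref{dilogarithm-calculations-lem}, which gives an explicit analytic formula for $(\deg+d-1)^{-1}_{Z_1} I(N(Z)^{-1})$ on $\Lambda^+$ and $\Lambda^-$, involving logarithms/dilogarithms in the eigenvalues $\lambda_1,\lambda_2$ of $Z_1Z_2^{-1}$; that lemma shows this function is extendable (and in fact its two boundary values $\DR^{\pm}$ are scalar multiples of $N(Z)^{-1}$).

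Finally, extendability is preserved by first-order holomorphic differential operators: if a function $g$ on $\Omega$ admits two analytic extensions $g^\pm$ on a neighbourhood $V$ of a diagonal point $(Z_0,Z_0)$, then $\overrightarrow{\partial}_{Z_1} g^\pm$ and $g^\pm \overleftarrow{\partial}_{Z_2}$ are still analytic on $V$ and agree with $\overrightarrow{\partial}_{Z_1} g$ and $g \overleftarrow{\partial}_{Z_2}$ on $\Lambda^\pm \cap V$ respectively. Applying this to $g = (\deg+d-1)^{-1}_{Z_1} I(N(Z)^{-1})$ and its $Z_2$-analogue yields extendability of $(\deg+d)^{-1}_{Z_1} \tilde J(N(Z)^{-1})$ and $(\deg+d)^{-1}_{Z_2} \tilde J(N(Z)^{-1})$.

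The main obstacle I expect is bookkeeping rather than analysis: the shift operators $(\deg+d)^{-1}$ are defined piecewise according to the sign of $l+d/2$, and one must check that the commutation with $\overrightarrow{\partial}_{Z_1}$ and $\overleftarrow{\partial}_{Z_2}$ is genuine (no kernel term sneaks in and no term that should be present is discarded by the index-shift). Once these shifts are justified term-by-term on the matrix-coefficient basis, the conclusion follows from Lemma \ref{dilogarithm-calculations-lem} with essentially no further analytic work.
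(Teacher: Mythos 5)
Your proposal is correct and follows exactly the route the paper takes: the paper's proof is the one-line observation that the lemma follows from Lemma \ref{dilogarithm-calculations-lem} together with equation (\ref{I-J-relation2}), and your argument simply spells out the details (the degree-shifted commutation of $(\deg+d)^{-1}$ past $\overrightarrow{\partial}_{Z_1}$, $\overleftarrow{\partial}_{Z_2}$, and the stability of extendability under holomorphic differentiation) that the authors leave implicit.
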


\begin{proof}
  The result follows from Lemma \ref{dilogarithm-calculations-lem} and
  equation (\ref{I-J-relation2}).
\end{proof}

\begin{thm}  \label{mult-extendable-spinor-thm}
  Let $F, G \in {\cal W}'$, then
  $\omega(J(F) \ast J(G)) \in \tilde{\cal A}^{\omega}$ is extendable and
  \begin{equation}  \label{rest_in_W-eqn}
  \DR^+ \bigl( \omega \bigl( J(F) \ast J(G) \bigr) \bigr), \quad
  \DR^- \bigl( \omega \bigl( J(F) \ast J(G) \bigr) \bigr) \quad
  \in {\cal Q}^+ \oplus {\cal Q}^0 \oplus {\cal Q}^-.
  \end{equation}
\end{thm}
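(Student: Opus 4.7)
The plan is to mirror the proof of Theorem \ref{mult-extendable-thm} for the scalar case, reducing to generators of the irreducible components of ${\cal W}'/\ker J$ and then exploiting the identification $\tilde J(F) = 24\overrightarrow\partial_{Z_1} I(F)\overleftarrow\partial_{Z_2}$ from (\ref{I-J-relation2}) to transfer extendability results from the scalar case (Lemma \ref{dilogarithm-calculations-lem}) to the spinor case. First I would use the $\mathfrak{gl}(2,\HC)$-equivariance of $J$, $\ast$, $\omega$ and $\DR^{\pm}$, together with the decomposition of ${\cal W}'/\ker J$ into the four irreducible components ${\cal Q}'^+$, ${\cal Q}'^0$, ${\cal Q}'^-$ and the trivial one-dimensional subrepresentation spanned by $N(Z)^{-1}\cdot Z$, to reduce the problem to the case when $F$ and $G$ each lie in a single cyclic generator of one of these four components.

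I would then dispose of the easy cases. When $F$ or $G$ lies in ${\cal Q}'^+\oplus{\cal Q}'^-$, Theorem \ref{J_R(W^+-)} places $J(F)$ or $J(G)$ already in the polynomial space ${\cal V}\otimes{\cal V}'$, so Lemma \ref{conv-prod-lem-spinor} keeps $J(F)\ast J(G)\in{\cal V}\otimes{\cal V}'$; applying $\omega$ is trivial, extendability is automatic, and Lemma \ref{Diag_on_VxV'-lem} places $\DR^{\pm}$ inside ${\cal Q}^+\oplus{\cal Q}^0\oplus{\cal Q}^-$. When $F$ or $G$ is the generator $N(Z)^{-1}\cdot Z$ of the trivial subrepresentation, Lemma \ref{1_A-lem} gives $J(N(W)^{-1}\cdot W)=24\cdot 1_{\cal A}$, which acts as the unit of the convolution algebra, so $J(F)\ast J(G)=24\,J(G)$ (or $24\,J(F)$), reducing the claim to Lemma \ref{J(W')-extendable-lem}.

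The core case is when both $F$ and $G$ lie in ${\cal Q}'^0$. Using $\mathfrak{gl}(2,\HC)$-equivariance together with Lemmas \ref{deg-inverse-comm-rels-spinor}, \ref{deg-inverse-star-rels-spinor} and \ref{A-omega-generators}, I would further reduce to $F = N(W)^{-1}\cdot\tilde H_{0,0,0}(W)$. Then Proposition \ref{gen-mult-spinor-prop} gives the explicit formula
$$
J\bigl(N(W)^{-1}\cdot\tilde H_{0,0,0}(W)\bigr)\ast J(G) = 24\,(\partial_{11})_{Z_1}(\deg+1)^{-1}_{Z_1}J(G),
$$
and by (\ref{I-J-relation2}) the right-hand side equals $24\,(\partial_{11})_{Z_1}(\deg+1)^{-1}_{Z_1}\bigl(24\overrightarrow\partial_{Z_1}I(G)\overleftarrow\partial_{Z_2}\bigr)$. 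Commuting $\partial_{11}$ and $(\deg+1)^{-1}_{Z_1}$ with the outer $\overrightarrow\partial_{Z_1}$ and pulling $\overleftarrow\partial_{Z_2}$ to the right converts the extendability question into a differentiation applied to the scalar expression $(\deg+d)^{-1}_{Z_1}I(G)$, whose extendability has already been established in Lemma \ref{dilogarithm-calculations-lem} (with the dilogarithmic closed forms (\ref{deg_Z1})--(\ref{deg_Z2})).

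The main obstacle will be the second half of the statement: showing that $\DR^{\pm}\bigl(\omega(J(F)\ast J(G))\bigr)$ lands in ${\cal Q}^+\oplus{\cal Q}^0\oplus{\cal Q}^-$ and not in some larger subrepresentation of ${\cal W}$ that could, a priori, contain the trivial component $\BB C\text{-span}\{N(Z)^{-2}\cdot Z^+\}$ or the non-semisimple extensions identified in Subsection \ref{W-decomp-subsect}. By Proposition \ref{mult-image-prop}, the image of $\operatorname{Mult}:{\cal V}\otimes{\cal V}'\to{\cal W}$ is precisely ${\cal Q}^+\oplus{\cal Q}^0\oplus{\cal Q}^-$, so the easy cases already land there. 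To handle the ${\cal Q}'^0$ case I would verify that $\tr\circ\partial^+$ annihilates $\DR^{\pm}\bigl(\omega(J(F)\ast J(G))\bigr)$, which reduces (via the explicit formula above and the commutation of $\tr\circ\partial^+$ with $\DR^{\pm}$) to the fact that the one-dimensional subrepresentation of ${\cal W}$ does not appear as a subrepresentation in the image of $J$ (Lemma \ref{nesting-lem}), and to the equivariance of the whole construction which forbids any such rogue component from emerging under the continuous limit $\DR^{\pm}$.
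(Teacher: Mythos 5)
Your reduction to the four irreducible components of ${\cal W}'/\ker J$, your treatment of the easy cases (${\cal Q}'^{\pm}$ via Theorem \ref{J_R(W^+-)} and Lemmas \ref{conv-prod-lem-spinor}, \ref{Diag_on_VxV'-lem}; the unit via Lemmas \ref{1_A-lem} and \ref{J(W')-extendable-lem}), and your extendability argument in the ${\cal Q}'^0$ case (Proposition \ref{gen-mult-spinor-prop} plus transfer from the scalar Lemma \ref{dilogarithm-calculations-lem} through (\ref{I-J-relation2})) all match the paper's proof; the last item is just an unpacking of Lemma \ref{deg(1/N)-extendable-lem}.

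The gap is in your final step, the proof of (\ref{rest_in_W-eqn}) when $F,G\in{\cal Q}'^0$. First, you never establish that $\DR^{\pm}\bigl(\omega(J(F)\ast J(G))\bigr)$ lies in ${\cal W}$ at all: a priori it is only an analytic function on $\HC^{\times}$ (the limit of an infinite series of matrix coefficients need not be polynomial), so one cannot yet speak of its components with respect to the decomposition of $(\rho_2,{\cal W})$, nor apply $\tr\circ\partial^+$ and Lemma \ref{nesting-lem} meaningfully. The paper closes this by observing that the limit is $K$-finite (by equivariance), that $(\rho_2,{\cal W})$ is an admissible $(\mathfrak g,K)$-module whose elements are dense among analytic functions on $\HC^{\times}$, and hence that the limit lies in ${\cal W}$. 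Second, even granting membership in ${\cal W}$, checking that $\tr\circ\partial^+$ annihilates the result is not enough: $\ker(\tr\circ\partial^+)$ has \emph{eight} irreducible components, namely ${\cal Q}^{\pm}$, ${\cal Q}^0$ together with the four doubly-regular-type components and the trivial one spanned by $N(Z)^{-2}\cdot Z^+$, so five unwanted components survive your test. Equivariance cannot exclude them either, since they are themselves $\mathfrak{gl}(2,\HC)$-invariant subquotients. The argument that actually works is the one via Proposition \ref{mult-image-prop}: the limit is a $K$-finite element of the closure of $\operatorname{Mult}({\cal V}\otimes{\cal V}')={\cal Q}^+\oplus{\cal Q}^0\oplus{\cal Q}^-$, and since that space is a direct sum of $K$-isotypic subspaces, a $K$-finite element of its closure already lies in it.
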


\begin{proof}
  Recall that ${\cal W}'/\ker J$ has four irreducible components:
  $$
  {\cal Q}'^+, \qquad {\cal Q}'^0, \qquad {\cal Q}'^-
  $$
  and the trivial one-dimensional subrepresentation generated by
  $N(Z)^{-1} \cdot Z$.
  If $F$ or $G \in {\cal Q}'^- \oplus {\cal Q}'^+$, then,
  by Theorem \ref{J_R(W^+-)}, $J(F)$ or $J(G) \in {\cal V} \otimes {\cal V}'$,
  and the result follows from
  Lemmas \ref{conv-prod-lem-spinor}, \ref{Diag_on_VxV'-lem}.

  If $F$ or $G$ is proportional to $N(Z)^{-1} \cdot Z$, then, by
  Lemma \ref{1_A-lem}, $J(F) \ast J(G)$ is proportional to $J(F)$ or $J(G)$,
  and the result follows from Lemma \ref{J(W')-extendable-lem}.

It remains to consider the case $F, G \in {\cal Q}'^0$.
Since ${\cal Q}'^0$ is irreducible and can be generated by either
$N(Z)^{-1}$ or $N(Z)^{-1} \cdot \tilde H_{0,0,0}(Z)$,
Proposition \ref{gen-mult-spinor-prop} and
Lemmas \ref{deg-inverse-comm-rels-spinor},
\ref{extendable-invariant-lem-spinor} and \ref{deg(1/N)-extendable-lem}
imply that $\omega(J(F) \ast J(G))$ is extendable.
We still need to prove property (\ref{rest_in_W-eqn}).
As usual, let $K= U(2) \times U(2)$ and observe that $F, G \in {\cal W}'$ are
$K$-finite. Since all operations involved intertwine the actions of
$\mathfrak{gl}(2,\HC)$, analytic functions
$\DR^+ \bigl( \omega \bigl( J(F) \ast J(G) \bigr) \bigr)$ and
$\DR^- \bigl( \omega \bigl( J(F) \ast J(G) \bigr) \bigr)$
on $\HC^{\times}$ are $K$-finite as well.
By Proposition \ref{W'-K-types}, $(\rho'_2,{\cal W}')$ and its dual
$(\rho_2,{\cal W})$ are admissible $(\mathfrak g, K)$-modules.
Since the elements of ${\cal W}$ are dense in the space of all analytic
functions on $\HC^{\times}$, it follows that
$$
\DR^+ \bigl( \omega \bigl( J(F) \ast J(G) \bigr) \bigr), \quad
\DR^- \bigl( \omega \bigl( J(F) \ast J(G) \bigr) \bigr) \quad \in {\cal W}.
$$
On the other hand, by Proposition \ref{mult-image-prop},
$\DR^+ \bigl( \omega \bigl( J(F) \ast J(G) \bigr) \bigr)$ and
$\DR^- \bigl( \omega \bigl( J(F) \ast J(G) \bigr) \bigr)$
must be in the closure of
${\cal Q}^+ \oplus {\cal Q}^0 \oplus {\cal Q}^-$,
then (\ref{rest_in_W-eqn}) follows.
\end{proof}

Theorem \ref{mult-extendable-spinor-thm} allows us to define two
$\mathfrak{gl}(2,\HC)$-invariant multiplication-like operations
$$
({\cal W}'/\ker J) \otimes ({\cal W}'/\ker J) \to {\cal W}'/\ker\M
$$
as follows.
Recall $\M: {\cal W}' \to {\cal W}$, its image is
${\cal Q}^+ \oplus {\cal Q}^0 \oplus {\cal Q}^-$.
Let
$$
\M^{-1}: {\cal Q}^+ \oplus {\cal Q}^0 \oplus {\cal Q}^-
\to {\cal W}'/\ker\M
$$
be the inverse isomorphism to
$\M: {\cal W}'/\ker\M \to \M({\cal W}') \subset {\cal W}$.

\begin{df}  \label{*-def}
  Let $F, G \in {\cal W}'/\ker J$, define
  \begin{align*}
    F \ast^+ G &=
    \M^{-1} \circ \DR^+ \bigl( \omega \bigl( J(F) \ast J(G) \bigr) \bigr), \\
    F \ast^- G &=
    \M^{-1} \circ \DR^- \bigl( \omega \bigl( J(F) \ast J(G) \bigr) \bigr).
  \end{align*}
\end{df}

\begin{rem}
Note that ${\cal W}'/\ker\M = {\cal W}'/\ker \tilde J$ is the quotient of
${\cal W}'/\ker J$ by the one-dimensional trivial subrepresentation spanned
by $N(Z)^{-1} \cdot Z$.
\end{rem}

\begin{prop}
One can also consider multiplications obtained by taking linear combinations
of $F \ast^+ G$ and $F \ast^- G$.
Thus we obtain a one-parameter family of $\mathfrak{gl}(2,\HC)$-equivariant maps
$$
({\cal W}'/\ker J) \otimes ({\cal W}'/\ker J) \to {\cal W}'/\ker\M.
$$
\end{prop}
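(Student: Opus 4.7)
The plan is to verify that both $\ast^+$ and $\ast^-$ are $\mathfrak{gl}(2,\HC)$-equivariant, so that their complex linear combinations form a subspace of equivariant maps; the one-parameter nature then comes from the fact that these two operations are linearly independent. The equivariance of each $\ast^\pm$ is an assembly of already established facts: $J$ is equivariant by Proposition \ref{equivariance}, the convolution $\ast$ on ${\cal A}$ is $(\pi_l\otimes\pi_r)$-equivariant and restricts to ${\cal A}^\omega$ by Proposition \ref{A-mult-closure-prop}, the realization $\omega$ commutes with the $\mathfrak{gl}(2,\HC)$-action (it is defined purely by formal series evaluation), and the diagonal-limit operators $\DR^\pm$ intertwine $\pi_l\otimes\pi_r$ with $\rho_2$ because the group action preserves analyticity and commutes with the limiting procedure along $\Lambda^\pm$. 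Finally, $\M^{-1}$ on ${\cal Q}^+\oplus{\cal Q}^0\oplus{\cal Q}^-$ is equivariant by construction, since $\M$ is.

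Given this, any combination $\alpha\ast^+ +\beta\ast^-$ with $\alpha,\beta\in\BB C$ is automatically $\mathfrak{gl}(2,\HC)$-equivariant as a map $({\cal W}'/\ker J)\otimes({\cal W}'/\ker J)\to{\cal W}'/\ker\M$. To reduce the a~priori two-parameter family to a genuine one-parameter family, I would impose a single normalization: for example $\alpha+\beta=1$, which has the natural effect that on inputs $F,G$ with $J(F),J(G)\in{\cal V}\otimes{\cal V}'$ (i.e.~coming from ${\cal Q}'^+\oplus{\cal Q}'^-$) both $\DR^\pm$ agree by Lemma \ref{Diag_on_VxV'-lem}, so the normalized combination recovers the unambiguous product there. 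The residual parameter $\alpha-\beta$ then labels the family.

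The one step requiring real content is the linear independence of $\ast^+$ and $\ast^-$, without which ``one-parameter family'' would be vacuous. The plan is to exhibit a concrete pair $F,G\in{\cal Q}'^0$ for which $F\ast^+G\ne F\ast^-G$, by transporting the scalar discrepancy from Lemma \ref{2-mult-lem}. Specifically, via the identity \eqref{I-J-relation2}, computing $J(F)\ast J(G)$ for $F,G$ proportional to $N(Z)^{-1}\cdot\tilde H_{0,0,0}(Z)$ reduces through Proposition \ref{gen-mult-spinor-prop} to applying $(\deg+1)^{-1}_{Z_i}$ and derivatives to $\tilde J(N(Z)^{-1})$. The explicit formula \eqref{(deg+1)-inverse} shows the two limits $\DR^+$ and $\DR^-$ differ by a term proportional to $\pi i/N(Z_2)$ arising from crossing the branch cut of $\log(-\lambda)$; differentiating in $Z_1$ and $Z_2$ preserves this discrepancy, and applying $\M^{-1}$ pulls it back to a non-zero element of ${\cal W}'/\ker\M$. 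This yields $F\ast^+G-F\ast^-G\ne 0$.

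The main obstacle I anticipate is purely bookkeeping: one must check that the branch-cut discrepancy, after the two applications of $\partial$ in \eqref{I-J-relation2}, does not accidentally land in $\ker\M$ (equivalently, in the image of $\partial^+:\Sh'\to{\cal W}'$ together with ${\cal M}$). This is not automatic and requires tracking which irreducible components of ${\cal W}$ are populated by the limit, using the explicit $K$-type description from Subsection \ref{W-decomp-subsect} and the fact that ${\cal Q}'^0\subset{\cal W}'/\ker\M$ survives after quotienting. Once this is verified, the proposition follows immediately by assembling equivariance with non-triviality.
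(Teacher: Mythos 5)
Your argument is correct and matches the paper's (implicit) reasoning: the paper states this proposition without proof, treating it as an immediate consequence of Theorem \ref{mult-extendable-spinor-thm} together with the already established equivariance of $J$ (Proposition \ref{equivariance}), of the convolution on ${\cal A}^{\omega}$ (Proposition \ref{A-mult-closure-prop}), of $\omega$, of $\DR^{\pm}$ and of $\M^{-1}$. Your extra verification that $\ast^+ \neq \ast^-$ — transporting the branch-cut discrepancy of Lemma \ref{2-mult-lem} through Proposition \ref{gen-mult-spinor-prop} and equation (\ref{I-J-relation2}) — goes beyond what the paper records (it only checks this non-degeneracy in the scalar case), and is welcome, though note the difference $\DR^+ - \DR^-$ lives in $\M({\cal W}') \subset {\cal W}$, on which $\M^{-1}$ is injective by construction, so its nonvanishing there already suffices without separately worrying about $\ker\M$.
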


If $F_1,\dots,F_n \in {\cal W}'/\ker J$, we expect
$$
\omega( J(F_1) \ast \dots \ast J(F_n)) \quad \in \tilde {\cal A}^{\omega}
$$
to be extendable. Once this is established, we can define
$\mathfrak{gl}(2,\HC)$-invariant $n$-multiplications:

\begin{df}
Let $F_1,\dots,F_n \in {\cal W}'/\ker J$, define
$$
\underbrace
{({\cal W}'/\ker J) \times \dots \times ({\cal W}'/\ker J)}_{\text{$n$ times}}
\to {\cal W}'/\ker\M:
$$
\begin{align*}
F_1 \ast^+ \dots \ast^+ F_n &=
\M^{-1} \circ \DR^+ \bigl( \omega \bigl( J(F_1) \ast \dots \ast J(F_n)
\bigr) \bigr), \\
F_1 \ast^- \dots \ast^- F_n &=
\M^{-1} \circ \DR^- \bigl( \omega \bigl( J(F_1) \ast \dots \ast J(F_n)
\bigr) \bigr).
\end{align*}
\end{df}

\begin{rem}  \label{n-mult-spinor}
  Once again, ${\cal W}'/\ker\M$ is the quotient of ${\cal W}'/\ker J$
  by the one-dimensional trivial subrepresentation. In a future work we intend
  to ``lift'' these operations to genuine multiplications
$$
\underbrace
{({\cal W}'/\ker J) \times \dots \times ({\cal W}'/\ker J)}_{\text{$n$ times}}
\to {\cal W}'/\ker J.
$$
  
  Furthermore, we can also consider $n$-multiplications obtained by taking
  linear combinations of $F_1 \ast^+ \dots \ast^+ F_n$ and
  $F_1 \ast^- \dots \ast^- F_n$.
  Thus we obtain a one-parameter family of $\mathfrak{gl}(2,\HC)$-invariant
  multiplications
  $$
  \underbrace
  {({\cal W}'/\ker J) \otimes \dots \otimes ({\cal W}'/\ker J)}_{\text{$n$ times}}
  \to {\cal W}'/\ker J.
  $$
\end{rem}



We conclude this subsection with some thoughts about lifting operations
$$
\ast^{\pm}: ({\cal W}'/\ker J) \otimes ({\cal W}'/\ker J) \to {\cal W}'/\ker\M
$$
to $\mathfrak{gl}(2,\HC)$-invariant multiplication operations on
${\cal W}'/\ker J$ and properties of these multiplication operations.
Recall that the definition of $\ast^+$ and $\ast^-$ involves a composition of
maps factoring through a subspace of ${\cal W}$ (Definition \ref{*-def}).
Our first observation is that a multiplication operation on ${\cal W}'/\ker J$
cannot be factored through a subspace of ${\cal W}$ because
there does not exist a $\rho_2$-invariant subspace of ${\cal W}$
isomorphic to ${\cal W}'/\ker J$.
Indeed, the only $\rho_2$-invariant subspace of ${\cal W}$ that has the same
 irreducible components as ${\cal W}'/\ker J$ is the subspace of
$(\rho_2, {\cal W})$ generated by
${\cal Q}^+ \oplus {\cal Q}^0 \oplus {\cal Q}^-$
and $N(Z)^{-2} \cdot Z^+$. But this subspace is still not isomorphic
to ${\cal W}'/\ker J$ because of Lemma \ref{nesting-lem}.



Next, we assume that a $\mathfrak{gl}(2,\HC)$-invariant multiplication
operation lifting $\ast^+$ or $\ast^-$ is defined and try to derive some of
its properties. As usual, let $K= U(2) \times U(2)$.
Recall that ${\cal W}'/ \ker J$ has four irreducible components:
$$
{\cal Q}'^+, \qquad {\cal Q}'^0, \qquad {\cal Q}'^-
$$
and the trivial one-dimensional representation generated by
$N(Z)^{-1} \cdot Z$.
The one-dimensional representation appears as a subrepresentation in
${\cal W}'/ \ker J$ and the irreducible component ${\cal Q}'^0$
only as a subquotient.
We decompose ${\cal W}'/ \ker J$ as a direct sum of $K$-invariant subspaces:
$$
{\cal W}'/ \ker J \simeq \BB C \oplus {\cal Q}'^+ \oplus
{\cal Q}'^0 \oplus {\cal Q}'^-,
$$
where $\BB C$ denotes the trivial one-dimensional representation generated by
$N(Z)^{-1} \cdot Z$. We emphasize that this direct sum decomposition is
{\em not} $\mathfrak{gl}(2,\HC)$-invariant, since the subspace
${\cal Q}'^0$ is not $\mathfrak{gl}(2,\HC)$-invariant,
it is only $K$-invariant. Since
$$
{\cal W}'/\ker\M \simeq
{\cal Q}'^+ \oplus {\cal Q}'^0 \oplus {\cal Q}'^-,
$$
we have a direct sum decomposition of $K$-invariant subspaces
\begin{equation}  \label{C+W/kerMx}
{\cal W}'/\ker J \simeq \BB C \oplus ({\cal W}'/\ker\M),
\end{equation}
and we can write elements of ${\cal W}'/\ker J$ as pairs
$$
(c,w), \qquad c \in \BB C,\: w \in {\cal W}'/\ker\M.
$$
For example, by Lemma \ref{1_A-lem}, element $N(Z)^{-1} \cdot Z$ corresponds
to a pair $(24,0) \in \BB C \oplus ({\cal W}'/\ker\M)$.
Then lifting an operation from
$$
\ast^{\pm}: ({\cal W}'/\ker J) \otimes ({\cal W}'/\ker J) \to {\cal W}'/\ker\M
$$
to multiplication
$$
\tilde\ast^{\pm}:
({\cal W}'/\ker J) \otimes ({\cal W}'/\ker J) \to {\cal W}'/\ker J
$$
amounts to specifying a $\BB C$-valued bilinear pairing
$$
c^{\pm}(F_1,F_2), \qquad F_1, F_2 \in {\cal W}'/\ker J
$$
so that the lift is
$$
F_1 \tilde\ast^{\pm} F_2 = \bigl( c^{\pm}(F_1,F_2), F_1 \ast^{\pm} F_2 \bigr)
\quad \in \BB C \oplus ({\cal W}'/\ker\M).
$$
If $\tilde\ast^{\pm}$ is $K$-invariant, then so is $c^{\pm}(F_1,F_2)$.

\begin{lem}
  Suppose that the multiplication operation $\tilde\ast^{\pm}$ on
  ${\cal W}'/\ker J$ is $\mathfrak{gl}(2,\HC)$-invariant.
  Then the bilinear pairing $c^{\pm}(F_1,F_2)$ {\em cannot} be
  $\mathfrak{gl}(2,\HC)$-invariant.
\end{lem}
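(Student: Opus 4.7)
The plan is to take the $\BB C$-component of the $\mathfrak{gl}(2,\HC)$-equivariance identity for $\tilde\ast^\pm$ in the block-triangular form of the $\rho'_2$-action on the decomposition (\ref{C+W/kerMx}), and then derive a contradiction from the non-split extension structure inside ${\cal W}'/\ker J$.

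First, I would record the precise block form of the $\rho'_2$-action. Since $\BB C\cdot N(Z)^{-1}\cdot Z$ is a $\mathfrak{gl}(2,\HC)$-subrepresentation on which every $X$ acts by zero, there is a well-defined linear map
$\phi\colon \mathfrak{gl}(2,\HC) \to \operatorname{Hom}({\cal W}'/\ker\M,\BB C)$
such that
$$
\rho'_2(X)(c,w) = \bigl(\phi(X)w,\ \rho'_2(X)w\bigr),
$$
where in the second coordinate the action is the induced one on the quotient ${\cal W}'/\ker\M$. The crucial point is $\phi\not\equiv 0$: dualising Lemma~\ref{nesting-lem} via the invariant pairing between $(\rho_2,{\cal W})$ and $(\rho'_2,{\cal W}')$ (Proposition~80 of \cite{FL1}) shows that inside ${\cal W}'/\ker J$ the trivial subrepresentation spanned by $N(Z)^{-1}\cdot Z$ cannot be split off as a $\mathfrak{gl}(2,\HC)$-complement of the preimage of ${\cal Q}'^0$, which is equivalent to the non-vanishing of $\phi$.

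Second, I would verify that $\ast^\pm$ is surjective onto ${\cal W}'/\ker\M$. By Lemma~\ref{1_A-lem} one has $J(N(Z)^{-1}\cdot Z) = 24\,1_{\cal A}$, so for any $G \in {\cal W}'/\ker J$,
$$
J(N(Z)^{-1}\cdot Z)\ast J(G) = 24\,J(G);
$$
applying $\omega$, then Lemma~\ref{J(W')-extendable-lem}, and finally $\M^{-1}$ gives $(N(Z)^{-1}\cdot Z)\ast^\pm G = 24\,\pi(G)$, where $\pi\colon {\cal W}'/\ker J \to {\cal W}'/\ker\M$ is the canonical projection. Since $\pi$ is surjective, every element of ${\cal W}'/\ker\M$ lies in the image of $\ast^\pm$.

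Third, I would take the $\BB C$-component of the $\mathfrak{gl}(2,\HC)$-equivariance of $\tilde\ast^\pm$. Writing each $F_i \in {\cal W}'/\ker J$ in the block form of Step~1 and expanding
$$
\rho'_2(X)(F_1\,\tilde\ast^\pm F_2) = (\rho'_2(X)F_1)\,\tilde\ast^\pm F_2 + F_1\,\tilde\ast^\pm (\rho'_2(X)F_2),
$$
one sees that the quotient component merely recovers the already-established $\mathfrak{gl}(2,\HC)$-equivariance of $\ast^\pm$, whereas the $\BB C$-component yields
$$
\phi(X)(F_1 \ast^\pm F_2) = c^\pm(\rho'_2(X)F_1,F_2) + c^\pm(F_1,\rho'_2(X)F_2).
$$
If $c^\pm$ were $\mathfrak{gl}(2,\HC)$-invariant then, since $\BB C$ is a trivial representation, the right-hand side vanishes identically; combined with the surjectivity from Step~2, this would force $\phi(X)w = 0$ for every $X\in\mathfrak{gl}(2,\HC)$ and every $w\in {\cal W}'/\ker\M$, contradicting Step~1.

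The main obstacle is Step~1, namely certifying non-splitness of the extension. Although this is morally dual to Lemma~\ref{nesting-lem}, the cleanest rigorous route is to exhibit an explicit pair $(X_0,w_0)$ -- for instance $w_0 = N(Z)^{-1}$ together with a suitable nilpotent $X_0 = \bigl(\begin{smallmatrix} 0 & 0 \\ C & 0\end{smallmatrix}\bigr) \in \mathfrak{gl}(2,\HC)$ -- and to check from Lemma~\ref{rho'_2-action-lem} that the class of $\rho'_2(X_0)\,N(Z)^{-1}$ in ${\cal W}'/\ker J$ has a nonzero $\BB C\cdot N(Z)^{-1}\cdot Z$-component. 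Steps~2 and~3 are then immediate consequences of the already-developed machinery of Sections~\ref{VP-section} and~\ref{Alg-section}.
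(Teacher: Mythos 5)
Your proposal is correct and follows essentially the same route as the paper: both arguments extract the $\BB C$-component of the equivariance identity for $\tilde\ast^{\pm}$, observe that invariance of $c^{\pm}$ would force that component to vanish, and contradict this using the fact that the image of $\ast^{\pm}$ reaches (a lift of) ${\cal Q}'^0$, on which some $\rho'_2(X)$ produces a nonzero $\BB C$-component because the extension of ${\cal Q}'^0$ by the trivial subrepresentation is non-split (dual to Lemma \ref{nesting-lem}). Your formalization via the cocycle $\phi$ and the surjectivity argument using $J(N(Z)^{-1}\cdot Z)=24\cdot 1_{\cal A}$ is a mild refinement of what the paper states more tersely, not a different method.
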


\begin{proof}
First, we spell out the $\mathfrak{gl}(2,\HC)$-invariance of $\tilde\ast^{\pm}$:
\begin{equation}  \label{*-invariance}
\bigl( \rho'_2(X)F_1 \bigr) \tilde\ast^{\pm} F_2
+ F_1 \tilde\ast^{\pm} \bigl( \rho'_2(X)F_2 \bigr)
= \rho'_2(X)(F_1 \tilde\ast^{\pm} F_2),
\end{equation}
for all $F_1, F_2 \in {\cal W}'/\ker J$ and all $X \in \mathfrak{gl}(2,\HC)$.
Since $\mathfrak{gl}(2,\HC)$ acts on the $\BB C$-component of (\ref{C+W/kerMx})
trivially,
$$
\rho'_2(X)(F_1 \tilde\ast^{\pm} F_2) = \rho'_2(X)(F_1 \ast^{\pm} F_2).
$$
If the bilinear pairing $c^{\pm}(F_1,F_2)$ is $\mathfrak{gl}(2,\HC)$-invariant,
then the $\BB C$-component of the left hand side of (\ref{*-invariance})
is zero.

On the other hand, the image of
$$
\ast^{\pm}: ({\cal W}'/\ker J) \otimes ({\cal W}'/\ker J) \to {\cal W}'/\ker\M
$$
contains ${\cal Q}'^0$.
But ${\cal Q}'^0$ is not a $\mathfrak{gl}(2,\HC)$-invariant subspace of
${\cal W}'/\ker J$ -- there exist an $F \in {\cal Q}'^0$ and
an $X \in \mathfrak{gl}(2,\HC)$ such that
$\rho'_2(X)F$ has non-zero $\BB C$-component.
Hence there exist elements
$F_1, F_2 \in {\cal W}'/\ker J$ and an $X \in \mathfrak{gl}(2,\HC)$ such that
the $\BB C$-component of
$$
\rho'_2(X)(F_1 \tilde\ast^{\pm} F_2) = \rho'_2(X)(F_1 \ast^{\pm} F_2)
\quad \in {\cal W}'/\ker J
$$
is not zero.
Substituting such $F_1$, $F_2$ and $X$ into (\ref{*-invariance})
produces a contradiction.
\end{proof}

Thus, lifting $\ast^+$ (or $\ast ^-$) to a $\mathfrak{gl}(2,\HC)$-invariant
multiplication operation $\tilde\ast^+$ (or $\tilde\ast^-$) on
${\cal W}'/\ker J$ amounts to specifying a certain $K$-invariant bilinear
pairing $c^+$ (or $c^-$) on ${\cal W}'/\ker J$.
But, since this pairing cannot be $\mathfrak{gl}(2,\HC)$-invariant, finding
such a pairing that would make the resulting multiplication operation
$\mathfrak{gl}(2,\HC)$-invariant is not trivial.

\subsection{Properties of Multiplications on Quaternionic Algebras}  \label{last_subsection}

We know that the quaternionic algebra ${\cal W}'/\ker J$ and its scalar
counterpart $\Zh$ are not associative (Example \ref{non-associativity-ex}).
On the other hand, in both cases we have indicated how to define
$\mathfrak{gl}(2,\HC)$-invariant $n$-multiplications
(Proposition \ref{n-mult-scalar} and Remark \ref{n-mult-spinor}).
It is natural to conjecture that these $n$-multiplications satisfy some sort
of relaxed associativity-type relations.
There is a well-known structure of this kind known as an $A_{\infty}$-algebra.
There are several types of $A_{\infty}$-algebras, and, in order to formulate
our conjecture more explicitly, we recall some basic definitions
(see, for example, \cite{K} for details).

An {\em $A_{\infty}$ algebra} over $\BB C$ is a complex vector space $A$ endowed
with maps
$$
\nu_n: A^{\otimes n} = \underbrace{A \otimes \dots \otimes A}_{\text{$n$ times}}
\to A, \qquad n=1,2,3,\dots,
$$
such that, for all $n \ge 1$, we have associativity-type identities of the form
\begin{equation}  \label{a-infinity-relations}
\sum_{a+b+c=n} \nu_{a+1+c} \circ
( \mathbbm 1^{\otimes a} \otimes \nu_b \otimes \mathbbm 1^{\otimes c}) =0
\end{equation}
as maps from $A^{\otimes n}$ to $A$.
For $n=1, 2, 3$, these identities become
\begin{align}
& \nu_1 \circ \nu_1 =0,  \label{1st_id} \\
& \nu_1 \circ \nu_2
+ \nu_2( \nu_1 \otimes \mathbbm 1 + \mathbbm 1 \otimes \nu_1) =0,  \label{2nd_id} \\
&\nu_1 \circ \nu_3 + \nu_2( \nu_2 \otimes \mathbbm 1 + \mathbbm 1 \otimes \nu_2)
+ \nu_3( \nu_1 \otimes \mathbbm 1 \otimes \mathbbm 1
+ \mathbbm 1 \otimes \nu_1 \otimes \mathbbm 1
+ \mathbbm 1 \otimes \mathbbm 1 \otimes \nu_1) =0.  \label{3rd_id}
\end{align}
The first identity (\ref{1st_id}) states that $A$ is a complex,
the second (\ref{2nd_id}) means that $\nu_2: A \otimes A \to A$
is a morphism of complexes.
Neither of these two assertions seem natural for the quaternionic algebras.
However, there is a more general notion of a {\em weak $A_{\infty}$-algebra}
with an additional map
\begin{equation*}
\nu_0 : \BB C \to A
\end{equation*}
such that the identities (\ref{a-infinity-relations}) hold after one includes
the additional terms with $\nu_0$. For example, for $n=1,2$, identities
(\ref{1st_id})-(\ref{2nd_id}) become
\begin{align}
& \nu_1 \circ \nu_1
+ \nu_2(\nu_0 \otimes \mathbbm 1 + \mathbbm 1 \otimes \nu_0) =0,  \label{1st-weak_id} \\
& \nu_1 \circ \nu_2
+ \nu_2( \nu_1 \otimes \mathbbm 1 + \mathbbm 1 \otimes \nu_1)
+ \nu_3( \nu_0 \otimes \mathbbm 1 \otimes \mathbbm 1
+ \mathbbm 1 \otimes \nu_0 \otimes \mathbbm 1
+ \mathbbm 1 \otimes \mathbbm 1 \otimes \nu_0) =0.  \label{2st-weak_id}
\end{align}
Identity (\ref{1st-weak_id}) is satisfied when $\nu_1$ is the identity map
on $A$ and
\begin{equation}  \label{1st-weak_id2}
\nu_2(\nu_0(1),a) + \nu_2(a,\nu_0(1)) =-a, \qquad \forall a \in A.
\end{equation}
Thus we can define multiplication on $A$ by setting
$$
\mu_2 = - \frac12 \nu_2,
$$
then (\ref{1st-weak_id2}) will be satisfied if $\nu_0(1)$ is the
(left and right) unit for the multiplication operation $\mu_2$.
Similarly, choosing appropriate coefficients, we can relate $\nu_n$ with
$n$-multiplications $\mu_n$ for our quaternionic algebras.

We also expect that our $n$-multiplications $\mu_n$ satisfy a certain
cyclic symmetry. Namely, there is a $\mathfrak{gl}(2,\HC)$-invariant
bilinear product $\langle \,\cdot\,,\,\cdot\, \rangle$ such that
\begin{equation}  \label{cyclic-symmetry}
\bigl\langle \mu_n(a_1,\dots,a_n), a_{n+1} \bigr\rangle
= \bigl\langle a_1, \mu_n(a_2,\dots,a_{n+1}) \bigr\rangle,
\qquad \forall a_1,\dots,a_{n+1} \in A.
\end{equation}
Those $A_{\infty}$-algebras that  satisfy the additional property
(\ref{cyclic-symmetry}) are called {\em cyclic $A_{\infty}$-algebras},
in our case {\em weak cyclic $A_{\infty}$-algebras}.

In the case of quaternionic algebras, the cyclic symmetry of
$n$-multiplications should follow from the conjectural identification of
the paired $n$-products as in (\ref{cyclic-symmetry}) with variants of
the $(n+1)$-photon diagrams (Figure \ref{n-photon}).
In fact, our map
$$
J: {\cal W}' \to {\cal A}
= \operatorname{completion \: of} {\cal V} \otimes {\cal V}'
$$
corresponds to the vertices in this diagram,
with spaces ${\cal V}$ and ${\cal V}'$ identified with the solutions of
the massless Dirac equations.
Note that we are considering the chiral case of $n$-photon diagrams;
in the non-chiral case all the diagrams with odd number of vertices cancel
out and yield a zero result.
Also, the Maxwell equations -- which define the classical photon space and
play an important role in the quantum theory -- are crucial to our definition
of algebra of quaternionic functions.
One can ask then, what might be the physical meaning of the associativity-type
identities (\ref{a-infinity-relations})-(\ref{2st-weak_id}).
It turns out that, despite the long history of four-dimensional quantum field
theory, only recently there were discovered certain quadratic relations,
first, in quantum Yang-Mills theory \cite{BCFW}.
These relations were later extended to one-loop multi-photon diagrams in
QED (see \cite{BBBV} and references therein), and they might provide a source
of associativity in quaternionic algebra.
Note that there is also a scalar counterpart of QED, which has an analogous
structure, including the quadratic identities.
This scalar version of QED is expected to match our scalar quaternionic
algebra, including the associativity-type identities.

To make the identification of our quaternionic constructions with physics
more transparent, we can ``translate'' all our definitions related to
quaternionic algebra from the quaternionic space into the Minkowski space
using the Cayley transform, as we did in \cite{FL1}.
In \cite{FL1}, the switch to the Minkowski space via the Cayley transform
was crucial for demonstrating unitarity of the spaces of (left and right)
regular functions.
The unitarity of $(\rho_1, \Zh)$ was shown in Proposition 9 of \cite{FL3}.
The same methods and motivations also apply to the spaces of (left and right)
doubly regular functions and to the underlying space of quaternionic algebra
${\cal W}'/\ker J$ factored by the one dimensional subrepresentation.
On the physics side, the unitarity is fundamental in four-dimensional
quantum field theory, in particular in QED.
Minkowski space realization of our current results also suggests an
interesting problem of finding of physical meaning of our present results
in quaternionic analysis, including the decomposition of spaces ${\cal W}$
and ${\cal W}'$ into irreducible components, the role of the one-dimensional
representation in vacuum polarization and the meaning of the quaternionic
algebras.

Comparing the structures of quaternionic analysis with those of
four-dimensional quantum field theory will be beneficial to both disciplines.
On the one hand, various techniques of calculations and regularizations of
Feynman integrals should apply to different constructions of quaternionic
analysis, including the quaternionic algebras.
On the other hand, our clear conceptual program of quaternionic analysis
developed along the lines of well-established complex analysis and
carried out to a new step in this paper might eventually provide a purely
mathematical foundation of the vast number of scattered calculations,
curious identities and remarkable cancellations in the still mysterious
subject of four-dimensional quantum physics.

\section{Appendix: Comments about \cite{FL1} and \cite{FL3}}  \label{errata-section}

We would like to add some comments about \cite{FL1} and \cite{FL3}
that are relevant to the present article.

\subsection{Comments about \cite{FL1}}

Lemma 17 describing the Lie algebra actions $\pi^0_l$ and $\pi^0_r$ of
$\mathfrak{gl}(2,\BB H)$ on the space of harmonic functions should state
\begin{equation}
\pi_l^0 \bigl(\begin{smallmatrix} 0 & 0 \\ C & 0 \end{smallmatrix}\bigr) =
\pi_r^0 \bigl(\begin{smallmatrix} 0 & 0 \\ C & 0 \end{smallmatrix}\bigr) :
\phi \mapsto \tr \bigl( C \cdot \bigl(
X \cdot (\partial \phi) \cdot X + X\phi \bigr) \bigr)
= \tr \bigl( C \cdot \bigl( X \cdot \partial (X\phi) \bigr) - X\phi \bigr).
\end{equation}

The matrix coefficient expansions from Propositions 25, 26 and 27 have much
larger regions of convergence than stated, the proofs remain the same.
Since we use these expansions so often, we provide more precise statements.

\begin{prop}  \label{Prop25}
We have the following matrix coefficient expansion
\begin{equation}  \label{1/N-expansion}
k_0(Z-W) = \frac 1{N(Z-W)}= N(W)^{-1} \cdot \sum_{l,m,n}
t^l_{n\,\underline{m}}(Z) \cdot t^l_{m\,\underline{n}}(W^{-1}),
\end{equation}
which converges uniformly on compact subsets in the region
$\{ (Z,W) \in \HC \times \HC^{\times}; \: ZW^{-1} \in \BB D^+ \}$.
The sum is taken first over all $m,n = -l, -l+1, \dots, l$, then over
$l=0,\frac 12, 1, \frac 32,\dots$.
\end{prop}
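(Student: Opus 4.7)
The plan is to reduce the identity to a Cauchy-type summation formula on the diagonal torus, exploiting the multiplicativity of the matrix coefficients $t^l_{n\,\underline{m}}$. First I would use the factorization $Z-W = (ZW^{-1}-1)W$ so that $N(Z-W) = N(ZW^{-1}-1) \cdot N(W)$, which reduces the claim to proving
\[
\frac{1}{N(1-X)} \;=\; \sum_{l,m,n} t^l_{n\,\underline{m}}(X)\,\delta_{?} \;=\; \sum_l \tr t^l(X), \qquad X = ZW^{-1}.
\]
To obtain this reduction from the stated right-hand side, I would write $t^l_{n\,\underline{m}}(Z) = t^l_{n\,\underline{m}}(XW) = \sum_k t^l_{n\,\underline{k}}(X)\,t^l_{k\,\underline{m}}(W)$ and then collapse the inner sum $\sum_m t^l_{k\,\underline{m}}(W)\,t^l_{m\,\underline{n}}(W^{-1}) = t^l_{k\,\underline{n}}(I) = \delta_{kn}$, which is precisely the orthogonality giving $t^l(W) t^l(W^{-1}) = I$. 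What survives is $\sum_l\sum_n t^l_{n\,\underline{n}}(X)$, i.e.\ the sum of characters of the irreducible $GL(2,\BB C)$-representations.

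Second, I would verify the scalar identity $N(1-X)^{-1} = \sum_l \tr t^l(X)$ on diagonal matrices $X = \operatorname{diag}(\lambda_1,\lambda_2)$. By the formula $t^l_{n\,\underline{m}}(\operatorname{diag}(\lambda_1,\lambda_2)) = \delta_{nm}\,\lambda_1^{l-m}\lambda_2^{l+m}$ (this is equation (\ref{t-diag}) in the paper), the character becomes $\sum_{m=-l}^l \lambda_1^{l-m}\lambda_2^{l+m}$. Reindexing the double sum over $l$ and $m$ by $p = l-m \ge 0$, $q = l+m \ge 0$ converts it into the product of geometric series
\[
\sum_{p,q\ge 0} \lambda_1^p \lambda_2^q \;=\; \frac{1}{(1-\lambda_1)(1-\lambda_2)} \;=\; \frac{1}{N(1-X)},
\]
valid absolutely for $|\lambda_1|,|\lambda_2|<1$.

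Third, I would extend the identity from diagonal $X$ to general $X$ with eigenvalues in the unit disk. Both sides are rational in the entries of $X$ and $SU(2)\times SU(2)$-invariant under simultaneous conjugation $X\mapsto u X u^{-1}$ in a suitable sense (the sum of characters is a class function, and $N(1-X)$ depends only on the eigenvalues); since every such $X$ is, on a dense open set, conjugate to a diagonal element, the identity on the torus propagates to all of $\BB D^+$. The main technical obstacle, and the only real work, is matching the locus of uniform convergence with the definition of $\BB D^+$ given in \cite{FL3}: one must check that on compact subsets of $\{ZW^{-1}\in \BB D^+\}$ the eigenvalues of $ZW^{-1}$ are uniformly bounded inside the open unit disk, so that the geometric-series estimate applies uniformly and the interchange of summation with multiplication by $t^l_{k\,\underline{m}}(W)\,t^l_{m\,\underline{n}}(W^{-1})$ used in the first step is justified. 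Once uniform convergence is secured, the equality of analytic functions $N(Z-W)^{-1}$ and the series on this open set follows from agreement on the dense subset where $ZW^{-1}$ is diagonalizable with small eigenvalues.
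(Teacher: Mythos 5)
Your proposal is correct and takes essentially the same route as the proof this paper relies on (the statement appears in the appendix as a restatement of Proposition 25 of \cite{FL1} with the sharpened convergence region, and the paper explicitly defers to the proof there): factor $N(Z-W)=N(W)\,N(ZW^{-1}-1)$, collapse the finite inner sum over $m,n$ via the addition theorem $t^l(ZW^{-1}W)t^l(W^{-1})=t^l(ZW^{-1})$ to the character sum $\sum_l \chi_l(ZW^{-1})$, and identify this with $\det(1-ZW^{-1})^{-1}$ by the geometric-series computation on the diagonal torus. Your convergence discussion is also right, and in fact easier than you suggest: the inner sum over $m,n$ is finite for each $l$, so the only analytic point is the bound $|\chi_l(X)|\le (2l+1)r^{2l}$ with $r=\max(|\lambda_1|,|\lambda_2|)<1$ uniformly on compacta of $\{ZW^{-1}\in\BB D^+\}$, since eigenvalues are dominated by singular values.
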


\begin{prop}  \label{Prop26}
We have the following matrix coefficient expansions
$$
k(Z-W) = \frac{(Z-W)^{-1}}{N(Z-W)}
= \frac1{N(Z)} \sum_{l,m,n} \left(\begin{smallmatrix}
(l-m+ \frac 12) t^l_{n \, \underline{m+ \frac 12}}(W)  \\
(l+m+ \frac 12) t^l_{n \, \underline{m- \frac 12}}(W) \end{smallmatrix}\right)
\cdot
\left(\begin{smallmatrix}
t^{l+\frac 12}_{m \, \underline{n- \frac 12}}(Z^{-1}), &
t^{l+\frac 12}_{m \, \underline{n+ \frac 12}}(Z^{-1})
\end{smallmatrix}\right),
$$
which converges uniformly on compact subsets in the region
$\{ (Z,W) \in \HC^{\times} \times \HC; \: WZ^{-1} \in \BB D^+ \}$.
The sum is taken first over all
$m =-l-\frac 12 ,-l+\frac 32,\dots,l+\frac 12$ and $n =-l,-l+1,\dots,l$,
then over $l=0,\frac 12, 1, \frac 32,\dots$.
Similarly,
$$
k(Z-W) = \frac{(Z-W)^{-1}}{N(Z-W)} = - \sum_{l,m,n} \frac1{N(W)}
\left(\begin{smallmatrix} (l-m+ \frac 12) t^l_{m- \frac 12 \, \underline{n}}(W^{-1}) \\
(l+m+ \frac 12) t^l_{m+ \frac 12 \, \underline{n}}(W^{-1}) \end{smallmatrix}\right)
\cdot
\left(\begin{smallmatrix} t^{l-\frac 12}_{n + \frac 12 \, \underline{m}}(Z), &
t^{l-\frac 12}_{n - \frac 12 \, \underline{m}}(Z) \end{smallmatrix}\right),
$$
which converges uniformly on compact subsets in the region
$\{ (Z,W) \in \HC \times \HC^{\times}; \: ZW^{-1} \in \BB D^+ \}$.
The sum is taken first over all
$m =-l+\frac 12 ,-l+\frac 32,\dots,l-\frac 12$ and $n =-l,-l+1,\dots,l$,
then over $l =\frac 12, 1, \frac 32, 2, \dots$.
\end{prop}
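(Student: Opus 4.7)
The plan is to deduce both expansions in Proposition \ref{Prop26} from the scalar expansion of Proposition \ref{Prop25} by termwise differentiation. The key identity is
$$
k(Z-W) \;=\; \frac{(Z-W)^{-1}}{N(Z-W)} \;=\; \frac{(Z-W)^+}{N(Z-W)^2} \;=\; \partial_W \frac{1}{N(Z-W)} \;=\; -\partial_Z \frac{1}{N(Z-W)},
$$
which follows from $\partial N(X) = X^+$ together with $X^+ = N(X)\cdot X^{-1}$. Since the series of Proposition \ref{Prop25} consists of holomorphic terms and converges uniformly on compact subsets of $\{ZW^{-1}\in\BB D^+\}$, termwise differentiation is legal and preserves the same region of uniform convergence on compacta.

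For the second expansion (region $ZW^{-1}\in\BB D^+$), I would apply $-\partial_Z$ directly to the expansion of Proposition \ref{Prop25}. The factor $N(W)^{-1}$ is constant in $Z$, and Lemma~22 of \cite{FL1} converts each $\partial_{ij} t^l_{n\,\underline{m}}(Z)$ into a linear combination of $t^{l-\frac12}_{n\pm\frac12\,\underline{m\pm\frac12}}(Z)$. Reindexing $l\mapsto l+\frac12$ and collecting the four entries of the matrix $\partial$ into the column vector stated in Proposition \ref{Prop26}, the $(i,j)$-entry becomes a product of a $t^l(W^{-1})$ factor with a $t^{l-\frac12}(Z)$ factor, which is exactly the claimed form.

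For the first expansion (region $WZ^{-1}\in\BB D^+$), I would start instead from the $Z\leftrightarrow W$ variant of Proposition \ref{Prop25},
$$
\frac{1}{N(Z-W)} \;=\; N(Z)^{-1}\sum_{l,m,n} t^l_{n\,\underline{m}}(W)\cdot t^l_{m\,\underline{n}}(Z^{-1}),
$$
valid on $\{WZ^{-1}\in\BB D^+\}$, and then apply $\partial_W$ termwise. Here Lemma~22 of \cite{FL1} (applied to $t^l_{n\,\underline{m}}(W)$) pulls out the prefactors $(l-m)$, $(l+m)$ and shifts $l$ down by $\tfrac12$; after the reindexing $l\mapsto l+\tfrac12$ the term-wise derivative becomes $t^l_{n\,\underline{m\pm\frac12}}(W)\cdot t^{l+\frac12}_{m\,\underline{n\pm\frac12}}(Z^{-1})$, and assembling the four entries into a matrix gives precisely the first displayed expansion.

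The main obstacle is the combinatorial bookkeeping: the precise coefficients $(l-m+\tfrac12)$ and $(l+m+\tfrac12)$, the index shifts $m\mapsto m\pm\tfrac12$, $n\mapsto n\pm\tfrac12$, and the identification of which $\partial_{ij}$ produces which entry of the $2\times 2$ matrix must all be tracked carefully through the four cases of Lemma~22. Once this case analysis is carried out, the enlargement of the region of convergence noted in the appendix is automatic: it is inherited from Proposition \ref{Prop25} itself via the termwise differentiation theorem for series of holomorphic functions, so no new analytic argument is required beyond what is already present in \cite{FL1}.
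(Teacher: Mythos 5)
Your derivation is correct, and it is essentially the approach the paper endorses: the identity $k(Z-W)=\partial_W N(Z-W)^{-1}=-\partial_Z N(Z-W)^{-1}$ combined with termwise differentiation of Proposition \ref{Prop25} via Lemma 22 of \cite{FL1} is exactly the technique the paper uses one level up to obtain the expansions (\ref{k_1-expansion-1})--(\ref{k_1-expansion-2}) of $k_1$ from those of $k$, and for Proposition \ref{Prop26} itself the paper gives no independent argument, deferring to \cite{FL1} with the remark that ``the proofs remain the same.'' Your point that the enlarged regions of convergence are simply inherited from Proposition \ref{Prop25} by the Weierstrass theorem on termwise differentiation of locally uniformly convergent holomorphic series is precisely the justification the appendix leaves implicit.
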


\begin{prop}  \label{Prop27}
We have the following matrix coefficient expansions
$$
\frac 1{N(Z-W)^2} = \sum_{k,l,m,n} 
(2l+1) N(Z)^k \cdot t^l_{n \, \underline{m}}(Z) \cdot
N(W)^{-k-2} \cdot t^l_{m \, \underline{n}}(W^{-1}),
$$
which converges uniformly on compact subsets in the region
$\{ (Z,W) \in \HC \times \HC^{\times}; \: ZW^{-1} \in \BB D^+ \}$.
The sum is taken first over all $m,n = -l, -l+1, \dots, l$, then over
$k=0,1,2,3,\dots$ and $l=0,\frac 12, 1, \frac 32,\dots$.
\end{prop}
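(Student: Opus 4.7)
The strategy is to deduce Proposition \ref{Prop27} from Proposition \ref{Prop25} by applying the Euler operator in $Z$ and then multiplying by a geometric series. Let $E_Z = \sum_{i,j} z_{ij}\partial_{ij}$ denote the degree operator in the $Z$ variable. Since $t^l_{n\,\underline{m}}(Z)$ is homogeneous of degree $2l$ in $Z$, we have $(E_Z+1)\,t^l_{n\,\underline{m}}(Z) = (2l+1)\,t^l_{n\,\underline{m}}(Z)$, which is exactly the factor appearing on the right-hand side of Proposition \ref{Prop27}.

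First I would apply $E_Z+1$ termwise to the expansion in Proposition \ref{Prop25}. The essential step is to compute $(E_Z+1)\,N(Z-W)^{-1}$ in closed form. By Jacobi's formula applied to $N(tZ-W) = \det(tZ-W)$, together with the $2\times 2$ identities $(Z-W)^+ = Z^+ - W^+$ and $N(Z-W) = N(Z) + N(W) - \tr(ZW^+)$ (the latter by direct determinant expansion), a short manipulation gives
$$(E_Z+1)\,\frac{1}{N(Z-W)} = \frac{N(W)-N(Z)}{N(Z-W)^2}.$$
Combined with Proposition \ref{Prop25}, this yields the intermediate identity
$$\frac{N(W)-N(Z)}{N(Z-W)^2} = \frac{1}{N(W)}\sum_{l,m,n}(2l+1)\,t^l_{n\,\underline{m}}(Z)\cdot t^l_{m\,\underline{n}}(W^{-1}).$$

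Second, I would expand $(N(W)-N(Z))^{-1}$ as the geometric series $N(W)^{-1}\sum_{k\ge 0}(N(Z)/N(W))^k$, which is valid whenever $|N(Z)|<|N(W)|$. Multiplying it into the preceding identity produces the claimed double expansion. To see that this inequality holds on the full region $\{ZW^{-1}\in\BB D^+\}$, note that if $\alpha,\beta$ are the eigenvalues of $ZW^{-1}$, then membership in the bounded symmetric domain $\BB D^+$ (whose Shilov boundary is $U(2)$) forces $|\alpha|,|\beta|<1$, whence $|N(Z)/N(W)|=|\alpha\beta|<1$.

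The main obstacle is to rigorously justify both the termwise application of $E_Z+1$ and the rearrangement of the product of series into the claimed double sum. I would handle this by establishing absolute and uniform convergence of the double series on compact subsets of $\{(Z,W)\in\HC\times\HC^\times:ZW^{-1}\in\BB D^+\}$. On such a compact set, the eigenvalues of $ZW^{-1}$ are uniformly bounded by some $r<1$; combined with the standard estimates $|t^l_{n\,\underline{m}}(Z)|\le C\,\|Z\|^{2l}$ and $|t^l_{m\,\underline{n}}(W^{-1})|\le C\,\|W^{-1}\|^{2l}$, this yields termwise bounds of the form $(2l+1)\,C'\,r^{2l+1}\,r^{2k}$, with the geometric decay in both $l$ and $k$ dominating the polynomial factor $(2l+1)$ and the $(2l+1)^2$ count of $(m,n)$ pairs. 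Once absolute convergence is in hand, the termwise differentiation and Fubini-type interchange are both justified, completing the proof.
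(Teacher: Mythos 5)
Your algebraic derivation is sound and follows the same bootstrapping-from-Proposition~\ref{Prop25} route as the source: the identity
$(E_Z+1)\,N(Z-W)^{-1}=(N(W)-N(Z))\,N(Z-W)^{-2}$ is correct (one checks it from
$N(tZ-W)=t^2N(Z)-t\,\tr(ZW^+)+N(W)$, so that $\frac{d}{dt}\big|_{t=1}N(tZ-W)=N(Z)-N(W)+N(Z-W)$),
termwise application of $E_Z+1$ multiplies the $l$-th term of Proposition~\ref{Prop25} by $2l+1$ since $t^l_{n\,\underline{m}}(Z)$ is homogeneous of degree $2l$, and multiplying by the geometric series for $(N(W)-N(Z))^{-1}$ produces exactly the claimed double sum. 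The eigenvalue criterion $|N(Z)/N(W)|=|\alpha\beta|<1$ for $ZW^{-1}\in\BB D^+$ is also right. (Termwise differentiation is legitimate here because locally uniform convergence of holomorphic functions is inherited by derivatives.)

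The gap is in your convergence estimate. The majorant you propose, $|t^l_{n\,\underline{m}}(Z)|\cdot|t^l_{m\,\underline{n}}(W^{-1})|\le C^2(\|Z\|\,\|W^{-1}\|)^{2l}$, does not decay geometrically on the stated region: the hypothesis $ZW^{-1}\in\BB D^+$ controls only $\|ZW^{-1}\|$, not the product $\|Z\|\,\|W^{-1}\|$, which can be arbitrarily large. For instance $Z=\operatorname{diag}(10,10^{-3})$, $W=\operatorname{diag}(100,10^{-2})$ gives $ZW^{-1}=\operatorname{diag}(0.1,0.1)\in\BB D^+$ while $\|Z\|\,\|W^{-1}\|=10^3$, so your bound $(2l+1)C'r^{2l+1}r^{2k}$ simply does not follow from the inequalities you quote, and the series of your majorants diverges. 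The repair uses the order of summation prescribed in the statement: the inner sum over $m,n$ is finite, and by multiplicativity of matrix coefficients
$\sum_{m,n}t^l_{n\,\underline{m}}(Z)\,t^l_{m\,\underline{n}}(W^{-1})=\chi_l(ZW^{-1})$
depends only on $ZW^{-1}$; if the eigenvalues of $ZW^{-1}$ have modulus at most $r<1$ on the compact set, then $|\chi_l(ZW^{-1})|\le(2l+1)r^{2l}$ and $|N(Z)/N(W)|^k\le r^{2k}$, so the $(k,l)$-indexed series of inner sums is dominated by $(2l+1)^2r^{2l}r^{2k}\sup|N(W)|^{-2}$ and the Weierstrass test gives uniform convergence. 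With that substitution for the last step, your proof is complete.
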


The representation $(\rho_2,{\cal W}^+)$ introduced at the beginning of
Subsection 4.2 is not irreducible.
In fact, it is easy to see from Subsection \ref{W-decomp-subsect} that
$(\rho_2,{\cal W}^+)$ has two irreducible components:
$(\rho_2,{\cal Q}^+)$ and $(\rho,\Sh^+)$.

There are several sign errors in Subsection 4.3.
In particular, $\tilde A$ should be defined as
$$
\tilde A = -A_0 \tilde e_0 + A_1e_1 + A_2e_2 + A_3e_3
= - \tilde e_0 A_0 + \overrightarrow{A}
$$
(negative of the original $\tilde A$).
Either way, the main conclusion still holds.
Namely, that $\M \tilde A=0$ if and only if the Maxwell equations
(expressed as equation (56) in \cite{FL1}) are satisfied.


The main purpose of Subsection 5.1 was to describe the decomposition of the
tensor product representation
$(\pi^0_l, {\cal H}^+) \otimes (\pi^0_r, {\cal H}^+)$ of $\mathfrak{gl}(2,\HC)$
into irreducible components due to \cite{JV2}.
Unfortunately, the representations $(\rho_n, \Zh^+_n)$ of $\mathfrak{gl}(2,\HC)$
are not irreducible for $n \ge 2$.
Indeed, we saw in Section \ref{decomp-section} that
$(\rho_2, \Zh^+_2) = (\rho_2,{\cal W}^+)$ is not irreducible.
Thus, Theorem 82 in \cite{FL1} can be corrected as

\begin{thm}
The image of the intertwining map $M_n$ from Theorem 85 in \cite{FL1} is an
irreducible subrepresentation of $(\rho_n,\Zh_n^+)$, $n=1,2,3,\dots$.

Let us denote this image by $(\Zh_n^+)_{irr}$.
The irreducible representations $\bigl( \rho_n, (\Zh_n^+)_{irr} \bigr)$,
$n=1,2,3,\dots$, of $\mathfrak{sl}(2,\HC)$ are pairwise non-isomorphic
and possess inner products which make them unitary representations of
the real form $\mathfrak{su}(2,2)$ of $\mathfrak{sl}(2,\HC)$.
\end{thm}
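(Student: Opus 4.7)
The plan is to reduce all three claims to the corresponding claims for the space ${\cal F}_n^+$ of polynomial left $n$-regular functions $\HC \to \underbrace{\BB S \odot \cdots \odot \BB S}_{n\text{ times}}$, equipped with the conformal action generalizing (\ref{pi_dl}). Based on the discussion at the end of Section \ref{DR-section} (the subsection on $n$-regular functions) and \cite{JV1}, I expect the intertwining map $M_n$ to identify its image $(\Zh_n^+)_{irr}$ with ${\cal F}_n^+$. For $n=1$ this recovers the space of polynomial left regular functions from \cite{FL1}; for $n=2$ it is the space ${\cal F}^+$ of Proposition \ref{DR-irred-prop}; for general $n$ these are the positive-helicity spin-$n/2$ representations of the most degenerate series.

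For irreducibility, I would follow the strategy of Proposition \ref{DR-irred-prop} and Corollary \ref{FG-irred}. The first step is to write down an explicit $K$-type basis of ${\cal F}_n^+$ generalizing Lemma \ref{doubly-reg-fns-basis}, built from the matrix coefficients $t^l_{n\,\underline{m}}(Z)$ arranged in an $(n+1)$-tuple and subjected to the $n$ symmetrized regularity conditions $(1 \otimes \cdots \otimes \nabla^+ \otimes \cdots \otimes 1)f = 0$. The second step is to compute the actions of $\rho_n\bigl(\begin{smallmatrix} 0 & B \\ 0 & 0 \end{smallmatrix}\bigr)$ and $\rho_n\bigl(\begin{smallmatrix} 0 & 0 \\ C & 0 \end{smallmatrix}\bigr)$ on this basis, generalizing Lemma \ref{Lie-alg-action}. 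The third step is to verify, by induction on $l$, that the orbit of any non-zero vector under these Lie algebra generators reaches the lowest $K$-type (the space of constant ${\cal F}_n^+$-valued functions, i.e.\ a copy of $V_{n/2}\boxtimes V_0$), whence the orbit exhausts all of ${\cal F}_n^+$.

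Pairwise non-isomorphism is the quickest step. The lowest $K$-type of $(\rho_n,(\Zh_n^+)_{irr})$ is the $(n+1)$-dimensional representation $V_{n/2}\boxtimes V_0$ of $SU(2)\times SU(2)$, so distinct values of $n$ produce non-isomorphic $K$-representations and \emph{a fortiori} non-isomorphic $\mathfrak{sl}(2,\HC)$-modules. Equivalently, the central character of $\rho_n$ depends non-trivially on $n$, since the degree operator $\deg$ acts on constant functions by $0$ whereas the scalar matrix $I\in\mathfrak{gl}(2,\HC)$ acts by $-n$ (up to sign) through the factor $(cZ+d)^{-1}\otimes\cdots$ in (\ref{pi_dl}).

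The main obstacle is the unitarity claim for all $n$. The plan is to imitate the $n=1$ argument of Section 5 of \cite{FL1}: transfer everything to the Minkowski picture via the Cayley transform, under which $\mathfrak{su}(2,2)$ becomes the conformal Lie algebra and ${\cal F}_n^+$ is realized as holomorphic sections of a homogeneous line-bundle over the forward tube (a Hermitian symmetric domain for $SU(2,2)$). A positive definite $\mathfrak{su}(2,2)$-invariant Hermitian form can then be defined by a Bergman-type integral built from an iterated derivative of the Cauchy-Fueter kernel; concretely, one sets $\langle f_1,f_2\rangle=\int f_1\cdot K_n\cdot \bar f_2$ where $K_n$ is obtained by $n$-fold application of $\nabla\otimes\nabla$ analogous to the kernel $k_1$ of (\ref{k_1-kernel}). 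Positivity must then be checked on the explicit $K$-type basis above, using orthogonality relations analogous to Proposition \ref{orthogonality-dreg-prop}; this eventually amounts to identifying $(\rho_n,(\Zh_n^+)_{irr})$ with one of the unitarizable highest-weight modules in the most degenerate series of $SU(2,2)$ classified in \cite{JV1}, from which positivity is inherited.
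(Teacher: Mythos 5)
There is a genuine gap, and it is located at the very first step. You identify the image $(\Zh_n^+)_{irr}$ of $M_n$ with the space ${\cal F}_n^+$ of polynomial left $n$-regular functions, i.e.\ with a most degenerate series (spin $\frac n2$) representation. This identification is wrong, and everything downstream — the $K$-type basis, the irreducibility induction, the ``lowest $K$-type $V_{n/2}\boxtimes V_0$'' claim, and the Bergman-kernel unitarity argument — is built on the wrong model. The representations $(\rho_n,(\Zh_n^+)_{irr})$ are by construction the irreducible summands of the tensor product $(\pi^0_l,{\cal H}^+)\otimes(\pi^0_r,{\cal H}^+)$ (filtered by order of vanishing on the diagonal), and the Introduction states explicitly that tensor products of most degenerate series representations of $\mathfrak{su}(2,2)$ contain \emph{no} representations of that same class; these summands belong to the middle series. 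Concretely: for $n=1$ the paper records $(\Zh_1^+)_{irr}=\Zh^+$, the \emph{scalar-valued} polynomials with the $\rho_1$-action, whose minimal $K$-type is the one-dimensional $V_0\boxtimes V_0$ — not the space of left regular functions with minimal $K$-type $V_{1/2}\boxtimes V_0$. For $n=2$ the paper records $(\Zh_2^+)_{irr}={\cal Q}^+$, which carries each $K$-type $V_{l\pm\frac12}\boxtimes V_{l\pm\frac12}$ with infinite multiplicity (indexed by $k\ge 0$), whereas the doubly regular space ${\cal F}^+$ is $K$-multiplicity-free; the two are manifestly non-isomorphic, and indeed ${\cal F}^\pm$, ${\cal G}^\pm$ occur as \emph{different} irreducible components of ${\cal W}$, disjoint from ${\cal Q}^+$.

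For comparison, the paper offers no self-contained proof of this theorem: it is stated as a correction of Theorem 82 of \cite{FL1}, and all three assertions (irreducibility of the summands, pairwise non-isomorphism, unitarizability) are inherited from the Jakobsen–Vergne decomposition \cite{JV2} of the tensor product of two unitary lowest weight (``holomorphic'') representations of $\mathfrak{su}(2,2)$ — a tensor product of unitary representations decomposes into irreducible unitary summands with distinct lowest weights, and the content of the correction is only that these summands are the images of the $M_n$ rather than all of $\Zh_n^+$. If you want an argument independent of \cite{JV2}, the viable route is the one the paper carries out for $n=2$ in Section \ref{decomp-section}: exhibit the $K$-types of ${\cal Q}^+$ and check that the root vectors $\bigl(\begin{smallmatrix}0&B\\0&0\end{smallmatrix}\bigr)$, $\bigl(\begin{smallmatrix}0&0\\C&0\end{smallmatrix}\bigr)$ connect them transitively — but this must be done for the scalar-valued (and, for $n\ge 2$, $\operatorname{Sym}$-valued) middle series spaces, not for the $n$-regular function spaces.
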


When $n=1$, we have $(\Zh_1^+)_{irr} = \Zh_1^+ = \Zh^+$.
Then equation (61) in \cite{FL1} should read as follows.
\begin{equation}  \label{tensor-decomp}
(\pi^0_l, {\cal H}^+) \otimes (\pi^0_r, {\cal H}^+) \simeq
\bigoplus_{n=1}^{\infty} \bigl( \rho_n,(\Zh_n^+)_{irr} \bigr),
\end{equation}
This decomposition is obtained by treating ${\cal H}^+ \otimes {\cal H}^+$
as functions of two variables $Z, Z' \in \HC$ and filtering them by the
degree of vanishing on the diagonal $\HC \subset \HC \times \HC$. Then
$$
(\rho_1,\Zh^+) \quad \text{generated by} \quad 1 \otimes 1,
$$
$$
\bigl( \rho_n, (\Zh_n^+)_{irr} \bigr) \quad \text{generated by} \quad
(z_{ij}-z'_{ij})^{n-1} , \qquad n \ge 2.
$$


Subsection 5.3 was written so it could be later used to give a proof of
the ``magic identities'' for the conformal four-point integrals described by
the box diagrams.
Magic identities are proved in \cite{L2} using different methods.

\subsection{Comments about \cite{FL3}}

In the expression above Theorem 15
$$
\bigl( (I_R^{+-}+ I_R^{-+}) N(W)^{-1} \bigr)(Z_1,Z_2)
= -\frac1{N(Z_2)} \cdot
\begin{cases} \frac{\log\lambda_2-\log\lambda_1}{\lambda_2-\lambda_1} &
\text{if $\lambda_1 \ne \lambda_2$;} \\
\lambda^{-1} & \text{if $\lambda_1 = \lambda_2 = \lambda$,} \end{cases}
$$
$\log$ denotes the branch of logarithm with a cut along
the {\em positive} real axis and $\lambda \ne 1$.
Thus when we let $Z_1, Z_2 \to Z$ we need the eigenvalues
$\lambda_1$, $\lambda_2$ of $Z_1 Z_2^{-1}$ to stay on the same side of the cut:
$$
\lim_{\genfrac{}{}{0pt}{}{Z_1, Z_2 \to Z,\: N(Z_1-Z_2) \ne 0}
{\sgn (\im \lambda_1) = \sgn (\im \lambda_2)}}
\bigl( (I_R^{+-}+ I_R^{-+}) N(W)^{-1} \bigr)(Z_1,Z_2)
= - N(Z)^{-1}, \qquad Z \in U(2)_R.
$$
Recall that $\widetilde{{\cal H} \otimes {\cal H}}$ denotes the space of
holomorphic $\BB C$-valued functions in two variables $Z_1,Z_2 \in \HC$
(possibly with singularities) that are harmonic in each variable separately.
Then Theorem 15 should be restated as

\begin{thm}  \label{Thm15}
The $\mathfrak{gl}(2,\HC)$-equivariant map
$$
f \mapsto \bigl((I_R^{+-}+ I_R^{-+})f\bigr)(Z_1,Z_2)
\quad \in \widetilde{{\cal H} \otimes {\cal H}}, \qquad f \in \Zh,
$$
where $Z_1,Z_2 \in U(2)_R$, $N(Z_1-Z_2) \ne 0$,
is well-defined and annihilates $\Zh^- \oplus \Zh^+$.

Moreover, we have a well defined operator $\P^0$ on $\Zh$
$$
f \mapsto (\P^0 f)(Z) =
\lim_{\genfrac{}{}{0pt}{}{Z_1, Z_2 \to Z,\: N(Z_1-Z_2) \ne 0}
{\sgn (\im \lambda_1) = \sgn (\im \lambda_2)}}
- \bigl( (I_R^{+-}+ I_R^{-+})f \bigr)(Z_1,Z_2), \qquad Z \in U(2)_R,
$$
which annihilates $\Zh^- \oplus \Zh^+$ and is the identity mapping on $\Zh^0$.

Furthermore, the projector $\P^0$ on $\Zh$ can be computed as follows:
\begin{multline*}
(\P^0 f)(Z) = \frac1{2\pi^3i} \lim_{\theta \to 0} \lim_{s \to 1} \biggl(
\int_{W \in U(2)_R} \frac{f(W)\,dV}{N(W-se^{i\theta}Z) \cdot
N(W-s^{-1}e^{-i\theta}Z)}\\
+ \int_{W \in U(2)_R} \frac{f(W)\,dV}{N(W-s^{-1}e^{i\theta}Z) \cdot
N(W-se^{-i\theta}Z)}
\biggr), \qquad Z \in U(2)_R.
\end{multline*}
\end{thm}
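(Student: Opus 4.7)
The plan is to follow the same strategy that was used to prove Theorem \ref{Mx^0-operator-thm} in the spinor case, reducing everything to the behavior on a single generator of $\Zh^0$. First I would note the decomposition $I_R = I^{++} - (I_R^{+-} + I_R^{-+}) + I^{--}$ and invoke Theorem 12 of \cite{FL3} (that $I^{++}$ annihilates $\Zh^0 \oplus \Zh^-$ and $I^{--}$ annihilates $\Zh^0 \oplus \Zh^+$) together with the fact that $I_R$ itself annihilates $\Zh^+ \oplus \Zh^-$. Subtracting these gives that $I_R^{+-} + I_R^{-+}$ annihilates $\Zh^+ \oplus \Zh^-$, which is the first claim. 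The $\mathfrak{gl}(2,\HC)$-equivariance is inherited from that of $I_R$ and the analytic continuation procedure used to define $I_R^{+-}$ and $I_R^{-+}$, exactly as in Proposition \ref{equivariance}.

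The main work is showing that the limit defining $\P^0$ exists on all of $\Zh$ and recovers the projection. Since $I_R^{+-} + I_R^{-+}$ annihilates $\Zh^+ \oplus \Zh^-$, it suffices to handle $\Zh^0$, which is generated as a $\mathfrak{gl}(2,\HC)$-module by $N(W)^{-1}$. Here I would use the explicit formula stated just above the theorem:
\begin{equation*}
\bigl((I_R^{+-}+ I_R^{-+}) N(W)^{-1}\bigr)(Z_1,Z_2) = -\frac{1}{N(Z_2)} \cdot \frac{\log\lambda_2-\log\lambda_1}{\lambda_2-\lambda_1},
\end{equation*}
with the branch of $\log$ cut along $(0,\infty)$. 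The key observation is that on the open set where $\sgn(\im\lambda_1) = \sgn(\im\lambda_2)$ the function $(\log\lambda_2 - \log\lambda_1)/(\lambda_2 - \lambda_1)$ has a removable singularity along the diagonal $\lambda_1 = \lambda_2$, extending to $\lambda^{-1}$ at $\lambda_1 = \lambda_2 = \lambda$. Taking $\lambda \to 1$ from either half-plane yields $-N(Z)^{-1}$, so $\P^0(N(W)^{-1}) = N(Z)^{-1}$. By $\mathfrak{gl}(2,\HC)$-equivariance and the irreducibility of $\Zh^0$, $\P^0$ acts as the identity on all of $\Zh^0$; by equivariance combined with $K$-admissibility of $(\rho_1,\Zh)$, the limit exists on every $K$-type of $\Zh$.

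For the explicit integral formula, I would choose the specific approach $Z_1 = se^{i\theta}Z$, $Z_2 = s^{-1}e^{-i\theta}Z$, so that $Z_1 Z_2^{-1}$ has both eigenvalues equal to $s^2 e^{2i\theta}$, lying strictly in one half-plane when $\theta \ne 0$. For $\theta > 0$ and $s < 1$ this places $(Z_1,Z_2) \in \BB D_R^+ \times \BB D_R^-$, where $(I_R^{+-} F)(Z_1,Z_2)$ is literally given by the original integral (\ref{I_R}); swapping $s \leftrightarrow s^{-1}$ gives $(Z_1,Z_2) \in \BB D_R^- \times \BB D_R^+$ and the $I_R^{-+}$ integral. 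Letting $s \to 1$ and then $\theta \to 0$ realizes the same limit used to define $\P^0$, and the overall minus sign yields the two displayed integrals.

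The main obstacle is the existence of the limit in general, but, as indicated, the delicacy is concentrated entirely in the single formula for $(I_R^{+-}+ I_R^{-+}) N(W)^{-1}$: everything else is equivariance, admissibility, and undoing the analytic continuations defining $I_R^{+-}$ and $I_R^{-+}$. Compared to Theorem \ref{Mx^0-operator-thm}, the scalar case is considerably cleaner because there is no one-dimensional subrepresentation to subtract and no $\M^{-1}$ to invert---the projection is genuinely onto $\Zh^0$ rather than onto a quotient.
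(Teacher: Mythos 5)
Your proposal is correct and follows essentially the same route the paper takes: reduce to the generator $N(W)^{-1}$ of $\Zh^0$, use the explicit logarithm formula with the cut along the positive real axis to see that the singularity along $\lambda_1=\lambda_2$ is removable precisely when $\sgn(\im\lambda_1)=\sgn(\im\lambda_2)$, extend by $\mathfrak{gl}(2,\HC)$-equivariance, and realize the limit along the path $Z_1=se^{i\theta}Z$, $Z_2=s^{-1}e^{-i\theta}Z$ to obtain the integral formula. One minor remark: no subtraction argument is needed for the first claim --- the mixed-domain operator $I_R$ (hence each of $I_R^{+-}$ and $I_R^{-+}$) annihilates $\Zh^+\oplus\Zh^-$ directly by the orthogonality relations from Section 6 of \cite{FL3}, exactly as invoked in the proof of Proposition \ref{J_R(W^0)-prop}.
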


The fact that $\P^0$ is a projector onto $\Zh^0$ may be interpreted as
$$
M \circ \bigl((I_R^{+-}+ I_R^{-+})f\bigr) = f \quad \text{if $f \in \Zh^0$}.
$$

Theorem 22 should be restated in a similar manner. In particular,
the operator $\P^0_{\BB M}$ on $\rho_1(\Zh)$ should be defined as
$$
(\P^0_{\BB M} f)(Z) =
\lim_{\genfrac{}{}{0pt}{}{Z_1, Z_2 \to Z,\: N(Z_1-Z_2) \ne 0}
{\sgn (\im \lambda_1) = \sgn (\im \lambda_2)}}
- \bigl( (I_{\BB M}^{+-}+ I_{\BB M}^{-+})f \bigr)(Z_1,Z_2), \qquad Z \in \BB M,
$$
where $\lambda_1$ and $\lambda_2$ denote the eigenvalues of
$(Z_1+1)(Z_1-1)^{-1}(Z_2-1)(Z_2+1)^{-1}$.

%
%


\noindent
{\em Department of Mathematics, Yale University,
P.O. Box 208283, New Haven, CT 06520-8283}\\
{\em Department of Mathematics, Indiana University,
Rawles Hall, 831 East 3rd St, Bloomington, IN 47405}   


\begin{thebibliography}{[BCFW]}
\bibitem[BBBV]{BBBV} S.~Badger, N.~Bjerrum-Bohr, P.~Vanhove,
{\em Simplicity in the structure of QED and gravity amplitudes},
JHEP {\bf 02} (2009) 038.
\bibitem[BCFW]{BCFW} R.~Britto, F.~Cachazo, B.~Feng, E.~Witten,
{\em Direct Proof of the Tree-Level Scattering Amplitude Recursion Relation
in Yang-Mills Theory}, Phys. Rev. Lett. {\bf 94} (18), 181602 (2005).
\bibitem[ES]{ES} M.~Eastwood, M.~Singer, {\em A conformally invariant
Maxwell gauge}, Phys. Lett. A {\bf 107} (1985), 73-74.
\bibitem[Er]{Ba} A.~Erd\'elyi et al,
{\em Higher Transcendental Functions.
Based, in part, on notes left by Harry Bateman}, Vol. I,
McGraw-Hill, New York-Toronto-London, 1953.
\bibitem[FL1]{FL1} I.~Frenkel, M.~Libine,
{\em Quaternionic analysis, representation theory and physics},
Advances in Math {\bf 218} (2008), 1806-1877; also arXiv:0711.2699.
\bibitem[FL2]{FL2} I.~Frenkel, M.~Libine,
{\em Split quaternionic analysis and the separation of the series for
$SL(2,\mathbb R)$ and $SL(2,\mathbb C)/SL(2,\mathbb R)$},
Advances in Math {\bf 228} (2011), 678-763; also arXiv:1009.2532.
\bibitem[FL3]{FL3} I.~Frenkel, M.~Libine,
{\em Anti de Sitter deformation
of quaternionic analysis and the second-order pole},
IMRN, 2015 (2015), 4840-4900; also arXiv:1404.7098.
\bibitem[FL4]{FL4} I.~Frenkel, M.~Libine,
{\em $n$-regular functions in quaternionic analysis}, submitted.
\bibitem[JV1]{JV1} H.~P.~Jakobsen, M.~Vergne,
{\em Wave and Dirac operators, and representations of the conformal group},
J. Functional Analysis {\bf 24} (1977), no. 1, 52-106.
\bibitem[JV2]{JV2} H.~P.~Jakobsen, M.~Vergne,
{\em Restrictions and expansions of holomorphic representations},
J. Funct. Anal. {\bf 34} (1979), no. 1, 29-53.
\bibitem[K]{K} B.~Keller, {\em $A$-infinity algebras, modules and functor
categories} in Trends in Representation Theory of Algebras and Related topics,
67-93, Contemp. Math. {\bf 406}, Amer. Math. Soc., Providence, RI, 2006. 
\bibitem[Le]{Le} S.~T.~Lee, {\em On some degenerate principal series
representations of $U(n,n)$}, J. Funct. Anal. {\bf 126} (1994), 305-366.
\bibitem[L1]{L} M.~Libine, {\em The two-loop ladder diagram and representations
of $U(2,2)$}, Jour. of Lie Theory {\bf 27} (2017), 771-800;
also arXiv:1309.5665.
\bibitem[L2]{L2} M.~Libine, {\em The conformal four-point integrals,
magic identities and representations of $U(2,2)$},
Advances in Math {\bf 301} (2016), 289-321; also arXiv:1407.2507.
\bibitem[P]{P} S.~Paneitz, {\em Analysis in space-time bundles, III.
Higher spin bundles}, J. Functional Analysis {\bf 54} (1983), 18-112.
\bibitem[V]{V} N.~Ja.~Vilenkin, {\em Special functions and the theory of
group representations}, translated from the Russian by V.~N.~Singh,
Translations of Mathematical Monographs, Vol. 22
American Mathematical Society, Providence, RI 1968.
\bibitem[Z]{Z} D.~Zagier, {\em The Dilogarithm Function}.
  In: P.~Cartier, P.~Moussa, B.~Julia, P.~Vanhove (eds)
  Frontiers in Number Theory, Physics, and Geometry II.
  Springer, Berlin, Heidelberg 2007.
\end{thebibliography}
\end{document}